\documentclass[11pt,reqno]{amsart}
\usepackage{amsmath}
\usepackage{amsfonts}
\usepackage{amssymb, amsthm, epsfig}
\usepackage{mathrsfs}
\usepackage{hyperref}
 \usepackage{yhmath}
\usepackage{cite}
\usepackage[utf8]{inputenc}
\usepackage[T1]{fontenc}
\usepackage{geometry}
\usepackage{graphicx}
\usepackage{verbatim}
\usepackage{color}
\usepackage{tikz}

\theoremstyle{plain}
\newtheorem{thm}{Theorem}[subsection]

\newtheorem{lem}[thm]{Lemma}
\newtheorem{prop}[thm]{Proposition}
\theoremstyle{definition}

\theoremstyle{remark}
\newtheorem{rem}{Remark}[section]

\numberwithin{equation}{section}

\allowdisplaybreaks

\DeclareSymbolFont{lettersA}{U}{pxmia}{m}{it}
\DeclareMathSymbol{\piup}{\mathord}{lettersA}{"19}

\makeatletter

\newcommand{\Rmnum}[1]{\expandafter\@slowromancap\romannumeral#1@}
\makeatother

\begin{document}
\title[]
{Oblique wave interactions in 2D steady supersonic  flows of Bethe-Zel'dovich-Thompson fluids}

\author{Geng Lai}

\address{Department of Mathematics, Shanghai University,
Shanghai, 200444, P.R. China;  Newtouch Center for Mathematics, Shanghai University, Shanghai, 20044, P.R. China.}
\email{\tt laigeng@shu.edu.cn}

\keywords{Bethe-Zel'dovich-Thompson fluid; supersonic ramp flow; oblique wave interaction; hodograph transformation; characteristic decomposition}
\date{\today}

\begin{abstract}
This paper studies steady supersonic flow in a 2D semi-infinite divergent duct.
We assume that the flow satisfies the slip boundary condition on the walls of the duct, and the state of the flow is given
at the inlet of the divergent duct. When the fluid is a polytropic ideal gas, the problem can be reduced to some   interactions of rarefaction simple waves, and the existence of a global classical solution inside the divergent duct can be established using the method of characteristics.
In this paper we assume that the fluid is a nonconvex Bethe-Zel'dovich-Thompson (BZT) fluid.
This type of fluid may significantly differ from polytropic ideal gases. For instance, physically admissible rarefaction shocks can occur.
Depending on the oncoming flow state and the flare angles of the divergent duct, thirteen distinct types of oblique wave interactions may occur, including oblique composite waves consisting of shocks and centered simple waves.
This paper systematically studies these oblique wave interactions and constructs global, piecewise smooth, supersonic solutions within the divergent duct using characteristic decomposition and hodograph transformation methods.
We also obtain the
detailed structures of these solutions in addition to their existence.
The results and methods of this paper are also applicable to some 2D Riemann problems for gases with nonconvex equations of state.
\end{abstract}

\maketitle
\tableofcontents

\section{Introduction}
The study of supersonic flows in ducts is fundamental in gasdynamics, as it provides essential insights into the behavior of high-speed flows and forms the basis for understanding complex aerodynamic phenomena.   Referring to Fig. \ref{Fig1.1}, a two-dimensional duct consists of a finite straight section and a semi-infinite divergent section, connected by two sharp corners $B$ and $D$.
A supersonic uniform stream flows through the straight section and enters the divergent section.
Assume that the flow is governed by the compressible Euler equations and the gas is an ideal gas.
Then at the corners, the flow is deflected by two centered simple waves with different types; see Courant $\&$ Friedrichs \cite{CF}. The two centered simple waves, denoted by ${\it f}_{\pm}^{1}$, interact inside the divergent duct, generating two new simple waves ${\it f}_{\pm}^{2}$  as a result. The simple waves ${\it f}_{\pm}^{2}$ will then reflect off the walls of the duct and generate two simple waves ${\it f}_{\pm}^{3}$ again.
The global existence of a continuous and piecewise smooth supersonic flow inside the divergent duct can then be established by analyzing the interactions and reflections of these rarefaction simple waves. There have been many studies on the global existence of piecewise smooth supersonic flows in divergent duct; we refer the reader to \cite{CQ,CF,Lai2,WX1,WX2} and the references therein.

\begin{figure}[htbp]
\begin{center}
\includegraphics[scale=0.38]{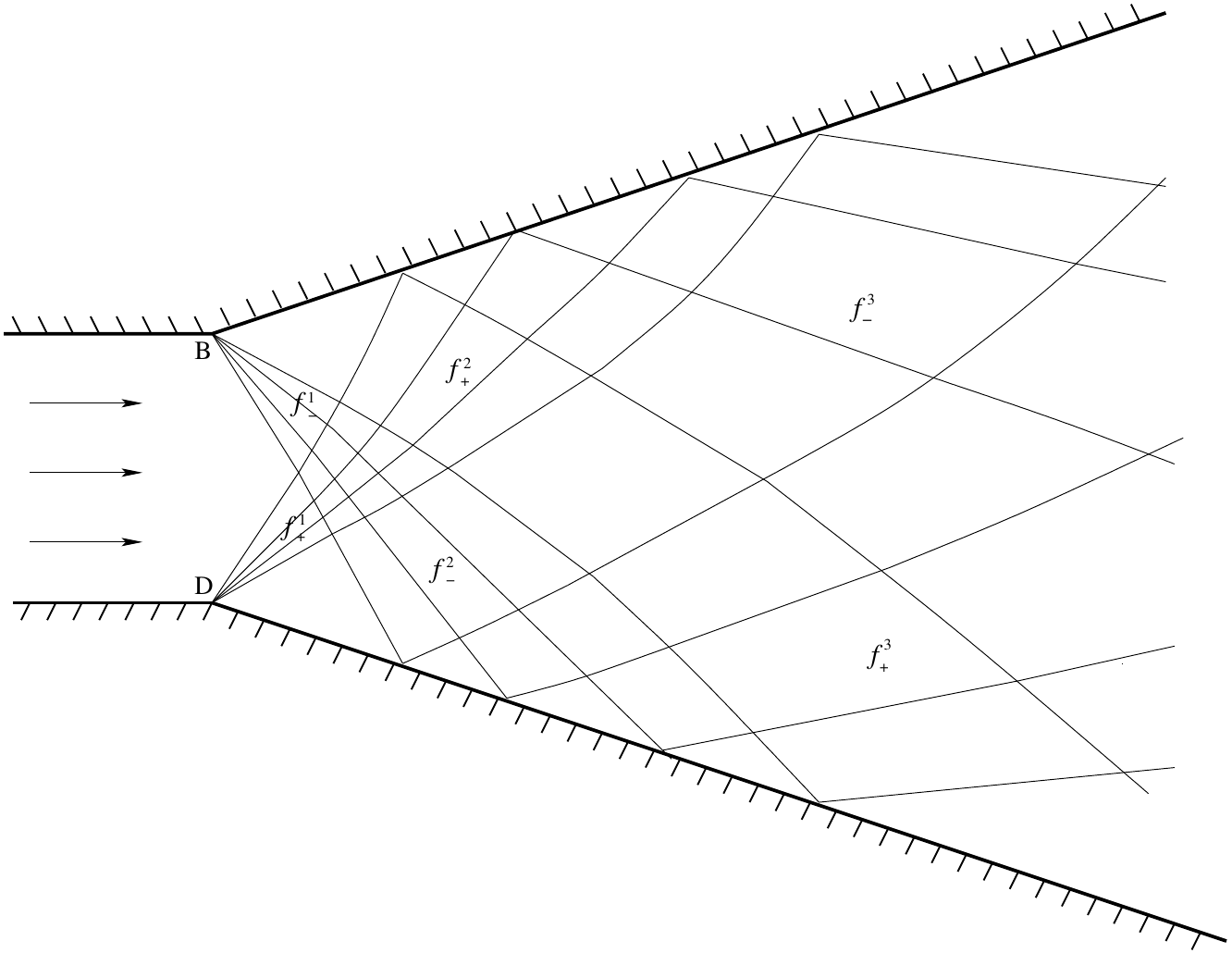}
\caption{\footnotesize Supersonic flow in a 2D semi-infinitely long divergent duct.}
\label{Fig1.1}
\end{center}
\end{figure}

In this paper,
we study the aforementioned supersonic nozzle flow problem for more general gases, such as gases with nonconvex isentropes.
The possibility that isentropes are nonconvex was first explored by Bethe \cite{Bethe} and Zel'dovich \cite{ZE} for the van der Waals equation of state,  and by Thompson and Lambrakis \cite{Thompson2} for more sophisticated equations of state. Thompson \cite{Thompson1}
introduced a non-dimensional thermodynamic quantity
$$\mathcal{G}=\frac{\tau^3}{2c^2}\Big(\frac{\partial^2 p}{\partial \tau^2}\Big)_{s}$$
called the fundamental derivative of gasdynamics to
delineate the dynamic behavior of compressible fluids, where $p$ is the pressure, $\tau$ is the specific volume, $s$ is the specific entropy and $c$ is the speed of sound.
Fluids with $\mathcal{G}<0$ within a certain thermodynamic region are now commonly referred to as Bethe-Zel'dovich-Thompson (BZT) fluids. 
Bethe-Zel'dovich-Thompson fluids may significantly differ from polytropic ideal gases. For example, physically admissible
rarefaction shocks may occur in BZT fluids; see \cite{BBKN,Cra,KNB,NSMGC,Thompson2,Weyl,ZGC} and the survey paper \cite{K}.


We assume that the flow in the 2D divergent nozzle is governed by the 2D steady potential flow equations, which can be written as
\begin{equation}\label{E1}
\left\{
  \begin{array}{ll}
   (\rho u)_x+(\rho v)_y=0,\\[4pt]
  \displaystyle  u_y-v_x=0
  \end{array}
\right.
\end{equation}
supplemented by the Bernoulli law
\begin{equation}\label{E2}
\frac{1}{2}(u^2+v^2)+h(\tau)=\mathcal{B},
\end{equation}
where $\mathcal{B}$ is a constant.
Here, $(u, v)$ is the flow velocity, $\rho=1/\tau$ is the density, $p=p(\tau)$ is the pressure, and $h(\tau)$ is the enthalpy which satisfies
\begin{equation}\label{5801}
h'(\tau)=\tau p'(\tau).
\end{equation}

In the paper, we assume that the equation of state $p=p(\tau)$ satisfies the following assumptions (see Figure  \ref{Fig1.2}):
\begin{description}
  \item[(A1)] $p'(\tau)<0$ for $\tau>0$;
  \item[(A2)] $p(\tau)\in C^2(0,+\infty)$;
  \item[(A3)] there exist $\tau_1^{i}$ and $\tau_2^{i}$ ($0<\tau_1^{i}<\tau_2^{i}$) such that $p''(\tau)>0$ for $ \tau\in(0, \tau_1^i)\cup(\tau_2^i, +\infty)$ and $ p''(\tau)<0$ for $\tau_1^i<\tau<\tau_2^i$;
  \item[(A4)] $\lim\limits_{\tau\rightarrow 0}p'(\tau)=-\infty$ and there exists a constant $\gamma>1$ such that $\lim\limits_{\tau\rightarrow+\infty}\tau^{\gamma+1}p'(\tau)$ exists.
\end{description}


\begin{figure}[htbp]
\begin{center}
\includegraphics[scale=0.5]{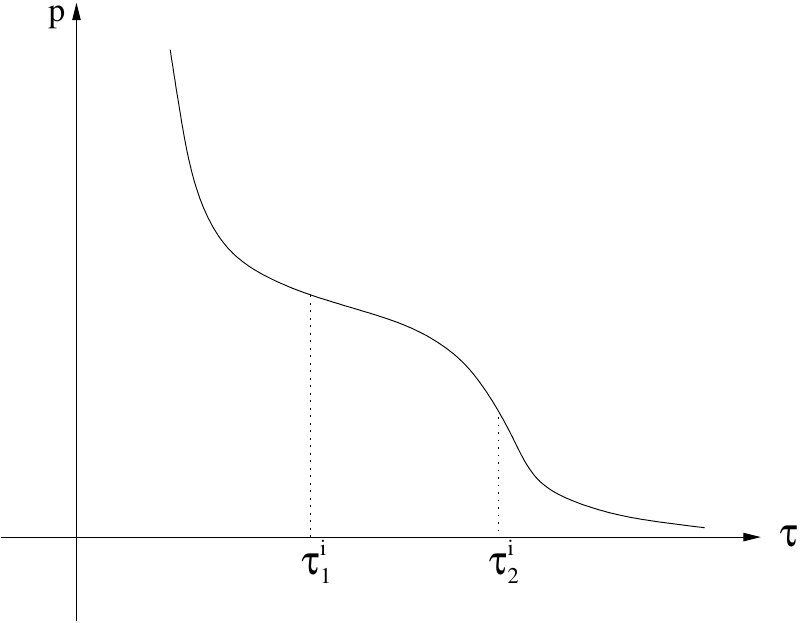}
\caption{\footnotesize The graph of the equation of state $p=p(\tau)$.}
\label{Fig1.2}
\end{center}
\end{figure}

%
Assume that
there is a supersonic flow, with a constant velocity $(u_0, 0)$ and specific volume $\tau_0$ through the straight section, arriving at the inlet of the divergent duct, where $u_0>c_0:=\tau_0\sqrt{-p'(\tau_0)}$.
Assume furthermore that the walls of the divergent duct can be represented by
$$
\mathcal{W}_{\pm}:\quad  y=\pm1+x\tan\theta_{\pm},~~ x>0,
$$
where
$0<\theta_{+}<\frac{\pi}{2}$ and $-\frac{\pi}{2}<\theta_{-}<0$ represent the flare angles of the divergent duct.
To understand the flow structure inside the divergent duct, we consider (\ref{E1}) with the following initial and boundary conditions:
\begin{equation}\label{42701}
\left\{
  \begin{array}{ll}
    (u, v)=(u_0, 0), & \hbox{ $x=0$, $-1<y<1$;} \\[2pt]
   \rho v=\rho u \tan\theta_{+}, & \hbox{$y=1+x\tan \theta_{+}$, $x>0$;} \\[2pt]
        \rho v=\rho u \tan\theta_{-}, & \hbox{$y=-1+x\tan\theta_{-}$, $x>0$.}
  \end{array}
\right.
\end{equation}
The problem (\ref{E1}), (\ref{42701}) is an initial-boundary value problem (IBVP).
We aim to construct a piecewise smooth solution to the IBVP (\ref{E1}), (\ref{42701}) in the domain $\Sigma=\{(x,y)\mid -1+x\tan\theta_{-}\leq y\leq 1+x\tan \theta_{+}, x>0\}$.

For the IBVP (\ref{E1}), (\ref{42701}), there exist two oblique waves issued from the corners $B(0, 1)$ and $D(0, -1)$, and the oblique waves can be constructed by solving a Riemann-type  problem.
It is well known that Riemann problems play a key role in understanding the wave structure of the compressible Euler equations.
Wendroff \cite{Wen1,Wen2} studied the 1D Riemann problem for the Euler equations with a nonconvex equation of state and constructed composite wave solutions that are composed of shock and simple waves.
Subsequently, Liu \cite{Liu1,Liu2} introduced an extended entropy condition and established the existence and uniqueness of the entropy solutions to the 1D Riemann problem.
We also refer the reader to the survey paper by Menikoff and Plohr
 \cite{MP} and  the  monograph by LeFloch and Thanh \cite{LeFloch2} for the 1D Riemann problem for gases with nonconvex equations of state.

Vimercati et al. \cite{VKG1} studied 2D steady supersonic flows of BZT fluids past compressible and rarefactive ramps.
They derived self-similar solutions to the supersonic ramp problem, including composite waves in which one or two centered simple waves are adjacent up to an oblique shock wave.
Recently, Li and Sheng \cite{Li-Sheng1,Li-Sheng2} also constructed self-similar solutions to the supersonic ramp problem for a van der Waals gas supplemented by the Maxwell equal area rule.
The study of the interactions of these oblique waves is important as it reveals fundamental mechanisms of nonlinear wave behavior in BZT fluids.
Vimercati et al. \cite{VKG2} studied oblique shock wave interactions in 2D steady flows of BZT fluids. Lai \cite{Lai6} studied
the interaction of oblique shock-fan composite waves in a 2D steady supersonic jet flow.
When the gas is a nonconvex BZT fluid, the problem (\ref{E1}), (\ref{42701}) involves several types of oblique wave interactions that have not been studied before.
From \cite{VKG1} one knows that the wave configuration at a rarefactive ramp may possibly be a fan wave (${\it f}$), fan-shock composite wave (${\it fs}$), fan-shock-fan composite wave (${\it fsf}$), shock wave (${\it s}$), or shock-fan composite wave (${\it sf}$).
Regarding the IBVP (\ref{E1}), (\ref{42701}) involving a non-convex BZT fluid,
several distinct types of oblique wave interactions can arise.
Depending on the oncoming flow state $(u_0, 0, \tau_0)$ and the flare angles $\theta_{\pm}$ of the divergent duct, we identify twelve distinct new types of oblique wave interactions, in addition to the known interactions of fan waves; see Fig. \ref{Fig1.3}.

\begin{figure}[htbp]
\begin{center}
\includegraphics[scale=0.38]{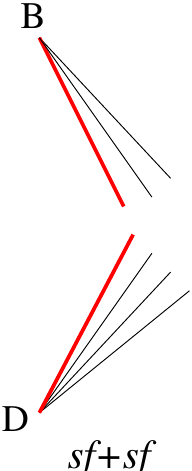}\qquad \quad\includegraphics[scale=0.38]{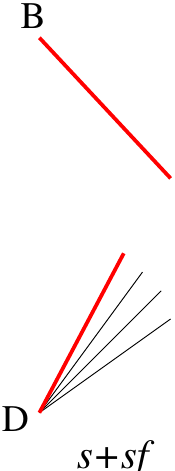}\qquad \quad \includegraphics[scale=0.38]{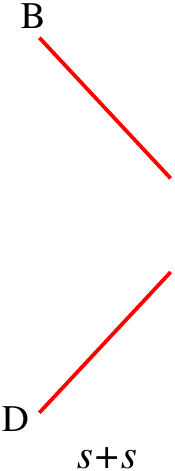}\qquad \quad \includegraphics[scale=0.38]{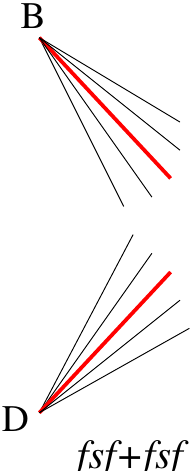}\qquad \quad\includegraphics[scale=0.38]{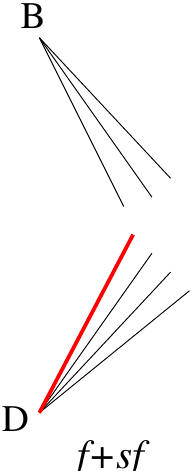}\qquad \quad \includegraphics[scale=0.38]{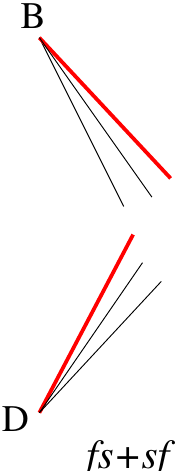}\\ \vskip 12pt
\includegraphics[scale=0.38]{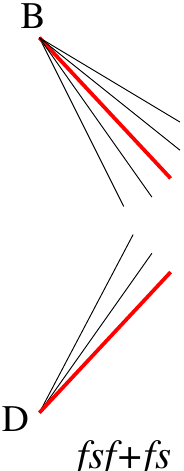}\qquad\quad \includegraphics[scale=0.38]{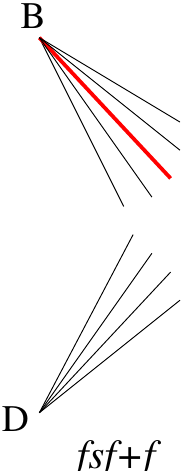}\qquad \quad \includegraphics[scale=0.38]{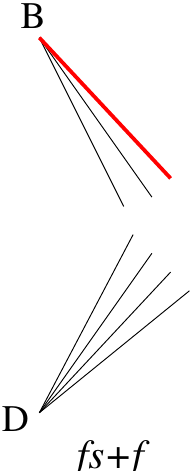}\qquad \quad \includegraphics[scale=0.38]{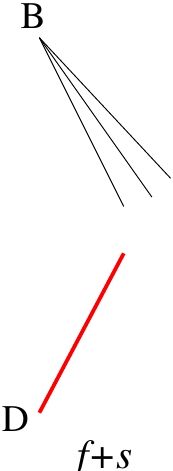}\qquad\quad\includegraphics[scale=0.38]{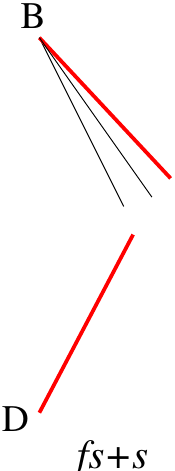}\qquad \quad \includegraphics[scale=0.38]{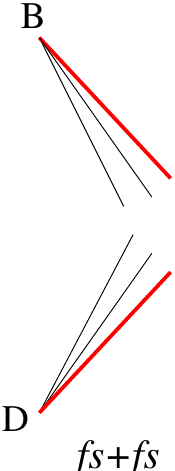}
\caption{\footnotesize Twelve additional types of oblique wave interactions in the divergent duct; where the red lines represent oblique shocks and the black lines represent centered simple waves.}
\label{Fig1.3}
\end{center}
\end{figure}

We aim to systematically study these oblique wave interactions and construct global piecewise smooth supersonic solutions within the divergent duct\footnote{Guardone and Vimercati \cite{GV} studied steady flows in a 2D converging-diverging nozzle for BZT fluids. They constructed exact solutions by assuming that the flows are quasi-one-dimensional.}.
To this aim, we solve some boundary value problems for the system (\ref{E1}), including Goursat problems, discontinuous Goursat problem, singular Cauchy problems, and shock free boundary problems.
In contrast to ideal gases, the flow downstream (or upstream) of a shock of a BZT fluid may be sonic in terms of the flow velocity relative to the shock front.
Moreover, in certain oblique wave interactions, the types of shocks are not known in advance. This results in a fact that the formulation of the boundary conditions on these shocks is a priori unknown.
By calculating the curvatures of the shock waves and using the Liu's extended entropy condition, we establish some a priori estimates for the type of the shocks.
We also find that sonic shocks,  which are envelopes of one family of wave characteristics, appear in certain oblique wave interactions.
The rest of the paper is organized as follows.

In Section 2 we analyze the structure of the system (\ref{E1}).
In Section 2.1 we introduce the characteristics, including characteristic lines, characteristic directions, characteristic angles and characteristic equations of
the system (\ref{E1}).
In Section 2.2 we derive several groups of characteristic decompositions for (\ref{E1}). These characteristic decompositions will be used to control the bounds of the derivatives of the solution.
In Section 2.3 we introduce a hodograph transformation based on the Riemann invariants.
In Section 2.4 we make some a priori assumptions on the hyperbolicity and the characteristic angles of the system (\ref{E1}).

In Section 3 we consider (\ref{E1}) with the following initial and boundary conditions:
\begin{equation}\label{112201a}
\left\{
  \begin{array}{ll}
    (u, v)=(u_0, 0), & \hbox{$x=0$, $y>0$;} \\[2pt]
    \rho v=\rho u \tan\theta_{-}, & \hbox{$y=-1+x\tan\theta_{-}$, $x>0$.}
  \end{array}
\right.
\end{equation}
The problem (\ref{E1}), (\ref{112201a}) is a Riemann-type initial and boundary value problem (RIBVP).
We use the classical phase plane analysis method to construct self-similar solutions to this RIBVP.
We also establish the uniqueness of the self-similar solution to this RIBVP, provided that $u_0$ is sufficiently large.
Thirteen distinct types of oblique wave interactions near the inlet of the divergent duct are also identified in this section.

In Section 4 we study several types of hyperbolic boundary value problems for (\ref{E1}), such as standard Goursat problems, discontinuous Goursat problems, simple wave problem, mixed boundary value problems, and singular Cauchy problems. We use the characteristic decomposition method and the hodograph transformation method to establish the global existence of classical solutions for these problems. The solutions constructed in this section will be used as building blocks of supersonic flows within the divergent duct.

In Section 5 we conduct a systematic study on the thirteen types of oblique wave interactions. The main results are stated as Theorems \ref{thm1}--\ref{thm13}, where we establish the global existence of piecewise smooth supersonic solutions within the divergent duct for each of the thirteen cases. We also obtain the
detailed structures of these solutions in addition to their existence.






\section{Systems of potential flow}

\subsection{Characteristic equation}
For smooth flow, system (\ref{E1}) can be written as the following matrix form:
\begin{equation}
\left(
 \begin{array}{cc}
c^{2}-u^{2} & -uv\\
  0 & -1 \\
  \end{array}
  \right)\left(
           \begin{array}{c}
             u \\
             v \\
           \end{array}
         \right)_{x}+\left(
                         \begin{array}{cc}
                          -uv &  c^{2}-v^{2}\\
                           1 & 0 \\
                         \end{array}
                       \right)\left(
                                \begin{array}{c}
                                  u \\
                                  v \\
                                \end{array}
                              \right)_{y}~=~\left(
                                               \begin{array}{c}
                                                 0 \\
                                                 0 \\
                                               \end{array}
                                             \right),
                                             \label{matrix1}
\end{equation}
where $c=\sqrt{-\tau^2p'(\tau)}$ is the sound speed.

The eigenvalues of system (\ref{matrix1}) are determined
by
\begin{equation}
(v-\lambda u)^{2}-c^{2}(1+\lambda^{2})=0,\label{characteristice}
\end{equation}
which yields
\begin{equation}
\lambda=\lambda_{\pm}=\frac{uv\pm
c\sqrt{u^{2}+v^{2}-c^{2}}}{u^{2}-c^{2}}.
\end{equation}
So, if and only if the flow is supersonic, i.e., $q>c$ where $q=\sqrt{u^2+v^2}$, system (\ref{matrix1}) is hyperbolic and has two families of wave characteristics
defined as the integral curves of
$\frac{{\rm d}y}{{\rm d}x}=\lambda_{\pm}$.

To characterize the wave's propagation direction, we introduce the concept of the characteristic direction.
The wave characteristic direction is defined as the tangent direction to the characteristic curve, which makes an acute angle $A$, termed the Mach angle, with the local flow velocity vector; see Fig. \ref{Fig2.1.1}.
By simple computation, we see that the $C_+$
characteristic direction forms with the direction of the flow velocity
 the angle $A$ from $(u, v)$ to $C_{+}$ in
the counterclockwise direction, and the $C_-$ characteristic direction forms with the
 direction of the flow velocity the angle $A$ from $(u, v)$ to $C_{-}$
in the clockwise direction.
 By computation, we also  have
\begin{equation}
c^{2}=q^{2}\sin^{2} A.\label{210cqo}
\end{equation}

\begin{figure}[htbp]
\begin{center}
\includegraphics[scale=0.55]{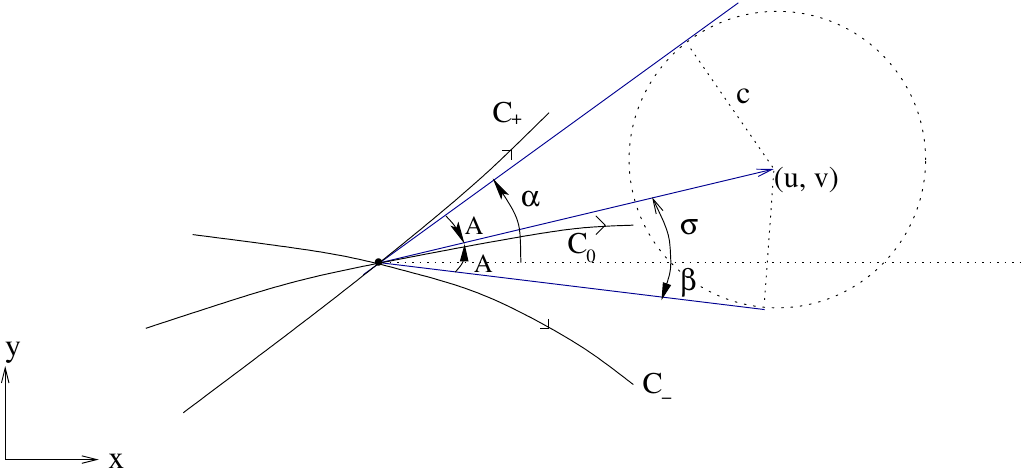}
\caption{ \footnotesize Characteristic curves, characteristic directions, and characteristic angles.}
\label{Fig2.1.1}
\end{center}
\end{figure}

Following \cite{CF} and \cite{Li3}, we introduce the concept of the characteristic angle. The $C_{+}$ ($C_{-}$, resp.) characteristic angle is defined as the counterclockwise angle from the positive $x$-axis to the $C_{+}$ ($C_{-}$, resp.) characteristic direction.
We denote by $\alpha$ and
 $\beta$ the $C_{+}$ and $C_{-}$ characteristic angle,  respectively,
 where $0\leq \alpha-\beta\leq \pi$. Let $\sigma$ be the counterclockwise angle from the positive $x$-axis to the direction of the flow velocity. From (\ref{characteristice}) and (\ref{210cqo}), we have
\begin{equation}
\alpha=\sigma+A,\quad \beta=\sigma-A,\quad\sigma=\frac{\alpha+\beta}{2},\quad A=\frac{\alpha-\beta}{2}, \label{tau}
\end{equation}
\begin{equation} \label{U}
u=q\cos\sigma, \quad v=q\sin\sigma,\quad u=c\frac{\cos\sigma}{\sin A},\quad \mbox{and}\quad v=c\frac{\sin\sigma}{\sin A}.
\end{equation}

Multiplying (\ref{matrix1}) on the left
by
$(1,\mp c\sqrt{u^{2}+v^{2}-c^{2}})$, we get
\begin{equation}
\left\{
  \begin{array}{ll}
  \displaystyle \bar{\partial}_{+}u+\lambda_{-}\bar{\partial}_{+}v
 =0,  \\[10pt]
    \displaystyle  \bar{\partial}_{-}u+\lambda_{+}\bar{\partial}_{-}v=0,
  \end{array}
\right.\label{form}
\end{equation}
where
\begin{equation}\label{41404}
\bar{\partial}_{+}:=\cos\alpha\partial_x+\sin\alpha\partial_y\quad \mbox{and} \quad \bar{\partial}_{-}:=\cos\beta\partial_x+\cos\beta\partial_y.
\end{equation}

From (\ref{U}) we have
\begin{equation}
\bar{\partial}_{\pm}u=\frac{\cos\sigma}{\sin A}\bar{\partial}_{\pm}c+\frac{c\cos\alpha\bar{\partial}_{\pm}\beta
-c\cos\beta\bar{\partial}_{\pm}\alpha}{2\sin^{2}A},\label{1}
\end{equation}
\begin{equation}
\bar{\partial}_{\pm}v=\frac{\sin\sigma}{\sin A}\bar{\partial}_{\pm}c
+\frac{c\sin\alpha\bar{\partial}_{\pm}\beta
-c\sin\beta\bar{\partial}_{\pm}\alpha}{2\sin^{2}A}.\label{2}
\end{equation}
Inserting (\ref{1}) and (\ref{2}) into (\ref{form}), we obtain
\begin{equation}\label{3}
\bar{\partial}_{+}c=\frac{c}{\sin2A}
(\bar{\partial}_{+}\alpha-\cos2A\bar{\partial}_{+}\beta)
\quad\mbox{and}\quad
\bar{\partial}_{-}c=\frac{c}{\sin2A}
(\cos2A\bar{\partial}_{-}\alpha-\bar{\partial}_{-}\beta).
\end{equation}

Differentiating the Bernoulli law (\ref{E2}) along the characteristic directions and
using (\ref{1}) and (\ref{2}), we get
\begin{equation}
\left(\frac{1}{\sin^{2}A}+\kappa\right)\bar{\partial}_{\pm}c=
\frac{c\cos A}{2\sin^{3}A}(\bar{\partial}_{\pm}\alpha-\bar{\partial}_{\pm}\beta),\label{bB1}
\end{equation}
where $$\kappa=\frac{-2p'(\tau)}{\tau p''(\tau)+2p'(\tau)}.$$

Inserting (\ref{bB1}) into (\ref{3}), we obtain
\begin{equation}\label{6}
\bar{\partial}_{+}\alpha=\Lambda\cos^{2}A\bar{\partial}_{+}\beta\quad \mbox{and}\quad \bar{\partial}_{-}\beta=\Lambda\cos^{2}A\bar{\partial}_{-}\alpha,
\end{equation}
respectively,
where
$$
\Lambda=~\varpi(\tau)-\tan^2A\quad \mbox{and}\quad \varpi(\tau)=-\frac{4p'(\tau)+\tau p''(\tau)}{\tau p''(\tau)}.
$$

Combining (\ref{3}) and (\ref{6}), we have
\begin{equation}
c\bar{\partial}_{+}\beta=-\frac{\tan A}{\mu}\bar{\partial}_{+}c=-\frac{p''(\tau)}{2c\rho^4}\tan A\bar{\partial}_{+}\rho,\label{7}
\end{equation}
\begin{equation}
c\bar{\partial}_{+}\alpha=-\left(\frac{1+\kappa}{2}\right)\Lambda\sin2 A\bar{\partial}_{+}c=-\frac{p''(\tau)}{4c\rho^4}\Lambda\sin2 A\bar{\partial}_{+}\rho,\label{10}
\end{equation}
\begin{equation}\label{82301}
c\bar{\partial}_{-}\alpha=\frac{\tan A}{\mu}\bar{\partial}_{-}c=\frac{p''(\tau)}{2c\rho^4}\tan A\bar{\partial}_{-}\rho,
\end{equation}
\begin{equation}
c\bar{\partial}_{-}\beta=\left(\frac{1+\kappa}{2}\right) \Lambda\sin2 A\bar{\partial}_{-}c=\frac{p''(\tau)}{4c\rho^4}\Lambda\sin2 A\bar{\partial}_{-}\rho.\label{8}
\end{equation}

From (\ref{1}), (\ref{2}), and (\ref{7})--(\ref{8}) we also have
\begin{equation}\label{11}
\bar{\partial}_{+}u=\kappa\sin\beta\bar{\partial}_{+}c=c\tau\sin\beta\bar{\partial}_{+}\rho,\quad\bar{\partial}_{-}u=-\kappa\sin\alpha\bar{\partial}_{-}c=-c\tau\sin\alpha\bar{\partial}_{-}\rho,
\end{equation}
\begin{equation}\label{72804}
\bar{\partial}_{+}v=-\kappa\cos\beta\bar{\partial}_{+}c=-c\tau\cos\beta\bar{\partial}_{+}\rho,
\quad\bar{\partial}_{-}v=\kappa\cos\alpha\bar{\partial}_{-}c=c\tau\cos\alpha\bar{\partial}_{-}\rho,
\end{equation}
\begin{equation}\label{12010a}
\bar{\partial}_{+}\sigma=-\frac{\tau^2\sin (2A)}{2}\bar{\partial}_{+}\rho, \quad
\bar{\partial}_{-}\sigma=\frac{\tau^2\sin (2A)}{2}\bar{\partial}_{-}\rho.
\end{equation}

\subsection{Characteristic decompositions}
We first give a  commutator relation derived by Li, Zhang and Zheng in \cite{Li2}.
\begin{prop}
(Commutator relation)  For the directional derivatives $\bar{\partial}_{\pm}$ defined in (\ref{41404}), we have
\begin{equation}
\bar{\partial}_{-} \bar{\partial}_{+}- \bar{\partial}_{+} \bar{\partial}_{-}=
\frac{1}{\sin2 A}\Big[(\cos2 A \bar{\partial}_{+}\beta- \bar{\partial}_{-}\alpha) \bar{\partial}_{-}-
(\bar{\partial}_{+}\beta-\cos2 A \bar{\partial}_{-}\alpha) \bar{\partial}_{+}\Big].
\label{comm}
\end{equation}
\end{prop}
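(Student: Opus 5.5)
The identity is purely geometric: it holds for any pair of unit vector fields $(\cos\alpha,\sin\alpha)$ and $(\cos\beta,\sin\beta)$ with $\sin(\alpha-\beta)\neq 0$, and uses no information from the flow equations. (I read the second operator in (\ref{41404}) as $\bar{\partial}_{-}=\cos\beta\,\partial_x+\sin\beta\,\partial_y$; the $\cos\beta\,\partial_y$ there looks like a typo.) The plan is to compute the Lie bracket of the two vector fields in Cartesian coordinates and then re-express it in the frame $\{\bar{\partial}_{+},\bar{\partial}_{-}\}$, which is a genuine frame because $\alpha-\beta=2A\in(0,\pi)$ in the supersonic region.

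\textbf{Step 1: the bracket.} Apply both sides to a smooth test function $\phi$. Second derivatives of $\phi$ cancel, leaving only the derivatives of the coefficients:
\[
\bar{\partial}_{-}\bar{\partial}_{+}\phi-\bar{\partial}_{+}\bar{\partial}_{-}\phi
=\bar{\partial}_{-}\alpha\,(-\sin\alpha\,\phi_x+\cos\alpha\,\phi_y)-\bar{\partial}_{+}\beta\,(-\sin\beta\,\phi_x+\cos\beta\,\phi_y).
\]
So I need to express the rotated directions $\partial_\alpha^{\perp}:=-\sin\alpha\,\partial_x+\cos\alpha\,\partial_y$ and $\partial_\beta^{\perp}:=-\sin\beta\,\partial_x+\cos\beta\,\partial_y$ in terms of $\bar{\partial}_{\pm}$.

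\textbf{Step 2: change of frame.} Write $\partial_\alpha^{\perp}=a\,\bar{\partial}_{+}+b\,\bar{\partial}_{-}$ and take dot products with the unit vectors $(\cos\alpha,\sin\alpha)$ and $(\cos\beta,\sin\beta)$. The first gives $0=a+b\cos 2A$. The second gives $\sin(\beta-\alpha)=-\sin 2A=a\cos 2A+b$. Solving these,
\[
\partial_\alpha^{\perp}=\frac{1}{\sin 2A}\bigl(\cos 2A\,\bar{\partial}_{+}-\bar{\partial}_{-}\bigr).
\]
By the symmetric computation (with the sign flipped, since now $\sin(\alpha-\beta)=+\sin 2A$),
\[
\partial_\beta^{\perp}=\frac{1}{\sin 2A}\bigl(\bar{\partial}_{+}-\cos 2A\,\bar{\partial}_{-}\bigr).
\]

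\textbf{Step 3: collect terms.} Substitute these into the Step 1 expression and group the coefficients of $\bar{\partial}_{-}$ and $\bar{\partial}_{+}$.
\begin{itemize}
\item The coefficient of $\bar{\partial}_{-}$ is $\frac{1}{\sin2A}\bigl(\cos2A\,\bar{\partial}_{+}\beta-\bar{\partial}_{-}\alpha\bigr)$.
\item The coefficient of $\bar{\partial}_{+}$ is $-\frac{1}{\sin2A}\bigl(\bar{\partial}_{+}\beta-\cos2A\,\bar{\partial}_{-}\alpha\bigr)$.
\end{itemize}
This is exactly (\ref{comm}).

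The only real obstacle is sign bookkeeping in the inversion of Step 2. I would check it against a special case, $\alpha=\pi/2$ and $\beta=0$ (so $\partial_\alpha^{\perp}=-\partial_x=-\bar{\partial}_{-}$), which matches the formula since $\cos 2A=0$ and $\sin 2A=1$ there.
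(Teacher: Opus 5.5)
Your proposal is correct and follows essentially the same route as the paper: the paper inverts the frame as $\partial_x=-\frac{\sin\beta\,\bar{\partial}_{+}-\sin\alpha\,\bar{\partial}_{-}}{\sin 2A}$, $\partial_y=\frac{\cos\beta\,\bar{\partial}_{+}-\cos\alpha\,\bar{\partial}_{-}}{\sin 2A}$ and substitutes into the first-order terms of the bracket, which is equivalent to your expressions for $\partial_\alpha^{\perp}$ and $\partial_\beta^{\perp}$. You are also right that the $\cos\beta\,\partial_y$ in (\ref{41404}) is a typo for $\sin\beta\,\partial_y$.
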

\begin{proof}
From (\ref{41404}) we have
\begin{equation}\label{61513}
\partial_{x}=-\frac{\sin\beta\bar{\partial}_{+}-\sin\alpha\bar{\partial}_{-}}{\sin2A}\quad \mbox{and}\quad
\partial_{y}=\frac{\cos\beta\bar{\partial}_{+}-\cos\alpha\bar{\partial}_{-}}{\sin2A}.
\end{equation}
Then the commutator relation (\ref{comm}) can be obtained by a direct computation.
\end{proof}

\begin{prop}
For the variable $\rho$, we have the following characteristic decomposition
\begin{equation}\label{81104}
\left\{
  \begin{array}{ll}
   \displaystyle \bar{\partial}_{+}\bar{\partial}_{-}\rho=
    \frac{\tau^4p''(\tau)}{4c^2\cos^{2}A }\Big[(\bar{\partial}_{-}\rho)^2+(\mathcal{F}-1)\bar{\partial}_{-}\rho\bar{\partial}_{+}\rho\Big],\\[12pt]
  \displaystyle \bar{\partial}_{-}\bar{\partial}_{+}\rho=
    \frac{\tau^4p''(\tau)}{4c^2\cos^{2}A }\Big[(\bar{\partial}_{+}\rho)^2+(\mathcal{F}-1)\bar{\partial}_{-}\rho\bar{\partial}_{+}\rho\Big],
  \end{array}
\right.
\end{equation}
where
$$
\mathcal{F}=2\sin^2A-\frac{8p'(\tau)\cos^4A}{\tau p''(\tau)}.
$$
\end{prop}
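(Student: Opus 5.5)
We derive the decomposition from the commutator relation (\ref{comm}) applied to $\rho$, combined with the first-order relations (\ref{7})--(\ref{8}) that express $\bar{\partial}_{\pm}\alpha$, $\bar{\partial}_{\pm}\beta$ through $\bar{\partial}_{\pm}\rho$. Compatibility between $\alpha$, $\beta$ and $\rho$ is what forces a closed second-order relation. We do not work with (\ref{comm}) for $\rho$ directly. Instead we use (\ref{82301}), written as
\[
\bar{\partial}_{-}\alpha = g(\rho,A)\,\bar{\partial}_{-}\rho,\qquad g=\frac{p''(\tau)\tan A}{2c^2\rho^4},
\]
and (\ref{7}), written as $\bar{\partial}_{+}\beta=-g\,\bar{\partial}_{+}\rho$. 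Since $A=(\alpha-\beta)/2$ and $c=c(\rho)$, the coefficient $g$ depends only on $(\rho,\alpha-\beta)$.

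\textbf{Step 1.} Apply $\bar{\partial}_{+}$ to $\bar{\partial}_{-}\alpha=g\bar{\partial}_{-}\rho$. This gives
\[
\bar{\partial}_{+}\bar{\partial}_{-}\alpha = g\,\bar{\partial}_{+}\bar{\partial}_{-}\rho + (\bar{\partial}_{+}g)\,\bar{\partial}_{-}\rho .
\]
We compute $\bar{\partial}_{+}g=g_\rho\bar{\partial}_{+}\rho+g_A\bar{\partial}_{+}A$. Here $\bar{\partial}_{+}A=\tfrac12(\bar{\partial}_{+}\alpha-\bar{\partial}_{+}\beta)$, and both pieces are expressed through $\bar{\partial}_{+}\rho$ by (\ref{7}) and (\ref{10}).

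\textbf{Step 2.} Compute $\bar{\partial}_{+}\bar{\partial}_{-}\alpha$ in a second way. By (\ref{comm}),
\[
\bar{\partial}_{+}\bar{\partial}_{-}\alpha=\bar{\partial}_{-}\bar{\partial}_{+}\alpha-[\bar{\partial}_{-},\bar{\partial}_{+}]\alpha .
\]
Using (\ref{10}), write $\bar{\partial}_{+}\alpha=h\,\bar{\partial}_{+}\rho$ with $h=-\frac{p''}{4c^2\rho^4}\Lambda\sin 2A$. Then $\bar{\partial}_{-}\bar{\partial}_{+}\alpha=h\,\bar{\partial}_{-}\bar{\partial}_{+}\rho+(\bar{\partial}_{-}h)\bar{\partial}_{+}\rho$. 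The commutator term is a quadratic form in $\bar{\partial}_{\pm}\rho$, again obtained from (\ref{7})--(\ref{8}).

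Doing the same with $\rho$ itself, (\ref{comm}) gives
\[
\bar{\partial}_{-}\bar{\partial}_{+}\rho=\bar{\partial}_{+}\bar{\partial}_{-}\rho+Q(\bar{\partial}_{+}\rho,\bar{\partial}_{-}\rho),
\]
where $Q$ is an explicit quadratic form. Equating the two expressions for $\bar{\partial}_{+}\bar{\partial}_{-}\alpha$ and substituting this relation then yields one linear equation
\[
(g-h)\,\bar{\partial}_{+}\bar{\partial}_{-}\rho = \text{quadratic in }\bar{\partial}_{\pm}\rho .
\]
We solve it for $\bar{\partial}_{+}\bar{\partial}_{-}\rho$. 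The second identity in (\ref{81104}) then follows either from the commutator relation or from the symmetric computation starting from (\ref{7}).

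\textbf{Main obstacle.} The difficulty is the algebraic simplification. We must check three things:
\begin{itemize}
\item $g-h\neq 0$. It should reduce to a multiple of $\frac{p''}{c^2\rho^4}\cdot\frac{1}{\sin 2A}$ times a positive combination, using $\Lambda\cos^2A=\varpi\cos^2A-\sin^2A$.
\item The coefficient of $(\bar{\partial}_{+}\rho)^2$ in $\bar{\partial}_{+}\bar{\partial}_{-}\rho$ vanishes.
\item The remaining coefficients collapse to $\frac{\tau^4p''}{4c^2\cos^2A}$ and $\frac{\tau^4p''}{4c^2\cos^2A}(\mathcal F-1)$.
\end{itemize}
To keep this tractable, we first rewrite everything in terms of $\tau$, using $\rho=1/\tau$, $c^2=-\tau^2p'$, and
\[
\frac{dc}{d\tau}=\frac{-2\tau p'-\tau^2p''}{2c}.
\]
We then track the $\tau$-derivative of $p''$ carefully. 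Terms involving $p'''$ must cancel between the $g_\rho$ and $h_\rho$ contributions, since $p$ is only $C^2$ and $p'''$ cannot appear in the result. If a direct cancellation is hard to see, we fall back on the alternative route of expressing $u$ and $v$ through (\ref{11}) and (\ref{72804}). In that route we use $\bar{\partial}_{-}\bar{\partial}_{+}u-\bar{\partial}_{+}\bar{\partial}_{-}u$ from (\ref{comm}), where the coefficient $c\tau\sin\beta$ involves no second derivative of $p$. That makes it clear that $p'''$ never enters, and we redo the elimination there.
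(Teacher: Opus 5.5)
Your proposal is correct, but its main route differs from the paper's. The paper imposes (\ref{comm}) on $u$ via (\ref{11}), then eliminates $\bar{\partial}_{-}\bar{\partial}_{+}\rho$ using (\ref{comm}) for $\rho$ and (\ref{7})--(\ref{8}); this is exactly your fallback. Your main route imposes it on $\alpha$ instead, and it closes more easily than you expect. Since $A=\check{A}(\tau)$ by the Bernoulli law, $g$ is a function of $\rho$ alone. Combining $\Lambda\cos^2A=\varpi\cos^2A-\sin^2A$ with $c^2=-\tau^2p'$ gives $h=g-\tau\sin(2A)$. Hence the $g'(\rho)\,\bar{\partial}_{+}\rho\,\bar{\partial}_{-}\rho$ terms coming from $\bar{\partial}_{+}g$ and $\bar{\partial}_{-}h$ cancel identically, so $p'''$ drops out automatically. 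Moreover, the coefficient you divide by is $g-h=\tau\sin(2A)$, not a multiple of $p''$ as you guessed; a factor $p''$ would have made the division fail at the inflection points $\tau_1^i,\tau_2^i$. Concretely, set $a=\bar{\partial}_{+}\rho$, $b=\bar{\partial}_{-}\rho$ and $K=g/\sin(2A)=\tau^4p''/(4c^2\cos^2A)$. The commutator for $\rho$ reads $\bar{\partial}_{-}\bar{\partial}_{+}\rho-\bar{\partial}_{+}\bar{\partial}_{-}\rho=K(a^2-b^2)$. Your elimination then gives $\bar{\partial}_{+}\bar{\partial}_{-}\rho=Kb^2+\big(K\cos 2A-\omega'(\rho)/\omega\big)ab$ with $\omega=\tau\sin(2A)$. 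Since $\mathrm{d}\ln\omega/\mathrm{d}\rho=-2\tau\cos^2A+2K\cos 2A$, the $ab$-coefficient equals $K(\mathcal{F}-1)$. (Incidentally, $h-g=-\tau\sin(2A)$ shows that the factor $\tau^2$ in (\ref{12010a}) should be $\tau$, as (\ref{122701}) confirms.)

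The two routes buy different things. In the paper's route the coefficients $c\tau\sin\alpha$ and $c\tau\sin\beta$ contain no $p''$, so $p'''$ cannot appear even at intermediate stages. However, in (\ref{14}) the factor multiplying $\bar{\partial}_{+}\bar{\partial}_{-}\rho$ is $\sin\alpha+\sin\beta=2\sin\sigma\cos A$, which vanishes wherever $\sigma=0$ (e.g.\ on the axis of a symmetric duct). That division therefore strictly needs a continuity argument or the companion computation with $v$. Your route divides by $\tau\sin(2A)>0$, which is legitimate at every supersonic state.
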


\begin{proof}
It follows from (\ref{11}) and (\ref{comm}) that
\begin{equation}
\begin{array}{rcl}
&&\bar{\partial}_{+}\big[c\tau\sin\alpha\bar{\partial}_{-}\rho\big]+
\bar{\partial}_{-}\big[c\tau\sin\beta\bar{\partial}_{+}\rho\big]
\\[6pt]&=&-\displaystyle\frac{1}{\sin2 A}\big[(\bar{\partial}_{+}\beta-\cos2 A \bar{\partial}_{-}\alpha)c\tau\sin\beta \bar{\partial}_{+}\rho-( \bar{\partial}_{-}\alpha- \cos2 A\bar{\partial}_{+}\beta)c\tau\sin\alpha \bar{\partial}_{-}\rho\big].
\end{array}
\end{equation}
Hence, we have
\begin{equation}
\begin{array}{rcl}
&&\displaystyle(\sin\alpha+\sin\beta)\frac{1}{c\tau }\frac{d(c\tau)}{d\rho }\bar{\partial}_{+}\rho\bar{\partial}_{-}\rho+\sin\alpha\bar{\partial}_{+}\bar{\partial}_{-}\rho+
\sin\beta\bar{\partial}_{-}\bar{\partial}_{+}\rho\\[8pt]&=&-\displaystyle\frac{1}{\sin2 A}
\Big[(\sin\beta \bar{\partial}_{+}\beta-\sin\beta \cos2 A \bar{\partial}_{-}\alpha+\cos\beta\sin2 A\bar{\partial}_{-}\beta) \bar{\partial}_{+}\rho\\[4pt]&&\qquad\qquad\qquad\qquad\displaystyle-( \sin\alpha\bar{\partial}_{-}\alpha- \sin\alpha\cos2 A\bar{\partial}_{+}\beta-\cos\alpha\sin2 A\bar{\partial}_{+}\alpha) \bar{\partial}_{-}\rho\Big].
\end{array}\label{12}
\end{equation}

Applying the commutator relation (\ref{comm}) for $\rho$, we get
\begin{equation}
\bar{\partial}_{-} \bar{\partial}_{+}\rho- \bar{\partial}_{+} \bar{\partial}_{-}\rho=
\frac{1}{\sin2 A}\big[(\cos2 A \bar{\partial}_{+}\beta- \bar{\partial}_{-}\alpha) \bar{\partial}_{-}\rho-
(\bar{\partial}_{+}\beta-\cos2 A \bar{\partial}_{-}\alpha) \bar{\partial}_{+}\rho\big].\label{13}
\end{equation}
Inserting this into (\ref{12}), we get
\begin{equation}
\begin{array}{rcl}
&&\displaystyle(\sin\alpha+\sin\beta)\frac{1}{c\tau }\frac{d(c\tau)}{d\rho }\bar{\partial}_{+}\rho\bar{\partial}_{-}\rho+(\sin\alpha+\sin\beta)\bar{\partial}_{+}\bar{\partial}_{-}\rho\\[6pt]
&=&-\displaystyle\frac{1}{\sin2 A}
\Big[\cos\beta\sin2 A\bar{\partial}_{-}\beta\bar{\partial}_{+}\rho+( \sin\alpha+\sin\beta)\cos2 A\bar{\partial}_{+}\beta\bar{\partial}_{-}\rho\\[6pt]&&
\qquad\qquad\qquad\qquad-( \sin\alpha+\sin\beta)\bar{\partial}_{-}\alpha\bar{\partial}_{-}\rho+\cos\alpha\sin2 A\bar{\partial}_{+}
\alpha\bar{\partial}_{-}\rho\Big].
\end{array}\label{14}
\end{equation}
Thus, by (\ref{7})--(\ref{8}) we get
(\ref{81104}).
This completes the proof.
\end{proof}

From (\ref{81104}) we also have
\begin{equation}\label{53001}
\left\{
  \begin{array}{ll}
   \displaystyle \bar{\partial}_{+}\Big(\frac{\bar{\partial}_{-}\rho}{\rho}\Big)=
    \frac{\tau^3p''(\tau)}{4c^2\cos^{2}A }\bigg(\big(\frac{\bar{\partial}_{-}\rho}{\rho}\big)^2+(\mathcal{H}-1)\frac{\bar{\partial}_{-}\rho}{\rho}\frac{
    \bar{\partial}_{+}\rho}{\rho}\bigg),\\[12pt]
  \displaystyle \bar{\partial}_{-}\Big(\frac{\bar{\partial}_{+}\rho}{\rho}\Big)=
    \frac{\tau^3p''(\tau)}{4c^2\cos^{2}A }\bigg(\big(\frac{\bar{\partial}_{+}\rho}{\rho}\big)^2+(\mathcal{H}-1)\frac{\bar{\partial}_{-}\rho}{\rho}\frac{
    \bar{\partial}_{+}\rho}{\rho}\bigg),
  \end{array}
\right.
\end{equation}
where
$$
\mathcal{H}=2\sin^2A-\frac{4p'(\tau)\cos^2A\cos 2A}{\tau p''(\tau)}.
$$

The characteristic decomposition method was introduced by Li, Zhang and Zheng \cite{Li2} in investigating simple waves of the 2D compressible Euler equations.
One can also use the characteristic decompositions (\ref{81104}) and the characteristic equations (\ref{11})--(\ref{72804}) to prove the following classical theorem about simple waves of 2D steady Euler equations (see \cite{CF}):
\begin{thm}\label{thm0}
The flow in a region adjacent to a straight characteristic line is a simple wave or a constant state.
\end{thm}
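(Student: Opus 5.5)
Suppose the straight characteristic is a $C_+$ line $\ell$ (the $C_-$ case is symmetric), and let $\Omega$ be the adjacent region, foliated by $C_-$ characteristics that each cross $\ell$. The goal is to show that $\bar{\partial}_{+}\rho\equiv0$ in $\Omega$. Once this holds, (\ref{11}), (\ref{72804}) and (\ref{12010a}) give $\bar{\partial}_{+}u=\bar{\partial}_{+}v=0$. So $(u,v,\rho)$, and hence $\alpha,\beta$, are constant along every $C_+$ characteristic. The $C_+$ characteristics are then straight lines, and the flow is a simple wave. If in addition $\bar{\partial}_{-}\rho\equiv0$, the flow is a constant state.

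\textbf{Step 1: the state is constant along $\ell$.} Since $\ell$ is straight, $\bar{\partial}_{+}\alpha=0$ on $\ell$. By (\ref{10}), $\Lambda\sin 2A\,p''(\tau)\bar{\partial}_{+}\rho=0$ there. This alone is not enough, because $\Lambda$ or $p''$ may vanish for a BZT fluid. Instead I will use (\ref{6}) in the form $\bar{\partial}_{+}\alpha=\Lambda\cos^2A\,\bar{\partial}_{+}\beta$ together with (\ref{7}), and argue directly from (\ref{3}) and (\ref{bB1}). On $\ell$ we have $\bar{\partial}_{+}\alpha=0$, and (\ref{3}) gives $\bar{\partial}_{+}c=-\frac{c\cos2A}{\sin2A}\bar{\partial}_{+}\beta$. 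Combining this with (\ref{bB1}) (using $\bar{\partial}_{+}\alpha=0$) yields
\[
\Big[-\Big(\frac{1}{\sin^2A}+\kappa\Big)\frac{\cos2A}{\sin2A}+\frac{\cos A}{2\sin^3A}\Big]c\,\bar{\partial}_{+}\beta=0 .
\]
The bracket equals $\frac{-\Lambda\cos^2A}{\ldots}$ up to a nonzero factor, so it can vanish. The robust route is therefore to work with $\rho$ directly. I regard $\ell$ as a $C_+$ line along which $\alpha$ is constant and use (\ref{10}) wherever $p''\Lambda\neq0$. On the exceptional set, the data along $\ell$ satisfy an ODE system, (\ref{7}) and (\ref{11})--(\ref{72804}), which has constant solutions. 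I will treat this by a uniqueness argument: one checks that $(\rho,\alpha,\beta)$ constant solves the ODE along $\ell$ with $\alpha$ fixed, and Lipschitz dependence ($p\in C^2$) gives uniqueness. This step is where I expect the main technical difficulty for nonconvex $p$. I will handle it under the paper's standing hyperbolicity and nondegeneracy assumptions. In particular I assume that on a straight characteristic the generic relation (\ref{10}) forces $\bar{\partial}_{+}\rho=0$, which is the classical situation considered in \cite{CF}.

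\textbf{Step 2: propagate $\bar{\partial}_{+}\rho=0$ into $\Omega$.} The second equation of (\ref{81104}) reads
\[
\bar{\partial}_{-}(\bar{\partial}_{+}\rho)=\frac{\tau^4p''(\tau)}{4c^2\cos^2A}\Big[\bar{\partial}_{+}\rho+(\mathcal{F}-1)\bar{\partial}_{-}\rho\Big]\bar{\partial}_{+}\rho .
\]
Along each $C_-$ characteristic issuing from $\ell$, this is a linear homogeneous ODE for $W:=\bar{\partial}_{+}\rho$ with bounded continuous coefficients, since the solution is $C^1$. Step 1 gives the initial value $W=0$ on $\ell$. Hence $W\equiv0$ along every such $C_-$ curve, and therefore throughout $\Omega$, by uniqueness (Gronwall). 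This yields the simple-wave conclusion described above. If moreover $\bar{\partial}_{-}\rho$ vanishes somewhere transversally, the first equation of (\ref{81104}) propagates that vanishing in the same way, which gives the constant-state case.
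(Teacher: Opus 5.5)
Your Step 2 is sound and is essentially the route the paper indicates. The second equation of (\ref{81104}) is homogeneous in $\bar{\partial}_{+}\rho$ along $C_-$ curves, so zero data on $\ell$ propagate into the adjacent region. (Once the state is known to be constant on $\ell$, you do not even need second derivatives: $\bar{\partial}_{-}r=0$ carries the value of $r$ on $\ell$ into the whole region, so $r$ and $s$ are both constant on every $C_+$ curve.)

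The gap is Step 1, which is the real content of the theorem. You never show that the state is constant along $\ell$; you end by assuming it, and the proposed ``ODE uniqueness'' cannot supply it. Along $\ell$ the only information is two algebraic constraints: $s\equiv$ const from (\ref{52801}), and $\alpha\equiv$ const. The trace of the flow on $\ell$ is not the solution of an initial value problem, so the fact that constants satisfy these relations does not exclude non-constant traces.

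Your diagnosis of the obstruction is also off. Where $p''=0$, the coefficient in (\ref{10}) does not vanish:
\[
p''\Lambda=-\frac{4p'(\tau)}{\tau}-\frac{p''(\tau)}{\cos^2A}=-\frac{4p'(\tau)\cos^2A+\tau p''(\tau)}{\tau\cos^2A},
\]
and this is strictly positive wherever $p''\le 0$. Its only zeros are where $\tau p''=-4p'\cos^2A$. That set lies in the convex range and is already present for a polytropic gas with $1<\gamma<3$, at $\cos^2A=(\gamma+1)/4$. So it is not a BZT effect, and (\ref{10}) alone does not force $\bar{\partial}_{+}\rho=0$ pointwise even in the classical setting you appeal to.

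What closes Step 1 is a continuity argument. By (\ref{alphar1}),
\[
\frac{\partial\alpha}{\partial r}=\frac{4p'\cos^2A+\tau p''}{4p'\cos^2A},
\]
which depends only on $\tau$. Along $\ell$ the values of $r$ sweep an interval $I$ (continuity), $s$ is fixed, and $r\mapsto\alpha(r,s)$ is constant on $I$. Hence $\partial\alpha/\partial r\equiv0$ on $I$. Since $\tau$ is strictly monotone in $r$ for fixed $s$ by (\ref{102302d}), $\tau$ then ranges over an interval contained in
\[
Z=\{\tau:\ \tau p''(\tau)+4p'(\tau)\cos^2\check{A}(\tau)=0\}.
\]
If $Z$ has empty interior (true for polytropic gases, where it is a single point), then $I$ is a point. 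So $r$, and with $s$ the whole state, is constant on $\ell$, and your Step 2 finishes the proof.

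Some nondegeneracy of this kind is genuinely needed. If $Z$ contained an interval, then by (\ref{10}) every $C_+$ characteristic of any flow with $\tau$ in that interval would be straight, while the state need not be constant along it. The statement would then fail.
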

Li, Zhang and Zheng \cite{Li2} applied the characteristic decomposition method to generalize this result to the pseudo-steady case.

\subsection{Hodograph transformation}
There are sonic shocks in certain oblique wave interaction regions, and the sonic shocks are envelopes of one out of the two families of wave characteristic curves. This results in a fact that the flow on the sonic side of a sonic shock is not $C^1$ smooth up to the shock boundary. In order to overcome the difficulty cased by the singularity on sonic shock boundaries, the author \cite{Lai3,Lai4} proposed a hodograph transformation.
In this part we shall introduce the hodograph transformation based on Riemann invariants.

\subsubsection{Riemann invariants}
The Riemann invariants of the system (\ref{matrix1}) are
$$
r=\sigma+\int^{q}_{u_0}\frac{\sqrt{q^{2}-c^{2}}}{qc}dq\quad \mbox{and}\quad
s=\sigma-\int^{q}_{u_0}\frac{\sqrt{q^{2}-c^{2}}}{qc}dq.
$$
In view of the Riemann invariants, we have
\begin{equation}\label{52801}
\left\{
  \begin{array}{ll}
    \bar{\partial}_{+}s=0,  \\[2pt]
     \bar{\partial}_{-}r=0.
  \end{array}
\right.
\end{equation}

It follows from
$$
\left(
  \begin{array}{cc}
    \displaystyle\frac{\partial r}{\partial \sigma} & \displaystyle\frac{\partial r}{\partial q}   \\[8pt]
    \displaystyle\frac{\partial s}{\partial \sigma}  & \displaystyle\frac{\partial s}{\partial q}
  \end{array}
\right)\left(
         \begin{array}{cc}
          \displaystyle \frac{\partial \sigma}{\partial r}  & \displaystyle\frac{\partial \sigma}{\partial s} \\[8pt]
           \displaystyle\frac{\partial q}{\partial r} & \displaystyle \frac{\partial q}{\partial s}
         \end{array}
       \right)=\left(
                 \begin{array}{cc}
                   1 & 0 \\[12pt]
                   0 & 1 \\
                 \end{array}
               \right)
$$
that
\begin{equation}
\frac{\partial\sigma}{\partial r}=\frac{\partial\sigma}{\partial s}=\frac{1}{2},\quad \frac{\partial q}{\partial
r}=\frac{q\sin A}{2\cos A}, \quad \mbox{and} \quad
\frac{\partial q}{\partial
s}=-\frac{q\sin A}{2\cos A}.\label{1201}
\end{equation}
Thus, by (\ref{U}) we have
\begin{equation}\label{102802a}
\frac{\partial u}{\partial r}=-\frac{q\sin\beta}{2\cos A}, \quad \frac{\partial u}{\partial s}=-\frac{q\sin\alpha}{2\cos A}, \quad \frac{\partial v}{\partial r}=\frac{q\cos\beta}{2\cos A}, \quad \mbox{and} \quad
\frac{\partial v}{\partial s}=\frac{q\cos\alpha}{2\cos A}.
\end{equation}
Combining these with the Bernoulli law (\ref{E2}), we have
\begin{equation}\label{102302d}
\frac{\partial \rho}{\partial s}=\frac{\rho }{\sin (2A)}\quad \mbox{and}\quad \frac{\partial \rho}{\partial r}=-\frac{\rho }{\sin (2A)}.
\end{equation}
From (\ref{102802a}) we also have
\begin{equation}\label{102803a}
\frac{\partial r}{\partial u}=\frac{\cos\alpha}{c}, \quad \frac{\partial r}{\partial v}=\frac{\sin\alpha}{c}, \quad \frac{\partial s}{\partial u}=-\frac{\cos\beta}{c}, \quad \mbox{and} \quad
\frac{\partial s}{\partial v}=-\frac{\sin\beta}{c}.
\end{equation}

From $\sin A=\frac{c}{q}$, we obtain
\begin{equation}\label{wq}
\cos A\frac{\partial A}{\partial q}=-\frac{c}{q^{2}}+\frac{1}{q}\frac{\partial c}{\partial q}.
\end{equation}
By the Bernoulli law (\ref{E2}), we have
\begin{equation}
\frac{{\rm d} c}{{\rm d} q}=-\frac{q}{c\kappa(\tau)}.\label{dc}
\end{equation}
Inserting (\ref{dc}) into (\ref{wq}), we get
\begin{equation}\label{102301}
\frac{{\rm d}  A}{{\rm d} q}=-\frac{q}{\sqrt{q^{2}-c^{2}}}\left(\frac{c}{q^{2}}+\frac{1}{c\kappa}\right)
=\frac{\sqrt{q^2-c^2}}{qc}+\frac{q}{\sqrt{q^2-c^2}}\cdot\frac{\tau p''(\tau)}{2cp'(\tau)}.
\end{equation}

By (\ref{1201}) and (\ref{102301}) we have
\begin{equation}\label{omegar}
\frac{\partial A}{\partial r}=-\frac{q^{2}c}{2(q^{2}-c^{2})}\left(\frac{c}{q^{2}}+\frac{1}{c\kappa}\right)\quad\mbox{and}\quad
\frac{\partial A}{\partial s}=\frac{q^{2}c}{2(q^{2}-c^{2})}\left(\frac{c}{q^{2}}+\frac{1}{c\kappa}\right)
\end{equation}
Hence,
\begin{equation}\label{alphar}
\frac{\partial\alpha}{\partial s}=\frac{\partial\beta}{\partial r}=\frac{1}{2}+\frac{q^{2}c}{2(q^{2}-c^{2})}\left(\frac{c}{q^{2}}+\frac{1}{c\kappa}\right)
=-\frac{q^2\tau p''(\tau)}{4(q^2-c^2)p'(\tau)}
 \end{equation}
and
 \begin{equation}\label{alphar1}
\frac{\partial\alpha}{\partial r}=\frac{\partial\beta}{\partial s}=\frac{1}{2}-\frac{q^{2}c}{2(q^{2}-c^{2})}\left(\frac{c}{q^{2}}+\frac{1}{c\kappa}\right)=1+\frac{q^2\tau p''(\tau)}{4(q^2-c^2)p'(\tau)}.
 \end{equation}

Applying the commutator relation (\ref{comm}) for the Riemann invariants,
we also have (see \cite{Lai6})
\begin{equation}
\left\{
 \begin{array}{ll}
 \displaystyle \bar{\partial}_{+}\bar{\partial}_{-}s=-\frac{\tau p''(\tau)}{4p'(\tau)\sin(2A)\cos^2A}\big(\bar{\partial}_{-}s
  -\cos(2A)\bar{\partial}_{+}r\big)\bar{\partial}_{-}s,  \\[12pt]
  \displaystyle \bar{\partial}_{-}\bar{\partial}_{+}r=\frac{\tau p''(\tau)}{4p'(\tau)\sin(2A)\cos^2A}\big(\bar{\partial}_{+}r
   -\cos(2A)\bar{\partial}_{-}s
\big)\bar{\partial}_{+}r.
  \end{array}
\right.\label{cdr}
\end{equation}

\subsubsection{Systems in $(r, s)$-plane}
Let the hodograph transformation be
$$
T:  (x, y)\rightarrow (r, s)
$$
for (\ref{52801}).
If the Jacobian
$$
j(r, s; x, y)=\frac{\partial(r, s)}{\partial(x, y)}=\frac{\partial r}{\partial{x}}\frac{\partial s}{\partial{y}}-\frac{\partial s}{\partial{x}}\frac{\partial r}{\partial{y}}\neq 0
$$
for a solution $(r, s)(x, y)$ of (\ref{52801}), we may consider $x$ and $y$ as functions of $r$ and $s$. From
\begin{equation}
\frac{\partial r}{\partial{x}}=j\frac{\partial y}{\partial s}, \quad \frac{\partial r}{\partial{y}}=-j\frac{\partial x}{\partial s}, \quad \frac{\partial s}{\partial{x}}=-j\frac{\partial y}{\partial r}, \quad \frac{\partial s}{\partial{y}}=j\frac{\partial x}{\partial r},
\end{equation}
we then see that $x(r, s)$ and $y(r, s)$ satisfy the equations
\begin{equation}\label{HT}
\left\{
  \begin{array}{ll}
\hat{\partial}_{+}y=\lambda_{+}\hat{\partial}_{+}x, \\[2pt]
 \hat{\partial}_{-}y=\lambda_{-}\hat{\partial}_{-}x,
  \end{array}
\right.
\end{equation}
where
\begin{equation}\label{122501a}
\hat{\partial}_{+}:=\frac{\partial}{\partial r}\quad \mbox{and}\quad \hat{\partial}_{-}:=-\frac{\partial}{\partial s}.
\end{equation}

From (\ref{HT}) we have the following characteristic decompositions on the $(r, s)$-plane:
\begin{equation}\label{6506}
\left\{
  \begin{array}{ll}
    (\lambda_{+}-\lambda_{-})\hat{\partial}_{-}\hat{\partial}_{+}x=\hat{\partial}_{+}\lambda_{-}\hat{\partial}_{-}x-
\hat{\partial}_{-}\lambda_{+}\hat{\partial}_{+}x,  \\[4pt]
     (\lambda_{+}-\lambda_{-})\hat{\partial}_{+}\hat{\partial}_{-}x=\hat{\partial}_{+}\lambda_{-}\hat{\partial}_{-}x-
\hat{\partial}_{-}\lambda_{+}\hat{\partial}_{+}x.
  \end{array}
\right.
\end{equation}
By (\ref{alphar}) we have
\begin{equation}\label{121902a}
\hat{\partial}_{+}\lambda_{-}=-\frac{q^2\tau p''(\tau)\sec^2\beta}{4(q^2-c^2)p'(\tau)}\quad \mbox{and}\quad
\hat{\partial}_{-}\lambda_{+}=\frac{q^2\tau p''(\tau)\sec^2\alpha}{4(q^2-c^2)p'(\tau)}.
\end{equation}

We compute
$$
\hat{\partial}_{\pm}\rho=\rho_x\hat{\partial}_{\pm}x+\rho_y\hat{\partial}_{\pm}y=
\frac{\bar{\partial}_{\pm}\rho\hat{\partial}_{\pm}x}{\cos(\sigma\pm A)}.
$$
Then by (\ref{102302d}) we obtain
\begin{equation}\label{53002}
\bar{\partial}_{\pm}\rho=\frac{\cos(\sigma\pm A)\hat{\partial}_{\pm}\rho}{\hat{\partial}_{\pm}x}=\mp\frac{\rho\cos(\sigma\pm A)}{\sin (2A)\hat{\partial}_{\pm}x}.
\end{equation}

\subsection{Hyperbolicity and invariant region}
Regarding the problem (\ref{E1}) and (\ref{42701}), the Bernoulli constant of the potential flow is $$\mathcal{B}=\frac{u_0^2}{2}+h(\tau_0).$$ Moreover, by the assumptions for $p=p(\tau)$ we see that when $q>u_0$ the variables $\tau$, $A$, and $c$ can be seen as functions of $q$. We denote these functions by
\begin{equation}\label{111901}
\tau=\hat{\tau}(q), \quad A=\hat{A}(q),\quad c=\hat{c}(q), \quad  q>u_0.
\end{equation}
We can also see $A$, $c$, $q$ as functions of $\tau$, denoted by
$$
q=\check{q}(\tau):=\sqrt{2\mathcal{B}-h(\tau)},\quad c=\check{c}(\tau):=\sqrt{-\tau^2p'(\tau)}, \quad A=\check{A}(\tau):=\arcsin\left(\frac{\check{c}(\tau)}{\check{q}(\tau)}\right), \quad  \tau>\tau_0.
$$

If $\tau_0>\tau_2^i$, then by $u_0>c_0$ we have $q>c$ for all $\tau>\tau_0$. However, if $\tau_0<\tau_2^i$ we can not deduce that $q>c$ for all $\tau>\tau_0$.
\begin{lem}
Assume that the equation of state $p=p(\tau)$ satisfies assumptions (A1)--(A4).
  Then when $u_0>\max\{c_0, \tau_2^i\sqrt{-p'(\tau_2^i)}\}$ there holds
$q>c$ for $\tau>\tau_0$.
\end{lem}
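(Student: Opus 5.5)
The plan is to reduce the claim to the sign of one scalar function of $\tau$ and study that function by elementary calculus, using only the Bernoulli law (\ref{E2}) and the relation (\ref{5801}).

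First, I will check that $q$ is well defined and real for $\tau>\tau_0$. By (\ref{E2}) with $\mathcal{B}=\frac{u_0^2}{2}+h(\tau_0)$,
$$q^2=u_0^2+2\big(h(\tau_0)-h(\tau)\big).$$
By (\ref{5801}) and (A1), $h'(\tau)=\tau p'(\tau)<0$, so $h$ is strictly decreasing. Hence $q^2> u_0^2>0$ for every $\tau>\tau_0$.

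Next I introduce
$$f(\tau):=q^2-c^2=u_0^2+2\big(h(\tau_0)-h(\tau)\big)+\tau^2p'(\tau),\qquad \tau\geq\tau_0,$$
and show that $f>0$ on $(\tau_0,+\infty)$. The key computation uses (\ref{5801}) again:
$$f'(\tau)=-2\tau p'(\tau)+2\tau p'(\tau)+\tau^2p''(\tau)=\tau^2p''(\tau).$$
By (A3), $f$ is increasing on $(0,\tau_1^i)\cup(\tau_2^i,+\infty)$ and decreasing on $(\tau_1^i,\tau_2^i)$. Therefore the infimum of $f$ over $[\tau_0,+\infty)$ is attained either at $\tau_0$ or, when $\tau_0<\tau_2^i$, at $\tau_2^i$. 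Moreover, $f$ is strictly larger than this minimum at every interior point except possibly $\tau_2^i$.

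It then remains to check positivity at these two candidates.
\begin{itemize}
\item At $\tau_0$ we have $f(\tau_0)=u_0^2-c_0^2>0$, since $u_0>c_0$.
\item At $\tau_2^i>\tau_0$, monotonicity of $h$ gives $h(\tau_0)-h(\tau_2^i)>0$. Hence
$$f(\tau_2^i)>u_0^2+(\tau_2^i)^2p'(\tau_2^i)=u_0^2-\big(\tau_2^i\big)^2\big(-p'(\tau_2^i)\big)>0,$$
by the hypothesis $u_0>\tau_2^i\sqrt{-p'(\tau_2^i)}$.
\end{itemize}
Thus $f>0$ for all $\tau>\tau_0$, i.e. $q>c$.

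There is no real obstacle here. The only point needing care is the derivative identity $f'=\tau^2p''$: the cancellation of the $h$-terms and the $p'$-terms is exactly what makes the monotonicity analysis reduce to (A3). One should also note the case $\tau_0\geq\tau_2^i$ (and $\tau_0\in(0,\tau_1^i)$ with no decreasing stretch beyond it). In that case the minimum is at $\tau_0$ alone, which recovers the remark preceding the lemma.
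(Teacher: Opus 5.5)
Your argument is correct, and it takes a cleaner route than the paper. The paper works with $q-c$ as a function of $q$, using (\ref{dc}) to obtain $\frac{{\rm d}(q-c)}{{\rm d}q}=\frac{c-q}{c}-\frac{q\tau p''(\tau)}{2cp'(\tau)}$. On each convex stretch ($p''>0$) the last term is positive, so $q-c$ cannot reach a first zero; this is a barrier/continuity argument. On the concave stretch $[\tau_1^i,\tau_2^i]$ the paper argues separately: $c\le\tau_2^i\sqrt{-p'(\tau_2^i)}$ because $c$ increases there, and $q>u_0$. The case $\tau_0\ge\tau_1^i$ is dismissed as ``similar''.

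Your choice of $f=q^2-c^2$ as a function of $\tau$ removes the term involving the unknown: $f'=\tau^2p''(\tau)$ has its sign fixed by (A3) alone. So no barrier argument is needed, and all three stretches are handled by a single monotonicity statement. Every position of $\tau_0$ relative to $\tau_1^i,\tau_2^i$ is also covered at once, by checking the two candidate minima $\tau_0$ and $\tau_2^i$. Your version further shows why $\tau_2^i$ enters the hypothesis: it is the only point where $q^2-c^2$ can have an interior local minimum. It even shows that the second hypothesis could be weakened to $f(\tau_2^i)>0$, i.e.\ $u_0^2+2\big(h(\tau_0)-h(\tau_2^i)\big)>-(\tau_2^i)^2p'(\tau_2^i)$.

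The only blemish is your closing parenthetical. For $\tau_0<\tau_1^i$ there \emph{is} a decreasing stretch beyond $\tau_0$, namely $(\tau_1^i,\tau_2^i)$, which is exactly why your second bullet is needed. Your main argument already handles this correctly.
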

\begin{proof}
We first discuss the case of $\tau_0<\tau_1^i$.
From (\ref{dc}) we have
\begin{equation}\label{112101a}
\frac{{\rm d}(q-c)}{{\rm d}q}=\frac{c-q}{c}-\frac{q\tau p''(\tau)}{2c p'(\tau)}.
\end{equation}
Combining this with $q_0>c_0$, we have
 $q>c$ for $\tau_0<\tau<\tau_1^i$.

From (\ref{dc}) and $\frac{{\rm d}q}{{\rm d}\tau}>0$ we have $c\leq \tau_2^i\sqrt{-p'(\tau_2^i)}$ and $q>u_0$ for $\tau_1^i\leq\tau\leq\tau_2^i$. Then by $u_0>\max\{c_0, \tau_2^i\sqrt{-p'(\tau_2^i)}\}$ we have $q>c$ for $\tau_1^i\leq\tau\leq\tau_2^i$.
Combining (\ref{112101a}) and $(q-c)(\tau_2^i)>0$ we have $q>c$ for $\tau>\tau_2^i$.

The proof for the case of $\tau_0\geq\tau_1^i$ is similar. We omit the details. This completes the proof.
\end{proof}

\begin{figure}[htbp]
\begin{center}
\includegraphics[scale=0.55]{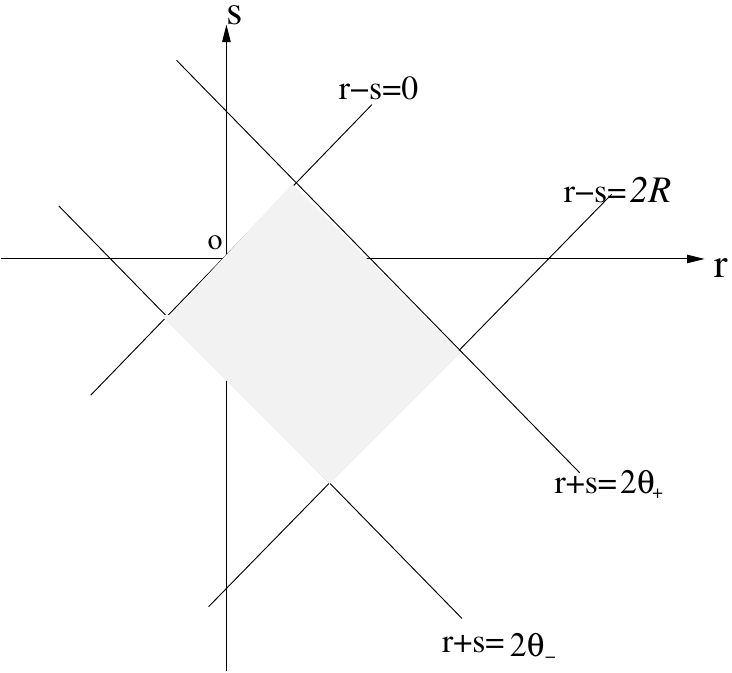}
\caption{\footnotesize Invariant region in the $(r, s)$-plane.}
\label{Fig2.2}
\end{center}
\end{figure}

We define the following constants that depend only on the oncoming flow state:
\begin{equation}\label{3506}
q_{\infty}=\sqrt{2\mathcal{B}-h_{\infty}}, \quad h_{\infty}=\lim\limits_{\tau\rightarrow +\infty}h(\tau), \quad \mbox{and}\quad \mathcal{R}=\int_{u_0}^{q_{\infty}}\frac{\sqrt{q^2-\hat{c}^2(q)}}{q\hat{c}(q)}{\rm d}q.
\end{equation}

In this paper,
we also make the following assumptions:
\begin{description}
  \item[(H1)] $q>c$, or equivalently, $0<A<\frac{\pi}{2}$ for $\tau\geq \tau_0$;
  \item[(H2)] $-\frac{\pi}{2}<\beta(r, s)<\alpha(r, s)<\frac{\pi}{2}$ for $(r, s)\in \Pi$,
where
\begin{equation}
\Pi:=\left\{(r, s)~\big|~ \theta_{-}\leq \frac{r+s}{2}\leq \theta_{+},~ 0\leq \frac{r-s}{2} <\mathcal{R} \right\};
\end{equation}
see Fig. \ref{Fig2.2}.
\end{description}
Obviously, when $u_0$ is sufficiently large assumptions {(H1)} and {(H2)} hold.
We will see that the region $\Pi$ is an invariant region of the solution of the IBVP  (\ref{E1}), (\ref{42701}).


\subsection{Notation}
We give some notations of this paper.

We will use various curves to describe the flow structure within the divergent duct. In order to distinguish the different meanings of these curves, we shall give some notations; see Fig. \ref{Fig2.3}.
Let $E$ and $F$ be two points on the $(x,y)$-plane.
\begin{itemize}
  \item $C_{+}^{E}$ ($C_{-}^{E}$, resp.): a forward $C_{+}$ ($C_{-}$, resp.) characteristic line issued from point $E$;
  \item $\widetilde{EF}$: a $C_{+}$ (or $C_{-}$) characteristic curve segment from points $E$ to $F$ along the characteristic direction;
  \item $\overline{EF}$: a straight $C_{+}$ (or $C_{-}$) characteristic line segment from points $E$ to $F$;
\item $\wideparen{EF}$: a shock curve connecting points $E$ and $F$.
\end{itemize}
We need to mention that $C_{+}^{E}$ (or $C_{-}^{E}$) could be a finite curve or a semi-infinite curve.
\vskip 4pt

We use $\Omega_i$ ($\overline{\Omega}_{i}$), $i=1, 2, 3, \cdot\cdot\cdot$ to denote open (closed, resp.) domains in the $(x, y)$-plane.

\begin{figure}[htbp]
\begin{center}
\includegraphics[scale=0.4]{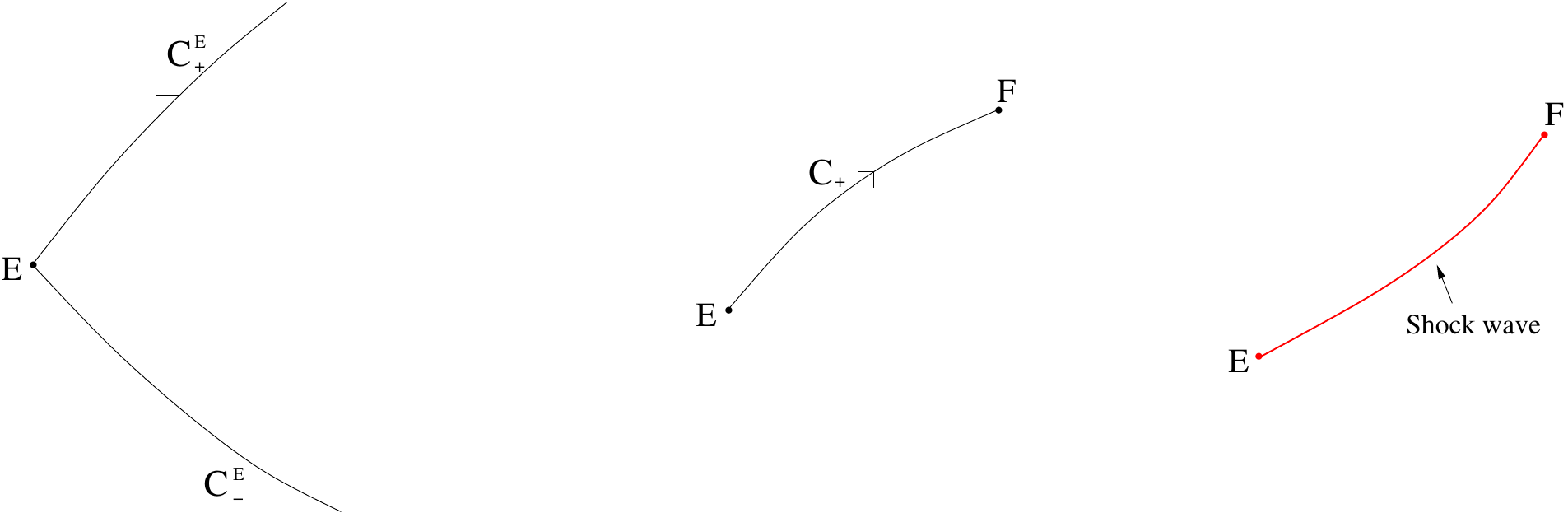}
\caption{\footnotesize Characteristic curves $C_{\pm}^{E}$ and $\widetilde{EF}$, and shock wave $\wideparen{EF}$.}
\label{Fig2.3}
\end{center}
\end{figure}

\vskip 4pt

Let ${\it S}$ be an oblique shock wave in the $(x, y)$-plane. We give the following two notations:
\begin{itemize}
  \item $\bar{\partial}_{_\Gamma}$: the derivation along ${\it S}$;
  \item $\phi$: the inclination angle of ${\it S}$.
\end{itemize}
Assume that ${\it S}$ can be represented by $y=y_s(x)$, $x\in (x', x'')$. Then,
\begin{equation}\label{20101}
\phi:=\arctan \big(y_s'(x)\big), \quad \bar{\partial}_{_\Gamma}u_{_{f/b}}:=\cos\phi\frac{{\rm d}u_{_{f/b}}(x, y_s(x))}{{\rm d}x},\quad \bar{\partial}_{_\Gamma}v_{_{f/b}}:=\cos\phi\frac{{\rm d}v_{_{f/b}}(x, y_s(x))}{{\rm d}x}
\end{equation}
along ${\it S}$,
where $(u_{_{f/b}}, v_{_{f/b}})$ denote the flow upstream/downstream of the shock ${\it S}$, respectively.
If the flow upstream/downstream of ${\it S}$ is $C^1$ smooth up to ${\it S}$, then we also have
\begin{equation}\label{110401}
\bar{\partial}_{_\Gamma}(u_{_{f/b}}, v_{_{f/b}})=\cos\phi\partial_{x}(u_{_{f/b}}, v_{_{f/b}})+\sin\phi\partial_{y}(u_{_{f/b}}, v_{_{f/b}})\quad\mbox{along}~{\it S}.
\end{equation}
However, if the upstream/downstream  flow is not $C^1$ smooth up to ${\it S}$ then (\ref{110401}) is not hold along ${\it S}$.

\section{Oblique waves around the sharp corner}

\subsection{Oblique shocks}
According to assumptions (A1)--(A4),
there exist $\tau_{1}^a$ and $\tau_{2}^a$ where $\tau_1^a<\tau_{1}^{i}<\tau_{2}^{i}<\tau_{2}^a$ such that
\begin{equation}\label{5901}
p'(\tau_{1}^a)=p'(\tau_{2}^{i})\quad \mbox{and}\quad p'(\tau_{1}^{i})=p'(\tau_{2}^a);
 \end{equation}
see Fig. \ref{Fig3.1}.
\begin{figure}[htbp]
\begin{center}
\includegraphics[scale=0.5]{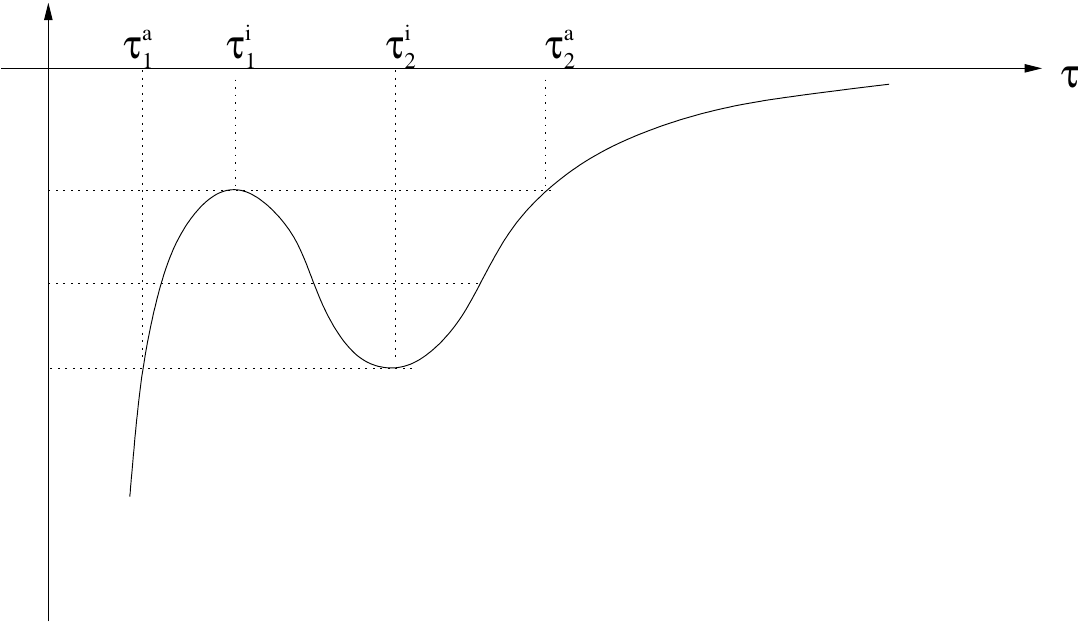}
\caption{\footnotesize The graph of $p'(\tau)$.}
\label{Fig3.1}
\end{center}
\end{figure}


We now discuss oblique shocks of (\ref{E1}).
Assume that there is an oblique shock front and the states on its both sides are constant.
Let $\phi$ be the inclination angle of this shock front.
We use subscripts `$f$' and `$b$' to denote the states on the front and back sides of the
shock; see Fig. \ref{Fig3.2}.
We denote by $N$ ($N>0$) and $L$
the components of $(u, v)$ normal and tangential to the shock direction, respectively.
Then
the Rankine-Hugoniot conditions of the oblique shock wave can be written in the following form:
\begin{equation}\label{RH}
\left\{
  \begin{array}{ll}
    \rho_f N_f=\rho_b N_b=m, \\[4pt]
    L_{f}=L_b,\\[4pt]
   N_f^2+2h(\tau_f)= N_b^2+2h(\tau_b).
  \end{array}
\right.
\end{equation}

Since the gas considered here is nonconvex, the shock is also required to satisfy the
Liu's extended entropy condition (Liu \cite{Liu1}):
\begin{equation}\label{43001}
-\frac{2h(\tau_f)-2h(\tau_b)}{\tau_f^2-\tau_b^2}\geq -\frac{2h(\tau_f)-2h(\tau)}{\tau_f^2-\tau^2}\quad \mbox{for~ all}~ \tau\in\big(\min\{\tau_f, \tau_b\}, \max\{\tau_f, \tau_b\}\big).
\end{equation}
 By (\ref{43001}) we can prove that along the shock,
\begin{equation}\label{20102}
N_f\geq c_f\quad \mbox{and}\quad N_b\leq c_b.
\end{equation}
We also assume that the flows upstream and downstream of the shock are supersonic.
Then by (\ref{20102}) we have that along the shock,
$$
\beta_b\leq \phi\leq \beta_f\quad \mbox{or}\quad \alpha_f\leq \phi\leq \alpha_b;
$$
see Fig. \ref{Fig3.2}.
 As in Lax \cite{Lax},  we call the oblique shock 1-shock (2-shock, resp.) if $\beta_b\leq \phi\leq \beta_f$
 ($\alpha_f\leq \phi\leq \alpha_b$, resp.); see Fig. \ref{Fig3.2}.
In this paper, we confine ourselves to rarefaction shocks, i.e., the shocks which satisfy $\tau_f<\tau_b$.
We also have
\begin{equation}\label{111306}
\left\{
  \begin{array}{ll}
    N=-u\sin\phi+v\cos\phi\quad \mbox{and}\quad L=u\cos\phi+v\sin\phi, & \hbox{1-shock;} \\[4pt]
    N=u\sin\phi-v\cos\phi\quad \mbox{and}\quad L=u\cos\phi+v\sin\phi, & \hbox{2-shock.}
  \end{array}
\right.
\end{equation}

\begin{figure}[htbp]
\begin{center}
\includegraphics[scale=0.6]{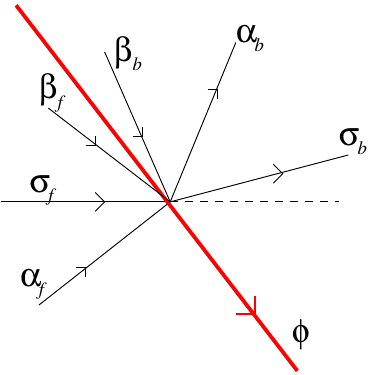}\qquad\qquad\qquad \qquad\qquad \includegraphics[scale=0.6]{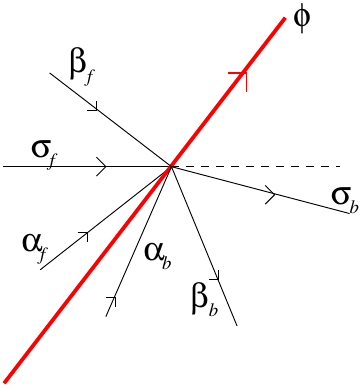}
\caption{\footnotesize Left: 1-shock; right: 2-shock.}
\label{Fig3.2}
\end{center}
\end{figure}

\begin{prop}\label{pro3.1.1}
For rarefaction 1-shocks,
there hold $A_f\leq \sigma_f-\phi<\sigma_b-\phi\leq A_b$;
for rarefaction 2-shocks,
there hold $A_f\leq\phi-\sigma_f<\phi-\sigma_b\leq A_b$.
\end{prop}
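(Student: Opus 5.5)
Throughout I treat a rarefaction 1-shock; the 2-shock case follows from the same argument after reflecting $y\mapsto -y$, which swaps the two characteristic families and reverses the orientation of the angles. The approach is to express the flow angles relative to the shock through the normal and tangential components. For a 1-shock, (\ref{111306}) gives $N=-u\sin\phi+v\cos\phi=q\sin(\sigma-\phi)$ and $L=q\cos(\sigma-\phi)$. Hence on each side $\sin(\sigma_{f/b}-\phi)=N_{f/b}/q_{f/b}$ and $\tan(\sigma_{f/b}-\phi)=N_{f/b}/L$, using the common tangential component $L_f=L_b=L$ from (\ref{RH}). Since $N>0$ and the flow is supersonic with $\beta\le\phi$, the angles $\sigma_{f/b}-\phi$ lie in $(0,\pi/2)$. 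I will justify $L>0$ from (H2), which gives $|\sigma|<\pi/2$, together with the bounds on $\phi$, so that $\sigma-\phi\in(0,\pi/2)$.

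The outer inequalities come from the entropy consequence (\ref{20102}).
\begin{itemize}
\item Front side: $N_f\ge c_f$ gives $\sin(\sigma_f-\phi)=N_f/q_f\ge c_f/q_f=\sin A_f$. Monotonicity of $\sin$ on $(0,\pi/2)$ then yields $A_f\le\sigma_f-\phi$.
\item Back side: $N_b\le c_b$ similarly yields $\sigma_b-\phi\le A_b$.
\end{itemize}
This is just a restatement of $\beta_b\le\phi\le\beta_f$ already noted in the text, since $\beta=\sigma-A$.

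The middle strict inequality $\sigma_f-\phi<\sigma_b-\phi$ comes from the mass flux. From $\rho_fN_f=\rho_bN_b=m$ we get $N_b/N_f=\tau_b/\tau_f>1$, because the shock is a rarefaction shock ($\tau_f<\tau_b$). Therefore $\tan(\sigma_b-\phi)=N_b/L>N_f/L=\tan(\sigma_f-\phi)$, and monotonicity of $\tan$ on $(0,\pi/2)$ gives the strict inequality.

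The main obstacle I expect is confirming $L>0$, i.e. that both $\sigma_{f/b}-\phi$ genuinely lie in $(0,\pi/2)$ rather than in $(\pi/2,\pi)$. The route is as follows.
\begin{itemize}
\item Supersonicity gives $A_f<\pi/2$.
\item The front-side bound just derived gives $\sigma_f-\phi\ge A_f>0$.
\item $N_f\le q_f$ forces $\sin(\sigma_f-\phi)\le 1$.
\end{itemize}
To exclude the obtuse branch I will use $L=q_f\cos(\sigma_f-\phi)$ together with the shock being an admissible oblique shock in the supersonic regime. Concretely, if $L\le0$ then $N_b=\sqrt{q_b^2-L^2}\ge\ldots$, so $q_b=N_b\le c_b$ when $L=0$, contradicting supersonicity of the downstream flow; continuity in $L$ then selects the branch $L>0$. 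Failing that, I would simply invoke (H2) together with $\beta_b\le\phi\le\beta_f$ to get $|\sigma-\phi|<\pi$ and $N>0$, which fixes the acute branch.
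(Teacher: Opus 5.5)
Your overall structure matches the paper's. Your proof of the middle inequality via $N_b/N_f=\tau_b/\tau_f>1$ and monotonicity of $\tan$ is a valid substitute for the paper's argument, which compares $q_f\cos(\sigma_f-\phi)=q_b\cos(\sigma_b-\phi)$ using $q_b>q_f$ from the Bernoulli law; since $L$ is common and $q^2=N^2+L^2$, the two are equivalent.

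The gap is the step you flag yourself: that $\sigma_f-\phi$ and $\sigma_b-\phi$ are acute, i.e.\ $L>0$. Neither of your arguments proves it. The relations (\ref{RH}), (\ref{20102}) and supersonicity are all invariant under reversing the sign of $L$ on both sides, so they cannot exclude $L<0$. Your contradiction at $L=0$ is correct, but ``continuity in $L$'' has no content for a single given shock, since there is no family to deform along. Your fallback cites $\beta_b\le\phi\le\beta_f$ but extracts only $|\sigma-\phi|<\pi$; with $N>0$ this gives $\sigma-\phi\in(0,\pi)$, not $(0,\pi/2)$. Relatedly, obtaining $\sigma_b-\phi\le A_b$ from $N_b\le c_b$ through monotonicity of $\sin$ is circular. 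If $\sigma_b-\phi\in(\pi/2,\pi)$, then $\sin(\sigma_b-\phi)\le\sin A_b$ yields $\sigma_b-\phi\ge\pi-A_b$ instead.

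The fix is to use the paper's definition of a 1-shock, $\beta_b\le\phi\le\beta_f$, directly rather than re-deriving it from (\ref{20102}).
\begin{itemize}
\item $\phi\le\beta_f=\sigma_f-A_f$ is exactly the left inequality.
\item $\phi\ge\beta_b=\sigma_b-A_b$ gives $\sigma_b-\phi\le A_b<\pi/2$.
\item With the angle ranges from (H2) and $\sin(\sigma_b-\phi)=N_b/q_b>0$, this forces $0<\sigma_b-\phi<\pi/2$, so $L=q_b\cos(\sigma_b-\phi)>0$.
\item Then $L=q_f\cos(\sigma_f-\phi)>0$ together with $N_f>0$ puts $\sigma_f-\phi$ in $(0,\pi/2)$ as well.
\end{itemize}
Your $\tan$ comparison now closes the argument. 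This is precisely the paper's route: it reads off $\sigma_b-\phi\le\sigma_b-\beta_b=A_b<\pi/2$ and $\sigma_f-\phi\ge A_f$ from the 1-shock definition, and then compares the cosines.
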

\begin{proof}
For rarefaction 1-shocks,
we have $L=q\cos(\sigma-\phi)$.
So, by the second relation of (\ref{RH}) we have
\begin{equation}\label{111311}
q_f\cos(\sigma_f-\phi)=q_b\cos(\sigma_b-\phi).
\end{equation}
By the definition of 1-shocks, we have
\begin{equation}\label{111312}
\sigma_b-\phi\leq \sigma_b-\beta_b=A_b<\frac{\pi}{2}\quad \mbox{and} \quad \sigma_{f}-\phi\geq \sigma_f-\beta_f=A_f.
\end{equation}
Since $\tau_f<\tau_b$, by the Bernoulli law (\ref{E2}) we have $q_b>q_f$. Combining this with (\ref{111311}) and (\ref{111312}) we have $A_f\leq \sigma_f-\phi<\sigma_b-\phi\leq A_b$.

The proof for the other is similar, we omit the details.
\end{proof}

In what follows, we are going to cite some propositions. The proofs of these propositions can be found in Lai \cite{Lai4}.
\begin{prop}\label{5602}
({\bf Double-sonic})
There exists a unique pair $\tau_1^e$ and $\tau_2^e$, where $1<\tau_1^e<\tau_2^e$, such that
\begin{equation}\label{42301}
p'(\tau_1^e)=p'(\tau_2^e)=\frac{2h(\tau_1^e)-2h(\tau_2^e)}{(\tau_1^e)^2-(\tau_2^e)^2},
\end{equation}
and
\begin{equation}\label{42301aa}
-\frac{2h(\tau_1^e)-2h(\tau_2^e)}
{(\tau_1^e)^2-(\tau_2^e)^2}>-\frac{2h(\tau_1^e)-2h(\tau)}{(\tau_1^e)^2-\tau^2}\quad \mbox{for all}~ \tau\in(\tau_1^e, \tau_2^e).
\end{equation}
Moreover, $\tau_1^e\in (\tau_1^a, \tau_1^i)$ and $\tau_2^e\in (\tau_2^i, \tau_2^a)$.
\end{prop}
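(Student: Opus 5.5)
Reformulate in terms of the convex-hull (bitangent) construction for the function $g(\xi):=2h(\tau)$ viewed as a function of $\xi=\tau^2$. Since $h'(\tau)=\tau p'(\tau)$, we have
\[
\frac{{\rm d}g}{{\rm d}\xi}=\frac{2\tau p'(\tau)}{2\tau}=p'(\tau).
\]
Then (\ref{42301}) says that the chord of $g$ between $\xi_1=(\tau_1^e)^2$ and $\xi_2=(\tau_2^e)^2$ is tangent to the graph of $g$ at both endpoints. In other words, it is a double tangent line of the curve $\xi\mapsto g(\xi)$. Moreover,
\[
\frac{{\rm d}^2g}{{\rm d}\xi^2}=\frac{p''(\tau)}{2\tau},
\]
so by (A3) $g$ is convex on $\xi<(\tau_1^i)^2$, concave on $((\tau_1^i)^2,(\tau_2^i)^2)$, and convex again for $\xi>(\tau_2^i)^2$. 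The inequality (\ref{42301aa}) says that $g$ lies strictly above the chord on $(\xi_1,\xi_2)$ (with the sign conventions given by $-$ and $\xi_1<\xi_2$). So the statement is the classical fact that a convex–concave–convex function has a unique bitangent supporting line bridging the concave region. (The statement's ``$1<\tau_1^e$'' presumably refers to a normalization; I would just prove $\tau_1^e>0$ together with the location claims.)

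Existence: for each $\xi_1$ in the left convex part, consider the tangent line $\ell_{\xi_1}$ at $\xi_1$. I would parametrize by the common slope $k=p'(\tau)$. Let $m$ range over the values in $(p'(\tau_1^i),p'(\tau_2^i))$, which is nonempty by (A3) and (\ref{5901}); note $p'$ is increasing on $(0,\tau_1^i)$, decreasing on $(\tau_1^i,\tau_2^i)$, and increasing after $\tau_2^i$. For each such $m$ there is a unique $a(m)\in(\tau_1^a,\tau_1^i)$ and a unique $b(m)\in(\tau_2^i,\tau_2^a)$ with $p'(a)=p'(b)=m$; these are exactly the points of the two convex branches with slope $m$. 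Define
\[
D(m)=g(b^2)-g(a^2)-m\,(b^2-a^2)=\int_{a^2}^{b^2}\big(p'(\tau(\xi))-m\big)\,{\rm d}\xi .
\]
As $m\to p'(\tau_2^i)$ we get $a\to\tau_1^a$ and $b\to\tau_2^i$, and the integrand is $\ge0$ except near the right end, giving $D>0$. As $m\to p'(\tau_1^i)$ we get $a\to\tau_1^i$ and $b\to\tau_2^a$, and the integrand is $\le 0$, giving $D<0$. Differentiating, using $p'(a)=p'(b)=m$ so that the boundary terms vanish, gives $D'(m)=-(b^2-a^2)<0$. Hence there is a unique zero $m^*$, and this yields (\ref{42301}) with $\tau_1^e\in(\tau_1^a,\tau_1^i)$ and $\tau_2^e\in(\tau_2^i,\tau_2^a)$.

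Uniqueness among all pairs: any pair satisfying (\ref{42301}) with equal slopes and $\tau_1<\tau_2$ must, by the monotonicity pattern of $p'$ and the strict inequality (\ref{42301aa}), have its endpoints on the two convex branches. Endpoints at the concave middle would violate the strict supporting property near that endpoint. So such a pair coincides with the $(a(m),b(m))$ parametrization, and uniqueness follows from the strict monotonicity of $D$.

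Finally, check (\ref{42301aa}): $\Phi(\xi)=g(\xi)-g(\xi_1)-m^*(\xi-\xi_1)$ vanishes at both ends, and $\Phi'=p'-m^*$ changes sign exactly at $\tau_1^e,\tau^*,\tau_2^e$, where $\tau^*$ is the middle root. Thus $\Phi$ first increases then decreases, so $\Phi>0$ inside the interval; translating back gives (\ref{42301aa}). The main obstacle I expect is carefully handling the limits of $D(m)$ at the endpoints of the slope interval and the sign bookkeeping in translating (\ref{42301aa}).
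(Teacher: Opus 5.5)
Your proposal is correct and gives a complete proof. The paper itself does not prove this proposition (it defers to \cite{Lai4}), so there is no in-text argument to compare with, and your bitangent argument works as a self-contained substitute. In $\xi=\tau^2$, (\ref{42301}) says the chord of $g(\xi)=2h(\tau)$ is tangent at both ends, and (\ref{42301aa}) says $g$ lies strictly above it. The slope parametrization $m\mapsto(a(m),b(m))$ with $D'(m)=-(b^2-a^2)<0$ then yields existence and uniqueness together. It also gives the strict location claims $\tau_1^e\in(\tau_1^a,\tau_1^i)$ and $\tau_2^e\in(\tau_2^i,\tau_2^a)$, because $D\neq 0$ at the two ends of the slope range.

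Two small points. First, $p'(\tau_1^i)$ is a local maximum and $p'(\tau_2^i)$ a local minimum of $p'$, so the slope range is $(p'(\tau_2^i),p'(\tau_1^i))$. You wrote the endpoints in the wrong order, though your limit computations already use the correct orientation.

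Second, you were right not to attempt $1<\tau_1^e$: it does not follow from (A1)--(A4). Replacing $p(\tau)$ by $p(\lambda\tau)$ preserves all four assumptions, with $h$ replaced by $h(\lambda\tau)/\lambda$. This rescales the double-sonic pair to $(\tau_1^e/\lambda,\tau_2^e/\lambda)$, which can be pushed below $1$. The condition is presumably a leftover from a normalized van der Waals setting.

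As an aside, uniqueness does not even need (\ref{42301aa}). Suppose the two roots of $p'=k$ lie on the first and middle, or the middle and last, monotone branches of $p'$. Then $\int_{\tau_1^2}^{\tau_2^2}(p'-k)\,{\rm d}\xi$ has a strict sign, which contradicts the equality of chord slope and $k$ in (\ref{42301}).
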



\begin{prop}\label{51003}
({\bf Post-sonic}) For any $\tau_{f}\in(\tau_1^e, \tau_2^i)$, there exists one and only one $\psi_{po}(\tau_{f})\in(\tau,+\infty)$ such that
\begin{equation}\label{43003}
p'(\psi_{po}(\tau_f))=\frac{2h(\psi_{po}(\tau_{f}))-2h(\tau_{f})}{\psi_{po}^2(\tau_{f})-\tau_{f}^2}<p'(\tau_{f}),
\end{equation}
and
\begin{equation}\label{43004}
-\frac{2h(\psi_{po}(\tau_{f}))-2h(\tau_{f})}{\psi_{po}^2(\tau_{f})-\tau_{f}^2}>-\frac{2h(\tau)-2h(\tau_{f})}{\tau^2-\tau_{f}^2}\quad \mbox{for all}~\tau\in (\tau_{f}, \psi_{po}(\tau_{f})).
\end{equation}
Moreover, the function $\psi_{po}(\tau_{f})\in C^1[\tau_1^e, \tau_2^i)$ and satisfies
\begin{equation}\label{5906}
 \psi_{po}'(\tau_{f})<0\quad \mbox{and}\quad \tau_2^i<\psi_{po}(\tau_{f})<\tau_2^e\quad \mbox{for}~  \tau_{f}\in (\tau_1^e, \tau_2^i); 
\end{equation}
$$
\psi_{po}(\tau_{f})\rightarrow \tau_2^e\quad \mbox{as}~  \tau_{f}\rightarrow\tau_1^e; \qquad
\psi_{po}(\tau_{f})\rightarrow \tau_2^i\quad \mbox{as}~ \tau_{f}\rightarrow\tau_2^i.
$$
\end{prop}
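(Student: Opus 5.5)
The natural approach is to recast everything in the $(\tau^2, 2h)$ plane. Set $X=\tau^2$ and $Y(X)=2h(\sqrt X)$. By (\ref{5801}),
\[
\frac{dY}{dX}=\frac{2\tau p'(\tau)}{2\tau}=p'(\tau),\qquad \frac{d^2Y}{dX^2}=\frac{p''(\tau)}{2\tau}.
\]
So the slope of $Y$ is $p'$, and the convexity of $Y$ in $X$ has the same sign pattern as $p''$: convex, then concave on $(\tau_1^i,\tau_2^i)$, then convex. The chord slope $\frac{2h(\tau)-2h(\tau_f)}{\tau^2-\tau_f^2}$ is the slope of the secant from $P_f=(\tau_f^2,Y(\tau_f^2))$. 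Hence (\ref{43003})--(\ref{43004}) say that the secant from $P_f$ to $P=(\psi^2,Y(\psi^2))$ is tangent to the graph at $P$, has slope below $p'(\tau_f)$, and is strictly steeper (more negative) than every intermediate secant. In other words, $\psi_{po}(\tau_f)$ is the first point beyond $\tau_f$ at which the secant slope function
\[
g(\tau)=\frac{Y(\tau^2)-Y(\tau_f^2)}{\tau^2-\tau_f^2}
\]
attains its minimum, and at that point $g=p'$.

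\textbf{Step 1 (existence).} Study $g$ on $(\tau_f,\infty)$. We have $g(\tau_f^+)=p'(\tau_f)$, and
\[
g'(\tau)=\frac{2\tau\,(p'(\tau)-g(\tau))}{\tau^2-\tau_f^2},
\]
so critical points of $g$ are exactly the tangency points. For $\tau_f<\tau_2^i$, $p'$ decreases on $(\max\{\tau_f,\tau_1^i\},\tau_2^i)$. Since $p'(\tau_f)>p'(\tau_2^i)$ when $\tau_f>\tau_1^e$ (here I use $\tau_1^e>\tau_1^a$ and the definition (\ref{5901})), $g$ initially decreases or eventually drops below $p'(\tau_f)$. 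By (A4), $p'(\tau)\to0^-$ and $g(\tau)\to0^-$ as $\tau\to\infty$, so $g$ attains a negative minimum at some finite $\psi>\tau_f$. At that minimum $g'=0$, hence $p'(\psi)=g(\psi)<p'(\tau_f)$. Taking the smallest minimizer gives (\ref{43004}). Uniqueness follows because any second tangency point $\tilde\psi$ satisfying (\ref{43004}) must also be a global minimizer on $(\tau_f,\tilde\psi]$, which contradicts strict minimality at the other point.

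\textbf{Step 2 (location).} Show $\psi\in(\tau_2^i,\tau_2^e)$. At the minimum, $g''\ge0$ forces $p''(\psi)\ge0$, so $\psi\notin(\tau_1^i,\tau_2^i)$. Also $\psi\le\tau_1^i$ is impossible, since on the convex part to the left the secant lies above the curve; the case $p''(\psi)=0$ at $\tau_2^i$ needs separate exclusion. For the upper bound, compare with the double-sonic line of Proposition \ref{5602}. The line through $P_f$ tangent at $P$ and the common tangent at $\tau_1^e,\tau_2^e$ can be ordered using convexity on $(\tau_2^i,\infty)$. Because $\tau_f>\tau_1^e$, the point $P_f$ lies strictly above the double tangent line, so its tangent point on the right convex branch lies left of $\tau_2^e$.

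\textbf{Step 3 (regularity and monotonicity).} Apply the implicit function theorem to
\[
F(\tau_f,\psi)=Y(\psi^2)-Y(\tau_f^2)-p'(\psi)(\psi^2-\tau_f^2)=0.
\]
We have $\partial_\psi F=-p''(\psi)(\psi^2-\tau_f^2)\ne0$, since $p''(\psi)>0$ for $\psi>\tau_2^i$. This gives $C^1$ regularity, and
\[
\psi'=\frac{2\tau_f\,(p'(\psi)-p'(\tau_f))}{p''(\psi)(\psi^2-\tau_f^2)}<0,
\]
because $p'(\psi)<p'(\tau_f)$. The limits follow from monotonicity and continuity. As $\tau_f\to\tau_1^e$, the tangency becomes the double-sonic pair by the uniqueness in Proposition \ref{5602}. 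As $\tau_f\to\tau_2^i$, we get $\psi\to\tau_2^i$, since $\psi>\tau_2^i$ and the secant degenerates.

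\textbf{Main obstacle.} I expect the main difficulty to be Step 2's upper bound together with the global-minimality ordering, which is a geometric comparison of tangent lines. Continuity at the endpoint $\tau_1^e$ for the $C^1[\tau_1^e,\tau_2^i)$ claim also needs care there.
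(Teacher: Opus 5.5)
The paper does not prove this proposition (it defers to Lai's earlier work), so I judge your argument on its own. Your framework is sound: chord slopes $g$ of the graph of $Y$ in the $(\tau^2,2h)$ plane, tangency $\Leftrightarrow g'=0$, localization by convexity, and the implicit function theorem for $F$. Your formulas for $\partial_\psi F$ and $\psi'$ are correct. But two steps fail as written.

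\textbf{Existence of the dip (Step 1).} The inference from $p'(\tau_f)>p'(\tau_2^i)$ to ``$g$ eventually drops below $p'(\tau_f)$'' is false, and this is exactly where the threshold $\tau_1^e$ has to enter. For $\tau_f\in(\tau_1^a,\tau_1^e]$ one still has $p'(\tau_f)>p'(\tau_1^a)=p'(\tau_2^i)$, yet no dip occurs. Let $\ell_e$ be the double tangent line through $((\tau_1^e)^2,2h(\tau_1^e))$ and $P_2^e:=((\tau_2^e)^2,2h(\tau_2^e))$. The graph of $Y$ stays on or above the tangent line at $P_f$:
\begin{itemize}
\item on $[\tau_f^2,(\tau_1^e)^2]$ by convexity;
\item beyond that, because the graph lies on or above $\ell_e$ (by (\ref{42301aa}) and convexity), and $\ell_e$ lies on or above the tangent at $P_f$ there: it starts on or above it and has the larger slope $p'(\tau_1^e)\ge p'(\tau_f)$.
\end{itemize}
Hence $g\ge p'(\tau_f)$ on $(\tau_f,\infty)$. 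Your ``initially decreases'' is fine for $\tau_f\in(\tau_1^i,\tau_2^i)$. For $\tau_f\in(\tau_1^e,\tau_1^i]$ you need the comparison you only use in Step 2: $P_f$ lies strictly above $\ell_e$ (strict convexity on $(0,\tau_1^i)$) and $p'(\tau_f)>p'(\tau_1^e)$. So $P_2^e\in\ell_e$ lies strictly below the tangent line at $P_f$, i.e.\ $g(\tau_2^e)<p'(\tau_f)$.

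\textbf{Uniqueness.} Your argument is invalid. (\ref{43004}) compares $\psi$ only with points to its left, so two solutions $\psi_1<\psi_2$ would merely give $g(\psi_2)<g(\psi_1)$, which contradicts nothing. What excludes a second solution is the shape of $g$. Your Step 2 localization in fact applies to every solution, not just the minimizer:
\begin{itemize}
\item the left-record property together with $g'(\psi)=0$ forces $g''(\psi)=2\psi p''(\psi)/(\psi^2-\tau_f^2)\ge 0$, which rules out $(\tau_1^i,\tau_2^i)$;
\item convexity on the left excludes $\psi\le\tau_1^i$;
\item $\psi=\tau_2^i$ is excluded because $g(\tau_2^i)$, an average of $p'$ over an interval where $p'>p'(\tau_2^i)$ except at the right end, exceeds $p'(\tau_2^i)$.
\end{itemize}
So all solutions lie in $(\tau_2^i,\infty)$. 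There $k=p'-g$ has $k'=p''>0$ at each zero, hence vanishes at most once.

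This same fact is what you need in two other places:
\begin{itemize}
\item in Step 2, to rule out a tangency beyond $\tau_2^e$: note $k(\tau_2^e)>0$, because the chord from $P_f$ (strictly above $\ell_e$) to $P_2^e$ has slope below that of $\ell_e$;
\item in Step 3, to identify the implicit-function branch with $\psi_{po}$.
\end{itemize}
Applying the implicit function theorem directly at $(\tau_1^e,\tau_2^e)$, where $\partial_\psi F\neq 0$ since $p''(\tau_2^e)>0$, then gives both the $C^1$ regularity up to $\tau_1^e$ and the limit $\psi_{po}\to\tau_2^e$.
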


\begin{prop}\label{pre}
({\bf Pre-sonic})
For any $\tau_{f}\in (\tau_1^e, \tau_1^i)$, there exists one and only one $\psi_{pr}(\tau_{f})\in (\tau_{f}, +\infty)$ such that
\begin{equation}\label{5701}
p'(\psi_{pr}(\tau_{f}))<\frac{2h(\psi_{pr}(\tau_{f}))-2h(\tau_{f})}{\psi_{pr}^2(\tau_{f})-\tau_{f}^2}=p'(\tau_{f})
\end{equation}
and
\begin{equation}\label{43007}
-\frac{2h(\psi_{pr}(\tau_{f}))-2h(\tau_{f})}{\psi_{pr}^2(\tau_{f})-\tau_{f}^2}>-
\frac{2h(\tau_{f})-2h(\tau)}{\tau_{f}^2-\tau^2}\quad \mbox{for all}~ \tau\in (\tau_{f}, \psi_{pr}(\tau_{f})).
\end{equation}
Moreover, the function $\psi_{pr}(\tau_f)$ satisfies
\begin{equation}
\psi_{pr}'(\tau_{f})<0\quad \mbox{and}\quad
\psi_{pr}(\tau_{f})\in (\tau_1^i, \tau_2^e)\quad \mbox{for} ~\tau_{f}\in (\tau_1^e, \tau_1^i).
\end{equation}
\end{prop}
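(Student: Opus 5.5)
We will work throughout with the auxiliary function
$$
D(\tau;\tau_f):=2h(\tau)-2h(\tau_f)-p'(\tau_f)(\tau^2-\tau_f^2),\qquad \tau>\tau_f .
$$
Geometrically, $D$ is the vertical distance in the $(\tau^2,2h)$-plane between the graph and its tangent line at $\tau_f$; the slope of that graph is $\frac{{\rm d}(2h)}{{\rm d}(\tau^2)}=p'(\tau)$ by (\ref{5801}). For $\tau>\tau_f$ the two requirements in the statement translate as follows:
\begin{itemize}
\item the equality in (\ref{5701}) is exactly $D(\psi_{pr};\tau_f)=0$;
\item (\ref{43007}) is equivalent to $D(\tau;\tau_f)>0$ on $(\tau_f,\psi_{pr})$, since the chord slope from $\tau_f$ to $\tau$ is then larger than $p'(\tau_f)$;
\item since $\partial_\tau D=2\tau\big(p'(\tau)-p'(\tau_f)\big)$, the strict inequality $p'(\psi_{pr})<p'(\tau_f)$ means $\partial_\tau D(\psi_{pr};\tau_f)<0$.
\end{itemize}
So the task is to locate the first zero of $D(\cdot;\tau_f)$ to the right of $\tau_f$ and show that $D$ crosses zero there transversally.

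\textbf{Step 1: monotonicity of $D$ in $\tau$.} We use (A1)--(A4) and Fig. \ref{Fig3.1}. The function $p'$ increases on $(0,\tau_1^i)$, decreases on $(\tau_1^i,\tau_2^i)$, and increases on $(\tau_2^i,\infty)$ toward $0$ by (A4). Since $\tau_f>\tau_1^e>\tau_1^a$, we have $p'(\tau_f)>p'(\tau_1^a)=p'(\tau_2^i)$. Hence there are unique points $\tau_*\in(\tau_1^i,\tau_2^i)$ and $\tau_{**}\in(\tau_2^i,\infty)$ with $p'(\tau_*)=p'(\tau_{**})=p'(\tau_f)$. It follows that $D(\cdot;\tau_f)$ vanishes at $\tau_f$, increases strictly on $(\tau_f,\tau_*)$, decreases strictly on $(\tau_*,\tau_{**})$, and increases again afterwards. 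Consequently, an admissible $\psi_{pr}$ exists if and only if $m(\tau_f):=D(\tau_{**}(\tau_f);\tau_f)<0$. When this holds, $\psi_{pr}$ is the unique zero in $(\tau_*,\tau_{**})$ and there $\partial_\tau D<0$. Any later zero, beyond $\tau_{**}$, violates (\ref{43007}) because $D<0$ somewhere before it; this gives uniqueness.

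\textbf{Step 2 (the key point): $m(\tau_f)<0$.} At $\tau_f=\tau_1^e$, Proposition \ref{5602} says the tangent line at $\tau_1^e$ is also tangent at $\tau_2^e$. Hence $\tau_{**}(\tau_1^e)=\tau_2^e$ and $m(\tau_1^e)=0$. Next compute
$$
\partial_{\tau_f}D=-2\tau_fp'(\tau_f)+2\tau_fp'(\tau_f)-p''(\tau_f)(\tau^2-\tau_f^2)=-p''(\tau_f)(\tau^2-\tau_f^2).
$$
Because $\partial_\tau D$ vanishes at $\tau_{**}$, the envelope argument gives
$$
m'(\tau_f)=-p''(\tau_f)\big(\tau_{**}^2-\tau_f^2\big)<0\qquad\text{for }\tau_f<\tau_1^i,
$$
so $m<0$ on $(\tau_1^e,\tau_1^i)$. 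We expect the only delicacy to be the $C^1$ dependence of $\tau_{**}$ on $\tau_f$, which holds via the implicit function theorem because $p''(\tau_{**})>0$. Alternatively, one can avoid differentiating $m$ altogether: $D(\tau;\tau_f)$ is strictly decreasing in $\tau_f$ for each fixed $\tau$, so $m(\tau_f)\le D(\tau_{**}(\tau_1^e);\tau_f)<D(\tau_2^e;\tau_1^e)=0$.

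\textbf{Step 3: monotonicity and range of $\psi_{pr}$.} Since $D(\psi_{pr};\tau_f)=0$ and $\partial_\tau D\neq 0$ there, the implicit function theorem applies and gives
$$
\psi_{pr}'=\frac{p''(\tau_f)(\psi_{pr}^2-\tau_f^2)}{2\psi_{pr}\big(p'(\psi_{pr})-p'(\tau_f)\big)}<0.
$$
From Step 1 we already have $\psi_{pr}>\tau_*>\tau_1^i$. For the upper bound, apply the same monotonicity in $\tau_f$ at the point $\tau=\tau_2^e$: $D(\tau_2^e;\tau_f)<D(\tau_2^e;\tau_1^e)=0$. Moreover $D(\cdot;\tau_f)<0$ on the whole interval $[\psi_{pr},\tau_{**}]$ and is increasing after $\tau_{**}$. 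Since $D(\tau_2^e;\tau_f)<0$, it follows that $\tau_2^e$ lies past $\psi_{pr}$, which gives $\psi_{pr}<\tau_2^e$.
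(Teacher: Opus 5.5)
Your proof is correct and complete. The paper does not prove Proposition \ref{pre} itself: it lists it among results whose proofs are deferred to \cite{Lai4}, so there is no in-paper argument to compare with. The one place the paper uses the internals of $\psi_{pr}$ is (\ref{122303}). Your implicit-function formula for $\psi_{pr}'$ reproduces it exactly once the chord slope there is replaced by $p'(\tau_f)$.

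Reducing everything to the tangent-line defect $D(\tau;\tau_f)$ is the natural route. The sign pattern of $\partial_\tau D=2\tau\big(p'(\tau)-p'(\tau_f)\big)$ gives existence, uniqueness, transversality and the lower bound $\psi_{pr}>\tau_*>\tau_1^i$ all at once. The identity $\partial_{\tau_f}D=-p''(\tau_f)(\tau^2-\tau_f^2)<0$, combined with the double tangency at $(\tau_1^e,\tau_2^e)$ from Proposition \ref{5602}, then gives both $m(\tau_f)<0$ and $\psi_{pr}<\tau_2^e$.

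Your second version of Step 2 is preferable to the envelope argument, since it needs no differentiability of $\tau_{**}$. State explicitly that it uses $\tau_2^e>\tau_2^i>\tau_*(\tau_f)$. This places $\tau_2^e$ in $[\tau_*(\tau_f),\infty)$, where $\tau_{**}(\tau_f)$ is the minimizer of $D(\cdot;\tau_f)$, which justifies $m(\tau_f)\le D(\tau_2^e;\tau_f)$.

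Two small fixes remain.
\begin{itemize}
\item In Step 3, $D(\psi_{pr};\tau_f)=0$, so the interval on which $D<0$ is $(\psi_{pr},\tau_{**}]$, not $[\psi_{pr},\tau_{**}]$. In any case the upper bound follows at once from $D\ge 0$ on $(\tau_f,\psi_{pr}]$ and $D(\tau_2^e;\tau_f)<0$.
\item When you invoke the implicit function theorem for $\psi_{pr}'$, add one line identifying the local $C^1$ branch with $\psi_{pr}$. The points $\tau_*$ and $\tau_{**}$ depend continuously on $\tau_f$, because $p''(\tau_*)<0<p''(\tau_{**})$. Also, $\psi_{pr}(\tau_f)$ is the unique zero of $D(\cdot;\tau_f)$ in the open interval $(\tau_*,\tau_{**})$. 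Hence the branch remains in that interval for nearby $\tau_f$ and therefore coincides with $\psi_{pr}$.
\end{itemize}
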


\begin{prop}\label{tran}
({\bf Transonic}) Assume $\tau_f\in (\tau_1^e, \tau_2^i)$. Then,
\begin{equation}\label{43012}
p'(\tau_b)<\frac{2h(\tau_b)-2h(\tau_f)}{\tau_b^2-\tau_f^2}<p'(\tau_f)\quad \mbox{for}~ \tau_b\in (\tau_*, \psi_{po}(\tau_f)),
\end{equation}
where $\tau_*=\left\{
                \begin{array}{ll}
                  \psi_{pr}(\tau_f), & \hbox{$\tau_f\leq \tau_1^i$;} \\
                  \tau_f, & \hbox{$\tau_f>\tau_1^i$.}
                \end{array}
              \right.
$ Moreover,
\begin{equation}\label{43013}
-\frac{2h(\tau_b)-2h(\tau_f)}{\tau_b^2-\tau_f^2}>-\frac{2h(\tau)-2h(\tau_f)}{\tau^2-\tau_f^2}\quad \mbox{for all}~ \tau\in (\tau_f, \tau_b).
\end{equation}
\end{prop}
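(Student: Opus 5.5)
The natural approach is to study, for fixed $\tau_f\in(\tau_1^e,\tau_2^i)$, the chord-slope function
\[
g(\tau):=\frac{2h(\tau)-2h(\tau_f)}{\tau^2-\tau_f^2},\qquad \tau>\tau_f,
\]
and compare it with $p'$. The key identity comes from (\ref{5801}): $2h(\tau)-2h(\tau_f)=\int_{\tau_f}^{\tau}2\xi p'(\xi)\,{\rm d}\xi$. Hence $g(\tau)$ is a weighted average of $p'$ over $(\tau_f,\tau)$ with weight $2\xi/(\tau^2-\tau_f^2)$, and $-g$ is exactly the quantity appearing in Liu's condition (\ref{43001}). Differentiating gives
\[
g'(\tau)=\frac{2\tau}{\tau^2-\tau_f^2}\big(p'(\tau)-g(\tau)\big).
\]
So the sign of $p'-g$ decides whether $g$ increases or decreases. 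In particular, critical points of $g$ occur exactly where $p'(\tau)=g(\tau)$, which is the post-sonic condition (\ref{43003}).

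First, I would prove the right inequality $g(\tau_b)<p'(\tau_f)$. Since $g(\tau_f^+)=p'(\tau_f)$, write $p'(\tau_f)-g(\tau)$ as the weighted average of $p'(\tau_f)-p'(\xi)$. Consider first the case $\tau_f>\tau_1^i$. Then $p'$ is decreasing on $(\tau_f,\tau_2^i)$, so $g<p'(\tau_f)$ immediately beyond $\tau_f$. Now suppose $g(\tau)=p'(\tau_f)$ at some first point $\tau^\sharp\le\psi_{po}(\tau_f)$. Then $g$ would have to increase back up somewhere before $\tau^\sharp$. I would rule this out by using the monotonicity structure of $g$ below: $g$ is decreasing on $(\tau_f,\psi_{po}(\tau_f))$. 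Next consider the case $\tau_f\le\tau_1^i$. By Proposition \ref{pre}, $\tau=\psi_{pr}(\tau_f)$ is the unique point beyond $\tau_f$ with $g=p'(\tau_f)$. Near $\tau_f$ we have $p'$ increasing (convex region), so $g>p'(\tau_f)$ there. By uniqueness, the sign of $g-p'(\tau_f)$ flips only once, at $\psi_{pr}$. This gives $g<p'(\tau_f)$ on $(\psi_{pr}(\tau_f),\psi_{po}(\tau_f)]$, provided no further crossing occurs. I would obtain that from the uniqueness statement in Proposition \ref{pre}, or from $g$ decreasing up to $\psi_{po}$.

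Second, I would prove the left inequality $p'(\tau_b)<g(\tau_b)$ for $\tau_b<\psi_{po}(\tau_f)$. By Proposition \ref{51003}, $\psi_{po}(\tau_f)$ is the unique point with $p'=g$ satisfying (\ref{43004}), and $\psi_{po}\in(\tau_2^i,\tau_2^e)$. On $(\tau_f,\tau_2^i]$, after the initial convex stretch, $p'$ is decreasing while $g$ lags behind as an average, so $p'<g$. The delicate region is $(\tau_2^i,\psi_{po})$, where $p'$ increases again. There, define the first point $\bar\tau>\tau_f$ (beyond the initial convex stretch when $\tau_f<\tau_1^i$) at which $p'=g$. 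At $\bar\tau$, $g$ attains a minimum and $-g$ a maximum over the earlier interval, so $\bar\tau$ satisfies both (\ref{43003}) and (\ref{43004}). By uniqueness, $\bar\tau=\psi_{po}(\tau_f)$. Hence $p'<g$ on the relevant interval, i.e. $g'<0$ there. This monotonicity also closes the gap left in the first step.

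For the initial convex stretch when $\tau_f<\tau_1^i$, $p'>g$ holds there, and I would track one sign change of $p'-g$ inside $(\tau_f,\tau_1^i)$ before reaching $\tau_1^i$. This needs care. I expect to argue that $g$ is increasing while $p'>g$, that $p'$ turns down after $\tau_1^i$, and then to apply the first-crossing argument only to the interval beyond $\psi_{pr}$.

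Finally, (\ref{43013}) follows from $g'<0$ on $(\tau_*,\psi_{po})$ combined with the already established ordering. For $\tau\in(\tau_f,\tau_b)$, we need $g(\tau_b)<g(\tau)$. If $\tau\ge\tau_*$, this is monotonicity. If $\tau<\tau_*$ (the case $\tau_f\le\tau_1^i$), use $g(\tau)\ge\min\{p'(\tau_f),g(\tau_*)\}=p'(\tau_f)>g(\tau_b)$, since on $(\tau_f,\psi_{pr})$ we have $g\ge p'(\tau_f)$ by the first step. The main obstacle is the second step: the first-crossing/uniqueness argument identifying the first zero of $p'-g$ with $\psi_{po}$, especially handling the extra sign change when $\tau_f<\tau_1^i$.
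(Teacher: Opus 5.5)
The paper gives no proof of this proposition (it only cites \cite{Lai4} for it), so I judge your argument on its own. Your framework is the right one. The key facts are that $g$ is the normalized average of $p'$ against $2\xi\,{\rm d}\xi$ and that $g'=\frac{2\tau}{\tau^2-\tau_f^2}(p'-g)$. Everything then follows once $g$ is strictly decreasing on $(\tau_*,\psi_{po}(\tau_f))$, together with (\ref{43007}) below $\tau_*$.

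For $\tau_f>\tau_1^i$ your first-crossing argument is complete. One has $p'<g$ on $(\tau_f,\tau_2^i]$, and the first zero $\bar\tau$ of $p'-g$ exists because $\psi_{po}(\tau_f)$ is a zero. Since $g$ decreases strictly on $(\tau_f,\bar\tau)$, $\bar\tau$ satisfies (\ref{43003}) (including $g(\bar\tau)<p'(\tau_f)$) and (\ref{43004}). Uniqueness in Proposition \ref{51003} then gives $\bar\tau=\psi_{po}(\tau_f)$.

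For $\tau_f<\tau_1^i$, however, the argument as written breaks. On $(\tau_f,\tau_1^i]$ one has $p'>g$ throughout, since $p'$ increases there and $g$ averages smaller values. So there is no sign change of $p'-g$ inside $(\tau_f,\tau_1^i)$. The sign change occurs in $(\tau_1^i,\tau_2^i)$, because $p'-g<0$ at $\tau_2^i$: all values of $p'$ on $(\tau_f,\tau_2^i)\subset(\tau_1^a,\tau_2^i)$ exceed $p'(\tau_2^i)$.

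Consequently your claim that $p'<g$ on $(\tau_1^i,\tau_2^i]$ is false. Your ``first point beyond the initial convex stretch with $p'=g$'' is a point $\bar\tau_1\in(\tau_1^i,\tau_2^i)$ where $g$ has a local \emph{maximum}, with $g(\bar\tau_1)>p'(\tau_f)$. It violates both (\ref{43003}) and (\ref{43004}), so the uniqueness clause says nothing about it; indeed $\bar\tau_1<\tau_2^i<\psi_{po}$.

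Also, Proposition \ref{pre} does not make $\psi_{pr}$ the only root of $g=p'(\tau_f)$; its uniqueness is only among points satisfying (\ref{5701}) and (\ref{43007}). In fact $g$ recrosses $p'(\tau_f)$ beyond $\psi_{po}$, since $g\to0$ as $\tau\to\infty$ by (A4). So ``the sign flips only once'' must come from monotonicity, not from that uniqueness.

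The repair is the one you gesture at in your last sentence: run the first-crossing argument from $\psi_{pr}$. It needs no information on $(\tau_f,\psi_{pr})$ beyond (\ref{43007}).
\begin{itemize}
\item By (\ref{5701}), $p'<g$ at $\psi_{pr}$ and $g(\psi_{pr})=p'(\tau_f)$.
\item By (\ref{43007}), $g>p'(\tau_f)$ on $(\tau_f,\psi_{pr})$. Together with $g(\psi_{po})<p'(\tau_f)$, this forces $\psi_{po}>\psi_{pr}$.
\item Let $\bar\tau$ be the first zero of $p'-g$ in $(\psi_{pr},\infty)$. Then $g$ strictly decreases on $[\psi_{pr},\bar\tau]$.
\item Hence $g(\bar\tau)<p'(\tau_f)\le g(\tau)$ for $\tau\in(\tau_f,\psi_{pr}]$, and $g(\bar\tau)<g(\tau)$ for $\tau\in(\psi_{pr},\bar\tau)$.
\item So $\bar\tau$ satisfies (\ref{43003}) and (\ref{43004}), giving $\bar\tau=\psi_{po}(\tau_f)$. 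Thus $g$ is strictly decreasing on $(\psi_{pr},\psi_{po})$, with $p'<g$ there.
\end{itemize}
This yields (\ref{43012}), and (\ref{43013}) follows exactly as in your last paragraph.

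The full sign pattern becomes transparent if you note that at any zero of $D=p'-g$ one has $D'=p''$. So zeros are down-crossings in $(\tau_1^i,\tau_2^i)$ and up-crossings in $(\tau_2^i,\infty)$.

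Finally, at $\tau_f=\tau_1^i$ Proposition \ref{pre} does not define $\psi_{pr}$. With the natural convention $\psi_{pr}(\tau_1^i)=\tau_1^i$, this case is identical to $\tau_f>\tau_1^i$.
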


From the first and third relations of (\ref{RH}), we have
\begin{equation}\label{112203a}
m^2=-\frac{2h(\tau_f)-2h(\tau_b)}{\tau_f^2-\tau_b^2}.
\end{equation}
We have the following classifications about the oblique shocks for BZT fluids.
\begin{itemize}
  \item When $\tau_f=\tau_1^e$ and $\tau_b=\tau_2^e$, they are referred to as double-sonic shocks. For double-sonic shocks, one has
$$
N_f=c_f:=\sqrt{-\tau_f^2p'(\tau_f)} \quad \mbox{and}\quad N_b=c_b:=\sqrt{-\tau_b^2p'(\tau_b)}.
$$
  \item When $\tau_f\in (\tau_1^e, \tau_2^i)$ and $\tau_b=\psi_{po}(\tau_f)$, they are referred to as post-sonic shocks. For oblique post-sonic shocks, one has
$$
N_f>c_f\quad \mbox{and}\quad N_b=c_b.
$$
\item When $\tau_f\in (\tau_1^e, \tau_1^i)$ and $\tau_b=\psi_{pr}(\tau_f)$, they are referred to as pre-sonic shocks. For oblique pre-sonic shocks, one has
$$
N_f=c_f\quad \mbox{and}\quad N_b<c_b.
$$
\item For oblique transonic shocks \footnote{In this paper, the term ``transonic" refers to the flow velocity relative to the shock front.},  one has
$$
N_f>c_f\quad \mbox{and}\quad N_b<c_b.
$$
\end{itemize}

\subsection{Centered simple waves}
In this part we study centered simple waves with $B$ (or $D$) as the center point.
For convenience, we define the self-similar variables
\begin{equation}\label{112204a}
\xi=\arctan\left(\frac{y+1}{x}\right)\quad \mbox{and}\quad \eta=\arctan\left(\frac{y-1}{x}\right)
\end{equation}
 at $D$ and $B$, respectively.

\begin{lem}\label{lem3.5}
Assume $(\bar{q}(\xi), \bar{\tau}(\xi), \bar{\sigma}(\xi))$ satisfies the equations
\begin{equation}\label{102302}
\left\{
  \begin{array}{ll}
    \bar{q}'(\xi)\cos(\bar{A}(\xi))+\bar{q}(\xi)\bar{\sigma}'(\xi)\sin(\bar{A}(\xi))=0, \\[4pt]
    \bar{q}(\xi)\bar{q}'(\xi)+\bar{\tau}(\xi)p'(\bar{\tau}(\xi))\bar{\tau}'(\xi)=0,  \\[4pt]
    \bar{\sigma}(\xi)+\bar{A}(\xi)=\xi
  \end{array}
\right.
\end{equation}
 for $\xi\in (\xi', \xi'')$,
where $\bar{q}(\xi)>\bar{c}(\xi)>0$,
\begin{equation}\label{102303}
\bar{A}(\xi)=\arcsin\left(\frac{\bar{c}(\xi)}{\bar{q}(\xi)}\right)\quad \mbox{and}\quad \bar{c}(\xi)=\bar{\tau}(\xi)\sqrt{-p'(\bar{\tau}(\xi))}.
\end{equation}
Let
\begin{equation}\label{102306}
\bar{u}(\xi)=\bar{q}(\xi)\cos(\bar{\sigma}(\xi))\quad \mbox{and}\quad \bar{v}(\xi)=\bar{q}(\xi)\sin(\bar{\sigma}(\xi)).
\end{equation}
Then the function
\begin{equation}
(u, v)=(\bar{u}, \bar{v})(\xi), \quad   \xi'<\xi<\xi''
\end{equation}
is a centered simple wave solution with straight $C_{+}$ characteristic lines of  (\ref{E1}) on the fan-shaped domain
\begin{equation}\label{102501}
\Xi_{+}=\big\{(x, y)~\big|~(x, y)=(r\cos\xi, r\sin\xi-1),~ r>0,
~\xi\in(\xi', \xi'')
\big\}.
\end{equation}
\end{lem}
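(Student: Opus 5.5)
We verify directly that the self-similar function $(u,v)=(\bar u,\bar v)(\xi)$, with $\xi=\arctan\frac{y+1}{x}$ measured from the corner $D(0,-1)$, satisfies system (\ref{E1}) together with the Bernoulli law in $\Xi_+$, and that the rays $\xi=\mathrm{const}$ are $C_+$ characteristics.

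\textbf{Step 1: the rays are $C_+$ characteristics.} By the third relation of (\ref{102302}), $\bar\sigma+\bar A=\xi$. By (\ref{tau}) the $C_+$ characteristic angle at the point with parameter $\xi$ is therefore $\alpha=\xi$, which is exactly the direction of the ray from $D$. Hence each ray is an integral curve of $\frac{{\rm d}y}{{\rm d}x}=\lambda_+$, i.e.\ a straight $C_+$ characteristic. The hypothesis $\bar q>\bar c$ makes $\bar A$ well defined in $(0,\frac{\pi}{2})$ and keeps the system hyperbolic.

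\textbf{Step 2: the Bernoulli law.} Differentiating (\ref{E2}) in $\xi$ and using (\ref{5801}) gives $\bar q\bar q'+\bar\tau p'(\bar\tau)\bar\tau'=0$, which is the second equation of (\ref{102302}). Conversely, this equation integrates to $\frac12\bar q^2+h(\bar\tau)=\mathrm{const}$, so $\bar\tau$ is the density variable consistent with Bernoulli's law along the family.

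\textbf{Step 3: the PDEs.} For a function of $\xi$ alone, every derivative has the form $\partial_x=-\frac{\sin\xi}{r}\frac{{\rm d}}{{\rm d}\xi}$, $\partial_y=\frac{\cos\xi}{r}\frac{{\rm d}}{{\rm d}\xi}$. In particular $\bar\partial_+=\cos\xi\,\partial_x+\sin\xi\,\partial_y$ annihilates all quantities, so the first equation of (\ref{form}) holds trivially.

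It remains to check the second equation of (\ref{form}), $\bar\partial_-u+\lambda_+\bar\partial_-v=0$. Since $\bar\partial_-$ applied to a function of $\xi$ equals $\frac{\sin(\xi-\beta)}{r}\frac{{\rm d}}{{\rm d}\xi}=\frac{\sin 2A}{r}\frac{{\rm d}}{{\rm d}\xi}\neq0$, this reduces to
$$\bar u'+\tan\xi\,\bar v'=0, \quad\text{i.e.}\quad \cos\xi\,\bar u'+\sin\xi\,\bar v'=0.$$
Writing $\bar u'=\bar q'\cos\bar\sigma-\bar q\bar\sigma'\sin\bar\sigma$ and $\bar v'=\bar q'\sin\bar\sigma+\bar q\bar\sigma'\cos\bar\sigma$, this combination equals
$$\bar q'\cos(\xi-\bar\sigma)+\bar q\bar\sigma'\sin(\xi-\bar\sigma)=\bar q'\cos\bar A+\bar q\bar\sigma'\sin\bar A,$$
which vanishes by the first equation of (\ref{102302}).

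Since (\ref{form}) was obtained from (\ref{matrix1}) by multiplying with an invertible matrix (the two multipliers $(1,\mp c\sqrt{q^2-c^2})$ are independent when $q>c$), the function satisfies (\ref{matrix1}). System (\ref{matrix1}) is equivalent to (\ref{E1}) for smooth flows obeying Bernoulli's law, which Step 2 supplies.

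\textbf{Main obstacle.} The main obstacle is bookkeeping: confirming that the sign and orientation conventions ($\alpha=\sigma+A$, the sign of $\sin(\xi-\beta)$) make $\bar\partial_-$ nondegenerate. One must also confirm that the first equation of (\ref{102302}) corresponds to the $C_-$ compatibility relation rather than the $C_+$ one. Note that the definition (\ref{41404}) of $\bar\partial_-$ contains a typo; it should read $\cos\beta\,\partial_x+\sin\beta\,\partial_y$, and this corrected form is the one used above.

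Finally, since $\xi$ is constant along each ray, the solution is constant along these straight $C_+$ lines, so it is a centered simple wave in the sense of Theorem \ref{thm0}.
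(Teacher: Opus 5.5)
Your proposal is correct and follows essentially the same route as the paper. Bernoulli comes from the second equation, and the third equation gives $\alpha=\xi$, so the rays are straight $C_{+}$ characteristics and $\bar{\partial}_{+}$ annihilates every function of $\xi$; the second compatibility relation then reduces to $\bar q'\cos\bar A+\bar q\bar\sigma'\sin\bar A=0$. The paper phrases your Step 1 as tangency of each ray to the sonic circle, which is equivalent. There is one harmless sign slip: $\bar{\partial}_{-}\xi=\sin(\beta-\xi)/r=-\sin(2\bar A)/r$, not $+\sin(2\bar A)/r$, but only its nonvanishing is used.
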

\begin{proof}
Firstly, by the second equation (\ref{102302}) we know that the Bernoulli law (\ref{E2}) holds.
From the third equation of (\ref{102302}) we have
\begin{equation}\label{82104}
\big(\bar{u}(\xi), \bar{v}(\xi)\big)\cdot(\sin\xi, -\cos\xi)=\bar{c}(\xi).
\end{equation}
So, for any fixed $\xi\in (\xi', \xi'')$, the ray $x=r\cos\xi$, $y=r\sin\xi-1$ ($r>0$)
is tangent to the sonic circle $(x-\bar{u}(\xi))^2+(y-\bar{v}(\xi))^2=\bar{c}^2(\xi)$.
Therefore, by the result of Section 2 we see that this ray
 is
a straight $C_{+}$ characteristic line. Hence, we have $$\alpha=\xi, \quad \bar{\partial}_{+}u=0,\quad \bar{\partial}_{+}v=0,\quad\mbox{and} \quad\bar{\partial}_{+}\tau=0\quad \mbox{in}~\Xi_{+}.$$ Consequently,
\begin{equation}\label{82101}
\bar{\partial}_{+}u+\lambda_{-}\bar{\partial}_{+}v=0\quad\mbox{in}~ \Xi_{+}.
\end{equation}

By the first equation of (\ref{102302}) and $\alpha=\xi$, we have
\begin{equation}\label{82102}
\begin{aligned}
\bar{\partial}_{-}u+\lambda_{+}\bar{\partial}_{-}v
&=\big( \bar{u}'(\xi)+\tan\xi  \bar{v}'(\xi)\big)\bar{\partial}_{-}\xi\\&=\big(\bar{q}'(\xi)\cos(\bar{A}(\xi))
+\bar{q}(\xi)\bar{\sigma}'(\xi)\sin(\bar{A}(\xi))\big)\frac{\bar{\partial}_{-}\xi}{\cos\xi}=0 \quad\mbox{in}~ \Xi_{+}.
\end{aligned}
\end{equation}
This completes the proof of this lemma.
\end{proof}

The system (\ref{102302}) can by (\ref{102303}) be written as
\begin{equation}\label{102304}
\left\{
  \begin{array}{ll}
    \displaystyle\bar{q}'=\frac{2\bar{c}p'(\bar{\tau})\cos \bar{A}}{\bar{\tau}p''(\bar{\tau})},\\[10pt]
     \displaystyle\bar{\tau}'=-\frac{2\bar{q}\bar{c}\cos \bar{A}}{\bar{\tau}^2p''(\bar{\tau})},\\[10pt]
 \displaystyle\bar{\sigma}'=-\frac{2 p'(\bar{\tau})\cos^2 \bar{A}}{\bar{\tau}p''(\bar{\tau})}.
  \end{array}
\right.
\end{equation}
From the second equation of (\ref{102304}) we also have
\begin{equation}\label{102304d}
\bar{c}'=\frac{(2p'(\bar{\tau})+\bar{\tau}p''(\bar{\tau}))\bar{q}\cos \bar{A}}{\bar{\tau}p''(\bar{\tau})}.
\end{equation}

\begin{lem}
Assume $(\tilde{q}(\eta), \tilde{\tau}(\eta), \tilde{\sigma}(\eta))$ satisfies the equations
\begin{equation}\label{102302a}
\left\{
  \begin{array}{ll}
    \tilde{q}'(\eta)\cos(\tilde{A}(\eta))-\tilde{q}(\eta)\tilde{\sigma}'(\eta)\sin(\tilde{A}(\eta))=0, \\[4pt]
    \tilde{q}(\eta)\tilde{q}'(\eta)+\tilde{\tau}(\eta)p'(\tilde{\tau}(\eta))\tilde{\tau}'(\eta)=0,  \\[4pt]
    \tilde{\sigma}(\eta)-\tilde{A}(\eta)=\eta
  \end{array}
\right.
\end{equation}
 for $\eta\in (\eta', \eta'')$,
where $\tilde{q}(\eta)>\tilde{c}(\eta)>0$,
\begin{equation}\label{102303a}
\tilde{A}(\eta)=\arcsin\Big(\frac{\tilde{c}(\eta)}{\tilde{q}(\eta)}\Big)\quad \mbox{and}\quad \tilde{c}(\eta)=\tilde{\tau}(\eta)\sqrt{-p'(\tilde{\tau}(\eta))}.
\end{equation}
Let
\begin{equation}\label{102306a}
\tilde{u}(\eta)=\tilde{q}(\eta)\cos(\tilde{\sigma}(\eta))\quad \mbox{and}\quad \tilde{v}(\eta)=\tilde{q}(\eta)\sin(\tilde{\sigma}(\eta)).
\end{equation}
Then the function
\begin{equation}
(u, v)=(\tilde{u}, \tilde{v})(\eta), \quad   \eta'<\eta<\eta''
\end{equation}
is a centered simple wave solution with straight $C_{-}$ characteristic lines of  (\ref{E1}) on the fan-shaped domain
\begin{equation}\label{102502}
\Xi_{-}=\big\{(x, y)~\big|~(x, y)=(r\cos\eta, r\sin\eta+1),~ r>0,
~\eta\in(\eta', \eta'')
\big\}.
\end{equation}
\end{lem}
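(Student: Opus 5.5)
The plan is to mirror the proof of Lemma \ref{lem3.5}, interchanging the roles of the two characteristic families and replacing the centre $D$ by $B$. The approach has three steps: check the Bernoulli law, show each ray of $\Xi_{-}$ is a straight $C_{-}$ characteristic carrying a constant state, and verify the remaining characteristic equation of (\ref{form}).

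\textbf{Bernoulli law.} The second equation of (\ref{102302a}) says $\frac{d}{d\eta}\big(\frac12\tilde q^2+h(\tilde\tau)\big)=0$, using (\ref{5801}). Hence (\ref{E2}) holds along the wave, with the constant fixed by the data.

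\textbf{Straight $C_{-}$ rays.} The third equation gives $\tilde\sigma-\tilde A=\eta$, i.e. $\beta=\eta$. Writing $(\tilde u,\tilde v)=\tilde q(\cos\tilde\sigma,\sin\tilde\sigma)$, a direct computation gives
$$(\tilde u,\tilde v)\cdot(-\sin\eta,\cos\eta)=\tilde q\sin(\tilde\sigma-\eta)=\tilde q\sin\tilde A=\tilde c .$$
So for fixed $\eta$ the ray $x=r\cos\eta$, $y=r\sin\eta+1$ is tangent to the sonic circle, with the velocity lying clockwise-adjacent, so that the flow direction sits at angle $\tilde A$ counterclockwise from the ray. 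By the characteristic-direction discussion of Section 2.1, this ray is a straight $C_{-}$ characteristic with $\beta=\eta$.

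Since $(u,v)$ depends only on $\eta$, which is constant on each ray, we get $\bar\partial_{-}u=\bar\partial_{-}v=0$ in $\Xi_{-}$. Therefore the second equation of (\ref{form}), $\bar\partial_{-}u+\lambda_{+}\bar\partial_{-}v=0$, holds trivially.

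\textbf{Remaining equation.} It remains to check $\bar\partial_{+}u+\lambda_{-}\bar\partial_{+}v=0$. Here $\lambda_{-}=\tan\beta=\tan\eta$, so
$$\bar\partial_{+}u+\lambda_{-}\bar\partial_{+}v=\big(\tilde u'(\eta)+\tan\eta\,\tilde v'(\eta)\big)\bar\partial_{+}\eta=\frac{\tilde u'\cos\eta+\tilde v'\sin\eta}{\cos\eta}\,\bar\partial_{+}\eta .$$
Expanding, $\tilde u'\cos\eta+\tilde v'\sin\eta=\tilde q'\cos(\tilde\sigma-\eta)-\tilde q\tilde\sigma'\sin(\tilde\sigma-\eta)=\tilde q'\cos\tilde A-\tilde q\tilde\sigma'\sin\tilde A$. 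This vanishes by the first equation of (\ref{102302a}), which completes the verification that $(\tilde u,\tilde v)(\eta)$ solves (\ref{E1}) on $\Xi_{-}$.

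\textbf{Main obstacle.} The one delicate point is the orientation check in the second step: confirming that the ray is the $C_{-}$ rather than the $C_{+}$ tangent. This needs the sign $\tilde\sigma-\eta=+\tilde A$ together with the convention that $C_{-}$ lies clockwise from the velocity, which is exactly what $\beta=\sigma-A$ in (\ref{tau}) encodes. One must also use $\cos\eta\neq0$, which holds because $|\eta|<\pi/2$ for $x>0$.
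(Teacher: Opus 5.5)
Your proposal is correct and is exactly the paper's approach: the paper disposes of this lemma by saying it is analogous to Lemma~\ref{lem3.5}. Your three steps are precisely the mirror image of that argument. First, the Bernoulli law follows from the second equation. Second, tangency of each ray to the sonic circle, with $\tilde{\sigma}-\eta=\tilde{A}$ giving the correct $C_{-}$ orientation $\beta=\eta$, makes $\bar{\partial}_{-}u+\lambda_{+}\bar{\partial}_{-}v=0$ trivial. Third, the first equation gives $\bar{\partial}_{+}u+\lambda_{-}\bar{\partial}_{+}v=0$ via $\lambda_{-}=\tan\eta$.
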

\begin{proof}
The proof is similar to that of Lemma \ref{lem3.5}; we omit the details.
\end{proof}

The system (\ref{102302a}) can by (\ref{102303a}) be written as
\begin{equation}\label{102304a}
\left\{
  \begin{array}{ll}
    \displaystyle\tilde{q}'=-\frac{2\tilde{c}p'(\tilde{\tau})\cos \tilde{A}}{\tilde{\tau}p''(\tilde{\tau})},\\[10pt]
     \displaystyle\tilde{\tau}'=\frac{2\tilde{q}\tilde{c}\cos \tilde{A}}{\tilde{\tau}^2p''(\tilde{\tau})},\\[10pt]
 \displaystyle\tilde{\sigma}'=-\frac{2 p'(\tilde{\tau})\cos^2 \tilde{A}}{\tilde{\tau}p''(\tilde{\tau})}.
  \end{array}
\right.
\end{equation}
From the second equation of (\ref{102304a}) we also have
\begin{equation}
\tilde{c}'=-\frac{(2p'(\tilde{\tau})+\tilde{\tau}p''(\tilde{\tau}))\tilde{q}\cos \tilde{A}}{\tilde{\tau}p''(\tilde{\tau})}.
\end{equation}

We call the simple waves with straight $C_{+}$ ($C_{-}$, resp.) characteristic lines the 2-fan (1-fan, resp.) waves.

\subsection{Oblique waves around the corners} 
We now consider the RIBVP (\ref{E1}), (\ref{112201a}).
We shall show that for any fixed $\tau_0$, when $u_0$ is sufficiently large this problem  admits a unique  self-similar solution for any $\theta_{-}\in (-\frac{\pi}{2}, 0)$.
We divide the discussion into the following four cases: (1) $\tau_0>\tau_2^i$; (2) $\tau_1^{i}<\tau_0\leq\tau_2^{i}$; (3) $\tau_1^{e}<\tau_0\leq\tau_1^{i}$; (4) $\tau_0<\tau_1^{e}$.

\subsubsection{$\tau_0>\tau_2^i$}
We consider (\ref{102304}) with data
\begin{equation}\label{102305}
(\bar{q}, \bar{\tau}, \bar{\sigma})(\xi_0)=(u_0, \tau_0, 0),
\end{equation}
where $\xi_0=\arcsin(\frac{c_0}{u_0})$.
In view of assumption (H1), the initial value problem (\ref{102304}), (\ref{102305}) admits a supersonic solution for $\xi\in (-\mathcal{R}, \xi_0)$. Moreover, the solution satisfies
\begin{equation}\label{102503}
\begin{aligned}
&\quad \bar{q}'(\xi)<0, \quad  \bar{\sigma}'(\xi)>0, \quad \bar{\tau}'(\xi)<0,\quad \mbox{and}\quad 0<\bar{A}(\xi)<\frac{\pi}{2}\quad \mbox{for}\quad \xi\in (-\mathcal{R}, \xi_0];\\&\qquad\qquad\qquad\qquad \lim\limits_{\xi\rightarrow -\mathcal{R}}\bar{\tau}(\xi)=+\infty;\quad \lim\limits_{\xi\rightarrow -\mathcal{R}}\bar{q}(\xi)=q_{_{\infty}}.
\end{aligned}
\end{equation}
Thus the set of states which can be connected to $(u_0, 0)$ by a rarefactive 2-fan wave on the downstream must lie in the curve
$$
{\it F}_{2}(u_0, 0):\quad u=\bar{u}(\xi), \quad v=\bar{v}(\xi), \quad \xi\in (-\mathcal{R}, \xi_0),
$$
where $(\bar{u}, \bar{v})(\xi)$ is defined by (\ref{102306}).
We call this curve the 2-fan wave curve.


\begin{figure}[htbp]
\begin{center}
\includegraphics[scale=0.53]{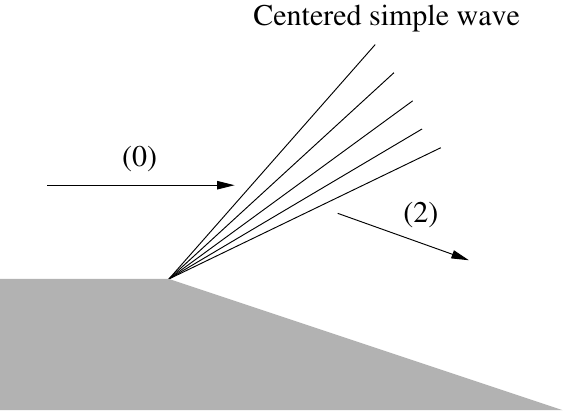}\qquad\qquad\qquad \includegraphics[scale=0.53]{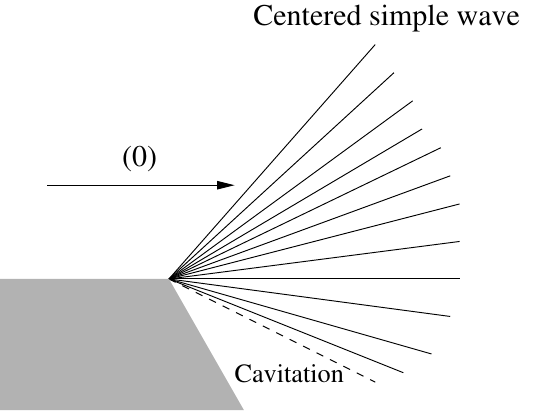}
\caption{\footnotesize Centered simple wave for $\tau_0>\tau_2^{i}$. }
\label{Fig3.3.1}
\end{center}
\end{figure}

  When
 $\theta_{-}>-\mathcal{R}$, there exists a $\xi_2\in (-\mathcal{R}, \xi_0)$ such that
$\bar{v}(\xi_2)=\bar{u}(\xi_2)\tan\theta_{-}$, and the RIBVP (\ref{E1}), (\ref{112201a}) admits a centered simple solution
with the following form:
$$
(u, v)=\left\{
               \begin{array}{ll}
                 (u_0, 0), & \hbox{$\xi>\xi_0$;} \\[2pt]
                 (\bar{u}, \bar{v})(\xi), & \hbox{$\xi_2<\xi<\xi_0$;} \\[2pt]
                 (u_{2}, v_{2}), & \hbox{$\theta_{-}<\xi<\xi_2$,}
               \end{array}
             \right.
$$
where  $(u_{2}, v_{2})=(\bar{u}, \bar{v})(\xi_2)$; see Fig. \ref{Fig3.3.1} (left). When
 $\theta_{-}\leq -\mathcal{R}$, the self-similar solution of the  RIBVP (\ref{E1}), (\ref{112201a}) has the form:
$$
(u, v)=\left\{
               \begin{array}{ll}
                 (u_0, 0), & \hbox{$\xi>\xi_0$;} \\[2pt]
                 (\bar{u}, \bar{v})(\xi), & \hbox{$-\mathcal{R}<\xi<\xi_0$;} \\[2pt]
                 \mbox{vacuum}, & \hbox{$\theta_{-}<\xi\leq -\mathcal{R}$;}
               \end{array}
             \right.
$$
 see Fig. \ref{Fig3.3.1} (right).

\subsubsection{$\tau_1^{i}<\tau_0\leq\tau_2^{i}$}
Since $\tau_1^{i}<\tau_0\leq\tau_2^{i}$, by
Proposition \ref{51003} one has
\begin{equation}\label{5601}
p'(\psi_{po}(\tau_0))=\frac{2h(\psi_{po}(\tau_0))-2h(\tau_0)}{\psi_{po}^2(\tau_0)-\tau_0^2}.
\end{equation}
We let $\tau_{po}=\psi_{po}(\tau_0)$, $\rho_{po}=\frac{1}{\tau_{po}}$ and $c_{po}=\sqrt{-\tau_{po}^2p'(\tau_{po})}$.
We shall show that an oblique shock or a shock-fan composite wave, in which a centered simple wave adjacent up to a shock, may occur.


Assume that there is an oblique 2-shock issued from the corner and the flow state on its front side is $(u_0, 0)$.
 We denote by $\phi$ the inclination angle of this oblique shock, and by $(u_b, v_b)$ the state on the back side of the shock. Then by (\ref{111306}) and (\ref{RH}) we have
\begin{equation}\label{102401}
u_b=N_b\sin\phi+L_b\cos\phi, \quad v_b=-N_b\cos\phi+L_b\sin\phi,
\end{equation}
where
$$
\phi=\arcsin \left(\frac{\tau_0}{u_0}\sqrt{-\frac{2h(\tau_b)-2h(\tau_0)}{\tau_b^2-\tau_0^2}}\right), \quad
N_b=\tau_b\sqrt{-\frac{2h(\tau_b)-2h(\tau_0)}{\tau_b^2-\tau_0^2}},\quad L_b=u_0\cos\phi
$$
and $\tau_b\in (\tau_0, \tau_{po}]$.
This implies that $u_b$, $v_b$, and $\phi$ can be seen as functions of $\tau_b\in (\tau_0, \tau_{po}]$.
Thus the set of states which can be connected to $(u_0, 0)$ by an oblique rarefactive 2-shock on the downstream must lie in the curve
$$
{\it S}_{2}(u_0, 0):\quad u=u_b(\tau_b), \quad v=v_b(\tau_b), \quad \tau_b\in (\tau_0, \tau_{po}].
$$
We call this curve the 2-shock curve.

\begin{lem}\label{120301}
The oblique 2-shock curve ${\it S}_{2}(u_0, 0)$ does not meet the $u$-axis.
\end{lem}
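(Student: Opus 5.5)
The plan is to show directly that $v_b(\tau_b)<0$ for every $\tau_b\in(\tau_0,\tau_{po}]$. Then no point of ${\it S}_{2}(u_0,0)$ lies on the $u$-axis $\{v=0\}$. The computation uses only the parametrization (\ref{102401}) and the Rankine--Hugoniot relations (\ref{RH}), so no monotonicity in $\tau_b$ is required.

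\emph{Step 1: rewrite $v_b$.} The front state is $(u_0,0)$ and the shock is a 2-shock with inclination $\phi$. By (\ref{111306}) the front normal component is $N_f=u_0\sin\phi$. By (\ref{102401}), with $L_b=u_0\cos\phi$,
$$
v_b=-N_b\cos\phi+u_0\cos\phi\sin\phi=\cos\phi\,\big(u_0\sin\phi-N_b\big)=\cos\phi\,\big(N_f-N_b\big).
$$
This is consistent with the formula for $\phi$ in (\ref{102401}): there $u_0\sin\phi=\tau_0 m$, where $m^2$ is given by (\ref{112203a}).

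\emph{Step 2: sign of $N_f-N_b$.} The first relation of (\ref{RH}) gives $N_f=\tau_0 m$ and $N_b=\tau_b m$, where $m>0$ because we assume $N>0$. Since we restrict to rarefaction shocks, $\tau_b>\tau_0$, and so $N_b>N_f$. Concretely, $m^2>0$ holds because $h$ is strictly increasing by (\ref{5801}) and (A1). Hence $N_f-N_b=(\tau_0-\tau_b)m<0$.

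\emph{Step 3: sign of $\cos\phi$.} We need $\phi\in(0,\frac{\pi}{2})$. This follows from the arcsine formula for $\phi$, provided its argument $\tau_0 m/u_0$ lies in $(0,1)$. By the Liu entropy condition (\ref{20102}) and the supersonic assumption on the upstream flow, $c_0\le N_f=\tau_0m<u_0$; the strict inequality is what guarantees $\cos\phi>0$. Alternatively, Proposition \ref{pro3.1.1} gives $A_f\le\phi-\sigma_f=\phi$ and $\phi-\sigma_b\le A_b<\frac{\pi}{2}$, which yields the same bound. As a cross-check, that proposition also gives $\sigma_b<\sigma_f=0$ directly, and together with $\sigma_b\ge \phi-A_b>-\frac{\pi}{2}$ this implies $v_b=q_b\sin\sigma_b<0$.

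Combining the three steps gives $v_b<0$ for all $\tau_b\in(\tau_0,\tau_{po}]$, which proves the lemma. The only delicate point is the strict bound $\phi<\frac{\pi}{2}$, that is, $\tau_0m<u_0$. I would secure it from the supersonic hypothesis (H1) together with (\ref{20102}), or from Proposition \ref{pro3.1.1}, as in Step 3.
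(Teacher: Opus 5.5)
Your skeleton is correct and matches the paper's proof. The factorization $v_b=\cos\phi\,(N_f-N_b)$, with $N_b=\tau_b m>\tau_0 m=N_f$, is exactly what the paper uses implicitly when it says that a zero of $v_b$ forces $\phi=\frac{\pi}{2}$. There is a small slip in Step 2: by (\ref{5801}) and (A1), $h'=\tau p'<0$, so $h$ is strictly \emph{decreasing}. That is what makes $m^2>0$ in (\ref{112203a}) for $\tau_f<\tau_b$.

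The real problem is the first justification you give for the delicate point in Step 3. From $N_f\ge c_0$ and $u_0>c_0$ nothing follows about $N_f<u_0$. Indeed, a normal shock ($\phi=\frac{\pi}{2}$, $N_f=u_0$) is compatible with every upstream condition. So excluding $\phi=\frac{\pi}{2}$ has to come from the \emph{downstream} side:
\begin{itemize}
\item By Proposition \ref{tran}, with equality at $\tau_b=\tau_{po}$ by Proposition \ref{51003}, one has $N_b=\tau_b m\le\check c(\tau_b)$.
\item By (H1), $\check c(\tau_b)<\check q(\tau_b)=q_b$.
\item The energy relation in (\ref{RH}) together with the Bernoulli law gives the identity
\[
u_0^2-\tau_0^2m^2=\check q^{\,2}(\tau_b)-\tau_b^2m^2 .
\]
\end{itemize}
The right side is therefore positive, i.e.\ $\tau_0m<u_0$ for every $\tau_b\in(\tau_0,\tau_{po}]$. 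This shows at once that the arcsine in (\ref{102401}) is well defined and that $\cos\phi>0$.

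Your fallback via Proposition \ref{pro3.1.1} is legitimate, but only because that proposition rests on $\phi\le\alpha_b$, which is this same downstream bound $N_b\le c_b<q_b$. You also need its full chain $A_f\le\phi<\phi-\sigma_b\le A_b$, not just the two outer inequalities you quote. With that chain, your ``cross-check'' ($\sigma_b<0$ and $\sigma_b\ge\phi-A_b>-\frac{\pi}{2}$) is already a complete proof.

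The paper argues by continuity instead. It shows $v_b<0$ for $\tau_b$ near $\tau_0$, since $\phi\to\arcsin(c_0/u_0)$. At a hypothetical zero $\tau_*$ it gets $\phi=\frac{\pi}{2}$, hence $q_b=u_b=N_b$, which contradicts (H1) via Proposition \ref{tran}. The decisive fact is the same downstream one; your factorization simply removes the need for the continuity and first-zero step.
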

\begin{proof}
From $L_f=L_b$, $N_b=\frac{\tau_b N_f}{\tau_0}>N_f$, and $\lim\limits_{\tau_b\rightarrow\tau_0}\phi(\tau_b)=\arcsin(\frac{c_0}{u_0})$,
we have $v_b(\tau_b)<0$ as $\tau_b-\tau_0>0$ is sufficiently small.
Suppose that there is a $\tau_*\in (\tau_0, \tau_{po}]$ such that $v_b(\tau_*)=0$.
Then $\phi(\tau_*)=\frac{\pi}{2}$ and
$u_b(\tau_*)=\tau_*\sqrt{-\frac{2h(\tau_*)-2h(\tau_0)}{\tau_*^2-\tau_0^2}}$.
By assumption (H1) we have $\check{q}(\tau_*)=u_b(\tau_*)>\check{c}(\tau_*)$. While by Proposition \ref{tran} we have
$\tau_*\sqrt{-\frac{2h(\tau_*)-2h(\tau_0)}{\tau_*^2-\tau_0^2}}<\check{c}(\tau_*)$. This leads to a contradiction.
This completes the proof.
\end{proof}

Set
\begin{equation}\label{4901}
\left\{
  \begin{array}{ll}
  \displaystyle \phi_{po}=\arcsin \Big(\frac{\rho_{po}c_{po}}{\rho_0u_0}\Big), \\[4pt]
  u_{po}=c_{po}\sin\phi_{po}+u_0\cos^2\phi_{po}, \\[4pt]
v_{po}=u_0\cos\phi_{po}\sin\phi_{po}-c_{po}\cos\phi_{po}.
  \end{array}
\right.
\end{equation}
Then we have $(\phi, u_b, v_b)=(\phi_{po}, u_{po}, \tau_{po})$ for $\tau_{b}=\tau_{po}$.

We define
\begin{equation}\label{102408}
\sigma_{po}=\arctan\Big(\frac{v_{po}}{u_{po}}\Big).
\end{equation}
Then when
$\theta_{-}>\sigma_{po}$  there exists a $\tau_2\in (\tau_0, \tau_{po})$ such that
$\arctan(\frac{v_{2}}{u_{2}})=\theta_{-}$ where  $(u_{2}, v_{2})=(u_b(\tau_2), v_b(\tau_2))$,  and the RIBVP (\ref{E1}), (\ref{112201a}) admits a self-similar solution
with the following form:
$$
(u, v)=\left\{
               \begin{array}{ll}
                 (u_0, 0), & \hbox{$\phi_{2}<\xi<\frac{\pi}{2}$;} \\[2pt]
                 (u_{2}, v_{2}), & \hbox{$\theta_{-}<\xi<\phi_{2}$,}
               \end{array}
             \right.
$$
where $\phi_{2}=\phi(\tau_2)$;
see Fig. \ref{Fig3.3.2} (left).

\begin{figure}[htbp]
\begin{center}
\includegraphics[scale=0.46]{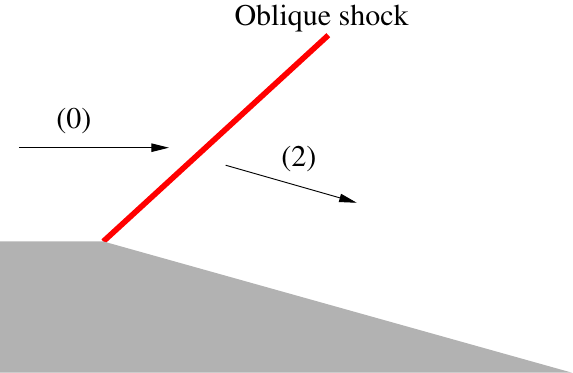}\qquad \qquad\qquad \includegraphics[scale=0.46]{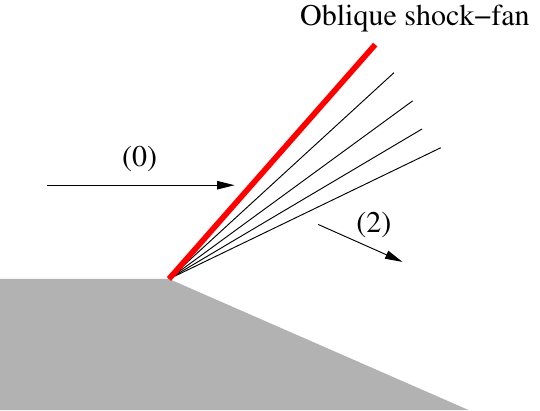}
\caption{\footnotesize Oblique wave at the corner $D$ for  $\tau_1^{i}<\tau_0\leq \tau_2^{i}$. }
\label{Fig3.3.2}
\end{center}
\end{figure}

We next consider oblique 2-shock-fan composite waves composed of a post-sonic 2-shock and a 2-fan wave.
The inclination angle of the post-sonic shock is $\phi_{po}$, and the states on the front and back sides of the shock are $(u_0, 0)$ and $(u_{po}, v_{po})$, respectively. In order to obtain the centered simple wave adjacent up to the oblique post-sonic shock, we consider (\ref{102304}) with data
\begin{equation}\label{102402}
(\bar{q}, \bar{\tau}, \bar{\sigma})(\phi_{po})=(q_{po}, \tau_{po}, \sigma_{po}),
\end{equation}
where $q_{po}=\sqrt{u_{po}^2+v_{po}^2}$.

Let
\begin{equation}\label{92601a}
\xi_v=\sigma_{po}-\int_{q_{po}}^{q_{\infty}}\frac{\sqrt{q^2-\hat{c}^2(q)}}{q\hat{c}(q)}{\rm d}q.
\end{equation}
Then the initial value problem (\ref{102304}), (\ref{102402}) admits a solution for $\xi\in (\xi_v, \phi_{po})$. Moreover, the solution satisfies
\begin{equation}\label{2201}
\begin{aligned}
&\quad \bar{q}'(\xi)<0, \quad  \bar{\sigma}'(\xi)>0, \quad \bar{\tau}'(\xi)<0,\quad \mbox{and}\quad 0<\bar{A}(\xi)<\frac{\pi}{2}\quad \mbox{for}\quad \xi\in (\xi_v, \phi_{po}];\\&\qquad\qquad\qquad\qquad \lim\limits_{\xi\rightarrow \xi_v}\bar{\tau}(\xi)=+\infty;\quad \lim\limits_{\xi\rightarrow \xi_v}\bar{q}(\xi)=q_{_{\infty}}.
\end{aligned}
\end{equation}
Thus the set of states which can be connected to $(u_0, 0)$ by an oblique 2-shock-fan composite wave on the downstream must lie in the curve
$$
{\it SF}_{2}(u_0, 0):\quad u=\bar{u}(\xi), \quad v=\bar{v}(\xi), \quad \xi\in (\xi_{v}, \xi_{po}),
$$
where $(\bar{u}, \bar{v})(\xi)$ is defined by (\ref{102306}).
We call this curve the  2-shock-fan composite wave curve.

\begin{figure}[htbp]
\begin{center}
\includegraphics[scale=0.38]{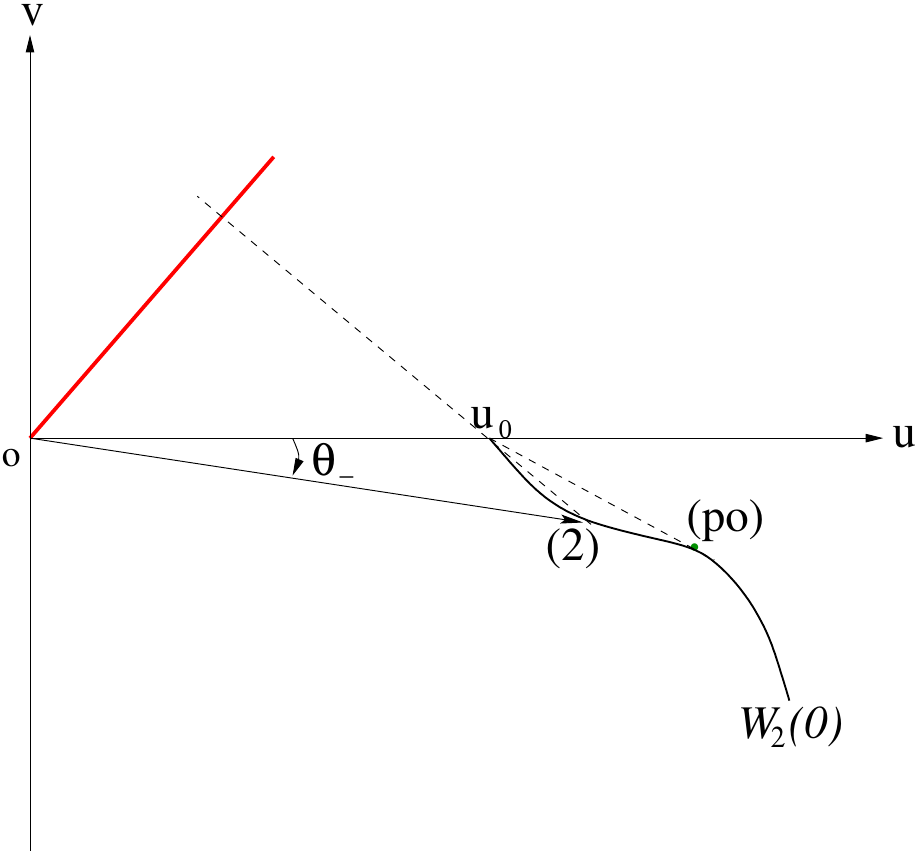}\qquad \qquad \includegraphics[scale=0.38]{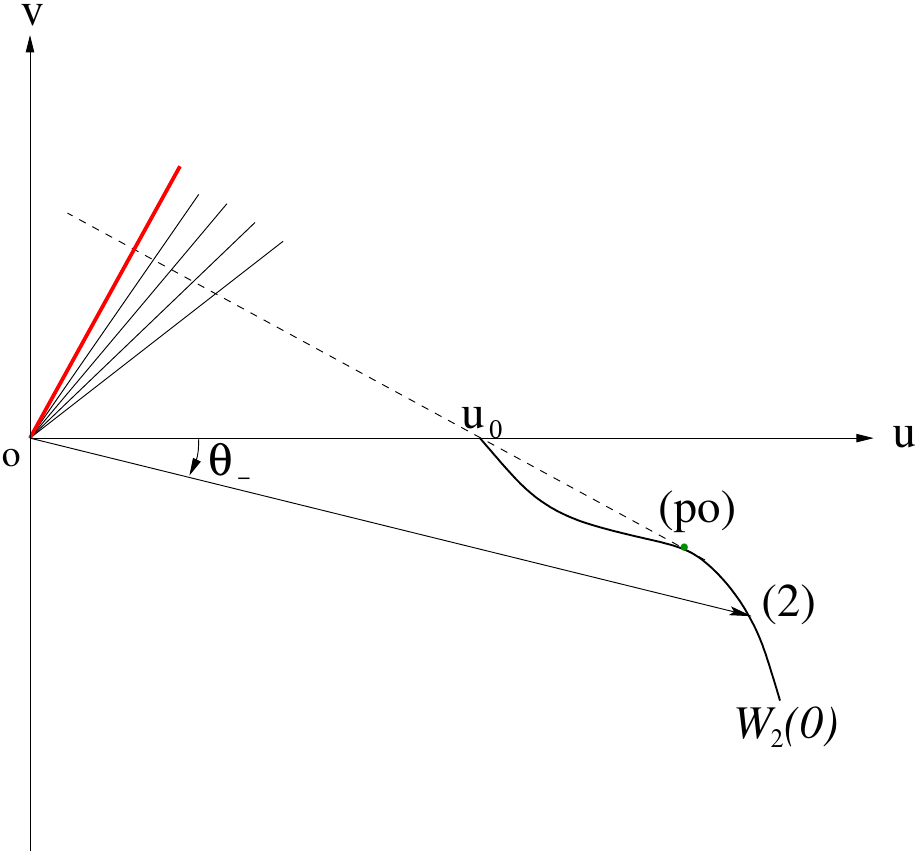}
\caption{\footnotesize The rarefactive 2-wave curve ${\it W}_{2}(u_0, 0)$, where $(0)$, $(po)$, and $(2)$ represent $(u_0, 0)$, $(u_{po}, v_{po})$, and $(u_2, v_2)$, respectively.}
\label{Fig3.3.3}
\end{center}
\end{figure}

 When
 $\xi_{v}<\theta_{-}<\sigma_{po}$, there exists a $\xi_2\in (\xi_v, \phi_{po})$ such that
$\bar{v}(\xi_2)=\bar{u}(\xi_2)\tan\theta_{-}$, and the RIBVP (\ref{E1}), (\ref{112201a}) admits an oblique fan-shock composite wave solution
with the following form:
\begin{equation}\label{8230a}
(u, v)=\left\{
               \begin{array}{ll}
                 (u_0, 0), & \hbox{$\phi_{po}<\xi<\frac{\pi}{2}$;} \\[2pt]
                 (\bar{u}, \bar{v})(\xi), & \hbox{$\xi_2<\xi<\phi_{po}$;} \\[2pt]
                 (u_{2}, v_{2}), & \hbox{$\theta_{-}<\xi<\xi_2$,}
               \end{array}
             \right.
\end{equation}
where $(u_{2}, v_{2})=(\bar{u}, \bar{v})(\xi_2)$;
see Fig. \ref{Fig3.3.2} (right).
 When $\theta_{-}<\xi_v$,  the RIBVP (\ref{E1}), (\ref{112201a}) admits an oblique fan-shock composite wave solution
with the following form:
\begin{equation}\label{101102}
(u, v)=\left\{
               \begin{array}{ll}
                 (u_0, 0), & \hbox{$\phi_{po}<\xi<\frac{\pi}{2}$;} \\[2pt]
                 (\bar{u}, \bar{v})(\xi), & \hbox{$\xi_v<\xi<\phi_{po}$;} \\[2pt]
                 \mbox{vacuum}, & \hbox{$\theta_{-}<\xi<\xi_v$.}
               \end{array}
             \right.
\end{equation}

Let ${\it W}_{2}(u_0, 0)={\it S}_{2}(u_0, 0)+{\it SF}_{2}(u_0, 0)$. We call the curve ${\it W}_{2}(u_0, 0)$ the rarefactive 2-wave curve; see Fig. \ref{Fig3.3.3}.
By the symmetry, we can define the rarefactive 1-wave curve ${\it W}_{1}(u_0, 0)$ of the state $(u_0, 0)$, i.e.,
${\it W}_{1}(u_0, 0)=\big\{(u, v)\mid  (u, -v)\in {\it W}_{2}(u_0, 0)\big\}$.

\begin{lem}\label{lem3.8}
For any fixed $\tau_0\in (\tau_1^i, \tau_2^i)$. When $u_0$ is sufficiently large, there holds
$$
\frac{{\rm d}\sigma}{{\rm d}\tau}<0\quad \mbox{along}~ {\it W}_{2}(u_0, 0),
$$
where $\sigma=\arctan(\frac{v}{u})$ and $\tau=\hat{\tau}(q)$.
\end{lem}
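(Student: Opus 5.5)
The curve ${\it W}_{2}(u_0,0)$ has two pieces, the shock curve ${\it S}_{2}(u_0,0)$ parametrized by $\tau_b\in(\tau_0,\tau_{po}]$ and the composite curve ${\it SF}_{2}(u_0,0)$ parametrized by $\xi\in(\xi_v,\phi_{po})$. I will prove the monotonicity on each piece separately; the two pieces join continuously at $\tau=\tau_{po}$ along ${\it W}_{2}(u_0,0)$.

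\emph{The fan part.} Here I use (\ref{102304}) directly. Along ${\it SF}_{2}$ we have $\bar\tau'(\xi)<0$ and $\bar\sigma'(\xi)>0$ by (\ref{2201}), so
$$
\frac{{\rm d}\sigma}{{\rm d}\tau}=\frac{\bar\sigma'}{\bar\tau'}=\frac{\bar\tau p'(\bar\tau)\cos\bar A}{\bar q\bar c}<0
$$
by (A1). This holds for all $u_0$, because on this piece $\tau\ge\tau_{po}>\tau_2^i$ and hence $p''>0$.

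\emph{The shock part.} Along ${\it S}_{2}$ I write $\sigma_b=\phi-\delta$, where $\delta:=\phi-\sigma_b\in[0,A_b]$ by Proposition \ref{pro3.1.1}. From (\ref{102401}) we have $\tan\delta=N_b/L_b$, with $L_b=u_0\cos\phi$ and $N_b=\tau_b m$, $m^2=M(\tau_b):=-\frac{2h(\tau_b)-2h(\tau_0)}{\tau_b^2-\tau_0^2}$. Also $\sin\phi=\tau_0 m/u_0$. As $u_0\to\infty$ with $\tau_0$ fixed, $\phi=O(1/u_0)$ and $\delta=O(1/u_0)$ uniformly for $\tau_b\in[\tau_0,\tau_{po}]$, because $m$ stays bounded there. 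Expanding,
$$
\sigma_b=\frac{m(\tau_0-\tau_b)}{u_0}+O(u_0^{-3}),
$$
and the same expansion holds after differentiation in $\tau_b$, since all quantities are smooth in $(1/u_0,\tau_b)$. Therefore
$$
u_0\frac{{\rm d}\sigma_b}{{\rm d}\tau_b}=-m-(\tau_b-\tau_0)m'(\tau_b)+O(u_0^{-2}),
\qquad
-\big(m(\tau_b-\tau_0)\big)'=-m-(\tau_b-\tau_0)\frac{M'}{2m}.
$$
It remains to show that $\big(m(\tau_b-\tau_0)\big)'>0$ uniformly on $(\tau_0,\tau_{po}]$. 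I will establish three facts.
\begin{itemize}
\item $m$ is bounded below by a positive constant: $M>0$ by (A1), and $M\to-p'(\tau_0)>0$ as $\tau_b\to\tau_0$.
\item $M'(\tau_b)\ge0$ on $(\tau_0,\tau_{po}]$. Differentiating gives $M'=\frac{2\tau_b}{\tau_b^2-\tau_0^2}\big(M+p'(\tau_b)\big)$, and $M+p'(\tau_b)\ge0$ is exactly the condition $N_b\le c_b$, i.e.\ Proposition \ref{tran} together with equality at $\tau_{po}$ from Proposition \ref{51003}. (Alternatively this follows from the Liu condition (\ref{43004})/(\ref{43013}).)
\item Since $\tau_0>\tau_1^i$, the transonic range in Proposition \ref{tran} is all of $(\tau_0,\tau_{po})$, so the previous fact applies on the whole shock branch.
\end{itemize}
Together these give $\big(m(\tau_b-\tau_0)\big)'\ge m\ge c>0$, and so ${\rm d}\sigma_b/{\rm d}\tau_b<0$ once $u_0$ is large enough that the $O(u_0^{-2})$ error is below $c/2$. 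Finally, $\tau=\hat\tau(q)$ along the curve coincides with $\tau_b$ by the Bernoulli law, so the parametrizations agree.

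\emph{Main obstacle.} The delicate step is making the $O(u_0^{-2})$ remainder in the derivative uniform up to the endpoints $\tau_b\to\tau_0$ and $\tau_b=\tau_{po}$. At $\tau_b\to\tau_0$ the expression $M'$ has a removable $0/0$ singularity, but $M\in C^1$ up to $\tau_0$ because $h\in C^2$. Rather than an asymptotic expansion, I would first try a direct implicit computation: write $\sigma_b=\phi-\arctan\big(\tau_bm/(u_0\cos\phi)\big)$, differentiate exactly, and bound each term using $\sin\phi\le C/u_0$.
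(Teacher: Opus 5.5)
Your proof is correct and follows essentially the paper's route: the same split into ${\it S}_{2}(u_0,0)$ and ${\it SF}_{2}(u_0,0)$, the same fan-part computation from (\ref{102304}), and on the shock branch the same key input, that $m$ (equivalently $\phi$, via (\ref{122201a})) is nondecreasing in $\tau_b$ by Proposition \ref{tran}. The only difference is that the paper does the exact differentiation you mention at the end, obtaining $\frac{{\rm d}\sigma}{{\rm d}\tau}=\frac{(1-\tau\rho_0)(1-\tau\rho_0\tan^2\phi)}{1+\tau^2\rho_0^2\tan^2\phi}\frac{{\rm d}\phi}{{\rm d}\tau}-\frac{\rho_0\tan\phi}{1+\tau^2\rho_0^2\tan^2\phi}$, whose first term is nonpositive and second term negative as soon as $1-\tau_{po}\rho_0\tan^2\phi(\tau_{po})>0$, so no uniform remainder estimate is needed; also note that in fact $M'=-\frac{2\tau_b}{\tau_b^2-\tau_0^2}\big(M+p'(\tau_b)\big)$ and $N_b\le c_b$ is equivalent to $M+p'(\tau_b)\le 0$, so both of your displayed signs are off, though they cancel and your conclusion $M'\ge 0$ is right.
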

\begin{proof}
In view of (\ref{111306}) and (\ref{RH}),
we have that along ${\it S}_{2}(u_0, 0)$,
$$
\sigma=\phi-\arctan\Big(\frac{\tau\tan\phi}{\tau_0}\Big),
$$
where $\phi=\arcsin \left(\frac{\tau_0}{u_0}\sqrt{-\frac{2h(\tau)-2h(\tau_0)}{\tau^2-\tau_0^2}}\right)$.
So, we have that along ${\it S}_{2}(u_0, 0)$,
$$
\begin{aligned}
\frac{{\rm d}\sigma}{{\rm d}\tau}=\frac{(1-\tau\rho_0)(1-\tau\rho_0\tan^2\phi)}{1+\tau^2\rho_0^2\tan^2\phi}
\frac{{\rm d}\phi}{{\rm d}\tau}-\frac{\rho_0\tan\phi}{1+\tau^2\rho_0^2\tan^2\phi}.
\end{aligned}
$$
By (\ref{43012}) we have
\begin{equation}\label{122201a}
\cos\phi\frac{{\rm d}\phi}{{\rm d}\tau}=\frac{-\tau\tau_0}{u_0(\tau^2-\tau_0^2)}
\sqrt{\frac{\tau_0^2-\tau^2}{2h(\tau)-2h(\tau_0)}}\left(p'(\tau)-\frac{2h(\tau)-2h(\tau_0)}{\tau^2-\tau_0^2}\right)>0,\quad \tau_0<\tau<\tau_{po}.
\end{equation}
So, when $u_0$ is sufficiently large, e.g., $1-\tau_{po}\rho_0\tan^2\phi(\tau_{po})>0$,
we have
$\frac{{\rm d}\sigma}{{\rm d}\tau}<0$ along ${\it S}_{2}(u_0, 0)$.

From (\ref{2201}) we immediately have $\frac{{\rm d}\sigma}{{\rm d}\tau}<0$ along ${\it SF}_{2}(u_0, 0)$.
This completes the proof of the lemma.
\end{proof}

By (\ref{5601}) and (\ref{102401}) we have
$$
\begin{aligned}
&u_b'(\tau)\cos\phi+v_b'(\tau)\sin\phi\\~=~&\Big(\frac{{\rm d}L_b}{{\rm d}\phi}+u_b(\tau)\sin\phi-v_b(\tau)\cos\phi\Big)\frac{{\rm d}\phi}{{\rm d}\tau}=(N_b-N_f)\frac{{\rm d}\phi}{{\rm d}\tau}=0\quad \mbox{for}~\tau=\tau_{po}.
\end{aligned}
$$
Combining this with (\ref{82102}) we see that the wave curves ${\it S}_2(u_0, 0)$ and ${\it SF}_2(u_0, 0)$ are in at least first-order contact at the point $(u_{po}, v_{po})$.

\subsubsection{$\tau_1^{e}<\tau_0\leq \tau_1^{i}$}
We first solve (\ref{102304}) with data
\begin{equation}\label{102407}
(\bar{q}, \bar{\tau}, \bar{\sigma})(\xi_0)=(u_0, \tau_0, 0).
\end{equation}
where $\xi_0=\arcsin(\frac{c_0}{u_0})$.
 Since $\tau_1^e<\tau_0<\tau_1^i$, there exists a $\xi_i<\xi_0$ such that
 initial value problem (\ref{102304}), (\ref{102407}) admits a solution for $\xi\in [\xi_i, \xi_0)$. Moreover, the solution satisfies $\bar{\tau}(\xi_i)=\tau_1^i$, and
$\bar{q}'(\xi)<0$, $\bar{\sigma}'(\xi)>0$, and $\bar{\tau}'(\xi)<0$ for $\xi\in [\xi_i, \xi_0)$.
Let
$$
{\it F}_{2}(u_0, 0):\quad u=\bar{u}(\xi), \quad v=\bar{v}(\xi), \quad \xi\in [\xi_i, \xi_0],
$$
where $(\bar{u}, \bar{v})(\xi)$ is defined by (\ref{102306}).

Let
 \begin{equation}\label{81802}
\sigma_i=\arctan \bigg(\frac{\bar{v}(\xi_i)}{\bar{u}(\xi_i)}\bigg).
\end{equation}
When $\theta_{-}>\sigma_i$, there exists a unique $\xi_2 \in (\xi_i, \xi_0)$ such that $\theta_{-}=\arctan(\frac{\bar{v}(\xi_2)}{\bar{u}(\xi_2)})$,
and the IBVP (\ref{E1}), (\ref{112201a}) admits a centered simple wave solution
with the following form
\begin{equation}\label{100901}
(u, v)=\left\{
               \begin{array}{ll}
                 (u_0, 0), & \hbox{$\xi_0\leq \xi\leq \frac{\pi}{2}$;} \\[2pt]
                 (\bar{u}, \bar{v})(\xi), & \hbox{$\xi_2<\xi<\xi_0$;} \\[2pt]
                (u_2, v_2), & \hbox{$\theta_{-}<\xi<\xi_2$,}
               \end{array}
             \right.
\end{equation}
where $(u_2, v_2)= (\bar{u}, \bar{v})(\xi_2)$;
 see Fig. \ref{Fig3.3.4} (1).

\begin{figure}[htbp]
\begin{center}
\includegraphics[scale=0.45]{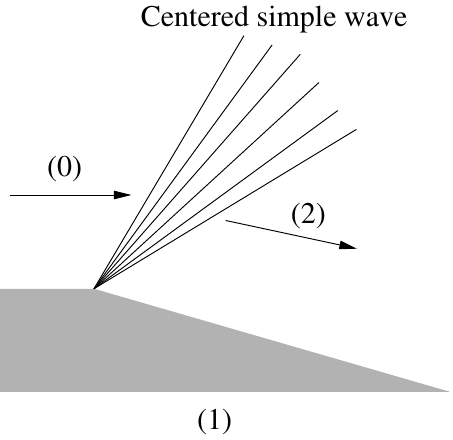} \quad\includegraphics[scale=0.42]{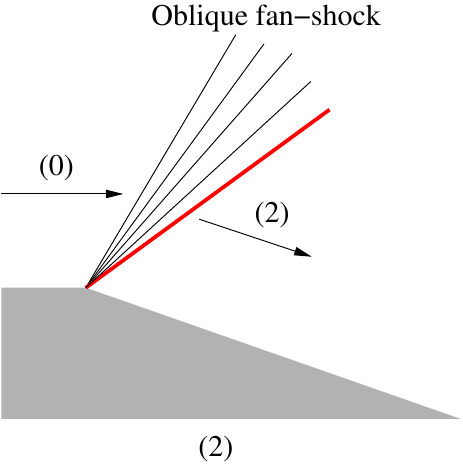}\quad
\includegraphics[scale=0.42]{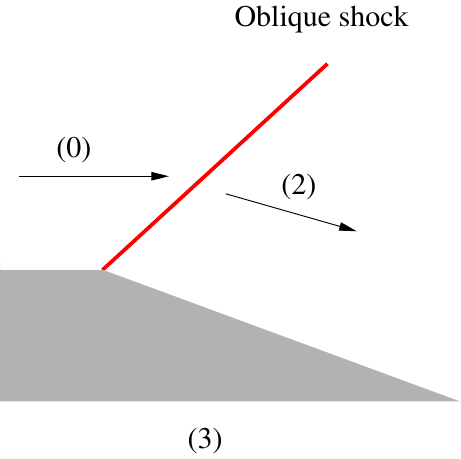}\quad \includegraphics[scale=0.42]{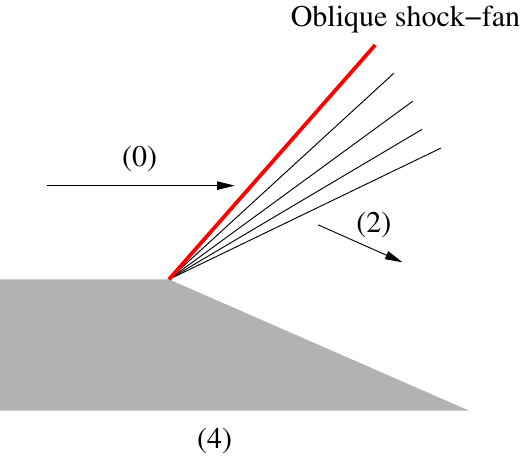}
\caption{\footnotesize Oblique wave at the corner $D$ for  $\tau_1^{e}<\tau_0\leq \tau_1^{i}$.}
\label{Fig3.3.4}
\end{center}
\end{figure}

 We next consider the possibility of the occurrence of an oblique fan-shock composite wave in which a centered simple wave is adjacent up to a pre-sonic shock.
We assume that the inclination angle of the pre-sonic shock is
$\phi\in (\xi_i, \xi_0]$.
Then the state on the front side of the pre-sonic shock is
$(\bar{u}, \bar{v})(\phi)$.
We denote by $(u_s, v_s)$ the  state on the back side of the pre-sonic shock. Then, we have
\begin{equation}\label{81803}
 u_{s}=N_{{s}}\sin\phi+L_{s}\cos\phi\quad \mbox{and}\quad v_{s}=-N_{s}\cos\phi+L_{s}\sin\phi,
\end{equation}
where
\begin{equation}\label{112902}
N_{s}=\frac{\psi_{pr}(\bar{\tau}(\phi))\bar{c}(\phi)}{\bar{\tau}(\phi)}\quad \mbox{and}\quad L_{s}=\sqrt{\bar{q}^2(\phi)-\bar{c}^2(\phi)}.
\end{equation}
Thus the set of states which can be connected to $(u_0, 0)$ by an oblique 2-fan-shock composite wave on the downstream must lie in the curve
$$
{\it FS}_{2}(u_0, 0):\quad u=u_s(\phi), \quad v=v_{s}(\phi), \quad \phi\in (\xi_{i}, \xi_{0}].
$$
We call this curve the 2-fan-shock composite wave curve.

Let
$$
\sigma_{_M}=\sup\limits_{\phi\in (\xi_i, \xi_0]}\arctan\bigg(\frac{v_{s}(\phi)}{u_{s}(\phi)}\bigg)\quad \mbox{and}\quad \sigma_{m}=\inf\limits_{\phi\in (\xi_i, \xi_0]}\arctan\bigg(\frac{v_{s}(\phi)}{u_{s}(\phi)}\bigg).
$$
When $\theta_{-}\in(\sigma_{m}, \sigma_{_M})$, there exists a $\phi_{pr}\in(\xi_i, \xi_0]$ such that $\theta_{-}=\arctan\big(\frac{v_{s}(\phi_{pr})}{u_{s}(\phi_{pr})}\big)$,
and the Riemann IBVP (\ref{E1}), (\ref{112201a}) admits a fan-shock composite wave solution
with the following form:
\begin{equation}\label{120702a}
(u, v)=\left\{
               \begin{array}{ll}
                 (u_0, 0), & \hbox{$\xi_0\leq \xi\leq \frac{\pi}{2}$;} \\[2pt]
                 (\bar{u}, \bar{v})(\xi), & \hbox{$\phi_{pr}<\xi<\xi_0$;} \\[2pt]
                (u_2, v_2), & \hbox{$\theta_{-}<\xi<{\phi}_{pr}$,}
               \end{array}
             \right.
\end{equation}
where $(u_2, v_2)=(u_{s}(\phi_{pr}), v_{s}(\phi_{pr}))$;
 see Fig. \ref{Fig3.3.4} (2).

Let $\sigma_*=\arctan\big(\frac{v_s(\xi_0)}{u_s(\xi_0)}\big)$ and let $\sigma_{po}$ be the same as that defined in (\ref{4901})--(\ref{102408}).
When $\sigma_{po}<\theta_{-}<\sigma_*$, the RIBVP (\ref{E1}), (\ref{112201a}) has an oblique shock wave solution; see Fig. \ref{Fig3.3.4} (3). When $\theta_{-}<\sigma_{po}$, the RIBVP (\ref{E1}), (\ref{112201a}) has an oblique shock-fan composite wave solution; see Fig. \ref{Fig3.3.4} (4).
As in Section 3.3.2, we can construct the 2-shock curve ${\it S}_{2}(u_0, 0)$ and the 2-shock-fan composite wave curve ${\it SF}_{2}(u_0, 0)$.
Then the rarefactive 2-wave curve of $(u_0, 0)$ for $\tau_1^e<\tau_0<\tau_1^i$ is ${\it W}_{2}(u_0, 0)={\it F}_{2}(u_0, 0)+{\it FS}_{2}(u_0, 0)+{\it S}_{2}(u_0, 0)+{\it SF}_{2}(u_0, 0)$; see Fig. \ref{Fig3.3.5}.

\begin{figure}[htbp]
\begin{center}
\includegraphics[scale=0.38]{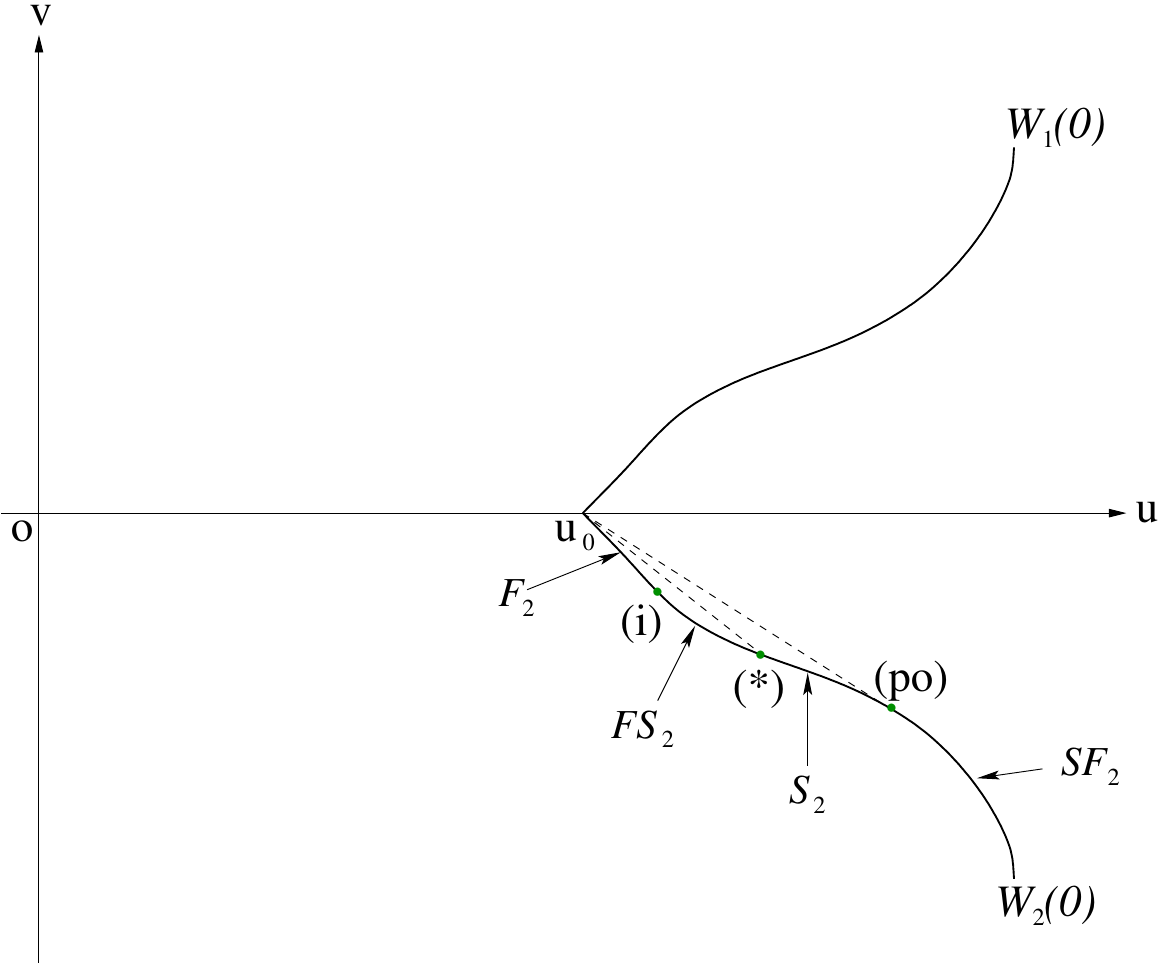}
\caption{\footnotesize The rarefactive wave curves ${\it W}_{1}(u_0, 0)$ and ${\it W}_{2}(u_0, 0)$ for $\tau_1^e<\tau_0<\tau_1^i$, in which $(i)$ and $(*)$ represent the points $(\bar{u}(\xi_i), \bar{v}(\xi_i))$
 and $(u_s(\xi_0), v_s(\xi_0))$.}
\label{Fig3.3.5}
\end{center}
\end{figure}

\begin{lem}\label{lem3.9}
For any fixed $\tau_0\in (\tau_1^e, \tau_1^i)$, when $u_0$ is sufficiently large there holds
$$
\frac{{\rm d}\sigma}{{\rm d}\tau}<0\quad \mbox{along}~ {\it W}_{2}(u_0, 0).
$$
\end{lem}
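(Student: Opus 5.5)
The plan is to check the sign of $\frac{{\rm d}\sigma}{{\rm d}\tau}$ on each of the four pieces of ${\it W}_{2}(u_0, 0)={\it F}_{2}+{\it FS}_{2}+{\it S}_{2}+{\it SF}_{2}$ in turn, and to use the largeness of $u_0$ only on the two shock pieces. Throughout, $\tau$ is the specific volume of the state, i.e. $\tau=\hat{\tau}(q)$, which is increasing in $q$ for $q>u_0$.

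\emph{Fan pieces.} On ${\it F}_{2}(u_0,0)$ (with $\xi\in[\xi_i,\xi_0]$, so $\bar\tau\le\tau_1^i$ and $p''>0$), system (\ref{102304}) gives $\bar\tau'(\xi)<0$ and $\bar\sigma'(\xi)>0$. Hence $\frac{{\rm d}\sigma}{{\rm d}\tau}=\bar\sigma'/\bar\tau'<0$. On ${\it SF}_{2}(u_0,0)$ the same conclusion follows from (\ref{2201}), exactly as in Lemma \ref{lem3.8}, since $\bar\tau>\tau_{po}>\tau_2^i$ there.

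\emph{The shock piece ${\it S}_{2}(u_0,0)$.} Here the front state is $(u_0,0)$ and the back state has $\tau\in(\tau_*,\tau_{po}]$, where $\tau_*=\psi_{pr}(\tau_0)$ (the lower endpoint in Proposition \ref{tran}). I will reuse the formula from the proof of Lemma \ref{lem3.8},
$$\frac{{\rm d}\sigma}{{\rm d}\tau}=\frac{(1-\tau\rho_0)(1-\tau\rho_0\tan^2\phi)}{1+\tau^2\rho_0^2\tan^2\phi}\frac{{\rm d}\phi}{{\rm d}\tau}-\frac{\rho_0\tan\phi}{1+\tau^2\rho_0^2\tan^2\phi}.$$
Proposition \ref{tran} (inequality (\ref{43012})) gives ${\rm d}\phi/{\rm d}\tau>0$ on the open interval. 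The second term is negative because $\phi\in(0,\pi/2)$. Also $1-\tau\rho_0<0$. Since $\sin\phi=O(1/u_0)$ uniformly for $\tau\le\tau_{po}$, we get $1-\tau_{po}\rho_0\tan^2\phi>0$ for $u_0$ large, so the first term is also negative. At the endpoint $\tau=\tau_*$ the derivative ${\rm d}\phi/{\rm d}\tau$ vanishes, but the second term stays strictly negative.

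\emph{The fan-shock piece ${\it FS}_{2}(u_0,0)$.} This is the main obstacle, because both the front state $(\bar u,\bar v)(\phi)$ and the back volume $\psi_{pr}(\bar\tau(\phi))$ move with $\phi$. I will parametrize by $\phi\in(\xi_i,\xi_0]$. The back state has $\tau=\psi_{pr}(\bar\tau(\phi))$. From $\psi_{pr}'<0$ (Proposition \ref{pre}) and $\bar\tau'<0$ we get ${\rm d}\tau/{\rm d}\phi>0$. It therefore suffices to show ${\rm d}\sigma_s/{\rm d}\phi<0$.

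For this I will write $\sigma_s=\phi-\arctan(N_s/L_s)$, using (\ref{81803}). Here $N_s=\psi_{pr}(\bar\tau)\bar c/\bar\tau=O(1)$ and $L_s=\sqrt{\bar q^2-\bar c^2}\sim u_0$. Then
$$\frac{{\rm d}\sigma_s}{{\rm d}\phi}=1-\frac{{\rm d}}{{\rm d}\phi}\arctan\frac{N_s}{L_s}.$$
Expressing $\frac{{\rm d}}{{\rm d}\phi}$ through (\ref{102304}) shows that $\bar q',\bar\tau',\bar c'$ are all of order $u_0\cos\bar A\approx u_0$. Consequently $\frac{{\rm d}}{{\rm d}\phi}(N_s/L_s)\approx \frac{1}{u_0}\frac{{\rm d}N_s}{{\rm d}\bar\tau}\,\bar\tau'$, which is of order one and positive, because $N_s$ decreases in $\bar\tau$. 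The task is then to show this quantity exceeds $1$ for large $u_0$.

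To do so I expect to compute the leading-order term explicitly. Since $\bar\tau'\approx-2u_0\bar c/(\bar\tau^2p''(\bar\tau))$, the required inequality reduces to a purely thermodynamic one,
$$-\frac{{\rm d}}{{\rm d}\tau_f}\Big(\frac{\psi_{pr}(\tau_f)c_f}{\tau_f}\Big)\cdot\frac{2c_f}{\tau_f^2p''(\tau_f)}>1 .$$
I will verify this by differentiating the pre-sonic relation (\ref{5701}) implicitly to obtain $\psi_{pr}'$. If that inequality cannot be closed in general, the fallback is to take $u_0$ larger and use the strict monotonicity of $\psi_{pr}$ together with compactness of $\tau_f\in[\tau_1^i-\delta,\tau_0]$.

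Finally, I will check continuity of $\sigma$ at the junctions $(i)$, $(*)$ and $(po)$, so that the monotonicity on the four pieces combines along the whole curve.
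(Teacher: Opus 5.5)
Your plan follows the paper's proof. The decomposition ${\it W}_2={\it F}_2+{\it FS}_2+{\it S}_2+{\it SF}_2$ is the same, and so is the treatment of each piece:
\begin{itemize}
\item the fan pieces are read off from (\ref{102304}) and (\ref{2201});
\item ${\it S}_2(u_0,0)$ is handled by the formula from Lemma \ref{lem3.8}, and you correctly start the back states at $\psi_{pr}(\tau_0)$ rather than at $\tau_0$;
\item ${\it FS}_2(u_0,0)$ is handled by direct differentiation in $\phi$, and writing $\sigma_s=\phi-\arctan(N_s/L_s)$ is equivalent to the paper's computation of $u_sv_s'-u_s'v_s$.
\end{itemize}
There is one slip on ${\it S}_2$. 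By (\ref{5701}), $p'(\tau)<\frac{2h(\tau)-2h(\tau_0)}{\tau^2-\tau_0^2}$ at $\tau=\psi_{pr}(\tau_0)$, so ${\rm d}\phi/{\rm d}\tau>0$ at that end. It is at $\tau=\tau_{po}$ that ${\rm d}\phi/{\rm d}\tau=0$; your argument (second term strictly negative) covers that endpoint anyway.

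On ${\it FS}_2$ you leave the key inequality open, but it closes at once. Using (\ref{102304}) one finds exactly $L_s'=-\bar c$ and $N_s'=L_sK$, where
\[
K=\frac{\psi_{pr}(\bar\tau)}{\bar\tau}+\big(-\psi_{pr}'(\bar\tau)\big)\frac{-2p'(\bar\tau)}{\bar\tau\, p''(\bar\tau)}
\]
is precisely your thermodynamic quantity. Then $K>1$ follows from $\psi_{pr}(\bar\tau)>\bar\tau$, $\psi_{pr}'<0$ and $p'<0<p''$, so no implicit differentiation is needed. Your fallback would have been vacuous in any case, since $K$ depends only on $\bar\tau$, not on $u_0$.

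I would also avoid closing the argument by a leading-order expansion plus compactness. That is delicate here, because $\bar\tau$ runs over the non-compact interval $[\tau_0,\tau_1^i)$ and $\psi_{pr}(\bar\tau)/\bar\tau-1\to 0$ as $\bar\tau\to\tau_1^i$. Instead use the exact identity
\[
q_s^2\,\frac{{\rm d}\sigma_s}{{\rm d}\phi}=L_s^2(1-K)+N_s(N_s-\bar c)
=-L_s^2\big(-\psi_{pr}'\big)\frac{-2p'}{\bar\tau\, p''}+\bar c^2\Big(1-\frac{\psi_{pr}}{\bar\tau}\Big)\Big(\cot^2\bar A-\frac{\psi_{pr}}{\bar\tau}\Big),
\]
which is the paper's factorization. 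Both terms are negative as soon as $\cot^2\bar A>\psi_{pr}(\tau_0)/\tau_0\geq\psi_{pr}(\bar\tau)/\bar\tau$, and this holds uniformly along ${\it FS}_2$ once $u_0$ is large.
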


\begin{proof}
We first prove
\begin{equation}\label{112905}
\frac{{\rm d}\sigma}{{\rm d}\tau}<0\quad \mbox{along}~ {\it FS}_2(u_0, 0).
\end{equation}
We use (\ref{81803}) to compute
\begin{equation}\label{112901}
u_s(\phi)v_s'(\phi)-u_s'(\phi)v_s(\phi)=-L_s(\phi)N_s'(\phi)+N_s(\phi)L_s'(\phi)+N_s^2(\phi)+L_s^2(\phi).
\end{equation}
From (\ref{102304}), (\ref{102304d}), and (\ref{112902}) we have
\begin{equation}\label{112903}
L_s'(\phi)=-\bar{c}(\phi)
\end{equation}
and
\begin{equation}\label{112904}
N_s'(\phi)=\frac{\psi_{pr}'(\bar{\tau}(\phi))\bar{c}(\phi)\bar{\tau}'(\phi)}{\bar{\tau}(\phi)}
+\frac{\psi_{pr}(\bar{\tau}(\phi))}{\bar{\tau}(\phi)}L_s(\phi).
\end{equation}
Inserting (\ref{112903}) and  (\ref{112904}) into  (\ref{112901}), we get
$$
\begin{aligned}
&u_s(\phi)v_s'(\phi)-u_s'(\phi)v_s(\phi)\\=&-L_s(\phi)\frac{\psi_{pr}'(\bar{\tau}(\phi))\bar{c}(\phi)\bar{\tau}'(\phi)}{\bar{\tau}(\phi)}
-L_s^2(\phi)\frac{\psi_{pr}(\bar{\tau}(\phi))}{\bar{\tau}(\phi)}-\bar{c}(\phi)N_s(\phi)+N_s^2(\phi)+L_s^2(\phi)
\\=&-L_s(\phi)\frac{\psi_{pr}'(\bar{\tau}(\phi))\bar{c}(\phi)\bar{\tau}'(\phi)}{\bar{\tau}(\phi)}
+\bar{c}^2(\phi)\left(1-\frac{\psi_{pr}(\bar{\tau}(\phi))}{\bar{\tau}(\phi)}\right)
\left(\cot^2\bar{A}(\phi)-\frac{\psi_{pr}(\bar{\tau}(\phi))}{\bar{\tau}(\phi)}\right).
\end{aligned}
$$

When $u_0$ is sufficiently large, we have $\cot^2\bar{A}(\phi)>\frac{\psi_{pr}(\tau_0)}{\tau_0}$ for $\phi\in (\xi_i, \xi_0]$. In addition, by Proposition \ref{pre} and (\ref{102304}) we have $\psi_{pr}'(\bar{\tau}(\phi))<0$ and $\bar{\tau}'(\phi)<0$ for $\phi\in (\xi_i, \xi_0]$.
Then by (\ref{81803})
 we have that when $u_0$ is sufficiently large,
$$
\begin{aligned}
\frac{{\rm d}\sigma}{{\rm d}\tau}=~&\frac{u_s(\phi)v_s'(\phi)-u_s'(\phi)v_s(\phi)}{q_s^2(\phi)\bar{\tau}'(\phi)\psi_{pr}'(\bar{\tau}(\phi))}
\\[6pt]=~&\frac{1}{q_s^2(\phi)}\left[
\frac{-L_s(\phi)\bar{c}(\phi)}{\bar{\tau}(\phi)}+
\frac{\bar{c}^2(\phi)}{\bar{\tau}'(\phi)\psi_{pr}'(\bar{\tau}(\phi))}\left(1-\frac{\psi_{pr}(\bar{\tau}(\phi))}{\bar{\tau}(\phi)}\right)
\left(\cot^2\bar{A}(\phi)-\frac{\psi_{pr}(\bar{\tau}(\phi))}{\bar{\tau}(\phi)}\right)\right]
\\[6pt]<~&\frac{1}{q_s^2(\phi)}\left[
\frac{-L_s(\phi)\bar{c}(\phi)}{\bar{\tau}(\phi)}+
\frac{\bar{c}^2(\phi)}{\bar{\tau}'(\phi)\psi_{pr}'(\bar{\tau}(\phi))}\left(1-\frac{\psi_{pr}(\bar{\tau}(\phi))}{\bar{\tau}(\phi)}\right)
\left(\cot^2\bar{A}(\phi)-\frac{\psi_{pr}(\tau_0)}{\tau_0}\right)\right]\\[6pt]<~&0 \quad \mbox{along}~ {\it FS}_2(u_0, 0).
\end{aligned}
$$
This completes the proof for (\ref{112905}).

In view of (\ref{102304}), we have $\frac{{\rm d}\sigma}{{\rm d}\tau}<0$ along ${\it F}_{2}(u_0, 0)$ and ${\it SF}_{2}(u_0, 0)$. As in the proof of Lemma \ref{lem3.8}, we have that when $u_0$ is sufficiently large,  $\frac{{\rm d}\sigma}{{\rm d}\tau}<0$ along ${\it S}_{2}(u_0, 0)$.
This completes the proof of the lemma.
\end{proof}

By a direct computation we also have
$$
u_s'(\phi)\cos\phi+v_s'(\phi)\sin\phi=N_s(\phi)-\bar{c}(\phi)
=\bar{c}(\phi)\left(\frac{\psi_{pr}(\bar{\tau}(\phi))}{\bar{\tau}(\phi)}-1\right)
$$
for $\phi\in (\xi_i, \xi_0]$. Hence, we get $u_s'(\phi)\cos\phi+v_s'(\phi)\sin\phi=0$ for $\phi=\xi_i$. This implies that the wave curves ${\it F}_2(u_0, 0)$ and ${\it FS}_2(u_0, 0)$ are in at least first-order contact at the point $(\bar{u}(\xi_i), \bar{v}(\xi_i))$.

 By (\ref{102401}) and (\ref{81803}) we have that for $\phi=\xi_0$,
$$
u_s'(\phi)~=~
N_s'(\xi_0)\sin\xi_0+N_s(\xi_0)\cos\xi_0-2c_0\cos\xi_0,
$$
$$
v_s'(\phi)~=~
-N_s'(\xi_0)\cos\xi_0+N_s(\xi_0)\sin\xi_0+u_0(\cos^2\xi_0-\sin^2\xi_0),
$$
$$
u_b'(\phi)~=~
N_b'(\xi_0)\sin\xi_0+N_b(\xi_0)\cos\xi_0-2c_0\cos\xi_0,
$$
and
$$
v_b'(\phi)~=~
-N_s'(\xi_0)\cos\xi_0+N_s(\xi_0)\sin\xi_0+u_0(\cos^2\xi_0-\sin^2\xi_0).
$$

Since $$N_b(\xi_0)=\psi_{pr}(\tau_0)\sqrt{-\frac{2h(\psi_{pr}(\tau_0))-2h(\tau_0)}{\psi_{pr}^2(\tau_0)-\tau_0^2}}\quad
\mbox{and} \quad N_s(\xi_0)=\frac{\psi_{pr}(\tau_0)c_0}{\tau_0},$$ by (\ref{5701})
we have $N_b(\xi_0)=N_s(\xi_0)$.
We next prove $N_b'(\xi_0)=N_s'(\xi_0)$.

From (\ref{102401}) we have
\begin{equation}\label{122301}
N_b'(\xi_0)=\frac{{\rm d}}{{\rm d}\phi}\left(\frac{u_0\tau_b\sin\phi}{\tau_0}\right)\bigg|_{\phi=\xi_0}=
\frac{u_0\psi_{pr}(\tau_0)\cos\xi_0}{\tau_0}+\frac{c_0}{\tau_0}\frac{{\rm d}\tau_b}{{\rm d}\phi}\bigg|_{\phi=\xi_0}.
\end{equation}
From (\ref{112904}) we have
\begin{equation}\label{122302}
N_s'(\xi_0)=
\frac{u_0\psi_{pr}(\tau_0)\cos\xi_0}{\tau_0}+\frac{c_0}{\tau_0}\psi_{pr}'(\tau_0)\bar{\tau}'(\xi_0).
\end{equation}

As in (\ref{122201a}), we have
\begin{equation}\label{122202a}
\begin{aligned}
\frac{{\rm d}\tau_b}{{\rm d}\phi}\bigg|_{\phi=\xi_0}=~&\frac{u_0(\psi_{pr}^2(\tau_0)-\tau_0^2)\cos\xi_0}{-\tau_0\psi_{pr}(\tau_0)}
\sqrt{\frac{2h(\psi_{pr}(\tau_0))-2h(\tau_0)}{\tau_0^2-\psi_{pr}^2(\tau_0)}}\\ \qquad\qquad&\cdot\left(p'(\psi_{pr}(\tau_0))
-\frac{2h(\psi_{pr}(\tau_0))-2h(\tau_0)}{\psi_{pr}^2(\tau_0)-\tau_0^2}\right)^{-1}
\\ &=\frac{u_0c_0(\psi_{pr}^2(\tau_0)-\tau_0^2)\cos\xi_0}{-\tau_0^2\psi_{pr}(\tau_0)}
\cdot\left(p'(\psi_{pr}(\tau_0))
-\frac{2h(\psi_{pr}(\tau_0))-2h(\tau_0)}{\psi_{pr}^2(\tau_0)-\tau_0^2}\right)^{-1}.
\end{aligned}
\end{equation}

By the definition of the function $\psi_{pr}(\tau)$, we have
\begin{equation}\label{122303}
\psi_{pr}'(\tau_0)=\frac{p''(\tau_0)(\psi_{pr}^2(\tau_0)-\tau_0^2)}{2\psi_{pr}(\tau_0)}\cdot\left(p'(\psi_{pr}(\tau_0))
-\frac{2h(\psi_{pr}(\tau_0))-2h(\tau_0)}{\psi_{pr}^2(\tau_0)-\tau_0^2}\right)^{-1}.
\end{equation}
By the second equation of (\ref{102304}) we have
\begin{equation}\label{122304}
\bar{\tau}'(\xi_0)=-\frac{2u_0c_0\cos \xi_0}{\tau_0^2p''(\tau_0)}.
\end{equation}

Combining (\ref{122301})--(\ref{122304}) we have $N_s'(\xi_0)=N_b'(\xi_0)$.
Thus, $(u_s'(\xi_0), v_s'(\xi_0))=(u_b'(\xi_0), v_b'(\xi_0))$. This implies that
the wave curves
 ${\it S}_2(u_0, 0)$ and ${\it FS}_2(u_0, 0)$ are in at least first-order contact at the point $(u_s(\xi_0), v_s(\xi_0))$.

\subsubsection{$\tau_0<\tau_1^{e}$}
We first solve the initial value problem (\ref{102304}), (\ref{102407}).
 Since $\tau_0<\tau_1^e$, there exist $\xi_i$ and $\xi_{e}$, where  $\xi_i<\xi_{e}<\xi_0$, such that $\bar{\tau}(\xi_i)=\tau_1^i$ and $\bar{\tau}(\xi_{e})=\tau_1^e$.
Let $\sigma_i$ be the same as that defined in (\ref{81802}) and
$$
\bar{\sigma}_{_M}=\sup\limits_{\phi\in (\xi_i, \xi_{e}]}\arctan\bigg(\frac{v_{s}(\phi)}{u_{s}(\phi)}\bigg)\quad \mbox{and}\quad \bar{\sigma}_{m}=\inf\limits_{\phi\in (\xi_i, \xi_{e}]}\arctan\bigg(\frac{v_{s}(\phi)}{u_{s}(\phi)}\bigg),
$$
where the functions $u_s(\phi)$, $v_s(\phi)$, $\phi\in(\xi_i, \xi_e]$ are the same as that defined in (\ref{81803}).
As in Section 3.3.3, when $\theta_{-}>\sigma_i$ the RIBVP (\ref{E1}), (\ref{112201a}) admits a centered simple wave solution;  when $\theta_{-}\in(\bar{\sigma}_{m}, \bar{\sigma}_{_M})$ the RIBVP (\ref{E1}), (\ref{112201a}) admits a fan-shock composite wave solution; see Fig. \ref{Fig3.3.6} (1) and (2).

\begin{figure}[htbp]
\begin{center}
\includegraphics[scale=0.51]{2D7.pdf}  \qquad\qquad\includegraphics[scale=0.47]{2D6.pdf}\qquad\qquad
\includegraphics[scale=0.47]{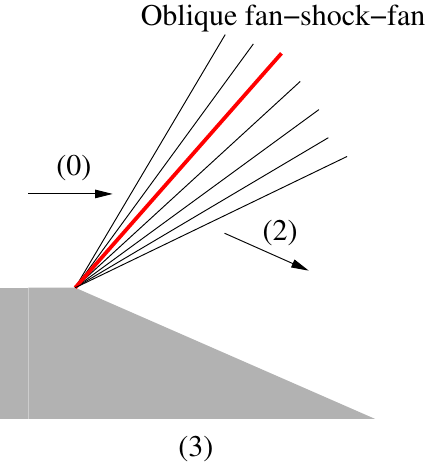}
\caption{\footnotesize Oblique wave at the corner $D$ for  $\tau_0\leq \tau_1^{e}$.}
\label{Fig3.3.6}
\end{center}
\end{figure}

We now consider the possibility of the occurrence of an oblique fan-shock-fan composite wave composted of a centered simple wave, a double-sonic shock, and another centered simple wave.
 The centered simple wave adjacent to the constant state $(0)$ can be represented by
 $$
 (u, v)=(\bar{u}, \bar{v})(\xi), \quad \xi_{e}\leq \xi\leq \xi_0,
 $$
 where $(\bar{u}, \bar{v})(\xi)$ is defined by (\ref{102306}).
 The inclination angle of the double-sonic shock is $\xi_{e}$, and the states on the front and back sides of the shock
 are $(\bar{u}, \bar{v})(\xi_{e})$ and $(u_s, v_s)(\xi_{e})$, respectively.

 In order to construct the other centered simple wave adjacent up to the double-sonic shock, we solve (\ref{102304}) with data
 \begin{equation}\label{81805}
 (\bar{q}, \bar{\tau}, \bar{\sigma})(\xi_{e})=(q_2^e, \tau_2^e, \sigma_2^e),
 \end{equation}
 where $q_2^e=\sqrt{u_s^2(\xi_{e})+v_s^2(\xi_{e})}$ and $\sigma_2^e=\arctan(\frac{v_s(\xi_{e})}{u_s(\xi_{e})})$.
Let
$$
\xi_v=\sigma_{2}^e-\int_{q_{2}^e}^{{q}_{\infty}}\frac{\sqrt{q^2-\hat{c}^2(q)}}{q\hat{c}(q)}{\rm d}q.
$$
Then the initial value problem (\ref{102304}), (\ref{81805}) admits a solution on $(\xi_v, \xi_{e})$. 

For convenience we use subscripts `$l$' and `$r$' to denote the solutions of the initial value problems (\ref{102304}, \ref{102407}) and (\ref{102304}, \ref{81805}), respectively.
When
 $\xi_v<\theta_{-}<\sigma_2^e$, there exists a $\xi_2\in (\xi_v, \xi_{e})$ such that
$\bar{v}_r(\xi_1)=\bar{u}_r(\xi_1)\tan\theta_w$, and  the IBVP (\ref{E1}), (\ref{112201a}) admits a fan-shock-fan composite wave solution
with the following form:
\begin{equation}\label{120401}
(u, v)=\left\{
               \begin{array}{ll}
                 (u_0, 0), & \hbox{$\xi_0<\xi<\frac{\pi}{2}$;} \\[2pt]
                 (\bar{u}_l, \bar{v}_l)(\xi), & \hbox{${\xi}_e<\xi\leq \xi_0$;} \\[2pt]
                  (\bar{u}_r, \bar{v}_r)(\xi), & \hbox{${\xi}_2\leq \xi<\xi_{e}$;} \\[2pt]
                 (u_2, v_2), & \hbox{$\theta_{-}<\xi<{\xi}_2$,}
               \end{array}
             \right.
\end{equation}
where $(u_2, v_2)=(\bar{u}_r, \bar{v}_r)(\xi_2)$;
see Fig. \ref{Fig3.3.6} (3).

\begin{figure}[htbp]
\begin{center}
\includegraphics[scale=0.42]{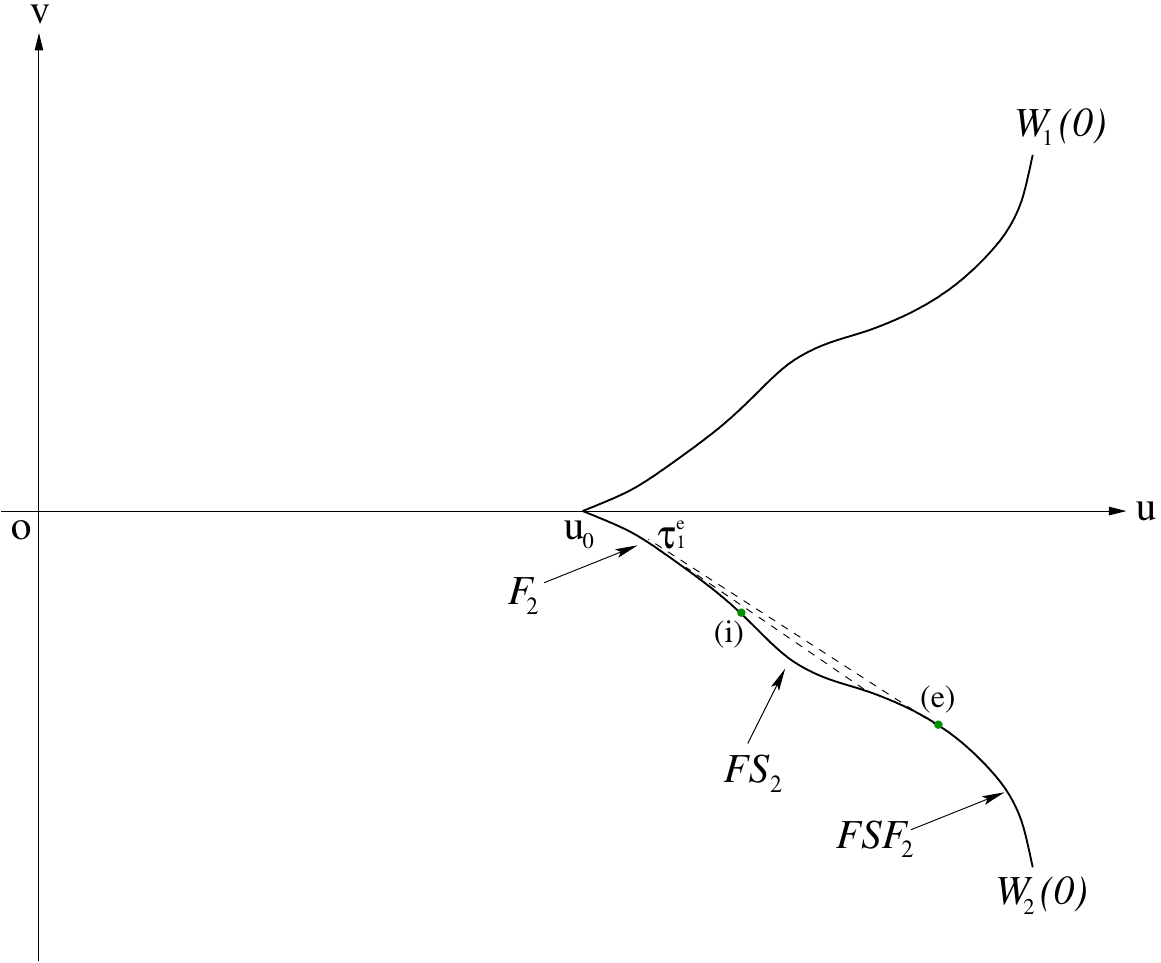}
\caption{\footnotesize  The  rarefactive wave curves ${\it W}_{1}(u_0, 0)$ and ${\it W}_{2}(u_0, 0)$ for $\tau_0<\tau_1^e$, in which $(i)$ and $(*)$ represent the points $(\bar{u}(\xi_i), \bar{v}(\xi_i))$
 and $(u_s(\xi_e), v_s(\xi_e))$.}
\label{Fig3.3.7}
\end{center}
\end{figure}

For $\tau_0<\tau_1^e$, we set
$$
{\it F}_{2}(u_0, 0):\quad u=\bar{u}_{l}(\xi), \quad v=\bar{v}_{l}(\xi), \quad \xi\in [\xi_i, \xi_0];
$$
$$
{\it FS}_{2}(u_0, 0):\quad u=u_s(\phi), \quad v=v_{s}(\phi), \quad \phi\in (\xi_{i}, \xi_{e});
$$
$$
{\it FSF}_{2}(u_0, 0):\quad u=\bar{u}_r(\xi), \quad v=\bar{v}_r(\xi), \quad \xi\in [\xi_v, \xi_e].
$$
Then the rarefactive 2-wave curve of $(u_0, 0)$ is ${\it W}_{2}(u_0, 0)={\it F}_{2}(u_0, 0)+{\it FS}_{2}(u_0, 0)+{\it FSF}_{2}(u_0, 0)$; see Fig. \ref{Fig3.3.7}.

\begin{lem}
For any fixed $\tau_0\in (0, \tau_1^e)$. When $u_0$ is sufficiently large, there holds
$$
\frac{{\rm d}\sigma}{{\rm d}\tau}<0\quad \mbox{along}~ {\it W}_{2}(u_0, 0).
$$
\end{lem}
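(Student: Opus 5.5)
The plan is to treat the three pieces of ${\it W}_{2}(u_0,0)={\it F}_{2}(u_0,0)+{\it FS}_{2}(u_0,0)+{\it FSF}_{2}(u_0,0)$ separately, exactly as in the proof of Lemma \ref{lem3.9}.

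\emph{Fan pieces.} The two fan pieces ${\it F}_{2}(u_0,0)$ (for $\xi\in[\xi_i,\xi_0]$) and ${\it FSF}_{2}(u_0,0)$ (for $\xi\in[\xi_v,\xi_e]$) are both given by solutions of the ODE system (\ref{102304}). Along these, $\sigma$ and $\tau$ are parametrized by $\xi$, so
$$\frac{{\rm d}\sigma}{{\rm d}\tau}=\frac{\bar\sigma'(\xi)}{\bar\tau'(\xi)}=\frac{p'(\bar\tau)\cos\bar A\,\bar\tau}{\bar q\bar c},$$
after dividing the third equation of (\ref{102304}) by the second. The factor $p''$ cancels, and the right-hand side is negative by (A1) whenever the flow is supersonic, which is guaranteed by (H1). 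In particular, no sign information on $p''$ is needed. This handles the centered wave $\bar\tau\in(\tau_0,\tau_1^i)$ and the post-shock wave $\bar\tau\in(\tau_2^e,+\infty)$, where $p''>0$.

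\emph{Fan-shock piece.} For ${\it FS}_{2}(u_0,0)$, now with $\phi\in(\xi_i,\xi_e)$, I would reuse verbatim the computation (\ref{112901})--(\ref{112904}). It gives
$$u_sv_s'-u_s'v_s=-L_s\frac{\psi_{pr}'(\bar\tau)\bar c\bar\tau'}{\bar\tau}+\bar c^2\Big(1-\frac{\psi_{pr}(\bar\tau)}{\bar\tau}\Big)\Big(\cot^2\bar A-\frac{\psi_{pr}(\bar\tau)}{\bar\tau}\Big).$$
Dividing by $q_s^2\,\bar\tau'(\phi)\psi_{pr}'(\bar\tau(\phi))$, which is the derivative of $\tau_s=\psi_{pr}(\bar\tau(\phi))$ along the curve, yields ${\rm d}\sigma/{\rm d}\tau$.

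The needed sign facts are these. First, $\psi_{pr}'<0$ on $(\tau_1^e,\tau_1^i)$ by Proposition \ref{pre}, and $\bar\tau'<0$. Second, $\psi_{pr}(\bar\tau)>\bar\tau$. Third, $\cot^2\bar A(\phi)>\psi_{pr}(\bar\tau(\phi))/\bar\tau(\phi)$ on the whole interval.

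\emph{Main obstacle.} The third fact is the step requiring care, because now $\bar\tau$ ranges over $(\tau_1^e,\tau_1^i)$ rather than starting at $\tau_0$. I would argue as follows. Since $\psi_{pr}$ is decreasing and $\bar\tau\ge\tau_1^e$, we have $\psi_{pr}(\bar\tau)/\bar\tau\le\tau_2^e/\tau_1^e$, using Proposition \ref{pre} and the limit value $\psi_{pr}\le\tau_2^e$. This bound is independent of $u_0$.

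Meanwhile $\cot^2\bar A=(\bar q^2-\bar c^2)/\bar c^2$. For $\tau\le\tau_2^e$, $\bar c$ is bounded, while $\bar q\ge\check q(\tau)$. By Bernoulli, $\check q^2=u_0^2+2h(\tau_0)-2h(\tau)$, and $h$ is decreasing, so $\bar q\ge u_0$. Hence for $u_0$ large this inequality holds uniformly. If $\psi_{pr}$ is not defined up to $\tau_1^e$, I would instead use continuity of $\psi_{pr}$ on the compact closure of the $\tau$-range, i.e. bound it by $\tau_2^e$.

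With all three facts in hand, the displayed expression is negative, exactly as in Lemma \ref{lem3.9}. Finally, $\tau$ is monotone along the whole curve, including across the junctions at $\xi_i$ and at the double-sonic shock. This is because along ${\it FS}_2$ the back state $\tau_s$ runs from $\tau_1^i$ up to $\psi_{pr}(\tau_1^e)=\tau_2^e$, matching ${\it FSF}_2$. Hence the sign conclusion holds along all of ${\it W}_{2}(u_0,0)$.
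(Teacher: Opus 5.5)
Your proof is correct and follows the same route as the paper, whose proof simply defers to the piecewise argument of Lemma \ref{lem3.9} applied to ${\it F}_{2}$, ${\it FS}_{2}$ and ${\it FSF}_{2}$. Your one real adaptation is the detail the paper omits: $\psi_{pr}(\tau_0)/\tau_0$ is undefined here since $\tau_0<\tau_1^e$, so you replace it by the $u_0$-independent bound $\psi_{pr}(\bar\tau)/\bar\tau<\tau_2^e/\tau_1^e$ from Proposition \ref{pre}, and this works.
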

\begin{proof}
The proof is similar to that of Lemma \ref{lem3.9}; we omit the details.
\end{proof}

\subsection{Classification of oblique wave interactions}
By the symmetry, we can construct the oblique wave around the corner $B$.
Depending on the oncoming flow state and the flare angles of the divergent duct, i.e., $(u_0, 0, \tau_0)$ and $\theta_{\pm}$,  thirteen distinct types of oblique wave interactions may occur near the inlet of the divergent duct; see Fig. \ref{Fig1.3}.

\section{Hyperbolic boundary value problems}
In this section we study some hyperbolic boundary value problems for (\ref{E1}) with the Bernoulli constant $\mathcal{B}=\frac{u_0^2}{2}+h(\tau_0)$. The solutions of these hyperbolic boundary value problems will be used as building blocks of the supersonic flows within the divergent duct.

\subsection{Standard Gourst problem}
Let $\widetilde{PE}$: $y=y_{+}(x)$ $(x_{_P}\leq x\leq x_{_E})$ and $\widetilde{PF}$: $y=y_{-}(x)$ $(x_{_P}\leq x\leq x_{_F})$ be two given smooth curves which satisfy
\begin{equation}\label{102307}
y_{+}(x_{_P})=y_{-}(x_{_P}), \quad y_{+}(x)>y_{-}(x)\quad \mbox{for}~ x_{_P}<x<\min\{x_{_E}, x_{_F}\};
\end{equation}
see Fig. \ref{Fig4.1}.
We give the boundary condition
\begin{equation}\label{6403a}
(u, v)=\left\{
               \begin{array}{ll}
                 (u_{+}, v_{+})(x,y), & \hbox{$(x,y)\in \widetilde{PE}$;} \\[2pt]
                 (u_{-}, v_{-})(x,y), & \hbox{$(x,y)\in \widetilde{PF}$,}
               \end{array}
             \right.
\end{equation}
where $(u_{+}, v_{+})\in C^1(\widetilde{PE})$ and $(u_{-}, v_{-})\in C^1(\widetilde{PF})$ such that
the following conditions hold:
\begin{equation}\label{6401a}
(u_{+}, v_{+})(P)= (u_{-}, v_{-})(P), \quad \tau(P)>\tau_2^i;
\end{equation}
\begin{equation}\label{6401}
(r, s)\in \Pi, \quad  s=\mbox{Const.},
 \quad \mbox{and}\quad\alpha=\arctan \big(y_{+}'(x)\big) \quad \mbox{on}~ \widetilde{PE};
\end{equation}
\begin{equation}\label{6402}
(r, s)\in \Pi, \quad r=\mbox{Const.}
\quad \mbox{and}\quad\beta=\arctan \big(y_{-}'(x)\big)   \quad \mbox{on} ~ \widetilde{PF};
\end{equation}
\begin{equation}\label{2301}
\frac{{\rm d}\rho(x, y_{+}(x))}{{\rm d}x}<0\quad \mbox{for}~ x\in [x_{_P}, x_{_E}]\quad \mbox{and}\quad
\frac{{\rm d}\rho(x, y_{-}(x))}{{\rm d}x}<0\quad \mbox{for}~ x\in [x_{_P}, x_{_F}].
\end{equation}
 We look for a solution of (\ref{E1}) in a sectorial domain between $\widetilde{PE}$ and $\widetilde{PF}$ such that it satisfies the boundary condition (\ref{6403a}).
From (\ref{6401}) and (\ref{6402}) we see that  $\widetilde{PE}$ is a $C_{+}$ characteristic curve  from $P(x_{_P}, y_{+}(x_{_P}))$ to $E(x_{_E}, y_{+}(x_{_E}))$, and $\widetilde{PF}$ is a  $C_{-}$ characteristic curve from $P$ to $F(x_{_F}, y_{-}(x_{_F}))$. 

\begin{figure}[htbp]
\begin{center}
\includegraphics[scale=0.42]{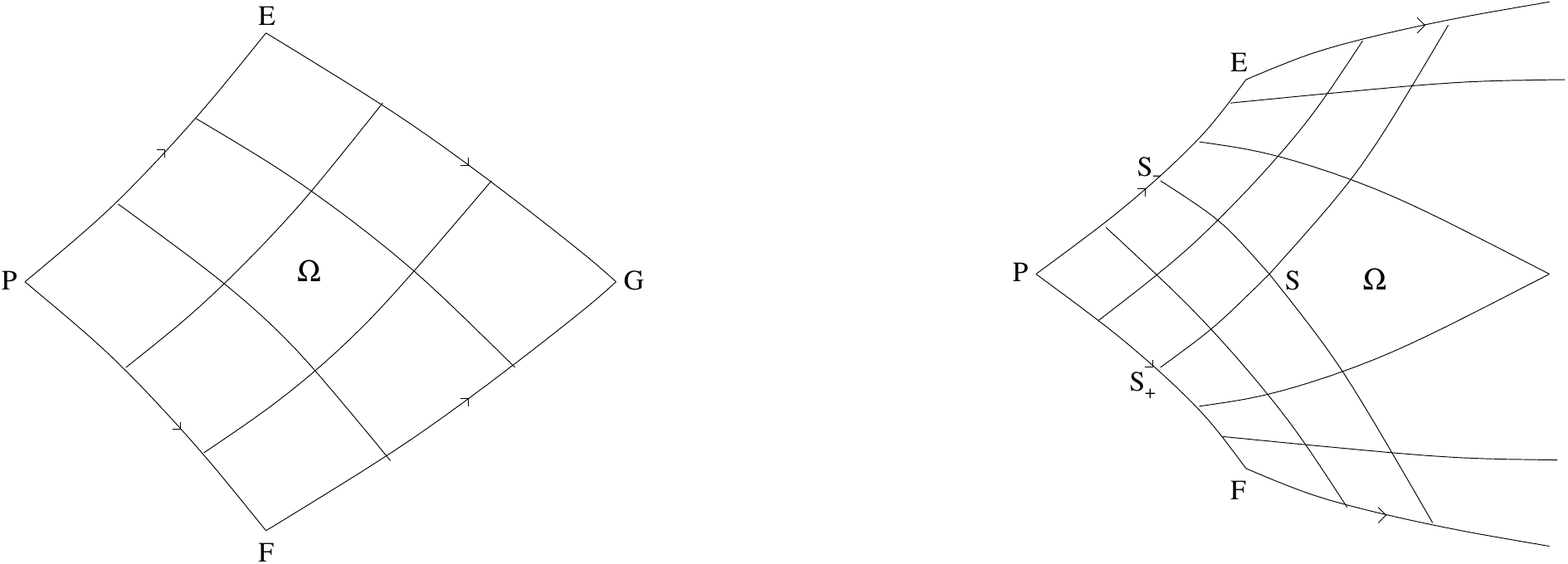}
\caption{\footnotesize A Goursat problem.  Left: $r(E)-s(F)<2\mathcal{R}$; right: $r(E)-s(F)\geq 2\mathcal{R}$.}
\label{Fig4.1}
\end{center}
\end{figure}

\begin{lem}\label{lem1}
Assume that conditions (\ref{6401a})--(\ref{2301}) hold.
Assume as well
$\rho(E)>0$ and $\rho(F)>0$.
Then the boundary value problem (\ref{E1}), (\ref{6403a}) admits a global $C^1$ solution on a curvilinear quadrilateral domain $\overline{\Omega}$ bounded by characteristic curves $\widetilde{PE}$, $\widetilde{PF}$, $C_{+}^{F}$, and $C_{-}^{E}$; see Fig. \ref{Fig4.1}. Moreover, the solution satisfies
\begin{equation}\label{113001a}
\rho>0, \quad \bar{\partial}_{\pm}\rho<0, \quad \sigma_y>0, \quad \mbox{and}\quad (r, s)\in \Pi\quad \mbox{in}~ \overline{\Omega}.
\end{equation}
\end{lem}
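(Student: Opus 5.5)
The plan is the standard one for Goursat problems of 2D steady potential flow: local existence from classical theory, plus a priori bounds on the solution and its derivatives, then a continuity argument to extend the solution to all of $\overline{\Omega}$.

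\emph{Local existence and the invariant region.} The classical theory of Goursat problems for quasilinear hyperbolic systems in diagonal form (\ref{52801}) gives a local $C^1$ solution near $P$. Its domain of existence extends as long as three things hold: $(r,s)$ stays in a compact subset of the hyperbolic region, $\tau$ stays bounded, and the $C^1$ norm stays bounded. Since $\bar{\partial}_{+}s=0$ and $\bar{\partial}_{-}r=0$, each $C_{-}$ characteristic carries the value of $r$ taken from $\widetilde{PE}$, and each $C_{+}$ characteristic carries the value of $s$ taken from $\widetilde{PF}$. Hence $r(E)\le r\le r(P)$ and $s(P)\le s\le s(F)$ in $\Omega$. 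This is consistent with $\bar\partial_\pm\rho<0$: by (\ref{102302d}), $\rho$ decreasing along $\widetilde{PE}$ means $r$ decreasing there. As a result $(r,s)\in\Pi$ wherever the solution exists, and (H1)--(H2) keep the system hyperbolic with $\alpha,\beta\in(-\frac{\pi}{2},\frac{\pi}{2})$.

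\emph{Sign of the derivatives.} This is the key a priori estimate. Use the decomposition (\ref{53001}) (equivalently (\ref{81104})), which along $C_{+}$ is a Riccati-type ODE for $W_-:=\bar{\partial}_{-}\rho/\rho$:
\[
\bar{\partial}_{+}W_-=\frac{\tau^3p''(\tau)}{4c^2\cos^2A}\big(W_-^2+(\mathcal{H}-1)W_-W_+\big),
\]
with the symmetric equation for $W_+$ along $C_-$. Because the right-hand side is a multiple of $W_-$, the sign of $W_-$ is preserved along each $C_+$ characteristic as long as $W_\pm$ stay bounded. The initial data on $\widetilde{PF}$ are negative by (\ref{2301}), together with $\cos\beta>0$ and $\bar{\partial}_{-}\rho=\cos\beta\,\frac{d\rho}{dx}$. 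A continuity argument, taking the first point where one of $W_\pm$ vanishes, then gives $\bar{\partial}_{\pm}\rho<0$ throughout.

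\emph{Main obstacle: uniform bounds and a priori upper bounds.} Since $p''$ changes sign, the Riccati coefficient can be positive, so $W_-^2$ could drive blow-up; the a priori bound is therefore the hardest step. I would avoid estimating the Riccati equation directly and instead pass to the hodograph plane. Since $\bar\partial_\pm\rho\neq0$, the Jacobian $j\neq0$, and by (\ref{53002}) $\bar{\partial}_{\pm}\rho=\mp\rho\cos(\sigma\pm A)/(\sin 2A\,\hat{\partial}_{\pm}x)$. So bounds on $\bar\partial_\pm\rho$ are equivalent to keeping $\hat\partial_\pm x$ bounded away from $0$ and from infinity. The functions $x(r,s)$ and $y(r,s)$ solve the linear system (\ref{HT}) on the rectangle $[r(E),r(P)]\times[s(P),s(F)]$ (minus possibly the vacuum corner $r-s\ge 2\mathcal R$). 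The characteristic decomposition (\ref{6506}) is a linear hyperbolic system for $(\hat\partial_+x,\hat\partial_-x)$ with coefficients bounded on compact subsets of $\Pi$, so Gronwall gives two-sided bounds with the correct sign.

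\emph{Remaining claims and conclusion.} The assumption $\tau(P)>\tau_2^i$ helps here: there $p''>0$, and $\rho$ decreasing means $\tau$ increases away from $P$, so it stays in the convex region. It follows that $\hat{\partial}_+\lambda_-$ and $-\hat\partial_-\lambda_+$ have fixed signs by (\ref{121902a}), which I expect yields monotonicity of $\hat\partial_\pm x$ directly. Next, $\sigma_y>0$ follows from (\ref{12010a}) and (\ref{61513}), since $\sigma_y=\big(\cos\beta\bar\partial_+\sigma-\cos\alpha\bar\partial_-\sigma\big)/\sin2A$ and both terms are positive. The condition $\rho>0$ follows from $\rho(E),\rho(F)>0$ and the monotonicity of $\rho$, and the case $r(E)-s(F)\ge2\mathcal R$ is handled by restricting to $\rho>0$. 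Finally, inverting the hodograph map gives the global $C^1$ solution on $\overline\Omega$.
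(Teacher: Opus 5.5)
Your route is genuinely different from the paper's, which never leaves the physical plane. Because $\tau\ge\tau(P)>\tau_2^i$ along the way, the coefficients in (\ref{81104}) and (\ref{53001}) are positive. A first-touching argument then gives $\mathcal{M}_1<\bar{\partial}_{\pm}\rho<0$ and $\bar{\partial}_{\pm}\rho/\rho>\mathcal{M}_2$. At a first point where $\bar{\partial}_{-}\rho=\mathcal{M}_1$ one has $\bar{\partial}_{-}\rho\le\bar{\partial}_{+}\rho$, so the right-hand side is at least the coefficient times $\mathcal{F}\,\bar{\partial}_{-}\rho\,\bar{\partial}_{+}\rho>0$. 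For $\bar{\partial}_{\pm}\rho/\rho$ one needs $\mathcal{H}>0$, which is why $\tau_*$ (where $A<\pi/4$) is built into $\mathcal{M}_2$. So the $W_-^2$ term is not the threat you describe: with $W_-<0$ and $p''>0$ it pushes $W_-$ toward $0$.

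Linearizing in the hodograph plane, as the paper does for Lemmas \ref{lem4.7} and \ref{lem7}, can replace this, but two steps must be repaired.
\begin{itemize}
\item[(i)] Your orientation is reversed. By (\ref{102302d}), $\bar{\partial}_{+}\rho<0$ with $s$ fixed forces $\bar{\partial}_{+}r>0$ (this is (\ref{122701})). So the hodograph domain is $[r(P),r(E)]\times[s(F),s(P)]$, with $P$ the corner of least $r-s$. Only then do you get $\theta_{-}\le\frac{r+s}{2}\le\theta_{+}$, and, without circularity, $\tau\ge\tau(P)>\tau_2^i$ on the whole domain. That is what fixes the signs in (\ref{121902a}).
\item[(ii)] Positivity of $\hat{\partial}_{\pm}x$ is the heart of the matter, and you only ``expect'' it; Gronwall gives upper bounds only. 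What is needed: $\lambda_+>\lambda_-$ and $\hat{\partial}_{+}\lambda_->0>\hat{\partial}_{-}\lambda_+$ make (\ref{6506}) cooperative. Together with $\hat{\partial}_{+}x>0$ on $s=s(P)$ and $\hat{\partial}_{-}x>0$ on $r=r(P)$, this makes both positive and nondecreasing away from those lines, so $\hat{\partial}_{\pm}x\ge m>0$. It is this lower bound that controls $\bar{\partial}_{\pm}\rho=-\rho\cos(\sigma\pm A)/(\sin 2A\,\hat{\partial}_{\pm}x)$; both signs are minus because $\hat{\partial}_{-}=-\partial_s$. If you invert globally rather than continue a physical solution, you also owe injectivity, as in Step 2 of Lemma \ref{lem4.7}.
\end{itemize}

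The genuine gap is $\rho>0$ in $\overline{\Omega}$. Your justification fails: $\rho$ decreases away from $P$ along both families, so interior values lie below $\rho(E)$ and $\rho(F)$. Their positivity says nothing about, e.g., the state $(r(E),s(F))$.

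When $r(E)-s(F)\ge 2\mathcal{R}$, you must show that the vacuum $r-s=2\mathcal{R}$ is attained only at infinity. It must not lie on a vacuum boundary at finite distance where $C_{+}^{F}$ and $C_{-}^{E}$ would terminate; Lemma \ref{lem2} relies on exactly this. ``Restricting to $\rho>0$'' presupposes it. A bound on $|\nabla(u,v)|$ does not exclude it either, since $q$ may reach $q_\infty$ at a finite point with bounded gradient.

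The paper obtains it from $\bar{\partial}_{\pm}\rho/\rho>\mathcal{M}_2$, i.e.\ $\ln\rho$ is Lipschitz along characteristics. In your framework the equivalent requirement is a bound $\sin 2A\,\hat{\partial}_{\pm}x\ge c>0$ all the way up to the vacuum line. There $A\to 0$, and the coefficients of (\ref{6506}) degenerate because $\lambda_+-\lambda_-\to 0$. Monotonicity of $\hat{\partial}_{\pm}x$ does not give this bound; it needs a separate estimate.
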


\begin{proof}
In view of $(u_{+}, v_{+})(P)= (u_{-}, v_{-})(P)$,
the boundary value problem (\ref{E1}), (\ref{6403a}) is a standard Goursat problem (SGP),  and the local existence of a classical solution can be obtained by the method of characteristics; see Li and Yu \cite{Li-Yu} (Chapter 2). In order to extend the local solution to a global one, we need to establish a uniform a priori estimate for the $C^1$ norm of the solution.

Let $\tau_*>\tau_2^i$ be a sufficiently large constant such that
\begin{equation}\label{61601}
0<A<\frac{\pi}{4}\quad \mbox{for}~ \tau>\tau_*.
\end{equation}

Let
$$\mathcal{M}_1:=2\min\left\{\min\limits_{x\in [x_{_P}, x_{_E}]}\cos\alpha(x, y_{+})\frac{{\rm d}\rho(x, y_{+})}{{\rm d}x}, ~ \min\limits_{x\in [x_{_P}, x_{_F}]}\cos\beta(x, y_{-})\frac{{\rm d}\rho(x, y_{-})}{{\rm d}x}\right\}$$
and
$$
\mathcal{M}_2:=2\min\left\{\min\limits_{x\in [x_{_P}, x_{_E}]}\frac{\cos\alpha(x, y_{+})}{\rho(x, y_{+})}\cdot\frac{{\rm d}\rho(x, y_{+})}{{\rm d}x}, ~ \min\limits_{x\in [x_{_P}, x_{_F}]}\frac{\cos\beta(x, y_{-})}{\rho(x, y_{-})}\cdot\frac{{\rm d}\rho(x, y_{-})}{{\rm d}x},~\tau_*\mathcal{M}_1\right\}.$$
Then by (\ref{6401})--(\ref{2301}) we have
\begin{equation}\label{6405}
\left\{
  \begin{array}{ll}
    \mathcal{M}_1<\bar\partial_{+}\rho<0\quad\mbox{and}\quad\frac{\bar\partial_{+}\rho}{\rho}> \mathcal{M}_2 & \hbox{along~ $\widetilde{PE}$;} \\[6pt]
    \mathcal{M}_1<\bar\partial_{-}\rho<0\quad\mbox{and}\quad \frac{\bar\partial_{-}\rho}{\rho}>\mathcal{M}_2 & \hbox{along~ $\widetilde{PF}$.}
  \end{array}
\right.
\end{equation}

Using the first equation of (\ref{81104})  and recalling (\ref{6405}), we have
\begin{equation}\label{61602}
 \mathcal{M}_1<\bar{\partial}_{-}\rho<0\quad \mbox{along}~ \widetilde{PE}.
\end{equation}

Suppose there is a point on $\widetilde{PE}$ such that $\frac{\bar\partial_{-}\rho}{\rho}=\mathcal{M}_2$ at this point. Then by (\ref{61602}) the definition of $\mathcal{M}_2$ we have $\tau>\tau_*$ at this point. Furthermore, by the first equation of (\ref{53001}) we have that at this point,
\begin{equation}\label{61603AA}
 \bar{\partial}_{+}\Big(\frac{\bar{\partial}_{-}\rho}{\rho}\Big)=
    \frac{\tau^3p''(\tau)}{4c^2\cos^{2}A }\bigg(\big(\frac{\bar{\partial}_{-}\rho}{\rho}\big)^2+(\mathcal{H}-1)\frac{\bar{\partial}_{-}\rho}{\rho}\frac{
    \bar{\partial}_{+}\rho}{\rho}\bigg)>
     \frac{\tau^3p''(\tau)\mathcal{H}}{4c^2\cos^{2}A }\frac{\bar{\partial}_{-}\rho}{\rho}\frac{
    \bar{\partial}_{+}\rho}{\rho}>0.
\end{equation}
Therefore, by $(\frac{\bar\partial_{-}\rho}{\rho})(P)>\mathcal{M}_2$ and the argument of continuity we have
\begin{equation}\label{61603}
 \frac{\bar{\partial}_{-}\rho}{\rho}>\mathcal{M}_2\quad \mbox{on}~\widetilde{PE}.
\end{equation}
Similarly, we have
\begin{equation}\label{6406}
    \mathcal{M}_1<\bar\partial_{+}\rho<0\quad\mbox{and}\quad  \frac{\bar{\partial}_{-}\rho}{\rho}>\mathcal{M}_2\quad \mbox{on}~ \widetilde{PF}.
\end{equation}

In what follows, we are going to prove that the solution of the Goursat problem satisfies
\begin{equation}\label{6404}
0<A<\frac{\pi}{2}, \quad
0<\rho<\rho(P), \quad \mathcal{M}_1<\bar\partial_{\pm}\rho<0,\quad\mbox{and}\quad\frac{\bar\partial_{\pm}\rho}{\rho}>\mathcal{M}_2.
\end{equation}

Firstly, by (\ref{102301}) we have
$$
0<A<\frac{\pi}{2}\quad \mbox{for}~ 0<\rho\leq\rho(P).
$$

Let $S$ be an arbitrary point in the determinate region of the standard Goursat problem, the backward $C_{+}$ and $C_{-}$ characteristic curves issued from $S$ intersect $\widetilde{PF}$ and $\widetilde{PE}$ at some points $S_{+}$ and $S_{-}$, respectively. Let $\overline{\Omega}_{_S}$ be a closed domain bounded by characteristic curves $\widetilde{PS_{+}}$, $\widetilde{PS_{-}}$, $\widetilde{S_{-}S}$, and $\widetilde{S_{+}S}$. We assert that if the inequalities in (\ref{6404}) hold for every point in $\overline{\Omega}_{_S}\setminus\{S\}$, then they also hold at $S$.

Firstly, by the assumption that $\frac{\bar{\partial}_{\pm}\rho}{\rho}>\mathcal{M}_2$  in $\overline{\Omega}_{_S}\setminus\{S\}$ we have $\rho(S)>0$. Integrating (\ref{81104}) along $\widetilde{S_{\pm}S}$ from $S_{\pm}$ to $S$ we have $\bar\partial_{\pm}\rho<0$ at $S$.

We next proof the rest estimates by contradiction.
Suppose $(\bar\partial_{-}\rho)(S)=\mathcal{M}_1$. Then by the assumption of the assertion we have $\bar\partial_{+}\bar\partial_{-}\rho\leq 0$ at $S$. While, by the first decomposition of (\ref{81104}) we have
$$
\bar{\partial}_{+}\bar{\partial}_{-}\rho=
    \frac{\tau^4p''(\tau)}{4c^2\cos^{2}A }\Big[(\bar{\partial}_{-}\rho)^2+(\mathcal{F}-1)\bar{\partial}_{-}\rho\bar{\partial}_{+}\rho\Big]\geq
\frac{\tau^4p''(\tau)\mathcal{F}}{4c^2\cos^{2}A }\bar{\partial}_{-}\rho\bar{\partial}_{+}\rho>0\quad \mbox{at}~ S.
$$
This leads to a contradiction. So, we have $(\bar\partial_{-}\rho)(S)>\mathcal{M}_1$. Similarly, we have $(\bar\partial_{+}\rho)(S)>\mathcal{M}_1$.

Suppose $\big(\frac{\bar\partial_{-}\rho}{\rho}\big)(S)=\mathcal{M}_1$.
Then by the assumption of the assertion we have $\bar{\partial}_{+}\big(\frac{\bar\partial_{-}\rho}{\rho}\big)\leq 0$ at the point $S$.
While, as in (\ref{61603AA}) we have $\bar{\partial}_{+}\big(\frac{\bar\partial_{-}\rho}{\rho}\big)> 0$ at the point $S$.
This leads to a contradiction.
Thus, we have
$\big(\frac{\bar\partial_{-}\rho}{\rho}\big)(S)>\mathcal{M}_1$. Similarly, we have $\big(\frac{\bar\partial_{+}\rho}{\rho}\big)(S)>\mathcal{M}_1$.
This completes the proof of the assertion.
Therefore, by the argument of continuity we obtain that the solution satisfies (\ref{6406}).
Combining this with (\ref{11}), (\ref{72804}), and (\ref{61513}) we can get a  uniform a priori $C^1$ norm estimate for the solution. Hence the global existence of a classical  solution follows routinely from Li \cite{LiT}.

By (\ref{102302d}) we have
\begin{equation}\label{122701}
\bar{\partial}_{+}r=-\frac{\sin(2A)}{\rho}\bar{\partial}_{+}\rho>0\quad \mbox{and}\quad
\bar{\partial}_{-}s=\frac{\sin(2A)}{\rho}\bar{\partial}_{-}\rho<0\quad\mbox{in}~\overline{\Omega}.
\end{equation}
Combining this with $(r, s)\in \Pi$ on $\widetilde{PE}\cup\widetilde{PF}$, we have $(r, s)\in \Pi$ in $\overline{\Omega}$.

By (\ref{12010a}), (\ref{61513}), and assumption (H2) we have
$$
\sigma_y=-\frac{\tau^2}{2}\big(\cos\beta\bar{\partial}_{+}\rho+\cos\alpha\bar{\partial}_{-}\rho\big)>0\quad\mbox{in}~ \overline{\Omega}.
$$
This completes the proof of the lemma.
\end{proof}

\begin{lem}\label{lem2}
The characteristic curves $C_{-}^{E}$ and $C_{+}^{F}$ intersect at a point if and only if $r(E)-s(F)<2\mathcal{R}$.
\end{lem}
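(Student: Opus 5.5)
The plan is to exploit the Riemann invariants. By (\ref{52801}), $s$ is constant along $C_{+}$ characteristics and $r$ is constant along $C_{-}$ characteristics. Consequently, along $C_{-}^{E}$ we have $r\equiv r(E)$, and along $C_{+}^{F}$ we have $s\equiv s(F)$. If the two curves meet at a point $Q$ inside the domain of the solution given by Lemma \ref{lem1}, then $(r,s)(Q)=(r(E),s(F))$. Since $(r,s)\in\Pi$, this forces $\frac{r-s}{2}<\mathcal{R}$, that is, $r(E)-s(F)<2\mathcal{R}$. This gives the "only if" direction. Here I would use that $\frac{r-s}{2}=\int_{u_0}^{q}\frac{\sqrt{q^2-c^2}}{qc}{\rm d}q$ tends to $\mathcal{R}$ only as $q\to q_\infty$, i.e. $\rho\to0$. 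Hence any point with finite positive density has $r-s<2\mathcal{R}$.

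For the "if" direction, assume $r(E)-s(F)<2\mathcal{R}$. I would continue the Goursat solution of Lemma \ref{lem1} and track the characteristics $C_{-}^{E}$ and $C_{+}^{F}$. By (\ref{122701}), $r$ increases along $C_{+}$ and $s$ decreases along $C_{-}$. At any point of the determinate region reached by both a $C_{-}$ from $\widetilde{PE}$ and a $C_{+}$ from $\widetilde{PF}$, the values satisfy $r\le r(E)$ and $s\ge s(F)$. Therefore $r-s\le r(E)-s(F)<2\mathcal{R}$ uniformly. This gives a uniform positive lower bound on $\rho$, hence a uniform bound $A\le A_*<\pi/2$ and strict hyperbolicity. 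Together with the $C^1$ bounds from the proof of Lemma \ref{lem1}, this shows that the solution cannot degenerate (no vacuum) before the two characteristics meet.

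It remains to show that the curves actually cross in finite distance rather than running off to infinity side by side. I would use the hodograph transformation of Section 2.3. Since $\bar\partial_{+}r>0$ and $\bar\partial_{-}s<0$, the map $(x,y)\mapsto(r,s)$ has nonzero Jacobian. The domain is sent to the rectangle $[r(P),r(E)]\times[s(F),s(P)]$, which is compact and lies inside $\Pi$ because $r(E)-s(F)<2\mathcal{R}$. On this rectangle, (\ref{HT}) is a linear hyperbolic system for $(x,y)$ with coefficients $\lambda_\pm$ that are bounded thanks to (H2). Its Goursat data are given on $r=r(P)$ and $s=s(P)$ by the images of $\widetilde{PF}$ and $\widetilde{PE}$. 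The linear Goursat problem has a global bounded solution on the whole compact rectangle, and the corner $(r(E),s(F))$ gives the intersection point of $C_{-}^{E}$ and $C_{+}^{F}$.

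The main obstacle is verifying that the hodograph solution maps back injectively, with $\hat\partial_{\pm}x\neq0$, so that it coincides with the physical solution up to that corner. Here I would use (\ref{53002}) together with the bounds $\bar\partial_\pm\rho<0$ from Lemma \ref{lem1}, which give $\hat\partial_{+}x>0$ and $\hat\partial_{-}x>0$ up to the corner. Conversely, if $r(E)-s(F)\ge 2\mathcal{R}$, the corner lies on or beyond the vacuum boundary $r-s=2\mathcal{R}$, so no such point can be attained. This confirms the "only if" direction as well.
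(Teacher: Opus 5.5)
Your proposal is correct and follows the paper's proof. The ``only if'' direction is the same Riemann-invariant argument: the meeting point would carry $(r,s)=(r(E),s(F))$, which is incompatible with $\rho>0$ from Lemma \ref{lem1} when $r(E)-s(F)\ge 2\mathcal{R}$. The ``if'' direction is the same appeal to the bounded solution of the linear Goursat problem (\ref{HT}), (\ref{6505}) on the compact rectangle $[r(P),r(E)]\times[s(F),s(P)]$.

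Your injectivity step goes beyond what the paper writes. One caution: positivity of $\hat{\partial}_{\pm}x$ on the whole rectangle is more cleanly obtained from the hodograph decomposition (\ref{6506}), as in Step 1 of Lemma \ref{lem4.7} using $\tau>\tau_2^i$ there, than from (\ref{53002}), which presupposes the correspondence you are trying to establish.
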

\begin{proof}
In view of (\ref{122701}), the function $(r, s)(u_{+}(x,y), v_{+}(x,y))$, $(x,y)\in \widetilde{PE}$ has an inverse function, denoted by
$$(x,y)=(x_{1}, y_{1})(r, s), \quad (s, r)\in \big\{(s, r)\mid s=s(P), r(P)<r<r(E)\big\},$$ and the function $(r, s)(u_{-}(x,y), v_{-}(x,y))$, $(x,y)\in \widetilde{PF}$ has an inverse function, denoted by
$$(x,y)=(x_{2}, y_{2})(r, s), \quad (s, r)\in \big\{(s, r)\mid r=r(P), s(F)<r<s(P)\big\}.$$

We then  consider (\ref{HT}) with the boundary condition
\begin{equation}\label{6505}
(x, y)=\left\{
  \begin{array}{ll}
(x_{1}, y_{1})(r, s), & \hbox{$s=s(P)$, $r(P)<r<r(E)$;} \\[2pt]
   (x_{2}, y_{2})(r, s), & \hbox{$r=r(P)$, $s(F)<s<s(P)$.}
  \end{array}
\right.
\end{equation}
By (\ref{6506}) we know that if  $r(E)-s(F)<2\mathcal{R}$ then
$\rho>0$ in the  rectangular region $\Delta=\{(r, s)\mid r(P)\leq r\leq r(E), s(F)\leq s\leq s(P)\}$.
Then the linear problem (\ref{HT}), (\ref{6505}) admits a bounded solution in  $\Delta$.
This implies that the determinate region of the Goursat problem (\ref{E1}), (\ref{6403a})  is a bounded region.

We next consider the case of
$r(E)-s(F)\geq 2\mathcal{R}$. Suppose  $C_{-}^{E}$ and $C_{+}^{F}$ intersect at a point. Then
by Lemma \ref{lem1} we have $\rho>0$ at this point.
While, by (\ref{52801}) we have
$(r, s)=(r(E), s(F))$, and hence $\rho\leq 0$ at this point. This leads to a contradiction.
\end{proof}


Using the characteristic decomposition method and Theorem \ref{thm0} we have the following simple wave solutions about the SGP (\ref{E1}), (\ref{6403a}).

\begin{lem}\label{lem3}
Assume that conditions (\ref{6401a})--(\ref{6402}) hold.
 Assume as well
$$
\frac{{\rm d}\rho(x, y_{+}(x))}{{\rm d}x}=0\quad \mbox{for}~ x\in [x_{_P}, x_{_E}],\quad
\frac{{\rm d}\rho(x, y_{-}(x))}{{\rm d}x}<0\quad \mbox{for}~ x\in [x_{_P}, x_{_F}],
$$
 and $\rho(F)>0$. Then the SGP (\ref{E1}), (\ref{6403a}) admits a simple wave solution with straight $C_{+}$ characteristic lines on a  curvilinear quadrilateral domain $\overline{\Omega}$ bounded by characteristic lines $\overline{PE}$, $\widetilde{PF}$, $\overline{FG}$, and $\widetilde{EG}$; see Fig. \ref{Fig4.2} (left). Moreover, the solution satisfies
$\bar{\partial}_{-}\rho<0$ in $\overline{\Omega}$.
\end{lem}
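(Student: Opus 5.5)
Since $\rho$ is constant along $\widetilde{PE}$ and $s$ is constant there by (\ref{6401}), the Riemann invariants $r$ and $s$ are both constant along $\widetilde{PE}$: by (\ref{102302d}), $\rho$ is a function of $r-s$ once the Bernoulli constant is fixed. So $(u,v)$ is constant on $\widetilde{PE}$, the angle $\alpha$ is constant there, and the condition $\alpha=\arctan(y_+'(x))$ forces $\widetilde{PE}$ to be a straight segment $\overline{PE}$. The plan is to build the solution explicitly as a simple wave and check it against the Goursat data, rather than redo the a priori estimates of Lemma \ref{lem1}. Uniqueness of the classical solution to the SGP then identifies it with the solution of the problem, and Theorem \ref{thm0} confirms the structure.

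\emph{Construction.} Along $\widetilde{PF}$ we have $r\equiv r(P)$. By (\ref{122701}), applied only on $\widetilde{PF}$ where the data satisfy $\bar\partial_{-}\rho<0$, $s$ is strictly decreasing from $s(P)$ to $s(F)$. Each point $Q\in\widetilde{PF}$ carries a state $(r(P),s(Q))$ and hence an angle $\alpha(Q)$. Through each such $Q$ draw the straight ray in direction $\alpha(Q)$ and assign the constant state of $Q$ along it. By the first equation of (\ref{52801}), $s$ is constant on $C_+$ characteristics, and $r$ is constant everywhere, so this ansatz is consistent. It is the simple wave with straight $C_+$ characteristics, the non-centered analogue of Lemma \ref{lem3.5}.

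\emph{Key steps.}

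\textbf{(i)} Show that $\alpha$, as a function of $s$ with $r$ fixed, has the right monotonicity so that neighbouring rays do not cross in the domain. By (\ref{alphar1}), $\partial\alpha/\partial s=1+\frac{q^2\tau p''}{4(q^2-c^2)p'}$. Here I would use $\tau>\tau_2^i$ on $\widetilde{PF}$, which holds because $\tau$ increases from $\tau(P)>\tau_2^i$; hence $p''>0$ and $\partial\alpha/\partial s<1$. Combined with the geometry of a $C_-$ curve (whose direction $\beta$ lies below every $\alpha$) and (H2), the rays fan out and cover the region between $\overline{PE}$ and $\overline{FG}$ without intersection. Here $G$ is where the $C_-$ characteristic from $E$ meets the ray from $F$.

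\textbf{(ii)} Verify that the resulting $(u,v)$ is $C^1$ and satisfies (\ref{E1}) by the same computation as (\ref{82101})–(\ref{82102}). Then identify $\widetilde{EG}$ as the $C_-$ characteristic issued from $E$.

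\textbf{(iii)} Derive the sign $\bar\partial_{-}\rho<0$ in $\overline\Omega$. By (\ref{102302d}), $\bar\partial_-\rho=\frac{\rho}{\sin 2A}\bar\partial_- s$, and $\bar\partial_- s$ has the sign of $ds/dQ$ along the family of rays, which is negative since $\bar\partial_- s<0$ on $\widetilde{PF}$. Alternatively, integrate the first equation of (\ref{81104}) along $C_+$ rays: with $\bar\partial_+\rho=0$ it becomes a Riccati equation $\bar\partial_+ w=K w^2$ with $K>0$ when $p''>0$. Starting from $w<0$, the solution stays negative and bounded, which gives the estimate and no blow-up.

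\emph{Main obstacle.} The delicate point is the non-crossing of the rays, i.e.\ nondegeneracy of the Jacobian of $(Q,t)\mapsto$ point on the ray over the whole quadrilateral, together with $\rho(G)>0$. I would handle this through the Riccati equation in (iii): $|\bar\partial_-\rho|$ blowing up is exactly ray crossing, and the sign structure $w<0$, $K>0$ prevents it. The positivity $\rho>0$ follows from $\rho(F)>0$, since $\rho$ is constant on each ray.
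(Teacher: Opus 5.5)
Your strategy is the one the paper indicates; the paper writes no proof and only cites the characteristic decomposition and Theorem \ref{thm0}. The data on $\widetilde{PE}$ form a constant state, so $\widetilde{PE}$ is straight and the solution is a simple wave with straight $C_+$ characteristics. With $\bar\partial_+\rho\equiv0$, the first equation of (\ref{81104}) becomes $\bar\partial_+w=Kw^2$ with $K=\frac{\tau^4p''(\tau)}{4c^2\cos^2A}>0$. Since $K$ is constant along each ray, $w=w_0/(1-Kw_0t)\in[w_0,0)$. Building the rays explicitly and invoking uniqueness, instead of local existence plus the continuation argument of Lemma \ref{lem1}, is only a cosmetic difference.

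Step (i), however, is wrong as written and should be dropped. Formula (\ref{alphar1}) is $\partial\alpha/\partial r=\partial\beta/\partial s$, not $\partial\alpha/\partial s$, and its right-hand side has no definite sign. In any case, the bound $\partial\alpha/\partial s<1$ says nothing about crossing, and the transversality of $\beta$ and $\alpha$ only controls the Jacobian at $t=0$.

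Let $\ell$ be arclength on $\widetilde{PF}$ and $t$ the distance along the ray. The Jacobian of $(\ell,t)\mapsto Q(\ell)+t(\cos\alpha,\sin\alpha)$ is $\sin 2A-t\,\frac{d\alpha}{d\ell}$. So what you need is that $\alpha$ decreases from $P$ to $F$. Since $s$ decreases there, this amounts to $\partial\alpha/\partial s>0$, which is (\ref{alphar}): $\partial\alpha/\partial s=-\frac{q^2\tau p''}{4(q^2-c^2)p'}>0$ for $\tau>\tau_2^i$.

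This is exactly your Riccati argument in another form. One checks $K\rho=\partial\alpha/\partial s$, so the Jacobian equals $\sin 2A\,\bigl(1-Kt\,\bar\partial_-\rho|_{\widetilde{PF}}\bigr)\ge\sin 2A$. The non-crossing is therefore settled by (iii), not by (i).

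Finally, the existence of $G$ needs one line. Write $C_-^{E}$ as $Q(\ell)+t(\ell)(\cos\alpha,\sin\alpha)$; this gives the linear equation $t'=-t\,\frac{d\alpha}{d\ell}\cot 2A$, the analogue of (\ref{62302}). Hence $C_-^{E}$ crosses every ray and meets $\overline{FG}$ at finite distance.
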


\begin{figure}[htbp]
\begin{center}
\includegraphics[scale=0.42]{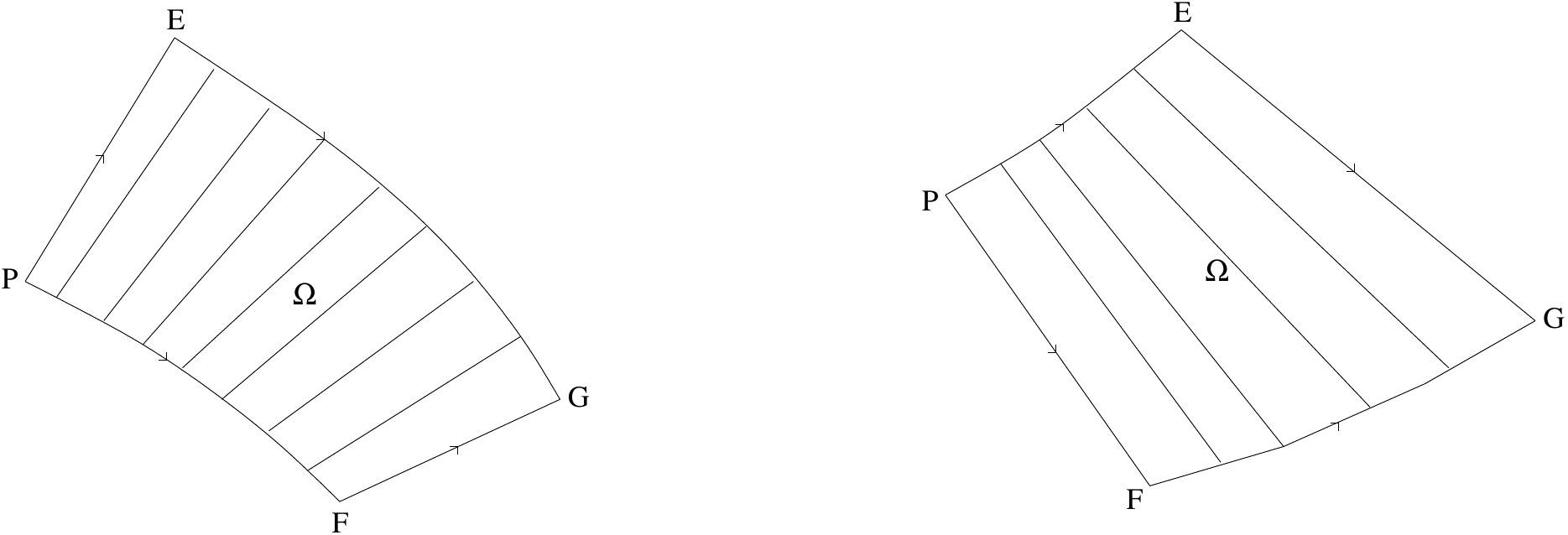}
\caption{\footnotesize Simple wave solution of the standard Goursat problem. }
\label{Fig4.2}
\end{center}
\end{figure}

\begin{lem}\label{lem4}
Assume that conditions (\ref{6401a})--(\ref{6402}) hold.
 Assume as well
$$
\frac{{\rm d}\rho(x, y_{+}(x))}{{\rm d}x}<0\quad \mbox{for}~ x\in [x_{_P}, x_{_E}],\quad
\frac{{\rm d}\rho(x, y_{-}(x))}{{\rm d}x}=0\quad \mbox{for}~ x\in [x_{_P}, x_{_F}],
$$
 and $\rho(E)>0$. Then the SGP (\ref{E1}), (\ref{6403a}) admits a simple wave solution with straight $C_{-}$ characteristic lines on a  curvilinear quadrilateral domain $\overline{\Omega}$ bounded by characteristic lines $\overline{PF}$, $\widetilde{PE}$, $\overline{EG}$, and $\widetilde{FG}$; see Fig. \ref{Fig4.2} (right). Moreover, the solution satisfies
$\bar{\partial}_{+}\rho<0$ in $\overline{\Omega}$.
\end{lem}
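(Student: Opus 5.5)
The plan mirrors Lemma \ref{lem3} with the roles of the two families exchanged. Since $\rho$ is constant along $\widetilde{PF}$, the relations (\ref{72804}), (\ref{11}) and (\ref{12010a}) give $\bar\partial_{-}u=\bar\partial_{-}v=0$ there. Hence $(u,v)$, and therefore $\beta$, is constant on $\widetilde{PF}$, and condition (\ref{6402}) shows that $\widetilde{PF}=\overline{PF}$ is a straight $C_-$ segment. Along it we also have $r=r(P)$ and $s=s(P)$.

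I would build the solution explicitly as a simple wave with straight $C_-$ characteristics. For each point $S\in\widetilde{PE}$, draw the straight ray from $S$ in the direction $\beta(S)$ and carry the constant state $(u_+,v_+)(S)$ along it. Along $\widetilde{PE}$ we have $s=\mathrm{Const.}$, so on this region $s\equiv s(P)=s(F)$, $r$ is transported along the rays, and (\ref{52801}) holds automatically. The constant state on each ray is consistent with the Bernoulli law. The rays are exactly the $C_-$ characteristics of that state, so $\bar\partial_{-}(u,v)=0$ and the second equation of (\ref{form}) is satisfied. The first equation of (\ref{form}) holds because it is the compatibility relation already imposed on $\widetilde{PE}$ through (\ref{6401}). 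Alternatively, I could invoke Theorem \ref{thm0}: the local $C^1$ solution of the SGP (Li--Yu) is adjacent to the straight characteristic $\overline{PF}$, so it is a simple wave. That argument is unique-continuation in nature, and the explicit construction above then gives its global extent.

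The key step is to show that neighbouring rays do not cross, so that the construction covers the quadrilateral bounded by $\overline{PF}$, $\widetilde{PE}$, $\overline{EG}$ and $\widetilde{FG}$ without forming an envelope. Along $\widetilde{PE}$ we have ${\rm d}\rho/{\rm d}x<0$, so $\bar\partial_+\rho<0$. Then (\ref{8}) together with $s$ being constant gives the sign of $\bar\partial_+\beta$ via $\partial\beta/\partial r$ in (\ref{alphar1}), showing the rays fan out. A cleaner way to obtain this is to integrate the second decomposition in (\ref{81104}) along each straight $C_-$ ray. There $\bar\partial_-\rho=0$, so it reduces to $\bar\partial_-\bar\partial_+\rho=\frac{\tau^4p''}{4c^2\cos^2A}(\bar\partial_+\rho)^2$. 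Because the state is constant along the ray, this is a Riccati equation with constant coefficient. Its solution stays finite and negative for all forward arclength whenever $p''(\tau)\ge 0$ or, more generally, whenever the initial value $\bar\partial_+\rho<0$ keeps the Riccati solution away from blow-up.

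I expect the main obstacle to be the case $p''(\tau)<0$ on some rays, i.e.\ $\tau\in(\tau_1^i,\tau_2^i)$. There the coefficient is negative, so $\bar\partial_+\rho\to-\infty$ in finite distance would indicate crossing rays. My first attempt would be to rule this out using $\tau(P)>\tau_2^i$ from (\ref{6401a}) together with $\rho$ decreasing along $\widetilde{PE}$. These give $\tau>\tau_2^i$, and hence $p''>0$, on all of $\widetilde{PE}$ and thus on every ray. With that in hand, the Riccati solution stays negative and bounded, and $\bar\partial_+\rho<0$ holds throughout $\overline\Omega$. Finally, $\rho(E)>0$ ensures that the last ray $\overline{EG}$ carries a non-vacuum state, and the region closes up at the $C_+$ curve $\widetilde{FG}$ issued from $F$. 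Along that curve $r$ is constant by (\ref{52801}), so it is simply the continuation of the $C_+$ characteristic through the fan.
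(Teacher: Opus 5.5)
Your proposal is correct and follows essentially the paper's route, which the paper only sketches as ``characteristic decomposition plus Theorem~\ref{thm0}''. Theorem~\ref{thm0} gives a simple wave with straight $C_-$ lines issuing from $\widetilde{PE}$, and integrating the second decomposition in (\ref{81104}) along those rays gives $\bar\partial_+\rho<0$ and rules out an envelope, since (\ref{6401a}) together with the monotonicity of $\rho$ on $\widetilde{PE}$ forces $\tau>\tau_2^i$ and hence $p''>0$. Two harmless slips: along the $C_+$ curve $\widetilde{FG}$ it is $s$, not $r$, that is constant by (\ref{52801}) (indeed $s\equiv s(P)$ on all of $\overline{\Omega}$, while $r$ runs from $r(P)$ to $r(E)$), and the sign of $\bar\partial_+\beta$ comes from (\ref{7}) and (\ref{alphar}), not from (\ref{8}) and (\ref{alphar1}).
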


\subsection{Discontinuous Goursat problem}
Let $\widetilde{PI}$: $y=y_{+}(x)$ $(x_{_P}\leq x\leq x_{_I})$ and $\widetilde{PJ}$: $y=y_{-}(x)$ $(x_{_P}\leq x\leq x_{_J})$ be two given smooth curves which satisfy
$$
y_{+}(x_{_P})=y_{-}(x_{_P}), \quad y_{+}(x)>y_{-}(x)\quad \mbox{for}~ x_{_P}<x<\min\{x_{_I}, x_{_J}\};
$$
 see Fig. \ref{Fig4.3}.
We give the boundary condition
\begin{equation}\label{6403}
(u, v)=\left\{
               \begin{array}{ll}
                 (u_{+}, v_{+})(x,y), & \hbox{$(x,y)\in \widetilde{PI}$;} \\[2pt]
                 (u_{-}, v_{-})(x,y), & \hbox{$(x,y)\in \widetilde{PJ}$,}
               \end{array}
             \right.
\end{equation}
where $(u_{+}, v_{+})\in C^1(\widetilde{PI})$ and $(u_{-}, v_{-})\in C^1(\widetilde{PJ})$ such that the following conditions hold: $s\equiv\mbox{Const.}$
and $\alpha=\arctan \big(y_{+}'(x)\big)$ on $\widetilde{PI}$;
$r\equiv\mbox{Const.}$ and
$\beta=\arctan\big(y_{-}'(x)\big)$ on $\widetilde{PJ}$.
We also assume that the boundary data (\ref{6403}) satisfy the following conditions:
\begin{equation}\label{6507a}
\tau(u_{+}(P), v_{+}(P))>\tau_2^i,\quad  \tau(u_{-}(P), v_{-}(P))>\tau_2^i;
\end{equation}
\begin{equation}\label{6507}
r(u_{+}(P), v_{+}(P))>r(u_{-}(P), v_{-}(P)),\quad  s(u_{+}(P), v_{+}(P))>s(u_{-}(P), v_{-}(P));
\end{equation}
\begin{equation}\label{102309a}
r(u_{+}(P), v_{+}(P))-s(u_{-}(P), v_{-}(P))<2\mathcal{R};
\end{equation}
\begin{equation}
(r, s)\in \Pi\quad \mbox{on}~  \widetilde{PI}\cup\widetilde{PJ};
\end{equation}
\begin{equation}\label{102308a}
\frac{{\rm d}s(x, y_{-}(x))}{{\rm d}x}<0\quad\mbox{for}~ x\in [x_{_P}, x_{_J}];
\end{equation}
\begin{equation}\label{102308}
\frac{{\rm d}r(x, y_{+}(x))}{{\rm d}x}>0\quad\mbox{for}~ x\in [x_{_P}, x_{_I}].
\end{equation}
\begin{figure}[htbp]
\begin{center}
\includegraphics[scale=0.47]{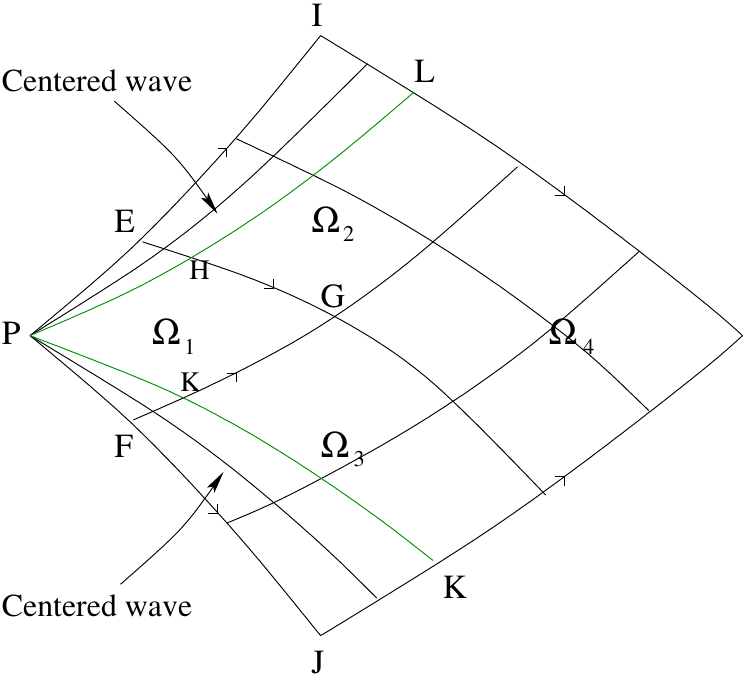}\qquad\qquad\quad
\includegraphics[scale=0.47]{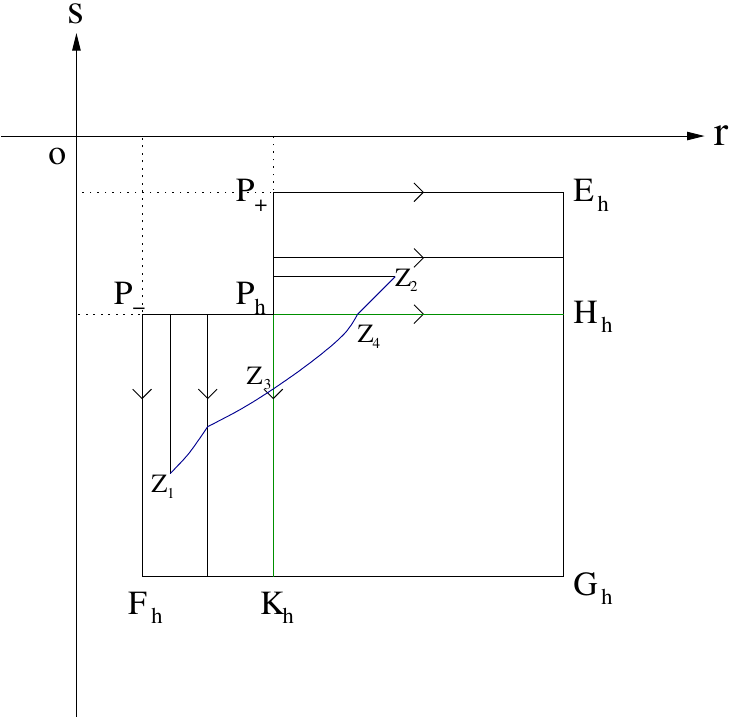}
\caption{\footnotesize A discontinuous Goursat problem. Left: physical plane; right: hodograph plane. Here, the green lines represent the weakly discontinuous lines.}
\label{Fig4.3}
\end{center}
\end{figure}

\begin{lem}\label{lem8}
Under assumptions (\ref{6507a})--(\ref{102308}), the  boundary value problem (\ref{E1}), (\ref{6403}) admits a global continuous and piecewise $C^1$ solution on a curvilinear quadrilateral domain $\Omega$ bounded by characteristic curves $\widetilde{PI}$, $\widetilde{PJ}$, $C_{+}^{J}$, and $C_{-}^{I}$; see Fig. \ref{Fig4.3} (left). Moreover, the solution satisfies $\rho>0$, $\bar{\partial}_{\pm}\rho<0$, and $(r, s)\in \Pi$ in $\overline{\Omega}\setminus\{P\}$.
\end{lem}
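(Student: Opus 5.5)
The discontinuous Goursat problem differs from the standard one in Lemma \ref{lem1} only through the jump of the data at $P$. By (\ref{6507}), the states $(u_{+},v_{+})(P)$ and $(u_{-},v_{-})(P)$ differ. Since $s$ is constant on $\widetilde{PI}$ and $r$ is constant on $\widetilde{PJ}$, the missing states near $P$ are those with $r\in[r(u_-(P),v_-(P)),\,r(u_+(P),v_+(P))]$ and $s\in[s(u_-(P),v_-(P)),\,s(u_+(P),v_+(P))]$. My plan is to fill this gap by a centered wave interaction at $P$ and then reduce everything to standard Goursat problems. Write $r_\pm=r(u_\pm(P),v_\pm(P))$ and $s_\pm=s(u_\pm(P),v_\pm(P))$. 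I would pass to the hodograph plane as in Lemma \ref{lem2}.

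\textbf{Step 1: centered waves at $P$.} First I would build a centered 2-fan wave with center $P$ attached to $\widetilde{PI}$. Its straight $C_{+}$ characteristics issue from $P$, and along them $s\equiv s_+$ while $r$ decreases from $r_+$ to $r_-$. Next I would build a centered 1-fan wave with center $P$ attached to $\widetilde{PJ}$, with straight $C_{-}$ characteristics on which $r\equiv r_-$, and $s$ running from $s_-$ to $s_+$. Both fans are obtained from the ODE systems (\ref{102304}) and (\ref{102304a}) with the center moved to $P$. Assumption (\ref{6507a}) gives $\tau>\tau_2^i$, so $p''>0$ throughout and the fans are rarefactive and well defined. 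The ordering (\ref{6507}) is exactly what makes the rays fan out in the correct direction. The two fans meet along their interaction: the $C_{-}$ characteristic bounding the 2-fan and the $C_{+}$ characteristic bounding the 1-fan both emanate from $P$ with the common state $(r_-,s_+)$. The interaction region is therefore a degenerate Goursat problem, and in hodograph variables it becomes (\ref{HT}) on the rectangle $[r_-,r_+]\times[s_-,s_+]$ with $(x,y)=(x_P,y_P)$ on two of its sides. Assumption (\ref{102309a}) gives $r-s<2\mathcal{R}$ on the whole rectangle, so $\rho>0$ there. Hence the linear hyperbolic problem (\ref{HT}) has a bounded solution, exactly as in Lemma \ref{lem2}.

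\textbf{Step 2: global solution.} The remaining domain splits into three pieces. The first is the region between $\widetilde{PI}$ and the 2-fan: a Goursat problem with data on $\widetilde{PI}$ and on the last straight $C_{+}$ ray, which is a simple wave region by Lemma \ref{lem3}/\ref{lem4} type arguments. Actually I would organize it more simply in the $(r,s)$-plane. The full hodograph domain is the union of the interaction rectangle and two strips adjacent to the data: $s\le s_-$ fed by $\widetilde{PJ}$, and $r\ge r_+$ fed by $\widetilde{PI}$. On each piece one solves the linear system (\ref{HT}) by Goursat data. The monotonicity conditions (\ref{102308a})--(\ref{102308}) make $r$ and $s$ strictly monotone along the data curves, so the data can be written as functions of $r$ or $s$ as in (\ref{6505}). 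Returning to the physical plane requires a nonvanishing Jacobian. I would verify this by showing $\bar\partial_\pm\rho<0$, which by (\ref{53002}) is equivalent to $\hat\partial_\pm x$ having the right sign. The sign itself would come from the characteristic decompositions (\ref{6506}) together with the signs in (\ref{121902a}) for $p''>0$.

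\textbf{Step 3: estimates.} In the physical plane away from $P$, I would apply the maximum-principle argument of Lemma \ref{lem1}, using (\ref{81104}) and (\ref{53001}), to obtain $\mathcal{M}_1<\bar\partial_\pm\rho<0$ on closed subregions excluding $P$. Invariance of $\Pi$ would then follow from (\ref{122701}). The solution is continuous, and $C^1$ except across the boundary characteristics of the fans, which are weak discontinuities. The main obstacle is the degeneracy at $P$: the derivatives blow up like $1/|x-x_P|$ near the center, so the estimates of Lemma \ref{lem1} do not apply up to $P$. I would handle this by working entirely in the hodograph plane near $P$, where the problem is regular linear. I would then prove $\bar\partial_\pm\rho<0$ via (\ref{53002}) by showing $\hat\partial_{+}x>0$ and $\hat\partial_{-}x>0$ for $r>r_-$ and $s<s_+$. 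To get this, I would propagate the signs from the fan boundaries using (\ref{6506}): the coefficients $\hat\partial_{+}\lambda_{-}>0$ and $-\hat\partial_{-}\lambda_{+}>0$ have signs that preserve positivity.
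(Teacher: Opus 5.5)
Your Step~1 puts the wrong wave structure at $P$. This is a genuine gap, not a matter of presentation.

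A fan with straight $C_{+}$ rays is crossed by $C_{-}$ characteristics, along which $r$ is constant by (\ref{52801}). Hence across such a fan $r$ is fixed and only $s$ varies. Your ``2-fan'' with $s\equiv s_+$ and $r$ running from $r_+$ to $r_-$ therefore does not solve (\ref{E1}). The same objection applies to your 1-fan with $r\equiv r_-$.

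The correct picture is the following. Move away from $\widetilde{PI}$ along the $C_{-}$ characteristics crossing the wave attached to it: $r$ keeps its value on $\widetilde{PI}$, which tends to $r_+$ at $P$, while $s$ decreases because $\bar{\partial}_{-}s<0$ by (\ref{122701}). Move away from $\widetilde{PJ}$ along $C_{+}$ characteristics: $s$ keeps its value on $\widetilde{PJ}$, which tends to $s_-$ at $P$, while $r$ increases because $\bar{\partial}_{+}r>0$. So the centered wave adjacent to $\widetilde{PI}$ carries the limit states $(r_+,s)$, $s_-\le s\le s_+$. The one adjacent to $\widetilde{PJ}$ carries $(r,s_-)$, $r_-\le r\le r_+$. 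The intermediate state at $P$ is $(r_+,s_-)$, not $(r_-,s_+)$, and this is where (\ref{6507}) enters. In particular, the open rectangle $(r_-,r_+)\times(s_-,s_+)$ is \emph{not} in the range of the solution, so there is no gap of states to fill.

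You can see the failure directly in your own hodograph formulation. On the rectangle $[r_-,r_+]\times[s_-,s_+]$, the two sides you send to $P$ are $s=s_+$ and $r=r_-$, meeting at your common state $(r_-,s_+)$. These are exactly the Goursat data sides for (\ref{HT}); compare (\ref{6505}), where data sit on $s=s_{\max}$ and $r=r_{\min}$. System (\ref{6506}) is linear and homogeneous in $(\hat{\partial}_{+}x,\hat{\partial}_{-}x)$, with $\hat{\partial}_{+}x=0$ on $s=s_+$ and $\hat{\partial}_{-}x=0$ on $r=r_-$. Hence $(x,y)\equiv(x_{_P},y_{_P})$ on the whole rectangle. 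There is no interaction region, the Jacobian vanishes identically there, and your claim $\hat{\partial}_{\pm}x>0$ for $r>r_-$, $s<s_+$ is false.

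What survives the collapse is that the inner sides $r=r_+$ and $s=s_-$ of your two strips are mapped to $P$. That is exactly the paper's construction: (\ref{HT}) is solved on the L-shaped domain $\Delta$, with $(x,y)=(x_{_P},y_{_P})$ on $\overline{P_{+}P_{h}}$ and $\overline{P_{-}P_{h}}$ as in (\ref{6508}), and the rectangle excluded. Your sign-propagation argument via (\ref{6506}) then does give $\hat{\partial}_{\pm}x>0$ in $\Delta$.

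Two further steps are still missing from your plan:
\begin{itemize}
\item A nonvanishing Jacobian gives only local invertibility. The paper proves global injectivity in Lemma~\ref{lem4.7}: for $r_1<r_2$, $s_1<s_2$ one follows a level curve of $x$, along which $y$ strictly increases.
\item Assumption (\ref{102309a}) controls $r-s$, hence $\rho>0$, only near $P$. The paper therefore uses the hodograph plane only on a piece with $r(E)-s(F)<2\mathcal{R}$. It covers the rest of $\Omega$ by standard Goursat problems via Lemma~\ref{lem1}, rather than solving on the whole strips.
\end{itemize}
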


In view of (\ref{6507}), the boundary value problem (\ref{E1}), (\ref{6403}) is a discontinuous Goursat problem (DGP). So the method for proving Lemma \ref{lem1} does not work here. In what follows, we are going to use the hodograph transformation method to solve this DGP.

By assumption (\ref{102309a}) there exist a point $E$ on $\widetilde{PI}$ and a point $F$ on $\widetilde{PJ}$ such that \begin{equation}\label{102309}
r(u_{+}(E), v_{+}(E))-s(u_{-}(F), v_{-}(F))<2\mathcal{R}.
\end{equation}
For convenience, we define the constants
$$
\begin{aligned}
&r_{+}:=r(u_{+}(P), v_{+}(P)), \quad r_{-}:=r(u_{-}(P), v_{-}(P)), \\&
s_{+}:=s(u_{+}(P), v_{+}(P)), \quad s_{-}:=s(u_{-}(P), v_{-}(P)).
\end{aligned}
$$

See Fig. \ref{Fig4.3} (right).
Let
$$
\overline{P_{+}E_{h}}=\big\{(r, s)\mid s=s_{+}, r_{+}\leq r\leq r(E)\big\},
\quad
\overline{P_{-}F_{h}}=\big\{(r, s)\mid r=r_{-}, s(F)<r<s_{-}\big\},
$$
$$
\overline{P_{+}P_{h}}=\big\{(r, s)\mid r=r_{+}, s_{-}<s<s_{+}\big\},
\quad
\overline{P_{-}P_{h}}=\big\{(r, s)\mid s=s_{-}, r_{-}<r<r_{+}\big\},
$$
$$
\overline{E_{h}G_{h}}=\big\{(r, s)\mid r=r(E), s(F)<s<s(E)\big\},
\quad
\overline{F_{h}G_{h}}=\big\{(r, s)\mid s=s(F), r(F)<r<r(E)\big\}.
$$
We let $\Delta$ be an open domain bounded by $\overline{P_{+}E_{h}}$, $\overline{P_{+}P_{h}}$,
$\overline{P_{-}P_{h}}$, $\overline{P_{-}F_{h}}$, $\overline{E_{h}G_{h}}$ and $\overline{F_{h}G_{h}}$.
In view of (\ref{102308a}) and (\ref{102308}),
the function $(r, s)(u_{+}(x,y), v_{+}(x,y))$, $(x,y)\in \widetilde{PE}$ has an inverse function, denoted by
$$(x,y)=(x^{+}, y^{+})(r, s), \quad (r, s)\in \overline{P_{+}E_{h}},$$ and the function $(r, s)(u_{-}(x,y), v_{-}(x,y))$, $(x,y)\in \widetilde{PF}$ has an inverse function, denoted by
$$(x,y)=(x^{-}, y^{-})(r, s), \quad (r, s)\in \overline{P_{-}F_{h}}.$$

We  consider
 (\ref{HT}) with the boundary condition
\begin{equation}\label{6508}
(x, y)=\left\{
  \begin{array}{ll}
(x^{+}, y^{+})(r, s), & \hbox{$(r, s)\in \overline{P_{+}E_{h}}$;} \\[4pt]
   (x^{-}, y^{-})(r, s), & \hbox{$(r, s)\in \overline{P_{-}F_{h}}$;}\\[4pt]
   (x_{_P}, y_{_P}), & \hbox{$(r, s)\in \overline{P_{+}P_{h}}$;}\\[4pt]
   (x_{_P}, y_{_P}), & \hbox{$(r, s)\in \overline{P_{-}P_{h}}$.}
  \end{array}
\right.
\end{equation}
The linear problem (\ref{HT}), (\ref{6508}) admits a continuous and piecewise smooth solution in $\Delta$, which we denote as $(x, y)=(\hat{x}, \hat{y})(r, s)$ for convenience.
The solution has two weakly discontinuous lines $\overline{P_hK_h}$ and $\overline{P_hH_h}$ in $\Delta$, where
$\overline{P_hK_h}=\{(r,s)\mid r=r_{+}, s(F)<s<s_{-}\}$, $\overline{P_hH_h}=\{(r,s)\mid r_{+}<r<r(E), s=s_{-}\}$.
The derivatives $\hat{x}_r(r, s)$ and $\hat{y}_r(r, s)$ are discontinuous on $\overline{P_hK_h}$,  and $\hat{x}_s(r, s)$ and $\hat{y}_s(r, s)$ are discontinuous on $\overline{P_hH_h}$.

\begin{lem}\label{lem4.7}
The mapping $(x, y)=(\hat{x}, \hat{y})(r, s)$, $(r,s)\in\Delta$ has an inverse, denoted by
 $$
 (r, s)=(\hat{r}, \hat{s})(x, y), \quad (x,y)\in \Omega_1,
 $$
 where
 $\Omega_1=\big\{(x, y)\mid (x, y)=(\hat{x}, \hat{y})(r, s), (r, s)\in \Delta\big\}$.
\end{lem}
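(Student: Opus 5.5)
The plan is to show that the Jacobian $\partial(\hat{x},\hat{y})/\partial(r,s)$ has a fixed sign in $\Delta$ away from the point $(x_{_P},y_{_P})$ image. Then the hodograph map is a local homeomorphism. A global injectivity argument along characteristics finishes the proof.

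\textbf{Reduction to sign conditions.} By (\ref{HT}), at every point of $\Delta$ we have $\hat{\partial}_{\pm}y=\lambda_{\pm}\hat{\partial}_{\pm}x$. Hence
\[
\frac{\partial(\hat x,\hat y)}{\partial(r,s)}=\hat x_r\hat y_s-\hat x_s\hat y_r=-(\lambda_{-}-\lambda_{+})\,\hat{\partial}_{+}\hat x\,\hat{\partial}_{-}\hat x .
\]
By (H2) we have $\lambda_{+}>\lambda_{-}$. It therefore suffices to prove
\[
\hat{\partial}_{+}\hat x>0\quad\text{and}\quad\hat{\partial}_{-}\hat x>0
\]
in $\Delta$, including one-sided limits across the weak discontinuities $\overline{P_hK_h}$ and $\overline{P_hH_h}$. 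By (\ref{53002}) this is exactly equivalent to $\bar{\partial}_{\pm}\rho<0$ in the physical plane, which is the desired conclusion of Lemma \ref{lem8}.

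\textbf{Sign of $\hat{\partial}_{\pm}\hat x$ on the boundary.} On $\overline{P_{+}E_h}$, assumption (\ref{102308}) together with the $C_+$ property of $\widetilde{PI}$ gives $\hat{\partial}_{+}\hat x>0$. Likewise (\ref{102308a}) gives $\hat{\partial}_{-}\hat x>0$ on $\overline{P_{-}F_h}$. On the degenerate pieces $\overline{P_{+}P_h}$ and $\overline{P_{-}P_h}$ the map is constant, so $\hat{\partial}_{-}\hat x=0$ and $\hat{\partial}_{+}\hat x=0$ respectively.

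\textbf{Propagation into the interior.} The decompositions (\ref{6506}) are linear ODEs along the characteristic lines $r=$ const and $s=$ const, coupling $\hat{\partial}_{+}\hat x$ and $\hat{\partial}_{-}\hat x$. By (\ref{121902a}), in the region $\tau>\tau_2^i$ we have $p''>0$, so
\[
\hat{\partial}_{+}\lambda_{-}>0,\qquad \hat{\partial}_{-}\lambda_{+}<0 .
\]
Consequently, if both quantities are nonnegative, each derivative of $\hat{\partial}_{\mp}\hat x$ along the other family is nonnegative. Integrating from the boundaries then preserves positivity; this is a standard invariant-cone argument with continuity. One must first check that $\tau>\tau_2^i$ throughout $\Delta$: this holds because $\tau$ is monotone in $r-s$ and increases away from $P_\pm$, by (\ref{6507a}).

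\textbf{The obstacle at the degenerate sides.} The main difficulty is strictness near $\overline{P_{+}P_h}$ and $\overline{P_{-}P_h}$, where one derivative vanishes on the boundary. There I would use the explicit centered-wave structure. In the sector $r_{-}<r<r_{+}$, $s_-<s<s_+$ the solution is identically $P$ (a degenerate corner). In the strip $r=r_+$-adjacent region, the data $\hat x=x_{_P}$ on $\overline{P_+P_h}$ together with (\ref{6506}) give
\[
\hat{\partial}_{+}\hat{\partial}_{-}\hat x=\frac{\hat{\partial}_{+}\lambda_{-}}{\lambda_+-\lambda_-}\hat{\partial}_{-}\hat x+\dots
\]
with $\hat{\partial}_{-}\hat x=0$ there. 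The transversal derivative $\hat{\partial}_{+}\hat x$ on $\overline{P_+P_h}$ is determined by this ODE in $s$ with positive initial value at $P_+$ from (\ref{102308}). Hence $\hat{\partial}_{+}\hat x>0$ on that side, and then $\hat{\partial}_{-}\hat x>0$ immediately inside. The side $\overline{P_{-}P_h}$ is handled symmetrically.

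\textbf{Global injectivity.} Once the Jacobian is nonzero, injectivity follows by the following argument. Each line $s=$ const is mapped to a $C_+$ curve on which $x$ is strictly increasing, and each line $r=$ const to a $C_-$ curve on which $x$ is strictly increasing. Two distinct points $(r_1,s_1),(r_2,s_2)$ can be joined by a monotone staircase path of such lines. Because $C_+$ and $C_-$ curves through a point separate the plane locally (using $\lambda_+>\lambda_-$), the images are distinct. This yields the inverse $(\hat r,\hat s)$ on $\Omega_1$.
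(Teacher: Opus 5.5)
Your derivation of $\hat{\partial}_{\pm}\hat{x}>0$ in $\Delta$ is the paper's Step 1: boundary signs, $\tau>\tau_2^i$ giving $\hat{\partial}_{+}\lambda_{-}>0>\hat{\partial}_{-}\lambda_{+}$, and propagation through (\ref{6506}). Your explicit linear ODE for $\hat{\partial}_{+}\hat{x}$ along $r=r_{+}$ fills in a detail the paper skips. One slip: the rectangle $r_-<r<r_+$, $s_-<s<s_+$ is the notch excluded from $\Delta$, not a piece of it on which the map is constant; if it belonged to $\Delta$, the lemma would be false.

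The genuine gap is global injectivity. A nonvanishing Jacobian gives only local invertibility. Your staircase argument covers only the case $(r_2-r_1)(s_2-s_1)\le 0$, where $\hat{x}$ increases along both legs. When $r_1<r_2$ and $s_1<s_2$, every staircase from $Z_1$ to $Z_2$ has $\hat{x}$ increasing on one leg and decreasing on the other, so no comparison follows. Local separation by $C_{\pm}$ curves also does not stop the image path from closing up. At a putative crossing of the images of $\{s=s_1\}$ and $\{r=r_2\}$, you would be comparing $\lambda_{+}(r',s_1)$ with $\lambda_{-}(r_2,s')$ at two different points of $\Delta$, where $\lambda_{+}>\lambda_{-}$ says nothing.

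The missing idea, which is the paper's Step 2, is to move along a level set of $\hat{x}$ rather than along characteristics. Suppose $\hat{x}(Z_2)=\hat{x}(Z_1)$ with $r_1<r_2$, $s_1<s_2$. Since $\hat{x}_r=\hat{\partial}_{+}\hat{x}>0$ and $\hat{x}_s=-\hat{\partial}_{-}\hat{x}<0$, the level set through $Z_1$ is a graph $s=f(r)$ with $f'=\hat{\partial}_{+}\hat{x}/\hat{\partial}_{-}\hat{x}>0$. This curve reaches $Z_2$ inside $\Delta$. It cannot exit through $\overline{P_{+}P_{h}}\cup\overline{P_{-}P_{h}}$, where $\hat{x}=x_{_P}<\hat{x}(Z_1)$. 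It also cannot exit through $s=s_{+}$ before $r=r_2$, by the monotonicity of $\hat{x}$ in $r$ and $s$. Along this curve, (\ref{HT}) gives
\[
\frac{{\rm d}}{{\rm d}r}\hat{y}(r,f(r))=\hat{y}_r+f'\hat{y}_s=(\lambda_{+}-\lambda_{-})\,\hat{\partial}_{+}\hat{x}>0,
\]
with both $\lambda_{\pm}$ evaluated at the same point $(r,f(r))$. Hence $\hat{y}(Z_2)>\hat{y}(Z_1)$. This is the step that turns the pointwise inequality $\lambda_{+}>\lambda_{-}$ into global injectivity.
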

\begin{proof}
It suffices to prove that for any two points $(r_1, s_1)$ and $(r_2, s_2)$ in $\Delta$, $(\hat{x}, \hat{y})(r_1, s_1)\neq(\hat{x}, \hat{y})(r_2, s_2)$.
The proof proceeds in two steps.

\vskip 4pt
\noindent
{\it Step 1.}
By (\ref{121902a}) and (\ref{6507a}) we have
\begin{equation}\label{61702}
\hat{\partial}_{+}\lambda_{-}>0\quad \mbox{and}\quad
\hat{\partial}_{-}\lambda_{+}<0\quad\mbox{in}~ \Delta.
\end{equation}

From (\ref{6401}), (\ref{6402}), (\ref{102308a}), and (\ref{102308})
we have
\begin{equation}\label{61703}
\hat{\partial}_{+}x>0\quad \mbox{on}~ \overline{P_{+}E_{h}}; \quad
\hat{\partial}_{-}x>0\quad \mbox{on}~ \overline{P_{-}F_{h}}.
\end{equation}
By the boundary condition (\ref{6508}) we have
\begin{equation}
\hat{\partial}_{+}x=0\quad \mbox{on}~ \overline{P_{-}P_{h}}; \quad
\hat{\partial}_{-}x=0\quad \mbox{on}~ \overline{P_{+}P_{h}}.
\end{equation}
Then by the characteristic decompositions
(\ref{6506}) we have
\begin{equation}\label{102310}
\hat{\partial}_{+}x>0\quad \mbox{and} \quad \hat{\partial}_{-}x>0\quad\mbox{in}~ \Delta.
\end{equation}
\vskip 4pt
\noindent
{\it Step 2.}
Let $Z_1(r_1, s_1)$ and $Z_2(r_2, s_2)$ be two arbitrary points in $\Delta$. If $r_1=r_2$ or $s_1=s_2$, then by (\ref{102310}) we have $x(Z_1)\neq x(Z_2)$. So, we only need to discuss the following two cases: (1) $r_1<r_2$, $s_1>s_2$; (2) $r_1<r_2$, $s_1<s_2$.

If $r_1<r_2$ and $s_1>s_2$, then by (\ref{102310}) we have
$$
x(Z_2)=x(Z_1)+\int_{r_1}^{r_2} (\hat{\partial}_{+}x)(r, s_1){\rm d}r-\int_{s_1}^{s_2} (\hat{\partial}_{-}x)(r_2, s){\rm d}s>x(Z_1).
$$

We now discuss the case of $r_1<r_2$, $s_1<s_2$. Suppose $x(r_2, s_2)=x(r_1, s_1)$, then by (\ref{102310}) and (\ref{6508}) there exists a curve $\widehat{Z_1Z_2}$ in $\Delta$, which connects the points $Z_1$ and $Z_2$, such that $$
x(r, s)\equiv x(Z_1), \quad (r, s)\in \widehat{Z_1Z_2}.
$$
Moreover, by (\ref{102310}) we see that the curve $\widehat{Z_1Z_2}$ can be represented by a function $s=f(r)$, $r_1\leq r\leq r_2$ which satisfies
\begin{equation}
f'(r)=\frac{\hat{\partial}_{+}x(r, f(r))}{\hat{\partial}_{-}x(r, f(r))}, \quad  r\in(r_1, r_2)\setminus\{r_3, r_4\},
\end{equation}
where $r_3=r_{+}$, and $r_4$ is determined by $f(r_4)=s_{-}$ if it exists.

Using  (\ref{102310}) we have
$$
\begin{aligned}
\frac{{\rm d}y(r, f(r))}{{\rm d}r}&=y_{r}(r, f(r))+f'(r)y_{s}(r, f(r))
\\&=\Big(\tan\big(\alpha(r, f(r))\big)-\tan\big(\beta(r, f(r))\big)\Big)\hat{\partial}_{+}x(r, f(r))\\&>0\quad \mbox{for}~ r\in (r_1, r_2)\setminus\{r_3, r_4\}.
\end{aligned}
$$
This implies $y(Z_2)>y(Z_1)$. From this we immediately have $(x, y)(Z_2)\neq(x, y)(Z_1)$.
This completes the proof of the lemma.
\end{proof}

Let
$$
(\hat{u}, \hat{v})(x,y)=\big(u(\hat{r}(x,y),\hat{s}(x,y)), u(\hat{r}(x,y),\hat{s}(x,y))\big), \quad (x,y)\in\Omega_1.
$$
Obviously, the function $(u, v)=(\hat{u}, \hat{v})(x,y)$ satisfies (\ref{E1}) in $\Omega_1$.
Moreover, by (\ref{53002}) and (\ref{102310}) we have that the solution satisfies
\begin{equation}\label{102412}
(r, s)\in\Pi\quad \mbox{and} \quad
\bar{\partial}_{\pm}\rho<0\quad \mbox{in}~ \overline{\Omega}\setminus\{P\}.
\end{equation}
The solution has a multiple-valued singularity at the point $P$, i.e., the solution contains two centered waves with different types, which are centered at $P$. For the definition of centered waves of quasilinear hyperbolic systems and the related results, we refer the reader to Li and Yu (\cite{Li-Yu}, Chapter 5).

Let $\widetilde{EG}=\big\{(x, y)\mid(x, y)=(\hat{x}, \hat{y})(r, s), (r, s)\in \overline{E_{h}G_{h}}\big\}$ and
$\widetilde{FG}=\big\{(x, y)\mid (x, y)=(\hat{x}, \hat{y})(r, s), (r, s)\in \overline{F_{h}G_{h}}\big\}$. Then
 $\widetilde{EG}$ is a $C_{+}$ characteristic curve issued from $E$, $\widetilde{FG}$ is a $C_{-}$ characteristic curve issued from $F$, and the domain $\Omega_1$ is bounded by the characteristic curves $\widetilde{PE}$, $\widetilde{PF}$, $\widetilde{EG}$, and $\widetilde{FG}$.

By solving two SGPs for (\ref{E1}) we can obtain the flow  on a curvilinear quadrilateral domain $\overline{\Omega}_2$ bounded by characteristic curves $\widetilde{EI}$, $\widetilde{EG}$, $C_{+}^{G}$, and $C_{-}^{I}$.
By solving another two SGPs for (\ref{E1}) we can obtain the flow  on a curvilinear quadrilateral domain $\overline{\Omega}_3$ bounded by characteristic curves $\widetilde{FJ}$, $\widetilde{FG}$, $C_{-}^{G}$, and $C_{+}^{J}$.
Finally, we  solve a SGP for (\ref{E1}) with $C_{\pm}^{G}$ as the characteristic boundaries to obtain the flow in the reaming part of the determinate region $\overline{\Omega}$ of the DGP (\ref{E1}), (\ref{6403}).
The global existence of classical solutions to these SGPs can be obtained by Lemma \ref{lem1}; we omit the details.
This completes the proof of Lemma \ref{lem8}.

There are two weakly discontinuous lines $\widetilde{PK}$ and $\widetilde{PL}$ in $\Omega$, where
$\widetilde{PL}$ ($\widetilde{PK}$, resp.) is a $C_{+}$ ($C_{-}$, resp.) characteristic curve issued from $P$ with the inclination angle $\alpha(r_{+}, s_{-})$ ($\beta(r_{+}, s_{-})$, resp.) at $P$; see Fig. \ref{Fig4.3} (left). We need to mention that the values of $\bar{\partial}_{\pm}\rho$ on the weakly discontinuous lines are the limiting values of $\bar{\partial}_{\pm}\rho$ on both sides of them. Actually, $\bar{\partial}_{+}\rho$ ($\bar{\partial}_{-}\rho$, resp.) is continuous across $\widetilde{PL}$ ($\widetilde{PK}$, resp.), and $\bar{\partial}_{-}\rho$ ($\bar{\partial}_{+}\rho$, resp.) is discontinuous across $\widetilde{PL}$ ($\widetilde{PK}$, resp.)

\begin{figure}[htbp]
\begin{center}
\includegraphics[scale=0.43]{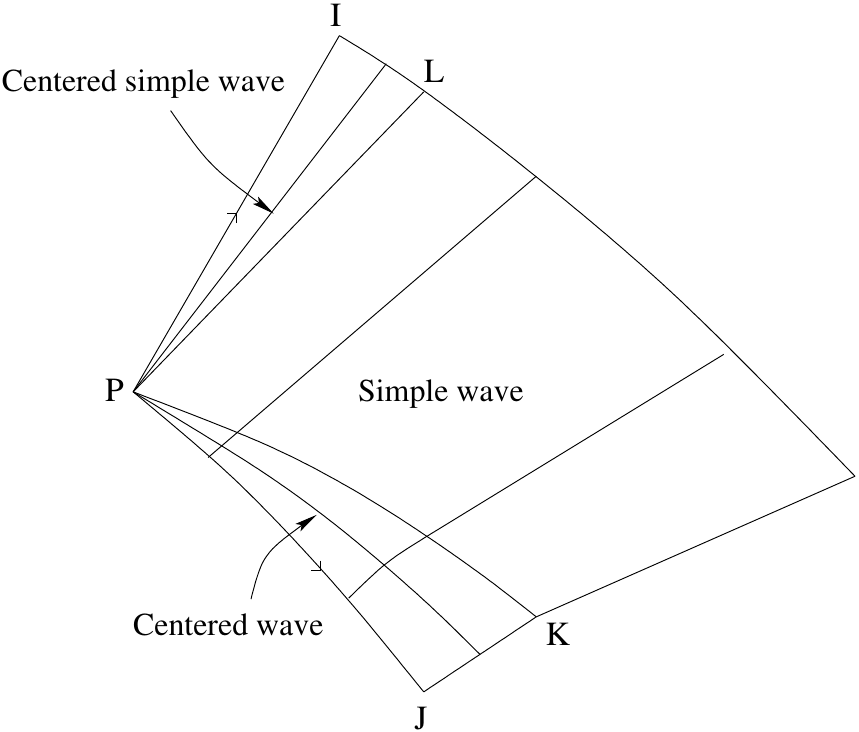}\qquad\qquad\qquad
\includegraphics[scale=0.45]{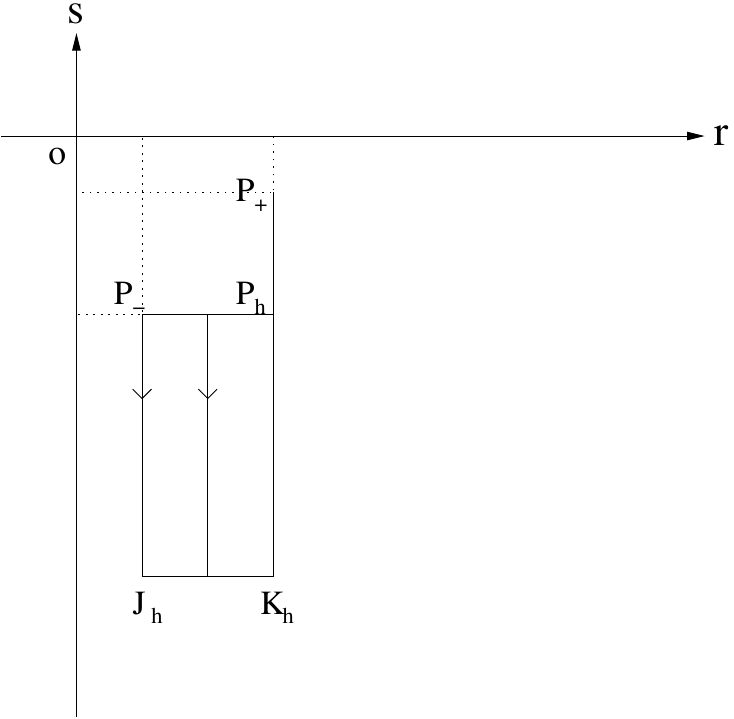}
\caption{\footnotesize A discontinuous Goursat problem with a straight characteristic boundary.}
\label{Fig4.4}
\end{center}
\end{figure}

\begin{rem}\label{rem4.1}
If the assumption (\ref{102308}) is replaced by
\begin{equation}\label{102308c}
\frac{{\rm d}r(x, y_{+}(x))}{{\rm d}x}=0\quad\mbox{for}~ x\in [x_{_P}, x_{_I}],
\end{equation}
we can also use the hodograph transformation method and the method of characteristics to construct a global solution.
Firstly, we construct a centered simple wave with straight $C_{+}$ characteristic lines issued from $P$ such that the images of the centered simple wave in the $(r, s)$-plane is $\overline{P_{+}P_{h}}$.
The centered simple wave region is bounded by characteristic lines $\overline{PI}$, $\overline{PL}$, and $\widetilde{IL}$, where $\overline{PL}$ is a straight $C_{+}$ characteristic line issued from $P$ and the state on it is $(r,s)=(r_{+}, s_{-})$.
We then use the hodograph transformation method to construct a centered wave on a curvilinear triangle region bounded by $\widetilde{PJ}$, $\widetilde{PK}$, and $\widetilde{JK}$. Finally, we solve a SGP as described in Lemma \ref{lem3} to obtain a simple
wave on a  curvilinear quadrilateral domain bounded by characteristic lines $\overline{PL}$, $\widetilde{PK}$, $C_{-}^{L}$, and $C_{+}^{K}$. See Fig. \ref{Fig4.4}.
\end{rem}

\begin{rem}
If assumptions (\ref{102308a}) and (\ref{102308}) are replaced by
$$
\frac{{\rm d}r(x, y_{+}(x))}{{\rm d}x}=0\quad\mbox{for}~ x\in [x_{_P}, x_{_I}]\quad \mbox{and}\quad
\frac{{\rm d}s(x, y_{-}(x))}{{\rm d}x}=0\quad\mbox{for}~ x\in [x_{_P}, x_{_J}],
$$
respectively.
Then the DGP (\ref{E1}), (\ref{6403}) is  a Riemann-type problem and admits a global solution consists of two centered simple waves with different types issued from the point $P$.
\end{rem}
%


\begin{rem}\label{rem4.3}
If the the assumption (\ref{102309a}) is not hold, i.e.,
$r_{+}-s_{-}>2\mathcal{R}$,
one can also use the hodograph transformation method to construct a global classical solution to the DGP (\ref{E1}), (\ref{6403}). The solution consists of two centered waves with different types issued from $P$ and a cavitation; see Fig. \ref{Fig4.5}.
The cavitation is a sectorial domain between two vacuum boundaries $V_1$: $y=y_{_P}+(x-x_{_P})\tan(r_{+}-\mathcal{R})$, $x>x_{_P}$ and
$V_2$: $y=y_{_P}+(x-x_{_P})\tan(s_{-}+\mathcal{R})$, $x>x_{_P}$.
One centered wave region is bounded by $\widetilde{PI}$, $C_{-}^{I}$, and
 $V_1$.
The other centered wave region is bounded by $\widetilde{PJ}$, $C_{+}^{J}$, and
 $V_2$.
\end{rem}

\begin{figure}[htbp]
\begin{center}
\includegraphics[scale=0.4]{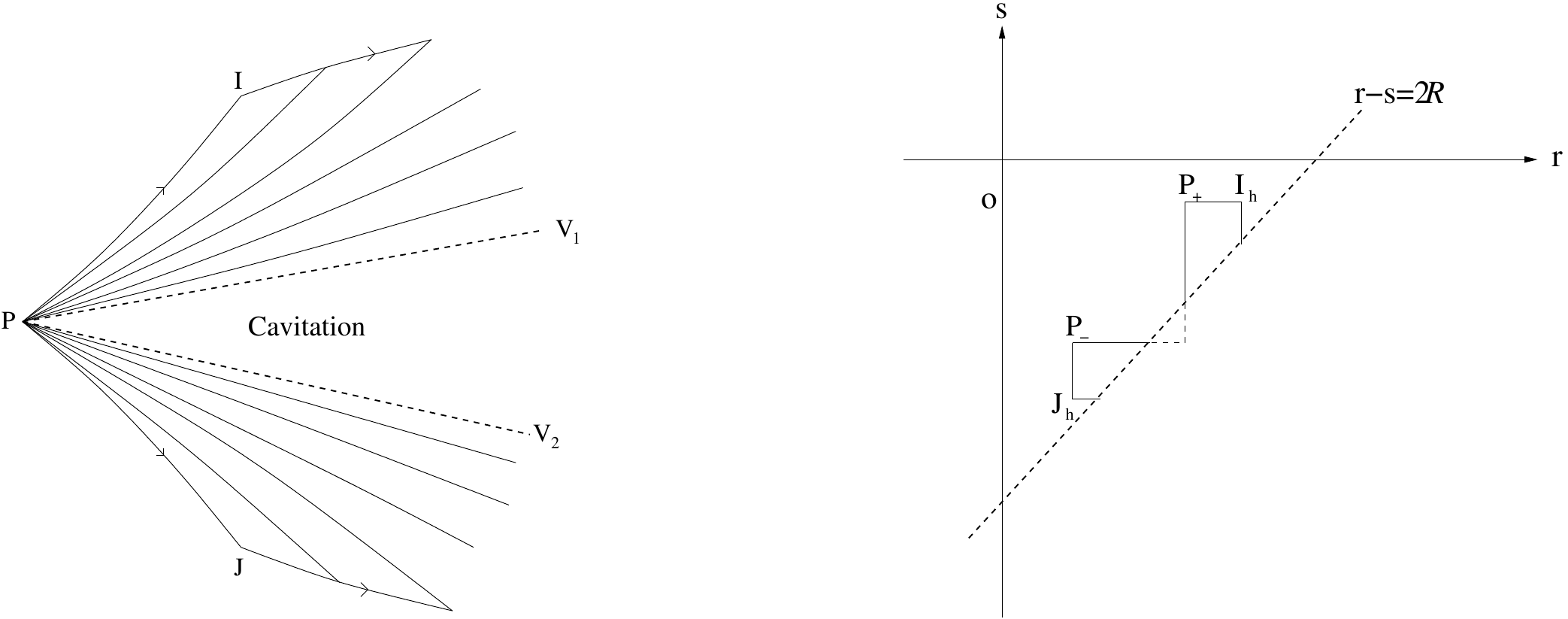}
\caption{\footnotesize  Formation of cavitation for $r_{+}-s_{-}>2\mathcal{R}$.}
\label{Fig4.5}
\end{center}
\end{figure}

\subsection{Mixed boundary value problem}
Let $\widetilde{PE}$: $y=y_{+}(x)$ $(x_{_P}\leq x\leq x_{_E})$ be a given smooth curve which satisfies $$y_{+}(x_{_P})=-1+x_{_P}\tan\theta_{-}, \quad y_{+}(x)>-1+x\tan\theta_{-}\quad  \mbox{for}~ x_{_P}<x\leq x_{_E};$$
see Fig. \ref{Fig4.6}.
 We give the following boundary conditions:
\begin{equation}\label{6503}
(u, v)=(u_{+}, v_{+})(x, y)\quad \mbox{for}~ (x,y)\in \widetilde{PE};
\end{equation}
\begin{equation}\label{6504}
  v(x,y)=u(x,y)\tan\theta_{-}\quad \mbox{for}~ y=-1+x\tan\theta_{-}, ~~x>x_{_P}.
\end{equation}
where $(u_{+}, v_{+})\in C^1(\widetilde{PE})$.
We also assume that condition (\ref{6401}) holds.
So, $\widetilde{PE}$ is a $C_{+}$ characteristic curve  from $P$ to $E$.



\begin{lem}\label{lem5}
Assume that condition (\ref{6401}) holds. Assume as well $\arctan\big(\frac{v_{+}(P)}{u_{+}(P)}\big)= \theta_{-}$, $\rho(E)>0$, and
$\frac{{\rm d}\rho(x, y_{+}(x))}{{\rm d}x}<0$ for $x\in [x_{_P}, x_{_E}]$.
Then  the problem (\ref{E1}), (\ref{6503})--(\ref{6504}) admits a global $C^1$ solution on a curvilinear triangle domain $\overline{\Omega}$ bounded by $\widetilde{PE}$, $\mathcal{W}_{-}$, and $C_{-}^{E}$; see Fig. \ref{Fig4.6} (left). Moreover, the solution satisfies $\rho>0$, $\bar{\partial}_{\pm}\rho<0$, and $(r, s)\in \Pi$ in $\overline{\Omega}$.
The forward $C_{-}$ characteristic curve $C_{-}^{E}$ intersects $\mathcal{W}_{-}$ at a point if and only if $r(E)-\theta_{-}<\mathcal{R}$.
\end{lem}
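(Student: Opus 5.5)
The plan follows Lemma \ref{lem1}: local existence by characteristics, a priori $C^1$ bounds via the characteristic decompositions, then a hodograph argument for the size of the domain. The new ingredient is the wall $\mathcal{W}_{-}$.

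\textbf{Local existence.} The problem (\ref{E1}), (\ref{6503})--(\ref{6504}) is a typical mixed characteristic-boundary problem. The boundary $\widetilde{PE}$ is a $C_{+}$ characteristic, and on the wall $\mathcal{W}_{-}$ we prescribe $\sigma=\theta_{-}$. The compatibility condition at $P$ is exactly $\arctan(v_+(P)/u_+(P))=\theta_-$. On the wall, the outgoing $C_{+}$ characteristic carries $r$, and $s=2\theta_{-}-r$ is determined by it. Hence Li and Yu \cite{Li-Yu} gives a local $C^1$ solution near $P$. To make it global, we need a uniform bound on $\bar\partial_\pm\rho$ and on $\bar\partial_\pm\rho/\rho$, as in (\ref{6404}).

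\textbf{A priori estimates.} In the interior we repeat the contradiction argument of Lemma \ref{lem1}, using (\ref{81104}) and (\ref{53001}) on characteristic quadrilaterals or triangles. The new point is the wall, where we need a reflection relation between $\bar\partial_{+}\rho$ and $\bar\partial_{-}\rho$. On $\mathcal{W}_{-}$ we have $\sigma\equiv\theta_{-}$, so the derivative of $\sigma$ along the wall vanishes. Write the wall direction $(\cos\theta_-,\sin\theta_-)$ as a combination of $\bar\partial_\pm$; by (\ref{61513}) the coefficients are both equal to $1/(2\cos A)$. Then (\ref{12010a}) gives
\[
\bar\partial_{+}\rho=\bar\partial_{-}\rho \quad\mbox{on } \mathcal{W}_{-}.
\]
So negativity of $\bar\partial_{-}\rho$, which arrives along $C_-$ characteristics from $\widetilde{PE}$, transfers to $\bar\partial_{+}\rho$ at the wall. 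The bounds $\mathcal M_1$ and $\mathcal M_2$ transfer in the same way, since the reflection coefficient is exactly $1$.

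We then run the continuity argument on domains bounded by $\widetilde{PS_-}$, the wall segment, and the backward characteristics from a point $S$. The maximum-principle contradiction at $S$ goes through verbatim. Near $P$, the values of $\bar\partial_-\rho$ on $\widetilde{PE}$ are obtained by integrating the first equation of (\ref{81104}) along $\widetilde{PE}$, as in (\ref{61602}). Together with (\ref{11}), (\ref{72804}) and (\ref{61513}), these bounds give $C^1$ control, and global existence follows from Li \cite{LiT}. Finally, $\bar\partial_+r>0$ and $\bar\partial_-s<0$ from (\ref{122701}), combined with $\sigma=\theta_-$ on the wall, keep $(r,s)\in\Pi$.

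\textbf{Intersection criterion.} Along $C_-^E$ the invariant $r=r(E)$ is constant, while $s$ decreases. If $C_-^E$ hits the wall at some point $Q$, then $s(Q)=2\theta_--r(E)$. Hence $(r-s)/2=r(E)-\theta_-$ at $Q$. Since $\rho(Q)>0$, we need $r(E)-\theta_-<\mathcal{R}$, which proves necessity. For sufficiency, we pass to the hodograph plane as in Lemma \ref{lem2}. The domain maps to the triangle
\[
\{\,s\le s(P),\ r\le r(E),\ r+s\ge 2\theta_-\,\},
\]
on which $\rho>0$ when $r(E)-\theta_-<\mathcal R$. The linear system (\ref{HT}) has a bounded solution there, with the reflection condition $\hat\partial_+x$ related to $\hat\partial_-x$ on the diagonal $r+s=2\theta_-$. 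Positivity of $\hat\partial_\pm x$ then follows from (\ref{6506}) and (\ref{61702}), which gives a bounded image and the intersection point.

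\textbf{Main obstacle.} I expect the hardest step to be handling the wall corner at $P$ and the reflection in the hodograph linear problem. In particular, one must show that $x(r,2\theta_--r)$ parametrizes the wall monotonically, so that the solution is bounded and invertible.
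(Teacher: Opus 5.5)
Your proposal is correct in substance, but it is organized differently from the paper. The paper's proof is a one-line reduction: the problem is viewed as half of a symmetric standard Goursat problem, and Lemma \ref{lem1} is invoked. Concretely, reflecting the data on $\widetilde{PE}$ across $y=-1+x\tan\theta_-$, under which (\ref{E1}) is invariant, gives a $C_-$ boundary $\widetilde{PE'}$. The two boundaries are continuous at $P$ exactly because $\sigma(P)=\theta_-$, and the symmetric solution satisfies the slip condition. The intersection criterion is then the mirrored form of Lemma \ref{lem2}: the mirror of $C_-^E$ is $C_+^{E'}$ with $s(E')=2\theta_--r(E)$, so $r(E)-s(E')<2\mathcal{R}$ reads $r(E)-\theta_-<\mathcal{R}$.

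You instead stay on the half-domain. Your wall identity $\bar\partial_+\rho=\bar\partial_-\rho$ is right: the wall derivative is $(\bar\partial_++\bar\partial_-)/(2\cos A)$ and (\ref{12010a}) applies. It is the infinitesimal form of that symmetry. Likewise your hodograph triangle is half of the reflected rectangle. Differentiating $y=-1+x\tan\theta_-$ along $r+s=2\theta_-$ with (\ref{HT}) gives $\hat\partial_+x/\cos\alpha=\hat\partial_-x/\cos\beta$, so the two derivatives share a sign there and your positivity argument via (\ref{6506}) closes. Then $\frac{d}{dr}x(r,2\theta_--r)=\hat\partial_+x+\hat\partial_-x>0$ gives the monotone parametrization of the wall you were worried about.

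The reflection buys brevity and reuse of two lemmas. Your route is longer but self-contained, and it avoids checking that the mirrored data meet the hypotheses of Lemma \ref{lem1}. The mirrored states have $\sigma\le\theta_-$, so they lie outside $\Pi$, and (H2) may need a rotation of axes.

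Two labelling slips should be fixed, because the wall step depends on which quantities arrive at the wall and which leave it. Along $C_-$ it is $r$ and $\bar\partial_+\rho$ that are transported, by (\ref{52801}) and the second equation of (\ref{81104}). Along $C_+$ it is $s$ and $\bar\partial_-\rho$. So the incoming $C_-$ brings $r$ to the wall, and the outgoing $C_+$ carries $s=2\theta_--r$.

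At a wall point the backward $C_+$ leaves the domain. Hence the bound on $\bar\partial_-\rho$ there must come from the bound on $\bar\partial_+\rho$, obtained along the backward $C_-$, through the wall identity, not conversely. In particular, $\bar\partial_-\rho(P)=\bar\partial_+\rho(P)$ is the initial value for integrating along $\widetilde{PE}$. Since the identity is symmetric with coefficient $1$, your conclusions survive.

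Finally, like the paper's reduction, your argument silently uses $\tau(P)>\tau_2^i$ (condition (\ref{6401a})) to keep $p''>0$ throughout. The statement does not list this hypothesis explicitly.
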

\begin{proof}
The problem (\ref{E1}), (\ref{6503}), and (\ref{6504}) is  a mixed characteristic boundary and slip boundary value problem  and can be actually seen as a half of a symmetrical SGP. Then the existence can be obtained by Lemma \ref{lem1}.
\end{proof}

\begin{figure}[htbp]
\begin{center}
\includegraphics[scale=0.5]{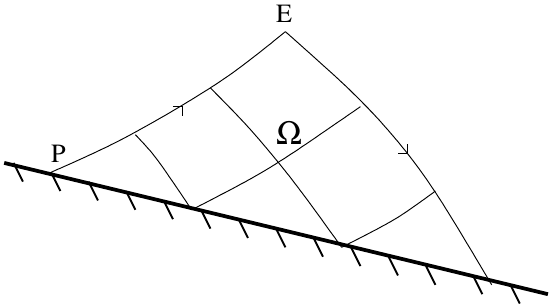}\qquad\includegraphics[scale=0.5]{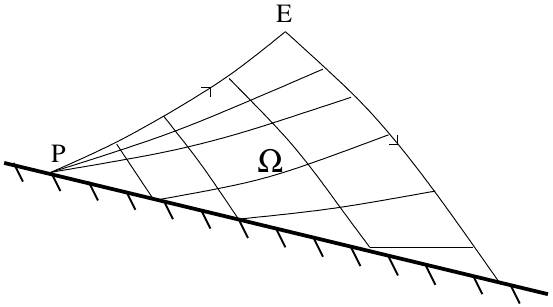}\qquad\includegraphics[scale=0.5]{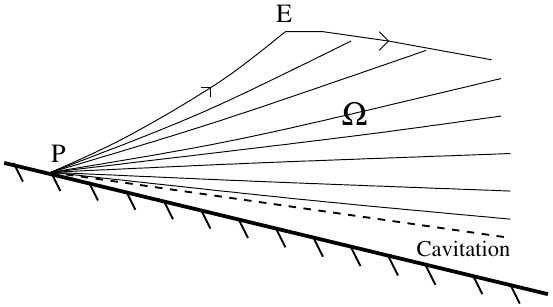}
\caption{\footnotesize Mixed characteristic boundary and slip boundary value problem.  }
\label{Fig4.6}
\end{center}
\end{figure}

\begin{lem}\label{lem5a}
Assume that condition (\ref{6401}) holds. Assume as well $\arctan\big(\frac{v_{+}(P)}{u_{+}(P)}\big)>\theta_{-}$, $\rho(E)>0$, and
$\frac{{\rm d}\rho(x, y_{+}(x))}{{\rm d}x}<0$ for $x\in [x_{_P}, x_{_E}]$.
\begin{itemize}
  \item If $r(u_{+}(P), v_{+}(P))<\mathcal{R}+\theta_{-}$, then the problem (\ref{E1}), (\ref{6503})--(\ref{6504}) admits a global continuous and piecewise smooth solution on a curvilinear triangle domain $\Omega$ bounded by $\widetilde{PE}$, $\mathcal{W}_{-}$, and $C_{-}^{E}$; see Fig. \ref{Fig4.6} (mid). Moreover, the solution satisfies $\rho>0$, $\bar{\partial}_{\pm}\rho<0$, and $(r, s)\in \Pi$ in ${\Omega}$.
  \item  If $r(u_{+}(P), v_{+}(P))\geq \mathcal{R}+\theta_{-}$, then the problem (\ref{E1}), (\ref{6503})--(\ref{6504})  admits a smooth solution on a curvilinear triangle domain $\Omega$ bounded by $\widetilde{PE}$, $C_{-}^{E}$, and a vacuum boundary $y=y_{+}(x_{_P})+(x-x_{_P})\tan (r(u_{+}(P), v_{+}(P))-\mathcal{R})$, $x>x_{_P}$; see Fig. \ref{Fig4.6} (right).
Moreover, the solution satisfies $\rho>0$, $\bar{\partial}_{\pm}\rho<0$, and $(r, s)\in \Pi$ in $\Omega$.
\end{itemize}
\end{lem}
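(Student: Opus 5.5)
The plan is to reduce Lemma \ref{lem5a} to a combination of a centered wave at $P$, Lemma \ref{lem5}, and, in the vacuum case, Remark \ref{rem4.3}. Write $r_{+}=r(u_{+}(P),v_{+}(P))$ and $s_{+}=s(u_{+}(P),v_{+}(P))$. Since $\arctan(v_{+}(P)/u_{+}(P))=\frac{r_{+}+s_{+}}{2}>\theta_{-}$, the flow at $P$ must be turned clockwise to become tangent to $\mathcal{W}_{-}$. Near $P$ I therefore expect the structure of the Riemann-type problem of Section 3: a rarefactive centered 2-fan wave, with straight $C_{+}$ characteristics issued from $P$, along which $r$ is constant.

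Concretely, I would first solve the ODE system (\ref{102304}), recentred at $P$. The initial value is the state $(u_{+},v_{+})(P)$, for which $r=r_{+}$; the solution is continued in decreasing $\xi$ until $\sigma=\theta_{-}$. On the fan $r\equiv r_{+}$, so the condition $\sigma=\theta_{-}$ means $s=2\theta_{-}-r_{+}$. Reaching this requires $\frac{r_{+}-s}{2}=r_{+}-\theta_{-}<\mathcal{R}$, which is exactly the hypothesis of the first case. Since $\tau(P)>\tau_2^i$ by (\ref{6401a}), the fan is a genuine smooth fan with $\bar\tau'<0$, as in (\ref{102503}). Equivalently, and more robustly, I would work in the hodograph plane as in Lemma \ref{lem8}: I would solve (\ref{HT}) with $(x,y)\equiv(x_{_P},y_{_P})$ on the segment $r=r_{+}$, $2\theta_{-}-r_{+}\le s\le s_{+}$, together with the data $(x^{+},y^{+})$ on $s=s_{+}$.

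This produces a centered wave region bounded by $\widetilde{PE}$, $C_{-}^{E}$, and the last straight $C_{+}$ ray $\overline{PL}$, along which $\sigma=\theta_{-}$. Invertibility follows from the argument of Lemma \ref{lem4.7}: $\hat\partial_{\pm}x>0$ is obtained from (\ref{6506}) and (\ref{121902a}). The signs $\bar\partial_{\pm}\rho<0$ follow from (\ref{53002}), and the positivity of $\rho$ from $r-s<2\mathcal{R}$. It remains to treat the region between $\overline{PL}$ (or its continuation curve) and $\mathcal{W}_{-}$. There $\overline{PL}$ is a $C_{+}$ characteristic issued from $P$, on which the flow is tangent to the wall at $P$ and $\rho$ is decreasing. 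This is precisely the setting of Lemma \ref{lem5}, except that along the straight ray $\rho$ is constant, not strictly decreasing. So I would apply the simple-wave variant of Lemma \ref{lem5}, which is the reflected analogue of Lemma \ref{lem3}, viewed as half of a symmetric SGP. This solves the problem up to the wall, and the remaining part of the triangle is then filled using Lemma \ref{lem1} and Lemma \ref{lem5}. The solution is only piecewise smooth, with weak discontinuities along the characteristics issued from $P$. The inclusion $(r,s)\in\Pi$ follows from the monotonicity of $r$ and $s$ along characteristics, as in (\ref{122701}).

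In the second case, $r_{+}-\theta_{-}\ge\mathcal{R}$, so the fan reaches $\tau=+\infty$ (that is, $q=q_\infty$) before the flow turns to $\theta_{-}$. The fan therefore terminates at the vacuum ray $\sigma=r_{+}-\mathcal{R}$, exactly as in Remark \ref{rem4.3} and (\ref{101102}). The flow never touches $\mathcal{W}_{-}$, and the domain is bounded by $\widetilde{PE}$, $C_{-}^{E}$, and the vacuum boundary. The main obstacle I expect is the global invertibility of the hodograph map, and the corresponding a priori bounds, near the wall and near vacuum where $A\to0$ and $\rho\to0$. For this I would reuse the ratio estimate on $\bar\partial_{\pm}\rho/\rho$ from Lemma \ref{lem1}, through the decomposition (\ref{53001}), together with the choice of $\tau_*$ made there.
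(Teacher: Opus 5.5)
Your route is the paper's proof unfolded on the half-domain, but one step is wrong: your description of what the hodograph Goursat problem produces, and with it the ``simple-wave variant'' of Lemma \ref{lem5} that you invoke next.

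On $\widetilde{PE}$ you have $\bar{\partial}_{+}s=0$ and $\bar{\partial}_{+}\rho<0$, so (\ref{102302d}) gives $\bar{\partial}_{+}r=-\frac{\sin(2A)}{\rho}\bar{\partial}_{+}\rho>0$. Thus $r$ increases strictly from $r_{+}$ to $r(E)$ along $\widetilde{PE}$. Since $\bar{\partial}_{-}r=0$, these values are carried by the $C_{-}$ characteristics straight through the wave centred at $P$. That wave is therefore not a simple 2-fan: $r\not\equiv r_{+}$ in it, and its $C_{+}$ characteristics from $P$ are curved. The ODE (\ref{102304}) only gives the limiting states at $P$, i.e.\ the hodograph segment $\{r=r_{+},\ s_{-}\le s\le s_{+}\}$ with $s_{-}:=2\theta_{-}-r_{+}$; it is not equivalent to the hodograph problem. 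Straight rays occur only in the situation of Remark \ref{rem4.1}, where $r$ is constant on the $C_{+}$ boundary, and $\frac{{\rm d}\rho}{{\rm d}x}<0$ excludes this.

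The lower edge of the hodograph image is the curved $C_{+}$ characteristic $\widetilde{PL}=\{s=s_{-},\ r_{+}\le r\le r(E)\}$. Along it $\sigma$ increases from $\theta_{-}$ at $P$ to $\frac{r(E)+s_{-}}{2}$ at $L\in C_{-}^{E}$, and $\rho$ is strictly decreasing. Indeed, $\hat{\partial}_{-}x=0$ on $r=r_{+}$, so (\ref{6506}) is a linear homogeneous ODE for $\hat{\partial}_{+}x$ along that segment. Hence $\hat{\partial}_{+}x>0$ down to $(r_{+},s_{-})$, and by Step 1 of Lemma \ref{lem4.7} on all of $\{s=s_{-}\}$; then (\ref{53002}) gives $\bar{\partial}_{+}\rho<0$. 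So the hypothesis of your simple-wave variant (a constant state on a straight ray) is false. Lemma \ref{lem5} itself applies verbatim with $(\widetilde{PL},L)$ in place of $(\widetilde{PE},E)$, and $\Omega$ is the centred-wave region plus this mixed region, with $\widetilde{PL}$ a weak discontinuity.

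With this correction your argument is the paper's proof. The paper reflects in $\mathcal{W}_{-}$, which acts by $(r,s)\mapsto(2\theta_{-}-s,\,2\theta_{-}-r)$. The reflected data satisfy (\ref{6507}) exactly because $\sigma_{+}(P)>\theta_{-}$, and (\ref{102309a}) exactly because $r_{+}<\mathcal{R}+\theta_{-}$. The upper half of the symmetric DGP solution of Lemma \ref{lem8}, with invertibility from Lemma \ref{lem4.7}, is the centred wave over $\overline{P_{+}P_{h}}$ together with the region beyond the weak discontinuity $\overline{P_{h}H_{h}}=\{s=s_{-}\}$. The wall corresponds to the diagonal $r+s=2\theta_{-}$ through $P_{h}$.

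Three smaller points.
\begin{itemize}
\item The hypothesis gives $r-s<2\mathcal{R}$ only near the corner $(r_{+},s_{-})$; on the whole rectangle you need $r(E)-s_{-}<2\mathcal{R}$. As in the proof of Lemma \ref{lem8}, first work with a point $E'\in\widetilde{PE}$ near $P$ with $r(E')-s_{-}<2\mathcal{R}$, then complete $\Omega$ with Lemmas \ref{lem1} and \ref{lem5}. That is the legitimate content of your ``remaining part'' step.
\item In the vacuum case the same correction applies. The vacuum boundary is the limit of the curved $C_{+}$ characteristics of the centred wave as $s\downarrow r_{+}-2\mathcal{R}$ (compare Remark \ref{rem4.3}), not the last ray of a simple fan.
\item Invertibility of the hodograph map comes from the sign argument of Lemma \ref{lem4.7} (using $\tau>\tau_2^i$), not from the ratio estimate of Lemma \ref{lem1}.
\end{itemize}
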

\begin{proof}
The problem can actually be seen as a half of a symmetrical DGP. Then the existence can be obtained by Lemma \ref{lem4.7}.
\end{proof}
\subsection{Singular Cauchy problem and envelope of characteristic curves}

Let $\wideparen{FG}$: $y=y_{+}(x)$ $(x_{_F}\leq x\leq x_{_G})$ be a given smooth curve, $F=(x_{_F}, y_{+}(x_{_F}))$, and
$G=(x_{_G}, y_{+}(x_{_G}))$.
We give the data
\begin{equation}\label{61707}
(u, v)=(u_{+}, v_{+})(x, y)\quad \mbox{on}~ \wideparen{FG},
\end{equation}
where $(u_{+}, v_{+})\in C^1(\wideparen{FG})$.
We assume that the  data (\ref{61707}) satisfy the following conditions:
\begin{equation}\label{101401}
\alpha=\arctan \big(y_{+}'(x)\big) \quad \mbox{on}~ \wideparen{FG};
\end{equation}
\begin{equation}\label{61704a}
\tau(F)>\tau_2^i\quad \mbox{and} \quad
r(G)-s(F)<2\mathcal{R};
\end{equation}
\begin{equation}\label{61704}
\frac{{\rm d}}{{\rm d}x} r(x, y_{+}(x))>0\quad \mbox{and}\quad
\frac{{\rm d}}{{\rm d}x} s(x, y_{+}(x))>0,\quad x_{_F}\leq x\leq x_{_G};
\end{equation}
\begin{equation}\label{113005a}
(r, s)\in \Pi  \quad \mbox{on}~ \wideparen{FG}.
\end{equation}
We want to look for a solution of (\ref{E1}) on a curvilinear triangle domain ${\Omega}$ bounded by $\wideparen{FG}$ and characteristic curves $C_{+}^{F}$ and $C_{-}^{G}$ such that it satisfies (\ref{61707}); see Fig. \ref{Fig4.7} (left).
By (\ref{101401}) and (\ref{61704}) we know that the problem (\ref{E1}), (\ref{61707}) is not a standard Cauchy problem. We shall show that it is actually a singular Cauchy problem, since the solution would have singularity on $\wideparen{FG}$.

\begin{figure}[htbp]
\begin{center}
\includegraphics[scale=0.55]{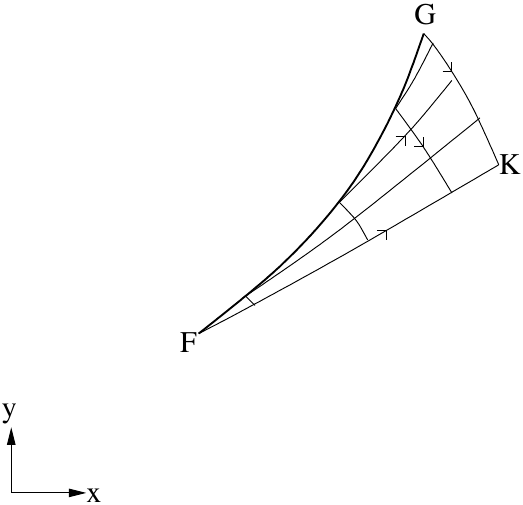}\qquad\qquad\qquad
\includegraphics[scale=0.42]{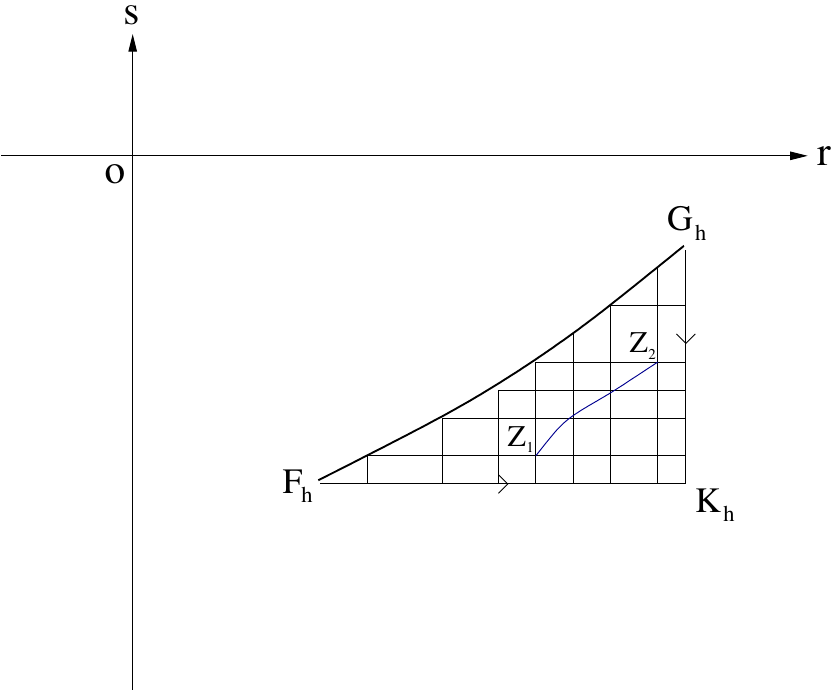}
\caption{\footnotesize A singular Cauchy problem. }
\label{Fig4.7}
\end{center}
\end{figure}

\begin{lem}\label{lem7}
Under assumptions (\ref{101401})--(\ref{113005a}). The problem (\ref{E1}), (\ref{61707}) admits a solution  $(u, v)(x,y)\in C^0(\overline{\Omega})\cap C^1(\overline{\Omega}\setminus\wideparen{FG})$, and this solution satisfies $\bar{\partial}_{\pm}\rho<0$ in $\overline{\Omega}\setminus\wideparen{FG}$. Moreover, the curve $\wideparen{FG}$ is an envelope of all the $C_{+}$ characteristic curves in $\overline{\Omega}$.
\end{lem}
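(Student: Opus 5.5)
The singular Cauchy problem will be solved in the hodograph plane, in the spirit of the proof of Lemma \ref{lem8}. By (\ref{61704}) the data map $x\mapsto (r,s)(x,y_{+}(x))$ is strictly increasing in both $r$ and $s$. Its image is therefore a strictly monotone curve $\Gamma_h:\ s=g(r)$, $r(F)\le r\le r(G)$, with $g'>0$. Such a curve is non-characteristic for the linear system (\ref{HT}), whose characteristics are the lines $r=$const and $s=$const. Let $(x,y)=(x^{+},y^{+})(r,g(r))$ be the inverse of the data map. I then pose (\ref{HT}) as a Cauchy problem with data $(x,y)=(x^{+},y^{+})$ on $\Gamma_h$.

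The determinate region of this problem is the curvilinear triangle $\Delta_h$ bounded by $\Gamma_h$, the segment $s=s(F)$ and the segment $r=r(G)$, lying on the side where $r-s$ is larger. Assumption (\ref{61704a}) gives $r-s\le r(G)-s(F)<2\mathcal{R}$ on $\Delta_h$, so $\rho>0$ there. By (\ref{113005a}) and the monotone structure of $\Delta_h$, $(r,s)$ stays in $\Pi$ there. Hence the coefficients $\lambda_\pm$ are smooth and $\lambda_{+}>\lambda_{-}$ by (H2), and the linear hyperbolic Cauchy problem has a unique smooth solution $(\hat x,\hat y)(r,s)$ on $\overline{\Delta}_h$.

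The key point is the boundary behaviour on $\Gamma_h$. Along $\Gamma_h$ the tangent of the data curve is $(\hat x_r+g'\hat x_s,\ \hat y_r+g'\hat y_s)$. Condition (\ref{101401}) says this tangent has slope $\lambda_{+}=\tan\alpha$. From $\hat\partial_{+}y=\lambda_{+}\hat\partial_{+}x$ we get $\hat x_s(\lambda_{+}-\lambda_{-})=0$, hence $\hat\partial_{-}x=-\hat x_s=0$ on $\Gamma_h$. Consequently the Jacobian $\partial(x,y)/\partial(r,s)=(\lambda_{-}-\lambda_{+})\hat x_r\hat x_s$ vanishes on $\Gamma_h$, and this is the source of the singularity. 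The same computation gives $\hat\partial_{+}x\ne0$ there, with sign fixed by the $x$-monotonicity of the data: $\hat x_r(1)=x'$-derivative over $dr/dx>0$. So $\hat\partial_{+}x>0$ on $\Gamma_h$.

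Next I establish sign information in the interior.
\begin{itemize}
\item Since $\tau(F)>\tau_2^i$ and $\tau$ increases into $\Delta_h$, we have $p''>0$, so (\ref{121902a}) gives $\hat\partial_{+}\lambda_{-}>0$ and $\hat\partial_{-}\lambda_{+}<0$.
\item Using the characteristic decomposition (\ref{6506}) written as ODEs along the lines $r=$const and $s=$const, starting from $\hat\partial_{-}x=0$ and $\hat\partial_{+}x>0$ on $\Gamma_h$, I want $\hat\partial_{\pm}x>0$ in $\overline\Delta_h\setminus\Gamma_h$.
\item Concretely, $\hat\partial_{+}(\hat\partial_{-}x)=\frac{1}{\lambda_{+}-\lambda_{-}}(\hat\partial_{+}\lambda_{-}\hat\partial_{-}x-\hat\partial_{-}\lambda_{+}\hat\partial_{+}x)$. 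At $\Gamma_h$ the right-hand side is $-\hat\partial_{-}\lambda_{+}\hat\partial_{+}x/(\lambda_+-\lambda_-)>0$, so $\hat\partial_{-}x$ becomes positive immediately off $\Gamma_h$.
\item A continuity (first-touching) argument as in Step 1 of Lemma \ref{lem4.7} keeps both quantities positive. If one of them first vanishes at an interior point, the other is still positive there, and the decomposition forces the wrong sign of the derivative.
\end{itemize}
I expect this step to be the main obstacle: one must orient the lines so that integration runs away from $\Gamma_h$ in the correct direction for each family, and handle the degenerate starting value $\hat\partial_{-}x=0$.

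With $\hat\partial_{\pm}x>0$ off $\Gamma_h$, the injectivity argument of Step 2 of Lemma \ref{lem4.7} carries over verbatim. So $T^{-1}$ is a bijection onto a domain $\Omega$ bounded by $\wideparen{FG}$, the image of $s=s(F)$ (which is $C_{+}^{F}$ by (\ref{52801})), and the image of $r=r(G)$ (which is $C_-^G$). Pulling back gives $(u,v)\in C^0(\overline\Omega)\cap C^1(\overline\Omega\setminus\wideparen{FG})$ satisfying (\ref{E1}).

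Then (\ref{53002}) yields $\bar\partial_{\pm}\rho=\mp\rho\cos(\sigma\pm A)/(\sin 2A\,\hat\partial_{\pm}x)$, with the sign convention of $\hat\partial_-$. By (H2) the cosines are positive, so this gives $\bar\partial_{\pm}\rho<0$ off $\wideparen{FG}$. As one approaches $\wideparen{FG}$, $\hat\partial_{-}x\to0$ and $|\bar\partial_{-}\rho|\to\infty$, which confirms the loss of $C^1$ regularity there.

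Finally, each $C_{+}$ characteristic in $\Omega$ is the image of a segment $s=s_1$, $r\in[g^{-1}(s_1),r(G)]$, and it ends on $\wideparen{FG}$ at the point with parameter $r=g^{-1}(s_1)$. Its direction there is $\alpha$, which by (\ref{101401}) equals the direction of $\wideparen{FG}$. So every $C_{+}$ characteristic touches $\wideparen{FG}$ tangentially, and $\wideparen{FG}$ is their envelope.
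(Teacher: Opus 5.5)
Your proposal follows the paper's proof essentially step for step. Both solve the linear Cauchy problem for (\ref{HT}) on the hodograph triangle below $\wideparen{F_hG_h}$, obtain $\hat\partial_{-}x=0$ and $\hat\partial_{+}x>0$ on $\wideparen{F_hG_h}$ from (\ref{101401}) and (\ref{61704}), propagate $\hat\partial_{\pm}x>0$ via (\ref{6506}), get injectivity as in Lemma \ref{lem4.7}, and conclude with (\ref{53002}) and the envelope property. You are in fact more explicit than the paper about the positivity step, but one inference fails as written.

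\textbf{Where $p''>0$ comes from.} You deduce $p''>0$ on $\Delta_h$ from ``$\tau(F)>\tau_2^i$ and $\tau$ increases into $\Delta_h$''.
\begin{enumerate}
\item Moving off $\wideparen{F_hG_h}$ into $\Delta_h$ does increase $r-s$, hence $\tau$. But that only gives $\tau\geq\min_{\wideparen{FG}}\tau$ on $\Delta_h$.
\item Assumption (\ref{61704}) makes $r$ and $s$ separately increasing along the data, not $r-s$, so nothing puts the minimum at $F$. In the paper's main use of the lemma (the data (\ref{102804}) behind the post-sonic shock, proof of Lemma \ref{110411}), (\ref{61308}) gives $\bar\partial_{_\Gamma}\rho_b>0$. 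So $\tau$ decreases from $\tau_2^e$ at $F$, and the minimum is at $G$.
\item The sign is essential. At a point of $\wideparen{F_hG_h}$ where $p''<0$, (\ref{121902a}) gives $\hat\partial_{-}\lambda_{+}>0$. Hence $\hat\partial_{+}(\hat\partial_{-}x)=-\hat\partial_{-}\lambda_{+}\hat\partial_{+}x/(\lambda_{+}-\lambda_{-})<0$ there, so $\hat\partial_{-}x<0$ just inside $\Delta_h$ and the conclusion $\bar\partial_{-}\rho<0$ fails.
\end{enumerate}
So the hypothesis in (\ref{61704a}) must be read as $\tau>\tau_2^i$ along all of $\wideparen{FG}$ (equivalently, at the point of $\wideparen{F_hG_h}$ where $r-s$ is smallest). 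The paper's one-line ``Thus \dots (\ref{92503aa})'' relies on exactly this tacitly. It does hold in every application, since the data there are back states of post-sonic shocks with $\tau=\psi_{po}(\tau_f)>\tau_2^i$.

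\textbf{Two smaller points.}
\begin{enumerate}
\item With $\hat\partial_{-}=-\partial_s$ and (\ref{102302d}), one has $\hat\partial_{+}\rho=\hat\partial_{-}\rho=-\rho/\sin(2A)$. The correct form of (\ref{53002}) is therefore $\bar\partial_{\pm}\rho=-\rho\cos(\sigma\pm A)/(\sin(2A)\,\hat\partial_{\pm}x)$, with the same sign for both families. Taken literally, the $\mp$ you copied would give $\bar\partial_{-}\rho>0$, although your stated conclusion is the right one.
\item In the first-touching argument, use the integrated form along the segments. Along $r=\mathrm{const}$, $\hat\partial_{+}x$ is bounded below by its positive boundary value times an exponential factor, as long as $\hat\partial_{-}x\geq 0$. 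Along $s=\mathrm{const}$, $\hat\partial_{-}x$ is a Duhamel integral of the positive forcing $-\hat\partial_{-}\lambda_{+}\hat\partial_{+}x/(\lambda_{+}-\lambda_{-})$ with zero initial value. This also covers simultaneous vanishing of both quantities at one point, which your ``the other is still positive'' does not address.
\end{enumerate}
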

\begin{proof}
The proof proceeds in two steps.
\vskip 4pt

\noindent
{\it Step 1.}
Let
$r_{+}(x,y)=r(u_{+}(x, y), v_{+}(x, y))$ and  $s_{+}(x, y)=s(u_{+}(x, y), v_{+}(x, y))$, $(x,y)\in \wideparen{FG}$.
Let
$$
\wideparen{F_{h}G_{h}}=\big\{(r, s)\mid (r, s)=(r_{+}(x, y), s_{+}(x, y)), (x,y)\in\wideparen{FG} \big\}.
$$
Then by (\ref{61704}) we know that the mapping
$(r, s)=(r_{+}(x, y), s_{+}(x,y))$, $(x,y)\in \wideparen{FG}$
has an inverse, denoted by
\begin{equation}\label{61706}
(x, y)=(x^{+}(r, s), y^{+}(r, s)), \quad (r, s)\in \wideparen{F_hG_h}.
\end{equation}

The curve $\wideparen{F_{h}G_{h}}$ can be represented by a function
$$
s=\varphi(r), \quad r(F)\leq r\leq r(G),
$$
where $r(F)=r(u_{+}(F), v_{+}(F))$ and $r(G)=r(u_{+}(G), v_{+}(G))$; see Fig. \ref{Fig4.7} (right).

Obviously, the Cauchy problem (\ref{HT}), (\ref{61706}) admits a classical solution on $\Delta$, where
 $$\Delta=\big\{(r, s)\mid s(F)\leq s\leq \varphi(r), r(F)\leq r\leq r(G)\big\}.$$

\noindent
{\it Step 2.}
 We next establish the global one-to-one inversion of the hodograph transformation.

 From (\ref{101401}) we have
$$
 \hat{\partial}_{+}y-\varphi'(r) \hat{\partial}_{-}y=\lambda_{+} ( \hat{\partial}_{+}x-\varphi'(r) \hat{\partial}_{-}x)\quad\mbox{on}~ \wideparen{F_hG_h}.
$$
 Combining this with (\ref{HT}) we get
 \begin{equation}\label{61711}
  \hat{\partial}_{-}x=0 \quad\mbox{on}~ \wideparen{F_hG_h}.
 \end{equation}
By (\ref{61704}) we have
\begin{equation}\label{113006a}
\hat{\partial}_{+}x-\varphi'(r) \hat{\partial}_{-}x>0\quad\mbox{on}~ \wideparen{F_hG_h}.
\end{equation}

Combining (\ref{61711}) and (\ref{113006a}) we get
 \begin{equation}\label{61712}
  \hat{\partial}_{+}x>0 \quad\mbox{on}~ \wideparen{F_hG_h}.
 \end{equation}
 Thus, by (\ref{61711}) and (\ref{61712}) we have
\begin{equation}\label{92503aa}
   \hat{\partial}_{-}x>0\quad \mbox{and}\quad   \hat{\partial}_{+}x>0\quad \mbox{in}~ \Delta_h\setminus \wideparen{F_hG_h}.
 \end{equation}

Let $Z_1(r_1, s_1)$ and $Z_2(r_2, s_2)$ be two arbitrary points in $\Delta$.
 If $r_1=r_2$ or $s_1=s_2$, then by (\ref{92503aa}) we have $x(r_1, s_1)\neq x(r_2, s_2)$. As in the proof of Lemma \ref{lem4.7}, we have $x(r_1, s_1)\neq x(r_2, s_2)$
for $r_1<r_2$ and $s_1>s_2$.
We now discuss the case of $r_1<r_2$, $s_1<s_2$. Suppose $x(r_2, s_2)=x(r_1, s_1)$, then by (\ref{92503aa}) and (\ref{113006a}) there exists a curve $\widehat{Z_1Z_2}$ in $\Delta$, which connects the points $Z_1$ and $Z_2$, such that $$
x(r, s)=x(r_1, s_1), \quad (r, s)\in \widehat{Z_1Z_2}.
$$
Then,
as in the proof of Lemma \ref{lem4.7} we have  $y(r_2, s_2)>y(r_1, s_1)$. So, $(x, y)(r_1, s_1)\neq (x,y)(r_2, s_2)$ for $r_1<r_2$ and $s_1<s_2$.
Thus, no two points from $(r, s)$ map to one point in the $(x, y)$-plane. So, the mapping $(x,y)=(x, y)(r, s)$, $(r, s)\in \Delta$ can be inversed back to the physical plane. This leads to the existence of a classical solution to the singular Cauchy problem.
From the one-to-one property, (\ref{101401}) and the definition of $C_{+}$ characteristic curves we can see that the curve $\wideparen{FG}$ is an envelope of all the $C_{+}$ characteristic curves in $\overline{\Omega}$.

Let $\overline{\Omega}=\{(x, y)\mid (x,y)=(x, y)(r, s), (r, s)\in \Delta\}$.
By (\ref{53002}) we also have that the solution satisfies
 $$
\bar{\partial}_{\pm}\rho<0\quad \mbox{in}~ \overline{\Omega}\setminus\wideparen{FG};\quad
 \bar{\partial}_{-}\rho=-\infty  \quad\mbox{on}~ \wideparen{FG}.
 $$
This completes the proof.
\end{proof}

\begin{rem}
If the assumption $r(G)-s(F)<2\mathcal{R}$ is not hold, we can still obtain a global classical solution to the singular Cauchy problem.  We can inset finite points $G_i$, $i=0, 1, 2, 3, \cdot\cdot\cdot n$ into $\widetilde{FG}$, where $G_0=F$ and $G_n=G$, such that  $r(G_{i})-s(G_{i-1})<2\mathcal{R}$, $i=1, 2, 3, \cdot\cdot\cdot n$. Through solving a finite number of singular Cauchy problems and Goursat problems, we can obtain a global solution to the original problem.
The determinate region of the singular Cauchy problem is an infinite region bounded by $\widetilde{FG}$, a forward $C_{+}$ characteristic curve issued from $F$, and a forward $C_{-}$ characteristic curve issued from $G$.
\end{rem}

\section{Oblique wave interactions within the divergent duct}

\subsection{Interaction of fan waves}
In this part we study the IBVP (\ref{E1}), (\ref{42701})
for $\tau_0>\tau_2^i$.
 The main result of this subsection is stated as the following theorem.
\begin{thm}\label{thm1}
Assume $\tau_0>\tau_2^i$ and $u_0>c_0$. Then the IBVP (\ref{E1}), (\ref{42701}) admits a global continuous and piecewise smooth solution in $\Sigma$. Moreover, if $\max\{\theta_{+}, -\theta_{-}\}<\mathcal{R}$ then there is no vacuum inside the duct.
\end{thm}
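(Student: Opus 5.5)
We construct the solution region by region, following the classical scheme for ideal gases, and use the tools of Section 4 in place of convexity. Since $\tau_0>\tau_2^i$ and $\tau$ increases along every rarefactive wave, we have $\tau>\tau_2^i$, hence $p''(\tau)>0$, everywhere in the flow. So the problem lies entirely in the convex regime.

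\emph{Corner waves.} Section 3.3.1 gives centered 2-fan waves at $D$ and, by symmetry, centered 1-fan waves at $B$. These solve the Riemann-type problems at the two corners. Each fan is followed by a constant state parallel to its wall; if $\theta_-\le -\mathcal{R}$ (resp. $\theta_+\ge\mathcal{R}$), the fan is followed instead by a vacuum. The uniform state $(u_0,0)$ occupies the triangle bounded by the leading characteristics $\overline{BP}$ and $\overline{DP}$ of the two fans, which meet at a point $P$ on the $x$-axis.

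\emph{First interaction.} At $P$ the two fans begin to interact. The leading $C_-$ characteristic from $B$ continues as a straight line only up to $P$. The data on the two sides are the simple-wave states along the first $C_+$ and $C_-$ characteristics issuing from $P$, across the fans. This is a standard Goursat problem with data on curves $\widetilde{PE}$ and $\widetilde{PF}$. The data satisfy (\ref{6401a})--(\ref{6402}), and (\ref{2301}) holds because $\rho$ decreases strictly across a rarefaction fan (by (\ref{102304}), $\bar\tau'<0$ while $p''>0$). Lemma \ref{lem1} then gives a $C^1$ solution with $\bar\partial_\pm\rho<0$ and $(r,s)\in\Pi$.
\begin{itemize}
\item If $r(E)-s(F)<2\mathcal{R}$ (this is where $\max\{\theta_+,-\theta_-\}<\mathcal{R}$ enters, since $r(E)\le\theta_+ +\mathcal{R}$ and $s(F)\ge\theta_- -\mathcal{R}$, with equality excluded), Lemma \ref{lem2} shows that the interaction region is bounded. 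It is closed by the last characteristics, beyond which simple waves $f_\pm^2$ follow by Lemmas \ref{lem3}, \ref{lem4} and Theorem \ref{thm0}.
\item Otherwise a vacuum appears inside the duct, in the spirit of Remark \ref{rem4.3}; the hodograph transformation still yields a global solution bounded by vacuum boundaries.
\end{itemize}

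\emph{Reflections.} The simple waves $f_\pm^2$ then hit the walls $\mathcal{W}_\pm$. Each reflection is a mixed characteristic/slip boundary problem, solved by Lemma \ref{lem5} when the incoming state is already parallel to the wall. The result is a region bounded by $\widetilde{PE}$, the wall, and $C_-^E$, with $\bar\partial_\pm\rho<0$ preserved. Behind it comes a further simple wave $f^3$ adjacent to a constant state parallel to the wall, which in turn interacts with the wave reflected from the other wall.

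\emph{Iteration and exhaustion of $\Sigma$.} We iterate these steps, alternating Goursat problems (Lemma \ref{lem1}) and wall reflections (Lemma \ref{lem5}). The invariant region $\Pi$ and the monotonicity $\bar\partial_\pm\rho<0$ propagate at every step, so the hyperbolicity hypotheses (H1)--(H2) are never violated. The main obstacle is showing that these regions cover all of $\Sigma$ and do not accumulate at a finite $x$. I would first try monotonicity of the Riemann invariants: by (\ref{122701}), $r$ increases along $C_+$ and $s$ decreases along $C_-$, while on the walls $\sigma=\theta_\pm$ is fixed. Each reflection therefore pushes $r-s$ toward its upper bound $2\mathcal{R}$, and $(r,s)$ approaches a limiting state (or vacuum when $\theta_+-\theta_->2\mathcal{R}$, i.e. outside the no-vacuum hypothesis). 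Using the monotonicity of $r$ and $s$, I would then show that after finitely many interactions either the flow becomes a terminal simple wave, reached by a final Goursat problem, or, when $r-s\to2\mathcal{R}$, the last regions are unbounded. In the no-vacuum case $\max\{\theta_+,-\theta_-\}<\mathcal{R}$, this gives $\rho>0$ throughout and completes the proof.
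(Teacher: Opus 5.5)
Your overall architecture is the paper's: corner fans from Section 3.3.1, a standard Goursat problem at $P$ solved by Lemma~\ref{lem1}, simple waves via Lemmas~\ref{lem3}--\ref{lem4}, wall reflections via Lemma~\ref{lem5}, and iteration. However, the step where you claim the no-vacuum hypothesis forces $r(E)-s(F)<2\mathcal{R}$ is false, and this breaks the second assertion of the theorem.

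Compute directly. On $\widetilde{PE}$ you have $s\equiv s(P)=0$ and $\sigma(E)=\theta_+$, so $r(E)=2\theta_+$. Likewise $s(F)=2\theta_-$. Hence $r(E)-s(F)=2(\theta_+-\theta_-)$, and the closing condition of Lemma~\ref{lem2} is $\theta_+-\theta_-<\mathcal{R}$. This is strictly stronger than $\max\{\theta_+,-\theta_-\}<\mathcal{R}$; take $\theta_+=-\theta_-=\tfrac{3}{5}\mathcal{R}$. Your two bounds only give $r(E)-s(F)<2\mathcal{R}+(\theta_+-\theta_-)$. So the case $r(E)-s(F)\ge 2\mathcal{R}$ does occur inside the no-vacuum regime. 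For that case you assert that a vacuum appears inside the duct ``in the spirit of Remark~\ref{rem4.3}''. That is the opposite of what the theorem claims, and it is also a wrong description of the solution.

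Here is what actually happens in that case. The Goursat data are continuous at $P$, so you are not in the setting of Remark~\ref{rem4.3}. There, the jump at $P$ produces centered waves whose states reach $r-s=2\mathcal{R}$ along rays, which gives a cavitation sector.

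The hypothesis enters only through $\rho(E)>0$ and $\rho(F)>0$, i.e.\ $\theta_+<\mathcal{R}$ and $\theta_->-\mathcal{R}$. That is all Lemma~\ref{lem1} needs. It yields $\rho>0$ on the whole determinate region, which by Lemma~\ref{lem2} is now unbounded because $C_+^F$ and $C_-^E$ never meet. You also need $\alpha>\sigma\ge\theta_-$ on $C_+^F$ and $\beta<\sigma\le\theta_+$ on $C_-^E$, a check you omit, to keep this region inside the duct. Simple-wave problems and the two wall problems of Lemma~\ref{lem5} then complete the flow, with $\rho>0$ at every finite point.

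This is the paper's treatment. It is also exactly the mechanism you yourself invoke at later stages in your final paragraph, so your first-interaction claim is inconsistent with your own iteration step.

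That final paragraph can also be sharpened, because no ``limiting state'' alternative exists. The wall conditions read $r+s=2\theta_\pm$, and one checks that the $k$-th interaction satisfies $r(E_k)-s(F_k)=2k(\theta_+-\theta_-)$. This grows linearly, so after finitely many rounds some Goursat or wall-reflection region becomes unbounded and the construction terminates. That is what underlies the paper's ``finite number'' of boundary value problems.
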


\begin{figure}[htbp]
\begin{center}
\includegraphics[scale=0.42]{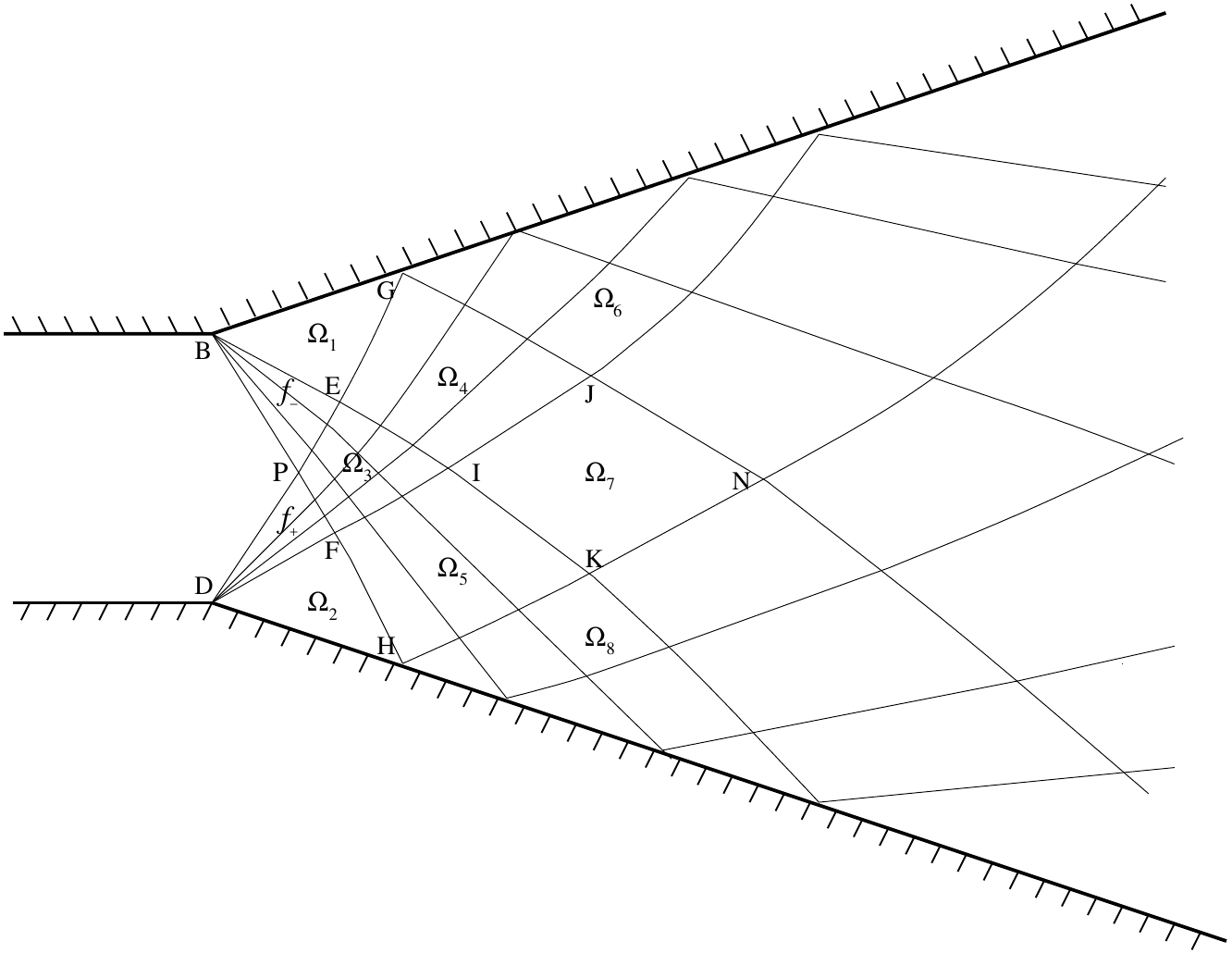}
\caption{\footnotesize Interactions and reflections of simple waves in the duct. }
\label{Fig5.1.1}
\end{center}
\end{figure}

When $\tau_0>\tau_2^i$ and $u_0>c_0$,
 there are two centered simple waves with different types issued from the corners $B$ and $D$; see Fig. \ref{Fig5.1.1}.
We first assume $\max\{\theta_{+}, -\theta_{-}\}<\mathcal{R}$.
Then the  flow near the corners  $B$ and $D$ can be represented by
$$
(u, v)=\left\{
               \begin{array}{ll}
                 (u_0, 0), & \hbox{$-\frac{\pi}{2}\leq \eta\leq \eta_0$;} \\[2pt]
                 (\tilde{u}, \tilde{v})(\eta), & \hbox{$\eta_0\leq \eta\leq \eta_2$;} \\[2pt]
                 (u_1, v_1), & \hbox{$\eta_1<\eta<\theta_{+}$,}
               \end{array}
             \right.
\quad \mbox{and}\quad
(u, v)=\left\{
               \begin{array}{ll}
                 (u_0, 0), & \hbox{$\xi_0\leq \xi\leq \frac{\pi}{2}$;} \\[2pt]
                 (\bar{u}, \bar{v})(\xi), & \hbox{$\xi_2\leq \xi\leq \xi_0$;} \\[2pt]
                 (u_2, v_2), & \hbox{$\theta_{-}<\xi<\xi_2$,}
               \end{array}
             \right.
$$
respectively.
Here, $\eta_0=-\xi_0=-\arctan(\frac{c_0}{u_0})$;
$(\bar{u}, \bar{v})(\xi)$ is determined by (\ref{102306}) and (\ref{102304}) with initial data $(\bar{q}, \bar{\tau}, \bar{\sigma})(\xi_0)=(u_0, \tau_0, 0)$;
$(\tilde{u}, \tilde{v})(\eta)$ is determined by (\ref{102306a}) and (\ref{102304a}) with initial data $(\tilde{q}, \tilde{\tau}, \tilde{\sigma})(\eta_0)=(u_0, \tau_0, 0)$;
$\eta_1$ is  determined by $\tilde{v}(\eta_1)=\tilde{u}(\eta_1)\tan\theta_{+}$;
$\xi_2$ is  determined by $\hat{v}(\xi_2)=\hat{u}(\xi_2)\tan\theta_{-}$;
 $(u_1, v_1)=(\tilde{u}, \tilde{v})(\eta_{1})$ and $(u_2, v_2)=(\bar{u}, \bar{v})(\xi_2)$.
For convenience, we set
\begin{equation}\label{def12}
q_k=\sqrt{u_k^2+v_k^2}, \quad\tau_k=\hat{\tau}(q_k), \quad c_k=\tau_k\sqrt{-p'(\tau_k)}, \quad \mbox{and}\quad A_k=\arcsin\Big(\frac{c_k}{q_k}\Big), \quad k=1,~ 2.
\end{equation}

We use $f_{+}$ and $f_{-}$ to denote the centered simple waves issued from the points $D$ and $B$ respectively.
The two centered waves start to interact with each other inside the duct at the point $P(\cot \xi_0, 0)$.
 We draw a forward $C_{-}$ characteristic curve from the point $P$. This characteristic curve then delimits the centered simple wave $f_{+}$ region.
 We describe this characteristic curve by the following one-parametric form:
 $$x=x_{-}(\xi):=\chi(\xi)\cos\xi,\quad  y=y_{-}(\xi):=\chi(\xi)\sin\xi-1, \quad \xi_2\leq \xi\leq \xi_0.$$
Then by $$\cos\big(\alpha(x_{-}, y_{-})-2A(x_{-}, y_{-})\big) \frac{{\rm d}y_{-}}{{\rm d}\xi}-\sin\big(\alpha(x_{-},y_{-})-2A(x_{-}, y_{-})\big) \frac{{\rm d}x_{-}}{{\rm d}\xi}~=~0,$$
$\alpha(x_{-}(\xi), y_{-}(\xi))=\xi$, and $A(x_{-}(\xi), y_{-}(\xi))=\bar{A}(\xi)$,
 we have
that $\chi(\xi)$ satisfies
\begin{equation}\label{62302}
\left\{
  \begin{array}{ll}
   \chi'(\xi)=-\cot \big(2\bar{A}(\xi)\big) \chi(\xi), \quad \xi_1\leq \xi\leq \xi_2,\\[4pt]
   \chi(\xi_0)=\csc \xi_0.
  \end{array}
\right.
\end{equation}
So, by (\ref{102503}), this  characteristic line crosses the whole simple wave $f_{+}$
and ends up at a point $F(r(\xi_2)\cos\xi_2, r(\xi_2)\sin\xi_2-1)$.
We denote by $\widetilde{PF}$ this $C_{-}$ cross characteristic line.

According to $\bar{\rho}'(\xi)>0$, we also have
\begin{equation}\label{62303}
\begin{aligned}
\bar{\partial}_-\rho&~=~\bar{\rho}'(\xi)\big(\cos\beta\xi_{x}+\sin\beta\xi_{y}\big)
~=~\bar{\rho}'(\xi)\big(\cos(\xi-2\bar{A}(\xi))\xi_{x}+\sin(\xi-2\bar{A}(\xi))\xi_{y}\big)\\&~=~-\frac{\sin (2\bar{A}(\xi))}{\sqrt{x^2+(y+1)^2}}\cdot \bar{\rho}'(\xi)<0
\quad \mbox{on}~ \widetilde{{PF}}.
\end{aligned}
\end{equation}
Combining this with (\ref{52801}),  (\ref{102302}), and $\sigma(F)=\bar{\sigma}(\xi_2)=\arctan(\frac{\bar{v}(\xi_2)}{\bar{u}(\xi_2)})=\theta_{-}$, we also have
\begin{equation}\label{1202}
(r, s)\in \Pi\quad \mbox{on}~ \widetilde{{PF}}.
\end{equation}

Similarly,
we draw a forward $C_{+}$ characteristic line from the point $P$. This characteristic line then delimit the centered simple wave $f_{-}$ region
and ends up at a point $E$ on the straight $C_{-}$ characteristic line $y=1+x\tan \eta_1$. Moreover,
\begin{equation}\label{101501}
\begin{aligned}
\bar{\partial}_{+}\rho~=~\frac{\sin (2\tilde{A}(\eta))}{\sqrt{x^2+(y-1)^2}}\cdot \bar{\rho}'(\eta)<0
\quad \mbox{on}~ \widetilde{{PE}}.
\end{aligned}
\end{equation}
Combining this with (\ref{52801}),  (\ref{102302}), and $\sigma(E)=\tilde{\sigma}(\eta_1)=\arctan(\frac{\tilde{v}(\eta_1)}{\tilde{u}(\eta_1)})=\theta_{+}$, we also have
\begin{equation}\label{1203}
(r, s)\in \Pi\quad \mbox{on}~ \widetilde{{PE}}.
\end{equation}

We now consider (\ref{E1}) with the boundary condition
\begin{equation}\label{101502a}
(u, v)=\left\{
               \begin{array}{ll}
                  (\bar{u}, \bar{v})(\xi), & \hbox{$(x, y)\in\widetilde{PF}$;} \\[2pt]
               (\tilde{u}, \tilde{v})(\eta), & \hbox{$(x, y)\in\widetilde{PE}$.}
               \end{array}
             \right.
\end{equation}
The problem  (\ref{E1}), (\ref{101502a}) is a standard Goursat problem.

From (\ref{62303})--(\ref{1203}) we see that
 the boundary condition (\ref{101502a}) satisfy the assumptions in Lemma \ref{lem1}. Then the SGP (\ref{E1}), (\ref{101502a}) admits a global classical solution on a curvilinear quadrilateral domain $\overline{\Omega}_3$ bounded by $\widetilde{PF}$, $\widetilde{PE}$, $C_{+}^{F}$, and  $C_{-}^{E}$; see Fig. \ref{Fig5.1.1}. Moreover, the solution satisfies
the following estimates in $\overline{\Omega}_{3}$:
\begin{equation}\label{102504}
(r, s)\in\Pi, \quad\bar{\partial}_{\pm}\rho<0, \quad \bar{\partial}_{+}r>0,\quad \bar{\partial}_{-}s<0, \quad
\bar{\partial}_{+}\sigma>0,\quad \mbox{and}\quad \bar{\partial}_{-}\sigma<0.
\end{equation}

By (\ref{102504})  we also have
$\alpha=\sigma+A>\sigma\geq\theta_{-}$ on   $C_{+}^{F}$ and
$\beta=\sigma-A<\sigma\leq \theta_{+}$ on  $C_{-}^{E}$.
So, $C_{+}^{F}$ and $C_{-}^{E}$ do not intersect $\mathcal{W}_{+}$ and $\mathcal{W}_{-}$ respectively.
This immediately implies that the domain $\overline{\Omega}_3$ lies inside the divergent duct.
For convenience, we denote the solution of the SGP (\ref{E1}), (\ref{101502a}) by $(u, v)=(u_3, v_3)(x, y)$, $(x,y)\in\overline{\Omega}_3$.

We draw a forward straight $C_{+}$ characteristic line from $E$ with the inclination angle $\alpha=\theta_{+}+A_1>\theta_{+}$. This line intersects $\mathcal{W}_{+}$ at a point $G$. Then the flow in the triangle domain $\Omega_1$ bounded by the straight characteristic lines $\overline{BE}$, $\overline{EG}$, and the wall $\mathcal{W}_{+}$ is the constant state $(u_1, v_1)$.
We draw a forward straight $C_{-}$ characteristic line from $F$ with the inclination angle $\beta=\theta_{-}-A_2<\theta_{-}$. This line intersects $\mathcal{W}_{-}$ at a point $H$. Then the flow in the triangle domain $\Omega_2$ bounded by the straight characteristic lines $\overline{DF}$, $\overline{FH}$, and the wall $\mathcal{W}_{-}$ is the constant state $(u_2, v_2)$.

When $r(E)-s(F)<2\mathcal{R}$, the characteristic lines $C_{+}^{F}$ and  $C_{-}^{E}$ intersect at a point $I$. We next consider (\ref{E1}) with the boundary condition
\begin{equation}\label{101503}
(u, v)=\left\{
               \begin{array}{ll}
                  (u_1, v_1), & \hbox{$(x, y)\in\overline{EG}$;} \\[2pt]
               (u_3, v_3)(x,y), & \hbox{$(x, y)\in \widetilde{EI}$.}
               \end{array}
             \right.
\end{equation}
By Lemma \ref{lem3}, the BVP (\ref{E1}), (\ref{101503}) admits a simple wave solution with straight $C_{+}$ characteristic lines on a curvilinear quadrilateral domain $\overline{\Omega}_4$ bounded by $\overline{EG}$, $\widetilde{EI}$, $\widetilde{GJ}$, and $\overline{IJ}$, where $\widetilde{GJ}$ is a forward $C_{-}$ characteristic line issued from $G$ and $\overline{IJ}$ is a forward straight $C_{+}$ characteristic line issued from $I$.
Moreover, the solution satisfies $(r, s)\in \Pi$, $\bar{\partial}_{-}\rho<0$, and $\bar{\partial}_{+}\rho=0$ in $\overline{\Omega}_4$.
The domain $\Omega_4$ is inside the duct.
We denote this simple wave solution by $(u, v)=(u_4, v_4)(x, y)$, $(x,y)\in \overline{\Omega}_4$.

Similarly, we can find a simple wave solution with straight $C_{-}$ characteristic lines on a domain $\overline{\Omega}_5$ bounded by
$\overline{FH}$, $\widetilde{FI}$, $\widetilde{HK}$ and $\overline{IK}$, where $\widetilde{HK}$ is a forward $C_{+}$ characteristic line issued from $F$ and $\overline{IK}$ is a forward $C_{-}$ characteristic line issued from $I$.
The solution satisfies $(r, s)\in\Pi$, $\bar{\partial}_{-}\rho=0$ and $\bar{\partial}_{+}\rho<0$ in $\overline{\Omega}_5$. We denote this simple wave solution by $(u, v)=(u_5, v_5)(x, y)$, $(x,y)\in \overline{\Omega}_5$.

Next, we  consider (\ref{E1}) with the boundary condition
\begin{equation}\label{101601}
\left\{
               \begin{array}{ll}
                 (u, v)=(u_5, v_5)(x,y), & \hbox{$(x, y)\in \widetilde{HK}$;} \\[2pt]
               v=u\tan \theta_{-}, & \hbox{$(x, y)\in \mathcal{W}_{-}$.}
               \end{array}
             \right.
\end{equation}
By Lemma \ref{lem5}, the problem (\ref{E1}), (\ref{101601}) admits a $C^1$ solution on a curvilinear triangle domain $\overline{\Omega}_8$ bounded by $\mathcal{W}_{-}$, $\widetilde{HK}$, and $C_{-}^{K}$. Moreover, the solution satisfies $(r, s)\in\Pi$ and $\bar{\partial}_{\pm}\rho<0$ in $\overline{\Omega}_8$.
Similarly, we can find the solution on a curvilinear triangle domain $\overline{\Omega}_6$ bounded by $\widetilde{GJ}$, $\mathcal{W}_{+}$, and $C_{+}^J$. Moreover, the solution satisfies $(r, s)\in\Pi$ and $\bar{\partial}_{\pm}\rho<0$ in $\overline{\Omega}_6$.

We next consider (\ref{E1}) with the boundary condition
\begin{equation}\label{101605}
 (u, v)=\left\{
               \begin{array}{ll}
(u, v)(I), & \hbox{$(x, y)\in \overline{IJ}$;} \\[2pt]
             (u, v)(I), & \hbox{$(x, y)\in \overline{IK}$.}
               \end{array}
             \right.
\end{equation}
The problem admits a constant state solution on a parallelogram domain $\overline{\Omega}_7$ bounded by straight characteristic lines $\overline{IJ}$, $\overline{IK}$, $\overline{JN}$, and $\overline{KN}$.

Repeating the above processes and
 solving a finite number of standard Goursat problems, mixed boundary value problems, and simple wave problems, we can obtain a global continuous and piecewise smooth solution inside the semi-infinite divergent duct.

The discussion for the case of $r(E)-s(F)\geq 2\mathcal{R}$ is similar to that for $r(E)-s(F)<2\mathcal{R}$. Actually, by solving a standard Goursat problem and two mixed boundary value problems we can obtain a global continuous and piecewise smooth solution inside the semi-infinite divergent duct. We omit the details.

The discussion for the case of $\max\{\theta_{+}, -\theta_{-}\}\geq\mathcal{R}$ is similar.
If $\theta_{+}>\mathcal{R}$,  there is a cavitation between the wall $\mathcal{W}_{+}$  and a vacuum boundary $y=1+x\tan\mathcal{R}$, $x>0$.
If $-\theta_{-}>\mathcal{R}$, there is a cavitation between the wall $\mathcal{W}_{-}$ and a vacuum boundary $y=-1-x\tan\mathcal{R}$, $x>0$.
This completes the proof of Theorem \ref{thm1}.
\vskip 8pt
\subsection{Interaction of shock-fan composite waves}
We study the IBVP (\ref{E1}), (\ref{42701})
for $\tau_1^e<\tau_0<\tau_2^i$.
In this part we assume that the oblique waves at the corners $B$ and $D$ are shock-fan composite waves.
The main result of this subsection is stated as the following theorem.
\begin{thm}\label{thm2}
Assume $\tau_1^e<\tau_0<\tau_2^i$ and $u_0>c_0$.
Assume furthermore that the oblique waves at the corners $B$ and $D$ are shock-fan composite waves. Then the IBVP (\ref{E1}), (\ref{42701}) admits a global  piecewise smooth solution in $\Sigma$.
\end{thm}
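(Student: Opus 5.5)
The plan is to follow the scheme of the proof of Theorem \ref{thm1}, with the oblique waves at the corners now described by Section 3.3.2. By symmetry and (\ref{8230a}), near $D$ the flow consists of the constant state $(u_0,0)$, a straight post-sonic 2-shock $\overline{DS_D}$ of inclination $\phi_{po}$, a 2-fan wave $(\bar u,\bar v)(\xi)$ on $\xi_2<\xi<\phi_{po}$, and the constant state $(u_2,v_2)$. Near $B$ the picture is the mirror image: a post-sonic 1-shock of inclination $-\phi_{po}$ followed by a 1-fan. The state behind each shock is $\tau_{po}=\psi_{po}(\tau_0)>\tau_2^i$ by Proposition \ref{51003}. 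Since the fans start at $\tau_{po}$ and $\bar\tau'<0$ along them, every fan state satisfies $\tau>\tau_2^i$. The two straight shocks meet at $P(\cot\phi_{po},0)$, so the first and hardest step is to resolve the shock--shock interaction at $P$ and then continue the two shocks through the opposite fans.

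At $P$ I would solve a local Riemann problem: the upstream state is $(u_0,0)$ and the downstream states are $(u_{po},\pm v_{po})$ (with signs fixed by the symmetry). The transmitted waves are a 2-shock and a 1-shock issuing from $P$, with the downstream states joined by wave curves $W_{1},W_2$ through a constant region. Uniqueness of the intersection point uses the monotonicity $d\sigma/d\tau<0$ along the wave curves, i.e. Lemma \ref{lem3.8}, which is where $u_0$ large enters. If the incoming post-sonic shocks reflect as weaker shocks, the state behind them is not sonic. I therefore expect the transmitted shocks to be transonic ($N_b<c_b$) and to curve as they cross the fans, becoming free boundaries. Each such shock $\wideparen{PE}$ is determined together with the flow behind it by a shock free boundary problem. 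The front data are the 1-fan (respectively 2-fan). On the back side, the R--H conditions (\ref{RH}) give one relation, and the other comes from the characteristic of the family that enters the shock from behind.

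To handle this I would write the R--H conditions along the shock in the form $\bar\partial_{\Gamma}$, compute the shock curvature $d\phi/ds$, and use Liu's condition (\ref{43001}) together with Proposition \ref{tran}. The goal is the a priori estimate that $\tau_f$ stays in $(\tau_1^e,\tau_2^i)$ along the shock and that the back state stays in $(\tau_*,\psi_{po}(\tau_f))$, so the shock remains transonic. This type-control estimate is the main obstacle. If it fails at some point, the shock becomes post-sonic there and the back side must be treated as a sonic shock. That case would be handled with the hodograph transformation and the singular Cauchy problem of Lemma \ref{lem7}, with the shock an envelope of one characteristic family. Alongside this I would propagate $\bar\partial_\pm\rho<0$ and $(r,s)\in\Pi$ behind the shock by the characteristic decompositions (\ref{81104}), (\ref{53001}), in the same continuity argument as Lemma \ref{lem1}.

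Once each shock has exited its fan, beyond the last straight characteristic of the fan, the front state is constant, namely $(u_1,v_1)$ or $(u_2,v_2)$. The shock then becomes straight, or else it is a post-sonic shock followed by a fan emanating from it. The region between the two shock back-sides and the fans is filled by a standard Goursat problem, Lemma \ref{lem1}, whose hypotheses are met because $\tau>\tau_2^i$ and $\bar\partial_\pm\rho<0$. Its boundary characteristics are then continued as in Theorem \ref{thm1}: simple waves from Lemmas \ref{lem3}--\ref{lem4}, reflections at $\mathcal W_\pm$ via Lemmas \ref{lem5} and \ref{lem5a}, and shock reflections at the walls treated as mixed shock/slip problems. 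Iterating finitely many such problems covers $\Sigma$. Shocks only weaken through successive interactions, since all downstream states have $\tau>\tau_2^i$, so eventually only fan interactions remain and Theorem \ref{thm1}'s construction finishes the proof. If $r(E)-s(F)\ge2\mathcal R$, Remark \ref{rem4.3} supplies the cavitation alternative.
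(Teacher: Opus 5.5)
\textbf{There is a genuine gap: the picture at $P$ is wrong, and the rest of your plan depends on it.}

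The two post-sonic shocks do not continue past $P$. In fact no admissible rarefaction shock can leave $P$. Any shock entering the opposite fan would have its front state in that fan, where, as you note yourself, $\tau\ge\tau_{po}>\tau_2^i$, so $p''>0$ there. By (\ref{112203a}) and (\ref{5801}), $m^2$ is minus a weighted average of $p'$ over $[\tau_f,\tau_b]$. Since $p'$ is increasing on that interval, $m^2<-p'(\tau_f)$, i.e. $N_f<c_f$, which contradicts (\ref{20102}). This is consistent with Propositions \ref{51003}--\ref{tran}, which only produce rarefaction shocks with $\tau_f<\tau_2^i$.

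So the estimate you single out as the main obstacle, $\tau_f\in(\tau_1^e,\tau_2^i)$ along the shock, cannot hold even at the start. The free-boundary, type-control and sonic-transition steps are aimed at a configuration that does not occur. Lemma \ref{lem3.8} is also irrelevant here.

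\textbf{What the paper does instead.} The states meeting at $P$ are $(u_{po},-v_{po})$ from ${\it sf}_-$ and $(u_{po},v_{po})$ from ${\it sf}_+$, and they describe diverging flows. They have the same $r-s$, because both have $\tau=\tau_{po}$. They have $r+s=\mp2\sigma_{po}$ with $\sigma_{po}<0$. Hence the upper state has strictly larger $r$ and $s$, which is (\ref{102603}). The intermediate state $(r_+,s_-)$ is therefore more expanded, and in the convex range it is reached by two centered expansion fans issuing from $P$. The shocks simply end at $P$.

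The paper draws the cross characteristics $\widetilde{PE}$ (through the fan of ${\it sf}_-$) and $\widetilde{PF}$ (through the fan of ${\it sf}_+$). On them $\bar\partial_\pm\rho<0$ by (\ref{102604}), and $(r,s)\in\Pi$ because $\theta_-<\sigma_{po}<-\sigma_{po}<\theta_+$. It then poses a characteristic problem with the fan data on these curves. That data is discontinuous at $P$, so Lemma \ref{lem1} cannot be applied to the region beyond $P$, as your plan proposes.

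The tool you are missing is the discontinuous Goursat problem, Lemma \ref{lem8}. It is solved by the hodograph transformation together with the one-to-one inversion of Lemma \ref{lem4.7}. Its hypotheses follow from $\tau_{po}>\tau_2^i$ (giving (\ref{6507a})), from (\ref{102603}), and from (\ref{102604}). Remark \ref{rem4.3} supplies the cavitation case when $r_+-s_-\ge2\mathcal R$.

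After that step, the simple waves ${\it f}_\pm$ that emerge, their reflections off the walls, and the later fan interactions are handled exactly as in Theorem \ref{thm1}. That last part of your plan does agree with the paper.
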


Firstly, we assume that
the solution the corners  $B$ and $D$ can be represented by
$$
(u, v)=\left\{
               \begin{array}{ll}
                 (u_0, 0), & \hbox{$-\frac{\pi}{2}\leq \eta\leq -\phi_{po}$;} \\[2pt]
                 (\tilde{u}, \tilde{v})(\eta), & \hbox{$-\phi_{po}\leq \eta\leq \eta_1$;} \\[2pt]
                 (u_1, v_1), & \hbox{$\eta_1<\eta<\theta_{+}$,}
               \end{array}
             \right.
~\mbox{and}\quad
(u, v)=\left\{
               \begin{array}{ll}
                 (u_0, 0), & \hbox{$\phi_{po}<\xi<\frac{\pi}{2}$;} \\[2pt]
                 (\bar{u}, \bar{v})(\xi), & \hbox{$\xi_2<\xi<\phi_{po}$;} \\[2pt]
                 (u_{2}, v_{2}), & \hbox{$\theta_{-}<\xi<\xi_2$,}
               \end{array}
             \right.
$$
respectively; see Fig. \ref{Fig5.2.1}. Here, the function $(\bar{u}, \bar{v})(\xi)$ is determined by (\ref{102306}) and (\ref{102304}) with initial data $(\bar{q}, \bar{\tau}, \bar{\sigma})(\phi_{po})=(q_{po}, \tau_{po}, \sigma_{po})$; the function
$(\tilde{u}, \tilde{v})(\eta)$ is determined by (\ref{102306a}) and (\ref{102304a}) with initial data $(\tilde{q}, \tilde{\tau}, \tilde{\sigma})(-\phi_{po})=(q_{po}, \tau_{po}, -\sigma_{po})$;
$\eta_1$ is determined by $\tilde{v}(\eta_1)=\tilde{u}(\eta_1)\tan\theta_{+}$;
$\xi_2$ is  determined by $\bar{v}(\xi_2)=\bar{u}(\xi_2)\tan\theta_{-}$;
the constants $q_{po}$, $\sigma_{po}$, and $\phi_{po}$ are the same as that defined in Section 3.3.2. 

For convenience, we use ${\it sf}_{+}$ and ${\it sf}_{-}$ to denote the shock-fan composite waves issued from points $D$ and $B$, respectively.
The two shock-fan composite waves ${\it sf}_{\pm}$ start to interact with each other inside the duct at point $P(\cot \phi_{po}, 0)$.
From the point $P$, we draw a forward $C_{-}$ ($C_{+}$, resp.) cross characteristic line $\widetilde{PF}$ ($\widetilde{PE}$, resp.) of the centered simple wave of ${\it sf}_{+}$ (${\it sf}_{-}$, resp.).
As in (\ref{62302})--(\ref{62303}), we have
\begin{equation}\label{102604}
\bar{\partial}_{+}\rho<0
\quad \mbox{on}~ \widetilde{{PE}}; \quad \bar{\partial}_{-}\rho<0
\quad \mbox{on}~ \widetilde{{PF}}.
\end{equation}

\begin{figure}[htbp]
\begin{center}
\includegraphics[scale=0.47]{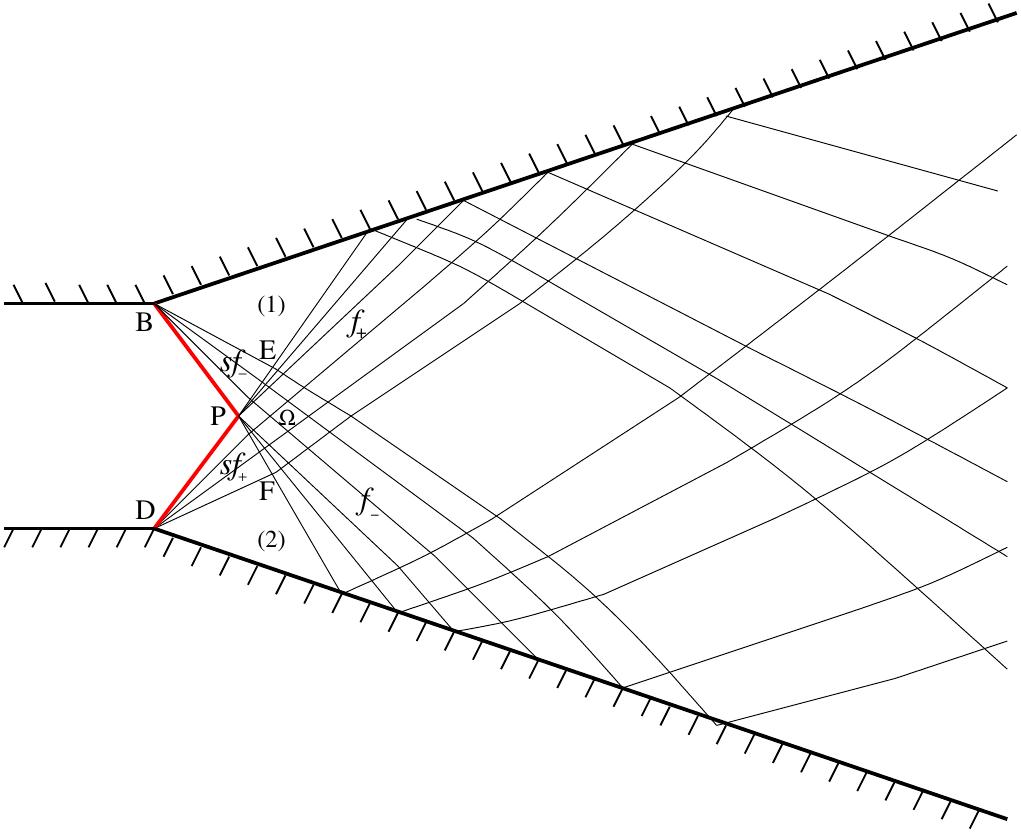}
\caption{\footnotesize Interaction of two shock-fan composite waves. }
\label{Fig5.2.1}
\end{center}
\end{figure}

We first consider (\ref{E1}) with the boundary condition
\begin{equation}\label{101502}
(u, v)=\left\{
               \begin{array}{ll}
                  (\bar{u}, \bar{v})(\xi), & \hbox{$(x, y)\in\widetilde{PF}$;} \\
               (\tilde{u}, \tilde{v})(\eta), & \hbox{$(x, y)\in\widetilde{PE}$.}
               \end{array}
             \right.
\end{equation}

From $\bar{\tau}(\phi_{po})= \tilde{\tau}(-\phi_{po})=\tau_{po}$, we have
\begin{equation}\label{102601}
(r-s)\big(\tilde{u}(-\phi_{po}), \tilde{v}(-\phi_{po})\big)=
(r-s)\big(\bar{u}(\phi_{po}), \bar{v}(\phi_{po})\big).
\end{equation}
From $(\bar{u}, \bar{v})(\phi_{po})=(u_{po}, v_{po})$ and $(\tilde{u}, \tilde{v})(-\phi_{po})=(u_{po}, -v_{po})$, we have
\begin{equation}\label{102602}
(r+s)\big(\tilde{u}(-\phi_{po}), \tilde{v}(-\phi_{po})\big)=-2\sigma_{po}, \quad
(r+s)\big(\bar{u}(\phi_{po}), \bar{v}(\phi_{po})\big)=2\sigma_{po}.
\end{equation}
Combining (\ref{102601}) with (\ref{102602}) and recalling $\sigma_{po}<0$, we have
\begin{equation}\label{102603}
r\big(\tilde{u}(-\phi_{po}), \tilde{v}(-\phi_{po})\big)>r\big(\bar{u}(\phi_{po}), \bar{v}(\phi_{po})\big),\quad s\big(\tilde{u}(-\phi_{po}), \tilde{v}(-\phi_{po})\big)>s\big(\bar{u}(\phi_{po}), \bar{v}(\phi_{po})\big).
\end{equation}
Thus the problem (\ref{E1}), (\ref{101502}) is a DGP.
Since the waves issued from the corners $B$ and $D$ are assumed to be shock-fan composite waves, by (\ref{102503}) we have $\theta_{-}<\sigma_{po}<-\sigma_{po}<\theta_{+}$. Hence, by (\ref{102604}) we also have
\begin{equation}\label{120201}
(r, s)\in \Pi\quad \mbox{on}~ \widetilde{PE}\cup \widetilde{PF}.
\end{equation}

From (\ref{102604}), (\ref{102603}), and (\ref{120201}) we see that the assumptions of Lemma \ref{lem8} hold.
Then by Lemma \ref{lem8} and Remark \ref{rem4.3} we have the following conclusion.
\begin{itemize}
  \item If $r\big(\tilde{u}(-\phi_{po}), \tilde{v}(-\phi_{po})\big)-s\big(\bar{u}(\phi_{po}), \bar{v}(\phi_{po})\big)<2\mathcal{R}$, the DGP (\ref{E1}), (\ref{101502}) admits a continuous and piecewise smooth solution on a curvilinear quadrilateral domain $\overline{\Omega}\setminus\{P\}$ bounded by characteristic curves $\widetilde{PE}$, $\widetilde{PF}$, $C_{+}^{F}$, and $C_{-}^{E}$. Moreover, the solution satisfies
$$
(r, s)\in \Pi\quad \mbox{and}\quad \bar{\partial}_{\pm}\rho<0\quad \mbox{in}~ \overline{\Omega}\setminus\{P\}.
$$
  \item If $r\big(\tilde{u}(-\phi_{po}), \tilde{v}(-\phi_{po})\big)-s\big(\bar{u}(\phi_{po}), \bar{v}(\phi_{po})\big)\geq 2\mathcal{R}$, the DGP (\ref{E1}), (\ref{101502}) admits a solution  consisting of two centered waves issued from the point $P$ and a cavitation. Moreover, the centered waves satisfy $(r, s)\in \Pi$ and $\bar{\partial}_{\pm}\rho<0$.
\end{itemize}

Next,  by solving two simple wave problems as discussed in Section 5.1, one can see that the interaction of ${\it sf}_{\pm}$ generates two simple waves, denoted by ${\it f}_{\pm}$; see Fig. \ref{Fig5.2.1}. These two simple waves will then reflect off the walls and generate two new simple waves.
The flow in the remaining part of the divergent duct consists of some constant sates, expansion simple waves, and interactions and reflections of expansion simple waves. These wave structures can be obtained by solving a finite number of SGPs, MBVPs, and simple wave problems.
Since the discussion is similar to that in Section 5.1, we omit the details.

\vskip 8pt
\subsection{Interaction of a shock wave with a shock-fan composite wave}
 We study the IBVP (\ref{E1}), (\ref{42701})
for $\tau_1^i<\tau_0<\tau_2^i$.
In this part we assume that the oblique waves at the corners $B$ and $D$ are shock wave and shock-fan composite wave, respectively. For convenience, we denote by ${\it s}_{-}$ the oblique shock at the point $B$, and by ${\it sf}_{+}$ the shock-fan composite wave at the point $D$.
We assume further that the flow near the corners  $B$ and $D$ can be represented by
$$
(u, v)=\left\{
               \begin{array}{ll}
                 (u_0, 0), & \hbox{$-\frac{\pi}{2}\leq \eta<\eta_{s}$;} \\[2pt]
                 (u_1, v_1), & \hbox{$\eta_{s}<\eta<\theta_{+}$}
               \end{array}
             \right. \mbox{and}\quad(u, v)=\left\{
               \begin{array}{ll}
                 (u_0, 0), & \hbox{$\phi_{po}\leq \xi\leq\frac{\pi}{2}$;} \\[2pt]
                 (\bar{u}, \bar{v})(\xi), & \hbox{$\xi_2\leq \xi\leq \phi_{po}$;} \\[2pt]
                 (u_2, v_2), & \hbox{$\theta_{-}<\xi<\xi_2$,}
               \end{array}
             \right.
$$
respectively.
Here, the oblique shock-fan composite wave solution near the point $D$ is the same as that defined in (\ref{8230a});
the constant state $(u_1, v_1)$ is connected on the right to $(u_0, 0)$ by an oblique 1-shock with the inclination angle $\eta_s$.
The main result of this subsection can be stated as the following theorem.
\begin{thm}\label{thm3}
Assume $\tau_1^i<\tau_0<\tau_2^i$ and $u_0>c_0$.
Assume furthermore that the oblique waves at the corners $B$ and $D$ are shock wave and shock-fan composite wave, respectively. Then the
 IBVP (\ref{E1}), (\ref{42701}) admits a global piecewise smooth solution in $\Sigma$.
\end{thm}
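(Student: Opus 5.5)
The construction starts from the local structure near the two corners. The straight oblique 1-shock ${\it s}_{-}$ issued from $B$ with inclination $\eta_s$ and the post-sonic 2-shock of ${\it sf}_{+}$ issued from $D$ with inclination $\phi_{po}$ first meet at a point $P$ on the oblique line through the two corners. At that point two rarefaction shocks of different families cross. Upstream of $P$ the state is $(u_0,0)$. Between the shocks we have the constant state $(u_1,v_1)$ above and the state $(u_{po},v_{po})$ below, the latter on the edge of the fan of ${\it sf}_{+}$.

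The first step is to resolve the interaction at $P$ locally by a Riemann-type analysis like that of Section~3.3. We look for a transmitted 2-wave issued from $P$ into the region behind ${\it s}_{-}$, with front state $(u_1,v_1)$. We also look for a transmitted 1-wave issued from $P$ behind the post-sonic shock, with front state $(u_{po},v_{po})$, and this wave lies in the sonic/fan-adjacent regime. The two waves must be matched so that the states behind them coincide, since potential flow has no contact discontinuity. Because $\tau_1>\tau_0>\tau_1^i$ and $\tau_{po}>\tau_2^i$, the wave curves from Section~3.3.2 (for $\tau_1\le\tau_2^i$) and Section~3.3.1 (for $\tau_{po}>\tau_2^i$) apply. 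By the monotonicity $\frac{{\rm d}\sigma}{{\rm d}\tau}<0$ of Lemma~\ref{lem3.8}, valid for $u_0$ large, the curves ${\it W}_2(u_1,v_1)$ and ${\it W}_1(u_{po},v_{po})$ intersect at a unique point. This generically gives a transmitted 2-shock or 2-shock-fan wave bending ${\it s}_{-}$, together with a 1-fan behind the post-sonic shock.

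The key structural claim is that past $P$ the shock ${\it s}_{-}$ interacts with the fan of ${\it sf}_{+}$. It therefore becomes a curved free-boundary shock $\wideparen{PQ}$ that crosses the fan $(\bar u,\bar v)(\xi)$, $\xi_2<\xi<\phi_{po}$, from which we read off the front state. The whole problem thus becomes a shock free-boundary problem, which I would split as follows.

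First, I would treat the curved shock penetrating the simple wave. This is the main obstacle. Along the shock, the Rankine--Hugoniot relations (\ref{RH}) with the known front state form a boundary condition, and the behind state is obtained by solving a Goursat-type problem with that shock boundary. The type of the shock (transonic versus post-sonic) is not known in advance. I would compute $\bar{\partial}_{_\Gamma}$ of the relations along the shock to get the shock curvature. Using Liu's condition (\ref{43001}) and Proposition~\ref{tran}, I would then show that the back state stays in the transonic range $N_b<c_b$ with $\tau_b<\psi_{po}(\tau_f)$, so that the shock remains strictly transonic. If instead it reaches the post-sonic branch, I would switch to the singular Cauchy problem of Lemma~\ref{lem7}, in which the shock is an envelope of $C_{+}$ characteristics. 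The a priori estimates $\bar{\partial}_{\pm}\rho<0$ and $(r,s)\in\Pi$ behind the shock would come from the characteristic decompositions (\ref{81104}) and (\ref{53001}) by the contradiction argument of Lemma~\ref{lem1}. The one-to-one property in the sonic case would come from the hodograph argument of Lemma~\ref{lem4.7}. I would show that the shock leaves the fan through its last straight characteristic $\xi=\xi_2$, at a point $Q$. Beyond $Q$ the front state is the constant $(u_2,v_2)$, so the shock becomes straight, or weakens until it hits $\mathcal W_{-}$.

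Second, behind $\wideparen{PQ}$ and the transmitted fan, the flow is a rarefactive region in which $\tau>\tau_2^i$. There I would assemble the solution from discontinuous Goursat problems (Lemma~\ref{lem8}, with the centered wave at $P$), standard Goursat problems (Lemma~\ref{lem1}), and simple-wave problems (Lemmas~\ref{lem3} and \ref{lem4}). Reflections off $\mathcal W_{\pm}$ would be handled by the mixed problems of Lemmas~\ref{lem5} and \ref{lem5a}, and cavitation would be inserted as in Remark~\ref{rem4.3} whenever $r-s$ reaches $2\mathcal R$.

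Third, I would check the reflection of the shock at $\mathcal W_{-}$. This is handled like the corner problem of Section~3.3 with the oncoming state now downstream-supersonic and $\tau>\tau_2^i$, so that only fans arise afterwards. Once every state has $\tau>\tau_2^i$, the remaining duct flow is built exactly as in Theorem~\ref{thm1}, by finitely many SGP, MBVP and simple-wave steps, which gives the global piecewise smooth solution.
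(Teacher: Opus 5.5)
Your local Riemann analysis at $P$ is the right starting point, but the structural claim you build the rest on is false, and it contradicts that same analysis. Past $P$, a $1$-wave issued from $P$ sees as front state the fan of ${\it sf}_{+}$, where $\tau\geq\tau_{po}=\psi_{po}(\tau_0)>\tau_2^i$. There $p''>0$, so Liu's condition (\ref{43001}) excludes any rarefaction $1$-shock whose states both lie in that range. This is exactly why ${\it W}_1(u_{po},v_{po})={\it F}_1(u_{po},v_{po})$ has no shock branch, which you yourself found (``a 1-fan behind the post-sonic shock''). Hence ${\it s}_{-}$ and the post-sonic shock of ${\it sf}_{+}$ both terminate at $P$. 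There is no curved shock $\wideparen{PQ}$ crossing the fan, no transonic/post-sonic dichotomy, no singular Cauchy problem, and no shock reaching $\mathcal{W}_{-}$. The ``main obstacle'' you plan to attack does not exist, and the estimate you intend to prove (that this shock stays strictly transonic) cannot be proved, because no admissible such shock exists. Note also that the transmitted $2$-wave does not ``bend ${\it s}_{-}$'', which is a $1$-shock. It is a new wave running into the $(u_1,v_1)$ region toward $\mathcal{W}_{+}$.

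What the proof actually needs is the following. The transmitted $1$-wave is a $C_{-}$ centered rarefaction wave at $P$, and it interacts with the $C_{+}$ fan of ${\it sf}_{+}$. This is a discontinuous Goursat problem posed on the straight $C_{+}$ characteristic $\overline{PE}$, which carries constant data, and on the cross $C_{-}$ characteristic $\widetilde{PF}$ of the fan. The intersection point $(u_m,v_m)\in{\it W}_2(u_1,v_1)\cap{\it W}_1(u_{po},v_{po})$ supplies the ordering (\ref{6507}). Lemma \ref{lem8} with Remark \ref{rem4.1}, i.e.\ the hodograph transformation, then gives a $C_{-}$ centered wave at $P$ together with a simple wave ${\it f}_{+}$ with straight $C_{+}$ characteristics. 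A simple wave problem as in Lemma \ref{lem4}, with $(u_2,v_2)$ on the straight $C_{-}$ line from $F$, gives ${\it f}_{-}$. If the wave curves meet only at ``vacuum'', a cavitation appears as in Remark \ref{rem4.3}.

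The case split is on $\tau_1$, and you do not make it. If $\tau_1>\tau_2^i$, then $\overline{PE}$ carries $(u_1,v_1)$ and the outgoing $2$-wave is the fan ${\it f}_{+}$. If $\tau_1<\tau_2^i$, then ${\it S}_2(u_1,v_1)$ misses ${\it W}_1(u_{po},v_{po})$ because $\psi_{po}(\tau_1)<\psi_{po}(\tau_0)=\tau_{po}$. So the pure $2$-shock you allow is impossible. Instead, a straight post-sonic $2$-shock runs from $P$ to $\mathcal{W}_{+}$. Being post-sonic, it is a $C_{+}$ characteristic of its back state $(u_b,v_b)$, so it can serve as $\overline{PE}$ in the same DGP. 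This produces a new shock-fan wave ${\it sf}_{+}^2$, and the shock that actually reflects is this one, at $\mathcal{W}_{+}$ (a mixed problem of Section 4.3 yielding fans), not a shock at $\mathcal{W}_{-}$. Only after this does the Section 5.1 machinery take over, as you say.
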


\begin{figure}[htbp]
\begin{center}
\includegraphics[scale=0.5]{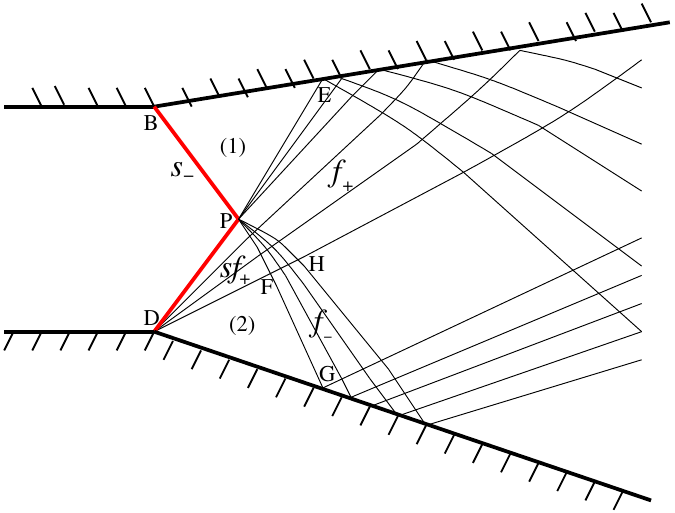} \qquad \qquad\quad\includegraphics[scale=0.5]{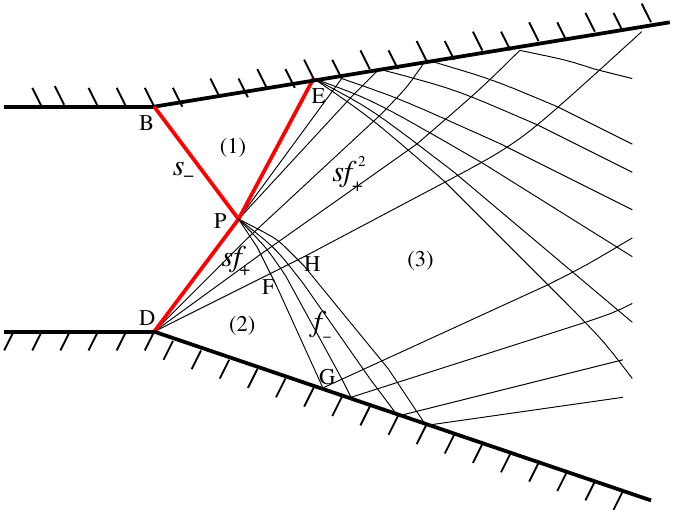}
\caption{\footnotesize Interaction of a shock wave with a shock-fan composite wave. }
\label{Fig5.3.1}
\end{center}
\end{figure}

\subsubsection{\bf Rarefaction wave curves}
The two oblique waves start to interact with each other at a point $P(x_{_P}, y_{_P})$, where $x_{_{P}}=2(\tan\phi_{po}-\tan\eta_s)^{-1}$ and $y_{_{P}}=(\tan\phi_{po}+\tan\eta_s)(\tan\phi_{po}-\tan\eta_s)^{-1}$.
In order to determine the oblique waves generated by the interaction of  ${\it s}_{-}$ with ${\it sf}_{+}$, we need to analyze the intersections of the wave curve ${\it W}_2(u_1, v_1)$ with the wave curves  ${\it W}_1(u_{po}, v_{po})$
and ${\it W}_1(u_{2}, v_{2})$, where $(u_{po}, v_{po})$ is the state on the backside of the post-sonic shock of ${\it sf}_{+}$.

\begin{lem}
For any $U\in {\it W}_{1}(u_0, 0)$, the wave curve ${\it W}_{2}(U)$ does not intersect the wave curve ${\it W}_{2}(u_0, 0)$.
\end{lem}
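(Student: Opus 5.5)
The idea is to use the inclination $\sigma=\arctan(v/u)$ and the specific volume $\tau$ as coordinates in the hodograph plane. In these coordinates both wave curves become graphs over the $\tau$-axis, and a monotonicity comparison separates them.

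First I fix the geometry of ${\it W}_1(u_0,0)$. By symmetry and Lemmas \ref{lem3.8}, \ref{lem3.9}, along ${\it W}_{2}(u_0,0)$ the flow angle $\sigma$ strictly decreases from $0$ as $\tau$ increases from $\tau_0$. Along ${\it W}_1(u_0,0)$, $\sigma$ strictly increases from $0$. Hence every $U\ne(u_0,0)$ on ${\it W}_1(u_0,0)$ has $\sigma_U>0$ and $\tau_U>\tau_0$, while every point of ${\it W}_2(u_0,0)$ has $\sigma\le 0$. The case $U=(u_0,0)$ is excluded or trivial, since then ${\it W}_2(U)={\it W}_2(u_0,0)$; the statement is meant for $U\neq(u_0,0)$.

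Next, every state on the rarefactive 2-wave curve ${\it W}_2(U)$ has $\tau>\tau_U$, because all shocks are rarefaction shocks and the fans have $\bar\tau'<0$. I will also show, as in Lemmas \ref{lem3.8}--\ref{lem3.9} applied with base state $U$ (using $u_0$ large, so the same estimates hold uniformly on the relevant bounded range of states), that $\sigma$ strictly decreases in $\tau$ along ${\it W}_2(U)$. So ${\it W}_2(U)$ is a graph $\sigma=g_U(\tau)$, $\tau>\tau_U$, with $g_U(\tau_U)=\sigma_U$. Likewise ${\it W}_2(u_0,0)$ is a graph $\sigma=g_0(\tau)$, $\tau>\tau_0$, with $g_0(\tau_0)=0$. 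Since $q$ is determined by $\tau$ through the Bernoulli law, two states coincide if and only if their $(\tau,\sigma)$ agree. An intersection therefore means $g_U(\tau)=g_0(\tau)$ for some $\tau>\tau_U$.

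The key step is the comparison $g_U(\tau)>g_0(\tau)$ for all $\tau>\tau_U$. At $\tau=\tau_U$ we have $g_U(\tau_U)=\sigma_U>0$ and $g_0(\tau_U)<0$. The natural route is to compare the total turning. Writing $g_U(\tau)-\sigma_U$ and $g_0(\tau)-g_0(\tau_U)$, I would argue that the rarefactive 2-wave turning from $U$ over $[\tau_U,\tau]$ is no larger than the turning along ${\it W}_2(u_0,0)$ from the state at $\tau_U$ to $\tau$, plus the gap $\sigma_U-g_0(\tau_U)$. On fan portions both curves satisfy the same ODE $d\sigma/dq=-\sqrt{q^2-c^2}/(qc)$, so the turnings agree exactly. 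The gap comes only from shock portions, where the Rankine--Hugoniot turning depends on the front state.

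For the shocks I would use the $\sigma$-formula from the proof of Lemma \ref{lem3.8}. For $u_0$ large, the turning across a rarefaction shock is of order $\tau_f\sqrt{m^2}/q$, which is small, of order $1/u_0$. Meanwhile $\sigma_U-g_0(\tau_U)$ is at least the fan turning between $\tau_0$ and $\tau_U$, counted on both curves. This is the delicate part and the main obstacle, because it requires a uniform, quantitative comparison of shock turnings against fan turnings.

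If that comparison proves messy, the fallback is a contact/ordering argument. Suppose a first intersection point $\tau^*$ exists. At that point the two curves share the same state, and both curves continue as 2-waves from the same state with the same Riemann invariant behaviour. Local uniqueness of the 2-wave curve through a given state then gives $g_U\ge g_0$ near $\tau^*$ with tangency, which contradicts the strict ordering immediately to the left of $\tau^*$. I expect this uniqueness/contact step, made rigorous for the composite wave types, to be where the real work lies.
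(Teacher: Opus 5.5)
Your reduction to graphs $\sigma=g_U(\tau)$ and $\sigma=g_0(\tau)$ over the $\tau$-axis is legitimate; the monotonicity of $\sigma$ along ${\it W}_2(U)$ is also taken for granted in the paper's next lemma. But it only restates the lemma as the inequality $g_U>g_0$ on $(\tau_U,\infty)$, and that inequality is never proved.

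The turning comparison cannot be closed by taking $u_0$ large. A 2-fan turns at the rate $\frac{{\rm d}\sigma}{{\rm d}\tau}\approx-\sqrt{-p'(\tau)}/q$, and a rarefaction shock turns the flow by $\Delta\sigma\approx-(\tau_b-\tau_f)m/q$. Both are $O(1/u_0)$. After multiplying by $u_0$ you face an $O(1)$ inequality between quantities like $(\tau_b-\tau_f)m$ and integrals of $\sqrt{-p'}$. That inequality depends on the nonconvex equation of state and on Liu's condition, not on $u_0$. Your lower bound on the gap $\sigma_U-g_0(\tau_U)$ by a fan turning is also unjustified when ${\it W}_1(u_0,0)$ and ${\it W}_2(u_0,0)$ begin with shocks, e.g.\ for $\tau_1^i<\tau_0<\tau_2^i$.

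The fallback rests on a false premise. A wave curve is attached to its base state, not to the points it passes through. Beyond a common point $U_*$, the shock branch of ${\it W}_2(U)$ still has front state $U$ and is not ${\it W}_2(U_*)$, so no local uniqueness forces tangency. Only where both curves are fans does $r=\mathrm{const}$ give $g_U'=g_0'$, so that $g_U-g_0$ is constant there. Hence a first crossing could only occur where at least one curve is on a shock branch just to its left. That is exactly the case your argument does not touch, and closing it would need a slope comparison at common points that you do not have.

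The missing idea is the monotonicity the paper extracts from Liu's entropy condition, applied to the \emph{backward} 2-wave curve.

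\begin{itemize}
\item The paper fixes $(u_*,v_*)\in{\it W}_2(u_0,0)$ and considers the set ${\it W}^2(u_*,v_*)$ of upstream states that can be joined to $(u_*,v_*)$ by a 2-wave. This set passes through $(u_0,0)$, and the lemma is equivalent to its meeting ${\it W}_1(u_0,0)$ nowhere else.
\item On the shock branch, $\phi=\sigma_*+\arcsin\big(\tau_*m(\tau)/q_*\big)$ with $m'(\tau)>0$ by the transonic inequality (\ref{43012}). The chord from a front state to $(u_*,v_*)$ is normal to the shock, with direction $\delta=\phi-\frac{\pi}{2}$. Hence $\delta_0<\delta<\sigma_*$ for $\tau_0<\tau<\tau_*$, where $\delta_0$ is the direction of the chord from $(u_0,0)$.
\item For the fan and shock-fan branches (when $\tau_2^i<\tau_*\le\tau_{po}$), the same bound $\arctan\big(\frac{v-v_*}{u-u_*}\big)>\delta_0$ follows from $\alpha>\sigma_*+\hat{A}(q_*)$ and (\ref{alphar}).
\item The case $\tau_*>\tau_{po}$ reduces to the previous one.
\end{itemize}

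Thus ${\it W}^2(u_*,v_*)$ stays in a sector based at $(u_*,v_*)$ that ${\it W}_1(u_0,0)$ does not enter. Some such geometric control, uniform in the front state, is what your argument needs in place of the turning estimate.
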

\begin{proof}
Firstly, by Lemma \ref{120301} we know that the wave curves ${\it W}_{1}(u_0, 0)$ and ${\it W}_{2}(u_0, 0)$ does not intersect with each other, provided that $u_0$ is large.
In order to prove this, it suffices to prove that for any $(u_*, v_*)\in  {\it W}_{2}(u_0, 0)$, there does not exist any point $U\in {\it W}_{1}(u_0, v_0)$ such that  $(u_*, v_*)\in {\it W}_{1}(U)$.
Set $\tau_*=\hat{\tau}(q_*)$ where $q_*=\sqrt{u_*^2+v_*^2}$.
We divide the discussion into the following three cases: (i) $\tau_0<\tau_*\leq \tau_2^i$;
(ii) $\tau_2^i<\tau_*\leq \tau_{po}$; (iii) $\tau_*>\tau_{po}$.
For the needs of the following discussion, we define
$$
m(\tau)=\sqrt{-\frac{2h(\tau)-2h(\tau_*)}{\tau^2-\tau_*^2}}, \quad \delta_0=\arctan\left(\frac{v_*}{u_*-u_0}\right),\quad \mbox{and}\quad \sigma_*=\arctan\Big(\frac{v_*}{u_*}\Big).
$$
\vskip 4pt

\noindent
{\bf Case (i): $\tau_0<\tau_*\leq \tau_2^i$}. We define
\begin{equation}\label{111301}
u_f=N_f\sin\phi+L_f\cos\phi, \quad v_f=-N_f\cos\phi+L_f\sin\phi, \quad \phi=\sigma_*+\arcsin\bigg(\frac{\tau_*m(\tau)}{q_*}\bigg),
\end{equation}
where
\begin{equation}\label{11701}
N_f=\tau m(\tau)\quad \mbox{and} \quad L_f=q_*\cos(\phi-\sigma_*).
\end{equation}
Then $u_f$, $v_f$, and $\phi$ can be seen as functions of $\tau\in [\tau_0, \tau_*)$.
We denote by ${\it W}^{2}(u_*, v_*)$ the one-parametric curve $(u, v)=(u_f, v_f)(\tau)$, $\tau\in [\tau_0, \tau_*)$. Then for any $U\in {\it W}^{2}(u_*, v_*)$, the state $U$ can be connected to $(u_*, v_*)$ on the upstream by an oblique 2-shock.
Since $(u_*, v_*)\in  {\it S}_{2}(u_0, 0)$, we have $u_f(\tau_0)=u_0$ and $v_{f}(\tau_0)=0$.

Let $\delta=\arctan(\frac{v_*-v_f}{u_*-u_f})$. Then by (\ref{111301}) we have $\phi=\delta+\frac{\pi}{2}$.
By (\ref{43012}), we have
$$
m'(\tau)=-\frac{\tau}{(\tau^2-\tau_*^2)m(\tau)}\big(p'(\tau)+m^2(\tau)\big)>0\quad \mbox{for}~ \tau_0\leq \tau<\tau_*.
$$
So, we have
$\frac{{\rm d}\phi}{{\rm d}\tau}>0$ for $\tau_0\leq \tau<\tau_*$.
This implies
\begin{equation}\label{111201}
\delta>\delta_0\quad \mbox{for} \quad \tau_0<\tau<\tau_*.
 \end{equation}

From the definition of 2-shock waves,
we have
\begin{equation}\label{111202}
\delta <\sigma_*\quad \mbox{for} \quad \tau_0\leq \tau<\tau_*.
\end{equation}
By (\ref{111201}) and (\ref{111202}) we know that the wave curve ${\it W}^{2}(u_*, v_*)$ lies in the region $\{(u, v)\mid \tau_0<\hat{\tau}(q)<\tau_*,~ \delta_0<\arctan\big(\frac{v-v_*}{u-u*}\big)<\sigma_*\}$; see Fig. \ref{Fig5.3.2} (left). This implies that ${\it W}^2(u_*, v_*)$ does not intersect the wave curve $W_{1}(u_0, 0)$.

\begin{figure}[htbp]
\begin{center}
\includegraphics[scale=0.48]{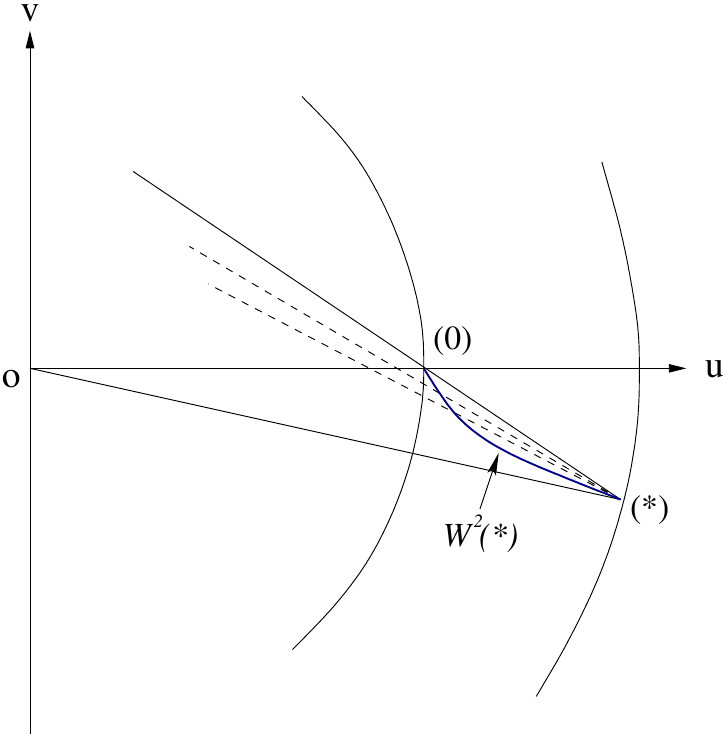}\qquad \qquad \qquad \includegraphics[scale=0.48]{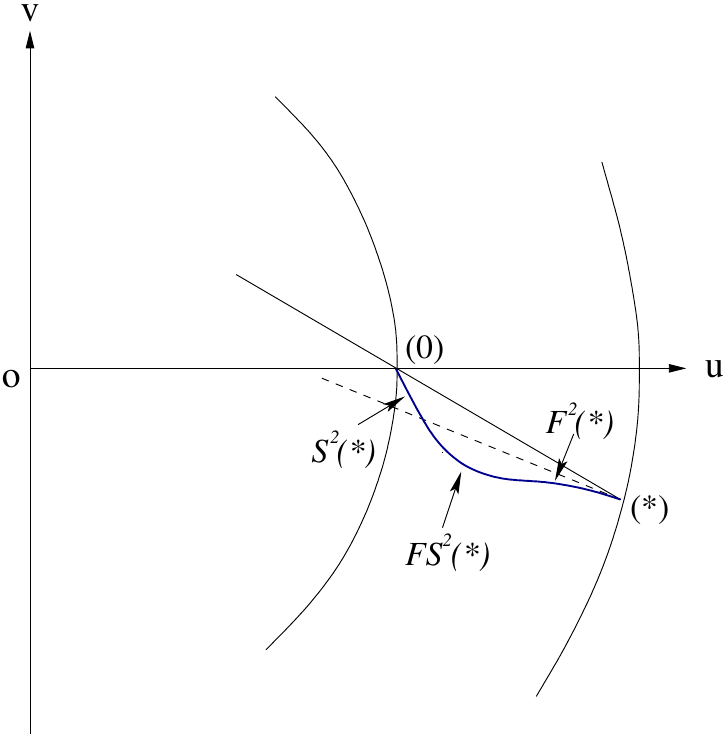}
\caption{\footnotesize Wave curve $W^2(u_*, v_*)$. Left: $\tau_0<\tau_*\leq \tau_2^i$; right: $\tau_2^i<\tau_*\leq \tau_{po}$. }
\label{Fig5.3.2}
\end{center}
\end{figure}

\vskip 4pt
\noindent
{\bf Case (ii): $\tau_2^i<\tau_*\leq \tau_{po}$.}
See Fig. \ref{Fig5.3.2} (right). Let 
$$
{\it F}^2(u_*, v_*)=\big\{(u, v)\mid  r(u, v)=r(u_*, v_*), ~\tau_2^i\leq \hat{\tau}(q)\leq \tau_*\big\}.
$$
Then, for any $U\in {\it F}^2(u_*, v_*)$, the state $U$ can be connected to $(u_*, v_*)$ on the left by a 2-fan wave.

Since $m(\tau_0)<\tau_*\hat{c}(q_*)$,
we have that along ${\it F}^2(u_*, v_*)$,
$$
\arctan\left(\frac{{\rm d}v}{{\rm d}u}\right)\bigg|_{(u_*, v_*)}=\alpha(u_*, v_*)-\frac{\pi}{2}=\sigma_*
+\hat{A}(q_*)-\frac{\pi}{2}>\delta_0.
$$
So, by (\ref{alphar}) we have
\begin{equation}\label{111303}
\alpha(u, v)>\sigma_*
+\hat{A}(q_*)\quad \mbox{and}\quad
\arctan\Big(\frac{v-v_*}{u-u_*}\Big)>\delta_0 \quad \mbox{for}~ (u, v)\in  {\it F}^2(u_*, v_*).
\end{equation}

Since $\tau_2^i<\tau_*<\tau_{po}$, we have $\tau_0<\psi_{po}^{-1}(\tau_*)<\tau_2^i$.
For $\tau\in (\psi_{po}^{-1}(\tau_*), \tau_2^i)$,
we define
$$
\tilde{u}_f=\tilde{N}_f\sin\tilde{\phi}+\tilde{L}_f\cos\tilde{\phi}, \quad \tilde{v}_f=-\tilde{N}_f\cos\tilde{\phi}+\tilde{L}_f\sin\tilde{\phi},\quad \tilde{\phi}=\sigma_b+\check{A}(\tau_b),
$$
where
$$
\quad \tilde{N}_f=\frac{\tau\check{c}(\tau_b)}{\tau_b}, \quad \tilde{L}_f=\check{q}(\tau_b)\cos\check{A}(\tau_b), \quad \tau_b=\psi_{po}(\tau), \quad \sigma_b=r(u_*, v_*)-\int_{u_0}^{\check{q}(\tau_b)}\frac{\sqrt{q^2-c^2}}{qc}{\rm d}q.
$$
Then $\tilde{u}_f$, $\tilde{v}_f$, $\tilde{\phi}$, $\tau_b$, and $\sigma_b$ can be seen as functions of $\tau\in [\psi_{po}^{-1}(\tau_*), \tau_2^i)$.
Let
$$
{\it SF}^2(u_*, v_*)=\big\{(u, v)\mid u=\tilde{u}_f(\tau), ~v=\tilde{v}_f(\tau), ~\tau\in[\psi_{po}^{-1}(\tau_*), \tau_2^i)\big\}.
$$ Then for any $U\in {\it SF}^2(u_*, v_*)$,
the state $U$ can be connected to $(u_*, v_*)$ on the upstream by an oblique 2-shock-fan composite wave.

Since the shock of the oblique shock-fan composite wave is post-sonic, by (\ref{111303}) we have
$$
\arctan\left(\frac{\tilde{v}_f(\tau)-v_b(\tau)}{\tilde{u}_f(\tau)-u_b(\tau)}\right)=\tilde{\phi}(\tau)-\frac{\pi}{2}=\sigma_b(\tau)+\check{A}(\tau_b(\tau))
-\frac{\pi}{2}>\delta_0
\quad \mbox{for}~ \tau\in [\psi^{-1}(\tau_*), \tau_2^i),
$$
where $(u_b, v_b)(\tau)=\hat{q}(\tau)(\cos\sigma_b(\tau), \sin\sigma_b(\tau))\in {\it F}^2(u_*, v_*)$.
Combining this with (\ref{111303}), we have
\begin{equation}\label{111305}
\arctan\Big(\frac{v-v_*}{u-u_*}\Big)>\delta_0 \quad \mbox{for}~ (u, v)\in  {\it SF}^2(u_*, v_*).
\end{equation}

Let ${\it S}^2(u_*, v_*)=\{(u, v)\mid (u, v)=(u_f, v_f)(\tau), \tau\in [\tau_0, \tau_*)\}$, where the functions
 $u_f(\tau)$ and $v_f(\tau)$ are the same as that defined in (\ref{111301}).
As in  (\ref{111201}) and (\ref{111202}) we have
\begin{equation}\label{111304}
\arctan\Big(\frac{v-v_*}{u-u_*}\Big)\geq \delta_0 \quad \mbox{for}~ (u, v)\in  {\it S}^2(u_*, v_*).
\end{equation}

Let ${\it W}^2(u_*, v_*)={\it F}^2(u_*, v_*)+{\it SF}^2(u_*, v_*)+{\it S}^2(u_*, v_*)$. Then  (\ref{111303})--(\ref{111304}) we have that ${\it W}^2(u_*, v_*)$ does not intersect the wave curve $W_{1}(u_0, 0)$.

\vskip 4pt

\noindent
{\bf Case (iii):  $\tau_*>\tau_{po}$.} In this case, we have
$$
{\it W}^2(u_*, v_*)={\it W}^2(u_{po}, v_{po})+ {\it W}_2(u_0, 0)\cap\{(u, v)\mid  \tau_{po}< \hat{\tau}(q)\leq \tau_*\}.
$$
From the previous step we know that ${\it W}^2(u_{po}, v_{po})$ does not  intersect the wave curve ${\it W}_{1}(u_0, 0)$.
By the result of the 2-fan wave curve, we know that the wave curves ${\it W}_2(u_0, 0)\cap\{(u, v)\mid  \tau_{po}< \hat{\tau}(q)\leq \tau_*\}$ and ${\it W}_{1}(u_0, 0)$ do not intersect with each other.


This completes the proof of the lemma.
\end{proof}

\begin{figure}[htbp]
\begin{center}
\includegraphics[scale=0.48]{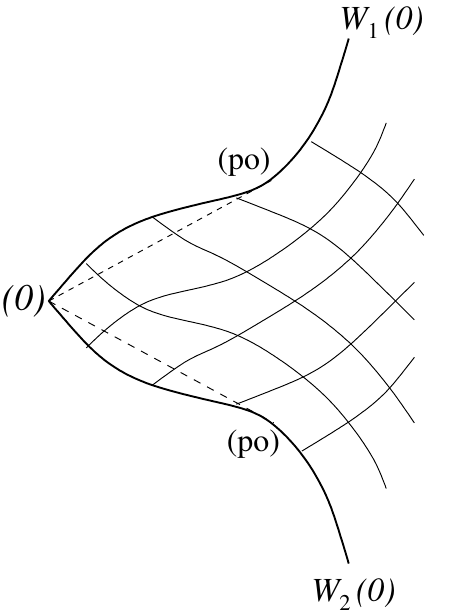}
\caption{\footnotesize Wave curves ${\it W}_1$ and ${\it W}_2$. }
\label{Fig5.3.3}
\end{center}
\end{figure}

\begin{lem}
Assume $\tau_0\in (\tau_1^i, \tau_2^i)$. Assume as well that $u_0$ is sufficiently large. Then for any $U'\in {\it W}_{1}(u_0, v_0)$ and $U''\in {\it W}_{2}(u_0, v_0)$, the wave curves ${\it W}_{1}(U'')$ and ${\it W}_{2}(U')$ have at most one intersection point before they reach the limiting circle $C_{\infty}:=\{(u, v)\mid u^2+v^2=q_{_\infty}^2\}$.
\end{lem}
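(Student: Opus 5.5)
To prove that ${\it W}_1(U'')$ and ${\it W}_2(U')$ meet at most once before $C_\infty$, I would use monotonicity: parametrize both curves by the specific volume $\tau=\hat\tau(q)$ and show that, as functions of $\tau$, a suitable angular quantity is monotone in opposite directions along the two curves. Then the difference can vanish at most once.

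\emph{Step 1: the quantity to track.} The natural choice is the flow angle $\sigma=\arctan(v/u)$. By symmetry and the analogues of Lemma \ref{lem3.8} (and Lemma \ref{lem3.9}) for general base states, one expects $\frac{{\rm d}\sigma}{{\rm d}\tau}<0$ along ${\it W}_2(U')$ and $\frac{{\rm d}\sigma}{{\rm d}\tau}>0$ along ${\it W}_1(U'')$. Each curve is then a graph $\sigma=\sigma_2(\tau)$ and $\sigma=\sigma_1(\tau)$ over a $\tau$-interval extending up to $\tau=+\infty$, i.e. up to $C_\infty$. An intersection point corresponds to a common value of $(\tau,\sigma)$, since $q$ is determined by $\tau$ through the Bernoulli law. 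The function $\sigma_1-\sigma_2$ is strictly increasing on the overlap of the two $\tau$-ranges, so it has at most one zero, which proves the lemma.

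\emph{Step 2: monotonicity for non-reference base states.} The catch is that Lemmas \ref{lem3.8}--\ref{lem3.9} were proved only for the base state $(u_0,0)$. Here $U'\in{\it W}_1(u_0,0)$ and $U''\in{\it W}_2(u_0,0)$ are general states with $\sigma\neq 0$ and $\tau>\tau_0$. I would first note that the wave curves are rotation invariant: rotating the base state by $\sigma(U)$ rotates its wave curves by the same angle and leaves $\tau$ unchanged. So ${\it W}_2(U)$ is the rotated copy of the curve built from $(q_U,0)$ with specific volume $\tau_U$, and $\frac{{\rm d}\sigma}{{\rm d}\tau}$ is unaffected by the rotation. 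It therefore suffices to rerun the proofs of Lemmas \ref{lem3.8}--\ref{lem3.9} for base states $(q_U,0)$ with $\tau_U$ ranging over $[\tau_0,\infty)$.

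The structure of the wave curve depends on which interval $\tau_U$ lies in:
\begin{itemize}
\item for $\tau_U>\tau_2^i$ it is a pure fan curve, and monotonicity follows at once from (\ref{102304});
\item for $\tau_U\in(\tau_1^i,\tau_2^i]$ the proof of Lemma \ref{lem3.8} applies directly;
\item the range $\tau_U\le\tau_1^i$ cannot occur, since $\tau_0>\tau_1^i$.
\end{itemize}
The fan pieces are monotone by (\ref{102304}) and (\ref{2201}).

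\emph{Step 3: the shock piece, which is the main obstacle.} The hard case is the shock piece ${\it S}_2(U)$. The proof of Lemma \ref{lem3.8} needed the smallness condition $1-\tau_{po}\rho_0\tan^2\phi(\tau_{po})>0$, obtained from $u_0$ large. Here I need the analogous condition uniformly in $U$. I would check that as $u_0\to\infty$ with $\tau_0$ fixed:
\begin{itemize}
\item $q_U\ge u_0\to\infty$ for all relevant $U$, since $q$ increases along rarefactive waves;
\item hence $\sin\phi=\tau_U m/q_U\to0$ uniformly, because $m$ is bounded on the compact $\tau$-range $[\tau_0,\psi_{po}(\tau_0)]$;
\item and $\psi_{po}(\tau_U)\le\psi_{po}(\tau_0)$ by $\psi_{po}'<0$ from (\ref{5906}).
\end{itemize}
This gives the required sign uniformly in $U$.

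If time permits, I would also justify the passage to $C_\infty$: the $\tau$-ranges of both curves extend to $\tau=+\infty$, and the monotone argument is carried out strictly inside the disc $q<q_\infty$.
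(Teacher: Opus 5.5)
Your proposal is correct and follows the same route as the paper. The paper's proof simply asserts, ``as in the proof of Lemma~\ref{lem3.8}'', that for $u_0$ large one has $\frac{{\rm d}\sigma}{{\rm d}\tau}>0$ along ${\it W}_1(U'')$ and $\frac{{\rm d}\sigma}{{\rm d}\tau}<0$ along ${\it W}_2(U')$, and then concludes from the definition of the wave curves. Your rotation-invariance reduction, together with your check that the smallness condition $1-\tau\rho_U\tan^2\phi>0$ holds uniformly over all base states (via $q_U>u_0$ and $\psi_{po}(\tau_U)\le\psi_{po}(\tau_0)$), supplies exactly the details the paper leaves implicit.
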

\begin{proof}
As in the proof of Lemma \ref{lem3.8}, we know that when $u_0$ is sufficiently large, for any $U'\in {\it W}_{1}(u_0, v_0)$ and $U''\in {\it W}_{2}(u_0, v_0)$,
$\frac{{\rm d}\sigma}{{\rm d}\tau}>0$ along ${\it W}_1(U'')$ and  $\frac{{\rm d}\sigma}{{\rm d}\tau}<0$ along ${\it W}_2(U')$.
So, by the definitions of the wave curves ${\it W}_{1}$ and ${\it W}_{2}$ we have the desired conclusion.
This completes the proof of the lemma.
\end{proof}

If the wave curves ${\it W}_{1}(U'')$ and ${\it W}_{2}(U')$ do not intersect with each other before they reach the limiting circle, we note that they intersect at a ``vacuum" point in the following text.

Next, we divide the discussion into the following two cases: (1) $\tau_1\geq \tau_2^i$; (2) $\tau_1<\tau_2^i$.

\subsubsection{\bf $\tau_1>\tau_2^i$}

When $\tau_1>\tau_2^i$,
the wave curves ${\it W}_2(u_1, v_1)$ and ${\it W}_1(u_{po}, v_{po})$ intersect at a point or a vacuum.
From the point $P$, we draw a forward $C_{-}$ cross characteristic line of the centered simple wave of ${\it sf}_{+}$, denoted by $\widetilde{PF}$. As in (\ref{62303}), we have
$\bar{\partial}_{-}\rho <0$ on $\widetilde{PF}$. We draw a forward straight $C_{+}$ characteristic line from $P$ with the inclination angle $\alpha=\theta_{+}+A_1$, in which $A_1$ is defined as in (\ref{def12}). This stright characteristic line intersects $\mathcal{W}_{+}$ at a point $E$. We then consider
(\ref{E1}) with the boundary conditions
\begin{equation}\label{110601}
(u, v)=\left\{
               \begin{array}{ll}
                  (\bar{u}, \bar{v})(\xi), & \hbox{$(x, y)\in\widetilde{PF}$;} \\[2pt]
               (u_1, v_1), & \hbox{$(x, y)\in\overline{PE}$.}
               \end{array}
             \right.
\end{equation}

If ${\it W}_2(u_1, v_1)$ and ${\it W}_1(u_{po}, v_{po})$  intersect at a  point $(u_m, v_m)$.
Then we have
$r(u_1, v_1)=r(u_m, v_m)$, $s(u_1, v_1)>s(u_m, v_m)$, $r(u_{po}, v_{po})<r(u_m, v_m)$, $s(u_1, v_1)=s(u_m, v_m)$.
So, by Lemma \ref{lem2} and Remark \ref{rem4.1}, the DGP (\ref{E1}), (\ref{110601}) admits a classical solution on a curvilinear quadrilateral domain bounded by $\overline{PE}$, $\widetilde{PF}$, $C_{+}^{F}$, and $C_{-}^{E}$.
The solution consists of a simple wave ${\it f}_{+}$ with straight $C_{+}$ characteristic lines and a $C_{-}$ centered wave with $P$ as the center point; see Fig. \ref{Fig5.3.1} (left). The centered wave region is a curvilinear triangle region bounded by two $C_{-}$ characteristic curves issued from $P$, i.e., $\widetilde{PF}$ and $\widetilde{PH}$, and a $C_{+}$ characteristic line $\widetilde{FH}$.
The straight characteristic lines of  ${\it f}_{+}$ are issued from $\widetilde{PH}$ and the point $P$.
Moreover, if $r(u_m, v_m)-s(u_2, v_2)>2\mathcal{R}$ then the point $H$ is at infinite. For convenience, we use $(u_c, v_c)(x,y)$ to denote the $C_{-}$ centered wave issued from $P$.

From the point $F$, we draw a forward straight $C_{-}$ characteristic line with the inclination angle $\beta=\theta_{-}-A_2$, in which $A_2$ is defined as in  (\ref{def12}).  This line intersects $\mathcal{W}_{-}$ at a point $G$. We consider
(\ref{E1}) with the boundary condition
\begin{equation}\label{120303}
(u, v)=\left\{
               \begin{array}{ll}
                  (u_{c}, v_{c})(x,y), & \hbox{$(x, y)\in\widetilde{FH}$;} \\[2pt]
               (u_2, v_2), & \hbox{$(x, y)\in\overline{FG}$.}
               \end{array}
             \right.
\end{equation}
The SGP (\ref{E1}), (\ref{120303}) admits a simple wave solution, denoted by ${\it f}_{-}$, with straight $C_{-}$ characteristic lines issued from $\widetilde{FH}$.

Thus, the interaction of ${\it s}_{-}$ with ${\it sf}_{+}$ generates two  simple waves ${\it f}_{\pm}$. These two simple waves will then reflect off the walls of the divergent duct. And the flow in the remaining part of the divergent duct can be determined using the method described in Section 5.1.

If ${\it W}_2(u_1, v_1)$ and ${\it W}_1(u_{po}, v_{po})$ intersect at a vacuum, then there will be a angular cavitation region as described in Remark \ref{rem4.3}.
We can still construct a global solution inside  the divergent duct.

\begin{figure}[htbp]
\begin{center}
\includegraphics[scale=0.46]{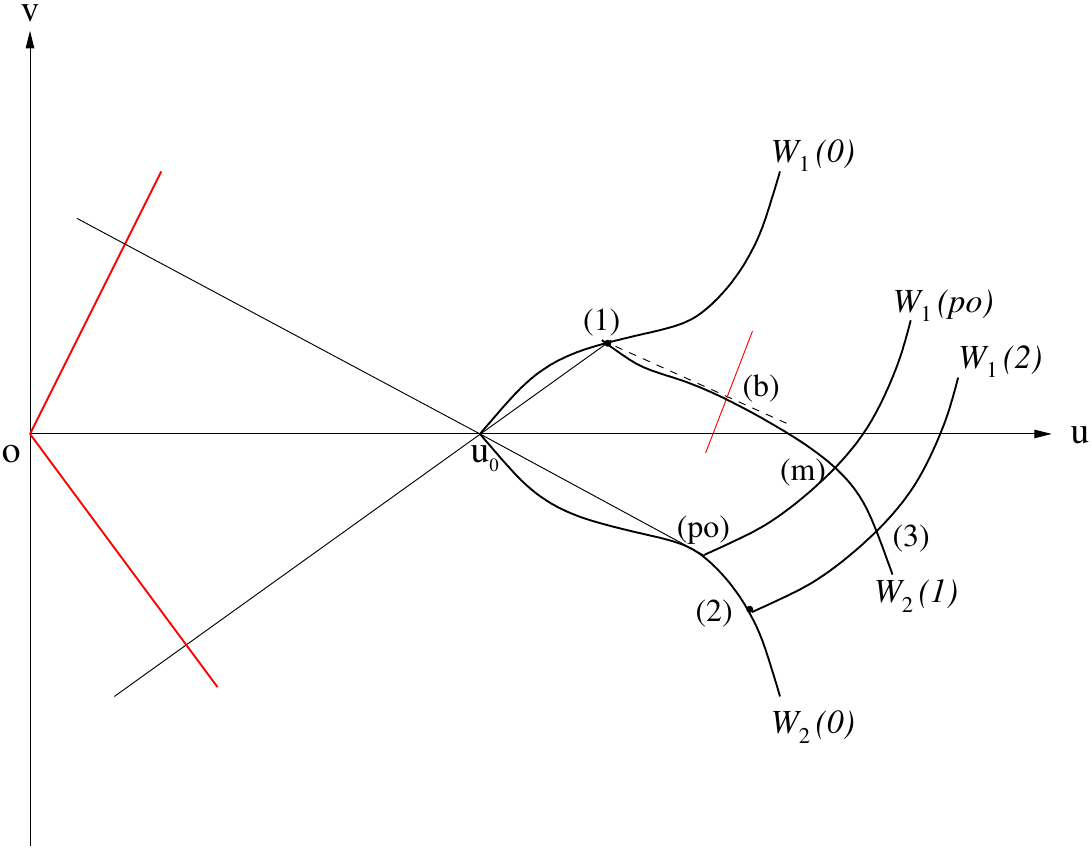}
\caption{\footnotesize Wave curves ${\it W}_2(u_1, v_1)$ and ${\it W}_1(u_{po}, v_{po})$. }
\label{Fig5.3.4}
\end{center}
\end{figure}
\subsubsection{\bf $\tau_1<\tau_2^i$}
When $\tau_1<\tau_2^i$, ${\it W}_{2}(u_1, v_1)={\it S}_{2}(u_1, v_1)+{\it SF}_{2}(u_1, v_1)$.
Since $\psi_{po}(\tau_1)<\tau_{po}=\psi_{po}(\tau_0)$, the waves curves ${\it S}_{2}(u_1, v_1)$ and ${\it W}_{1}(u_{po}, v_{po})$ do not intersect with each other.
Then,
 ${\it SF}_2(u_1, v_1)$ and ${\it W}_1(u_{po}, v_{po})$ intersect at a point or a vacuum.

There is an oblique post-sonic 2-shock issued from the point $P$ with the upstream flow state $(u_1, v_1)$.
We denote by $\phi_b$ and $(u_b, v_b)$
the inclination angle and the backside state of the post-sonic shock, respectively.
Then we have
$$
u_b=N_b\sin\phi_b+L_b\cos\phi_b, \quad v_b=-N_b\cos\phi_b+L_b\sin\phi_b,\quad \phi_b=\theta_{+}+\arctan\bigg(\frac{\tau_1N_b}{\tau_bL_b}\bigg),
$$
where
$$
N_b=\tau_b\sqrt{-p'(\tau_b)},\quad L_b=\sqrt{q_b^2-N_b^2}, \quad \tau_b=\psi_{po}(\tau_1),\quad q_b=\check{q}(\tau_b),
$$
and $\tau_1$ is defined as in  (\ref{def12}).
This post-sonic shock intersects $\mathcal{W}_{+}$ at a point $E$.
Next, we consider
(\ref{E1}) with the boundary condition
\begin{equation}\label{110601A}
(u, v)=\left\{
               \begin{array}{ll}
                  (\bar{u}, \bar{v})(\xi), & \hbox{$(x, y)\in\widetilde{PF}$;} \\[2pt]
               (u_b, v_b), & \hbox{$(x, y)\in\overline{PE}$.}
               \end{array}
             \right.
\end{equation}
If  ${\it W}_2(u_1, v_1)$ and ${\it W}_1(u_{po}, v_{po})$ intersect at a point $(u_m, v_m)$; see Fig. \ref{Fig5.3.4}.
Then by (\ref{102302d}) we have
$r(u_b, v_b)=r(u_m, v_m)$, $s(u_b, v_b)>s(u_m, v_m)$, $r(u_{po}, v_{po})<r(u_m, v_m)$, $s(u_1, v_1)=s(u_m, v_m)$.
So, by Lemma \ref{lem2} and Remark \ref{rem4.1}, the DGP (\ref{E1}), (\ref{110601A}) admits a classical solution on a curvilinear quadrilateral domain bounded by $\overline{PE}$, $\widetilde{PF}$, $C_{+}^{F}$, and $C_{-}^{E}$; see Fig. \ref{Fig5.3.1} (right).
Therefore, when  $\tau_1<\tau_2^i$ the interaction of ${\it s}_{-}$ and ${\it sf}_{+}$ generates a fan wave ${\it f}_{-}$ and another shock-fan composite wave ${\it sf}_{+}^{2}$.
The shock-fan composite wave ${\it sf}_{+}^{2}$ and the fan wave ${\it f}_{-}$ will then reflect off the walls $\mathcal{W}_{+}$ and $\mathcal{W}_{-}$, respectively. By solving two mixed boundary value problems (see Section 4.3), we see that the reflections generate two fan waves.
The flow in the remaining part of the divergent duct can be constructed using the method described in Section 5.1.

This completes the proof of Theorem \ref{thm3}.

\vskip 8pt
\subsection{Interaction of shock waves}
We consider the IBVP (\ref{E1}), (\ref{42701})
for $\tau_1^e<\tau_0<\tau_2^i$.
In this part we assume that the oblique waves at the corners $B$ and $D$ are oblique shocks with different types, and
the flow near the corners  $B$ and $D$ can be represented by
$$
(u, v)=\left\{
               \begin{array}{ll}
                 (u_0, 0), & \hbox{$-\frac{\pi}{2}\leq \eta<\eta_{s}$;} \\[2pt]
                 (u_1, v_1), & \hbox{$\eta_{s}<\eta<\theta_{+}$}
               \end{array}
             \right.
 \mbox{and}\quad(u, v)=\left\{
               \begin{array}{ll}
                 (u_0, 0), & \hbox{$\xi_s< \xi\leq\frac{\pi}{2}$;} \\[2pt]
                 (u_2, v_2), & \hbox{$\theta_{-}<\xi<\xi_s$,}
               \end{array}
             \right.
$$
respectively.
Here, the constant state $(u_1, v_1)$ satisfies $v_1=u_1\tan\theta_{+}$ and is connected to the constant state $(u_0, 0)$ on the downstream by an oblique 1-shock with the inclination angle  $\eta_{s}$;
the constant state $(u_2, v_2)$ satisfies $v_2=u_2\tan\theta_{-}$ and is connected to the constant state $(u_0, 0)$ on the downstream by an oblique 2-shock with the inclination angle  $\xi_{s}$.
The result of this subsection can be stated as the following theorem.
\begin{thm}\label{thm4}
Assume $\tau_1^i<\tau_0<\tau_2^i$ and $u_0>c_0$.
Assume furthermore that the oblique waves at the corners $B$ and $D$ are oblique shocks.  Then the
 IBVP (\ref{E1}), (\ref{42701}) admits a global piecewise smooth solution in $\Sigma$.
\end{thm}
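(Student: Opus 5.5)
The plan follows the pattern of Theorems \ref{thm2} and \ref{thm3}. First I solve the local interaction of the two oblique shocks $s_{\pm}$ at their meeting point, which is a Riemann-type problem. Then I propagate the outgoing waves through the duct with the hyperbolic boundary value problems of Section 4.

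\textbf{Step 1: the meeting point.} Since $\eta_s<0<\xi_s$, the shock $s_{-}$ from $B$ and the shock $s_{+}$ from $D$ are straight lines. They meet at
\[
P=\Big(2(\tan\xi_s-\tan\eta_s)^{-1},\ (\tan\xi_s+\tan\eta_s)(\tan\xi_s-\tan\eta_s)^{-1}\Big).
\]
Near $P$ the upstream state is $(u_0,0)$. The state is $(u_1,v_1)$ between $s_{-}$ and $\mathcal W_{+}$, and $(u_2,v_2)$ between $s_{+}$ and $\mathcal W_{-}$.

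\textbf{Step 2: the middle state.} The states downstream of $P$ should be found as an intersection point $(u_m,v_m)$ of ${\it W}_2(u_1,v_1)$ with ${\it W}_1(u_2,v_2)$. The intersection may also be at a ``vacuum'' point. Here $(u_1,v_1)\in{\it W}_1(u_0,0)$ lies on the 1-shock curve and $(u_2,v_2)\in{\it W}_2(u_0,0)$ lies on ${\it S}_2(u_0,0)$.

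Uniqueness comes from the lemma just proved: for $u_0$ large, ${\it W}_1(U'')$ and ${\it W}_2(U')$ have at most one intersection before the limiting circle $C_\infty$. The preceding lemma shows that ${\it W}_2(U)$ does not meet ${\it W}_2(u_0,0)$. Together with the monotonicity $\frac{d\sigma}{d\tau}<0$ along ${\it W}_2$ and $>0$ along ${\it W}_1$ (Lemma \ref{lem3.8}), this gives that the curves cross exactly once, or run off to $C_\infty$. The key observation is that $\tau_1,\tau_2<\tau_{po}$, so both curves start inside $(\tau_1^i,\tau_{po})$. The relevant branches are therefore ${\it S}_2,{\it SF}_2$ of $(u_1,v_1)$ and ${\it S}_1,{\it SF}_1$ of $(u_2,v_2)$.

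\textbf{Step 3: case split and the main obstacle.} I split into cases according to whether $\tau_1$ and $\tau_2$ lie below or above $\tau_2^i$, and on which branch $(u_m,v_m)$ lands. I expect this to be the main obstacle: proving that the transmitted waves are admissible \emph{rarefaction} waves and identifying their type.

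Two sub-cases arise.
\begin{itemize}
\item If $(u_m,v_m)$ lies on the shock branches, the outgoing waves are two straight oblique shocks with constant states on both sides. They satisfy (\ref{RH}) and (\ref{43001}) by construction of the wave curves.
\item If $(u_m,v_m)$ lies on an ${\it SF}$ branch, the outgoing wave is a post-sonic shock followed by a centered fan issued from $P$. The fan is given explicitly by (\ref{102304}) or (\ref{102304a}).
\end{itemize}

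To rule out the remaining possibilities I use the ordering $\psi_{po}(\tau_1)<\psi_{po}(\tau_0)$ from Proposition \ref{51003}, as in the subsection with $\tau_1<\tau_2^i$. This excludes intersections of ${\it S}_2(u_1,v_1)$ with the far part of ${\it W}_1(u_2,v_2)$. I would also verify that every transmitted shock has $\tau_f<\tau_b$, using Proposition \ref{tran}.

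If the intersection is a vacuum point, then $r(u_m)-s(u_m)\ge 2\mathcal R$. In that case a cavitation wedge appears, and I treat it as in Remark \ref{rem4.3}.

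\textbf{Step 4: reflection at the walls.} Each transmitted shock or shock–fan wave next hits a wall, at points $E\in\mathcal W_{+}$ and $G\in\mathcal W_{-}$.

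For a pure shock hitting a wall, the reflection is a new Riemann problem of the type in Section 3.3. The upstream state is $(u_m,v_m)$ and the wall angle is $\theta_{\pm}$. By Lemma \ref{lem3.8}, the reflected wave is a rarefactive fan, a shock, or a shock–fan. Because $\tau$ has increased past $\tau_1$ and $\tau_2$, after finitely many reflections the specific volume exceeds $\tau_2^i$. From then on only fans occur.

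For a shock–fan wave, the fan part interacting with the wall is a mixed problem, handled by Lemmas \ref{lem5} and \ref{lem5a}.

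\textbf{Step 5: global continuation.} Once all states satisfy $\tau>\tau_2^i$, the rest of the duct is built exactly as in Theorem \ref{thm1}. This uses the Goursat problems of Lemmas \ref{lem1}, \ref{lem3}, \ref{lem4} and \ref{lem8}, together with the simple-wave and mixed problems. The invariant region $\Pi$ and $\bar\partial_{\pm}\rho<0$ supply the $C^1$ bounds. Finitely many steps then give a global piecewise smooth solution in $\Sigma$.
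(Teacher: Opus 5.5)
Your overall route is the paper's. You find the state behind $P$ as the crossing of ${\it W}_2(u_1,v_1)$ and ${\it W}_1(u_2,v_2)$, which is unique by the at-most-one-intersection lemma. You split according to where $\tau_1,\tau_2$ sit relative to $\tau_2^i$ and on which branches the curves meet. Then you let the outgoing waves reflect off $\mathcal W_{\pm}$ and finish with the machinery of Theorem \ref{thm1}.

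The case analysis you actually write down is wrong at one point. From $\tau_1,\tau_2\in(\tau_0,\tau_{po}]$ you conclude that only ${\it S}_2,{\it SF}_2$ of $(u_1,v_1)$ and ${\it S}_1,{\it SF}_1$ of $(u_2,v_2)$ matter. This does not follow. By (\ref{5906}) we have $\tau_{po}=\psi_{po}(\tau_0)>\tau_2^i$, so $\tau_1\ge\tau_2^i$ or $\tau_2\ge\tau_2^i$ can occur. In that case the rarefactive wave curve is the pure fan curve: ${\it W}_2(u_1,v_1)={\it F}_2(u_1,v_1)$, respectively ${\it W}_1(u_2,v_2)={\it F}_1(u_2,v_2)$.

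Your list of outgoing waves in Step 3 (two shocks, or a shock--fan) therefore omits exactly the paper's cases (1)--(3):
\begin{itemize}
\item when $\tau_1,\tau_2\ge\tau_2^i$, two centered fans issued from $P$;
\item when only one of them is $\ge\tau_2^i$, a centered fan paired with a transmitted shock or shock--fan.
\end{itemize}
In the fan--shock subcase you also need $\tau_3>\tau_2\ge\tau_2^i$ to see that the transmitted shock reflects off the wall as a centered fan. These are the easiest configurations, so the repair is just to add the ${\it F}$ branch. The complete list is the paper's four cases, with subcases ${\it F}$--${\it S}$, ${\it F}$--${\it SF}$, and ${\it SF}$--${\it SF}$, ${\it S}$--${\it SF}$, ${\it SF}$--${\it S}$, ${\it S}$--${\it S}$.

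Two lesser points. First, the ordering $\psi_{po}(\tau_1)<\psi_{po}(\tau_0)$ excludes nothing here. In Section 5.3.3 it works because ${\it W}_1(u_{po},v_{po})$ lies in $\{\tau>\tau_{po}\}$, whereas ${\it W}_1(u_2,v_2)$ starts at $\tau_2\le\tau_{po}$. That is why the paper keeps all four combinations when $\tau_1,\tau_2<\tau_2^i$.

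Second, in Step 4, the fact that $\tau$ increases at each reflection does not by itself force it past $\tau_2^i$ after finitely many steps. You need a uniform lower bound on the gain per reflection, i.e. that the reflected waves do not become arbitrarily weak. The paper itself only says the previous steps can be repeated, so both arguments are thin here, but your stated reason is not a proof.
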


For convenience, we denote the oblique 1-shock (2-shock, resp.) issued from $B$ ($D$, resp.) by ${\it s}_{-}$ (${\it s}_{+}$, resp.).
The two shocks interact with each other inside the duct at a point $P(x_{_P}, y_{_P})$, where $x_{_{P}}=2(\tan\xi_s-\tan\eta_s)^{-1}$ and $y_{_{P}}=(\tan\xi_s+\tan\eta_s)(\tan\xi_s-\tan\eta_s)^{-1}$.
We set $\tau_i=\hat{\tau}(\sqrt{u_i^2+v_i^2})$, $i=1, 2, \cdot\cdot\cdot$.
In order to identify the waves generated from the point $P$ after the interaction,
we divide the discussion into the following four cases:
(1) $\tau_1\geq\tau_2^i$, $\tau_2\geq\tau_2^i$;
(2) $\tau_1\geq\tau_2^i$, $\tau_2<\tau_2^i$;
(3) $\tau_1<\tau_2^i$, $\tau_2\geq\tau_2^i$;
(4) $\tau_1<\tau_2^i$, $\tau_2<\tau_2^i$.

\begin{figure}[htbp]
\begin{center}
\includegraphics[scale=0.38]{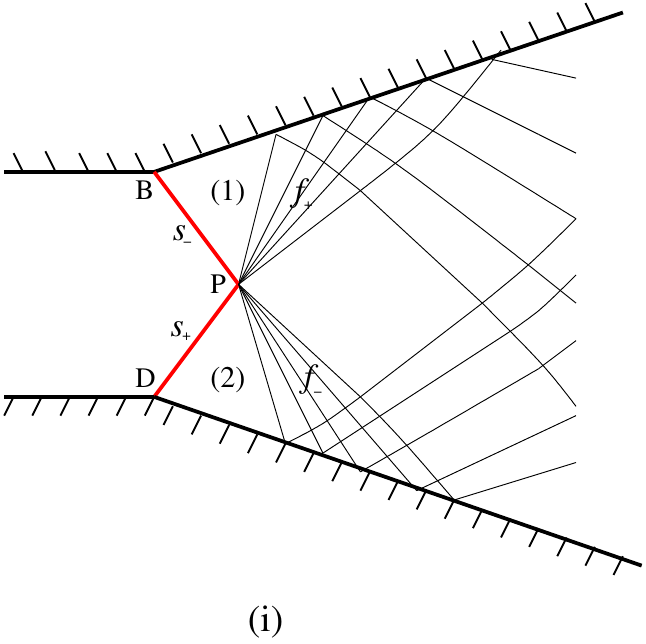} \qquad  \includegraphics[scale=0.38]{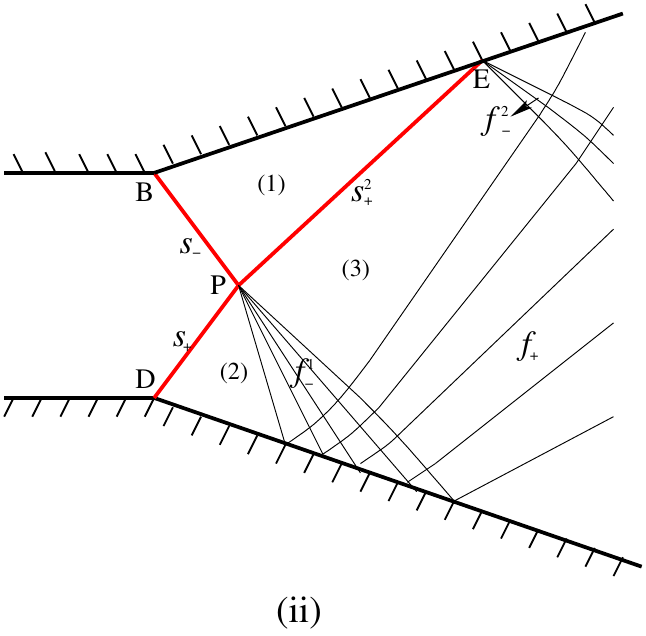}\qquad  \includegraphics[scale=0.38]{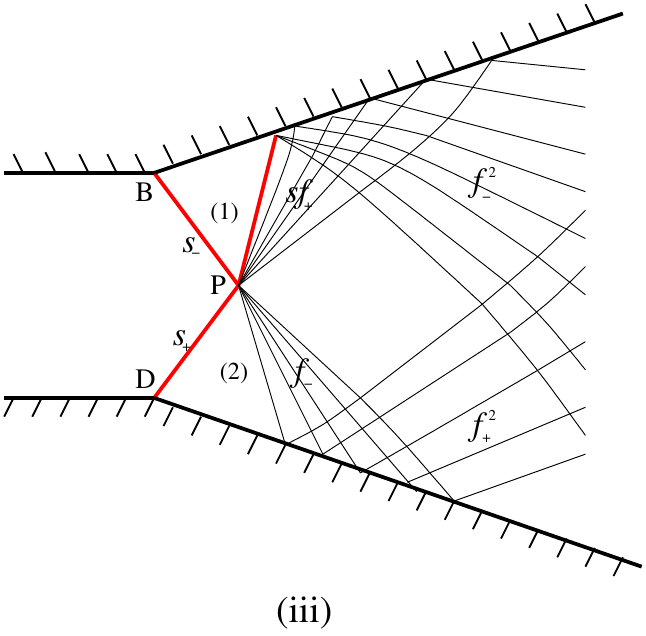}\\
\vskip 10pt
\includegraphics[scale=0.38]{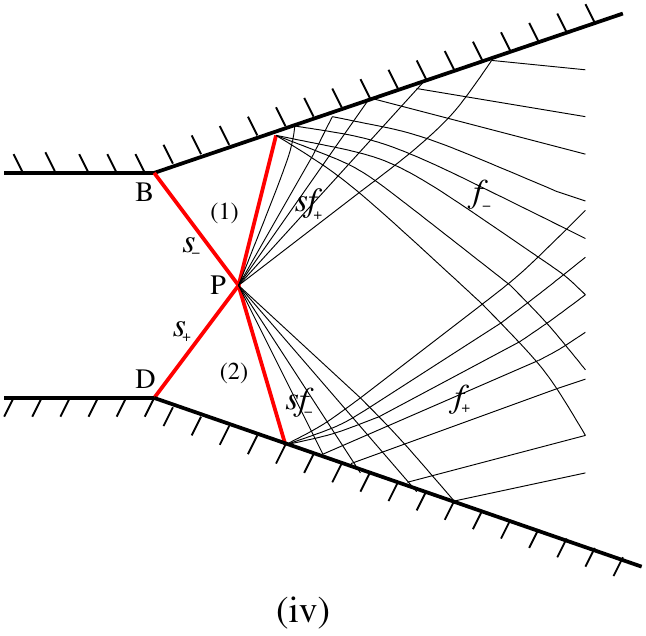}\qquad   \includegraphics[scale=0.38]{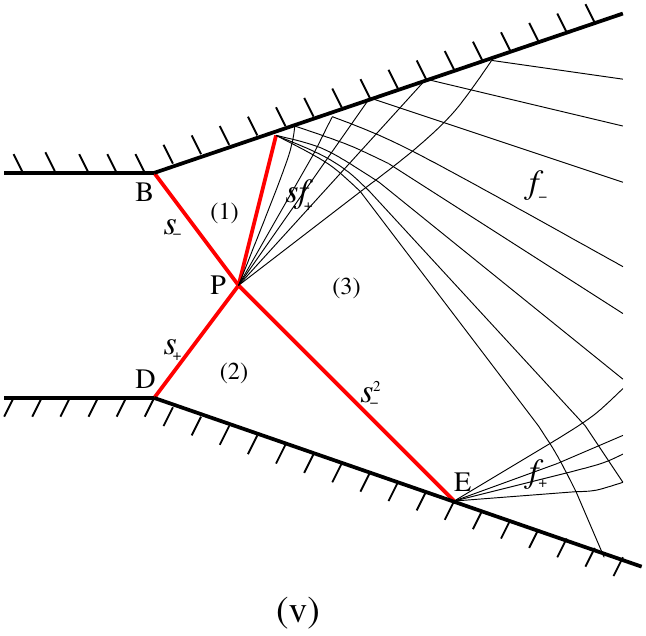}\\
\vskip 10pt
\includegraphics[scale=0.38]{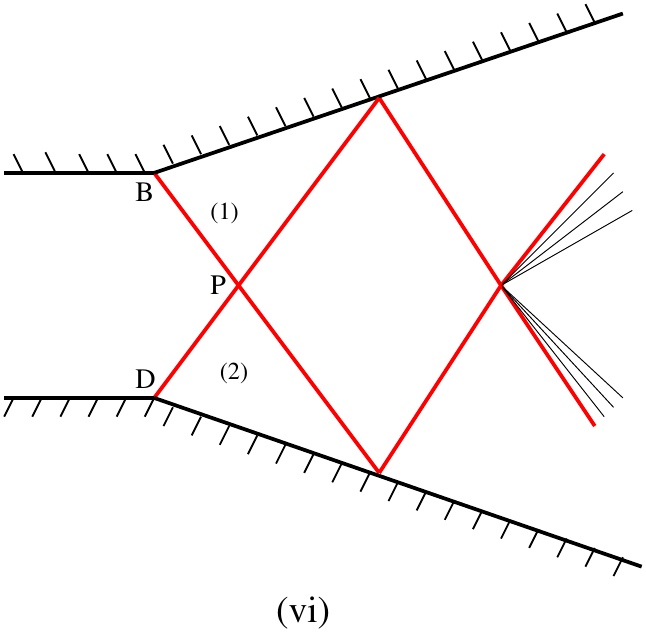}\qquad  \includegraphics[scale=0.38]{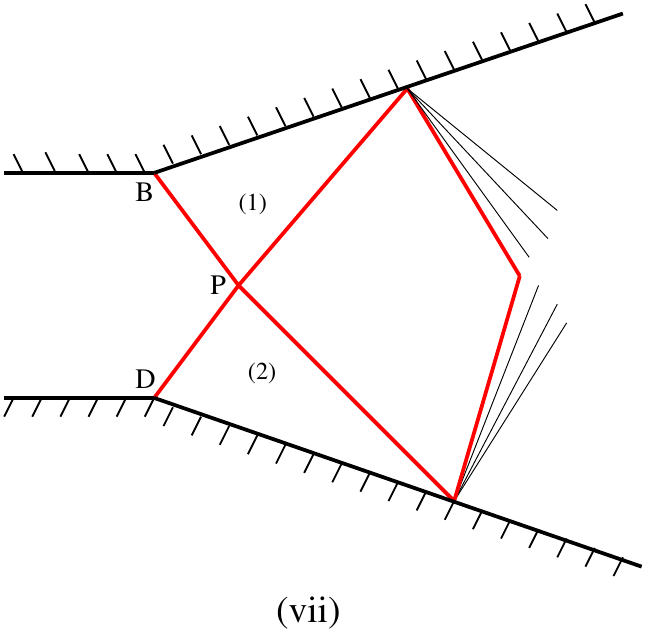}\qquad  \includegraphics[scale=0.38]{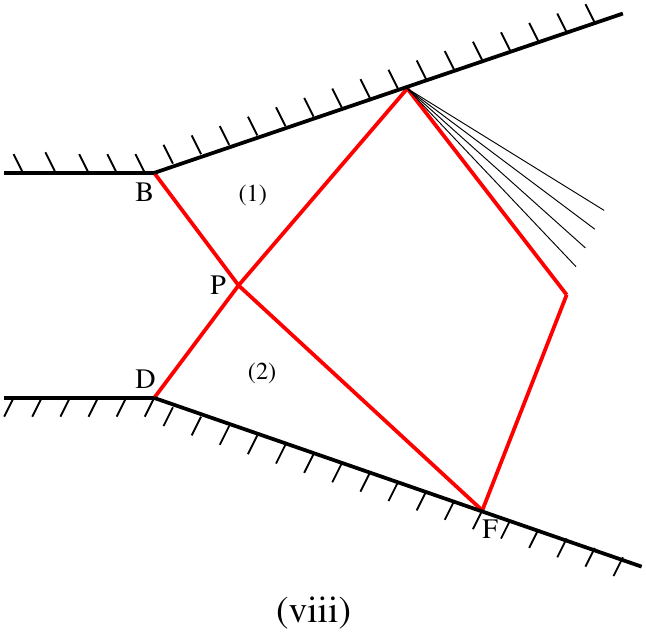}
\caption{\footnotesize Interaction of oblique shock waves. }
\label{Fig5.4.1}
\end{center}
\end{figure}

\subsubsection{$\tau_1\geq\tau_2^i$, $\tau_2\geq\tau_2^i$}
When $\tau_1\geq\tau_2^i$ and $\tau_2\geq\tau_2^i$, we have ${\it W}_1(u_2, v_2)={\it F}_1(u_2, v_2)$ and ${\it W}_2(u_1, v_1)={\it F}_2(u_1, v_1)$. The wave curves ${\it W}_1(u_2, v_2)$ and ${\it W}_2(u_1, v_1)$ intersect at a point $(u_3, v_3)$ or a vacuum.
In this case, the interaction produces two centered simple waves issued from $P$; see ${\it f}_{\pm}$ in Fig. \ref{Fig5.4.1} (i). These two simple waves will then reflect off the walls and generate two simple waves again. The flow in the remaining part of the divergent duct can be determined using the method described in Section 5.1.


\subsubsection{$\tau_1<\tau_2^i$, $\tau_2\geq\tau_2^i$}
The discussion for cases (2) and (3) are similar. Without loss of generality, we discuss case (3).
When $\tau_1<\tau_2^i$ and $\tau_2\geq\tau_2^i$, we have ${\it W}_1(u_2, v_2)={\it F}_1(u_2, v_2)$ and ${\it W}_2(u_1, v_1)={\it S}_2(u_1, v_1)+{\it SF}_2(u_1, v_1)$, respectively.
In this case, there are  the following two subcases:
\begin{itemize}
 \item the wave curves ${\it F}_1(u_2, v_2)$ and ${\it S}_2(u_1, v_1)$ intersect at a point $(u_3, v_3)$;
  \item the wave curves ${\it F}_1(u_2, v_2)$ and ${\it SF}_2(u_1, v_1)$ intersect at a point $(u_3, v_3)$ or a vacuum.
\end{itemize}

For the first subcase, the interaction of ${\it s}_{\pm}$ generates an oblique 2-shock, denoted by ${\it s}_{+}^{2}$, and a centered simple wave, denoted by ${\it f}_{-}^{1}$. The states on the front and back sides of ${\it s}_{+}^{2}$ are $(u_1, v_1)$ and $(u_3, v_3)$, respectively.
 Since $\tau_3>\tau_2>\tau_2^i$, the oblique shock ${\it s}_{+}^{2}$ will then interact with the wall $\mathcal{W}_{+}$ at a point $E$, generating a new centered simple wave ${\it f}_{+}^2$ emanating from the point $E$.
The centered simple wave ${\it f}_{-}^{1}$ will then reflect off the wall $\mathcal{W}_{-}$ and generate a new simple wave ${\it f}_{+}$. These two simple waves ${\it f}_{+}$ and ${\it f}_{-}^{2}$ will then interact with each other inside the duct;  see Fig. \ref{Fig5.4.1} (ii).

For the second subcase, the interaction of ${\it s}_{\pm}$ generates an oblique shock-fan composite wave ${\it sf}_{+}$ and a centered simple ${\it f}_{-}$ issued from the point $P$.
The shock-fan composite wave ${\it sf}_{+}$ and the centered simple ${\it f}_{-}$ will then reflect off the walls $\mathcal{W}_{+}$ and $\mathcal{W}_{-}$, respectively, and produce two new simple waves ${\it f}_{\pm}^2$ as described in Section 5.3.3;
see Fig. \ref{Fig5.4.1} (iii).

The flow in the remaining part of the divergent duct can be determined using the method described in Section 5.1;

\subsubsection{$\tau_1<\tau_2^i$, $\tau_2<\tau_2^i$}
When $\tau_1<\tau_2^i$ and $\tau_2<\tau_2^i$, we have ${\it W}_1(u_2, v_2)={\it F}_1(u_2, v_2)+{\it SF}_1(u_2, v_2)$ and ${\it W}_2(u_1, v_1)={\it S}_2(u_1, v_1)+{\it SF}_2(u_1, v_1)$.
So, we have the following four subcases:
\begin{itemize}
  \item the wave curves ${\it SF}_1(u_2, v_2)$ and ${\it SF}_2(u_1, v_1)$ intersect at a point or a vacuum;
\item the wave curves ${\it S}_1(u_2, v_2)$ and ${\it SF}_2(u_1, v_1)$ intersect at a point;
\item the wave curves ${\it SF}_1(u_2, v_2)$ and ${\it S}_2(u_1, v_1)$ intersect at a point;
\item the wave curves ${\it S}_1(u_2, v_2)$ and ${\it S}_2(u_1, v_1)$ intersect at a point.
\end{itemize}

For the first subcase, the interaction will generate two centered shock-fan composite waves ${\it sf}_{\pm}$ with different types issued from the point $P$. These two composite waves will then reflect off the walls and generate two simple waves ${\it f}_{\pm}$; see Fig. \ref{Fig5.4.1} (iv).

We now discuss the second and third subcases.
Assume that the wave curves ${\it S}_1(u_2, v_2)$ and ${\it SF}_2(u_1, v_1)$ intersect at a point $(u_3, v_3)$. Then the interaction will generate an oblique shock ${\it s}_{-}^2$ and a centered shock-fan composite wave ${\it sf}_{+}$ issued from the point $P$. The composite wave ${\it sf}_{+}$ will then reflect off the wall $\mathcal{W}_{+}$ and produce a simple wave ${\it f}_{-}$.
The oblique shock ${\it s}_{-}^2$ will then interact with the wall $\mathcal{W}_{-}$ at a point. Since $(u_3, v_3)\in {\it SF}_2(u_1, v_1)$, we have $\tau_3\geq \psi_{po}(\tau_1)>\tau_2^i$ and the interaction will generate a centered simple wave ${\it f}_{+}$ issued from this point. The two simple waves will intersect with each other inside the duct; see Fig. \ref{Fig5.4.1} (v).

For the fourth subcase, the interaction of ${\it s}_{\pm}$ generates two oblique shocks with different type issued from the point $P$. The two shocks will then interact with the walls. There are the following three possibilities.
(1). The interactions generate two centered shock-fan composite waves; see Fig. \ref{Fig5.4.1} (vii). The discussion is the same as that in Section 5.2.
(2). The interaction generate a centered shock-fan composite wave and an oblique shock; see Fig. \ref{Fig5.4.1}.
(viii). The discussion  is the same as that in Section 5.3.
(3). The interactions generate two oblique shocks; see Fig. \ref{Fig5.4.1} (vi).
We can repeat the previous steps to construct a global piecewise smooth solution inside the divergent duct.
This completes the proof of Theorem \ref{thm4}.

\subsection{Interaction of  fan-shock-fan composite waves}
We consider the IBVP (\ref{E1}), (\ref{42701})
for $\tau_0<\tau_1^e$.
In this part we assume that
the oblique waves at the corners $B$ and $D$ are fan-shock-fan composite waves with different types, and
the flow near $B$ and $D$ can be represented by
$$
(u, v)=\left\{
               \begin{array}{ll}
                 (u_0, 0), & \hbox{$-\frac{\pi}{2}<\eta<\eta_0$;} \\[2pt]
                 (\tilde{u}_l, \tilde{v}_l)(\eta), & \hbox{$\eta_0<\eta\leq -\xi_{e}$;} \\[2pt]
                  (\tilde{u}_r, \tilde{v}_r)(\eta), & \hbox{$-\xi_{e}\leq \eta<{\eta}_1$;} \\[2pt]
                 (u_1, v_1), & \hbox{$\eta_1<\xi<\theta_{+}$;}
               \end{array}
             \right.
\quad \mbox{and}
\quad
(u, v)=\left\{
               \begin{array}{ll}
                 (u_0, 0), & \hbox{$\xi_0<\xi<\frac{\pi}{2}$;} \\[2pt]
                 (\bar{u}_l, \bar{v}_l)(\xi), & \hbox{${\xi}_e<\xi\leq \xi_0$;} \\[2pt]
                  (\bar{u}_r, \bar{v}_r)(\xi), & \hbox{${\xi}_2\leq \xi<\xi_{e}$;} \\[2pt]
                 (u_2, v_2), & \hbox{$\theta_{-}<\xi<{\xi}_2$,}
               \end{array}
             \right.
$$
respectively. Here, the solution near the corner $D$ is the same as that given in (\ref{120401}); $\xi_0=-\eta_0=\arcsin\frac{c_0}{u_0}$;
$(\tilde{u}_l, \tilde{v}_l)(\eta)=(\bar{u}_l, \bar{v}_l)(-\eta)$, $\eta_0<\eta\leq -\xi_{e}$; $(\tilde{u}_r, \tilde{v}_r)(\eta)$ is determined by (\ref{102306a}) and (\ref{102304a}) with initial data $(\tilde{q}_r, \tilde{\tau}_r, \tilde{\sigma}_r)(-\xi_{e})=(q_2^e, \tau_2^e, -\sigma_2^e)$;
$\eta_1$ is determined by $\tilde{v}_{r}(\eta_1)=\tilde{u}_{r}(\eta_1)\tan\theta_{-}$; the constants $q_2^e$, $\tau_2^e$, $\xi_2$ and $\sigma_2^e$ are the same as that defined in Section 3.3.4.
The main result of this part can be stated as the following theorem.

\begin{thm}\label{thm5}
Assume $\tau_0<\tau_1^e$ and $u_0>c_0$.
Assume furthermore that the oblique waves at the corners $B$ and $D$ are fan-shock-fan composite waves. Then when $\tau_1^e-\tau_0$ is sufficiently small, the IBVP (\ref{E1}), (\ref{42701}) admits a global  piecewise smooth solution in $\Sigma$.
\end{thm}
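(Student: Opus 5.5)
The plan is to follow the same architecture as in Sections 5.1--5.2. The fan-shock-fan wave $fsf_{+}$ at $D$ has an inner 2-fan $(\bar u_l,\bar v_l)$ on $\xi_e<\xi\le\xi_0$. This fan is followed by a straight double-sonic shock along $\xi=\xi_e$ and then by an outer 2-fan $(\bar u_r,\bar v_r)$ on $\xi_2\le\xi<\xi_e$. The wave $fsf_{-}$ at $B$ is its mirror image. The inner fans first meet at $P(\cot\xi_0,0)$.

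\textbf{Step 1: interaction of the inner fans.} This is a standard Goursat problem on the cross characteristics $\widetilde{PE}\subset fsf_-$ and $\widetilde{PF}\subset fsf_+$, with $\bar\partial_\mp\rho<0$ on them as in (\ref{62303}), (\ref{101501}). Lemma \ref{lem1} requires $\tau(P)>\tau_2^i$, which fails here since $\tau_0<\tau_1^e$. I would instead use the smallness of $\tau_1^e-\tau_0$. The cross characteristics stop where they hit the double-sonic shocks, at $\tau\le\tau_1^e$. So the whole interaction region lies in $\tau\in[\tau_0,\tau_1^e]\subset(0,\tau_1^i)$, where $p''>0$.

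On that range the characteristic decompositions (\ref{81104}) and (\ref{53001}) keep the same signs as in the proof of Lemma \ref{lem1}. The invariant-region argument therefore goes through verbatim and gives $\bar\partial_\pm\rho<0$ and a $C^1$ bound. Because the region is thin (of size $O(\tau_1^e-\tau_0)$ in the $(r,s)$-plane), the transmitted characteristics $C_+^F$, $C_-^E$ and the interaction domain stay close to $P$. The transmitted characteristics then reach the two double-sonic shocks before anything else happens.

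\textbf{Step 2: penetration of the double-sonic shocks.} This is the main obstacle. After Step 1 the flow in front of each double-sonic shock is no longer a simple wave, so the shock bends. Its type is not known a priori: it could become post-sonic, pre-sonic, transonic, or stay double-sonic.

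I would try to show that it persists as a curved shock which is sonic on both sides. On the front it is the envelope of $C_+$ characteristics, as in Lemma \ref{lem7}, and the back state sits exactly at $\tau_2^e$. The evidence comes from Propositions \ref{5602}--\ref{tran}: since $\tau_1^e$ is isolated, for front states slightly perturbed near $\tau_1^e$ with $\bar\partial\rho<0$, the only admissible rarefaction shocks satisfying Liu's condition (\ref{43001}) are pre-sonic or double-sonic. Comparing curvatures of the shock with the characteristic curvatures computed from (\ref{7})--(\ref{8}) should rule out the pre-sonic alternative.

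Concretely, I would formulate the back side as a singular Cauchy problem of the type in Lemma \ref{lem7}. This uses the hodograph transformation of Section 2.3 with data on the shock, where (\ref{61704}) is checked by monotonicity of $r,s$ along it. I would then solve a free-boundary problem for the shock location, by iteration or a fixed point in the hodograph plane. Smallness of $\tau_1^e-\tau_0$ is what makes this perturbation/contraction argument close. If the double-sonic shock cannot be sustained, I would fall back on treating it as a post-sonic shock and use Remark \ref{rem4.1}.

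\textbf{Step 3: the outer fans.} Behind the shocks we have $\tau\ge\tau_2^e>\tau_2^i$. The outer fans interact with the transmitted waves through a discontinuous Goursat problem, handled by Lemma \ref{lem8} and Remark \ref{rem4.3} (the latter gives a cavitation if $r-s\ge 2\mathcal R$), together with simple wave problems from Lemmas \ref{lem3}--\ref{lem4}. Reflections off $\mathcal W_\pm$ are handled by mixed problems from Lemmas \ref{lem5}--\ref{lem5a}, with the estimates (\ref{102412}) giving $(r,s)\in\Pi$.

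From then on everything lies in the convex regime $\tau>\tau_2^i$, so the remaining construction repeats Section 5.1 verbatim. Patching these pieces gives the global piecewise smooth solution in $\Sigma$.
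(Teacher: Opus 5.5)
Your Steps 1 and 3 are broadly consistent with the paper, but Step 2 rests on a false premise, and this is a genuine gap. The double-sonic shock cannot persist once it enters the interaction region $\Omega^3$:
\begin{itemize}
\item A shock that is sonic on both sides must have $(\tau_f,\tau_b)=(\tau_1^e,\tau_2^e)$ by the uniqueness in Proposition \ref{5602}, and $N_f=c_f$ forces $\phi=\alpha_f$.
\item The shock would then be an integral curve of the $C_+$ field of the known $C^1$ front flow, i.e.\ a $C_+$ characteristic. Along such a curve $\bar{\partial}_{+}\rho\approx-\mathcal{L}\neq 0$ by (\ref{6811}), contradicting $\tau_f\equiv\tau_1^e$.
\end{itemize}
Your supporting claim that near $\tau_1^e$ only pre-sonic or double-sonic shocks are admissible is also false. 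By Propositions \ref{51003} and \ref{tran}, for every $\tau_f\in(\tau_1^e,\tau_2^i)$ there are post-sonic and transonic rarefaction shocks satisfying (\ref{43001}). The paper shows $\bar{\partial}_{_\Gamma}\tau_f>0$ along the shock, so $\tau_f$ moves precisely into that range. You also place the envelope on the wrong side. For the correct (post-sonic) shock the front flow has $N_f>c_f$, and it is the \emph{back} flow that is tangent to the shock, $\phi=\alpha_b$. That is what makes the downstream problem a singular Cauchy problem.

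Your one-line fallback omits what is actually needed (Section 5.5.3 of the paper):
\begin{itemize}
\item Define the shock from $F$ as post-sonic, $\tau_b=\psi_{po}(\tau_f)$, through the ODE (\ref{61115}) with $\phi(F)=\xi_e$.
\item Show by the curvature comparison $\bar{\partial}_{_\Gamma}\phi>\bar{\partial}_{+}\alpha_3$ at $F$ that the shock bends into $\Omega^3$ rather than out of it.
\item Show via $\bar{\partial}_{_\Gamma}\tau_f>0$ that $\tau_1^e\le\tau_f<\tau_2^i$, so the shock stays post-sonic until it meets its mirror image at a point $G$ (Lemma \ref{91011}).
\item Prove the structural conditions $\bar{\partial}_{_\Gamma}r_b>0$, $\bar{\partial}_{_\Gamma}s_b>0$ and $(r_b,s_b)\in\Pi$ (Lemmas \ref{110411} and \ref{2501a}). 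These are exactly the hypotheses of Lemma \ref{lem7}, which produces the back flow.
\end{itemize}
This is where the smallness of $\tau_1^e-\tau_0$ is really used, through (\ref{6811}). No free-boundary fixed-point argument is needed. Remark \ref{rem4.1}, a DGP with a straight characteristic boundary, is not the relevant tool.

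Your plan also never treats the shock--shock meeting point $G$. The paper first solves SGPs pairing the singular-Cauchy solutions with the outer fans on $\widetilde{EH}$ and $\widetilde{FI}$. It then closes the interaction with a DGP centered at $G$ via Lemma \ref{lem8}.

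Two smaller slips:
\begin{itemize}
\item In Step 1, $\tau$ is not confined to $[\tau_0,\tau_1^e]$. Since $\bar{\partial}_{\pm}\rho<0$, it exceeds $\tau_1^e$ inside $\Omega^3$; only $\tau-\tau_1^e\to 0$ uniformly holds, which still gives $\tau<\tau_1^i$.
\item In Step 3, the back states satisfy $\tau_b=\psi_{po}(\tau_f)\in(\tau_2^i,\tau_2^e)$, not $\tau\ge\tau_2^e$. The convex-regime conclusion $\tau>\tau_2^i$ still holds.
\end{itemize}
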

\begin{figure}[htbp]
\begin{center}
\includegraphics[scale=0.4]{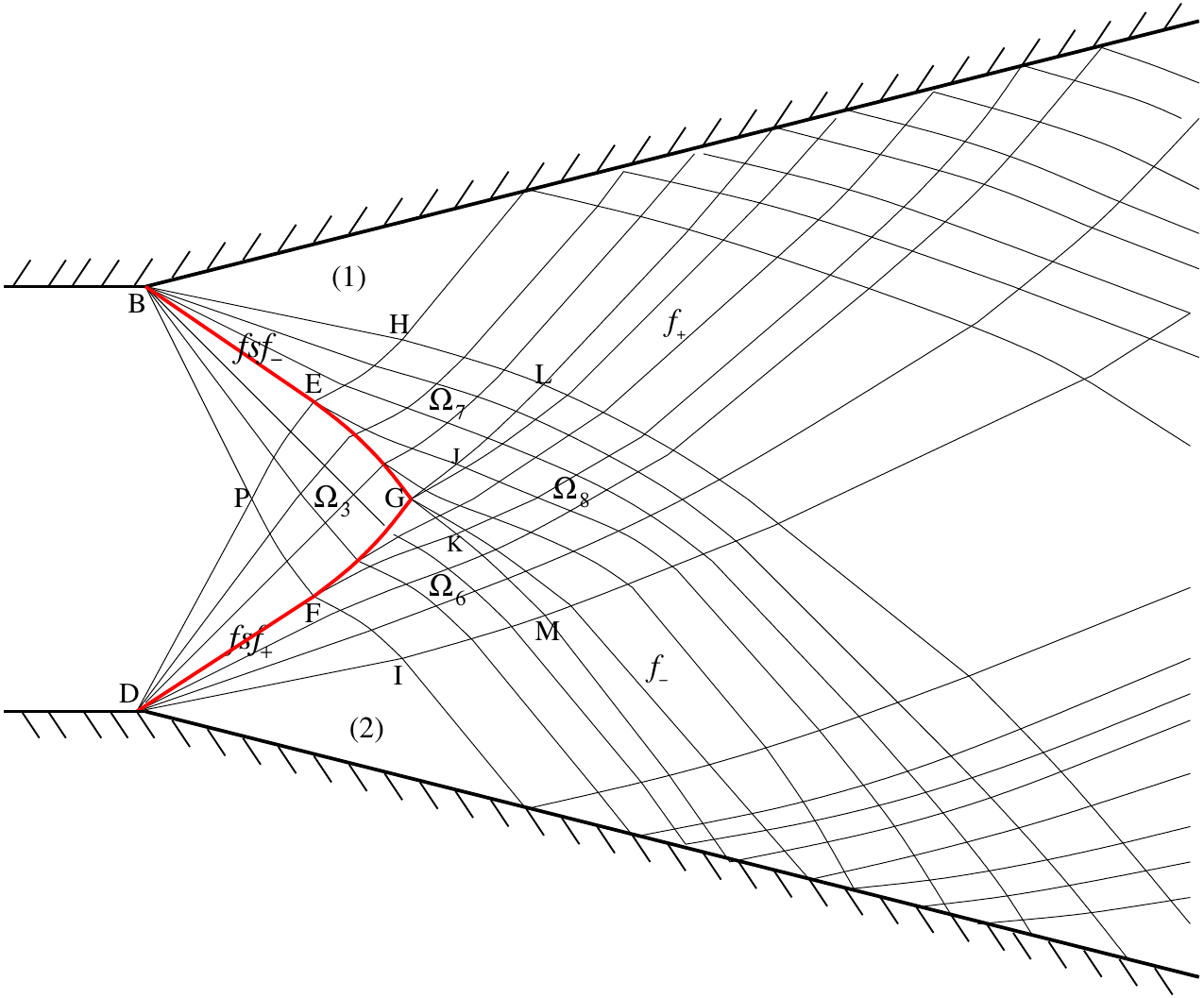}
\caption{\footnotesize Interaction of two oblique fan-shock-fan composite waves }
\label{Fig5.5.1}
\end{center}
\end{figure}

\subsubsection{\bf Characteristic boundary value problem}
For convenience, we use ${\it fsf}_{+}$ and ${\it fsf}_{-}$ to denote the oblique fan-shock-fan composite waves issued from $D$ and $B$, respectively. The two fan-shock-fan composite waves start to interact with each other at point ${P}(\cot\xi_0,0)$.
From the point $P$, we draw a forward $C_{-}$  cross characteristic line of the fan-shock-fan composite wave ${\it fsf}_{+}$. This characteristic line intersects the double-sonic shock of ${\it fsf}_{+}$ at a point $F$ and ends up at a point $I$ on the straight $C_{+}$ characteristic line $y=-1+x\tan\xi_2$.
From the point $P$, we draw a forward $C_{+}$  cross characteristic line of the fan-shock-fan composite wave ${\it fsf}_{-}$. This characteristic line intersects the double-sonic shock of ${\it fsf}_{-}$ at a point $E$ and ends up at a point $H$ on  the straight $C_{-}$ characteristic line $y=1+x\tan\eta_1$.
We shall show that the two cross characteristic lines delimit the two fan-shock-fan composite wave regions.
As in Section 5.1, we can
obtain the cross characteristic curves $\widetilde{{PE}}$, $\widetilde{{EH}}$, $\widetilde{{PF}}$, and $\widetilde{{FI}}$, and have
the following estimates:
\begin{equation}\label{120602}
\bar{\partial}_{+}\rho<0\quad on~ \widetilde{{PE}}~\mbox{and}~\widetilde{{EH}};
\quad \bar{\partial}_{-}\rho<0\quad \mbox{on}~\widetilde{{PF}}~\mbox{and}~ \widetilde{{FI}}.
\end{equation}
As in (\ref{120201}) we also have
\begin{equation}\label{120602a}
~(r, s)\in\Pi\quad \mbox{on}~ \widetilde{{PE}}\cup\widetilde{{EH}}\cup\widetilde{{PF}}\cup\widetilde{{FI}}.
\end{equation}

In order to determine the flow in the interaction region of ${\it fsf}_{\pm}$,  we consider (\ref{E1}) with the following boundary condition:
\begin{equation}\label{102807}
(u, v)=\left\{
               \begin{array}{ll}
                  (\tilde{u}_l, \tilde{v}_l)(\eta), & \hbox{$(x,y)\in \widetilde{{PE}}$;} \\[2pt]
                  (\tilde{u}_r, \tilde{v}_r)(\eta), & \hbox{$(x,y)\in \widetilde{{EH}}$;} \\[2pt]
               (\bar{u}_l, \bar{v}_l)(\xi), & \hbox{$(x,y)\in \widetilde{{PF}}$;} \\[2pt]
                  (\bar{u}_r, \bar{v}_r)(\xi), & \hbox{$(x,y)\in \widetilde{{FI}}$.}
               \end{array}
             \right.
\end{equation}
We shall show that when $\tau_1^e-\tau_0$ is sufficiently small, the BVP (\ref{E1}), (\ref{102807}) admits a global piecewise smooth solution on a  curvilinear quadrilateral domain $\overline{\Omega}$ bounded by $\widetilde{{PE}}\cup \widetilde{{EH}}$, $\widetilde{{PF}}\cup \widetilde{{FI}}$, $C_{-}^{H}$, and $C_{+}^{I}$.
Moreover, the solution satisfies
$$
\bar{\partial}_{+}\rho<0\quad \mbox{on}~C_{+}^{I}, \quad \bar{\partial}_{-}\rho<0\quad \mbox{on}~C_{-}^{H}.
$$

\subsubsection{\bf Interaction of fan waves}
We first consider (\ref{E1}) with the boundary condition
\begin{equation}\label{102701}
(u, v)=\left\{
               \begin{array}{ll}
                  (\tilde{u}_l, \tilde{v}_l)(\eta) & \hbox{$(x,y)\in \widetilde{{PE}}$;} \\[2pt]             (\bar{u}_l, \bar{v}_l)(\xi) & \hbox{$(x,y)\in \widetilde{{PF}}$.}
              \end{array}
             \right.
\end{equation}

\begin{figure}[htbp]
\begin{center}
\includegraphics[scale=0.36]{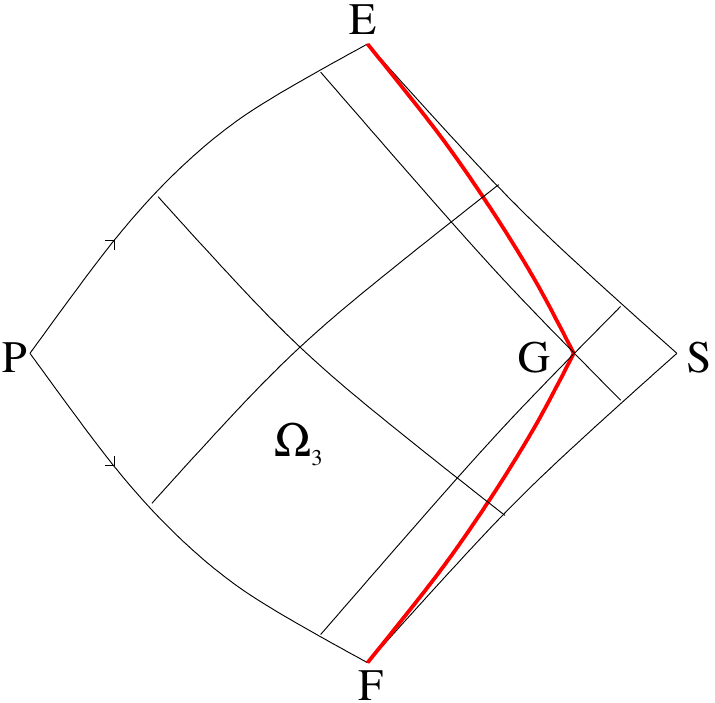}\qquad\qquad \qquad\qquad\includegraphics[scale=0.36]{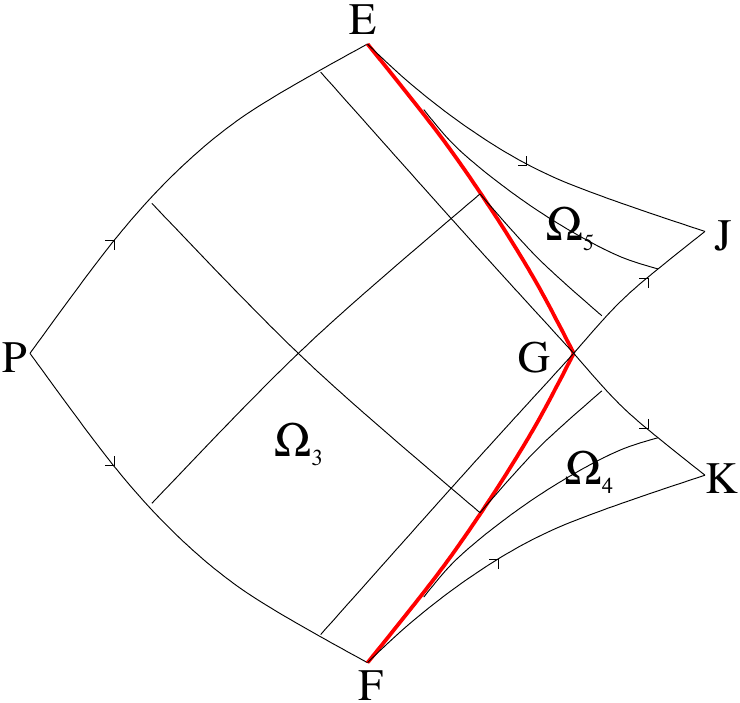}
\caption{\footnotesize Post-sonic shocks through points $E$ and $F$ and the flows upstream and downstream of them. }
\label{Fig5.5.2}
\end{center}
\end{figure}

\begin{lem}\label{lem110101}
Assume $\tau_1^e-\tau_0$ to be sufficiently small. Then the Goursat problem (\ref{E1}), (\ref{102701}) admits a classical solution on a curvilinear quadrilateral domain $\overline{\Omega^3}$ bounded by $\widetilde{{PE}}$, $\widetilde{{PF}}$, $\widetilde{{ES}}$, and $\widetilde{{FS}}$, where $\widetilde{{ES}}$ is a forward $C_{+}$ characteristic line issued from ${E}$ and $\widetilde{{FS}}$ a forward $C_{-}$ characteristic line issued from ${F}$; see Fig. \ref{Fig5.5.2} (left). Moreover, the solution satisfies
\begin{equation}\label{6811}
\big\|\big(\tau-\tau_1^e, ~\alpha-A_1^e, ~\beta+A_1^e, ~\bar{\partial}_{+}\rho+\mathcal{L},~  \bar{\partial}_{-}\rho+\mathcal{L}\big)\big\|_{{0; \overline{\Omega^3}}}\rightarrow 0\quad \mbox{as} ~ \tau_1^e-\tau_0\rightarrow 0,
\end{equation}
where
\begin{equation}\label{102901}
\mathcal{L}=\frac{c_1^eu_0\sin^2(2A_1^e)}{(\tau_1^e)^4p''(\tau_1^e)}, \quad c_1^e=\tau_1^e\sqrt{-p'(\tau_1^e)},\quad \mbox{and} \quad A_1^e=\arcsin\Big(\frac{c_1^e}{u_0}\Big).
\end{equation}
\end{lem}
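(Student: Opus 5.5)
The plan is a perturbation argument around an explicit limiting solution, carried out in the hodograph $(r,s)$-plane rather than with the maximum-principle argument of Lemma \ref{lem1}. Lemma \ref{lem1} cannot be applied directly: near $\tau_1^e<\tau_1^i$ we have $p''>0$ but $\tau<\tau_2^i$, so condition (\ref{6401a}) fails.

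\emph{Step 1: boundary data.} On $\widetilde{PE}$ the states $(\tilde u_l,\tilde v_l)(\eta)$ run over $\eta\in[\eta_0,-\xi_e]$. On $\widetilde{PF}$ the states $(\bar u_l,\bar v_l)(\xi)$ run over $\xi\in[\xi_e,\xi_0]$. Since $\bar\tau(\xi_e)=\tau_1^e$ and $\bar\tau(\xi_0)=\tau_0$, continuous dependence for the ODE (\ref{102304}) gives $\xi_0-\xi_e\to0$ as $\tau_1^e-\tau_0\to0$. Consequently, uniformly on both boundary curves,
\[
\tau\to\tau_1^e,\qquad \alpha\to A_1^e \ \text{on}\ \widetilde{PE},\qquad \beta\to-A_1^e\ \text{on}\ \widetilde{PF}.
\]
The image of the data in the $(r,s)$-plane therefore shrinks to the point $(r_1^e,s_1^e)$ corresponding to $(u_0,0)$ and $\tau=\tau_1^e$.

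Next I compute $\bar\partial_\pm\rho$ on the boundary, as in (\ref{62303}) and (\ref{101501}). The distance to the centre is $\chi\approx\csc\xi_0\approx u_0/c_1^e$, and by (\ref{102304}) $\bar\rho'(\xi)=-\bar\tau'/\bar\tau^2\approx 2u_0c_1^e\cos A_1^e/((\tau_1^e)^4p''(\tau_1^e))$. Hence
\[
\bar\partial_-\rho\approx-\frac{c_1^e}{u_0}\,\sin(2A_1^e)\,\bar\rho'
\]
on $\widetilde{PF}$, and symmetrically for $\bar\partial_+\rho$ on $\widetilde{PE}$. This should reproduce $-\mathcal{L}$ up to trigonometric identities (using $\sin A_1^e=c_1^e/u_0$); I will check the exact constant there.

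\emph{Step 2: solve in the hodograph plane.} Since $\bar\partial_+\rho<0$ on $\widetilde{PE}$ and $\bar\partial_-\rho<0$ on $\widetilde{PF}$, (\ref{102302d}) shows that $r$ is strictly monotone along $\widetilde{PE}$ and $s$ along $\widetilde{PF}$. The data can thus be inverted to $(x,y)(r,s)$ on two sides of a small rectangle $\Delta_\varepsilon$ of side $O(\xi_0-\xi_e)$ with corner $(r(P),s(P))$. The linear Goursat problem for (\ref{HT}) on $\Delta_\varepsilon$ is then well posed.

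To get the invertibility of $T$, I will show $\hat\partial_\pm x>0$ in $\Delta_\varepsilon$ using the decompositions (\ref{6506}). The key difference from Lemma \ref{lem4.7} is that the coefficients $\hat\partial_+\lambda_-$ and $\hat\partial_-\lambda_+$ in (\ref{121902a}) now have the opposite sign, because $p''>0$. Positivity is therefore not automatic. Instead I use smallness: on the boundary,
\[
\hat\partial_+x = -\frac{\rho\cos\alpha}{\sin(2A)\,\bar\partial_+\rho},
\]
which by (\ref{53002}) is bounded below by a constant close to $\rho\cos A_1^e/(\sin(2A_1^e)\mathcal{L})$. Integrating (\ref{6506}) over a rectangle of side $O(\varepsilon)$ changes $\hat\partial_\pm x$ only by a relative error $O(\varepsilon)$, via Gronwall. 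This gives $\hat\partial_\pm x$ uniformly close to these positive constants throughout $\Delta_\varepsilon$. The one-to-one argument of Lemma \ref{lem4.7}, Step 2, then applies verbatim, using $\tan\alpha>\tan\beta$.

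\emph{Step 3: pull back and conclude.} The image of $\Delta_\varepsilon$ is the quadrilateral bounded by $\widetilde{PE}$, $\widetilde{PF}$, $C_+^E$ and $C_-^F$, which meet at $S$ corresponding to $(r(E),s(F))$. On it the solution is classical. Moreover, $(r,s)$ lies in a rectangle of size $O(\varepsilon)$ around $(r_1^e,s_1^e)$, which gives the claimed convergence of $\tau$, $\alpha$ and $\beta$. Finally, (\ref{53002}) together with the near-constancy of $\hat\partial_\pm x$ gives $\bar\partial_\pm\rho\to-\mathcal{L}$ uniformly, completing (\ref{6811}).

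The main obstacle is Step 2: the sign of the characteristic-decomposition coefficients is unfavourable, so the argument must rely on a quantitative Gronwall estimate on a shrinking domain. It also needs a check that the boundary values of $\hat\partial_\pm x$ stay bounded away from zero, i.e.\ that $\mathcal{L}$ is finite and positive, which uses $p''(\tau_1^e)>0$.
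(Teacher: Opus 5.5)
Your proposal is correct, but it takes a different route from the paper.

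\textbf{The paper's route.} The paper gets existence from the physical-plane continuity/maximum-principle argument of Lemma \ref{lem1}, applied to the decompositions (\ref{81104}). It obtains smallness of $\tau-\tau_1^e$, $u-u_0$, $v$ from the hodograph picture. It computes $\bar\partial_\pm\rho$ on $\widetilde{PE}$ and $\widetilde{PF}$ from the centered-wave formulas and (\ref{102304}). Finally it integrates (\ref{81104}) along the short characteristics of $\Omega^3$ to propagate $\bar\partial_\pm\rho\approx-\mathcal{L}$ into the interior.

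\textbf{Your route.} You stay entirely in the linear Goursat problem for $(x,y)(r,s)$ on a shrinking rectangle. One Gronwall estimate on (\ref{6506}) makes $\hat\partial_\pm x$ nearly constant, and this gives three things at once:
\begin{itemize}
\item invertibility, via Step 2 of Lemma \ref{lem4.7};
\item shrinking of the physical domain, which the paper later uses in Lemma \ref{91011};
\item through (\ref{53002}), the limit $\bar\partial_\pm\rho\to-\mathcal{L}$.
\end{itemize}
This is cleaner: the problem is linear, no continuation argument is needed, and you never have to explain why Lemma \ref{lem1} may be used outside its stated hypotheses. Your boundary constant is exactly right: since $\sin^2(2A)=2\sin A\cos A\sin(2A)$ and $u_0\sin A_1^e=c_1^e$, the limit of $-\frac{c_1^e}{u_0}\sin(2A_1^e)\bar\rho'$ equals $-\mathcal{L}$.

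\textbf{One claim to correct.} You say the coefficients in (\ref{121902a}) have the opposite sign to those in Lemma \ref{lem4.7}. That is wrong.
\begin{itemize}
\item On the small image rectangle, $\tau_0\le\tau<\tau_1^i$, so $p''>0$ exactly as in Lemma \ref{lem4.7}, where $\tau>\tau_2^i$.
\item Hence $\hat\partial_+\lambda_->0$ and $\hat\partial_-\lambda_+<0$, and positivity of $\hat\partial_\pm x$ already follows from the sign argument in Step 1 of Lemma \ref{lem4.7}.
\item Likewise, the proof of Lemma \ref{lem1} only needs $p''>0$ (hence $\mathcal{F}>0$) along the solution. It goes through once hypothesis (\ref{6401a}) is replaced by $\tau<\tau_1^i$ on the determinate region, which smallness guarantees. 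This is how the paper reads it (cf. Remark \ref{rem5.3}).
\end{itemize}
Your Gronwall step is still needed, not for positivity but for the near-constancy that yields the limit of $\bar\partial_\pm\rho$. So the argument stands; only its motivation should be fixed.

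A minor point: the far sides of the quadrilateral are $C_-^E$ and $C_+^F$. This matches your identification of $S$ with $(r(E),s(F))$; the labels in the statement are swapped.
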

\begin{proof}
When $\tau_1^e-\tau_0$ is sufficiently small, the existence of $C^1$ solution on $\overline{\Omega^3}$ follows routinely from Lemma \ref{lem1}.
Moreover, by the hodograph transformation we know that the solution satisfies
\begin{equation}\label{102702}
\big\|(\tau-\tau_1^e, u-u_0, v)\big\|_{{0; \overline{\Omega^3}}}\rightarrow 0\quad \mbox{as}~ \tau_1^e-\tau_0\rightarrow 0.
\end{equation}

As in (\ref{62303}) and (\ref{101501}), we have
$$
\bar{\partial}_-\rho=-\frac{\sin (2\bar{A}(\xi))\bar{\rho}'(\xi)}{\sqrt{x^2+(y+1)^2}}<0
\quad \mbox{along}~\widetilde{{PF}}; \quad \bar{\partial}_{+}\rho~=~-\frac{\sin (2\tilde{A}(\eta)) \tilde{\rho}'(\eta)}{\sqrt{x^2+(y-1)^2}}<0
\quad \mbox{along}~ \widetilde{{PE}}.
$$
Combining this with (\ref{102304}), (\ref{102304a}), (\ref{81104}), (\ref{102702}) we can get the desired estimates for $\bar{\partial}_{\pm}\rho$.


This completes the proof.
\end{proof}

For convenience, we denote the solution of the Goursat problem (\ref{E1}), (\ref{102701}) by $(u, v)=(u_3, v_3)(x,y)$.
We also let $$\tau_3(x,y)=\hat{\tau}\big(\sqrt{u_3^2+v_3^2}\big)\quad \mbox{and}
\quad(\alpha_3, \beta_3, A_3, \sigma_3)(x,y)=(\alpha, \beta, A, \sigma)(u_3(x,y), v_3(x,y)).$$
We shall show that the double-sonic shocks will propagate into the domain $\Omega^3$ from ${E}$ and ${F}$ and become post-sonic shocks.

\subsubsection{\bf Existence of post-sonic shocks propagated from $E$ and $F$}


 For convenience, we denote the shocks propagating from ${E}$ and ${F}$ by ${\it S}_{_{E}}$ and ${\it S}_{_{F}}$, respectively.

We first discuss the shock wave propagating from ${F}$. 
We use subscripts `$f$' and `$b$' to denote the states on the front  and back sides of the shock ${\it S}_{_{F}}$, respectively.
We use $\phi$ to denote the inclination angle of the shock curve ${\it S}_{_{F}}$. 
Since ${\it S}_{_{F}}$ is a $2$-shock, we set
\begin{equation}\label{62604}
L=u\cos \phi+v\sin \phi\quad\mbox{and} \quad N=u\sin \phi- v\cos \phi.
\end{equation}
Then by the Rankine-Hugoniot relations we have that on ${\it S}_{_{F}}$,
\begin{equation}\label{RH1}
\left\{
  \begin{array}{ll}
    \rho_f N_f=\rho_b N_b=m,  \\[4pt]
L_f=L_b,\\[4pt]
   N_f^2+2h(\tau_f)=N_b^2+2h(\tau_b).
  \end{array}
\right.
\end{equation}
By the first and third relations of (\ref{RH1}), one has
\begin{equation}\label{250101}
m^2=-\frac{2h(\tau_f)-2h(\tau_b)}{\tau_f^2-\tau_b^2}.
\end{equation}

From (\ref{62604}) we have
\begin{equation}\label{61112}
\phi=\sigma+\arcsin\Big(\frac{N}{q}\Big)\quad \mbox{along}~{\it S}_{_{F}}.
\end{equation}
Since $\tau_f({F})=\tau_1^e$ and $\tau_b({F})=\tau_2^e$,
we have
\begin{equation}\label{250401}
N_b=c_b=q_b\sin A_b\quad \mbox{and}\quad \alpha_b=\phi=\xi_{e}\quad  \mbox{at}~ {F};
\end{equation}
\begin{equation}\label{102002}
N_f=c_f=q_f\sin A_f\quad \mbox{and}\quad \alpha_f=\phi=\xi_{e}\quad \mbox{at}~ {F}.
\end{equation}


From (\ref{250101}) we have
\begin{equation}\label{250104}
\begin{aligned}
2m \bar{\partial}_{_\Gamma}m ~=~&\frac{2\tau_f}{\tau_f^2-\tau_b^2}\left(-p'(\tau_f)+\frac{2h(\tau_f)-2h(\tau_b)}{\tau_f^2-\tau_b^2}\right)
\bar{\partial}_{_\Gamma}\tau_f\\&
-\frac{2\tau_b}{\tau_f^2-\tau_b^2}\left(-p'(\tau_b)+\frac{2h(\tau_f)-2h(\tau_b)}{\tau_f^2-\tau_b^2}\right)
\bar{\partial}_{_\Gamma}\tau_b\\~=~&\frac{2\rho_f}{\tau_f^2-\tau_b^2}(c_f^2-N_f^2)\bar{\partial}_{_\Gamma}\tau_f
-\frac{2\rho_b}{\tau_f^2-\tau_b^2}(c_b^2-N_b^2)\bar{\partial}_{_\Gamma}\tau_b\quad \mbox{along}~{\it S}_{_{F}}.
\end{aligned}
\end{equation}
Combining this with $\tau_f({F})=\tau_1^e$ and $\tau_b({F})=\tau_2^e$,  we have
\begin{equation}\label{250402}
\bar{\partial}_{_\Gamma}m=0\quad \mbox{at}~ {F}.
\end{equation}

From (\ref{U}) and (\ref{62604}) we have
$
m=\rho  N =\rho  q  (\cos\sigma \sin\phi-\sin\sigma \cos\phi)
$  on $S_{_F}$.
Thus,
\begin{equation}\label{4250103a}
\bar{\partial}_{_\Gamma} m=\frac{N }{q }\partial_{s}(\rho q )-\rho L \bar{\partial}_{_\Gamma}\sigma +\rho L \bar{\partial}_{_\Gamma} \phi\quad \mbox{along}~ {\it S}_{_{F}}.
\end{equation}

From the Bernoulli law (\ref{E2}) we have
\begin{equation}\label{102801a}
 \bar{\partial}_{_\Gamma} q =\frac{\tau ^3 p'(\tau )\bar{\partial}_{_\Gamma} \rho }{q }\quad \mbox{along}~{\it S}_{_{F}}.
\end{equation}
This yields
\begin{equation}\label{110403}
\bar{\partial}_{_\Gamma} (\rho q )=\frac{1}{q }(q ^2+\tau ^2 p'(\tau ))\bar{\partial}_{_\Gamma}\rho\quad \mbox{along}~{\it S}_{_{F}}.
\end{equation}
Hence, by (\ref{4250103a}) we get
\begin{equation}\label{250102}
\bar{\partial}_{_\Gamma} \phi=\frac{\bar{\partial}_{_\Gamma} m}{\rho L }-\frac{N }{\rho L q ^2}(q ^2+\tau ^2 p'(\tau ))\bar{\partial}_{_\Gamma}\rho +\bar{\partial}_{_\Gamma}\sigma
\end{equation}
and
\begin{equation}\label{42804a}
\bar{\partial}_{_\Gamma} \sigma =\bar{\partial}_{_\Gamma} \phi-\frac{\bar{\partial}_{_\Gamma} m}{\rho L }+\frac{N }{\rho L q ^2}(q ^2+\tau ^2 p'(\tau ))\bar{\partial}_{_\Gamma}\rho \quad \mbox{along}~ {\it S}_{_{F}}.
\end{equation}

By a direct computation,
we also have
\begin{equation}\label{50701}
\bar{\partial}_{_\Gamma}u=\cos\sigma\bar{\partial}_{_\Gamma}q-q\sin\sigma\bar{\partial}_{_\Gamma}\sigma\quad \mbox{and}\quad
\bar{\partial}_{_\Gamma}v=\sin\sigma\bar{\partial}_{_\Gamma}q
+q\cos\sigma\bar{\partial}_{_\Gamma}\sigma.
\end{equation}



In what follows, we are going to construct a post-sonic shock propagated from the point $F$.
We first assume a priori that $ {\it S}_{_{F}}$ states in the interior of $\Omega^3$.
Then
\begin{equation}\label{101101}
(u_f, v_f, \tau_f)=(u_{3}, v_{3}, \tau_{3})(x, y)
\end{equation}
on ${\it S}_{_{F}}$.
We also assume a priori that
\begin{equation}\label{72202}
\tau_1^e\leq\tau_f<\tau_2^i\quad \mbox{on}~ {\it S}_{_{F}}\setminus {F}.
\end{equation}
Then by Proposition \ref{51003} we have
\begin{equation}\label{62603}
\tau_b=\psi_{po}(\tau_f)\quad \mbox{and}\quad
m^2=-p'(\psi_{po}(\tau_f))>-p'(\tau_f)\quad \mbox{on}~ {\it S}_{_{F}}\setminus {F}.
\end{equation}
This implies
\begin{equation}\label{6810}
N_b=c_b=\sqrt{-\tau_b^2p'(\tau_b)}\quad \mbox{and}\quad N_f>c_f=\sqrt{-\tau_f^2p'(\tau_f)}\quad \mbox{on}~{\it S}_{_{F}}\setminus {F}.
\end{equation}

By (\ref{6810}) and (\ref{61112}) we have
\begin{equation}\label{61113}
\phi=\sigma_b+A_b=\alpha_b\quad\mbox{and}\quad \phi>\sigma_f+A_f=\alpha_f  \quad \mbox{on}~ {\it S}_{_{F}}\setminus {F}.
\end{equation}


By (\ref{6810}) we have
\begin{equation}\label{61105}
\bar{\partial}_{_\Gamma} m=\frac{-p''(\psi_{po}(\tau_f))\psi_{po}'(\tau_f)}{2m }\bar{\partial}_{_\Gamma} \tau_f\quad \mbox{along}~{\it S}_{_{F}}.
\end{equation}


We denote by $y=y_{+}(x)$, $x_{_{F}}\leq x\leq x_{_{S}}$ the $C_{+}$ characteristic line $\widetilde{{FS}}$ and by
$y=y_{s}(x)$, $x\geq x_{_{F}}$ the post-sonic shock curve ${\it S}_{_{F}}$.
Then we have
\begin{equation}\label{61102}
\begin{aligned}
&y_{s}'(x)=\tan(\phi(x, y_s(x))), \quad y_{+}'(x)=\tan(\alpha_f(x, y_+(x))), \quad\\&\qquad
y_{s}(x_{_{F}})=y_{+}(x_{_{F}}), \quad  y_{s}'(x_{_{F}})=y_{+}'(x_{_{F}})=\tan\xi_{e}.
\end{aligned}
\end{equation}

Actually, in order to determine $y_s(x)$ we can consider
\begin{equation}\label{61115}
\bar{\partial}_{_\Gamma} \phi=\frac{-p''(\psi_{po}(\tau_f))\psi_{po}'(\tau_f)\bar{\partial}_{_\Gamma} \tau_f}{2\rho_f^2 L_fN_f}-\frac{N_f}{\rho_fL_fq_f^2}\big(q_f^2+\tau_f^2 p'(\tau_f)\big)\bar{\partial}_{_\Gamma}\rho_f+\bar{\partial}_{_\Gamma}\sigma_f
\end{equation}
 with the initial data
\begin{equation}\label{61114}
\phi({F})=\alpha_f({F})=\xi_{e}.
\end{equation}
The problem (\ref{61115}), (\ref{61114}) can be seen as an initial value problem for an ordinary differential equation.

\begin{lem}\label{91011}
Assume $\tau_1^e-\tau_0$ to be sufficiently small. Then
the post-sonic shock front  ${\it S}_{_{F}}$ exists and stays in $\Omega^3$ until it intersects the characteristic line $\widetilde{ES}$ at some point.
\end{lem}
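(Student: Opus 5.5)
The plan is to treat the shock curve ${\it S}_{_{F}}$ as the solution of the ODE initial value problem (\ref{61115}), (\ref{61114}), with the front state given by (\ref{101101}). Its right-hand side is determined by the known smooth solution $(u_3,v_3)$ on $\overline{\Omega^3}$ from Lemma \ref{lem110101}. The back state is then recovered algebraically through $\tau_b=\psi_{po}(\tau_f)$, $L_b=L_f$ and $N_b=c_b$. I would write $\bar{\partial}_{_\Gamma}=\cos(\phi-\alpha_f)\bar{\partial}_{+}+\ldots$, expressing the directional derivative along $\phi$ through (\ref{61513}) in terms of $\bar{\partial}_{\pm}$ of the front state. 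With this, (\ref{61115}) becomes an ODE $y_s''=G(x,y_s,y_s')$ whose coefficients are continuous on $\overline{\Omega^3}$.

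\textbf{Singularity at $F$.} The ODE is singular at $F$: there $\tau_f=\tau_1^e$, $\psi_{po}'\to-\infty$ (by Proposition \ref{51003}, $\psi_{po}\to\tau_2^e$ while the sonic denominator $p'(\psi_{po})-m^2$ degenerates), and also $N_f=c_f$. To handle it I would use $\tau_f$, rather than $x$, as the parameter near $F$. The equation is then first order for $(x,y)$ along the shock, with a $\tau_f$-dependence that is at worst square-root singular. Existence of a local solution starting tangentially to $\widetilde{FS}$ can be obtained by a contraction or Schauder fixed point on a short interval $\tau_f\in[\tau_1^e,\tau_1^e+\epsilon]$. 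The key sign is that $\bar{\partial}_{_\Gamma}\tau_f>0$ just after $F$, which makes $\tau_f$ increase along the shock. This follows from $\bar{\partial}_{\pm}\rho\approx-\mathcal{L}<0$ in (\ref{6811}), since the shock direction is a positive combination of the $C_\pm$ directions.

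\textbf{Curvature and the a priori assumptions.} The next step is to show that the shock bends into $\Omega^3$, i.e. that $\phi>\alpha_f$ strictly after $F$, consistent with (\ref{61113}). I would compute $\bar{\partial}_{_\Gamma}(\phi-\alpha_f)$ at $F$. Combining (\ref{61115}) with (\ref{10}) and $N_f\downarrow c_f$, the term containing $\psi_{po}'$ has the dominant positive sign, so $\phi-\alpha_f$ becomes positive immediately. This is the main obstacle: one must check carefully, using the limits in (\ref{6811}), that the positive singular term outweighs $\bar{\partial}_{_\Gamma}\sigma_f$ and the $\bar{\partial}_{_\Gamma}\rho_f$ term once $\tau_1^e-\tau_0$ is small. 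Once $\phi>\alpha_f$, the shock lies above the $C_+$ characteristic $\widetilde{FS}$ and hence inside $\Omega^3$. It stays on the correct side of $\widetilde{PF}$ because $\phi$ is close to $A_1^e>\beta_f\approx-A_1^e$.

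\textbf{Continuation to $\widetilde{ES}$.} To continue the solution, I would verify that the a priori assumption (\ref{72202}) persists. Since $\tau_3-\tau_1^e\to0$ uniformly by (\ref{6811}), we have $\tau_f<\tau_2^i$ throughout $\overline{\Omega^3}$, and $\tau_f\ge\tau_1^e$ holds because $\tau_f$ increases along the shock. Bounds on $\phi$ near $A_1^e$ then follow from the ODE, since all coefficients are uniformly bounded away from $F$. A standard continuation argument extends $y_s$ until it leaves $\overline{\Omega^3}$. The shock has slope near $\tan A_1^e$ and moves transversally to the $C_-$ direction, so it cannot exit through $\widetilde{FS}$ or $\widetilde{PF}$. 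The domain is bounded, so it must exit through $\widetilde{ES}$.
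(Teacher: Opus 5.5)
\textbf{The central step rests on a false claim.} You say that at $F$ the $\psi_{po}'$-term in (\ref{61115}) is singular and dominates. It is not singular. That term equals $\bar{\partial}_{_\Gamma}m/(\rho_fL_f)$, and it vanishes at $F$, which is (\ref{250402}). Differentiating $p'(\psi)(\psi^2-\tau_f^2)=2h(\psi)-2h(\tau_f)$ gives
\[
\psi_{po}'(\tau_f)=-\frac{2\tau_f\big(p'(\tau_f)-p'(\psi_{po}(\tau_f))\big)}{p''(\psi_{po}(\tau_f))\big(\psi_{po}^2(\tau_f)-\tau_f^2\big)},\qquad p''(\psi_{po})>0 .
\]
Hence $\psi_{po}'(\tau_1^e)=0$, because $p'(\tau_1^e)=p'(\tau_2^e)$. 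This agrees with $\psi_{po}\in C^1[\tau_1^e,\tau_2^i)$ in Proposition \ref{51003}. The blow-up you describe is that of $\psi_{pr}$, cf. (\ref{122303}). Nor does $N_f=c_f$ make any coefficient degenerate.

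So the ODE is regular at $F$, and the reparametrization/fixed-point step is unnecessary. More importantly, $\phi>\alpha_f$ near $F$ cannot come from a dominant $\psi_{po}'$-term. The missing idea is the explicit comparison at $F$. There $\phi=\alpha_f$, so $\bar{\partial}_{_\Gamma}=\bar{\partial}_{+}$, $N_f=q_f\sin A_f$ and $L_f=q_f\cos A_f$. With $\bar{\partial}_{_\Gamma}m=0$, (\ref{61115}) reduces to
\[
\bar{\partial}_{_\Gamma}\phi=-\frac{\sin 2A_f}{2\rho_f}\bar{\partial}_{+}\rho_3+\frac{1}{2}\bar{\partial}_{+}\alpha_3+\frac{1}{2}\bar{\partial}_{+}\beta_3 .
\]
By (\ref{7}), $\bar{\partial}_{+}\beta_3>0$, since $p''(\tau_1^e)>0$ and $\bar{\partial}_{+}\rho_3<0$. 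By (\ref{10}) and $\Lambda<-4p'(\tau)/(\tau p''(\tau))$, one gets $\bar{\partial}_{+}\alpha_3<-\frac{\sin 2A_f}{\rho_f}\bar{\partial}_{+}\rho_3$. Together these give $\bar{\partial}_{_\Gamma}\phi(F)>\bar{\partial}_{+}\alpha_3(F)$. Since $y_s$ and $y_+$ have equal values and slopes at $x_F$, this means $y_s''(x_F)>y_+''(x_F)$, so the shock enters $\Omega^3$. This is (\ref{61108})--(\ref{61101}), and it needs no smallness.

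\textbf{The continuation step has two further holes.} First, for $\phi>\alpha_f$ the shock direction is not a positive combination of the $C_\pm$ directions. In (\ref{82003}) the coefficient of $\bar{\partial}_{-}\tau_3$ is negative. Positivity of $\bar{\partial}_{_\Gamma}\tau_f$ therefore has to come from the smallness of $\phi-\alpha_3$, i.e. (\ref{82004}), combined with (\ref{6811}).

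Second, your exit argument excludes $\widetilde{FS}$ and $\widetilde{PF}$ but says nothing about $\widetilde{PE}$. A slope argument cannot handle it: $\widetilde{PE}$ is a $C_+$ curve with slope $\approx\tan A_1^e$, essentially the shock's own slope. The paper argues via $\tau$ instead:
\begin{itemize}
\item Since $\bar{\partial}_{\pm}\rho_3<0$, one has $\tau_3\le\tau(E)=\tau(F)=\tau_1^e$ on $\widetilde{PE}\cup\widetilde{PF}$.
\item $\bar{\partial}_{_\Gamma}\tau_f>0$ gives $\tau_f>\tau_1^e$ on ${\it S}_{_{F}}\setminus F$, so the shock cannot reach $\widetilde{PE}$ or $\widetilde{PF}$.
\item Crossing $\widetilde{FS}$ is ruled out by $\phi>\alpha_f$, which follows from $N_f>c_f$ in (\ref{61113}).
\end{itemize}
Only after these steps is $\widetilde{ES}$ the sole remaining exit from the bounded domain $\Omega^3$.
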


\begin{proof}


From $\tan(\alpha_f(y_{+}(x),x))=y_{+}'(x)$ we have
\begin{equation}\label{61103}
y_{+}''(x)=\frac{\bar{\partial}_{+}\alpha_{3}(x, y_{+}(x))}{\cos^3(\alpha_{3}(x, x_{+}(y)))}.
\end{equation}
From $\tan(\phi(x, y_{s}(x)))=y_{s}'(x)$ we have
\begin{equation}\label{61104}
y_{s}''(x)=\frac{\bar{\partial}_{_\Gamma}\phi(x, y_{s}(x))}{\cos^3(\phi(x, y_{s}(x)))}.
\end{equation}


From (\ref{61115}), (\ref{7}), (\ref{10}), (\ref{61114}), (\ref{102002}), and (\ref{250402}) we have that at the point ${F}$,
\begin{equation}\label{61108}
\begin{aligned}
\bar{\partial}_{_\Gamma} \phi&=\frac{\bar{\partial}_{_\Gamma} m}{\rho_fL_f}-\frac{N_f}{\rho_fL_fq_f^2}(q_f^2+\tau_f^2 p'(\tau_f))\bar{\partial}_{_\Gamma}\rho_f+\bar{\partial}_{_\Gamma}\sigma_f\\&=
-\frac{N_f}{\rho_fL_fq_f^2}(q_f^2+\tau_f^2 p'(\tau_f))\bar{\partial}_{+}\rho_{3}+\bar{\partial}_{+}\Big(\frac{\alpha_{3}+\beta_{3}}{2}\Big)
\\&=-\frac{\sin 2A_f}{2\rho_f}\bar{\partial}_{+}\rho_{3}+\frac{\bar{\partial}_{+}\alpha_{3}}{2}-\frac{p''(\tau_f)\tan A_f}{4c_f^2\rho_f^4}\bar{\partial}_{+}\rho_{3}
\\&=-\frac{\sin 2A_f}{2\rho_f}\bar{\partial}_{+}\rho_{3}+\frac{\bar{\partial}_{+}\alpha_{3}}{2}-\frac{p''(\tau_f)\tan A_f}{4c_f^2\rho_f^4}\bar{\partial}_{+}\rho_{3}\\&>-\frac{\sin 2A_f}{2\rho_f}\bar{\partial}_{+}\rho_{3}+\frac{\bar{\partial}_{+}\alpha_{3}}{2}
\end{aligned}
\end{equation}
and
\begin{equation}\label{103006}
\begin{aligned}
\bar{\partial}_+ \alpha_{3}&=-\frac{p''(\tau_f)}{4c_f^2\rho_f^4}\left(-\frac{4p'(\tau_f)+\tau_f p''(\tau_f)}{\tau_f p''(\tau_f)}-\tan^2A_f\right)\sin 2A_f\bar{\partial}_{+}\rho_{3}\\&<
-\frac{p''(\tau_f)}{4c_f^2\rho_f^4}\left(-\frac{4p'(\tau_f)}{\tau_f p''(\tau_f)}\right)\sin 2A_f\bar{\partial}_{+}\rho_{3}\\&=-\frac{\sin 2A_f}{\rho_f}\bar{\partial}_{+}\rho_{3}.
\end{aligned}
\end{equation}
This implies
\begin{equation}\label{61106}
\bar{\partial}_{_\Gamma} \phi(x_{_{F}}, y_{s}(x_{_{F}}))>\bar{\partial}_+ \alpha_{3}(x_{_{F}}, y_{+}(x_{_{F}})).
\end{equation}

Combining (\ref{61102}) and (\ref{61106}), there exists a sufficiently small $\varepsilon>0$ such that 
\begin{equation}\label{61101}
y_{s}(x)>y_{+}(x)\quad \mbox{for}~ x_{_{F}}<x<x_{_{F}}+\varepsilon.
\end{equation}

A direct computation yields
$$
(\cos\phi, \sin\phi)=\frac{\sin (\beta_f-\phi)}{\sin (\beta_f-\alpha_f)}(\cos\alpha_f, \sin\alpha_f)-
\frac{\sin (\alpha_f-\phi)}{\sin (\beta_f-\alpha_f)}(\cos\beta_f, \sin\beta_f).
$$
Thus, we have
\begin{equation}\label{82003}
\bar{\partial}_{_\Gamma}\tau_f=\frac{\sin (\beta_{3}-\phi)}{\sin (\beta_{3}-\alpha_{3})}\bar{\partial}_{+}\tau_{3}-\frac{\sin (\alpha_{3}-\phi)}{\sin (\beta_{3}-\alpha_{3})}\bar{\partial}_{-}\tau_{3}.
\end{equation}

By (\ref{61115}) we know that
\begin{equation}\label{82004}
\|\phi-\xi_{e}\|_{0;  {\it S}_{_{F}}}\rightarrow 0 \quad \mbox{as}~\tau_1^e-\tau_0\rightarrow 0,
\end{equation}
since the diameter of the region $\Omega^3$ approaches $0$ as $\tau_1^e-\tau_0\rightarrow 0$.
Therefore, by (\ref{6811}), (\ref{82003}) and (\ref{82004}) we have
\begin{equation}\label{82008}
\bar{\partial}_{_\Gamma}\tau_f>0\quad \mbox{along}~{\it S}_{_{F}},
\end{equation}
as $\tau_1^e-\tau_0$ is sufficiently small.
Combining this with $\tau_f({F})=\tau_f^e$ and (\ref{6811}), we immediately obtain
(\ref{72202}) for sufficiently small $\tau_1^e-\tau_0$.
This also implies that ${\it S}_{_{F}}$ does not intersect the characteristic lines $\widetilde{PE}$ and $\widetilde{PF}$.

Furthermore, by (\ref{61113}) we  obtain
\begin{equation}\label{62603a}
\phi(x, y_{s}(x))>\alpha_f(x, y_{s}(x))
\end{equation}
if $(x, y_{s}(x))$ lies in $\Omega^3$.
This implies that the post-sonic shock front does not intersect the $C_{+}$ characteristic line $\widetilde{FS}$.
This completes the proof of the lemma.
\end{proof}

The states on the backside of ${\it S}_{_{F}}$ can then be determined by
\begin{equation}\label{110405}
u_b=L_b\cos\phi+N_b\sin\phi,\quad  v_b=L_b\sin\phi-N_b\cos\phi,
\end{equation}
where
$L_b=L_f$ and  $N_b=\frac{\psi_{po}(\tau_f)N_f}{\tau_f}$.

By symmetry we can obtain the post-sonic shock propagated from $E$ and it's back side  states.
We still use $\phi$ to denote the inclination angle of ${\it S}_{_{E}}$.
Since ${\it S}_{_{E}}$ is an oblique $1$-shock, we let
\begin{equation}\label{110402}
L=u\cos \phi+v\sin \phi\quad \mbox{and} \quad N=-u \sin \phi+v\cos \phi.
\end{equation}
We use subscripts `$f$' and `$d$' to denote the states on the front and back sides of $S_{_E}$, respectively.
Then (\ref{RH1}) and (\ref{101101}) still hold hon $S_{_E}$.



By (\ref{110402}) we have
$
N=q\sin(\sigma-\phi)
$ on $S_{_E}$.
Thus, we have
\begin{equation}
\phi=\sigma-\arcsin\Big(\frac{N}{q}\Big)\quad \mbox{on}~{\it S}_{_{E}}.
\end{equation}
From (\ref{U}) and (\ref{110402}) we have
$
m=\rho  N =\rho  q  (\sin\sigma \cos\phi-\cos\sigma \sin\phi)
$  on $S_{_E}$.
Thus,
\begin{equation}\label{4250103}
\bar{\partial}_{_\Gamma} m=\frac{N }{q }\partial_{s}(\rho q )+\rho L \bar{\partial}_{_\Gamma}\sigma -\rho L \bar{\partial}_{_\Gamma} \phi\quad \mbox{along}~ {\it S}_{_{E}}.
\end{equation}
Inserting (\ref{110403}) into this, we get
\begin{equation}\label{110408}
\bar{\partial}_{_\Gamma} \phi=\frac{p''(\psi_{po}(\tau_f))\psi_{po}'(\tau_f)\bar{\partial}_{_\Gamma} \tau_f}{2\rho_f^2 L_fN_f}+\frac{N_{f}}{\rho_{f}L_{f}q_{f}^2}(q_{f}^2+\tau_{f}^2 p'(\tau_{f}))\bar{\partial}_{_\Gamma}\rho_{f}+\bar{\partial}_{_\Gamma}\sigma_{f}\quad\mbox{along}~{\it S}_{_{E}}.
\end{equation}

The states on the backside of the post-sonic shock ${\it S}_{_{E}}$ can then be determined by
\begin{equation}\label{110406}
u_d=L_d\cos\phi-N_d\sin\phi,\quad  v_d=L_d\sin\phi+N_d\cos\phi,
\end{equation}
where
$L_d=L_f$ and  $N_d=\frac{\psi_{po}(\tau_f)N_f}{\tau_f}$.

From Lemma \ref{91011} we can see that the two shocks intersect at some point $G$ inside the domain $\Omega^3$. From now on, we use  $\wideparen{EG}$ the $\wideparen{FG}$ to denote the post-sonic shocks propagated from $E$ and $F$, respectively; see Fig. \ref{Fig5.5.2} (left).
We now let $\Omega_3$ be the domain bounded by $\widetilde{PE}$, $\widetilde{PF}$, $\wideparen{EG}$, and $\wideparen{FG}$.
Then the flow in the domain $\Omega_3$ is $(u, v)=(u_3, v_3)(x, y)$.

\begin{lem}\label{110411}
(Structural conditions)
Assume $\tau_1^e-\tau_0$ to be sufficiently small. Then we have the following estimates:
\begin{equation}\label{72204a}
\bar{\partial}_{_\Gamma}r_{b}>0\quad \mbox{and}\quad \bar{\partial}_{_\Gamma}s_{b}>0\quad \mbox{along}~ \wideparen{FG};
\end{equation}
\begin{equation}\label{72204b}
\bar{\partial}_{_\Gamma}r_{d}<0\quad \mbox{and}\quad \bar{\partial}_{_\Gamma}s_{d}<0\quad \mbox{along}~ \wideparen{EG};
\end{equation}
where $(r_{b}, s_b)=(r(u_b, v_b), s(u_b, v_b))$ and  $(r_{d}, s_d)=(r(u_d, v_d), s(u_d, v_d))$.
\end{lem}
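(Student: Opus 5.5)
The plan is to reduce both sign conditions to explicit limiting quantities and exploit the smallness of $\Omega^3$. By Lemma \ref{lem110101}, (\ref{82004}) and Lemma \ref{91011}, as $\tau_1^e-\tau_0\to0$ we have, uniformly along $\wideparen{FG}$: $\tau_f\to\tau_1^e$, $\tau_b=\psi_{po}(\tau_f)\to\tau_2^e$, $\phi\to\xi_e$, $(u_f,v_f)\to(u_0,0)$ and $\bar\partial_\pm\rho_3\to-\mathcal{L}$. Hence it suffices to show that $\bar\partial_{_\Gamma}r_b$ and $\bar\partial_{_\Gamma}s_b$ are given by expressions which converge to strictly positive limits depending only on the double-sonic data. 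The inequalities (\ref{72204b}) along $\wideparen{EG}$ then follow by the reflection symmetry $(x,y,u,v)\mapsto(x,-y,u,-v)$, which exchanges $r$ and $-s$ and reverses the orientation of $\bar\partial_{_\Gamma}$.

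\textbf{Step 1: the derivative of $r_b$.} By (\ref{102803a}), $c_b\bar\partial_{_\Gamma}r_b=\cos\alpha_b\bar\partial_{_\Gamma}u_b+\sin\alpha_b\bar\partial_{_\Gamma}v_b$. Since the shock is post-sonic, $\phi=\alpha_b$ by (\ref{61113}). Differentiating $L_b=u_b\cos\phi+v_b\sin\phi$ along the shock and using $L_b=L_f$ gives
$$
c_b\bar\partial_{_\Gamma}r_b=\cos\phi\,\bar\partial_{_\Gamma}u_f+\sin\phi\,\bar\partial_{_\Gamma}v_f+(N_b-N_f)\bar\partial_{_\Gamma}\phi .
$$
The front state is $C^1$, so I will expand $\bar\partial_{_\Gamma}$ in $\bar\partial_\pm$ via (\ref{82003}) and use (\ref{11})--(\ref{72804}). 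Because $\phi-\alpha_f\to0$, the first two terms tend to $c_f\tau_f\sin(\alpha_f-\beta_f)\cdot\mathcal{L}=c_1^e\tau_1^e\sin(2A_1^e)\mathcal{L}>0$. For the last term, $N_b-N_f=(\psi_{po}(\tau_f)/\tau_f-1)N_f>0$. Moreover, $\bar\partial_{_\Gamma}\phi$ is given by (\ref{61115}), whose right-hand side has a limit; by the computation (\ref{61108})--(\ref{103006}), together with (\ref{82008}) and $\psi_{po}'<0$, $p''(\tau_2^e)>0$, this limit is positive. Hence $\bar\partial_{_\Gamma}r_b>0$ for $\tau_1^e-\tau_0$ small.

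\textbf{Step 2: the derivative of $s_b$.} By (\ref{102803a}), $-c_b\bar\partial_{_\Gamma}s_b=\cos\beta_b\bar\partial_{_\Gamma}u_b+\sin\beta_b\bar\partial_{_\Gamma}v_b$. I will write $\beta_b=\phi-2A_b$ and split $(\cos\beta_b,\sin\beta_b)=\cos2A_b(\cos\phi,\sin\phi)+\sin2A_b(\sin\phi,-\cos\phi)$. This gives
$$
-c_b\bar\partial_{_\Gamma}s_b=\cos 2A_b\,c_b\bar\partial_{_\Gamma}r_b+\sin 2A_b\big(\bar\partial_{_\Gamma}N_b-L_b\bar\partial_{_\Gamma}\phi\big).
$$
The term $\bar\partial_{_\Gamma}N_b$ is computed from $N_b=c_b=\check c(\psi_{po}(\tau_f))$, that is, $\bar\partial_{_\Gamma}N_b=\check c'(\tau_b)\psi_{po}'(\tau_f)\bar\partial_{_\Gamma}\tau_f$. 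I expect to use the Bernoulli relation (\ref{102801a}) and the explicit form of $\bar\partial_{_\Gamma}\phi$ from (\ref{61115}). After inserting these, the expression should reduce to a combination of $\psi_{po}'(\tau_f)\bar\partial_{_\Gamma}\tau_f$ and $\bar\partial_+\rho_3$ with coefficients evaluated at $(\tau_1^e,\tau_2^e,u_0)$. I will then check that its limit is negative, so that $\bar\partial_{_\Gamma}s_b>0$.

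\textbf{Main obstacle.} I expect Step 2 to be the main obstacle. The term $\cos2A_b\,c_b\bar\partial_{_\Gamma}r_b$ is positive, so $-L_b\sin2A_b\,\bar\partial_{_\Gamma}\phi$ together with $\bar\partial_{_\Gamma}N_b$ must dominate it. This requires a careful cancellation: the $\bar\partial_{_\Gamma}\sigma_f$ part of $\bar\partial_{_\Gamma}\phi$ should combine with the $r_b$ contribution, leaving a term proportional to $-p''(\tau_b)\psi_{po}'(\tau_f)\bar\partial_{_\Gamma}\tau_f$ plus terms of the form $\sin2A\,\bar\partial_+\rho_3$. My first attempt is to evaluate everything exactly at $F$, where $\bar\partial_{_\Gamma}m=0$ by (\ref{250402}) and $N_f=c_f$. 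If that sign is strictly favourable there, continuity and (\ref{6811}) extend it along all of $\wideparen{FG}$. If the sign at $F$ is only degenerate, I would instead use the largeness of $u_0$, so that $A_b$ is small and $\cos2A_b\approx1$, to compare the sizes of the competing terms.
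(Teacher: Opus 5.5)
Your Step 1 is correct, and it takes a different route from the paper. You transfer $\cos\alpha_b\bar\partial_{_\Gamma}u_b+\sin\alpha_b\bar\partial_{_\Gamma}v_b$ to the $C^1$ front state through $L_b=L_f$. This splits $c_b\bar\partial_{_\Gamma}r_b$ into a front tangential derivative, which tends to $c_1^e\tau_1^e\sin(2A_1^e)\mathcal{L}$, plus $(N_b-N_f)\bar\partial_{_\Gamma}\phi$, and both pieces are positive by (\ref{61109}). The $\bar\partial_{_\Gamma}m$ term enters only through $\bar\partial_{_\Gamma}\phi$, and with a favourable sign. The paper instead applies the Bernoulli law (\ref{102801a}) and (\ref{250102}) on the back side to get $\bar\partial_{_\Gamma}r_b=\bar\partial_{_\Gamma}\phi-\bar\partial_{_\Gamma}m/(\rho_bL_b)$. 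That form needs $\bar\partial_{_\Gamma}m$ to be negligible along $\wideparen{FG}$, which holds because $\psi_{po}'(\tau_1^e)=0$. Your version avoids needing this.

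The gap is Step 2: you never establish the sign of $\bar\partial_{_\Gamma}s_b$, and the sketch leaves it undetermined.
\begin{itemize}
\item Expanding $\bar\partial_{_\Gamma}N_b=\check{c}'(\tau_b)\psi_{po}'(\tau_f)\bar\partial_{_\Gamma}\tau_f$ produces a term of uncontrolled sign, since $\check{c}'$ has the sign of $-(2p'+\tau p'')$. You also never note that $\psi_{po}'(\tau_1^e)=0$, which is what would make this term vanish in the limit.
\item The large-$u_0$ fallback does not help: $L_b\sin(2A_b)=2c_b\cos^2A_b$ does not become small as $A_b\to0$.
\end{itemize}
The missing observation is that $\rho$ depends on $(r,s)$ only through $r-s$. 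By (\ref{102302d}),
\[
\bar\partial_{_\Gamma}s_b-\bar\partial_{_\Gamma}r_b=\tau_b\sin(2A_b)\,\bar\partial_{_\Gamma}\rho_b .
\]
Moreover $\bar\partial_{_\Gamma}\rho_b=-\tau_b^{-2}\psi_{po}'(\tau_f)\bar\partial_{_\Gamma}\tau_f\ge 0$ by $\psi_{po}'\le0$ and (\ref{82008}); this is (\ref{61308}). Hence $\bar\partial_{_\Gamma}s_b\ge\bar\partial_{_\Gamma}r_b>0$ follows at once from Step 1. This is what the paper's (\ref{110413}) together with (\ref{61308}) amounts to.

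In your own decomposition it is the same fact. The Bernoulli law on the back side gives $\bar\partial_{_\Gamma}N_b-L_b\bar\partial_{_\Gamma}\phi=-c_b\tau_b\bar\partial_{_\Gamma}\rho_b-\cot A_b\,c_b\bar\partial_{_\Gamma}r_b$. Since $\cos 2A_b-2\cos^2A_b=-1$, your formula becomes $-c_b\bar\partial_{_\Gamma}s_b=-c_b\bar\partial_{_\Gamma}r_b-c_b\tau_b\sin(2A_b)\bar\partial_{_\Gamma}\rho_b$. No delicate cancellation is involved.

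If you prefer to keep the limiting argument, the fact you need is $\psi_{po}'(\tau_1^e)=0$. Differentiating the post-sonic relation gives $\psi_{po}'(\tau_f)=2\tau_f\big(p'(\tau_b)-p'(\tau_f)\big)/\big(p''(\tau_b)(\tau_b^2-\tau_f^2)\big)$, which vanishes at the double-sonic pair. This makes $\bar\partial_{_\Gamma}N_b\to0$ uniformly.

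There is also a slip in your symmetry step. By (\ref{20101}), $\bar\partial_{_\Gamma}$ is taken in the direction of increasing $x$ on both shocks, so the reflection does not reverse its orientation. With $r_d=-s_b$ and $s_d=-r_b$ at mirror points, you get $\bar\partial_{_\Gamma}r_d=-\bar\partial_{_\Gamma}s_b<0$ and $\bar\partial_{_\Gamma}s_d=-\bar\partial_{_\Gamma}r_b<0$. An additional orientation reversal would flip both signs.
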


\begin{proof}
We first prove (\ref{72204a}).
The proof proceeds in three steps.

\vskip 4pt

\noindent
{\it Step 1.}
Using (\ref{7}), (\ref{10}), (\ref{102002}),  and (\ref{250402}) we have
\begin{equation}\label{250407}
\begin{aligned}
\bar{\partial}_{_\Gamma} \phi&=\frac{\bar{\partial}_{_\Gamma} m}{\rho_fL_f}-\frac{N_f}{\rho_fL_fq_f^2}(q_f^2+\tau_f^2 p'(\tau_f))\bar{\partial}_{_\Gamma}\rho_f+\bar{\partial}_{_\Gamma}\sigma_f\\&=
-\frac{N_f}{\rho_fL_fq_f^2}(q_f^2-c_f^2)\bar{\partial}_{+}\rho_3
+\bar{\partial}_{+}\Big(\frac{\alpha_3+\beta_3}{2}\Big)
\\&=-\frac{\sin 2A_{3}}{2\rho_{3}}\bar{\partial}_{+}\rho_{3}-\frac{p''(\tau_{3})}{4c_{3}^2\rho_{3}^4}
\big(\varpi(\tau_{3})+1\big)\sin A_{3}\cos A_{3}\bar{\partial}_{+}\rho_{3}\\&>-\frac{\sin 2A_{3}}{\rho_{3}}\bar{\partial}_{+}\rho_{3}\quad\mbox{at}~ {F}.
\end{aligned}
\end{equation}
Using (\ref{6811}), (\ref{250407}), and (\ref{250402}) we know that when $\tau_1^e-\tau_0$ is sufficiently small,
\begin{equation}\label{61109}
\bar{\partial}_{_\Gamma} \phi>\frac{\tau_1^e\sin(2A_1^e)}{4}\mathcal{L}\quad \mbox{along}~ \wideparen{FG}.
\end{equation}

\noindent
{\it Step 2.}
In view of (\ref{50701}) we have
\begin{equation}\label{42801}
\cos\alpha_b \bar{\partial}_{_\Gamma} u_b+\sin\alpha_b \bar{\partial}_{_\Gamma} v_b~=~
\cos A_b \bar{\partial}_{_\Gamma} q_b+q_b\sin A_b \bar{\partial}_{_\Gamma} \sigma_b\quad \mbox{along}~ \wideparen{FG}.
\end{equation}

Inserting (\ref{102801a}) and  (\ref{250102}) into  (\ref{42801}) and recalling the first equality in (\ref{6810}), we get
\begin{equation}\label{110412}
\begin{aligned}
&\cos\alpha_b \bar{\partial}_{_\Gamma} u_b+\sin\alpha_b \bar{\partial}_{_\Gamma} v_b\\=~&
\cos A_b\left(-\tau_bq_b\sin^2 A_b\bar{\partial}_{_\Gamma} \rho_b\right)
+q_b\sin A_b\left(\bar{\partial}_{_\Gamma} \phi-\frac{\bar{\partial}_{_\Gamma} m}{\rho_bL_b}+\frac{\sin A_b\cos A_b}{\rho_b}\bar{\partial}_{_\Gamma}\rho_b\right)\\=~&
q_b\sin A_b \left(\bar{\partial}_{_\Gamma} \phi-\frac{\bar{\partial}_{_\Gamma} m }{\rho_bL_b}\right) \quad \mbox{along}~ \wideparen{FG}.
\end{aligned}
\end{equation}
Therefore, by (\ref{250402}) and (\ref{61109}) we know that when $\tau_1^e-\tau_0$ is sufficiently small, $$\cos\alpha_b \bar{\partial}_{_\Gamma} u_b+\sin\alpha_b \bar{\partial}_{_\Gamma} v_b>0\quad \mbox{along} ~\wideparen{FG}.$$
Hence, by (\ref{102803a}) we have
\begin{equation}\label{123104}
\bar{\partial}_{_\Gamma}r_{b}=\frac{\cos\alpha_b \bar{\partial}_{_\Gamma} u_b+\sin\alpha_b \bar{\partial}_{_\Gamma} v_b}{c_b}>0\quad \mbox{along}~ \wideparen{FG}.
\end{equation}
\noindent
{\it Step 3.}
In view of (\ref{50701}), we have
\begin{equation}\label{82006}
\cos\beta_b \bar{\partial}_{_\Gamma} u_b+\sin\beta_b \bar{\partial}_{_\Gamma} v_b~=~
\cos A_b \bar{\partial}_{_\Gamma} q_b-q_b\sin A_b \bar{\partial}_{_\Gamma} \sigma_b\quad \mbox{along}~\wideparen{FG}.
\end{equation}
Inserting (\ref{102801a}) and  (\ref{250102}) into (\ref{82006}), we get
\begin{equation}\label{110413}
\begin{aligned}
&\cos\beta_b \bar{\partial}_{_\Gamma} u_b+\sin\beta_b \bar{\partial}_{_\Gamma} v_b\\=~&
\cos A_b\left(-\tau_bq_b\sin^2 A_b\bar{\partial}_{_\Gamma} \rho_b\right)
-q_b\sin A_b\left(\bar{\partial}_{_\Gamma} \phi-\frac{\bar{\partial}_{_\Gamma} m}{\rho_bL_b}+\frac{\sin A_b\cos A_b}{\rho_b}\bar{\partial}_{_\Gamma}\rho_b\right)\\=~&
-q_b\sin A_b \left(\bar{\partial}_{_\Gamma} \phi-\frac{\bar{\partial}_{_\Gamma} m }{\rho_bL_b}\right)-
2\tau_bq_b\cos A_b\sin^2 A_b\bar{\partial}_{_\Gamma} \rho_b\quad \mbox{along}~ \wideparen{FG}.
\end{aligned}
\end{equation}

Recalling $\tau_b=\psi_{po}(\tau_f)$ and (\ref{82008}), we know that
when $\tau_1^e-\tau_0$ is sufficiently small,
\begin{equation}\label{61308}
\bar{\partial}_{_\Gamma} \rho_b=-\frac{1}{\tau_b^2}\psi_{po}'(\tau_f)\bar{\partial}_{_\Gamma} \tau_f>0 \quad \mbox{along}~\wideparen{{FG}}.
\end{equation}
Therefore, when $\tau_1^e-\tau_0$ is sufficiently small, $ \cos\beta_b \bar{\partial}_{_\Gamma} u_b+\sin\beta_b \bar{\partial}_{_\Gamma} v_b<0$ along $\wideparen{{FG}}$.
Hence, by (\ref{102803a}) we have
\begin{equation}\label{123105}
\bar{\partial}_{_\Gamma}s_{b}=\frac{-(\cos\beta_b \bar{\partial}_{_\Gamma} u_b+\sin\beta_b \bar{\partial}_{_\Gamma} v_b)}{c_b}>0\quad \mbox{along}~ \wideparen{FG}.
\end{equation}
This completes the proof of (\ref{72204a}). The proof for (\ref{72204b}) is similar; we omit the details.
\end{proof}

\begin{lem}\label{2501a}
Assume $\tau_1^e-\tau_0$ to be sufficiently small. Then we have
$$
(r_d(x, y), s_d(x, y))\in \Pi\quad \mbox{for}~ (x,y)\in\wideparen{EG}; \quad (r_b(x, y), s_b(x, y))\in \Pi\quad \mbox{for}~ (x,y)\in\wideparen{FG}.
$$
\end{lem}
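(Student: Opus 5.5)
To prove the lemma I will establish the claim for $\wideparen{FG}$; the statement for $\wideparen{EG}$ then follows by the symmetry $(x,y,v)\mapsto(x,-y,-v)$, which exchanges $r$ and $-s$ and maps $\Pi$ to itself when $\theta_\pm$ are exchanged. The plan is to use the monotonicity of Lemma \ref{110411} along the shock, together with control of $(r_b,s_b)$ at the two endpoints $F$ and $G$.

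The region $\Pi$ is described by three inequalities:
\begin{itemize}
  \item[(i)] $\theta_-\le \sigma_b\le\theta_+$;
  \item[(ii)] $r_b-s_b\ge0$;
  \item[(iii)] $r_b-s_b<2\mathcal{R}$.
\end{itemize}

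Inequalities (ii) and (iii) are the easy part. We have $r-s=2\int_{u_0}^{q}\frac{\sqrt{q^2-c^2}}{qc}\,{\rm d}q$. On the back side $\tau_b=\psi_{po}(\tau_f)\in(\tau_2^i,\tau_2^e)$ by Proposition \ref{51003} and (\ref{72202}). Hence $\tau_0<\tau_b<\infty$, and since $q$ is increasing in $\tau$ by the Bernoulli law, $u_0<q_b<q_\infty$. This gives $0<r_b-s_b<2\mathcal{R}$ directly. Note that the supersonicity needed here is assumption (H1).

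The main step is (i), the bound $\theta_-\le\sigma_b\le\theta_+$. I will compare with the endpoint $F$. There $(u_b,v_b)(F)=(u_s,v_s)(\xi_e)$, so
$$\sigma_b(F)=\sigma_2^e,\qquad \tau_b(F)=\tau_2^e.$$
Since the corner wave at $D$ is a fan-shock-fan wave, $\theta_-<\sigma_2^e$. Moreover, by Lemma \ref{lem3.9} and its analogue, together with the large-$u_0$ hypothesis implicit in the classification, $\sigma_2^e<0<\theta_+$. Along $\wideparen{FG}$, Lemma \ref{110411} gives $\bar\partial_\Gamma r_b>0$ and $\bar\partial_\Gamma s_b>0$, hence
$$\bar\partial_\Gamma\sigma_b=\tfrac12\,\bar\partial_\Gamma(r_b+s_b)>0.$$
So $\sigma_b$ increases from $F$ to $G$, and $\sigma_b\ge\sigma_2^e>\theta_-$ on the whole shock.

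For the upper bound I will use smallness. As $\tau_1^e-\tau_0\to0$, the diameter of $\Omega^3$ tends to zero and (\ref{6811}) holds. Also $\|\phi-\xi_e\|_{0;\wideparen{FG}}\to0$ by (\ref{82004}) and $\tau_f\to\tau_1^e$. From (\ref{110405}) with $N_b=\psi_{po}(\tau_f)N_f/\tau_f$, the back states $(u_b,v_b)$ converge uniformly to $(u_s,v_s)(\xi_e)$, and $\psi_{po}$ is continuous near $\tau_1^e$. Therefore
$$\sup_{\wideparen{FG}}|\sigma_b-\sigma_2^e|\to0,$$
and in particular $\sigma_b<\theta_+$ for $\tau_1^e-\tau_0$ small, because $\sigma_2^e<\theta_+$ strictly.

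The step I expect to be delicate is making this uniform convergence rigorous near $\tau_f=\tau_1^e$. The function $\psi_{po}$ is only known to be $C^1$ on $[\tau_1^e,\tau_2^i)$ with limit $\tau_2^e$; continuity at $\tau_1^e$ suffices, but I will need care that the limiting state at $G$ is also controlled. The monotonicity argument already handles the lower bound, so only this crude continuity estimate is needed for the upper bound. The $\wideparen{EG}$ case uses (\ref{72204b}) in the same way, with the roles of $\theta_+$ and $\theta_-$ swapped.
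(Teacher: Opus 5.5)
Your proposal is correct. The bound $0\le r_b-s_b<2\mathcal{R}$ and the lower bound $\sigma_b\ge\sigma_b(F)=\sigma_2^e>\theta_-$, obtained from the monotonicity in Lemma~\ref{110411}, match what the paper does.

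The difference is in the upper bound. You use a perturbation estimate, while the paper uses the other endpoint $G$. The data on $\widetilde{PE}$ and $\widetilde{PF}$ are mirror images, so the two shocks meet at a point $G$ on $y=0$ where $v_f(G)=0$. Proposition~\ref{pro3.1.1} for the rarefaction 2-shock then gives $\sigma_b(G)<\sigma_f(G)=0$. Monotonicity yields $\theta_-<\sigma_b(F)\le\sigma_b\le\sigma_b(G)<0$ on all of $\wideparen{FG}$, and symmetrically $0<\sigma_d\le\sigma_d(E)<\theta_+$ on $\wideparen{EG}$.

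What each route buys:
\begin{itemize}
\item The paper's argument is exact and needs no uniform convergence of the back states, so the delicate point you flag does not arise. It also gives the sharper sign information $\sigma_b<0<\sigma_d$.
\item Your continuity argument does not use that $G$ lies on the symmetry axis. It would therefore carry over to asymmetric configurations, such as the one behind Lemma~\ref{121101}, where the shock ends on a pre-sonic shock rather than on $y=0$. The price is that it leans on the smallness of $\tau_1^e-\tau_0$ and on continuity of $\psi_{po}$ at $\tau_1^e$, which Proposition~\ref{51003} does supply.
\end{itemize}

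Incidentally, $\sigma_2^e<0$ follows directly from Proposition~\ref{pro3.1.1} at $F$, without the large-$u_0$ lemma. Indeed $\sigma_2^e=\sigma_b(F)<\sigma_f(F)=\bar\sigma_l(\xi_e)<0$, because $\bar\sigma_l$ is increasing in $\xi$ on the first fan (where $p''>0$) and $\bar\sigma_l(\xi_0)=0$.
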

\begin{proof}
By the previous discussion we know have
$\tau_2^i\leq\tau_b\leq\tau_2^e$ on $\wideparen{FG}$, and $\tau_2^i\leq\tau_d\leq\tau_2^e$  on $\wideparen{EG}$.

By Proposition \ref{pro3.1.1} and $v_{f}(G)=0$, we have
\begin{equation}\label{120601}
\frac{r_b(G)+s_{b}(G)}{2}=\sigma_b(G)<0<\sigma_d(G)=\frac{r_d(G)+s_{d}(G)}{2}.
\end{equation}
Therefore by Lemma \ref{110411} we have
$$
0<\frac{r_d+s_{d}}{2}<\frac{r_d(E)+s_{d}(E)}{2}<\theta_{+}\quad \mbox{on} ~\wideparen{EG}
$$
and
$$
\theta_{-}<\frac{r_b(F)+s_{b}(F)}{2}<\frac{r_b+s_{b}}{2}<0\quad \mbox{on} ~\wideparen{FG}.
$$
This completes the proof.
\end{proof}

We next consider (\ref{E1}) with the data
\begin{equation}\label{102804}
(u, v)=(u_b, v_b)(x, y),\quad (x,y)\in \wideparen{FG}.
\end{equation}
By (\ref{61113}) and the first inequality of (\ref{72204a}) we see that the problem (\ref{E1}), (\ref{102804}) is a singular Cauchy problem. By (\ref{72204a})  we know that when $\tau_1^e-\tau_0$ is sufficiently small, the assumptions of Lemma \ref{lem7} hold.
So, by Lemma \ref{lem7} we know that
the singular Cauchy problem  (\ref{E1}), (\ref{102804}) admits a solution on a triangle domain $\overline{\Omega}_4$ bounded by $\wideparen{FG}$, $\widetilde{GK}$, and $\widetilde{FK}$, where $\widetilde{GK}$ is a forward $C_{-}$
characteristic line issued from $G$ and $\widetilde{FK}$ is a forward $C_{+}$ characteristic line issued from $F$; see Fig. \ref{Fig5.5.2} (right). Moreover, the solution satisfies
\begin{equation}\label{102805}
\bar{\partial}_{+}\rho<0 \quad \mbox{in}~ \overline{\Omega}_4; \quad \bar{\partial}_{-}\rho<0 \quad \mbox{in}~ \overline{\Omega}_4\setminus\wideparen{FG}.
\end{equation}
For convenience, we use subscript `$4$' to denote the flow in the region  $\overline{\Omega}_4$, i.e., $(u, v)=(u_4, v_4)(x,y)$, $(x,y)\in \overline{\Omega}_4$.

Similarly,
by solving (\ref{E1}) with the data
$$
(u, v)=(u_b, v_b)(x, y),\quad (x,y)\in \wideparen{EG},
$$
 we obtain
a solution on a triangle domain $\overline{\Omega}_5$ bounded by $\wideparen{EG}$, $\widetilde{EJ}$, and $\widetilde{GJ}$, where $\widetilde{EJ}$ is a forward $C_{-}$
characteristic line issued from $E$ and $\widetilde{GJ}$ is a forward $C_{+}$ characteristic line issued from $G$.
Moreover, the solution satisfies
\begin{equation}\label{102806}
\bar{\partial}_{-}\rho<0 \quad \mbox{in}~ \overline{\Omega}_4; \quad \bar{\partial}_{+}\rho<0 \quad \mbox{in}~ \overline{\Omega}_5\setminus\widetilde{EG}.
\end{equation}
We use subscript `$5$' to denote the flow in the region  $\overline{\Omega}_5$, i.e., $(u, v)=(u_5, v_5)(x,y)$, $(x,y)\in \overline{\Omega}_5$.
Actually, we have $(u_5, v_5)(x,y)=(u_4, -v_4)(x,-y)$, $(x,y)\in \overline{\Omega}_5$.

\begin{rem}
If $r_b(G)-s_{b}(F)<2\mathcal{R}$ then the points $K$ and $J$ exist. However, if $r_b(G)-s_{b}(F)\geq2\mathcal{R}$ then the points $K$ and $J$ are at infinity. When $K$ is at infinity, we still use $\widetilde{FK}$ ($\widetilde{GK}$, resp.) to denote the forward $C_{+}$ ($C_{-}$, resp.) characteristic curve issued from $F$ ($G$, resp.) of the singular Cauchy problem.
\end{rem}

\subsubsection{\bf Global solution to the problem (\ref{E1}), (\ref{102807})}
We now construct the solution of the problem (\ref{E1}), (\ref{102807}) in the remaining part.
We consider (\ref{E1}) with the boundary condition
\begin{equation}\label{102811}
(u, v)=\left\{
               \begin{array}{ll}
                  (u_5, v_5)(x,y), & \hbox{$(x, y)\in\widetilde{EJ}$;} \\
               (\tilde{u}_r, \tilde{v}_r)(\eta), & \hbox{$(x, y)\in\widetilde{EH}$.}
               \end{array}
             \right.
\end{equation}
The problem (\ref{E1}), (\ref{102811}) is a standard Goursat problem.
From (\ref{120602}) and (\ref{102806}) we see that the assumptions of Lemma \ref{lem1} hold.
Then by Lemma \ref{lem1}
we know that the problem (\ref{E1}), (\ref{102811}) admits a classical solution on a  curvilinear quadrilateral domain $\overline{\Omega}_7$ bounded by $\widetilde{EJ}$, $\widetilde{EH}$, $\widetilde{JL}$, and $\widetilde{HL}$, where $\widetilde{HL}$ is a forward $C_{-}$ characteristic curve issued from $H$ and $\widetilde{JL}$ is a forward $C_{+}$ characteristic curve issued from $J$. Moreover, the solution satisfies $\bar{\partial}_{\pm}\rho<0$ and $(r, s)\in\Pi$ in $\overline{\Omega}_{7}$.
We denote the solution in $\overline{\Omega}_7$ by $(u, v)=(u_7, v_7)(x,y)$.

Similarly, by solving (\ref{E1}) with the boundary condition
\begin{equation}\label{102812}
(u, v)=\left\{
               \begin{array}{ll}
                  (u_4, v_4)(x,y), & \hbox{$(x, y)\in\widetilde{FK}$;} \\
               (\hat{u}_r, \hat{v}_r)(\xi), & \hbox{$(x, y)\in\widetilde{FI}$,}
               \end{array}
             \right.
\end{equation}
we can obtain the flow on a  curvilinear quadrilateral domain $\overline{\Omega}_6$ bounded by $\widetilde{FK}$, $\widetilde{FI}$, $\widetilde{KN}$, and $\widetilde{IM}$, where $\widetilde{KM}$ is a forward $C_{-}$ characteristic curve issued from $K$ and $\widetilde{IM}$ is a forward $C_{+}$ characteristic curve issued from $I$. Moreover, the solution satisfies $\bar{\partial}_{\pm}\rho<0$ and $(r, s)\in\Pi$ in $\overline{\Omega}_{6}$.
We denote the solution in $\overline{\Omega}_6$ by $(u, v)=(u_6, v_6)(x,y)$.

Finally, we consider (\ref{E1}) with the boundary condition
\begin{equation}\label{102813}
(u, v)=\left\{
               \begin{array}{ll}
                  (u_5, v_5)(x,y), & \hbox{$(x,y)\in \widetilde{{GJ}}$;} \\
                   (u_7, v_7)(x,y), & \hbox{$(x,y)\in \widetilde{{JL}}$;} \\
               (u_4, v_4)(x,y), & \hbox{$(x,y)\in \widetilde{{GK}}$;} \\
                   (u_6, v_6)(x,y), & \hbox{$(x,y)\in \widetilde{{KM}}$.}
               \end{array}
             \right.
\end{equation}
By  Lemma \ref{lem8} we know that the problem (\ref{E1}), (\ref{102813}) admits a classical solution on a  curvilinear quadrilateral domain $\overline{\Omega}_8$ bounded by characteristic curves $\widetilde{{GJ}}\cup \widetilde{{JL}}$, $ \widetilde{{GK}}\cup  \widetilde{{KM}}$, $C_{+}^{M}$, and $C_{-}^{L}$. Moreover, the solution satisfies $\bar{\partial}_{\pm}\rho<0$ in $\overline{\Omega}_{8}\setminus\{G\}$. 

\begin{rem}
If the points $K$ and $J$ are at infinity, we only need to replace (\ref{102813}) with
$$
(u, v)=\left\{
               \begin{array}{ll}
               (u_4, v_4)(x,y), & \hbox{$(x,y)\in \widetilde{{GK}}$;} \\
                  (u_5, v_5)(x,y), & \hbox{$(x,y)\in \widetilde{{GJ}}$.}
               \end{array}
             \right.
$$
\end{rem}

Let
\begin{equation}\label{123101}
(u, v)=(u_i, v_i)(x,y), \quad (x, y)\in \Omega_i,\quad i=3, 4, 5, 6, 7, 8.
\end{equation}
Then the $(u, v)$ defined in (\ref{123101}) is a piecewise smooth solution
 to the boundary value problem (\ref{E1}), (\ref{102807}).
This solve the interaction of the fan-shock-fan composite waves ${\it fsf}_{\pm}$.

\subsubsection{\bf Global existence of a piecewise smooth solution in the divergent duct}
By solving two simple wave problems as described in Section 5.1, we see that
there are two simple waves, denoted by ${\it f}_{\pm}$, issued from the fan-shock-fan composite wave interaction region; see Fig. \ref{Fig5.5.1}.
The two simple waves will then reflect off the walls and generate two new simple waves, and the solution in the remaining part of the duct can be determined using the method described in Section 5.1.
This completes the proof of Theorem \ref{thm5}.

\begin{rem}\label{rem5.3}
In Theorem \ref{thm5}, we make an assumption that  $\tau_1^e-\tau_0$ is sufficiently small.
This assumption is crucial. Firstly, if  $\tau_1^e-\tau_0$ is not small,  the solution of the SGP problem (\ref{E1}), (\ref{102701}) does not satisfy $\tau<\tau_1^i$ in the whole determinate region and the monotonicity condition (\ref{120602}) might cause a blow-up of the solution. Secondly, we use the smallness to ensure that the structural conditions (\ref{72204a}) and (\ref{72204a}) hold. Actually, in view of the characteristic decomposition (\ref{cdr}) we can see that these structural conditions are necessary to obtain the flow downstream of the post-sonic shocks $\wideparen{EG}$ and $\wideparen{FG}$.
\end{rem}

\subsubsection{\bf Non-existence of transonic shocks and pre-sonic shocks}
One might ask whether the shock propagating from the point $F$
could be a transonic shock or a pre-sonic shock.
In what follows, we are going to show that the shock porpagating from the point  $F$ could not be a transonic shock or a pre-sonic shock.

\vskip 8pt
\noindent
{\bf (1) Nonexistence of transonic shocks.}
Suppose there exists
is a smooth transonic shock ${\it S}_{_F}$ propagated from $F$.
Then ${\it S}_{_F}\setminus\{F\}\in \Omega^3$ and for any point on ${\it S}_{_F}\setminus\{F\}$, the backward $C_{+}$ characteristic line issued from this point stays in the sectorial domain between ${\it S}_{_F}$ and $\widetilde{FI}$ until it intersects $\widetilde{FI}$ at a point.
We use subscripts `$f$' and `$b$' to denote the flow upstream and downstream of ${\it S}_{_F}$, respectively,  and use $\phi$ to denote the inclination angle of ${\it S}_{_F}$.

As in (\ref{61109}), we have that when $\tau_1^e-\tau_0$ is sufficiently small,
$$
\bar{\partial}_{_\Gamma} \phi>\frac{\tau_1^e\sin(2A_1^e)}{4}\mathcal{L}\quad \mbox{at}~ {F}.
$$

For a given small $\varepsilon>0$, we let $F_{\varepsilon}$ be the point on  ${\it S}_{_F}$ such that $\mbox{dist}(F, F_\varepsilon)=\varepsilon$.
When $\varepsilon$ is sufficiently small, we have $\tau_1^e<\tau_3<\tau_1^i$ on $\wideparen{FF_{\varepsilon}}$. Since
$\wideparen{FF_{\varepsilon}}$ is a transonic shock, we have $\tau_b<\tau_2^e$ on $\wideparen{FF_{\varepsilon}}$. This implies
$$
\bar{\partial}_{_\Gamma}\rho_b\geq 0 \quad \mbox{at}~  {F}.
$$

As in (\ref{110413}),
we have
$$
\bar{\partial}_{_\Gamma}s_{b}=-\frac{\cos\beta_b \bar{\partial}_{_\Gamma} u_b+\sin\beta_b \bar{\partial}_{_\Gamma} v_b}{c_b}=\bar{\partial}_{_\Gamma} \phi+
\tau_b\sin(2A_b)\bar{\partial}_{_\Gamma} \rho_b>\frac{\tau_1^e\sin(2A_1^e)}{4}\mathcal{L}\quad \mbox{at}~{F}.
$$

Integrating the first equation of (\ref{cdr}) along the forward $C_{+}$ characteristic lines issued from $\widetilde{FI}$, we see that when $\varepsilon$ is sufficiently small, $\bar{\partial}_{-}s_b$ is bounded on $\widetilde{FF_{\varepsilon}}$. So, when $\varepsilon$ is sufficiently small,
\begin{equation}\label{121701a}
\bar{\partial}_{+}s_{b}=\frac{\sin (2A_b)}{\sin(\phi-\beta_b)}\bar{\partial}_{_\Gamma}s_{b}-
\frac{\sin (\alpha_b-\phi)}{\sin(\phi-\beta_b)}\bar{\partial}_{-}s_{b}>0 \quad\mbox{on}~ \wideparen{FF_{\varepsilon}}.
\end{equation}

Since $\wideparen{FF_{\varepsilon}}$ is not a characteristic of the flow downstream of it. The flow downstream of it must satisfy the equations (\ref{52801}) on $\wideparen{FF_{\varepsilon}}$. While, by (\ref{121701a}) we know that the characteristic equation $\bar{\partial}_{+}s=0$ does not hold on  $\wideparen{FF_{\varepsilon}}$. This leads to a contradiction. So, the shock propagated from the point $F$ could not be a transonic shock.

\begin{figure}[htbp]
\begin{center}
\includegraphics[scale=0.35]{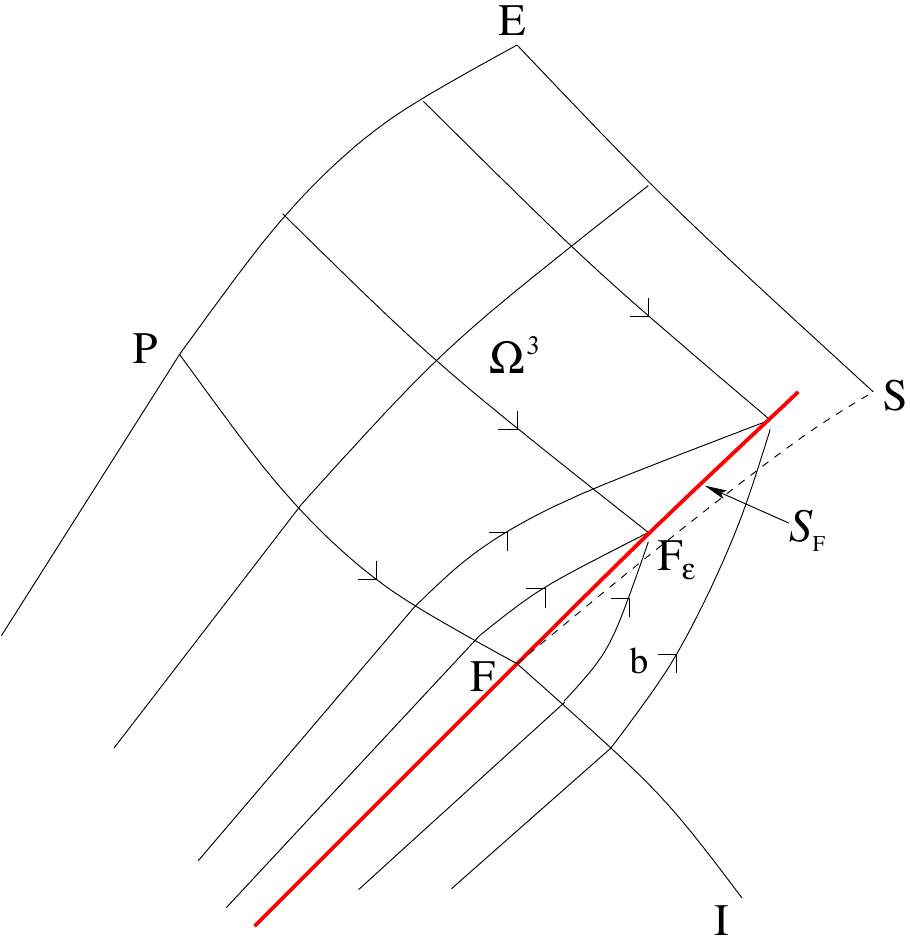}\qquad\qquad \qquad\includegraphics[scale=0.35]{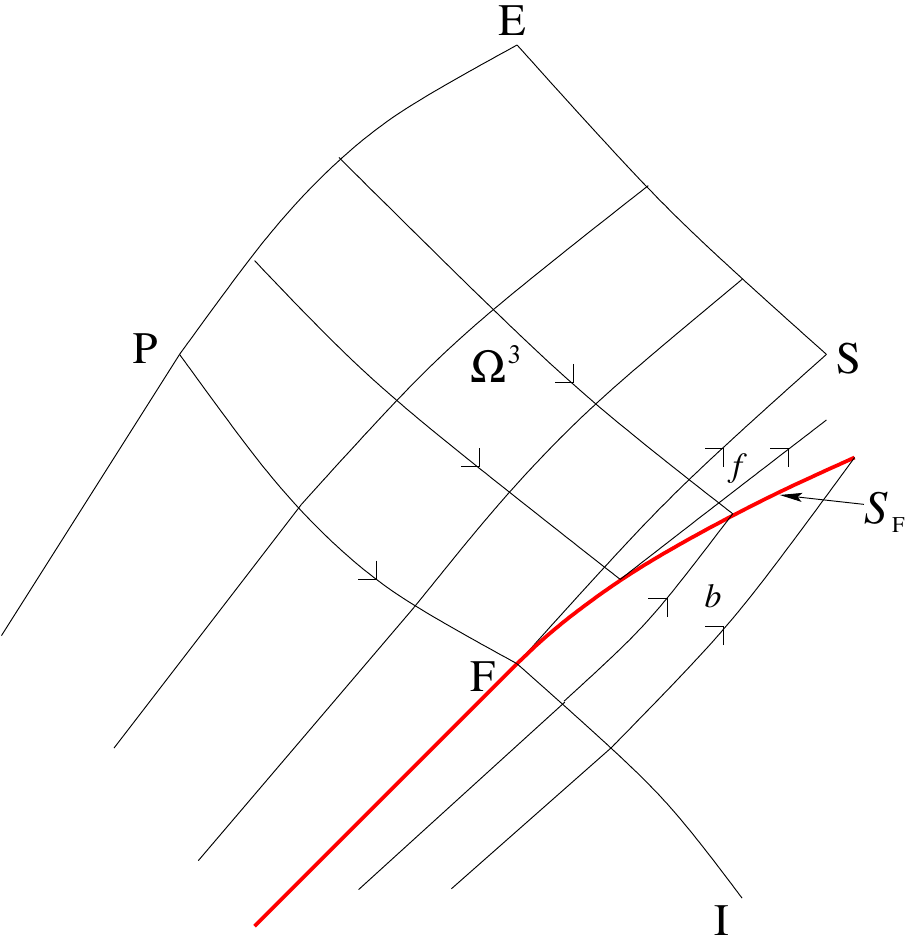}
\caption{\footnotesize Left: an imaginary transonic shock propagated from $F$; right:  an imaginary pre-sonic shock propagated from $F$.}
\label{Fig5.5.3}
\end{center}
\end{figure}

\vskip 8pt
\noindent
{\bf (2) Nonexistence of  pre-sonic shocks.}
Suppose that there is a smooth pre-sonic shock ${\it S}_{_F}$ propagated from $F$; see Fig. \ref{Fig5.5.3} (right).
Since the shock ${\it S}_{_F}$ is assumed to be pre-sonic, it must lie outside the domain ${\Omega}^3$.
For any point in the sectorial domain between $\widetilde{FS}$ and ${\it S}_{_F}$, the backward $C_{-}$ ($C_{+}$, resp.) characteristic curve issued from this point stays in this sectorial domain until it intersects $\widetilde{FS}$ (${\it S}_{_F}$, resp.) at a point.
For any point on ${\it S}_{_F}$, the backward $C_{+}$ characteristic curve stays in the sectorial domain between ${\it S}_{_F}$ and $\widetilde{FI}$ until it intersects $\widetilde{FI}$ at a point.
For convenience, we use subscripts `$f$' and `$b$' to denote the flow upstream and downstream of ${\it S}_{_F}$, respectively, and use $\phi$ to denote the inclination angle of ${\it S}_{_F}$.
Actually, by the method of characteristics we can see that $r_f$ is $C^1$ smooth up to the pre-sonic shock ${\it S}_{_F}$.


Integrating the second equation of (\ref{cdr}) along $\widetilde{PF}$ and recalling $(\bar{\partial}_{+}r)(P)>0$ and $\phi(F)=\alpha_f(F)$, we have
\begin{equation}\label{10705}
\bar{\partial}_{_\Gamma}r_f=\bar{\partial}_{+}r_f=\bar{\partial}_{+}r_3>0\quad \mbox{at}~ F.
\end{equation}
From (\ref{250104}) we have $\bar{\partial}_{_\Gamma} m=0$ at $F$.
So, as in (\ref{110412}) and (\ref{123104}) we have
$$
\bar{\partial}_{_\Gamma}\phi=\frac{\bar{\partial}_{_\Gamma} m}{\rho_fL_f}+\bar{\partial}_{_\Gamma}r_f
=\bar{\partial}_{_\Gamma}r_f>0\quad \mbox{at}~ F.
$$

Since the shock is assumed to be pre-sonic, by the Liu's extended entropy condition we have $\tau_b\leq \tau_2^e$ along ${\it S}_{_F}$. Hence, we have $\bar{\partial}_{_\Gamma}\rho_b\geq 0$ at $F$.
Then, as in (\ref{250407}) we have
$$
\begin{aligned}
\bar{\partial}_{_\Gamma} \phi&=\frac{\bar{\partial}_{_\Gamma} m}{\rho_bL_b}-\frac{N_b}{\rho_bL_bq_b^2}(q_b^2+\tau_b^2 p'(\tau_b))\bar{\partial}_{_\Gamma}\rho_b+\bar{\partial}_{_\Gamma}\sigma_b\\&=
-\frac{N_b}{\rho_bL_bq_b^2}(q_b^2-c_b^2)\bar{\partial}_{+}\rho_b
+\bar{\partial}_{+}\Big(\frac{\alpha_b+\beta_b}{2}\Big)
\\&=-\frac{\sin 2A_{b}}{2\rho_{b}}\bar{\partial}_{_\Gamma}\rho_{b}-\frac{p''(\tau_{b})}{4c_{b}^2\rho_{b}^4}
\Big[\big(\varpi(\tau_{b})-\tan^2A_{b}\big)\sin A_{b}\cos A_{b}+\tan A_{b}\Big]\bar{\partial}_{_\Gamma}\rho_{b}\\&=-\frac{\sin 2A_{b}}{\rho_{b}}\bar{\partial}_{_\Gamma}\rho_{b}<0\quad\mbox{at}~ {F}.
\end{aligned}
$$
This leads to a contradiction. So, the shock propagated from the point $F$ could not be a pre-sonic shock.




\subsection{Interaction of a fan wave with a shock-fan composite wave}
We consider the IBVP (\ref{E1}), (\ref{42701}) for $\tau_1^e<\tau_0<\tau_1^i$.
In this part, we assume that the oblique waves at the corners $B$ and $D$ are centered simple wave and shock-fan composite wave, respectively, and the flow near the corners $B$ and $D$ can be represented by
$$
(u, v)=\left\{
               \begin{array}{ll}
                 (u_0, 0), & \hbox{$-\frac{\pi}{2}\leq \eta\leq \eta_0$;} \\[2pt]
                 (\tilde{u}, \tilde{v})(\eta), & \hbox{$\eta_0\leq \eta\leq \eta_1$;} \\[2pt]
                 (u_1, v_1), & \hbox{$\eta_{1}<\eta<\theta_{+}$}
               \end{array}
             \right.
\quad \mbox{and}\quad
(u, v)=\left\{
               \begin{array}{ll}
                 (u_0, 0), & \hbox{$\phi_{po}\leq \xi\leq \frac{\pi}{2}$;} \\[2pt]
                 (\bar{u}, \bar{v})(\xi), & \hbox{$\xi_2\leq \eta\leq \phi_{po}$;} \\[2pt]
                 (u_2, v_2), & \hbox{$\theta_{-}<\xi<\xi_2$,}
               \end{array}
             \right.
$$
respectively.
Here, the flow near the point $D$ is defined the same as that defined in Section 5.2; the function
$(\tilde{u}, \tilde{v})(\eta)$ is determined by (\ref{102306a}) and (\ref{102304a}) with initial data $(\bar{q}, \bar{\tau}, \bar{\sigma})(\eta_0)=(u_{0}, \tau_{0}, 0)$;
$\eta_1$ is determined by $\tilde{v}(\eta_1)=\tilde{u}(\eta_1)\tan\theta_{+}$. The main result of this subsection can be stated as the following theorem.
\begin{thm}\label{thm6}
Assume $\tau_1^e<\tau_0<\tau_1^i$ and $u_0>c_0$.
Assume furthermore that the oblique waves at the corners $B$ and $D$ are centered simple wave and shock-fan composite wave, respectively. 
Then the IBVP (\ref{E1}), (\ref{42701}) admits a global  piecewise smooth solution in $\Sigma$.
\end{thm}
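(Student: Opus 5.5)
The plan is to follow the pattern of Sections 5.1--5.3. First I fix the interaction point. The 1-fan $f_{-}$ from $B$ has leading straight $C_{-}$ line $y=1+x\tan\eta_0$, with $\eta_0=-\xi_0$. The post-sonic shock of ${\it sf}_{+}$ from $D$ has inclination $\phi_{po}$. Since $\phi_{po}>\xi_0$ when $\tau_0\in(\tau_1^e,\tau_1^i)$, the two meet at
$$x_{_P}=\frac{2}{\tan\phi_{po}+\tan\xi_0},\qquad y_{_P}=\frac{\tan\phi_{po}-\tan\xi_0}{\tan\phi_{po}+\tan\xi_0}.$$
The structure near $P$ then hinges on the post-sonic shock penetrating the fan $f_{-}$.

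\textbf{Step 1 (shock through the fan).} The upstream state along the continued 2-shock is the fan $(\tilde u,\tilde v)(\eta)$. This fan has $\tau<\tau_0$ decreasing into $\tau_1^e<\tau<\tau_0$? No: along a rarefaction fan $\tau$ increases from $\tau_0$. Hence $\tau_f$ increases from $\tau_0$, and as long as $\tau_f<\tau_2^i$ I take the shock to remain post-sonic, with $\tau_b=\psi_{po}(\tau_f)$. I then determine the shock curve $\wideparen{PE}$ by the ODE (\ref{61115}) with $\phi(P)=\phi_{po}$. The argument mirrors Lemma \ref{91011}: in the centered wave we have $\bar\partial_{-}\rho<0$ and $\bar\partial_{+}\rho=0$, so $\bar\partial_{_\Gamma}\tau_f>0$ by (\ref{82003}). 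This gives $\tau_f\nearrow$, and the shock stays strictly inside the fan, since $\phi>\alpha_f$ by (\ref{61113}) and $\phi$ lies below the $C_{-}$ lines. Two alternatives follow. In alternative (a) the shock exits the fan at its trailing line $y=1+x\tan\eta_1$ at a point $E$, and afterwards becomes a straight shock in the constant state $(u_1,v_1)$; this is Section 5.3. In alternative (b) $\tau_f$ reaches $\tau_2^i$ inside the fan; then $\psi_{po}(\tau_f)=\tau_f$, the shock degenerates and the fan simply continues. Which alternative occurs depends on whether $\tau_1<\tau_2^i$.

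\textbf{Step 2 (structural conditions and downstream flow).} Next I establish the analogue of Lemma \ref{110411}, namely $\bar\partial_{_\Gamma}r_b>0$ and $\bar\partial_{_\Gamma}s_b>0$ along $\wideparen{PE}$. I plan to use (\ref{110412}), (\ref{110413}) together with $\bar\partial_{_\Gamma}\rho_b=-\tau_b^{-2}\psi_{po}'(\tau_f)\bar\partial_{_\Gamma}\tau_f>0$ (from Proposition \ref{51003}). For $\bar\partial_{_\Gamma}\phi-\bar\partial_{_\Gamma}m/(\rho_bL_b)>0$ I will exploit the simple-wave identities (\ref{7})--(\ref{8}), which make $\bar\partial_{_\Gamma}\sigma_f$ and $\bar\partial_{_\Gamma}\rho_f$ explicit; here I may need $u_0$ large, as in Lemmas \ref{lem3.8}--\ref{lem3.9}. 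With these conditions in hand, Lemma \ref{lem7} solves the singular Cauchy problem with data on $\wideparen{PE}$, producing a region on which $\wideparen{PE}$ is an envelope of $C_{+}$ characteristics. Between $\wideparen{PE}$ and the $C_{-}$ cross characteristic $\widetilde{PF}$ of the fan of ${\it sf}_{+}$, the data at $P$ jump. I therefore pose a DGP, the analogue of (\ref{102813}), and apply Lemma \ref{lem8}, or Remark \ref{rem4.3} if cavitation occurs. The jump orderings (\ref{6507}) are to be verified via the wave-curve intersection of ${\it W}_2(u_1,v_1)$-type curves with ${\it W}_1(u_{po},v_{po})$, as in Section 5.3. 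The remaining regions are handled with Lemma \ref{lem1}, the simple-wave problems (Lemmas \ref{lem3}, \ref{lem4}), and the reflection problems of Lemmas \ref{lem5} and \ref{lem5a}. After this the solution consists of fans interacting, and Section 5.1 completes the construction.

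\textbf{Main obstacle.} I expect the structural inequality $\bar\partial_{_\Gamma}s_b>0$ to be the hardest part, because Section 5.5 obtained it only by smallness. My plan is to use the exact simple-wave structure upstream ($\bar\partial_{+}\rho_f=0$), which should yield a sign-definite expression without smallness. I also expect a further difficulty: ruling out, as in Section 5.5.6, that the shock turns transonic or pre-sonic.
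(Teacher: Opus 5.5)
Your skeleton (continue the post-sonic shock through $f_{-}$ by the shock ODE, prove $\bar\partial_{_\Gamma}r_b>0$ and $\bar\partial_{_\Gamma}s_b>0$, solve a singular Cauchy problem behind $\wideparen{PE}$, then Goursat, simple-wave and reflection problems) is the paper's. However, two load-bearing steps are wrong as written.

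\textbf{First, you have the structure of $f_{-}$ backwards.} The wave from $B$ is a 1-fan whose straight characteristics are the $C_{-}$ rays from $B$. So in it $\bar\partial_{-}\rho=0$ and $\bar\partial_{+}\rho<0$ (cf.\ (\ref{101501})), not the reverse. With your inputs, (\ref{82003}) would give $\bar\partial_{_\Gamma}\tau_f<0$, because $\phi>\alpha_f$ on a post-sonic 2-shock makes the coefficient of $\bar\partial_{-}\tau$ negative. Likewise, your announced route to the structural condition (via $\bar\partial_{+}\rho_f=0$) leaves the $\rho_f$-term and the $\sigma_f$-term with opposite signs, so it is not sign-definite.

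With the correct structure, every upstream quantity satisfies $\bar\partial_{_\Gamma}=t_{+}\bar\partial_{+}$ with $t_{+}=\sin(\phi-\beta_f)/\sin(2A_f)>0$. Hence $\bar\partial_{_\Gamma}\rho_f<0$ and $\bar\partial_{_\Gamma}\rho_b>0$, and both $-\frac{N_f\cos^2A_f}{\rho_fL_f}\bar\partial_{_\Gamma}\rho_f$ and $\bar\partial_{_\Gamma}\sigma_f=-\frac{\tau_f^2\sin(2A_f)}{2}\bar\partial_{_\Gamma}\rho_f$ are positive. Replacing $\rho_fL_f$ by $\rho_bL_b$ adds the negative term $\frac{\tau_f(N_f^2-c_f^2)}{\rho_fN_fL_f(\tau_f+\tau_b)}\bar\partial_{_\Gamma}\rho_f$. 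Since $\tau_f/(\tau_f+\tau_b)<1$, this term is absorbed by the $N_f\cos^2A_f$ term, giving $\bar\partial_{_\Gamma}r_b>\bar\partial_{_\Gamma}\sigma_f-\frac{(q_f^2-N_f^2)\sin^2A_f}{\rho_fN_fL_f}\bar\partial_{_\Gamma}\rho_f>0$. This is Lemma \ref{120801a}, and it needs no smallness and no large $u_0$. Moreover $\bar\partial_{_\Gamma}s_b=\bar\partial_{_\Gamma}r_b+\tau_b\sin(2A_b)\bar\partial_{_\Gamma}\rho_b$, so the $s_b$ inequality is not the hard part.

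\textbf{Second, there is no jump at $P$, so your DGP step rests on false hypotheses.} $P$ lies on the leading ray $\eta=\eta_0$ of $f_{-}$, where the upstream state is still $(u_0,0)$. So the state behind $\wideparen{PE}$ at $P$ is $(u_{po},v_{po})$, which is precisely the leading state of the 2-fan of ${\it sf}_{+}$. The data on $\widetilde{PH}$ (the $C_{+}$ characteristic from $P$ bounding the singular Cauchy region) and on $\widetilde{PF}$ therefore agree at $P$. Thus (\ref{6507}) holds with equality, Lemma \ref{lem8} does not apply, and there is nothing to settle by wave-curve intersections. That device is needed in Section 5.3 only because the shock ${\it s}_{-}$ creates a genuine jump. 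The correct step is a standard Goursat problem, Lemma \ref{lem1}, with $\tau(P)=\tau_{po}>\tau_2^i$, $\bar\partial_{+}\rho<0$ on $\widetilde{PH}$ and $\bar\partial_{-}\rho<0$ on $\widetilde{PF}$.

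\textbf{Finally, your alternative (b) never occurs.} A centered fan from $(u_0,0)$ with $\tau_0<\tau_1^i$ exists only up to $\tau_1^i$ (Section 3.3.3), so $\tau_1<\tau_1^i<\tau_2^i$. Hence $\tau_f\in[\tau_0,\tau_1]\subset(\tau_1^e,\tau_2^i)$, and the shock stays post-sonic until it reaches $E$. Beyond $E$ it is a straight post-sonic shock in $(u_1,v_1)$. Together with two simple-wave problems, this shock yields ${\it sf}_{+}^2$ and ${\it f}_{-}^2$.
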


\begin{figure}[htbp]
\begin{center}
\includegraphics[scale=0.5]{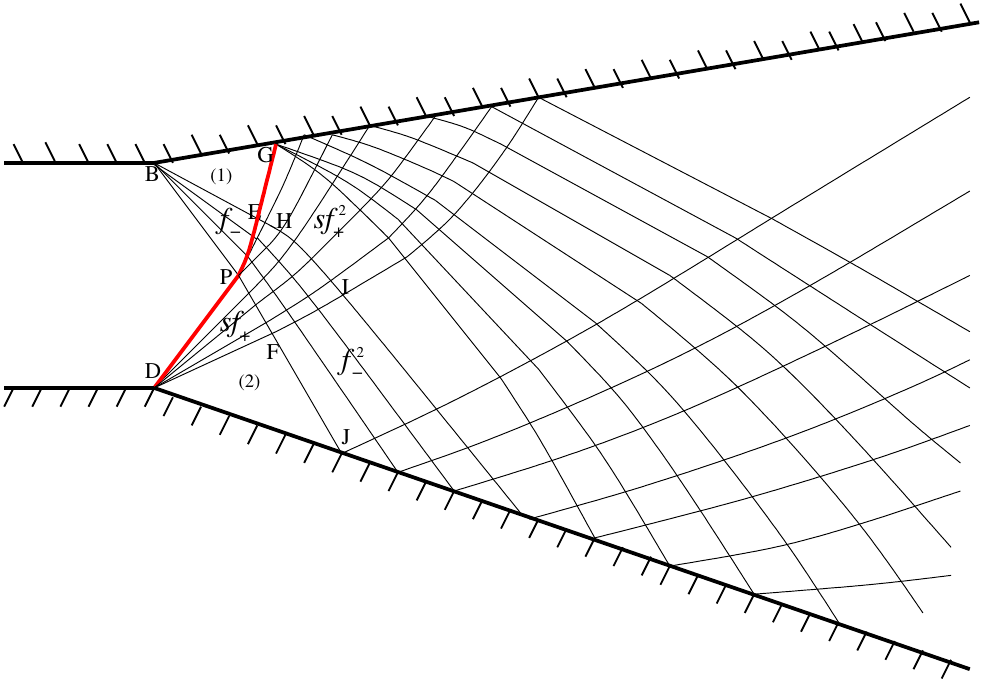}
\caption{\footnotesize Interaction of a centered simple wave with a shock-fan composite wave. }
\label{Fig5.6.1}
\end{center}
\end{figure}

We denote the fan wave and shock-fan composite wave by ${\it f}_{-}$ and ${\it sf}_{+}$, respectively; see Fig. \ref{Fig5.6.1}.
The two waves start to interact with each other at a point $P(x_{_P}, y_{_P})$, where
$x_{_P}=2(\tan\phi_{po}-\tan\eta_0)^{-1}$ and $y_{_P}=1+2(\tan\phi_{po}-\tan\eta_0)^{-1}\tan\eta_0$.
The post-sonic shock of the shock-fan composite wave ${\it sf}_{+}$ will propagate into the centered wave ${\it f}_{-}$ region from the point $P$ and remain as a post-sonic shock.

For convenience, we use ${\it S}_{_{P}}$ to denote the post-sonic shock front propagating from the point $P$.
We use subscripts `$f$' and `$b$' to denote the flow upstream and downstream of ${\it S}_{_{P}}$, respectively, and use $\phi$ to denote the inclination angle of ${\it S}_{_{P}}$.
We also set
$$
L=u\cos \phi+v\sin \phi\quad \mbox{and} \quad N=u\sin \phi-v\cos \phi\quad \mbox{along}~ S_{_P}.
$$

When ${\it S}_{_{P}}$ stays in the centered simple wave ${\it f}_{-}$ region,
the states on the front side of ${\it S}_{_{P}}$ are known by $(u_f, v_f, \tau_f)=(\tilde{u}, \tilde{v}, \tilde{\tau})(\eta)$.
The specific volume of the flow on the backside of ${\it S}_{_{P}}$ can be represented by $\tau_b=\psi_{po}(\tau_f)$.

As in (\ref{250102}), we have
\begin{equation}\label{123103}
\bar{\partial}_{_\Gamma} \phi=\frac{\bar{\partial}_{_\Gamma} m}{\rho_fL_f}-\frac{N_{f}}{\rho_{f}L_{f}q_{f}^2}(q_{f}^2+\tau_{f}^2 p'(\tau_{f}))\bar{\partial}_{_\Gamma}\rho_{f}+\bar{\partial}_{_\Gamma}\sigma_{f}\quad\mbox{along}~{\it S}_{_{P}}.
\end{equation}
From (\ref{250104}) we have
$$
\bar{\partial}_{_\Gamma}m
~=~\frac{\tau_f(N_f^2-c_f^2)}{\rho_fN_f(\tau_f^2-\tau_b^2)}
\bar{\partial}_{_\Gamma}\rho_f\quad \mbox{along}~ {\it S}_{_{P}}.
$$
So, we have
\begin{equation}\label{100903}
\bar{\partial}_{_\Gamma} \phi=\frac{\tau_f(N_f^2-c_f^2)}{\rho_f^2N_fL_f(\tau_f^2-\tau_b^2)}
\bar{\partial}_{_\Gamma}\rho_f-\frac{N_{f}\cos^2 A_f}{\rho_{f}L_{f}}\bar{\partial}_{_\Gamma}\rho_{f}+\bar{\partial}_{_\Gamma}\sigma_{f}\quad\mbox{along}~{\it S}_{_{P}}.
\end{equation}
Therefore, the post-sonic shock front ${\it S}_{_{P}}$ can be obtained by solving (\ref{100903}) with initial data $\phi(P)=\phi_{po}$. The states on the backside of ${\it S}_{_{P}}$ can then be determined by
$$
u_b=L_b\cos\phi+N_b\sin\phi\quad \mbox{and} \quad  v_b=L_b\sin\phi-N_b\cos\phi,
$$
where
$L_b=L_f$ and  $N_b=\frac{\tau_bN_f}{\tau_f}$.

The post-sonic shock front ${\it S}_{_{P}}$ will then intersects the straight $C_{-}$ characteristic line $y=1+x\tan\eta_1$ of the centered simple wave ${\it f}_{+}$  at a point $E$.  For convenience, we denote the post-sonic shock connecting $P$ and $E$ by $\wideparen{{PE}}$.

\begin{lem}\label{120801a}
There hold the following estimates:
\begin{equation}\label{72204}
(r_b, s_b)\in \Pi, \quad \bar{\partial}_{_\Gamma}r_{b}>0,\quad \mbox{and}\quad \bar{\partial}_{_\Gamma}s_{b}>0\quad \mbox{along}~\wideparen{PE};
\end{equation}
where $r_{b}=r(u_b, v_b)$ and $s_b=s(u_b, v_b)$.
\end{lem}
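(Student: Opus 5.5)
We follow the three-step scheme of the proof of Lemma \ref{110411}, but exploit that here the upstream flow is an exact centered simple wave with straight $C_{-}$ characteristics. Consequently no smallness assumption should be needed: signs come directly from the structure of the simple wave. Along $\wideparen{PE}$ we have $(u_f,v_f,\tau_f)=(\tilde u,\tilde v,\tilde\tau)(\eta)$ and $\tau_b=\psi_{po}(\tau_f)$, so $N_b=c_b$ and $\phi=\alpha_b$. By Proposition \ref{pro3.1.1}, $\phi>\alpha_f$.

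\textbf{Step 1 (monotonicity of $\tau_f$ and $\rho_b$).} In the 1-fan region ${\it f}_{-}$ we have $\bar\partial_{-}\rho=0$ and $\bar\partial_{+}\rho<0$ (cf. (\ref{101501})). Since $\phi>\alpha_f$ and $\phi$ lies between $\alpha_f$ and $\beta_f+\pi$, the decomposition (\ref{82003}) gives $\bar\partial_{_\Gamma}\tau_f>0$ along $\wideparen{PE}$, provided $\wideparen{PE}$ remains in the fan. We first verify that it stays there until it meets the last straight $C_{-}$ line at $E$. This keeps $\tau_f<\tau_1^i<\tau_2^i$, so Proposition \ref{51003} applies throughout. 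Together with $\psi_{po}'<0$, this yields $\bar\partial_{_\Gamma}\rho_b>0$, as in (\ref{61308}). Moreover, $\bar\partial_{_\Gamma}m$ is given by (\ref{61105}), and since $p''(\psi_{po}(\tau_f))>0$ (because $\psi_{po}>\tau_2^i$), we get $\bar\partial_{_\Gamma}m>0$.

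\textbf{Step 2 (sign of $\bar\partial_{_\Gamma}r_b$).} Exactly as in (\ref{110412}), we have $c_b\bar\partial_{_\Gamma}r_b=q_b\sin A_b(\bar\partial_{_\Gamma}\phi-\bar\partial_{_\Gamma}m/(\rho_bL_b))$. We substitute (\ref{123103}) for $\bar\partial_{_\Gamma}\phi$ and note that $\rho_fL_f$ and $\rho_bL_b$ differ, which produces a factor $(1-\rho_f/\rho_b)\bar\partial_{_\Gamma}m/(\rho_f L_f)$ with positive sign (since $\rho_b<\rho_f$). The remaining terms, $-\frac{N_f\cos^2A_f}{\rho_fL_f}\bar\partial_{_\Gamma}\rho_f+\bar\partial_{_\Gamma}\sigma_f$, are the front-side contribution. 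Using $\bar\partial_{-}\rho_f=0$, (\ref{12010a}) and (\ref{82003}), we rewrite this contribution as a positive multiple of $-\bar\partial_{+}\rho_f$ times a trigonometric factor. Its sign reduces to $\phi>\alpha_f$ together with $N_f>c_f$; equivalently, it equals $\bar\partial_{_\Gamma}r_f$ up to a positive factor, and $\bar\partial_{_\Gamma}r_f>0$ because $r_f$ is increasing across the 1-fan in the direction of $\wideparen{PE}$. Hence $\bar\partial_{_\Gamma}r_b>0$. We expect this sign bookkeeping to be the main obstacle. The first thing to try is expressing everything via $\bar\partial_{_\Gamma}r_f$, as in the pre-sonic nonexistence argument, where $\bar\partial_{_\Gamma}\phi=\bar\partial_{_\Gamma}m/(\rho_fL_f)+\bar\partial_{_\Gamma}r_f$.

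\textbf{Step 3 (sign of $\bar\partial_{_\Gamma}s_b$ and membership in $\Pi$).} By (\ref{110413}), $c_b\bar\partial_{_\Gamma}s_b=q_b\sin A_b(\bar\partial_{_\Gamma}\phi-\bar\partial_{_\Gamma}m/(\rho_bL_b))+2\tau_bq_b\cos A_b\sin^2A_b\bar\partial_{_\Gamma}\rho_b$. Both terms are positive by Steps 1 and 2, so $\bar\partial_{_\Gamma}s_b>0$. For $(r_b,s_b)\in\Pi$: at $P$ the state is $(u_{po},v_{po})$, for which $\theta_{-}<\sigma_{po}$ since ${\it sf}_{+}$ is a shock-fan wave. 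Monotonicity of $r_b$ and $s_b$ makes $\sigma_b=(r_b+s_b)/2$ increase along the shock. At $E$, Proposition \ref{pro3.1.1} gives $\sigma_b(E)<\sigma_f(E)=\theta_{+}$. Finally, $r_b-s_b\ge0$ and $r_b-s_b<2\mathcal R$ follow from $\tau_2^i<\tau_b<\tau_2^e<\infty$.
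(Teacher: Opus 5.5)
Your Steps 1 and 3 and the argument for $(r_b,s_b)\in\Pi$ agree with the paper. Step 2, however, has a sign error exactly where the lemma's content lies. Substituting (\ref{123103}) gives
\[
\bar{\partial}_{_\Gamma}\phi-\frac{\bar{\partial}_{_\Gamma}m}{\rho_bL_b}=X+\Big(1-\frac{\rho_f}{\rho_b}\Big)\frac{\bar{\partial}_{_\Gamma}m}{\rho_fL_f},\qquad X:=-\frac{N_f\cos^2A_f}{\rho_fL_f}\bar{\partial}_{_\Gamma}\rho_f+\bar{\partial}_{_\Gamma}\sigma_f .
\]
Since the shock is rarefactive, $\rho_b<\rho_f$, so $1-\rho_f/\rho_b<0$. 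You correctly showed $\bar{\partial}_{_\Gamma}m>0$ in Step 1. Hence the correction term is \emph{negative}, not positive, and $X>0$ alone does not give $\bar{\partial}_{_\Gamma}r_b>0$. This term is nonzero precisely because $N_f>c_f$ here. In Lemma \ref{110411} it was harmless only because of (\ref{250402}) together with smallness, neither of which is available now.

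The missing idea is a quantitative absorption of this negative term into $X$, which is what the paper does. By (\ref{250104}) with $N_b=c_b$,
\[
\bar{\partial}_{_\Gamma}m=\frac{\tau_f(N_f^2-c_f^2)}{\rho_fN_f(\tau_f^2-\tau_b^2)}\bar{\partial}_{_\Gamma}\rho_f .
\]
Using $L_b=L_f$, the correction term therefore equals $\frac{\tau_f(N_f^2-c_f^2)}{\rho_fN_fL_f(\tau_f+\tau_b)}\bar{\partial}_{_\Gamma}\rho_f$. Since $\bar{\partial}_{_\Gamma}\rho_f<0$ and $\tau_f/(\tau_f+\tau_b)<1$, this exceeds $\frac{N_f^2-c_f^2}{\rho_fN_fL_f}\bar{\partial}_{_\Gamma}\rho_f$. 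Combining this with the identity $N_f^2\cos^2A_f-(N_f^2-c_f^2)=(q_f^2-N_f^2)\sin^2A_f=L_f^2\sin^2A_f$ yields
\[
\bar{\partial}_{_\Gamma}\phi-\frac{\bar{\partial}_{_\Gamma}m}{\rho_bL_b}>\bar{\partial}_{_\Gamma}\sigma_f-\frac{L_f\sin^2A_f}{\rho_fN_f}\bar{\partial}_{_\Gamma}\rho_f>0 .
\]
Both terms on the right are positive in the 1-fan, because $\bar{\partial}_{_\Gamma}\sigma_f=-\frac{\tau_f^2\sin(2A_f)}{2}\bar{\partial}_{_\Gamma}\rho_f$.

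Since $q_b\sin A_b=c_b$, $\bar{\partial}_{_\Gamma}r_b$ equals the left-hand side, and your Step 3 then goes through unchanged. Your detour through $\bar{\partial}_{_\Gamma}r_f$ is unnecessary: $X>0$ follows directly from $\sin(\phi-\beta_f)>0$ and $\bar{\partial}_{+}\rho_f<0$.
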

\begin{proof}
Firstly,
we point out that the proof of this lemma is different from that of Lemma \ref{110411}, since the shock at the point $P$ is not double-sonic and the amplitude of the centered simple wave ${\it f}_{+}$ is not required to be small.

We decompose $\bar{\partial}_{_\Gamma}$ in two characteristic directions
\begin{equation}\label{123102}
\bar{\partial}_{_\Gamma}=t_{+}\bar{\partial}_{+}+t_{-}\bar{\partial}_{-},
\end{equation}
where
$$
t_{+}=\frac{\sin(\phi-\beta_f)}{\sin(2A_f)}\quad \mbox{and}\quad t_{-}=\frac{\sin(\alpha_f -\phi)}{\sin(2A_f)}.
$$

As in (\ref{101501}), we have $\bar{\partial}_{+}\rho<0$ in ${\it f}_{-}$.
By the definition of 2-shock we have $0<\phi-\beta_f<\pi$. Then we have
$$
\bar{\partial}_{_\Gamma}\rho_{f}<0\quad \mbox{along}~  \wideparen{{PE}}.
$$
Combining this with (\ref{5906}) we have
$$
\bar{\partial}_{_\Gamma}\rho_{b}>0\quad \mbox{along}~  \wideparen{{PE}}.
$$

Then by (\ref{7}), (\ref{10}), and (\ref{123103}) we have
$$
\begin{aligned}
\bar{\partial}_{_\Gamma} \phi-\frac{\bar{\partial}_{_\Gamma} m}{\rho_fL_f}&=-\frac{N_{f}}{\rho_{f}L_{f}q_{f}^2}(q_{f}^2+\tau_{f}^2 p'(\tau_{f}))\bar{\partial}_{_\Gamma}\rho_{f}+\bar{\partial}_{_\Gamma}\sigma_{f}\\&=
-\frac{N_{f}\cos^2 A}{\rho_{f}L_{f}}t_{+}\bar{\partial}_{+}\rho_{f}+t_{+}\bar{\partial}_{+}\Big(\frac{\alpha_{f}+\beta_{f}}{2}\Big)
\\&=
-\Big(\frac{\sin(2A_f)}{2\rho_f}+\frac{N_{f}\cos^2 A}{\rho_{f}L_{f}}\Big)t_{+}\bar{\partial}_{+}\rho_{f}
\\&=
-\Big(\frac{\sin(2A_f)}{2\rho_f}+\frac{N_{f}\cos^2 A}{\rho_{f}L_{f}}\Big)\bar{\partial}_{_\Gamma}\rho_{f}>0\quad \mbox{along}~ \wideparen{{PE}}.
\end{aligned}
$$
 Hence,
\begin{equation}
\begin{aligned}
\bar{\partial}_{_\Gamma} \phi-\frac{\bar{\partial}_{_\Gamma} m}{\rho_bL_b}&=
\bar{\partial}_{_\Gamma} \phi-\frac{\bar{\partial}_{_\Gamma} m}{\rho_fL_f}+\left(\frac{1}{\rho_fL_f}-\frac{1}{\rho_bL_b}\right)\bar{\partial}_{_\Gamma} m\\
~&=~-\Big(\frac{\sin(2A_f)}{2\rho_f}+\frac{N_{f}\cos^2 A}{\rho_{f}L_{f}}\Big)\bar{\partial}_{_\Gamma}\rho_{f}+\frac{\tau_f(N_f^2-c_f^2)}{\rho_fN_fL_f(\tau_f+\tau_b)}
\bar{\partial}_{_\Gamma}\rho_f\\
~&>~-\Big(\frac{\sin(2A_f)}{2\rho_f}+\frac{N_{f}\cos^2 A}{\rho_{f}L_{f}}\Big)\bar{\partial}_{_\Gamma}\rho_{f}+\frac{N_f^2-c_f^2}{\rho_fN_fL_f}
\bar{\partial}_{_\Gamma}\rho_f\\&=
-\left(\frac{\sin(2A_f)}{2\rho_f}+\frac{(q_f^2-N_f^2)\sin^2 A_f}{\rho_fL_fN_f}\right)\bar{\partial}_{_\Gamma}\rho_{f}>0\quad \mbox{along}~ \wideparen{{PE}}.
\end{aligned}
\end{equation}

Therefore,
as in (\ref{110412}), (\ref{123104}), (\ref{110413}), and (\ref{123105}) we have
$$
\bar{\partial}_{_\Gamma}r_{b}= \bar{\partial}_{_\Gamma} \phi-\frac{\bar{\partial}_{_\Gamma} m }{\rho_bL_b}>0
$$
and
$$
\bar{\partial}_{_\Gamma}s_{b}= \left(\bar{\partial}_{_\Gamma} \phi-\frac{\bar{\partial}_{_\Gamma} m }{\rho_bL_b}\right)+
\tau_b\sin(2 A_b)\bar{\partial}_{_\Gamma} \rho_b>0
$$
along $\wideparen{PE}$.

The proof for $(r_b, s_b)\in \Pi$ on $\wideparen{PE}$ is similar to that of Lemma \ref{2501a}.
This completes the proof of the lemma.
\end{proof}

We then consider (\ref{E1}) with the data
\begin{equation}\label{100703a}
(u, v)=(u_b, v_b)(x, y), \quad (x,y)\in \wideparen{{PE}}.
\end{equation}
By Lemma \ref{lem7} we know that
the singular Cauchy problem (\ref{E1}), (\ref{100703a}) admits a solution on a curvilinear triangle domain $\overline{\Omega}_3$ bounded by $\wideparen{{PE}}$, $\widetilde{{PH}}$, and $\widetilde{{EH}}$, where $\widetilde{{PH}}$ is a forward $C_{+}$ characteristic curve issued from $P$ and $\widetilde{{EH}}$ is a forward $C_{-}$ characteristic curve issued from $E$. Moreover, the solutions satisfies $\tau>\tau_2^i$,  $(r, s)\in \Pi$, and $\bar{\partial}_{+}\rho<0$ in $\overline{\Omega}_3$, and $\bar{\partial}_{-}\rho<0$ in $\overline{\Omega}_3\backslash\wideparen{PE}$.

From the point $P$, we draw a cross $C_{-}$ characteristic line of the centered simple wave of ${\it sf}_{+}$. This curve intersects the straight $C_{+}$ characteristic line $y=-1+x\tan\xi_2$ at a point $F$. We then solve a standard Goursat problem for (\ref{E1}) on a curvilinear quadrilateral domain $\overline{\Omega}_4$ bounded by characteristic lines $\widetilde{PH}$,
$\widetilde{PF}$, $C_{-}^{H}$, and $C_{+}^{F}$.
Moreover, the solution satisfies $\tau>\tau_2^i$, $(r, s)\in \Pi$ and $\bar{\partial}_{\pm}\rho<0$.

Since the state $(u_1, v_1)$ is connected to $(u_0, 0)$ on the downstream by a centered simple wave, we have $\tau_1<\tau_1^i$ where $\tau_1$ is defined as in  (\ref{def12}).
Then as in Section 5.3.3, there is a straight post-sonic 2-shock propagating from the point $E$ with the front side state $(u_1, v_1)$. This straight shock will then intersects $\mathcal{W}_{+}$ at a point $G$.
From the point $F$, we draw a straight $C_{-}$ characteristic line with the characteristic angle $\beta=\theta_{-}-A_2$, where $A_2$ is defined as in  (\ref{def12}). This straight characteristic line intersects $\mathcal{W}_{-}$ at a point $J$.
As in Section 5.3.3,
by solving two simple wave problems we see that the interaction of ${\it f}_{-}$ and ${\it sf}_{+}$ generates a shock-fan composite wave ${\it sf}_{+}^2$ with straight $C_{+}$ characteristic lines issued from $\widetilde{FH}\cup C_{-}^{H}$ and a fan wave ${\it f}_{-}^2$ with straight $C_{-}$ characteristic lines issued from $C_{+}^{F}$; see Fig. \ref{Fig5.6.1}. The oblique waves ${\it sf}_{+}^2$ and ${\it f}_{-}^2$ will then reflect off the walls $\mathcal{W}_{+}$ and $\mathcal{W}_{-}$, respectively, and generate two simple waves.
The flow in the remaining part of the divergent duct can then be determined using the method described in Sections 5.1 and 5.3.
This completes the proof of Theorem \ref{thm6}.

\subsection{Interaction of a fan-shock composite wave with a shock-fan composite wave}
We consider the IBVP for $\tau_1^e<\tau_0<\tau_1^i$.
In this part, we assume that the oblique waves at the corners $B$ and $D$ are fan-shock composite wave and shock-fan composite wave, respectively, and the flow near the corners  $B$ and $D$ can be represented by
$$
(u, v)=\left\{
               \begin{array}{ll}
                 (u_0, 0), & \hbox{$-\frac{\pi}{2}\leq \eta\leq \eta_0$;} \\[2pt]
                 (\tilde{u}, \tilde{v})(\eta), & \hbox{$\eta_0\leq \eta\leq \eta_1$;} \\[2pt]
                 (u_1, v_1), & \hbox{$\eta_{1}<\eta<\theta_{+}$}
               \end{array}
             \right.
\quad \mbox{and}\quad
(u, v)=\left\{
               \begin{array}{ll}
                 (u_0, 0), & \hbox{$\phi_{po}\leq \xi\leq \frac{\pi}{2}$;} \\[2pt]
                 (\bar{u}, \bar{v})(\xi), & \hbox{$\xi_2\leq \eta\leq \phi_{po}$;} \\[2pt]
                 (u_2, v_2), & \hbox{$\theta_{-}<\xi<\xi_2$,}
               \end{array}
             \right.
$$
respectively. Here, the flow near the point $D$ is the same as that defined in Section 5.2; the function
$(\tilde{u}, \tilde{v})(\eta)$ is determined by (\ref{102306a}) and (\ref{102304a}) with initial data $(\tilde{q}, \tilde{\tau}, \tilde{\sigma})(\eta_0)=(u_0, \tau_0, 0)$; the ray $y=1+x\tan\eta_1$, $x>0$ represents a pre-sonic shock with the front side state $(\tilde{u}, \tilde{v})(\eta_1)$ and the back side state $(u_1, v_1)$; $v_1=u_1\tan\theta_{+}$.
The main result of this part can be stated as the following theorem.
\begin{thm}\label{thm7}
Assume $\tau_1^e<\tau_0<\tau_1^i$ and $u_0>c_0$.
Assume furthermore that the oblique waves at the corners $B$ and $D$ are fan-shock composite wave and shock-fan composite wave, respectively.  Then the IBVP (\ref{E1}), (\ref{42701}) admits a global  piecewise smooth solution in $\Sigma$.
\end{thm}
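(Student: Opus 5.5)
The plan is to follow the scheme of Sections 5.3 and 5.6. We first locate the interaction point and identify the wave that meets ${\it sf}_{+}$ there; then we solve the resulting free-boundary and characteristic problems; finally we hand the remaining duct over to the machinery of Section 5.1. Denote the fan-shock composite wave at $B$ by ${\it fs}_{-}$. It consists of the 1-fan $(\tilde u,\tilde v)(\eta)$ for $\eta_0\le\eta\le\eta_1$, followed by the straight pre-sonic 1-shock $y=1+x\tan\eta_1$ with back state $(u_1,v_1)$. The first wave of ${\it sf}_{+}$ is the post-sonic 2-shock of inclination $\phi_{po}$. It meets the leading characteristic $y=1+x\tan\eta_0$ of ${\it fs}_{-}$ at a point $P$, exactly as in Section 5.6.

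Step 1 is the shock through the fan. From $P$ the post-sonic shock ${\it S}_{_P}$ propagates into the fan region with front state $(\tilde u,\tilde v)(\eta)$ and $\tau_b=\psi_{po}(\tau_f)$. It is obtained by integrating (\ref{100903}) with $\phi(P)=\phi_{po}$, until it reaches the ray $y=1+x\tan\eta_1$ at a point $E$. Lemma \ref{120801a} applies verbatim on $\wideparen{PE}$. Hence the singular Cauchy problem (Lemma \ref{lem7}) gives the flow downstream of $\wideparen{PE}$. A standard Goursat problem (Lemma \ref{lem1}) then gives the flow between $C_{+}^{P}$ and the cross $C_{-}$ characteristic $\widetilde{PF}$ of the fan of ${\it sf}_{+}$. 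Both steps proceed as in Section 5.6.

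Step 2 is the interaction at $E$, and I expect it to be the main obstacle. At $E$ the post-sonic 2-shock $\wideparen{PE}$ meets the pre-sonic 1-shock of ${\it fs}_{-}$. The states around $E$ are $(\tilde u,\tilde v)(\eta_1)$ ahead, $(u_1,v_1)$ behind the 1-shock, and the trace of the flow behind $\wideparen{PE}$. I would first solve the local Riemann problem at $E$ by wave-curve analysis, as in Section 5.3.1. One needs to intersect ${\it W}_2(u_1,v_1)$ with ${\it W}_1$ of the state behind $\wideparen{PE}$ at $E$; the monotonicity $d\sigma/d\tau<0$ along ${\it W}_2$ and $>0$ along ${\it W}_1$ (Lemma \ref{lem3.9} and its analogue) gives at most one intersection or a vacuum. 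Since $\tau_1=\psi_{pr}(\tilde\tau(\eta_1))<\tau_2^e$, I expect a transmitted post-sonic 2-shock issuing from $E$ with front state $(u_1,v_1)$, straight as in Section 5.3.3, reaching $\mathcal{W}_{+}$ at a point $G$. Into the back region a 1-wave is emitted; because the back states have $\tau>\tau_2^i$, this 1-wave is a centered 1-fan at $E$. The fan is centered because the boundary data on the two sides of $E$ are discontinuous in the Riemann invariants, so the corresponding downstream problem is a discontinuous Goursat problem. It is solved by Lemma \ref{lem8} together with Remark \ref{rem4.1}, via the hodograph transformation. The delicate point is to verify the sign conditions (\ref{6507})--(\ref{102308}): I would check $r$ and $s$ ordering from the fact that $(u_1,v_1)$ and the downstream trace lie on opposite sides of the transmitted wave curves, and use Lemma \ref{2501a}-type arguments to keep $(r,s)\in\Pi$. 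If the 1-shock is instead reflected as a shock, I would fall back on the Section 5.4 case split.

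Step 3 is the remainder of the duct. Draw the straight $C_{-}$ characteristic from $F$ with angle $\theta_{-}-A_2$ to $\mathcal{W}_{-}$. Two simple-wave problems (Lemmas \ref{lem3}, \ref{lem4}) then produce a transmitted shock-fan wave ${\it sf}_{+}^2$ and a fan ${\it f}_{-}^2$. Their reflections off the walls are handled by the mixed problems of Lemmas \ref{lem5} and \ref{lem5a}. The rest of the flow is built from finitely many standard Goursat, mixed and simple-wave problems as in Section 5.1, with cavitation allowed via Remark \ref{rem4.3} when the $2\mathcal{R}$ condition fails.
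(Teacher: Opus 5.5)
Step 1 and the overall plan of Step 3 follow the paper: Section 5.6 for $\wideparen{PE}$, the singular Cauchy problem behind it and the Goursat problem up to $\widetilde{PF}$, then the straight $C_-$ line from $F$ and Section 5.1 afterwards. Step 2, however, has a genuine gap.

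You assert that the wave transmitted from $E$ toward $\mathcal{W}_+$ is a post-sonic 2-shock with front state $(u_1,v_1)$, and you justify this by $\tau_1=\psi_{pr}(\tilde\tau(\eta_1))<\tau_2^e$. That inequality is not the relevant one. A post-sonic shock with front state $\tau_1$ requires $\tau_1\in(\tau_1^e,\tau_2^i)$, the domain of $\psi_{po}$. But $\psi_{pr}$ takes values in $(\tau_1^i,\tau_2^e)$, so nothing in the hypotheses forces $\tau_1<\tau_2^i$; for instance it fails when $\tilde\tau(\eta_1)$ is near $\tau_1^e$. When $\tau_1\ge\tau_2^i$, the state $(u_1,v_1)$ lies where $p''>0$. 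Then no rarefaction 2-shock from it satisfies Liu's condition (\ref{43001}), so ${\it W}_2(u_1,v_1)={\it F}_2(u_1,v_1)$ and the wave issued from $E$ is a centered 2-fan. This is the configuration of Section 5.3.2: the discontinuous Goursat problem with a straight $C_+$ boundary of Remark \ref{rem4.1}. In that case the interaction produces two fans, not ${\it sf}_+^2$ and ${\it f}_-^2$. Your fallback ``reflected as a shock, Section 5.4'' is not the missing alternative.

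You therefore need the paper's case split:
\begin{itemize}
\item $\tau_1\ge\tau_2^i$: a 2-fan at $E$, handled as in Section 5.3.2.
\item $\tau_1<\tau_2^i$: a straight post-sonic 2-shock plus a fan at $E$, handled as in Section 5.3.3.
\end{itemize}
In the second case your conclusion is right, but the reason is the Section 5.3.3 argument, not $\tau_1<\tau_2^e$. Write $\tau_b(E)$ for the specific volume behind $\wideparen{PE}$ at $E$. Since $\tilde\tau(\eta_1)<\tau_1$ and $\psi_{po}'<0$, we get $\psi_{po}(\tau_1)<\psi_{po}(\tilde\tau(\eta_1))=\tau_b(E)$. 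Hence the pure shock curve ${\it S}_2(u_1,v_1)$ cannot meet the 1-fan curve of the state behind $\wideparen{PE}$ at $E$, and the intersection must lie on ${\it SF}_2(u_1,v_1)$. With this case split, the rest of your Step 2 (the centered 1-wave at $E$ via Lemma \ref{lem8} and Remark \ref{rem4.1}) and Step 3 go through as in the paper.
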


\begin{figure}[htbp]
\begin{center}
\includegraphics[scale=0.35]{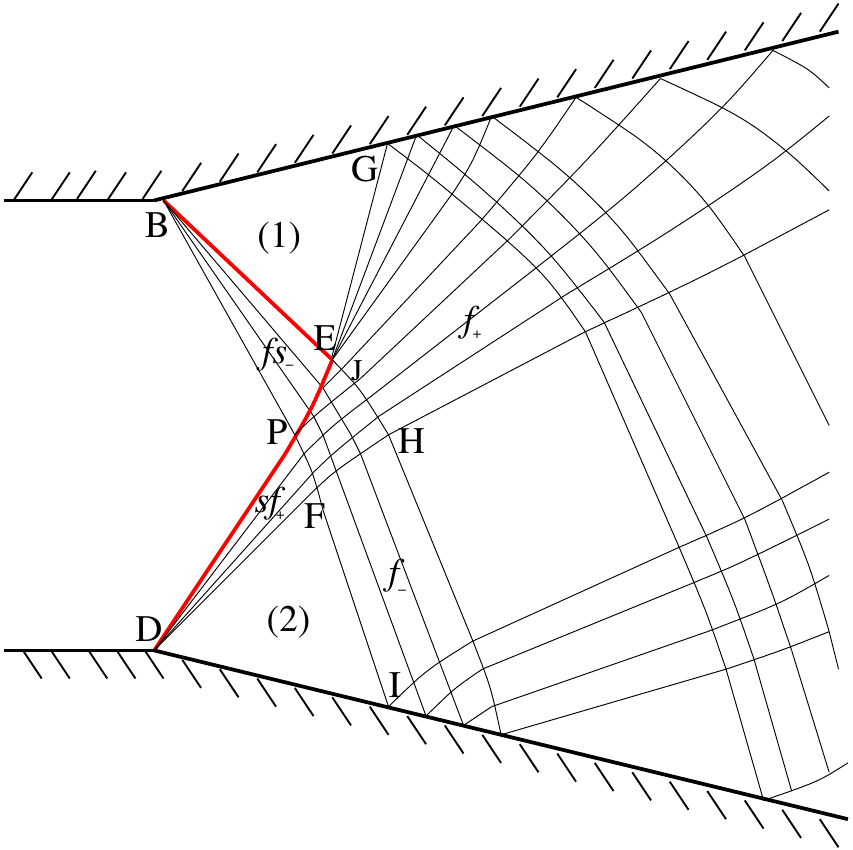}\qquad \qquad\qquad\includegraphics[scale=0.35]{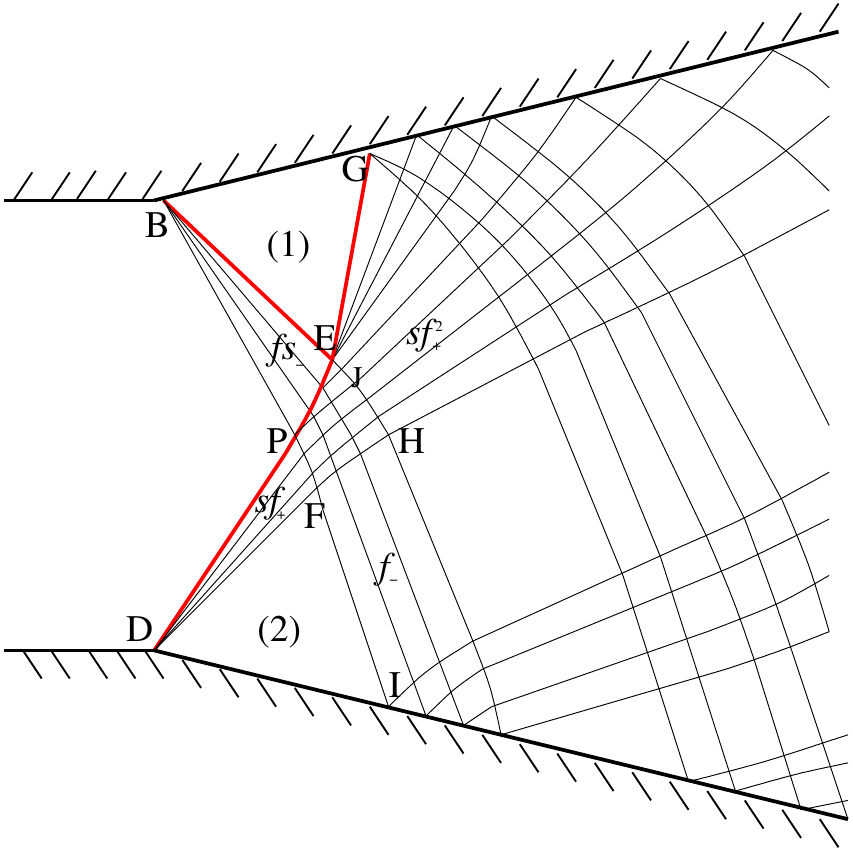}
\caption{\footnotesize Interaction of a fan-shock composite wave with a shock-fan composite wave. Left:  $\tau_1\geq \tau_2^i$; right: $\tau_1<\tau_2^i$.}
\label{Fig5.7.1}
\end{center}
\end{figure}

For convenience, we denote the fan-shock composite wave and the shock fan composite wave by ${\it fs}_{-}$ and ${\it sf}_{+}$, respectively; see Fig. \ref{Fig5.7.1}.
The two composite waves start to interact with each other at a point $P(x_{_P}, y_{_P})$, where
$x_{_P}=2(\tan\phi_{po}-\tan\eta_0)^{-1}$ and $y_{_P}=1+2(\tan\phi_{po}-\tan\eta_0)^{-1}\tan\eta_0$.
Moreover, the post-sonic shock of ${\it sf}_{+}$ propagates into the centered wave region of ${\it fs}_{-}$ from the point $P$ and remains as a post-sonic shock. This post-sonic shock will then intersect the pre-sonic shock of ${\it fs}_{-}$ at a point $E$. As in Section 5.6, we can obtain the post-sonic shock $\wideparen{PE}$ and the flow downstream of it, i.e., the flow on a triangle domain bounded by $\wideparen{PE}$, $\widetilde{PJ}$, and $\widetilde{EJ}$, where $\widetilde{PJ}$ is a forward $C_{+}$ characteristic line issued from $P$ and $\widetilde{EJ}$
is a forward $C_{+}$ characteristic line issued from $E$.

From the point $P$, we draw a cross $C_{-}$ characteristic line of the centered simple wave of ${\it sf}_{+}$. This curve intersects the straight $C_{+}$ characteristic line $y=-1+x\tan\xi_2$ of ${\it sf}_{+}$ at a point $F$. We then solve a standard Goursat problem for (\ref{E1}) on a curvilinear quadrilateral domain bounded by characteristic lines $\widetilde{PJ}$,
$\widetilde{PF}$, $\widetilde{JH}$, and $\widetilde{FH}$, where $\widetilde{JH}$ is a forward $C_{-}$ characteristic curve issued from $J$ and $\widetilde{FH}$ is a forward $C_{+}$ characteristic curve issued from $F$.

From the point $F$, we draw a straight $C_{-}$ characteristic line with the characteristic angle $\beta=\theta_{-}-A_2$ where $A_2$ is the same as that defined in  (\ref{def12}). This characteristic line intersects the wall $\mathcal{W}_{-}$ at a point $I$. We then solve a simple wave problem for (\ref{E1}) to obtain a fan wave ${\it f}_{-}$ with straight $C_{-}$ characteristic lines issued from  $\widetilde{EH}$.

We next construct the flow connected to the constant state $(u_1, v_1)$. Let $\tau_1$ be the same as that defined in (\ref{def12}).
When $\tau_1\geq\tau_2^i$,
as in Section 5.3.2, there is
a simple wave ${\it f}_{+}$ with straight $C_{+}$ characteristic lines issued from the point $E$ and $\widetilde{EJ}\cup \widetilde{JH}$. In this case, the interaction of ${\it fs}_{-}$ and ${\it sf}_{+}$ generates two fan waves with different types.
When $\tau_1<\tau_2^i$, as in Section 5.3.3, there is
an oblique shock-fan composite wave ${\it sf}_{+}^2$ with a straight post-sonic shock propagated from $E$ and straight $C_{+}$ characteristic lines issued from the point $E$ and $\widetilde{EJ}\cup \widetilde{JH}$.
In this case, the interaction of the composite waves ${\it fs}_{-}$ and ${\it sf}_{+}$ generates a shock-fan composite wave and a fan wave.
The flow after the interaction can be determined using the method described in Sections 5.1 and 5.3. This completes the proof of Theorem \ref{thm7}.

\subsection{Interaction of a fan-shock-fan composite wave with a fan-shock composite wave}
We consider the IBVP (\ref{E1}), (\ref{42701})
for $\tau_0<\tau_1^e$.
In this part we assume that the oblique waves at the corners $B$ and $D$ are fan-shock-fan and fan-shock composite waves, respectively, and the flow near the corners  $B$ and $D$ can be represented by
$$
(u, v)=\left\{
               \begin{array}{ll}
                 (u_0, 0), & \hbox{$-\frac{\pi}{2}<\eta<\eta_0$;} \\[2pt]
                 (\tilde{u}_l, \tilde{v}_l)(\eta), & \hbox{$\eta_0<\eta\leq -\xi_{e}$;} \\[2pt]
                  (\tilde{u}_r, \tilde{v}_r)(\eta), & \hbox{$-\xi_{e}\leq \eta<{\eta}_1$;} \\[2pt]
                 (u_1, v_1), & \hbox{$\eta_1<\xi<\theta_{+}$;}
               \end{array}
             \right.
\quad \mbox{and}
\quad
(u, v)=\left\{
               \begin{array}{ll}
                 (u_0, 0), & \hbox{$\xi_0\leq \xi\leq \frac{\pi}{2}$;} \\[2pt]
                 (\bar{u}, \bar{v})(\xi), & \hbox{$\phi_{pr}<\xi<\xi_0$;} \\[2pt]
                (u_2, v_2), & \hbox{$\theta_{-}<\xi<{\phi}_{pr}$,}
               \end{array}
             \right.
$$
respectively.
Here, the flow near the point $B$ is the same as that defined in  Section 5.5; the flow near the point $D$ is the same as that defined in  (\ref{120702a}). The main result of this part can be stated as the following theorem.
\begin{thm}\label{thm8}
Assume $\tau_0<\tau_1^e$ and $u_0>c_0$.
Assume furthermore that the oblique waves at the corners $B$ and $D$ are fan-shock-fan composite wave and fan-shock composite wave, respectively.  Then when $\tau_1^e-\tau_0$ is sufficiently small,
the IBVP (\ref{E1}), (\ref{42701}) admits a global  piecewise smooth solution in $\Sigma$.
\end{thm}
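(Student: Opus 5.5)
The plan is to combine the construction of Section 5.5 near the upper part of the interaction with the shock-tracking argument of Section 5.6/5.7 for the pre-sonic shock coming from $D$.

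\emph{Step 1: the fan--fan interaction.} The waves ${\it fsf}_{-}$ (from $B$) and ${\it fs}_{+}$ (from $D$) begin to interact at $P(\cot\xi_0,0)$, where the leading fans meet. From $P$ draw the forward $C_{+}$ cross characteristic of the left fan $(\tilde u_l,\tilde v_l)$ of ${\it fsf}_{-}$; it meets the double-sonic shock at a point $E$. Also draw the forward $C_{-}$ cross characteristic of the fan $(\bar u,\bar v)$ of ${\it fs}_{+}$; it meets the pre-sonic shock ray $\xi=\phi_{pr}$ at a point $F$. As in (\ref{62303}) and (\ref{101501}), we have $\bar\partial_{\pm}\rho<0$ and $(r,s)\in\Pi$ on these arcs. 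Lemma \ref{lem1} then gives a classical solution $(u_3,v_3)$ of the Goursat problem on the quadrilateral spanned by $\widetilde{PE}$ and $\widetilde{PF}$. When $\tau_1^e-\tau_0$ is small, the estimate (\ref{6811}) of Lemma \ref{lem110101} carries over, because the data are $O(\tau_1^e-\tau_0)$-close to the state $\tau=\tau_1^e$, $u=u_0$, $v=0$. Note that $\tau_0<\tau_1^e$ forces $\phi_{pr}$ to lie in $(\xi_i,\xi_e)$, so the relevant smallness also controls $\tau_f$ along the pre-sonic shock.

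\emph{Step 2: two shocks propagate into $\Omega^3$.} From $E$, the double-sonic shock enters as a post-sonic 1-shock $S_E$. Its existence, and the structural conditions $\bar\partial_\Gamma r_d<0$, $\bar\partial_\Gamma s_d<0$, follow verbatim from Lemmas \ref{91011}, \ref{110411} and \ref{2501a}. From $F$, the pre-sonic shock of ${\it fs}_{+}$ must continue as a 2-shock $S_F$ through the nonconstant region $(u_3,v_3)$. Because $\tau_f$ is near $\tau_1^e<\tau_1^i$ on $S_F$, I will first decide its type. The nonexistence arguments of Section 5.5.7 adapt here: computing $\bar\partial_\Gamma\phi$ at $F$ from (\ref{250102}) together with the sign of $\bar\partial_\Gamma\rho_f$ obtained from the decomposition (\ref{123102}). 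The expected outcome is that $S_F$ becomes post-sonic, with $\tau_b=\psi_{po}(\tau_f)$, as soon as it enters $\Omega^3$. The shock is then determined by the ODE (\ref{61115}) with $\phi(F)=\phi_{pr}$. Using $\bar\partial_\Gamma\tau_f>0$, exactly as in (\ref{82008}), I obtain the a priori bound $\tau_f<\tau_2^i$ and show that $S_F$ meets $S_E$ at a point $G$.

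\emph{Step 3: downstream flow and the main obstacle.} Behind $\wideparen{FG}$ and $\wideparen{EG}$ I solve singular Cauchy problems by Lemma \ref{lem7}. This requires the structural conditions $\bar\partial_\Gamma r_b>0$, $\bar\partial_\Gamma s_b>0$ on $\wideparen{FG}$, and I expect this to be the hardest step. At $F$ the shock is not double-sonic: there is a jump from pre-sonic to post-sonic, so $\bar\partial_\Gamma m\neq0$ there and the perturbative argument of Lemma \ref{110411} does not apply directly. My first attempt will follow the proof of Lemma \ref{120801a}, writing $\bar\partial_\Gamma=t_+\bar\partial_++t_-\bar\partial_-$, and using $\bar\partial_\Gamma\rho_f<0$, $\psi_{po}'<0$ and the positivity of $\bar\partial_\Gamma\phi-\bar\partial_\Gamma m/(\rho_bL_b)$. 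If needed, I will add smallness of $\tau_1^e-\tau_0$ to control the $\bar\partial_-$ contributions.

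Once the structural conditions hold, the remaining regions are filled in as in Section 5.5.5. Standard Goursat problems give the regions adjacent to $\widetilde{EH}$ (the right fan of ${\it fsf}_{-}$) and to the constant state behind the pre-sonic shock; the latter is handled by Lemma \ref{lem3} or \ref{lem4} as a simple wave. A discontinuous Goursat problem centered at $G$ is solved by Lemma \ref{lem8}. Finally, the outgoing simple waves reflect off $\mathcal W_\pm$, and these reflections are treated by the mixed problems of Lemmas \ref{lem5} and \ref{lem5a}, completing the global solution as in Section 5.1.
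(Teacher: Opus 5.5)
\textbf{Gap in Step 2, inherited by Step 3.} Your Step 2 is not correct: the shock leaving $F$ is not post-sonic from the outset, and the initial value problem you write for it is inconsistent at $F$.

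Since $\phi_{pr}=\alpha_f(F)$, the relation (\ref{61112}), $\phi=\sigma_f+\arcsin(N_f/q_f)$, forces $N_f(F)=c_f(F)$. A post-sonic shock, however, has $m^2=-p'(\psi_{po}(\tau_f))>-p'(\tau_f)$ by (\ref{62603}), i.e.\ $N_f>c_f$ and $\phi>\alpha_f$. Equivalently, $\tau_b$ would jump at $F$ from $\tau_2=\psi_{pr}(\tau_f(F))$ to $\psi_{po}(\tau_f(F))>\tau_2$, and your proposal neither notices nor resolves this jump. At the double-sonic point of Section 5.5 no jump occurs, because there $N_f=c_f$ and $N_b=c_b$ hold simultaneously.

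What actually determines the shock near $F$ is the constant state $(u_2,v_2)$ behind the pre-sonic shock. Backward $C_+$ characteristics from points between the straight $C_-$ characteristic $\overline{FK}$ of $(u_2,v_2)$ and the continued shock land on $\overline{FK}$. Hence $s\equiv s(u_2,v_2)$ there, the downstream flow is a simple wave with straight $C_-$ characteristics (Theorem \ref{thm0}), and $s_b$ is constant along the shock. Together with continuity $\tau_b(F)=\tau_2<\psi_{po}(\tau_f(F))$, this makes the shock \emph{transonic} near $F$.

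The paper obtains this shock from the free boundary problem (\ref{102904})--(\ref{102906}) (Lemma \ref{120903a}), integrating (\ref{103002}), (\ref{103004}) from $(\phi,\rho)(F)=(\phi_{pr},\rho_2)$. It becomes post-sonic only at a later point where $\phi=\alpha_b$, and only from there on is a singular Cauchy problem needed. Consequently the structural condition $\bar\partial_\Gamma s_b>0$ that you flag as the ``hardest step'' is false on the transonic part, where $\bar\partial_\Gamma s_b=0$. The nonexistence arguments of Section 5.5.7 do not transfer either, since they rely on $\bar\partial_\Gamma m=0$ at a double-sonic point.

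\textbf{Second problem, in Step 1.} The data on $\widetilde{PF}$ run up to $\tau_f(F)=\bar\tau(\phi_{pr})\in(\tau_1^e,\tau_1^i)$, which is not controlled by $\tau_1^e-\tau_0$; the fact that $\phi_{pr}\in(\xi_i,\xi_e)$ says exactly this. So (\ref{6811}) does not hold on all of $\Omega^3$, and $\Omega^3$ does not shrink. The estimate holds only on a subdomain near $P$ where $\tau\le\psi_{pr}^{-1}(\tau_2')$; compare (\ref{110407}). Lemma \ref{91011} therefore cannot be quoted verbatim for $S_E$.

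In fact, for fixed $\tau_2$ and small $\tau_1^e-\tau_0$, $S_E$ leaves $\Omega^3$ through $\widetilde{PF}$ at a point $H$ near $P$ (Lemma \ref{111503}). It must then be continued through the fan of ${\it fs}_{+}$ until it meets the pre-sonic shock at a point $G$ between $D$ and $F$. The structural conditions on that part are proved as in Lemma \ref{120801a}, not Lemma \ref{110411}. The interaction is then finished by a DGP centered at $G$, with a straight $C_-$ characteristic from $G$ if $\tau_2\ge\tau_2^i$ and a straight post-sonic shock if $\tau_2<\tau_2^i$. In that case no shock from $F$ appears at all.

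Your configuration, with $S_E$ meeting a shock from $F$ inside $\Omega^3$, arises only when $0<\tau_2^e-\tau_2<\epsilon$, where $\epsilon\to0$ as $\tau_1^e-\tau_0\to0$. Both cases have to be treated, and the second one requires the transonic-then-post-sonic analysis described above.
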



\begin{figure}[htbp]
\begin{center}
\includegraphics[scale=0.38]{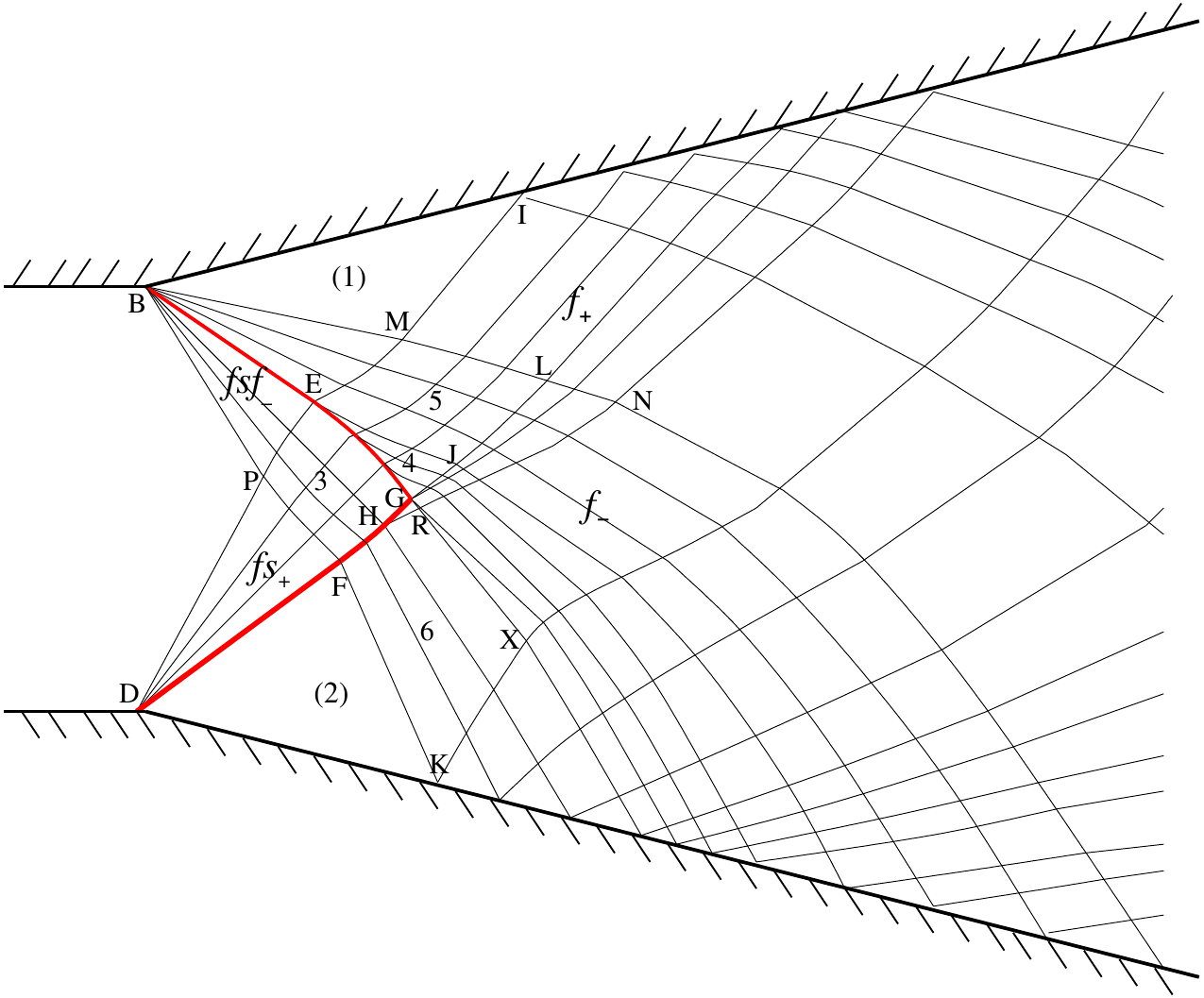}
\caption{\footnotesize Interaction of a fan-shock-fan composite wave with a fan-shock composite wave.}
\label{Fig5.8.1}
\end{center}
\end{figure}

\subsubsection{\bf Interaction of fan waves}
For convenience, we use ${\it fsf}_{-}$ and ${\it fs}_{+}$ to denote the fan-shock-fan composite wave issued from $B$
and the fan-shock composite wave issued from $D$, respectively.
The two composite waves start to interact with each other at point ${P}(\cot\xi_0,0)$.
From the point $P$, we draw a forward $C_{-}$  cross characteristic curve of the oblique fan-shock composite wave ${\it fs}_{+}$. This characteristic line intersects the pre-sonic shock of ${\it fs}_{+}$ at a point $F$.
From the point $P$, we draw a forward $C_{+}$  cross characteristic curve of the oblique fan-shock-fan composite wave ${\it fsf}_{-}$. This characteristic line intersects the double-sonic shock of ${\it fsf}_{-}$ at a point $E$ and ends up at a point $M$ on the straight $C_{-}$ characteristic line $y=1+x\tan\eta_1$ of ${\it fsf}_{-}$.
We will show that in some cases the fan-shock composite wave ${\it fs}_{+}$ region  is not totally delimited by the cross characteristic curve $\widetilde{PF}$.

We first consider (\ref{E1}) with the boundary condition
\begin{equation}\label{102903a}
(u, v)=\left\{
               \begin{array}{ll}
                  (\tilde{u}_l, \tilde{v}_l)(\eta), & \hbox{$(x,y)\in \widetilde{{PE}}$;} \\[2pt]             (\bar{u}, \bar{v})(\xi), & \hbox{$(x,y)\in \widetilde{{PF}}$.} \\[2pt]
              \end{array}
             \right.
\end{equation}

\begin{lem}
Assume $\tau_1^e-\tau_0$ to be sufficiently small. Then the Goursat problem (\ref{E1}), (\ref{102903a}) admits a classical solution on a curvilinear quadrilateral domain $\overline{\Omega^3}$ closed by $\widetilde{{PE}}$, $\widetilde{{PF}}$, $\widetilde{{ES}}$, and $\widetilde{{FS}}$, where $\widetilde{{ES}}$ is a forward $C_{+}$ characteristic line issued from ${E}$ and $\widetilde{{FS}}$ a forward $C_{-}$ characteristic line issued from ${F}$; see Fig. \ref{Fig5.8.2} (left).
Moreover, the solution satisfies $\bar{\partial}_{\pm}\rho<0$ in $\overline{\Omega^3}$.
\end{lem}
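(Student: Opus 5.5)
The plan mirrors the proof of Lemma \ref{lem110101}. The problem (\ref{E1}), (\ref{102903a}) is a standard Goursat problem with continuous data at $P$: both centered waves start from $(u_0,0)$ there. So the task is to check the hypotheses of Lemma \ref{lem1}, or rather those of its proof, since here $\tau(P)=\tau_0<\tau_1^e$ and not $\tau(P)>\tau_2^i$. On $\widetilde{PE}$ the data come from the 1-fan $(\tilde u_l,\tilde v_l)(\eta)$, $\eta_0\le\eta\le-\xi_e$. As in (\ref{101501}) one has $\bar\partial_+\rho<0$ there, because $\tilde\rho'(\eta)$ has a fixed sign by (\ref{102304a}) with $p''>0$ for $\tau<\tau_1^i$. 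On $\widetilde{PF}$ the data come from the 2-fan $(\bar u,\bar v)(\xi)$, $\phi_{pr}\le\xi\le\xi_0$, and as in (\ref{62303}) one has $\bar\partial_-\rho<0$. Along these data curves $\tau$ lies in $[\tau_0,\tau_1^e]$ on $\widetilde{PE}$ and in $[\tau_0,\bar\tau(\phi_{pr})]\subset[\tau_0,\tau_1^i)$ on $\widetilde{PF}$. Also $r$ is constant on $\widetilde{PF}$, $s$ is constant on $\widetilde{PE}$, and $(r,s)\in\Pi$ as in (\ref{1202}).

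Local existence follows from Li--Yu as in Lemma \ref{lem1}. To extend to the whole quadrilateral $\overline{\Omega^3}$ I would run the continuity argument of Lemma \ref{lem1} on the set where $\tau<\tau_1^i$, so that $p''>0$. The key input is smallness. As $\tau_1^e-\tau_0\to0$, the fans near $P$ have amplitude $O(\tau_1^e-\tau_0)$, so the data curves have length $O(\tau_1^e-\tau_0)$ and $\overline{\Omega^3}$ shrinks to $P$. I would argue in the hodograph plane with (\ref{HT}). The rectangle $\{r(P)\le r\le r(E),\ s(F)\le s\le s(P)\}$ is small, and $\tau$, being a function of $r-s$, stays within $O(\tau_1^e-\tau_0)$ of $\tau_0$ on the whole rectangle. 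Hence $\tau<\tau_1^i$ throughout, and the solution satisfies (\ref{102702}).

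With $p''(\tau)>0$ and $\mathcal F>0$ in this range, the decompositions (\ref{81104}) show that $\bar\partial_+\bar\partial_-\rho$ and $\bar\partial_-\bar\partial_+\rho$ have the same sign as $(\bar\partial_\mp\rho)^2+(\mathcal F-1)\bar\partial_-\rho\,\bar\partial_+\rho$. The argument from the proof of Lemma \ref{lem1}, with the point $S$ and contradiction at the first touching, then gives the following. The sign conditions $\bar\partial_\pm\rho<0$ propagate from the boundary: integrating along characteristics, the right-hand side is a Riccati-type expression that cannot change the sign of $\bar\partial_\mp\rho$. A lower bound $\mathcal M_1<\bar\partial_\pm\rho$ holds as well. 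On $\widetilde{PE}$ the quantity $\bar\partial_-\rho$ starts from its value at $P$ and is obtained by integrating the first equation of (\ref{81104}), which is a Riccati ODE in $\bar\partial_-\rho$ over a length $O(\tau_1^e-\tau_0)$. The same holds for $\bar\partial_+\rho$ on $\widetilde{PF}$. These give the uniform $C^1$ bound, and global existence on $\overline{\Omega^3}$ follows as in Lemma \ref{lem1}.

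The main obstacle is the uniform lower bound preventing blow-up of $\bar\partial_\pm\rho$. Because $p''>0$, the Riccati terms push $\bar\partial_\pm\rho$ toward $-\infty$, which is exactly the difficulty noted in Remark \ref{rem5.3}. Near $P$ the magnitude of $\bar\partial_\pm\rho$ is close to $\mathcal L/\!\mathrm{dist}$-type values of order $\mathcal L$, as in (\ref{6811}), while the length of the characteristics is $O(\tau_1^e-\tau_0)$. I would therefore compare with the explicit Riccati solution $y'=Ky^2$, $y(0)=-2\mathcal L$, which stays bounded on intervals of length less than $(2K\mathcal L)^{-1}$. This yields $-3\mathcal L<\bar\partial_\pm\rho<0$ on $\overline{\Omega^3}$ once $\tau_1^e-\tau_0$ is small. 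That bound closes the bootstrap and gives $\bar\partial_\pm\rho<0$ in $\overline{\Omega^3}$.
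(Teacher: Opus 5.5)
Your route is the paper's: its proof is the single line ``by Lemma \ref{lem1}''. You also correctly see that what Lemma \ref{lem1} really needs here is not $\tau(P)>\tau_2^i$ but $p''>0$ on the whole determinate region, i.e.\ $\tau<\tau_1^i$. However, you secure the a priori bounds with a premise carried over from Lemma \ref{lem110101} that is false here.

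\textbf{The domain does not shrink.} In Lemma \ref{lem110101} both data curves cross fans running from $\tau_0$ to $\tau_1^e$. Here $\widetilde{PF}$ crosses the 2-fan of ${\it fs}_{+}$, which runs from $\tau_0$ to $\bar\tau(\phi_{pr})=\psi_{pr}^{-1}(\tau_2)\in(\tau_1^e,\tau_1^i)$, as your own first paragraph records. For fixed $\tau_2$ its amplitude tends to $\psi_{pr}^{-1}(\tau_2)-\tau_1^e>0$, not to $0$. Consequently:
\begin{itemize}
\item $\widetilde{PF}$ is not of length $O(\tau_1^e-\tau_0)$;
\item $\overline{\Omega^3}$ does not shrink to $P$, and $s(F)$ is not close to $s(P)$;
\item neither (\ref{102702}) nor ``$\tau$ within $O(\tau_1^e-\tau_0)$ of $\tau_0$ on the hodograph rectangle'' holds.
\end{itemize}
The paper is explicit about this. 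Lemma \ref{111503} obtains smallness only on $\Omega_3'$, cut off at $F'$ with $\tau_2^e-\tau_2'$ small, and Section 5.8.3 needs $\tau_2^e-\tau_2<\epsilon$ for $\Omega^3$ itself to be small.

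Your Riemann-invariant idea does repair the $\tau$-bound once it is applied at the right corner. Since $s$ is constant on $C_+$ and $r$ on $C_-$, every point has $(r,s)\in[r(P),r(E)]\times[s(F),s(P)]$. Hence
\[
r-s\le r(E)-s(F)=(r-s)(F)+\big((r-s)(E)-(r-s)(P)\big),
\]
and the bracket is small with $\tau_1^e-\tau_0$. So $\tau\le\bar\tau(\phi_{pr})+o(1)<\tau_1^i$ once $\tau_1^e-\tau_0$ is small relative to $\tau_1^i-\bar\tau(\phi_{pr})$. This also gives $r(E)-s(F)<2\mathcal R$, so Lemma \ref{lem2} closes the quadrilateral at $S$.

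\textbf{The Riccati sign is backwards.} Where $p''>0$, both $K=\tau^4p''/(4c^2\cos^2A)$ and $\mathcal F$ are positive. Take a first point where $\bar\partial_-\rho$ reaches the common lower bound $\mathcal M_1$, so $\bar\partial_-\rho\le\bar\partial_+\rho<0$ there. The first equation of (\ref{81104}) then gives $\bar\partial_+\bar\partial_-\rho\ge K\mathcal F\,\bar\partial_-\rho\,\bar\partial_+\rho>0$. So the quadratic terms push $\bar\partial_\pm\rho$ toward $0$, not toward $-\infty$.

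This is exactly the contradiction argument of Lemma \ref{lem1}. It yields $\mathcal M_1<\bar\partial_\pm\rho<0$, with $\mathcal M_1$ twice the minimum of the boundary values, on a domain of any size. The blow-up mentioned in Remark \ref{rem5.3} occurs only if the solution enters $\tau>\tau_1^i$, where $K<0$. Likewise, $y'=Ky^2$ with $y(0)=-2\mathcal L$ and $K>0$ has the global solution $y=-2\mathcal L/(1+2K\mathcal L t)$; the finite-time blow-up you describe belongs to $K<0$.

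Your bound $-3\mathcal L<\bar\partial_\pm\rho$ on all of $\overline{\Omega^3}$ is also false in general. By (\ref{62303}) and (\ref{102304}), $|\bar\partial_-\rho|$ on $\widetilde{PF}$ is proportional to $1/p''(\bar\tau)$. This is far from its value at $P$ and unbounded as $\bar\tau(\phi_{pr})\to\tau_1^i$.

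So smallness enters only once, to keep $\tau<\tau_1^i$. After that, the argument of Lemma \ref{lem1} gives the $C^1$ bound and global existence.
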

\begin{proof}
The existence can be obtained by Lemma \ref{lem1}.
\end{proof}

For convenience, we use subscript `$3$' to denote the flow states in $\overline{\Omega^3}$, i.e.,
$$(u, v, \tau, \alpha, \beta, A, \sigma)=(u_3, v_3, \tau_3, \alpha_3, \beta_3, A_3, \sigma_3)(x,y), \quad (x,y)\in \overline{\Omega^3}.$$

\subsubsection{\bf Post-sonic shock from the point $E$}
The simple wave interaction region $\Omega^3$ will not exist as a whole in the duct. Actually, it will be delimited by shocks propagating from the points $E$ and $F$. We next discuss the shocks propagating from $E$ and $F$.

As in Section 5.5.3, the double-sonic shock of ${\it fsf}_{-}$ will propagate into the region $\Omega^3$ from the point $E$ and become a post-sonic shock.
For convenience, we denote this shock by $S_{_E}$. The shock front $S_{_E}$ can be constructed using the method described in Section 5.5.3; see (\ref{110408}). There are the following two possibilities:
\begin{enumerate}
  \item $S_{_E}$ intersects $\widetilde{FS}$ at a point;
  \item $S_{_E}$ intersects $\widetilde{PF}$ at a point.
\end{enumerate}

Let $\tau_2$ be the same as that defined in (\ref{def12}).
Then we have $\tau_2=\psi_{pr}(\bar{\tau}(\phi_{pr}))<\tau_2^e$.
From Lemma \ref{91011} we can see that for a fixed $\tau_0<\tau_1^e$,  when $\tau_2^e-\tau_2$ is sufficiently small, $S_{_E}$ intersects the characteristic boundary  $\widetilde{FS}$ of the SGP (\ref{E1}), (\ref{102903a}) at a point. We next show that the second case is also possible in some cases.

We use $\phi$ to denote the inclination angle of ${\it S}_{_{E}}$, and use subscripts `$f$' and `$d$' to denote the states on the front and back sides of $S_{_E}$, respectively.
We set
$$
L=u\cos \phi+v\sin \phi\quad \mbox{and} \quad N=-u \sin \phi+v\cos \phi.
$$

\begin{lem}\label{111503}
Let the value of $\tau_2$ be fixed. Then  when $\tau_1^e-\tau_0$ is sufficiently small, the shock curve $S_{_E}$ intersects the cross characteristic curve  $\widetilde{PF}$ at a point $H$; see Fig. \ref{Fig5.8.2} (left). Moreover, the point $H$ approaches the point $P$ as $\tau_1^e-\tau_0\rightarrow 0$.
\end{lem}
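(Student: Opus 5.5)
The plan is to compare the two competing lengths as $\tau_1^e-\tau_0\to0$ with $\tau_2$ held fixed. Along $\widetilde{PF}$ the state runs from $(u_0,0)$ at $P$ to the front state of the pre-sonic shock of ${\it fs}_{+}$ at $F$. Since $\tau_2$ is fixed, $\phi_{pr}$ is fixed, and so is $\bar\tau(\phi_{pr})=\psi_{pr}^{-1}(\tau_2)\in(\tau_1^e,\tau_1^i)$. The centered wave of ${\it fs}_{+}$ is a fixed solution of (\ref{102304}), and its angular width $\xi_0-\phi_{pr}$ does not shrink. Hence $\widetilde{PF}$ keeps a length bounded below uniformly, and along it $\tau_3$ increases strictly from $\tau_0$ to a fixed value above $\tau_1^e$.

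On the ${\it fsf}_{-}$ side the situation is different. The arc $\widetilde{PE}$ spans the fan from $\eta_0$ to $-\xi_e$, and its length tends to $0$ as $\tau_1^e-\tau_0\to0$, by (\ref{102304a}) and $\tilde\tau'\neq0$. So $E\to P$.

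Near $P$, the solution of the Goursat problem (\ref{E1}), (\ref{102903a}) is controlled by the estimates of Lemma \ref{lem1} together with the hodograph representation. As in (\ref{6811}), I would show that on the part of $\overline{\Omega^3}$ within distance $\delta$ of $P$:
\begin{itemize}
\item $\tau_3$ stays in a small neighborhood of $[\tau_0,\tau_1^e]$;
\item $\bar\partial_{-}\rho_3$ is bounded;
\item $\bar\partial_{+}\rho_3$ converges to a negative constant like $-\mathcal{L}$.
\end{itemize}

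Next I would construct $S_{_E}$ from (\ref{110408}) with initial data $\phi(E)=\alpha_3(E)$, exactly as in Lemma \ref{91011}. Using (\ref{82003}) and the sign of $\bar\partial_{+}\rho_3$, this gives $\bar\partial_{_\Gamma}\tau_f>0$ and $\phi>\alpha_f$ as long as $\tau_f<\tau_2^i$. So $S_{_E}$ is a post-sonic 1-shock that lies between $\widetilde{ES}$ and the $C_{-}$ direction and moves into $\Omega^3$. Its inclination $\phi$ stays near $-A_1^e$, which is close to $\beta_3$ near $P$. Consequently $S_{_E}$ is transversal to the $C_{+}$ family and nearly parallel to the $C_{-}$ curves. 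Since $E$ lies on the $C_{+}$ boundary through $P$, the shock runs downward toward $\widetilde{PF}$.

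The core of the argument is a comparison, in the characteristic coordinates of $\overline{\Omega^3}$ (equivalently in the hodograph plane $(r,s)$ via Lemma \ref{lem4.7}), of where $S_{_E}$ meets the $C_{+}$ boundary $\widetilde{PF}$ against the opposite boundary $\widetilde{FS}$. The shock starts at distance $O(|PE|)$ from $\widetilde{PF}$, while $\widetilde{FS}$ is at distance bounded below. Since $\sin(\phi-\beta_3)\geq c>0$ fails only to leading order, I would track $\phi-\beta_3$. The shock is strictly steeper than $C_{-}$ on the $1$-shock side, by the analogue of Proposition \ref{pro3.1.1} and $N_f>c_f$. Its displacement across $C_{-}$ lines over a bounded distance is therefore of order $|PE|$ times a bounded factor, so it must hit $\widetilde{PF}$ at a point $H$ with $|PH|\leq C|PE|\to0$. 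This also guarantees that $\tau_f$ stays below $\tau_2^i$ along $\wideparen{EH}$, which closes the a priori assumption used in the construction.

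I expect the main obstacle to be this last quantitative transversality estimate. It requires a lower bound on $\phi-\beta_f$ along $S_{_E}$, which is degenerate at $E$ because the shock starts double-sonic with $N_f=c_f$. My first attempt would be to differentiate $\phi-\beta_f$ along $S_{_E}$ using (\ref{110408}) and (\ref{8}), and to show that $\bar\partial_{_\Gamma}(\phi-\beta_f)$ is bounded below by a positive multiple of $\mathcal{L}$ near $E$, mirroring (\ref{61109}). That makes $S_{_E}$ curve away from the $C_{-}$ direction at a definite rate, which forces it into $\widetilde{PF}$ within distance $O(|PE|)$.
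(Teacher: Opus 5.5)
\textbf{Gap 1: the decisive estimate is missing.} The argument in your fifth paragraph does not do the job. Because $S_{_E}$ starts double-sonic, $\beta_f-\phi$ vanishes at $E$. So the qualitative fact that the post-sonic 1-shock is steeper than $C_-$ gives no rate at which it crosses $C_-$ curves. Nothing in your argument prevents it from staying nearly tangent to that family and leaving through $\widetilde{FS}$. Whether it reaches $\widetilde{PF}$ first is a competition between $|PE|$ and the length of $\widetilde{PF}$; compare the remark just before this lemma, where for fixed $\tau_0$ and small $\tau_2^e-\tau_2$ the shock does exit through $\widetilde{FS}$. That competition is decided by a curvature gap, and this is the whole content of the paper's proof:
\begin{itemize}
\item It works in the region $\Omega_3'$ cut off at $F'\in\widetilde{PF}$ with $\tau(F')=\psi_{pr}^{-1}(\tau_2')$ and $\tau_2^e-\tau_2'=\delta$. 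There $\alpha_3-A_1^e$, $\beta_3+A_1^e$, $\tau_3-\tau_1^e$ and $\bar{\partial}_{\pm}\rho_3+\mathcal{L}$ are uniformly small.
\item From (\ref{82301}), (\ref{8}) and (\ref{110408}) it derives $\bar{\partial}_{-}\beta>-\frac{\sin(2A_1^e)}{\rho_1^e}\mathcal{L}+\frac{\tan A_1^e\,p''(\tau_1^e)}{4(c_1^e)^2(\rho_1^e)^4}\mathcal{L}$ on $\widetilde{PF'}$, and $\bar{\partial}_{_\Gamma}\phi<-\frac{\sin(2A_1^e)}{\rho_1^e}\mathcal{L}+\frac{\tan A_1^e\,p''(\tau_1^e)}{8(c_1^e)^2(\rho_1^e)^4}\mathcal{L}$ along $S_{_E}$.
\item Since $x_{_E}-x_{_P}\to0$, $y_s(x_{_E})-y_-(x_{_P})\to0$, and both inclinations are near $-A_1^e$, the uniformly concave function $y_s-y_-$ vanishes at some $x_{_H}\in(x_{_E},x_{_{F'}})$ with $x_{_H}-x_{_E}\to0$.
\end{itemize}
Your last paragraph points at this mechanism but leaves it unproved, and with the wrong sign. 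For a post-sonic 1-shock $\phi\le\beta_f$ with equality at $E$, so what you need is $\bar{\partial}_{_\Gamma}(\beta_f-\phi)\ge\epsilon_0\mathcal{L}$ for a fixed $\epsilon_0>0$. Your bound $|PH|\le C|PE|$ is also false. A fixed curvature gap against a transverse offset of order $|PE|$ gives $|PH|$ of order $|PE|^{1/2}$. This is harmless, since only $H\to P$ is needed.

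\textbf{Gap 2: wrong characteristic family.} The computation cannot be run as you set it up, because you transplanted Lemma \ref{91011} (the 2-shock $S_{_F}$) without reflecting it.
\begin{itemize}
\item $S_{_E}$ is a 1-shock, so $\phi(E)=\beta_3(E)=-\xi_e$, not $\alpha_3(E)$. Your choice also contradicts your own later remark that $\phi$ stays near $-A_1^e$.
\item The post-sonic condition is $\phi=\beta_b<\beta_f$, not $\phi>\alpha_f$.
\item $\widetilde{PF}$ is the $C_-$ boundary, not the $C_+$ one.
\end{itemize}
With $\phi\approx\beta_3$, (\ref{82003}) gives $\bar{\partial}_{_\Gamma}\approx\bar{\partial}_{-}$ along $S_{_E}$. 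The $\bar{\partial}_{+}$ term has a coefficient of order $\beta_3-\phi$ and the unfavourable sign. Hence $\bar{\partial}_{_\Gamma}\tau_f>0$, $\bar{\partial}_{_\Gamma}\phi$ in (\ref{110408}), and $\bar{\partial}_{-}\beta$ in (\ref{8}) are all governed by $\bar{\partial}_{-}\rho_3$, not $\bar{\partial}_{+}\rho_3$.

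So mere boundedness of $\bar{\partial}_{-}\rho_3$ is not enough. You need its uniform limit $-\mathcal{L}$ near $P$, obtained from the fan formula on $\widetilde{PF}$ as in Lemma \ref{lem110101} and propagated by (\ref{81104}). With it, to leading order, $\bar{\partial}_{_\Gamma}\phi\approx-\frac{\sin(2A_1^e)}{\rho_1^e}\mathcal{L}$ and $\bar{\partial}_{-}\beta\approx-\frac{\sin(2A_1^e)}{\rho_1^e}\mathcal{L}+\frac{\tan A_1^e\,p''(\tau_1^e)}{2(c_1^e)^2(\rho_1^e)^4}\mathcal{L}$. Their difference is the gap.

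\textbf{How your idea could then be completed.} Once the gap is established, your characteristic-coordinate idea closes the proof as well as the paper's $y_s$--$y_-$ comparison does:
\begin{itemize}
\item $r$ is constant on $C_-$ curves, equals $r(P)$ exactly on $\widetilde{PF}$, and $r(E)-r(P)\to0$.
\item Along the shock, $\bar{\partial}_{_\Gamma}r_f=\frac{\sin(\phi-\beta_3)}{\sin(2A_3)}\bar{\partial}_{+}r_3$, with $\bar{\partial}_{+}r_3>0$ by (\ref{122701}).
\item Linear growth of $\beta_3-\phi$ therefore forces $r_f$ down to $r(P)$ within arc length $O\big((r(E)-r(P))^{1/2}\big)$.
\end{itemize}
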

\begin{proof}
We use $\phi$ to denote the inclination angle of $S_{_E}$. The shock curve $S_{_E}$ can be represented by a function
$y=y_{s}(x)$ which satisfies $y_{s}'(x)=\tan(\phi(x, y_{s}(x)))$.
The characteristic line $\widetilde{PF}$ can be represented by a function $y=y_{-}(x)$ which satisfies
$y_{-}'(x)=\tan(\beta_3(x, y_-(x)))$.
It is obvious that $x_{_E}-x_{_P}\rightarrow 0$ and $y_{s}(x_{_E})-y_{-}(x_{_P})\rightarrow 0$ as $\tau_1^e-\tau_0\rightarrow 0$.

For a given $\tau_2'\in (\tau_2, \tau_2^e)$, we let $F'$ be the point on $\widetilde{PF}$ such that $\tau(F')=\psi_{pr}^{-1}(\tau_2')$.
From $F'$,
 we draw a forward $C_{+}$ characteristic curve of the Goursat problem (\ref{E1}), (\ref{102903a}). This characteristic curve intersects $\widetilde{ES}$ at a point $S'$.
We denote by $\Omega_3'$ the curvilinear quadrilateral domain bounded by $\widetilde{{PE}}$, $\widetilde{{PF'}}$, $\widetilde{{ES'}}$, and $\widetilde{{F'S'}}$.

As in Lemma \ref{lem110101}, we have
\begin{equation}\label{110407}
\big\|(\alpha_3-A_1^e, \beta_3+A_1^e, \tau_3-\tau_1^e, \bar{\partial}_{\pm}\rho_3+\mathcal{L})\big\|_{0; \overline{\Omega_3'}}~\rightarrow ~0\quad \mbox{as} ~ \tau_1^e-\tau_0\rightarrow 0~ \mbox{and} ~ \tau_2^e-\tau_2'\rightarrow 0,
\end{equation}
where the constants $\mathcal{L}$  and $A_1^e$ are the same as that defined in (\ref{102901}).

Using (\ref{82301}), (\ref{8}), (\ref{110408}), and (\ref{110407}), there is a small $\delta>0$  such that when $\tau_1^e-\tau_0<\delta$ and $\tau_2^e-\tau_2'=\delta$,
$$
\bar{\partial}_{-}\beta>-\frac{\sin (2A_1^e)}{\rho_1^e}\mathcal{L}+\frac{\tan A_1^e p''(\tau_1^e)}{4(c_1^e)^2(\rho_1^e)^4}\mathcal{L}\quad \mbox{on}~ \widetilde{PF'},
$$
and
$$
\begin{aligned}
\bar{\partial}_{_\Gamma}\phi&= -\frac{\bar{\partial}_{_\Gamma} m}{\rho_fL_f}+\frac{N_{f}}{\rho_{f}L_{f}q_{f}^2}(q_{f}^2+\tau_{f}^2 p'(\tau_{f}))\bar{\partial}_{_\Gamma}\rho_{f}+\bar{\partial}_{_\Gamma}\sigma_{f}
\\&<-\frac{\sin (2A_1^e)}{\rho_1^e}\mathcal{L}+\frac{\tan A_1^e p''(\tau_1^e)}{8(c_1^e)^2(\rho_1^e)^4}\mathcal{L}\quad \mbox{for}~ (x, y_{s}(x))\in \Omega_3',
\end{aligned}
$$
where $(u_f, v_f)(x, y)=(u_3, v_3)(x,y)$.

Therefore, for any fixed $\tau_2\in (\tau_2^i, \tau_2^e)$, when $\tau_1^e-\tau_0$ is sufficiently small there exists an $x_{_H}\in (x_{_E}, x_{_{F'}})$ such that
$$
y_{s}(x_{_H})=y_{-}(x_{_H}), \quad \mbox{and}\quad y_{s}(x)>y_{-}(x)\quad \mbox{for}~  x_{_E}<x<x_{_H}.
$$
Moreover, $x_{_H}-x_{_E}\rightarrow 0$ as $\tau_1^e-\tau_0\rightarrow 0$.
This completes the proof of the lemma.
\end{proof}

\begin{figure}[htbp]
\begin{center}
\includegraphics[scale=0.52]{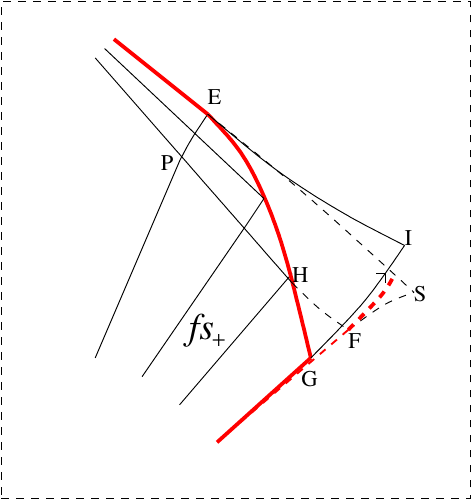}\qquad\qquad \includegraphics[scale=0.4]{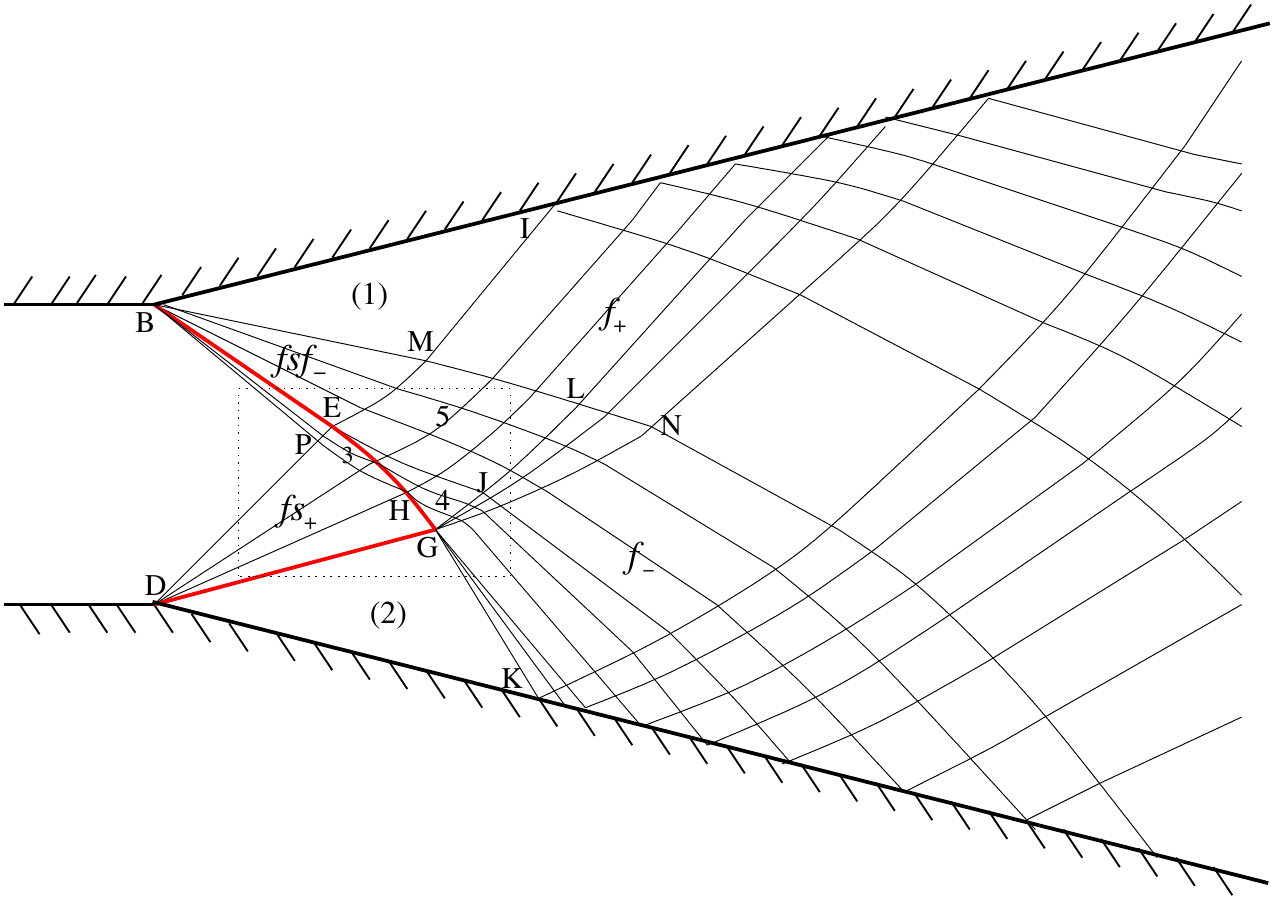}\quad
\caption{\footnotesize Interaction of a fan-shock-fan composite wave with a fan-shock composite wave.}
\label{Fig5.8.2}
\end{center}
\end{figure}


If $S_{_E}$ intersects $\widetilde{PF}$ at a point $H$,
 then the post-sonic shock $S_{_E}$ will propagate into the simple wave region of ${\it fs}_{+}$  from $H$ and intersects the pre-sonic shock of ${\it fs}_{+}$ at a point $G$. For convenience, we denote this post-sonic shock by $\wideparen{HG}$.
The shock front $\wideparen{HG}$ can be determined by
\begin{equation}
\bar{\partial}_{_\Gamma} \phi=\frac{-p''(\psi_{po}(\tau_f))\psi_{po}'(\tau_f)\bar{\partial}_{_\Gamma} \tau_f}{2 \rho_f^2L_f^2}+\frac{N_{f}\cos^2 A_f}{\rho_{f}L_{f}}\bar{\partial}_{_\Gamma}\rho_{f}+\bar{\partial}_{_\Gamma}\sigma_{f}
\end{equation}
where $(u_f, v_f)(x,y)=(\bar{u}, \bar{v})(\xi)$.
The backside states of the flow on $\widetilde{HG}$ can be determined by (\ref{110406}).
 For convenience, we denote the shock propagated from $E$ by $\wideparen{EG}$, i.e., $\wideparen{EG}=\wideparen{EH}\cup \wideparen{HG}$. We denote by $\Omega_3$ the domain bounded by $\widetilde{PE}$, $\widetilde{PH}$, and $\wideparen{EH}$.
Then the flow in $\Omega_3$ is $(u, v)=(u_3, v_3)(x,y)$.
We next construct the flow downstream of the shock $\wideparen{EG}$.

\begin{lem}\label{121101}
(Structural conditions)
Assume $\tau_1^e-\tau_0$ to be sufficiently small. Then
\begin{equation}\label{120901a}
(r, s)\in\Pi,\quad
\bar{\partial}_{_\Gamma}r_{d}<0,\quad \mbox{and}\quad \bar{\partial}_{_\Gamma}s_{d}<0\quad \mbox{along}~ \widetilde{EG};
\end{equation}
where $r_{d}=r(u_d, v_d)$, $s_d=s(u_d, v_d)$, and $(u_d, v_d)(x,y)$ represent the backside  states on $\widetilde{EG}$.
\end{lem}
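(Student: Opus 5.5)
The shock $\wideparen{EG}=\wideparen{EH}\cup\wideparen{HG}$ naturally splits into two pieces, and I will treat them separately. On $\wideparen{EH}$ the upstream state is the Goursat solution $(u_3,v_3)$ of (\ref{E1}), (\ref{102903a})... by Lemma \ref{111503} this piece shrinks to $P$ as $\tau_1^e-\tau_0\to0$. On $\wideparen{HG}$ the upstream state is the fan $(\bar u,\bar v)(\xi)$ of ${\it fs}_{+}$. In both pieces $S_{_E}$ is a post-sonic 1-shock, so $N_d=c_d$, $\phi=\beta_d$, $\tau_d=\psi_{po}(\tau_f)$. The analogue of (\ref{110412})--(\ref{110413}) for 1-shocks, with the signs fixed by (\ref{4250103}) and (\ref{102803a}), gives
$$
\bar{\partial}_{_\Gamma}s_d=-\Big(\bar{\partial}_{_\Gamma}\phi-\frac{\bar{\partial}_{_\Gamma}m}{\rho_dL_d}\Big),\qquad
\bar{\partial}_{_\Gamma}r_d=-\Big(\bar{\partial}_{_\Gamma}\phi-\frac{\bar{\partial}_{_\Gamma}m}{\rho_dL_d}\Big)-\tau_d\sin(2A_d)\bar{\partial}_{_\Gamma}\rho_d,
$$
with signs to be checked against the symmetric computation. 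So it suffices to show two things along $\wideparen{EG}$: first, $\bar{\partial}_{_\Gamma}\phi-\bar{\partial}_{_\Gamma}m/(\rho_dL_d)>0$; second, $\bar{\partial}_{_\Gamma}\rho_d>0$, where $\bar{\partial}_{_\Gamma}$ is oriented from $E$ toward $G$.

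\textbf{Step 1: the piece $\wideparen{EH}$.} I would mirror Lemma \ref{110411}. At $E$ the shock is double-sonic, so $\bar{\partial}_{_\Gamma}m=0$ there by (\ref{250104}). The computation (\ref{250407}), reflected, gives $|\bar{\partial}_{_\Gamma}\phi|\ge \frac{\tau_1^e\sin 2A_1^e}{4}\mathcal L$ with the sign appropriate to a 1-shock. The estimates (\ref{110407}) (the analogue of (\ref{6811})) make the upstream quantities uniformly close to their values at $E$, so this bound persists along $\wideparen{EH}$, and $\bar{\partial}_{_\Gamma}m$ stays small. The decomposition (\ref{82003}), together with $\bar{\partial}_\pm\rho_3\approx-\mathcal L$, gives $\bar{\partial}_{_\Gamma}\tau_f$ of fixed sign. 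Then $\psi_{po}'<0$ yields the sign of $\bar{\partial}_{_\Gamma}\rho_d$, and both derivatives of $(r_d,s_d)$ come out negative.

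\textbf{Step 2: the piece $\wideparen{HG}$.} Here the amplitude is not small, so I would follow the proof of Lemma \ref{120801a} instead. Decompose $\bar{\partial}_{_\Gamma}=t_+\bar{\partial}_++t_-\bar{\partial}_-$. In the fan of ${\it fs}_+$ only $\bar{\partial}_-\rho\neq0$, and it is negative by (\ref{62303}); a 1-shock has $t_->0$. Hence $\bar{\partial}_{_\Gamma}\rho_f$ has a definite sign, and since $\psi_{po}'<0$, $\bar{\partial}_{_\Gamma}\rho_d$ has the opposite sign. Inserting (\ref{82301}), (\ref{8}) and the expression for $\bar{\partial}_{_\Gamma}m$ from (\ref{250104}) into (\ref{110408}) expresses $\bar{\partial}_{_\Gamma}\phi-\bar{\partial}_{_\Gamma}m/(\rho_dL_d)$ as a positive multiple of $-\bar{\partial}_{_\Gamma}\rho_f$. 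The estimate $\tau_f/(\tau_f+\tau_d)<1$ absorbs the $\bar{\partial}_{_\Gamma}m$ correction exactly as in the proof of Lemma \ref{120801a}, and the claimed signs follow.

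\textbf{Step 3: the point $H$ and the invariant region.} At $H$ the one-sided derivatives may jump, but each side separately keeps the strict sign, so monotonicity along the whole curve still holds. For $(r,s)\in\Pi$, I would argue as in Lemma \ref{2501a}. First, $\tau_2^i\le\tau_d\le\tau_2^e$ gives $0\le (r-s)/2<\mathcal R$. Second, monotone decrease of $r_d$ and $s_d$ means $\sigma_d$ lies between its values at $E$ and $G$. At $E$, $\sigma_d\le\theta_+$ because the upstream configuration is the corner wave with $\sigma_d(E)<\theta_+$. At $G$, Proposition \ref{pro3.1.1} gives $\sigma_d(G)>\sigma_f(G)=\sigma$ of the pre-sonic back state $(u_2,v_2)$, which is $\theta_-$.

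\textbf{Main obstacle.} I expect Step 1 to be hardest, because $\wideparen{EH}$ starts at a double-sonic point with $\bar{\partial}_{_\Gamma}m=0$. The uniform lower bound on $\bar{\partial}_{_\Gamma}\phi$ there rests entirely on the smallness of $\tau_1^e-\tau_0$ via (\ref{110407}). I would also have to check that the constant $\tau_2'$ chosen in Lemma \ref{111503} keeps $\wideparen{EH}$ inside $\Omega_3'$.
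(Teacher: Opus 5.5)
Your route is the paper's. The paper proves this lemma in two sentences: on $\wideparen{EH}$ argue as in Lemma \ref{110411} (double-sonic start, smallness of $\tau_1^e-\tau_0$), and on $\wideparen{HG}$ argue as in Lemma \ref{120801a} (decompose $\bar{\partial}_{_\Gamma}$ along $\bar{\partial}_{\pm}$ in the fan). Your Steps 1--2 expand exactly this. Your Step 3 writes out the $(r,s)\in\Pi$ part, which the paper leaves to the words ``as in''.

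There is, however, a sign error in your reduction, and as written it targets a false inequality. Take $\phi$ to be the inclination of $S_{_E}$, which is the convention you use when you write $\phi=\beta_d$ and cite (\ref{110408}). Then (\ref{4250103}), (\ref{110403}) and (\ref{102803a}) give
\begin{equation*}
\bar{\partial}_{_\Gamma}s_d=\bar{\partial}_{_\Gamma}\phi+\frac{\bar{\partial}_{_\Gamma}m}{\rho_dL_d},\qquad
\bar{\partial}_{_\Gamma}r_d=\bar{\partial}_{_\Gamma}s_d-\tau_d\sin(2A_d)\,\bar{\partial}_{_\Gamma}\rho_d .
\end{equation*}
Your formulas hold only if $\phi$ there is read as the angle $-\phi$ of the mirror-image 2-shock. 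So what you must show is $\bar{\partial}_{_\Gamma}\phi+\bar{\partial}_{_\Gamma}m/(\rho_dL_d)<0$ together with $\bar{\partial}_{_\Gamma}\rho_d\ge 0$. Only the non-strict inequality holds at $E$: differentiating (\ref{43003}) and using (\ref{42301}) gives $\psi_{po}'(\tau_1^e)=0$. That is enough, since $\bar{\partial}_{_\Gamma}r_d\le\bar{\partial}_{_\Gamma}s_d$.

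The inequality you wrote, $\bar{\partial}_{_\Gamma}\phi-\bar{\partial}_{_\Gamma}m/(\rho_dL_d)>0$, fails on all of $\wideparen{EG}$. There $\bar{\partial}_{_\Gamma}m\ge 0$ and $\bar{\partial}_{_\Gamma}\phi<0$; near $E$ one has $\bar{\partial}_{_\Gamma}\phi\approx-\tau_1^e\sin(2A_1^e)\mathcal L$, the mirror of (\ref{61109}). With the correct combination, your Step 2 goes through as you describe. Using $\bar{\partial}_{_\Gamma}\rho_f=t_-\bar{\partial}_-\rho_f<0$, $\bar{\partial}_{_\Gamma}\sigma_f=\frac{\sin 2A_f}{2\rho_f}\bar{\partial}_{_\Gamma}\rho_f$, $L_d=L_f$ and (\ref{250104}), one gets
\begin{equation*}
\bar{\partial}_{_\Gamma}\phi+\frac{\bar{\partial}_{_\Gamma}m}{\rho_dL_d}
=\Big(\frac{\sin 2A_f}{2\rho_f}+\frac{N_f\cos^2A_f}{\rho_fL_f}-\frac{\tau_f^2(N_f^2-c_f^2)}{L_fN_f(\tau_f+\tau_d)}\Big)\bar{\partial}_{_\Gamma}\rho_f .
\end{equation*}
Since $\tau_f/(\tau_f+\tau_d)<1$, the bracket is bounded below by $\frac{\sin 2A_f}{2\rho_f}+\frac{\tau_fL_f\sin^2A_f}{N_f}>0$, exactly as in Lemma \ref{120801a}.

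One small correction in Step 3. At $G$ the front state of $S_{_E}$ is the front state $(\bar u,\bar v)(\phi_{pr})$ of the pre-sonic shock, not $(u_2,v_2)$. Proposition \ref{pro3.1.1} applied to that pre-sonic 2-shock gives $\bar\sigma(\phi_{pr})>\theta_-$, so your conclusion $\sigma_d(G)>\theta_-$ still holds.
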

\begin{proof}
As in Lemma \ref{110411}, when  $\tau_1^e-\tau_0$ is sufficiently small the desired estimates hold along the section $\wideparen{FH}$. As in Lemma \ref{120801a}, the desired estimates hold along $\wideparen{HG}$.
\end{proof}

Based on Lemma \ref{121101},
we can solve a
singular Cauchy problem for (\ref{E1})
to obtain the flow downstream of the post-sonic shock $\wideparen{EG}$, i.e, the flow
on a triangle domain $\overline{\Omega}_4$ bounded by $\widetilde{EJ}$, $\wideparen{EG}$, and $\widetilde{GJ}$, where $\widetilde{EJ}$ is a forward $C_{-}$
characteristic line issued from $E$ and $\widetilde{GJ}$ is a forward $C_{+}$ characteristic line issued from $G$; see Fig. \ref{Fig5.8.2} (right). Moreover, the solution satisfies
$$
(r, s)\in\Pi\quad \mbox{and}\quad\bar{\partial}_{-}\rho<0 \quad \mbox{in}~ \overline{\Omega}_4; \quad \bar{\partial}_{+}\rho<0 \quad \mbox{in}~ \overline{\Omega}_4\setminus\widetilde{EG}.
$$
We use subscript `$4$' to denote the flow in $\overline{\Omega}_4$, i.e., $(u, v)=(u_4, v_4)(x,y)$, $(x,y)\in \overline{\Omega}_4$.

Next, we solve a standard Gorusat problem for (\ref{E1}) to obtain the flow on a curvilinear quadrilateral domain $\overline{\Omega}_5$ bounded by $\widetilde{EM}$,  $\widetilde{EJ}$, $\widetilde{ML}$ and $\widetilde{JL}$, where $\widetilde{ML}$ is a forward $C_{-}$ characteristic line issued from $M$, and $\widetilde{JL}$ is a forward $C_{+}$ characteristic line issued from $J$. Moreover, the solution satisfies $\bar{\partial}_{\pm}\rho<0$ and $(r, s)\in \Pi$ in $\overline{\Omega}_5$.
We use subscript `$5$' to denote the flow in domain $\overline{\Omega}_5$, i.e., $(u, v)=(u_5, v_5)(x,y)$, $(x,y)\in \overline{\Omega}_5$.

See Fig. \ref{Fig5.8.2} (right).
When $\tau_2\geq\tau_2^i$,
we draw a straight $C_{-}$ characteristic line with the characteristic angle $\beta=\theta_{-}-A_2$ from the point $G$, where $A_2$ is the same as that defined in (\ref{def12}). This line intersects $\mathcal{W}_{-}$ at a point $K$.
We then consider (\ref{E1}) with the boundary conditions
\begin{equation}\label{121102}
(u, v)=\left\{
               \begin{array}{ll}
                  (u_4, v_4)(x,y), & \hbox{$(x, y)\in\widetilde{GJ}$;} \\
(u_5, v_5)(x,y), & \hbox{$(x, y)\in\widetilde{JL}$;} \\
               (u_2, v_2), & \hbox{$(x, y)\in\overline{GK}$,}
               \end{array}
             \right.
\end{equation}
By Lemma  \ref{lem8} and Remark \ref{rem4.1}, the problem (\ref{E1}), (\ref{121102})
admits a solution on a curved quadrilateral domain bounded by $\widetilde{GJ}\cup\widetilde{JL}$, $\overline{GK}$, $C_{-}^{L}$, and $C_{+}^{K}$.
This solution consists of a centered wave centered at $G$ and a simple wave $f_{-}$ with straight $C_{-}$ characteristic lines.
The centered wave flow region is bounded by $\widetilde{GJ}\cup\widetilde{JL}$, $\widetilde{GN}$, and $\widetilde{LN}$, where $\widetilde{LN}$ is a $C_{-}$ characteristic line issued from $L$, and $\widetilde{GN}$ is a $C_{+}$ characteristic line issued from $G$. The straight $C_{-}$ characteristic lines of the simple wave $f_{-}$ are issued from $G$ and $\widetilde{GN}$.

From the point $M$ we draw a straight $C_{+}$ characteristic line with the characteristic angle $\alpha=\theta_{+}+A_1$. This line intersects $\mathcal{W}_{+}$ at a point $I$. When then solve a simple wave problem for
(\ref{E1})  to obtain a simple wave ${\it f}_{+}$ on a curved quadrilateral domain bounded by $\overline{MI}$, $\widetilde{ML}\cup \widetilde{LN}$, $C_{-}^{I}$, and $C_{+}^{N}$.

Therefore, for Case (ii), when $\tau_2\geq\tau_2^i$ the interaction of ${\it fsf}_{-}$ with ${\it fs}_{+}$ generates two simple waves ${\it f}_{\pm}$.
The simple waves will then reflect off the walls $\mathcal{W}_{\pm}$ and generate two new simple waves.
The solution in the remaining part of the divergent duct can be obtained using the method described in Section 5.1.

\vskip 4pt

There is a slight difference in the discussion between  $\tau_2<\tau_2^i$ and $\tau_2\geq\tau_2^i$.
When $\tau_2<\tau_2^i$, we only need to replace the straight $C_{-}$ characteristic line issued from $G$ by a straight post-sonic 2-shock.
The state on the front side of the post-shock is $(u_2, v_2)$. The back side state and the inclination angle of the post-sonic shock can be determined by the Rankine-Hugoniot conditions.
Therefore, for Case (ii), when $\tau_2<\tau_2^i$ the interaction of ${\it fsf}_{-}$ with ${\it fs}_{+}$ generates a simple wave and a shock-fan composite wave. The simple wave and shock-fan composite wave will then reflect off the walls $\mathcal{W}_{+}$ and $\mathcal{W}_{-}$, and generate two new simple waves.
The solution in the remaining part of the divergent duct can be obtained using the method described in Section 5.1.

This completes the proof of Theorem \ref{thm8} for Case (ii).

\subsubsection{\bf Shock wave propagated from the point $F$}
Assume $\tau_1^e-\tau_0$ to be small. Then by Lemmas \ref{91011} and \ref{111503} we know that
there is an $\epsilon>0$ depending only on  $\tau_1^e-\tau_0$  such that
 $S_{_E}$ intersects the characteristic curve $\widetilde{FS}$ of the SGP (\ref{E1}), (\ref{102903a}) if and only if $0<\tau_2^e-\tau_2<\epsilon$.
Moreover, $\epsilon\rightarrow0$ as $\tau_1^e-\tau_0\rightarrow 0$.
We shall show that when $0<\tau_2^e-\tau_2<\epsilon$ the pre-sonic shock of ${\it fs}_{+}$ will propagate into the domain $\Omega^3$ from the point $F$ and become a transonic shock.

From the point $F$ we draw a straight $C_{-}$ characteristic line with the characteristic angle $\beta=\theta_{-}-A_2$. This characteristic line intersects $\mathcal{W}_{-}$ at a point $K$.
In order to construct the transonic shock propagating from  $F$ and the flow downstream of it, we consider (\ref{E1}) with the following boundary conditions:
\begin{equation}\label{102904}
(u, v)=(u_2, v_2)\quad \mbox{on}~ \overline{FK};
\end{equation}
\begin{equation}\label{102905}
(\rho u-\rho_3 u_3)(u-u_3)+(\rho v-\rho_3 v_3)(v-v_3)=0\quad \mbox{on}~ y=y_{t}(x),
\end{equation}
\begin{equation}\label{102906}
\left\{
  \begin{array}{ll}
    \displaystyle\frac{{\rm d}y_{t}}{{\rm d}x}=-\frac{u(x, y_t)-u_3(x, y_t)}{v(x, y_t)-v_3(x, y_t)}, & \hbox{$x>x_{_F}$;} \\[6pt]
    y_t(x_{_F})=y_{_F}.
  \end{array}
\right.
\end{equation}
Here, $y=y_t(x)$, $x>x_{_F}$ represents the shock propagating from $F$.
The problem (\ref{E1}), (\ref{102904})--(\ref{102906}) is a shock free boundary problem.

\begin{figure}[htbp]
\begin{center}
\qquad\includegraphics[scale=0.58]{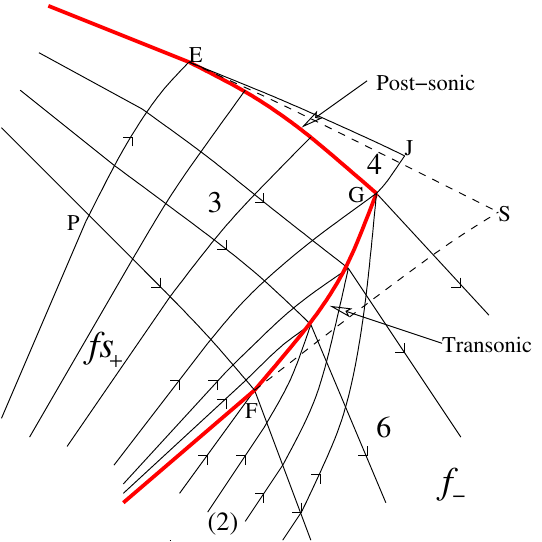}\qquad\qquad\qquad\includegraphics[scale=0.58]{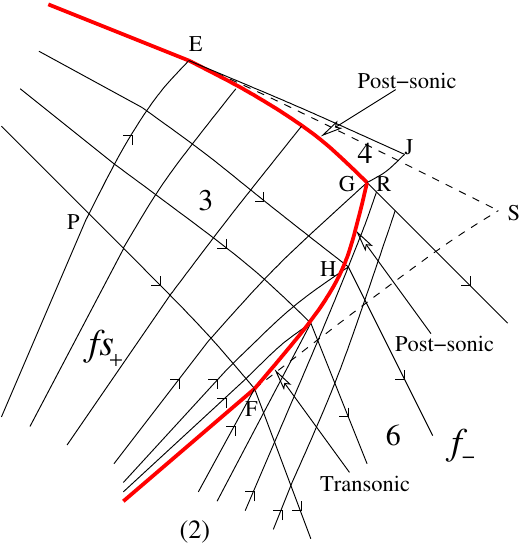}\quad  \qquad
\caption{\footnotesize Shock waves propagated from $E$ and $F$, and the flows upstream and downstream of them. }
\label{Fig5.8.3}
\end{center}
\end{figure}

\begin{lem}\label{120903a}
 The shock free boundary problem (\ref{E1}), (\ref{102904})--(\ref{102906}) admits a (local) simple wave solution.
Moreover, the simple wave solution satisfies $\bar{\partial}_{+}\rho<0$ and $\bar{\partial}_{-}\rho=0$.
\end{lem}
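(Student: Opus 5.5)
The plan is to look for the solution downstream of the shock $y=y_t(x)$ as a simple wave whose $C_{-}$ characteristics are straight. The constant state $(u_2,v_2)$ on $\overline{FK}$ fixes $r\equiv r(u_2,v_2)=:r_2$, so the downstream states $(u,v)$ must lie on the 1-fan curve $\{r=r_2\}$. I will parametrize them by the specific volume $\tau_b$, or equivalently by $s$. Across the shock the upstream state is $(u_3,v_3)(x,y_t(x))$, which is known from the Goursat solution in $\overline{\Omega^3}$.

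\textbf{Step 1: reduce to an ODE for the shock.} At each point of the shock I impose (\ref{102905})--(\ref{102906}), i.e. the Rankine--Hugoniot relations (\ref{RH1}) for a 1-shock (conservation of mass, $L_f=L_b$), with the back state constrained to $r(u_b,v_b)=r_2$. Since $m^2$ is given by (\ref{250101}) and $N_b=\tau_b m$, $L_b=L_f$, the constraint $r_b=r_2$ becomes one scalar equation in $(\tau_b,\phi)$ for a given front state. Together with the Bernoulli law and the relation $\phi=\sigma-\arcsin(N/q)$, this determines $\tau_b$ and $\phi$ as functions of the front state. At $F$ the solution is the pre-sonic data $\tau_b=\tau_2$, $\tau_f=\psi_{pr}^{-1}(\tau_2)$.

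I will apply the implicit function theorem. Its nondegeneracy condition amounts to $\partial r_b/\partial\tau_b\neq0$ along the admissible branch. I expect this to follow from $N_b<c_b$ (transonic), via (\ref{102803a}) and the computation behind Lemma \ref{120301}. The shock curve is then obtained by integrating $y_t'=\tan\phi$ from $F$, with $\phi(F)=\alpha_f(F)$, locally in $x>x_F$.

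\textbf{Step 2: verify admissibility.} Along the constructed shock I must check the transonic inequalities $N_f>c_f$ and $N_b<c_b$, together with Liu's condition, using Proposition \ref{tran}. At $F$ the shock is pre-sonic, so I will argue as in (\ref{10705}) that $\bar{\partial}_{_\Gamma}\tau_f>0$ initially. This pushes $\tau_f$ above $\psi_{pr}^{-1}(\tau_b)$ and puts the pair $(\tau_f,\tau_b)$ in the open transonic range of Proposition \ref{tran}. This is the main obstacle: at $F$ the relevant derivative degenerates because $N_f=c_f$. I will handle it by computing the first-order expansion of the shock relation at $F$, in the way the paper checks $\bar{\partial}_{_\Gamma}m=0$ in (\ref{250402}) and establishes first-order contact at the end of Section 3.3.3. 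Smallness of $\tau_1^e-\tau_0$ and $\tau_2^e-\tau_2$, through the estimates (\ref{110407}), should fix the signs.

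\textbf{Step 3: build the downstream simple wave.} From each shock point I draw the straight $C_{-}$ line with angle $\beta=\sigma_b-A_b$ and assign it the constant state $(u_b,v_b)$. This family matches $(u_2,v_2)$ along $\overline{FK}$. Theorem \ref{thm0} and the analogue of Lemma \ref{lem4} show that this is a solution, provided the lines do not cross locally. Non-crossing requires the monotonicity $\bar{\partial}_{_\Gamma}\rho_b<0$, i.e. $\bar{\partial}_{_\Gamma}\tau_b>0$, which I will obtain from differentiating the shock relation as in (\ref{110413}) together with $\bar{\partial}_{\pm}\rho_3<0$.

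Finally, $\bar{\partial}_{-}\rho=0$ holds because $\rho$ is constant on the straight $C_{-}$ lines. For $\bar{\partial}_{+}\rho<0$ I will use (\ref{53002}), or directly the sign of $\bar{\partial}_{_\Gamma}\rho_b$ transported along the straight lines.
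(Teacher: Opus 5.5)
Your overall plan is the paper's. By Theorem \ref{thm0} the downstream flow is a simple wave with straight $C_-$ lines. The shock is fixed by the Rankine--Hugoniot relations together with the requirement that the back state lie on the wave curve through $(u_2,v_2)$. Your pointwise implicit-function formulation is just the undifferentiated form of the paper's system (\ref{103002}), (\ref{103004}).

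\textbf{The constraint is wrong.} That $r$ is constant along the single $C_-$ line $\overline{FK}$ says nothing about the downstream region. Its $C_-$ characteristics issue from the shock, not from $\overline{FK}$. What enters the region from $\overline{FK}$ is $s$, carried by the $C_+$ characteristics; this is the paper's remark that backward $C_+$ characteristics from shock points return to $\overline{FK}$. In a simple wave with straight $C_-$ lines, $\bar\partial_- s=0$ because the state is constant on those lines, and $\bar\partial_+ s=0$ by (\ref{52801}). Hence $s\equiv s_2$, while $r$ varies.

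With $r_b\equiv r_2$ imposed, the family you build in Step 3 violates $\bar\partial_+ s=0$ and is not a solution. The sign arguments of Steps 2--3 also fail. The paper uses $\bar\partial_{_\Gamma}\sigma_b=-\frac{\sin 2A_b}{2\rho_b}\bar\partial_{_\Gamma}\rho_b$, which is (\ref{1201}), (\ref{102302d}) with $s$ fixed. This makes all three coefficients on the left of (\ref{103004}) negative. At $F$, where $\phi=\alpha_3$, $t_{-}^{f}=0$ and $N_3=c_3$, it yields $\bar\partial_{_\Gamma}\rho_b<0$. With $r$ fixed that term flips sign and the coefficient has no definite sign. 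Then neither $\bar\partial_{_\Gamma}\tau_b>0$ nor transonicity near $F$ follows.

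\textbf{The shock family is also wrong.} The shock from $F$ continues the pre-sonic 2-shock of ${\it fs}_{+}$. You must therefore use $N=u\sin\phi-v\cos\phi$ and $\phi=\sigma+\arcsin(N/q)$, as in (\ref{61112}). Your 1-shock relation $\phi=\sigma-\arcsin(N/q)$ gives $\phi(F)=\beta_f(F)$ at a point where $N_f=c_f$. This contradicts your own initial condition $\phi(F)=\alpha_f(F)$.

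After both corrections the argument runs as in the paper, with three further remarks:
\begin{enumerate}
\item Nothing degenerates at $F$. Only the $\tau_f$-coefficient $c_f^2-N_f^2$ in (\ref{250104}) vanishes there, which simplifies the computation.
\item You should state explicitly that the shock enters $\Omega^3$. The upstream data exist only on one side of $\widetilde{FS}$, and the shock is tangent to $\widetilde{FS}$ at $F$. The paper gets this from $\bar\partial_{_\Gamma}\phi>\bar\partial_{+}\alpha_3$ at $F$, see (\ref{103009}). To first order this is the same as your transonic condition $\phi>\alpha_f$ near $F$.
\item No smallness of $\tau_1^e-\tau_0$ or $\tau_2^e-\tau_2$ is needed for this local statement.
\end{enumerate}
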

\begin{proof}
We assume a priori that the free boundary problem admits a transonic shock solution. Then for any point on the transonic shock front, the backward $C_{+}$ characteristic line issued from this point goes back to $\overline{FK}$.
By Theorem \ref{thm0}, the solution of the shock free boundary problem must be a simple wave if it exists a transonic shock solution.

For convenience, we denote by $S_{_F}$ the transonic shock propagating from $F$.
Let $\phi$ be the inclination angle of $S_{_F}$.
We also set
$L=u\cos \phi+v\sin \phi$, $N=u\sin \phi- v\cos \phi$,
$L_3=u_3\cos \phi+v_3\sin \phi$, $N_3=u_3\sin \phi- v_3\cos \phi$, $m=\rho_3 N_3$ on $S_{_F}$.

As in (\ref{250104}), we have
\begin{equation}\label{103001}
\begin{aligned}
2m \bar{\partial}_{_\Gamma}m ~=~\frac{2\rho_3}{\tau_3^2-\tau^2}(c_3^2-N_3^2)\bar{\partial}_{_\Gamma}\tau_3
-\frac{2\rho}{\tau_3^2-\tau^2}(c^2-N^2)\bar{\partial}_{_\Gamma}\tau\quad \mbox{along}~{\it S}_{_{F}}.
\end{aligned}
\end{equation}
As in (\ref{250102}), we have
\begin{equation}\label{103002}
\bar{\partial}_{_\Gamma} \phi=\frac{\bar{\partial}_{_\Gamma} m}{\rho_3L_3}-\frac{N_3\cos^2 A_3}{\rho_3L_3}\bar{\partial}_{_\Gamma}\rho_3+\bar{\partial}_{_\Gamma}\sigma_3
\end{equation}
and
\begin{equation}\label{103003}
\bar{\partial}_{_\Gamma} \phi=\frac{\bar{\partial}_{_\Gamma} m}{\rho L}-\frac{N\cos^2 A}{\rho L}\bar{\partial}_{_\Gamma}\rho+\bar{\partial}_{_\Gamma}\sigma
\end{equation}
along ${\it S}_{_{F}}$.
Hence, we have
$$
\begin{aligned}
(\tau_3-\tau)\frac{\bar{\partial}_{_\Gamma} m}{L_3}-\frac{N_3\cos^2 A_3}{\rho_3L_3}\bar{\partial}_{_\Gamma}\rho_3+\bar{\partial}_{_\Gamma}\big(\frac{\alpha_3+\beta_3}{2}\big)
=-\frac{N\cos^2 A}{\rho L}\bar{\partial}_{_\Gamma}\rho+\bar{\partial}_{_\Gamma}\big(\frac{\alpha+\beta}{2}\big)\quad \mbox{along}~ {\it S}_{_{F}}.
\end{aligned}
$$
Inserting (\ref{103001}) into this, we get
$$
\begin{aligned}
&\frac{\tau_3(N_3^2-c_3^2)}{\rho_3L_3N_3(\tau_3+\tau)}\bar{\partial}_{_\Gamma}\rho_3-\frac{N_3\cos^2 A_3}{\rho_3L_3}\bar{\partial}_{_\Gamma}\rho_3+\bar{\partial}_{_\Gamma}\Big(\frac{\alpha_3+\beta_3}{2}\Big)
\\&\qquad\qquad ~=~\frac{\tau(N^2-c^2)}{\rho LN(\tau_3+\tau)}\bar{\partial}_{_\Gamma}\rho-\frac{N\cos^2 A}{\rho L}\bar{\partial}_{_\Gamma}\rho+\bar{\partial}_{_\Gamma}\Big(\frac{\alpha+\beta}{2}\Big)\quad \mbox{along}~ {\it S}_{_{F}}.
\end{aligned}
$$
This implies
\begin{equation}\label{103004}
\begin{aligned}
&\left(\frac{\tau(N^2-c^2)}{\rho LN(\tau_3+\tau)}-\frac{N\cos^2 A}{\rho L}-\frac{\sin (2A)}{2\rho}\right)\bar{\partial}_{_\Gamma}\rho\\ =~&
t_{+}^{f}\left(\frac{\tau_3(N_3^2-c_3^2)}{\rho_3L_3N_3(\tau_3+\tau)}-\frac{N_3\cos^2 A_3}{\rho_3L_3}-\frac{\sin (2A_3)}{2\rho_3}\right)\bar{\partial}_+\rho_3\\&~+~t_{-}^{f}\left(\frac{\tau_3(N_3^2-c_3^2)}
{\rho_3L_3N_3(\tau_3+\tau)}-\frac{N_3\cos^2 A_3}{\rho_3L_3}+\frac{\sin (2A_3)}{2\rho_3}\right)\bar{\partial}_{-}\rho_3\quad \mbox{along}~{\it S}_{_{F}},
\end{aligned}
\end{equation}
where
$$
t_{+}^{f}=\frac{\sin(\phi-\beta_3)}{\sin(2A_3)}\quad \mbox{and}\quad t_{-}^{f}=\frac{\sin(\alpha_3 -\phi)}{\sin(2A_3)}.
$$

Since the flow downstream of $S_{_F}$ is a simple wave,  $S_{_F}$ can be determined by the density of the flow on the backside of it. So, in order to construct the transonic shock $S_{_F}$ and the states on the backside of it, we consider (\ref{103002}) and (\ref{103004}) with initial data
\begin{equation}\label{103005}
(\phi, \rho)(x_{_F}, y_{_F})=(\phi_{pr}, \rho_2).
\end{equation}
The problem (\ref{103002}), (\ref{103004}), and (\ref{103005}) is actually an initial value problem for a system of  ``ordinary differential equations". Since this initial value problem also use the flow states in the domain $\Omega^3$, we need to establish an a priori estimate about the position relation between $S_{_F}$ and $\widetilde{FS}$.
We denote by $y=y_{+}(x)$, $x_{_{F}}\leq x\leq x_{_{S}}$ the $C_{+}$ characteristic curve $\widetilde{{FS}}$.

We first establish an a priori estimate about the transonic shock front $S_{_F}$.
By (\ref{103004}) we have
\begin{equation}
\bar{\partial}_{_\Gamma}\rho<0\quad \mbox{at}~ F.
\end{equation}
Then by (\ref{103001})
we have
\begin{equation}
m\bar{\partial}_{_\Gamma}m =\frac{\tau_2^3}{\tau_*^2-\tau_2^2}\left(-p'(\tau_2)+\frac{2h(\tau_*)-2h(\tau_2)}{\tau_*^2-\tau_2^2}\right)
\bar{\partial}_{_\Gamma}\rho>0 \quad \mbox{at}~ F,
\end{equation}
where $\tau_*=\psi_{pr}^{-1}(\tau_2)$.

As in (\ref{61108}) and (\ref{103006}), we have
\begin{equation}
\begin{aligned}
\bar{\partial}_{_\Gamma} \phi&=\frac{\bar{\partial}_{_\Gamma} m}{\rho_3L_3}-\frac{N_3}{\rho_3L_3q_3^2}(q_3^2+\tau_3^2 p'(\tau_3))\bar{\partial}_{_\Gamma}\rho_3+\bar{\partial}_{_\Gamma}\sigma_3\\&=\frac{\bar{\partial}_{_\Gamma} m}{\rho_3L_3}-\frac{\sin 2A_3}{2\rho_3}\bar{\partial}_{+}\rho_{3}+\frac{\bar{\partial}_{+}\alpha_{3}}{2}-\frac{p''(\tau_3)\tan A_3}{4c_3^2\rho_3^4}\bar{\partial}_{+}\rho_{3}\\&>-\frac{\sin 2A_3}{2\rho_3}\bar{\partial}_{+}\rho_{3}+\frac{\bar{\partial}_{+}\alpha_{3}}{2}
\end{aligned}
\end{equation}
and
\begin{equation}
\bar{\partial}_+ \alpha_{3}<-\frac{\sin 2A_3}{\rho_3}\bar{\partial}_{+}\rho_{3}
\end{equation}
at $F$. So, we have
\begin{equation}\label{103009}
\bar{\partial}_{_\Gamma} \phi(x_{_F}, y_{_F})>\bar{\partial}_{+}\alpha_{3}(x_{_F}, y_{_F}).
\end{equation}
Then there exists a sufficiently small $\varepsilon>0$ such that
$\phi(x, y_t(x))>\alpha_3(x, y_t(x))$ for $x_{_{F}}<x<x_{_{F}}+\varepsilon$.
This implies
$$
y_{t}(x)>y_{+}(x), \quad x_{_{F}}<x<x_{_{F}}+\varepsilon.
$$
This implies that the transonic shock $S_{_F}$ exists.

For convenience we denote the solution of (\ref{103002}), (\ref{103004}), and (\ref{103005}) by
$(\phi, \rho)=(\phi, \rho_b)(x, y)$, $(x, y)\in S_{_F}$.
The states on the backside of $S_{_F}$ can  be determined by
$$
u_b=L_b\cos\phi-N_b\sin\phi,\quad  v_b=L_b\sin\phi+N_b\cos\phi,
$$
where
$L_b=L_3$ and  $N_b=\frac{N_3}{\rho_b\tau_f}$.

The simple wave and its straight characteristic lines can then be determined by
the backside states of the transonic shock $S_{_F}$.
This completes the proof of the lemma.
\end{proof}

Since $\epsilon\rightarrow0$ as $\tau_1^e-\tau_0\rightarrow 0$,
the radius of the domain $\Omega^3$ approaches $0$ as $\tau_1^e-\tau_0\rightarrow 0$ and $0<\tau_2^e-\tau_2<\epsilon$. We have
\begin{equation}\label{103011}
\parallel\alpha_3-A_1^e, ~\beta_3+A_1^e,~ \bar{\partial}_{-}\rho_3+\mathcal{L}, \bar{\partial}_{-}\rho_3+\mathcal{L}\parallel_{0;\overline{\Omega^3}}~\rightarrow 0 \quad \mbox{as}~0<\tau_2^e-\tau_2<\epsilon~\mbox{and}~ \tau_1^e-\tau_0\rightarrow 0.
\end{equation}
Then when  $\tau_1^e-\tau_0$ is sufficiently small and $0<\tau_2^e-\tau_2<\epsilon$, $\phi-A_1^e$ is small along $S_{_F}$, and hence
\begin{equation}\label{2601}
\bar{\partial}_{_\Gamma}\rho_3=t_{+}^f\bar{\partial}_{+}\rho_3+t_{-}^f\bar{\partial}_{-}\rho_3<0\quad \mbox{along}~ S_{_F}.
\end{equation}
Combining this with $\tau_3(F)=\tau_*>\tau_1^e$, we know that $S_{_F}$ does not intersect the characteristic curve $\widetilde{PE}$.

Moreover, by (\ref{103004}) and (\ref{103011}) we also have that when $\tau_1^e-\tau_0$ is sufficiently small and  $0<\tau_2^e-\tau_2<\epsilon$, the solution of the free boundary problem (\ref{E1}), (\ref{102904})--(\ref{102906}) satisfies
\begin{equation}\label{12201}
\bar{\partial}_{_\Gamma}\rho<0\quad \mbox{along}~ S_{_F}.
\end{equation}

We next assert that if $\phi-\alpha_b<0$ along  $S_{_F}$, then $S_{_F}$  does not intersect $\widetilde{FS}$.
Suppose that $S_{_F}$ intersects $\widetilde{FS}$ at a point. Then there must exists a point $x_{0}>x_{_F}$ such that
$$
\phi(x_0, y_t(x_0))=\alpha_3(x_0, y_t(x_0)),\quad \mbox{and} \quad \phi(x, y_t(x))>\alpha_3(x, y_t(x))\quad \mbox{for}~ x_{_F}<x<x_0.
$$
While, by (\ref{2601}) and (\ref{12201}) we have
$$
\tau(x_0, y_t(x_0))>\tau_2=\psi_{pr}(\tau_3(F))>\psi_{pr}(\tau_3(x_0, y_t(x_0))),
$$
and hence $\phi(x_0, y_t(x_0))>\alpha_3(x_0, y_t(x_0))$.
This leads to a contradiction.
This completes the proof of the assertion. 

Therefore,
there are the following two subcases about the transonic shock $S_{_F}$ of the free boundary problem (\ref{E1}), (\ref{102904})--(\ref{102906}):

\begin{description}
  \item[(i1)] $S_{_F}$ stays in $\Omega^3$ and remains transonic until it intersects $S_{_E}$ at a point $G$, see Fig. \ref{Fig5.8.3} (left);
  \item[(i2)] $S_{_F}$ stays in  $\Omega^3$ and remains transonic until there is a point $H(x_{_H}, y_t(x_{_H}))$ on $S_{_F}$ such that $\phi(H)-\alpha_b(H)=0$. Moreover, the transonic shock does not intersect $S_{_E}$, see Fig. \ref{Fig5.8.3} (right).
\end{description}

We next show that the second subcase can happen in some cases.
\begin{lem}
(Transition from a transonic shock to a post-sonic shock)
For a fixed small $\tau_1^e-\tau_0$,
when $\tau_2^e-\tau_2$ is sufficiently small, there is an $x_{_H}>x_{_F}$  such that $(\phi-\alpha_b)(x_{_H}, y_t(x_{_H}))=0$, and $(\phi-\alpha_b)(x, y_t(x))<0$ for $x_{_F}<x<x_{_H}$.
\end{lem}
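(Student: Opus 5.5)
The approach is a continuity argument on the function $g(x):=(\phi-\alpha_b)(x,y_t(x))$ along the transonic shock $S_{_F}$ constructed in Lemma \ref{120903a}. First compute $g$ at $F$. At $F$ the shock is pre-sonic: $N_3=c_3$ and $N_b<c_b$ with $\tau_b(F)=\tau_2=\psi_{pr}(\tau_*)$. For a 2-shock one has $\phi=\sigma_b+\arcsin(N_b/q_b)$ and $\alpha_b=\sigma_b+A_b$, so
$$g=\arcsin(N_b/q_b)-\arcsin(c_b/q_b).$$
Hence $g(x_{_F})<0$, and $g\to 0$ exactly when $N_b\to c_b$, i.e. when $\tau_b$ reaches $\psi_{po}(\tau_3)$. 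Quantitatively, at $F$ we have $c_b-N_b=c(\tau_2)-\tau_2\sqrt{-p'(\tau_*)}$. This tends to $0$ as $\tau_2\to\tau_2^e$, because then $\tau_*\to\tau_1^e$ and the shock becomes double-sonic. So $|g(x_{_F})|=O(\tau_2^e-\tau_2)$, uniformly for fixed small $\tau_1^e-\tau_0$.

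Next, bound $g$ from below along $S_{_F}$. By Lemma \ref{120903a} and (\ref{12201}), $\bar{\partial}_{_\Gamma}\rho<0$, so $\tau_b$ increases along $S_{_F}$. By (\ref{2601}), $\tau_3$ increases as well, so $\psi_{po}(\tau_3)$ decreases by (\ref{5906}). Thus the gap $\psi_{po}(\tau_3)-\tau_b$, which measures how far the shock is from being post-sonic, is strictly decreasing.

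To make this quantitative I would use (\ref{103004}) together with the near-constant estimates (\ref{103011}). Their coefficients are evaluated at the near-double-sonic values $\tau_3\approx\tau_1^e$, $\tau_b\approx\tau_2^e$, $\bar{\partial}_\pm\rho_3\approx-\mathcal{L}$. From these I would derive lower bounds
$$-\bar{\partial}_{_\Gamma}\tau_b\ge -\kappa_1\,\bar{\partial}_{_\Gamma}\tau_3,\qquad \bar{\partial}_{_\Gamma}\tau_3\ge \kappa_2>0,$$
with constants $\kappa_1,\kappa_2$ independent of $\tau_2^e-\tau_2$. Equivalently, I would write $\frac{d}{dx}\big(\psi_{po}(\tau_3)-\tau_b\big)\le-\kappa<0$ while $S_{_F}$ stays in $\Omega^3$.

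Then I would show that $S_{_F}$ stays in $\Omega^3$ and does not meet $S_{_E}$ over an $x$-interval of length $\ell$. Here $\ell$ is independent of $\tau_2^e-\tau_2$, determined only by the fixed geometry of $\Omega^3$ for the given $\tau_1^e-\tau_0$. The non-meeting with $\widetilde{FS}$ and $\widetilde{PE}$ is already given by the assertion and (\ref{2601}). For $S_{_E}$, I would use the comparison of curvatures: $\bar{\partial}_{_\Gamma}\phi$ from (\ref{103002}) versus (\ref{110408}), as in Lemma \ref{91011}.

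Choosing $\tau_2^e-\tau_2$ so small that the initial gap $\psi_{po}(\tau_*)-\tau_2<\kappa\ell$, the gap must vanish at some $x_{_H}<x_{_F}+\ell$. The first such zero gives $N_b=c_b$, hence $g(x_{_H})=0$, while $g<0$ before it since $N_b<c_b$ there.

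The main obstacle is the uniform lower bound on the rate in the third step. The coefficient of $\bar{\partial}_{_\Gamma}\rho$ in (\ref{103004}) contains $N^2-c^2$, which degenerates precisely as the shock approaches post-sonic. I would handle this by noting that this term only improves the sign: near $g=0$ the left coefficient tends to $-\frac{N\cos^2A}{\rho L}-\frac{\sin 2A}{2\rho}$, which is bounded away from zero. The right side stays of order $\mathcal{L}$, so $\bar{\partial}_{_\Gamma}\rho$ remains bounded away from zero up to $H$.
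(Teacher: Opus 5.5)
Your proposal is correct and follows essentially the paper's proof. Both arguments combine a uniform negative bound on $\bar{\partial}_{_\Gamma}\rho_b$ along $S_{_F}$, obtained from (\ref{103004}) with the near-double-sonic estimates, with the monotone decrease of $\psi_{po}(\tau_3)$ and the vanishing of the initial gap $\psi_{po}(\tau_*)-\tau_2$ as $\tau_2\to\tau_2^e$; your explicit check that $S_{_F}$ reaches $H$ before meeting $S_{_E}$ is something the paper leaves implicit (via $x_{_H}\to x_{_F}$). One correction: as written, the displayed inequality $-\bar{\partial}_{_\Gamma}\tau_b\ge-\kappa_1\bar{\partial}_{_\Gamma}\tau_3$ is an upper bound on $\bar{\partial}_{_\Gamma}\tau_b$, whereas you need $\bar{\partial}_{_\Gamma}\tau_b\ge\kappa>0$ (which your last paragraph does supply), because $\psi_{po}'(\tau_3)$ is small near $\tau_1^e$ (indeed $\psi_{po}'(\tau_1^e)=0$).
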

\begin{proof}
By (\ref{103004}) we know that
when $\tau_1^e-\tau_0$ is sufficiently small and $0<\tau_2^e-\tau_2<\epsilon$,
\begin{equation}
\bar{\partial}_{_\Gamma}\rho_b<-\frac{\tau_1^e\sin (2A_1^e)}{2\tau_2\sin (2A_2)}\mathcal{L}\quad \mbox{along}~ S_{_F}.
\end{equation}

$$
\bar{\partial}_{_\Gamma}\big(\rho_b-\psi_{po}(\tau_3)\big)
=\bar{\partial}_{_\Gamma}\rho_b+\psi_{po}'(\tau_3)\bar{\partial}_{_\Gamma}\tau_3<
\bar{\partial}_{_\Gamma}\rho<-\frac{\tau_1^e\sin (2A_1^e)}{2\tau_2\sin (2A_2)}\mathcal{L}\quad \mbox{along}~ S_{_F}.
$$
We also have
$$\tau_b(F)-\psi_{po}(\tau_3(F))=\tau_2-\psi_{po}(\tau_*)\rightarrow 0\quad \mbox{as}~ \tau_2^e-\tau_2\rightarrow 0.$$
So,
 for a fixed $\tau_0<\tau_1^e$, when $\tau_2^e-\tau_2$ is sufficiently small, there is an $x_{_H}>x_{_F}$ such that
$\tau_b(x_{_H}, y_t(x_{_H}))-\psi_{po}(\tau_3(x_{_H}, y_t(x_{_H})))=0$. Moreover, $x_{_H}\rightarrow x_{_F}$ as $\tau_2^e-\tau_2\rightarrow 0$.

This completes the proof of the lemma.
\end{proof}

If the second subcase happens, we do not use (\ref{103002}) and (\ref{103004}) to look for the shock propagating from $H$, since it will violate the Liu's extended entropy condition. Instead, we look for a post-sonic shock.
As in Section 5.5, we can find a post-sonic shock propagated from $H$, and this post-sonic shock stays in $\Omega^3$ until it intersects ${\it S}_{_E}$ at a point $G$.

For convenience, we denote by $\wideparen{EG}$ and $\wideparen{FG}$ the shocks propagated from $E$ and $F$, respectively. The shock $\wideparen{EG}$ is post-sonic. For the first subcase, the shock $\wideparen{FG}$ is transonic; see Fig. \ref{Fig5.8.3} (left).   For the second subcase, the shock $\wideparen{FG}$ consists of a transonic shock $\wideparen{FH}$ and a post-sonic shock $\wideparen{HG}$; see Fig. \ref{Fig5.8.3} (right).
In what follows we are going to study the second subcase; see Fig. \ref{Fig5.8.1}.

We first construct the flow downstream of the post-sonic shock $\wideparen{EG}$.
We solve a singular Cauchy problem to find the flow behind the post-sonic shock $\wideparen{EG}$, i.e., the flow on a triangle domain $\overline{\Omega}_4$ bounded by $\wideparen{EG}$, $\widetilde{EJ}$, and $\widetilde{GJ}$, where
$\widetilde{GJ}$ is a forward $C_{+}$ characteristic line issued from $G$ and
$\widetilde{EJ}$ is a forward $C_{-}$ characteristic line issued from $E$. We solve a standard Goursat problem to obtain the flow  on a curvilinear quadrilateral domain $\overline{\Omega}_5$ bounded by $\widetilde{EM}$,  $\widetilde{EJ}$, $\widetilde{ML}$ and $\widetilde{JL}$, where
$\widetilde{ML}$ is a forward $C_{-}$ characteristic curve issued from $M$ and $\widetilde{JL}$ is a forward $C_{+}$ characteristic curve issued from $J$.

We next construct the flow downstream of the post-sonic shock $\widetilde{FG}$.
By Lemma \ref{120903a} we know that there is a simple wave with straight $C_{-}$ characteristic lines issued from the transonic shock $\wideparen{FH}$.
We solve a singular Cauchy problem to find the flow behind the post-sonic shock $\widetilde{HG}$, i.e., the flow on a triangle domain bounded by $\wideparen{HG}$, $\widetilde{HR}$, and $\widetilde{GR}$, where
$\widetilde{HR}$ is a forward $C_{+}$ characteristic line issued from $H$ and
$\widetilde{GR}$ is a forward $C_{-}$ characteristic line issued from $G$.
We then solve a simple wave problem to obtain a simple wave with straight $C_{-}$ characteristic lines issued from $\widetilde{HR}$.
When $\tau_1^e-\tau_0$ is sufficiently small and  $0<\tau_2^e-\tau_2<\epsilon$, we have $\tau_2>\tau_2^i$.
Then the straight characteristic lines issued from $\wideparen{FH}$ and $\widetilde{HR}$ do not intersect with other.
Therefore, we obtain a flow on a domain $\overline{\Omega}_6$ bounded by $\wideparen{FG}$, $\overline{FK}$, $\widetilde{GR}\cup \overline{RX}$, and $\widetilde{KX}$, where $\overline{RX}$ is a forward straight $C_{-}$ characteristic line issued from $R$ and $\widetilde{KX}$ is a forward $C_{+}$ characteristic line issued from $K$.

For convenience, we use subscript `$i$' to denote the flow states in domains $\overline{\Omega}_{i}$, i.e., $(u, v)=(u_i, v_i)(x, y)$, $(x, y)\in \overline{\Omega}_i$, $i=4,~5,~6$.
Finally, we consider (\ref{E1}) with the boundary condition
\begin{equation}\label{121105}
(u, v)=\left\{
               \begin{array}{ll}
                  (u_4, v_4)(x,y), & \hbox{$(x, y)\in\widetilde{GJ}$;} \\
(u_5, v_5)(x,y), & \hbox{$(x, y)\in\widetilde{JL}$;} \\
               (u_6, v_6), & \hbox{$(x, y)\in\widetilde{GR}\cup\overline{RX}$,}
               \end{array}
             \right.
\end{equation}
The problem (\ref{E1}), (\ref{121105})
admits a solution on a curved quadrilateral domain bounded by $\widetilde{GJ}\cup\widetilde{JL}$, $\widetilde{GR}\cup\overline{RX}$, $C_{-}^{L}$, and $C_{+}^{X}$.

 From the point $R$,
we draw a forward $C_{+}$ characteristic curve, and this characteristic line intersects $C_{-}^{L}$ at a point $N$. Then, the interaction of ${\it fsf}_{-}$ with ${\it fs}_{+}$ generates a simple wave with straight $C_{+}$ characteristic lines issued from $\widetilde{MN}$ and a simple wave
 with straight $C_{-}$ characteristic lines issued from $\wideparen{FH}$, $\widetilde{HR}$, and $\widetilde{RN}$.
The two simple waves will then reflect off the walls $\mathcal{W}_{\pm}$, and the flow in the remaining part of the divergent duct can be constructed using the method described in Section 5.1.

The discussion for the first subcase is similar; we omit the details. This completes the proof of Theorem \ref{thm8}.


\subsubsection{\bf Non-existence of a pre-sonic shock propagated from $F$}
Suppose that there is a smooth pre-sonic shock propagated from $F$, denoted by ${\it S}_{_F}$ for convenience; see Fig. \ref{Fig5.8.4}. The shock ${\it S}_{_F}$ lies outside the domain ${\Omega}^3$.
For convenience, we still use subscripts `$f$' and `$b$' to denote the flow upstream and downstream of ${\it S}_{_F}$, respectively, and use $\phi$ to denote the inclination angle of ${\it S}_{_F}$.


As in (\ref{10705}) we have
\begin{equation}\label{1802}
\bar{\partial}_{_\Gamma}r_f=\bar{\partial}_{+}r_f=\bar{\partial}_{+}r_3>0\quad \mbox{at}~ F.
\end{equation}
So, as in (\ref{110412}) and (\ref{123104}) we have
$$
\bar{\partial}_{_\Gamma}\phi-\frac{\bar{\partial}_{_\Gamma} m}{\rho_fL_f}=\bar{\partial}_{_\Gamma}r_f
>0\quad \mbox{at}~ F.
$$

\begin{figure}[htbp]
\begin{center}
\includegraphics[scale=0.35]{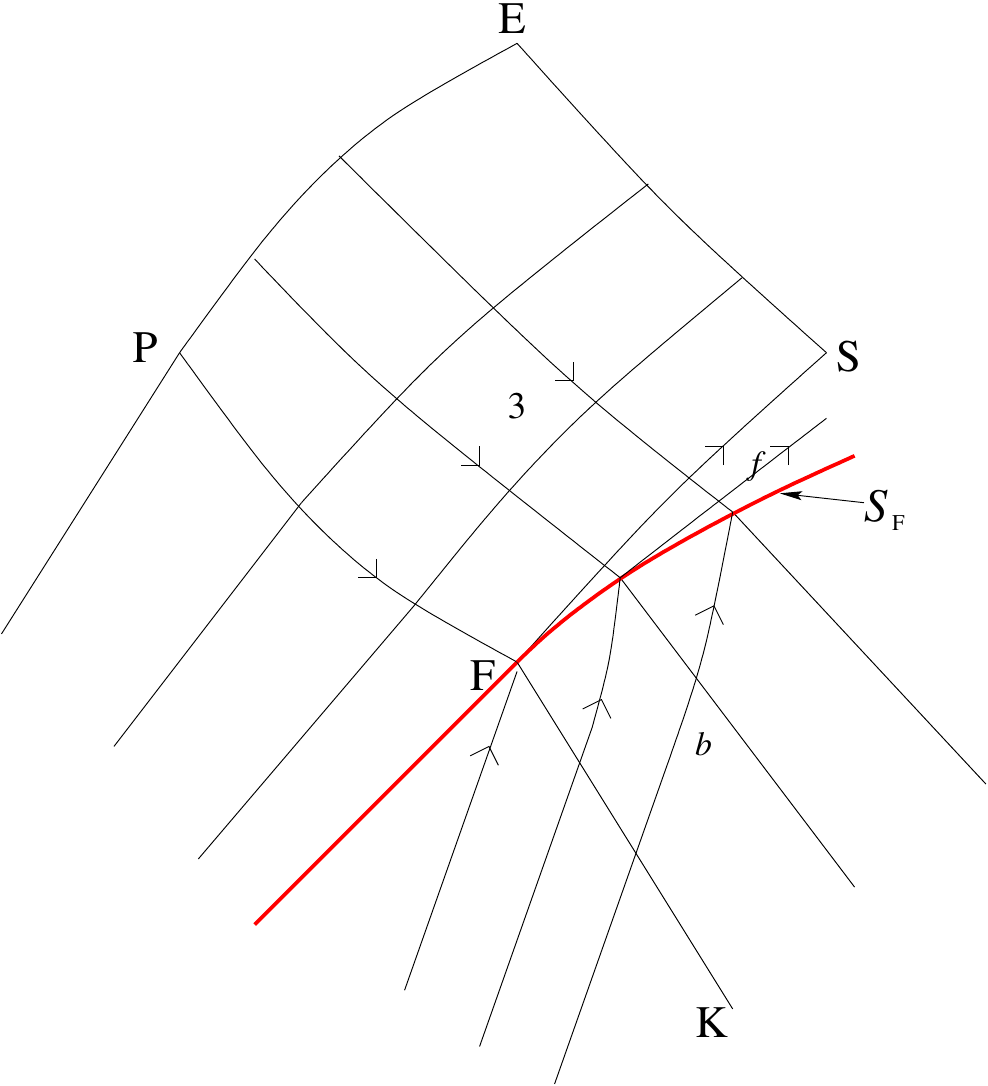}
\caption{\footnotesize An impossible pre-sonic shock from the point $F$. }
\label{Fig5.8.4}
\end{center}
\end{figure}

Since ${\it S}_{_F}$ lies outside the domain ${\Omega}^3$, by (\ref{alphar}) we have
\begin{equation}\label{1801}
\bar{\partial}_{_\Gamma}s_f\leq 0 \quad \mbox{at}~~ F.
\end{equation}
Combining (\ref{1801}) and (\ref{1802}) we have
$\bar{\partial}_{_\Gamma}\rho_f<0$ at $F$.
Moreover, since $\tau_b=\psi_{pr}(\tau_f)$ on ${\it S}_{_F}$,
we have
\begin{equation}
\bar{\partial}_{_\Gamma}\rho_b>0 \quad \mbox{at}~~ F.
\end{equation}

As in (\ref{250407}), we have
$$
\begin{aligned}
\bar{\partial}_{_\Gamma} \phi-\frac{\bar{\partial}_{_\Gamma} m}{\rho_fL_f}=~&\frac{(\tau_b-\tau_f)\bar{\partial}_{_\Gamma} m}{L_b}-\frac{N_b}{\rho_bL_bq_b^2}(q_b^2+\tau_b^2 p'(\tau_b))\bar{\partial}_{_\Gamma}\rho_b+\bar{\partial}_{_\Gamma}\sigma_b\\<~&\frac{(\tau_b-\tau_f)\bar{\partial}_{_\Gamma} m}{L_b}
+t_{+}^b\bar{\partial}_{+}\Big(\frac{\alpha_b+\beta_b}{2}\Big)
\\=~&\left\{\frac{N_b^2-c_b^2}{mL_b\rho_b(\tau_b+\tau_f)}-\frac{p''(\tau_{b})}{8c_{b}^2\rho_{b}^4}
\big(\varpi(\tau_{b})+1\big)\sin (2A_{b})\right\}\bar{\partial}_{_\Gamma}\rho_{b}<0\quad \mbox{at}~~ F,
\end{aligned}
$$
where $t_{+}^b=\frac{\sin(\phi-\beta_b)}{\sin (2A_b)}$.
This leads to a contradiction. So, the shock propagated from the point $F$ could not be a pre-sonic shock.

\subsection{Interaction of a fan-shock-fan composite wave with a fan wave}
We consider the IBVP (\ref{E1}), (\ref{42701})
for $\tau_0<\tau_1^e$.
In this part we assume the oblique waves at the corners $B$ and $D$ are fan-shock-fan composite wave and centered simple wave, respectively.
We assume further that the flow near  $B$ and $D$ can be represented by
\begin{equation}
(u, v)=\left\{
               \begin{array}{ll}
                 (u_0, 0), & \hbox{$-\frac{\pi}{2}<\eta<\eta_0$;} \\[2pt]
                 (\tilde{u}_l, \tilde{v}_l)(\eta), & \hbox{$\eta_0<\eta\leq -\xi_{e}$;} \\[2pt]
                  (\tilde{u}_r, \tilde{v}_r)(\eta), & \hbox{$-\xi_{e}\leq \eta<{\eta}_1$;} \\[2pt]
                 (u_1, v_1), & \hbox{$\eta_1<\xi<\theta_{+}$,}
               \end{array}
             \right.
\quad \mbox{and}
\quad
(u, v)=\left\{
               \begin{array}{ll}
                 (u_0, 0), & \hbox{$\xi_0\leq \xi\leq \frac{\pi}{2}$;} \\[2pt]
                 (\bar{u}, \bar{v})(\xi), & \hbox{$\xi_2<\xi<\xi_0$;} \\[2pt]
                (u_2, v_2), & \hbox{$\theta_{-}<\xi<\xi_2$,}
               \end{array}
             \right.
\end{equation}
respectively.
Here, the flow near the point $B$ is the same as that defined in  Section 5.5; $(\bar{u}, \bar{v})(\xi)$ is determined by (\ref{102306}) and (\ref{102304}) with initial data $(\bar{q}, \bar{\tau}, \bar{\sigma})(\xi_0)=(u_0, \tau_0, 0)$; $\xi_2$ is determined by $\bar{v}(\xi_2)=\bar{u}(\xi_2)\tan\theta_{-}$;
$(u_2, v_2)=(\bar{u}, \bar{v})(\xi_2)$. The main result of this subsection can be stated as the following theorem.

\begin{figure}[htbp]
\begin{center}
\includegraphics[scale=0.29]{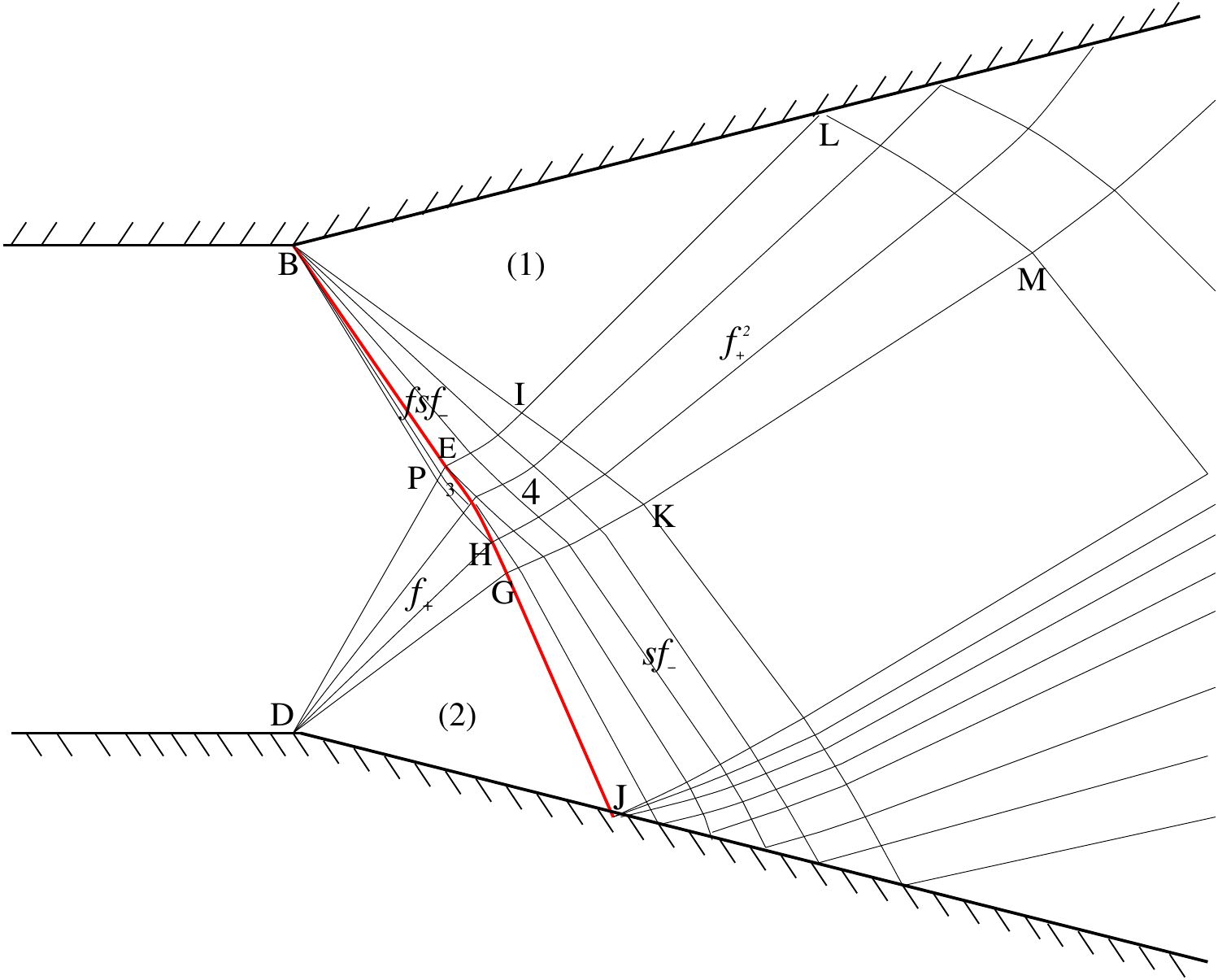}
\caption{\footnotesize Interaction of a fan-shock-fan composite wave with a centered simple wave. }
\label{Fig5.9.1}
\end{center}
\end{figure}

\begin{thm}\label{thm9}
Assume $\tau_0<\tau_1^e$ and $u_0>c_0$.
Assume furthermore that the oblique waves at the corners $B$ and $D$ are fan-shock-fan composite wave and centered simple wave, respectively.  Then when $(\tau_1^e-\tau_0)/(\tau_2-\tau_0)$ is sufficiently small, the IBVP (\ref{E1}), (\ref{42701}) admits a global  piecewise smooth solution in $\Sigma$.
\end{thm}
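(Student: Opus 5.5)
The plan is to follow the scheme of Theorems \ref{thm5} and \ref{thm8}, adapted to the fact that the wave at $D$ is a pure 2-fan with no shock. The two waves begin to interact at $P(\cot\xi_0,0)$. From $P$ I draw the forward $C_{+}$ cross characteristic $\widetilde{PE}\cup\widetilde{EM}$ of ${\it fsf}_{-}$, where $E$ lies on the double-sonic shock and $M$ on $y=1+x\tan\eta_1$. I also draw the forward $C_{-}$ cross characteristic $\widetilde{PF}$ of the fan $f_{+}$, ending on $y=-1+x\tan\xi_2$; by (\ref{62303}) it satisfies $\bar\partial_{-}\rho<0$. The first step is to solve the SGP with data $(\tilde u_l,\tilde v_l)$ on $\widetilde{PE}$ and $(\bar u,\bar v)$ on $\widetilde{PF}$ via Lemma \ref{lem1}. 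This gives a region $\Omega^3$ with $\bar\partial_{\pm}\rho<0$ and $(r,s)\in\Pi$.

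Next I propagate the double-sonic shock from $E$ into $\Omega^3$ as a post-sonic 1-shock $S_{_E}$, solving (\ref{110408}) with $\phi(E)=\beta_f(E)$, as in Lemma \ref{91011}. This is where the smallness of $(\tau_1^e-\tau_0)/(\tau_2-\tau_0)$ enters. Near $S_{_E}$, the estimates (\ref{6811}) and (\ref{110407}) should hold only on the portion of $\Omega^3$ where $\tau_3$ stays close to $\tau_1^e$. That portion is a small neighbourhood of $P$ relative to the whole fan $f_{+}$, whose amplitude is measured by $\tau_2-\tau_0$. Under this ratio condition I expect, as in Lemma \ref{111503}, that $S_{_E}$ reaches $\widetilde{PF}$ at a point $H$ close to $P$. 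It should do so while $\tau_f<\tau_2^i$, so that the shock is post-sonic and not transonic. The curvature comparison between $\bar\partial_{_\Gamma}\phi$ and $\bar\partial_{-}\beta_3$ is exactly as in that lemma.

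The shock then continues from $H$ into the fan region of $f_{+}$ with upstream state $(\bar u,\bar v)(\xi)$. I solve the analogue of (\ref{100903}) as in Section 5.6, which needs no smallness, and the shock ends on $\mathcal W_{-}$ or on the last straight characteristic. Along it I will prove the structural conditions $\bar\partial_{_\Gamma}r_d<0$, $\bar\partial_{_\Gamma}s_d<0$ and $(r_d,s_d)\in\Pi$. On $\wideparen{EH}$ this follows as in Lemma \ref{110411}, and on $\wideparen{HG}$ as in Lemma \ref{120801a}, by decomposing $\bar\partial_{_\Gamma}=t_{+}\bar\partial_{+}+t_{-}\bar\partial_{-}$ and using $\bar\partial_{-}\rho_f<0$ in the fan. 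Lemma \ref{lem7} then solves the singular Cauchy problem behind the shock. After that, the SGPs of Lemma \ref{lem1} and the DGP of Lemma \ref{lem8}, with Remark \ref{rem4.1} for the straight-characteristic pieces, fill in the region bounded by $\widetilde{EM}$ and the remaining fan.

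Depending on whether the shock still holds $\tau_f<\tau_2^i$ when it meets the wall $\mathcal W_{-}$, the reflection is either a fan or a post-sonic shock-fan reflection, treated as in Section 5.3.3. The outgoing waves are simple waves and shock-fan composites, and the rest of the duct is built by the finitely many SGPs, mixed problems (Lemmas \ref{lem5}, \ref{lem5a}) and simple-wave problems used in Section 5.1. The main obstacle is controlling $S_{_E}$ in $\Omega^3$. Specifically, I must show it exits through $\widetilde{PF}$ before the structural conditions or the post-sonic condition $\tau_f<\tau_2^i$ fail, with bounds uniform in the ratio rather than in $\tau_1^e-\tau_0$ alone. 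To get this I would first rescale using (\ref{81104}) and the uniform closeness of $\bar\partial_{\pm}\rho_3$ to $-\mathcal L$ near $P$.
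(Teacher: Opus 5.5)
Your proposal is correct and follows the paper's proof essentially step for step. The steps match in order:
\begin{enumerate}
\item the SGP on $\Omega^3$;
\item the post-sonic continuation of the double-sonic shock from $E$ until it meets $\widetilde{PF}$ at $H$ under the ratio condition (as in Lemma \ref{111503});
\item the continuation of that shock through the fan $f_{+}$, with the structural conditions of Lemmas \ref{110411} and \ref{120801a};
\item a singular Cauchy problem and an SGP behind the shock;
\item wall reflections treated as in Sections 5.1 and 5.3.
\end{enumerate}

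One sharpening concerns the part after the shock crosses the last straight characteristic $y=-1+x\tan\xi_2$ at $G$. There its front state is the constant $(u_2,v_2)$ with $\tau_1^e<\tau_2<\tau_1^i$, so it continues as a straight post-sonic shock to $\mathcal{W}_{-}$. The flow behind this straight piece is a simple wave obtained from an SGP; no DGP or singular Cauchy problem is needed there. For the same reason, your case distinction at the wall does not arise.
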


We use ${\it fsf}_{-}$ and ${\it f}_{+}$ to denote the fan-shock-fan composite wave issued from point $B$ and the centered simple wave issued from point $D$, respectively; see Fig. \ref{Fig5.9.1}.
These two composite waves start to interact with each other at point ${P}(\cot\xi_0,0)$.
From the point $P$, we draw a forward $C_{+}$  cross characteristic line in ${\it fsf}_{-}$.
This characteristic line intersects the double-sonic shock of ${\it fsf}_{-}$ at a point $E$ and ends up at a point $I$ on the straight $C_{-}$ characteristic line $y=1+x\tan\eta_1$ of ${\it fsf}_{-}$.
From the point $P$, we draw a forward $C_{-}$  cross characteristic line in ${\it f}_{+}$.
This characteristic line intersects the straight $C_{+}$ characteristic  line $y=-1+x\tan\xi_2$  of ${\it f}_{+}$ at a point $F$.

We first consider (\ref{E1}) with the boundary condition
\begin{equation}\label{102903}
(u, v)=\left\{
               \begin{array}{ll}
                  (\tilde{u}_l, \tilde{v}_l)(\eta), & \hbox{$(x,y)\in \widetilde{{PE}}$;} \\[2pt]
            (\bar{u}, \bar{v})(\xi), & \hbox{$(x,y)\in \widetilde{{PF}}$.} \\[2pt]
              \end{array}
             \right.
\end{equation}
When $\tau_1^e-\tau_0$ is sufficiently small,
the Goursat problem (\ref{E1}), (\ref{102903}) admits a classical solution on a curvilinear quadrilateral domain $\overline{\Omega^3}$ bounded by $\widetilde{{PE}}$, $\widetilde{{PF}}$, $\widetilde{{ES}}$, and $\widetilde{{FS}}$, where $\widetilde{{ES}}$ is a forward $C_{+}$ characteristic line issued from ${E}$ and $\widetilde{{FS}}$ a forward $C_{-}$ characteristic line issued from ${F}$. Moreover the solution satisfies $\tau<\tau_1^i$ in $\overline{\Omega^3}$.

As described in Section 5.8.2, the double-sonic shock of ${\it fsf}_{-}$ will propagate into the domain $\Omega^3$ from the point $E$ and become a post-sonic shock.
Moreover, when  $(\tau_1^e-\tau_0)/(\tau_2-\tau_0)$ is sufficiently small, this shock will intersect $\widetilde{{PF}}$ at a point $H$. Then the shock propagates into the fan wave ${\it f}_{+}$ region from $H$ until it intersects the straight $C_{+}$ characteristic  line $y=-1+x\tan\xi_2$ at a point $G$.
We use $\wideparen{EG}$ to denote the post-sonic shock from $E$ to $G$, and
 let $\overline{\Omega}_3$ be the domain bounded by $\widetilde{PE}$, $\widetilde{PH}$, and $\wideparen{EH}$.
The flow upstream of $\wideparen{EG}$ is thus known.

The states on the backside of the post-sonic shock $\wideparen{EG}$ can be determined using the method described in Section 5.8.2.
We next solve a singular Cauchy problem and a standard Goursat problem of (\ref{E1}) to obtain the flow downstream of $\wideparen{EG}$, i.e., the flow on a curvilinear quadrilateral $\overline{\Omega}_4$ bounded by  $\widetilde{EI}$, $\wideparen{EG}$, $\widetilde{IK}$, and $\widetilde{GK}$, where $\widetilde{IK}$ is a forward $C_{-}$
characteristic line issued from $I$ and $\widetilde{GK}$ is a forward $C_{+}$ characteristic line issued from $G$.
For convenience, we use subscript `$4$' to denote the flow in $\overline{\Omega}_4$, i.e., $(u, v)=(u_4, v_4)(x,y)$, $(x,y)\in \overline{\Omega}_4$.

The post-sonic shock will continue to propagate at a constant speed from the point $G$ until it intersects $\mathcal{W}_{-}$ at a point $J$. The states on the front and  back sides of the straight post-sonic shock $\overline{GJ}$ are $(u_2, v_2)$ and $(u_4, v_4)(G)$, respectively. We then consider (\ref{E1}) with the boundary condition
\begin{equation}\label{121106}
(u, v)=\left\{
               \begin{array}{ll}
                  (u_4, v_4)(x,y), & \hbox{$(x, y)\in\widetilde{GK}$;} \\[2pt]
(u_4, v_4)(G), & \hbox{$(x, y)\in\widetilde{GJ}$.}
               \end{array}
             \right.
\end{equation}
The problem (\ref{E1}), (\ref{121106}) admits a simple wave solution with straight $C_{-}$ characteristic line issued from $\widetilde{GK}$. This simple wave and the post-sonic shock $\overline{GJ}$ form a shock-fan composite wave, denoted by  ${\it sf}_{-}$ for convenience.

From the point $I$ we draw a straight $C_{+}$ characteristic line with the characteristic angle $\alpha=\theta_{+}+A_1$. This line intersects $\mathcal{W}_{+}$ at a point $L$. When then solve a simple wave problem for
(\ref{E1})  to obtain a simple wave, denoted by ${\it f}_{+}^2$,  on a curvilinear quadrilateral domain bounded by characteristic lines $\overline{IL}$, $\widetilde{IK}$, $\overline{KM}$, and $\widetilde{LM}$, where
$\overline{KM}$ is a straight $C_{-}$ characteristic line issued from $K$ and $\widetilde{LM}$ is a forward $C_{-}$ characteristic curve issued from $L$.

Therefore, when $\tau_1^e-\tau_0$ is sufficiently small, the interaction of ${\it fsf}_{-}$ with ${\it f}_{+}$ generates a simple wave ${\it f}_{+}^2$ and a shock-fan composite wave ${\it sf}_{-}$.
The shock-fan composite wave ${\it sf}_{-}$ and the simple wave ${\it f}_{+}^2$
will then reflect off the walls $\mathcal{W}_{-}$ and $\mathcal{W}_{+}$, respectively, and generate two new simple waves.
The solution in the remaining part of the divergent duct can be construct using the method described in Sections 5.1 and 5.3.
This completes the proof of Theorem \ref{thm9}.

\subsection{Interaction of a fan-shock composite wave with a fan wave}
We consider the IBVP (\ref{E1}), (\ref{42701})
for $\tau_0<\tau_1^i$.
In this part we assume that the oblique waves at the corners $B$ and $D$ are fan-shock composite wave and centered simple wave, respectively, and the flow near the corners  $B$ and $D$ can be represented by
$$
(u, v)=\left\{
               \begin{array}{ll}
                 (u_0, 0), & \hbox{$-\frac{\pi}{2}<\eta<\eta_0$;} \\[2pt]
                 (\tilde{u}, \tilde{v})(\eta), & \hbox{$\eta_0<\eta\leq \eta_{pr}$;} \\[2pt]
                 (u_1, v_1), & \hbox{$\eta_{pr}<\xi<\theta_{+}$;}
               \end{array}
             \right.
\quad \mbox{and}
\quad
(u, v)=\left\{
               \begin{array}{ll}
                 (u_0, 0), & \hbox{$\xi_0\leq \xi\leq \frac{\pi}{2}$;} \\[2pt]
                 (\bar{u}, \bar{v})(\xi), & \hbox{$\xi_2<\xi<\xi_0$;} \\[2pt]
                (u_2, v_2), & \hbox{$\theta_{-}<\xi<\xi_2$,}
               \end{array}
             \right.
$$
respectively.
Here, the flow near the point $B$ is the same as that defined in  Section 5.7; the flow near the point $D$ is the same as that defined in  Section 5.9.
The main result of this subsection can be stated as the following theorem.
\begin{thm}\label{thm10}
Assume $\tau_0<\tau_1^i$ and $u_0>c_0$.
Assume furthermore that the oblique waves at the corners $B$ and $D$ are fan-shock composite wave and centered simple wave, respectively. Then when $\tau_1>\tau_2^i$ and $(\psi_{pr}^{-1}(\tau_1)-\tau_0)/(\tau_2-\tau_0)$ is sufficiently small, the IBVP (\ref{E1}), (\ref{42701}) admits a global  piecewise smooth solution in $\Sigma$.
\end{thm}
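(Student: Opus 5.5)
The plan is to follow the scheme of Theorems \ref{thm8} and \ref{thm9}, with the fan-shock composite wave ${\it fs}_{-}$ at $B$ now taking the place of ${\it fsf}_{-}$. Denote by ${\it fs}_{-}$ the fan-shock composite wave at $B$ and by ${\it f}_{+}$ the centered simple wave at $D$. They begin to interact at $P(\cot\xi_0,0)$.

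\textbf{Step 1: the fan–fan interaction region.} From $P$ draw the forward $C_{+}$ cross characteristic of ${\it fs}_{-}$; it meets the pre-sonic shock $y=1+x\tan\eta_{pr}$ at a point $E$. Also draw the forward $C_{-}$ cross characteristic of ${\it f}_{+}$; it ends at a point $F$ on $y=-1+x\tan\xi_2$. Solve the standard Goursat problem on $\widetilde{PE}\cup\widetilde{PF}$ by Lemma \ref{lem1}. The smallness of $(\psi_{pr}^{-1}(\tau_1)-\tau_0)/(\tau_2-\tau_0)$ is used here. It forces $\tau(E)=\psi_{pr}^{-1}(\tau_1)$ close to $\tau_0$ relative to the amplitude of ${\it f}_{+}$. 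Consequently the portion of the Goursat region that is actually needed stays in $\tau<\tau_1^i$, where $p''>0$, and the monotonicity $\bar\partial_\pm\rho<0$ together with the bounds in the proof of Lemma \ref{lem1} persist there.

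\textbf{Step 2: the shock leaving $E$.} At $E$ the front state has $N_f=c_f$ and the back state is $\tau_1=\psi_{pr}(\tau_f)$. I would show that the shock continuing into the Goursat region is post-sonic, arguing as in Lemma \ref{91011}. First compute $\bar\partial_{_\Gamma}\phi$ at $E$ from (\ref{110408}). Then check that $\bar\partial_{_\Gamma}\tau_f>0$ along the shock, using (\ref{82003}) and $\bar\partial_\pm\rho<0$. With $\tau_f$ increasing from $\psi_{pr}^{-1}(\tau_1)$, Proposition \ref{51003} makes the post-sonic branch $\tau_b=\psi_{po}(\tau_f)$ admissible. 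The hypothesis $\tau_1>\tau_2^i$ ensures the back state lies in the convex region $\tau>\tau_2^i$.

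Nonexistence of transonic and pre-sonic alternatives is argued exactly as in Section 5.5.6. Then, as in Lemma \ref{111503}, the smallness ratio makes this shock reach $\widetilde{PF}$ at a point $H$ near $P$, before it reaches $\widetilde{FS}$. It then crosses the simple wave ${\it f}_{+}$ following (\ref{100903}) and meets $y=-1+x\tan\xi_2$ at $G$. From $G$ it becomes a straight post-sonic shock and finally hits $\mathcal W_{-}$, as in Section 5.9.

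\textbf{Step 3: the structural conditions and the downstream flow.} The structural conditions $\bar\partial_{_\Gamma}r_d<0$, $\bar\partial_{_\Gamma}s_d<0$ and $(r_d,s_d)\in\Pi$ along $\wideparen{EG}$ are obtained in two pieces. On $\wideparen{EH}$ they follow from the near-constant-state estimates, as in Lemma \ref{110411}. On $\wideparen{HG}$ they follow from the argument of Lemma \ref{120801a}, which uses only $\bar\partial_{_\Gamma}\rho_f<0$ and $\psi_{po}'<0$. Lemma \ref{lem7} (a singular Cauchy problem) then gives the flow behind $\wideparen{EG}$.

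The remaining pieces are assembled as follows. A Goursat problem attaches this flow to the constant state $(u_1,v_1)$ behind the pre-sonic shock; since $\tau_1>\tau_2^i$, this is possible via a simple wave ${\it f}_{+}^2$ with straight $C_{+}$ lines issued from $E$. Next comes a simple wave problem behind $\overline{GJ}$, producing ${\it sf}_{-}$. Finally, the wall reflections are handled by Lemmas \ref{lem5}–\ref{lem5a}, and the continuation proceeds as in Sections 5.1 and 5.3.

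\textbf{Main obstacle.} The hardest step is Step 2 at $E$. The pre-sonic shock is not double-sonic, so (\ref{250402}) fails: $\bar\partial_{_\Gamma}m\ne0$ there, and the curvature comparison $\bar\partial_{_\Gamma}\phi>\bar\partial_+\alpha_3$ used to keep the shock inside $\Omega^3$ must be redone. I would first compute $\bar\partial_{_\Gamma}m$ from (\ref{250104}), where now the term with $N_b<c_b$ survives, and show it has the favorable sign. Should that fail, the fallback is the smallness of $\tau_1-\tau_2^i$, or more generally the smallness of the ratio, to make the extra term dominated by the leading $\mathcal L$-type terms.
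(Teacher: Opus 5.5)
\textbf{Gap in Step 2: the shock that continues from $E$ into $\Omega^3$ is not post-sonic.} Your argument for a post-sonic continuation cannot work.

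At $E$ the upstream state is the fan state with $\tau_f(E)=\psi_{pr}^{-1}(\tau_1)$. The downstream state is $(u_1,v_1)$ with $\tau_1=\psi_{pr}(\tau_f(E))$, and the incoming pre-sonic shock has $m^2=-p'(\tau_f(E))$ by (\ref{5701}). A post-sonic continuation would instead have $\tau_b=\psi_{po}(\tau_f(E))>\psi_{pr}(\tau_f(E))$ (Proposition \ref{tran}) and $m^2=-p'(\psi_{po}(\tau_f(E)))>-p'(\tau_f(E))$ by (\ref{43003}). It would therefore have a different inclination and a different back state at $E$: the shock would have a corner at $E$, and its downstream state would not match $(u_1,v_1)$.

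Your Step 3 tacitly bridges this mismatch with a 2-wave ${\it f}_+^2$ centered at $E$. But $(u_1,v_1)$ already lies on the 1-shock curve of the upstream state at $E$. The local Riemann problem at $E$ is thus solved by the incoming pre-sonic shock alone, leaving no room for such a wave.

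This is exactly where the double-sonic case of Theorem \ref{thm5} differs. There $\psi_{po}(\tau_f)\to\tau_2^e=\tau_b(F)$ as $\tau_f\to\tau_1^e$ and (\ref{250402}) holds, so the post-sonic continuation is continuous. For the same reason, the nonexistence argument for transonic shocks in Section 5.5.6 does not transfer. That argument relies on $\tau_b(F)=\tau_2^e$ being the largest admissible back volume, which forces $\bar\partial_{_\Gamma}\rho_b\ge 0$. At a pre-sonic point, $\tau_b(E)$ is the lower end of the transonic range $(\psi_{pr}(\tau_f),\psi_{po}(\tau_f))$, and the transonic continuation in fact satisfies $\bar\partial_{_\Gamma}\rho_b<0$, cf. (\ref{12201}). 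A symptom of the problem: in your picture the hypothesis $\tau_1>\tau_2^i$ does nothing for the shock's back state, since $\psi_{po}>\tau_2^i$ holds automatically by Proposition \ref{51003}.

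\textbf{What the paper does instead.} It uses the mechanism of Section 5.8.3 with the two families exchanged.
\begin{itemize}
\item The shock from $E$ comes from a shock free boundary problem like (\ref{E1}), (\ref{102904})--(\ref{102906}) (Lemma \ref{120903a}). It is transonic, and its back state starts at $(u_1,v_1)$.
\item The curvature comparison at $E$ is (\ref{103009}), with the sign of $\bar\partial_{_\Gamma}m$ taken from (\ref{103001}).
\item The downstream flow is a simple wave with straight $C_+$ characteristics issued from the shock itself. This is precisely where $\tau_1>\tau_2^i$ is used: it keeps the increasing back volumes in the convex range, so these straight characteristics do not cross.
\end{itemize}

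After the shock reaches $\widetilde{PF}$ at $H$ (as in Lemma \ref{111503}) and crosses ${\it f}_+$ to $G$, two cases remain.
\begin{itemize}
\item $\wideparen{EG}$ stays transonic. A straight shock then continues from $G$ and produces a centered simple wave when it reflects at $\mathcal{W}_-$.
\item The shock becomes post-sonic at a point $R$ where $\tau_b=\psi_{po}(\tau_f)$. Only behind $\wideparen{RG}$ does one solve the singular Cauchy problem of Lemma \ref{lem7}, with structural conditions checked as in Lemma \ref{120801a}. The shock-fan composite wave ${\it sf}_-$ then issues from $G$.
\end{itemize}

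Accordingly, your Step 3 has to be replaced. It applies Lemma \ref{110411}-type estimates on $\wideparen{EH}$, which depend on $\bar\partial_{_\Gamma}m=0$ at a double-sonic point, and a singular Cauchy problem behind the whole of $\wideparen{EG}$.
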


\begin{figure}[htbp]
\begin{center}
\includegraphics[scale=0.35]{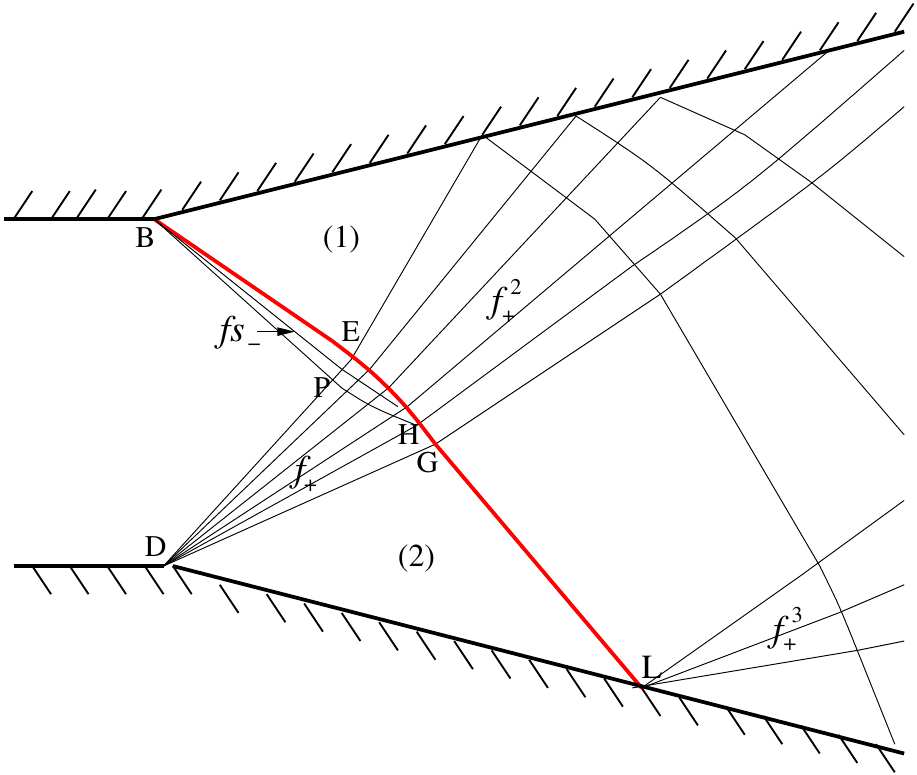}\qquad\qquad\qquad\includegraphics[scale=0.35]{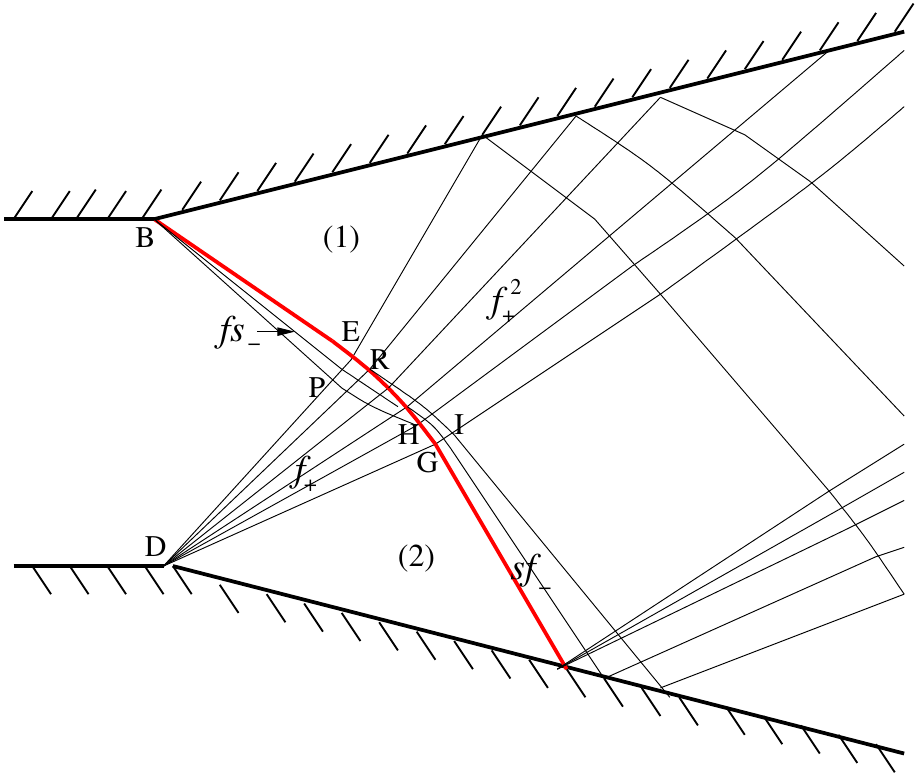}
\caption{\footnotesize Interaction of a fan-shock composite wave with a centered simple wave. }
\label{Fig5.10.1}
\end{center}
\end{figure}

For convenience, we use ${\it fs}_{-}$ and ${\it f}_{+}$ to denote the fan-shock composite wave issued from $B$
and the centered simple wave issued from $D$, respectively.
The two oblique waves start to interact with each other at the point ${P}(\cot\xi_0,0)$.
From the point $P$, we draw a forward $C_{-}$  cross characteristic curve of the centered simple wave ${\it f}_{+}$. This characteristic curve intersects the straight characteristic line $y=-1+x\tan\xi_2$ at a point $F$.
From the point $P$, we draw a forward $C_{+}$  cross characteristic curve of the  fan-shock composite wave ${\it fs}_{-}$. This characteristic curve intersects the pre-sonic shock of ${\it fs}_{-}$ at a point $E$.

We first consider (\ref{E1}) with the boundary condition
\begin{equation}\label{121201a}
(u, v)=\left\{
               \begin{array}{ll}
                  (\tilde{u}, \tilde{v})(\eta), & \hbox{$(x,y)\in \widetilde{{PE}}$;} \\[2pt]             (\bar{u}, \bar{v})(\xi), & \hbox{$(x,y)\in \widetilde{{PF}}$.} \\[2pt]
              \end{array}
             \right.
\end{equation}
When  $\psi_{pr}^{-1}(\tau_1)-\tau_0$ is sufficiently small,  the SGP (\ref{E1}), (\ref{121201a}) admits a classical solution on a curvilinear quadrilateral domain $\overline{\Omega^3}$ bounded by $\widetilde{{PE}}$, $\widetilde{{PF}}$, $\widetilde{{ES}}$, and $\widetilde{{FS}}$, where $\widetilde{{ES}}$ is a forward $C_{+}$ characteristic line issued from ${E}$ and $\widetilde{{FS}}$ a forward $C_{-}$ characteristic line issued from ${F}$.

See Fig. \ref{Fig5.10.1}. By solving a shock free boundary problem like (\ref{E1}), (\ref{102904})--(\ref{102906}) we see that the pre-sonic shock of ${\it fs}_{-}$ will propagate into the region $\Omega^3$ from the point $E$. For convenience, we denote this shock by ${\it S}_{_E}$.
As in Lemma \ref{111503}, when $\psi_{pr}^{-1}(\tau_1)-\tau_0$ is sufficiently small, ${\it S}_{_E}$ intersects the characteristic curve $\widetilde{PF}$ at a point $H$.
The shock ${\it S}_{_E}$ will then propagate into the centered simple wave ${\it f}_{+}$ region from the point $H$ and intersect the straight $C_{+}$ characteristic line $y=-1+x\tan\xi_2$ at a point $G$. We use $\wideparen{EG}$ to denote the shock ${\it S}_{_E}$ connecting $E$ and $G$.

Like the shock  $\wideparen{FG}$ discussed in Section 5.8.3, we have the following two cases.
\begin{itemize}
  \item $\wideparen{EG}$ is a transonic shock; see Fig. \ref{Fig5.10.1} (left).
  \item There is a point $R$ on $\wideparen{EG}$ such that the segment $\wideparen{ER}$ is transonic and the segment $\wideparen{RG}$ is post-sonic;  see Fig. \ref{Fig5.10.1} (left).
\end{itemize}

For the first case, there is a simple wave (denoted by ${\it f}_{+}^2$) with straight $C_{+}$ characteristic lines  issued from the shock $\wideparen{EG}$, and the shock will continue to propagate at a constant from the point $G$.
Then the interaction of ${\it fs}_{-}$ with ${\it f}_{+}$ generates a simple wave and a shock wave.
The shock propagating from $G$ will impinge on the wall $\mathcal{W}_{-}$ at a point $L$, generating
a centered simple wave (denoted by ${\it f}_{+}^3$) emanating from that point.

For the second case, by solving a singular Cauchy problem we obtain the flow downstream of the post-sonic shock $\wideparen{RG}$, i.e., the flow on a triangle region bounded by  $\widetilde{RI}$, $\widetilde{RG}$, and $\widetilde{GI}$, where $\widetilde{RI}$ is a forward $C_{-}$
characteristic line issued from $R$ and $\widetilde{GI}$ is a forward $C_{+}$ characteristic line issued from $G$.
As in Section 5.8.3, there is a simple wave (denoted by ${\it f}_{+}^2$) with straight $C_{+}$ characteristic lines
issued from $\widetilde{ER}$ and $\widetilde{RI}$, and a shock-fan composite wave (denoted by ${\it sf}_{-}$) issued from $\widetilde{GI}$. Then the interaction of ${\it fs}_{-}$ with ${\it f}_{+}$ generates a simple wave and a shock-fan composite wave.

The flow in the remaining part of the duct can then be determined using the method described in Sections 5.1 and 5.3.
This completes the proof of Theorem \ref{thm10}.

\begin{rem}
 The assumption of $\tau_1>\tau_2^i$
is made to ensure that the straight $C_{-}$ characteristic lines of the simple wave adjacent to the constant state $(u_1, v_1)$ do not intersect with each other.
\end{rem}

\subsection{Interaction of a fan wave with a shock wave}
We consider the IBVP (\ref{E1}), (\ref{42701})
for $\tau_1^e<\tau_0<\tau_1^i$.
In this part we assume that the oblique waves at the corners $B$ and $D$ are centered simple wave and oblique shock wave, respectively, and the flow near the corners  $B$ and $D$ can be represented by
$$
(u, v)=\left\{
               \begin{array}{ll}
                 (u_0, 0), & \hbox{$-\frac{\pi}{2}\leq \eta\leq \eta_0$;} \\[2pt]
                 (\tilde{u}, \tilde{v})(\eta), & \hbox{$\eta_1\leq \eta\leq \eta_0$;} \\[2pt]
                 (u_1, v_1), & \hbox{$\eta_{1}<\eta<\theta_{+}$,}
               \end{array}
             \right.
\quad \mbox{and}\quad
(u, v)=\left\{
               \begin{array}{ll}
                 (u_0, 0), & \hbox{$\xi_s< \xi\leq\frac{\pi}{2}$;} \\[2pt]
                 (u_2, v_2), & \hbox{$\theta_{-}<\xi<\xi_s$,}
               \end{array}
             \right.
$$
respectively.
Here, the flow near the corner $B$ is the same as that defined in  Section 5.6;
the flow near the corner $D$ is the same as that defined in  Section 5.4.
The main result of this subsection can be stated as the following theorem.
\begin{thm}\label{thm11}
Assume $\tau_1^e<\tau_0<\tau_1^i$ and $u_0>c_0$.
Assume as well that the oblique waves at the corners $B$ and $D$ are centered simple wave and oblique shock wave, respectively. Assume furthermore $\tau_2>\tau_2^i$, Then the IBVP (\ref{E1}), (\ref{42701}) admits a global  piecewise smooth solution in $\Sigma$.
\end{thm}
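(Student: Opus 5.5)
The plan is to follow the pattern of Sections 5.3 and 5.6. The 1-fan ${\it f}_{-}$ from $B$ and the oblique 2-shock ${\it s}_{+}$ from $D$ first meet at
$$P\big(x_{_P},y_{_P}\big),\qquad x_{_P}=2(\tan\xi_s-\tan\eta_0)^{-1},\qquad y_{_P}=1+x_{_P}\tan\eta_0 .$$
Since $\tau_0\in(\tau_1^e,\tau_1^i)$ and ${\it s}_{+}$ is a rarefaction 2-shock with front state $(u_0,0)$, it is transonic; the case $\tau_2=\psi_{pr}(\tau_0)$ makes it pre-sonic. From $P$ on, the shock propagates into the fan region of ${\it f}_{-}$, where the front state $(\tilde u,\tilde v)(\eta)$ is known and varies. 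The back state is no longer constant, so the shock is curved.

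The central step is a shock free boundary problem of the type (\ref{E1}), (\ref{102904})--(\ref{102906}). The known data are the constant state $(u_2,v_2)$ on the straight $C_{-}$ line from $P$ behind the shock, and the upstream flow of ${\it f}_{-}$. Every point behind the curved shock is reached by a backward $C_{+}$ characteristic from that straight $C_{-}$ line, so by Theorem \ref{thm0} the downstream flow is a simple wave with straight $C_{-}$ characteristics. As in Lemma \ref{120903a}, I would write the shock as the ODE system (\ref{103002}), (\ref{103004}) for $(\phi,\rho_b)$, with initial data $(\phi,\rho_b)(P)=(\xi_s,\rho_2)$, and integrate it while the front point remains in ${\it f}_{-}$.

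In the fan $\bar\partial_{+}\rho_f<0$ and $\bar\partial_{-}\rho_f=0$, so $\bar\partial_{_\Gamma}\rho_f=t_{+}^f\bar\partial_{+}\rho_f<0$. From (\ref{103004}) I expect $\bar\partial_{_\Gamma}\rho_b<0$, so $\tau_b$ increases along the shock and stays above $\tau_2>\tau_2^i$. This is where the hypothesis $\tau_2>\tau_2^i$ enters. In the convex region behind the shock the straight $C_{-}$ lines of the downstream simple wave do not cross: the crossing condition is governed by (\ref{121902a}) and the sign of $p''$, as in the remark after Theorem \ref{thm10}. For the same reason no further fan is needed behind.

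Entropy admissibility must be kept along the shock. The front state satisfies $\tau_f<\tau_1^i$, and I would check that $\tau_b$ stays strictly between $\psi_{pr}(\tau_f)$ and $\psi_{po}(\tau_f)$ for as long as the shock is transonic. If $\tau_b$ reaches $\psi_{po}(\tau_f)$, equivalently $\phi=\alpha_b$, at some point $H$, I would switch to the post-sonic shock law (\ref{100903}) from $H$. I would then check the structural conditions $\bar\partial_{_\Gamma}r_b>0$ and $\bar\partial_{_\Gamma}s_b>0$ exactly as in Lemma \ref{120801a}, and solve the singular Cauchy problem behind it by Lemma \ref{lem7}. This reproduces the transonic-to-post-sonic transition of Section 5.8.3.

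The shock leaves the fan at a point $E$ on the last straight $C_{-}$ line $y=1+x\tan\eta_1$. Beyond $E$ the front state is the constant $(u_1,v_1)$, so the shock continues straight. The interaction therefore produces a fan ${\it f}_{-}^2$ behind the shock, made of straight $C_{-}$ lines, possibly combined with a post-sonic part into a shock-fan composite wave, together with a transmitted straight 2-shock or a shock-fan composite wave, depending on $\tau_1$ as in Section 5.3.

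On the $\mathcal W_{+}$ side, a cross $C_{+}$ characteristic from $P$ through ${\it f}_{-}$ and a cross characteristic in the downstream simple wave give the boundary data. A standard Goursat problem (Lemma \ref{lem1}) or a discontinuous Goursat problem (Lemma \ref{lem8}), followed by simple wave problems (Lemmas \ref{lem3}--\ref{lem4}), then gives the flow between the waves. Reflections off the walls are handled by the mixed boundary value problems of Lemmas \ref{lem5}--\ref{lem5a}, including reflection of the straight shock off $\mathcal W_{+}$, which produces a centered fan because the back state has $\tau>\tau_2^i$. The rest of the duct then follows as in Section 5.1.

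I expect the main obstacle to be the global a priori control of the curved shock. The key facts to establish are:
\begin{itemize}
\item the sign $\bar\partial_{_\Gamma}\rho_b<0$ from (\ref{103004}) without a smallness assumption;
\item that the shock stays strictly ahead of the downstream $C_{-}$ characteristics;
\item that $\phi-\beta_f$ keeps the correct sign until $E$.
\end{itemize}
For the first, I would try to show that the coefficient of $\bar\partial_{_\Gamma}\rho_b$ in (\ref{103004}) has a definite sign using only $N_b<c_b$ and $N_f>c_f$, as in the computation in the proof of Lemma \ref{120801a}.
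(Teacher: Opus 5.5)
Your proposal is correct and is essentially the paper's argument. In both, ${\it s}_{+}$ is continued through ${\it f}_{-}$ as the shock free boundary problem of Section 5.8.3, with a straight-$C_{-}$ simple wave behind it (Theorem \ref{thm0}); it is switched to a post-sonic shock with a singular Cauchy problem behind it if $\phi=\alpha_b$ is reached (Lemmas \ref{120801a} and \ref{lem7}), and continued straight beyond $E$. The hypothesis $\tau_2>\tau_2^i$ is used, as in the paper's remark, only to keep the straight $C_{-}$ lines from crossing.

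Your planned proof of $\bar\partial_{_\Gamma}\rho_b<0$ works without any smallness, which is more than the paper's ``as in Section 5.8.3'' supplies. In ${\it f}_{-}$ one has $\bar\partial_{-}\rho_f=0$, the bracket multiplying $t_{+}^{f}\bar\partial_{+}\rho_f$ in (\ref{103004}) is negative by the same bound as in Lemma \ref{120801a}, and the left-hand coefficient is negative since $N_b\le c_b$. Your other two listed obstacles follow from Proposition \ref{pro3.1.1} once admissibility is maintained. The one misplaced detail is the ``cross $C_{+}$ characteristic from $P$ through ${\it f}_{-}$'': it does not exist, because $\alpha_f\le\phi$ at $P$ and the whole fan lies upstream of $\wideparen{PE}$.
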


We denote the centered simple wave issued from $B$ and the oblique shock issued from $D$ by ${\it f}_{-}$ and ${\it s}_{+}$, respectively.
The two oblique waves start to interact with each other at the point $P(x_{_P}, y_{_P})$, where
$x_{_P}=2(\tan\xi_s-\tan\eta_0)^{-1}$ and $y_{_P}=1+2(\tan\xi_s-\tan\eta_0)^{-1}\tan\eta_0$.
As in Section 5.8.3, the shock ${\it s}_{+}$ propagates into the fan wave ${\it f}_{-}$ region and intersects the straight $C_{-}$ characteristic line $y=1+x\tan\eta_1$ of ${\it f}_{-}$ at a point $E$.
We denote the shock segment connecting points $P$ and $E$ by $\wideparen{PE}$.
As the shock wave $\wideparen{FG}$ discussed in Section 5.8.3, there are the following two cases about the shock wave $\wideparen{PE}$.
\begin{itemize}
\item $\wideparen{PE}$ is a transonic shock; see Fig. \ref{Fig5.11.1} (left).
\item There is a point $H$ on $\wideparen{PE}$ such that the segment $\wideparen{PH}$ is transonic and the segment $\wideparen{HE}$ is post-sonic;  see Fig. \ref{Fig5.11.1} (right).
 \end{itemize}

\begin{figure}[htbp]
\begin{center}
\includegraphics[scale=0.35]{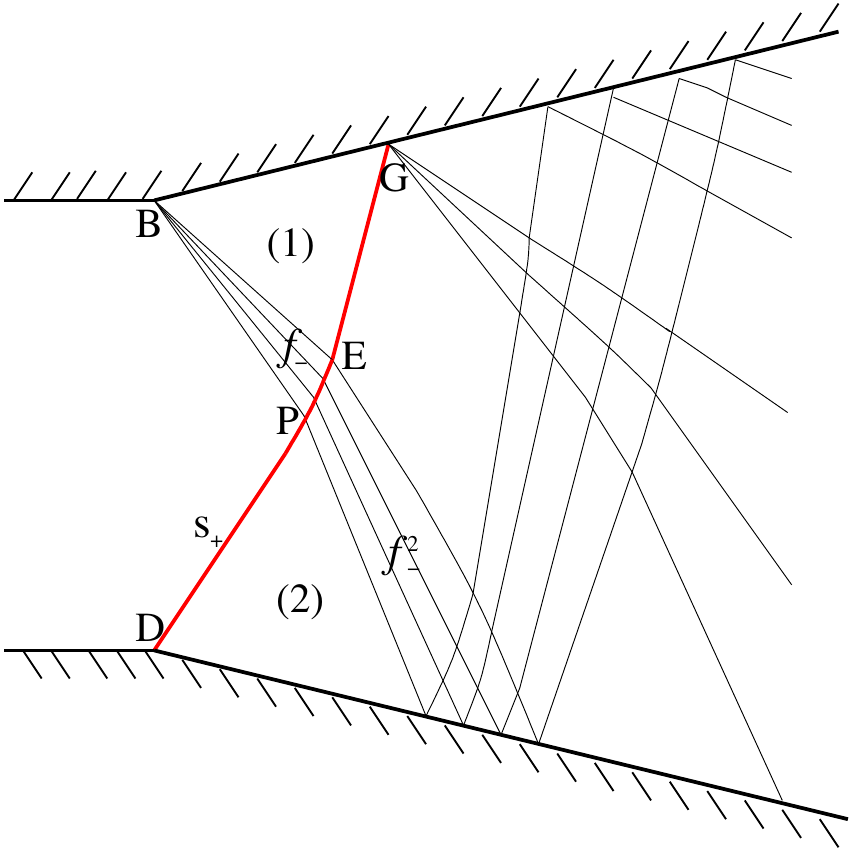}\qquad\qquad\qquad\includegraphics[scale=0.35]{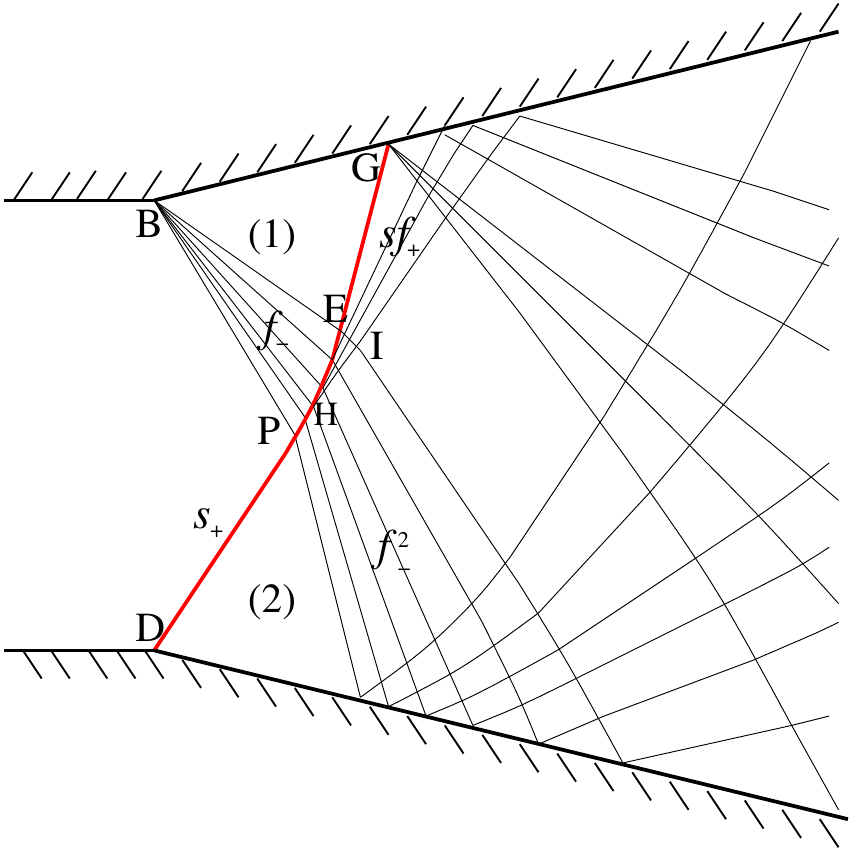}
\caption{\footnotesize Interaction of a centered simple wave with a shock wave. }
\label{Fig5.11.1}
\end{center}
\end{figure}

For the first case, there is a simple wave (denoted by ${\it f}_{-}^2$) with straight $C_{-}$ characteristic lines  issued from the shock $\wideparen{PE}$. The shock propagates at a constant speed from the point $E$ and impinges on the wall $\mathcal{W}_{+}$ at a point $G$, generating a simple wave emanating from the point $G$.

For the second case, we solve a singular Cauchy problem to obtain the flow downstream of the post-sonic shock $\wideparen{HE}$, i.e., the flow on a triangle region bounded by  $\widetilde{HI}$, $\wideparen{HE}$, and $\widetilde{EI}$, where $\widetilde{EI}$ is a forward $C_{-}$
characteristic line issued from $E$ and $\widetilde{HI}$ is a forward $C_{+}$ characteristic line issued from $H$.
As in the previous discussion, there is a simple wave (denoted by ${\it f}_{+}^2$) with straight $C_{+}$ characteristic lines
issued from  $\wideparen{PH}$ and $\widetilde{HI}$, and a shock-fan composite wave (denoted by ${\it sf}_{+}$) with a post-sonic shock issued from the point $E$ and straight $C_{+}$ characteristic lines issued from $\widetilde{EI}$. Then the interaction of ${\it f}_{-}$ with ${\it s}_{+}$ generates a simple wave and a shock-fan composite wave.

The flow after the interaction of ${\it f}_{-}$ with ${\it s}_{+}$ can be constructed using the method described in the previous subsections.
We omit the details. This completes the proof of Theorem \ref{thm11}.

\begin{rem}
The assumption of $\tau_2>\tau_2^i$ is made to ensure that the straight $C_{-}$ characteristic lines of the simple wave adjacent to the constant state $(u_2, v_2)$ do not intersect with each other.
\end{rem}

\subsection{Interaction of a fan-shock composite wave with a shock wave}
We consider the IBVP (\ref{E1}), (\ref{42701})
for $\tau_1^e<\tau_0<\tau_1^i$.
In this part we assume that the oblique waves at the corners $B$ and $D$ are fan-shock composite wave and oblique shock wave, respectively, and the flow near the corners  $B$ and $D$ can be represented by
$$
(u, v)=\left\{
               \begin{array}{ll}
                 (u_0, 0), & \hbox{$-\frac{\pi}{2}<\eta<\eta_0$;} \\[2pt]
                 (\tilde{u}, \tilde{v})(\eta), & \hbox{$\eta_0<\eta\leq \eta_{pr}$;} \\[2pt]
                 (u_1, v_1), & \hbox{$\eta_{pr}<\xi<\theta_{+}$,}
               \end{array}
             \right.
\quad \mbox{and}\quad
(u, v)=\left\{
               \begin{array}{ll}
                 (u_0, 0), & \hbox{$\xi_s< \xi\leq \frac{\pi}{2}$;} \\[2pt]
                 (u_2, v_2), & \hbox{$\theta_{-}<\xi<\xi_s$,}
               \end{array}
             \right.
$$
respectively.
Here, the flow near the corner $B$ is the same as that defined in  Section 5.7;
the flow near the corner $D$ is the same as that defined in  Section 5.4. The main result of this subsection can be stated as the following theorem.
\begin{thm}\label{thm12}
Assume $\tau_1^e<\tau_0<\tau_1^i$ and $u_0>c_0$.
Assume as well that the oblique waves at the corners $B$ and $D$ are are fan-shock simple wave and oblique shock wave, respectively. Assume furthermore that $\tau_2>\tau_2^i$. Then the IBVP (\ref{E1}), (\ref{42701}) admits a global  piecewise smooth solution in $\Sigma$.
\end{thm}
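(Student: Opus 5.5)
The proof combines the constructions of Section 5.8 (fan-shock composite wave meeting another wave) with those of Section 5.11 (fan wave meeting an oblique shock). Denote the fan-shock composite wave issued from $B$ by ${\it fs}_{-}$: it consists of a $C_{-}$ centered fan $(\tilde u,\tilde v)(\eta)$, $\eta_0\le\eta\le\eta_{pr}$, followed by a pre-sonic 1-shock along $\eta=\eta_{pr}$ with back state $(u_1,v_1)$. Denote the oblique 2-shock issued from $D$ by ${\it s}_{+}$, with back state $(u_2,v_2)$ and $\tau_2>\tau_2^i$.

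\emph{Step 1: the shock inside the fan.} The first interaction is at $P(x_{_P},y_{_P})$, where ${\it s}_{+}$ meets the leading characteristic $\eta=\eta_0$ of the fan. As in Section 5.11 (which follows Section 5.8.3), ${\it s}_{+}$ enters the fan region. Its front state is the known fan state $(\tilde u,\tilde v)(\eta)$, so the shock front and its back state solve the free boundary problem (\ref{E1}), (\ref{102904})--(\ref{102906}) with the fan in place of $(u_3,v_3)$. By Lemma \ref{120903a} and Theorem \ref{thm0}, the flow behind the transonic part is a simple wave with straight $C_{+}$ characteristics.

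I then integrate the ODE system (\ref{103002}), (\ref{103004}) for $(\phi,\rho_b)$ from $P$. In the fan we have $\bar\partial_-\rho=0$ and $\bar\partial_+\rho<0$ by (\ref{101501}), and the coefficient structure of (\ref{103004}) then gives $\bar\partial_{_\Gamma}\rho_b<0$ and $\bar\partial_{_\Gamma}\rho_f<0$. The continuation argument of Section 5.8.3 keeps the shock transonic until one of two things happens. Either it reaches the straight line $\eta=\eta_{pr}$ at a point $E$, or $\tau_b$ reaches $\psi_{po}(\tau_f)$ at some $H$. In the second case I switch to the post-sonic shock ODE (\ref{123103})/(\ref{100903}). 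The estimates of Lemma \ref{120801a} then give the structural conditions $\bar\partial_{_\Gamma}r_b>0$, $\bar\partial_{_\Gamma}s_b>0$ and $(r_b,s_b)\in\Pi$, and Lemma \ref{lem7} produces the flow behind $\wideparen{HE}$ as a singular Cauchy problem.

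\emph{Step 2: shock–shock interaction at $E$.} At $E$ the 2-shock meets the pre-sonic 1-shock of ${\it fs}_{-}$. Both shocks now have constant states ahead, since the fan has ended. The outgoing waves are found from the intersection of ${\it W}_2(u_1,v_1)$ with ${\it W}_1(U_E)$, where $U_E$ is the back state at $E$. The non-intersection lemma and the ``at most one intersection'' lemma of Section 5.3.1 give a unique intersection point or a vacuum; this uses the monotonicity of $\sigma$ along the wave curves from Lemma \ref{lem3.9}, which holds for large $u_0$. Depending on which pieces intersect, this produces a 2-shock or 2-shock-fan wave toward $\mathcal W_{+}$, and a 1-fan or a DGP-type centered wave at $E$, exactly as in Sections 5.3--5.4. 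The centered and discontinuous Goursat problems at $E$ are solved by Lemma \ref{lem8} together with Remarks \ref{rem4.1} and \ref{rem4.3}.

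\emph{Step 3: reflections and global continuation.} The waves from Step 2 reflect off $\mathcal W_{\pm}$, and these reflections are handled by the mixed problems of Lemmas \ref{lem5} and \ref{lem5a}. The hypothesis $\tau_2>\tau_2^i$ plays the role it has in Theorem \ref{thm11}: the straight $C_{-}$ characteristics of the simple wave adjacent to $(u_2,v_2)$ do not cross, because $p''>0$ there. After the reflections only expansion fans remain, and the solution is completed as in Section 5.1 by finitely many SGPs, MBVPs and simple wave problems.

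The main obstacle is Step 1. One must show that the shock stays inside the fan region and does not cross the cross characteristic from the front. One must also show that the transonic/post-sonic alternative is exhaustive, meaning no pre-sonic or entropy-violating branch appears. I would first try to adapt the sign argument leading to (\ref{103009}) together with the assertion following (\ref{12201}). The fan being one-sided ($\bar\partial_-\rho=0$) should make these signs hold exactly, with no smallness assumption needed.
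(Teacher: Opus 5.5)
Your route is the one the paper takes for Theorem~\ref{thm12}. The shock ${\it s}_{+}$ enters the fan of ${\it fs}_{-}$ at $P$. The front $\wideparen{PE}$ is either transonic throughout, or transonic up to a point $H$ and post-sonic afterwards. The flow behind a post-sonic part comes from a singular Cauchy problem. A Riemann or discontinuous Goursat problem is solved at $E$, and then the waves reflect off the walls. However, one claim in Step~1 is false, and it is load-bearing.

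\textbf{The error in Step~1.} Behind the transonic part of $\wideparen{PE}$ the flow is a simple wave with straight $C_{-}$ characteristics, not $C_{+}$. At a transonic 2-shock one has $\phi<\alpha_b$, so the downstream $C_{+}$ characteristics run \emph{into} the shock. Followed backward, they reach the straight $C_{-}$ line $\overline{PF}$ of inclination $\theta_{-}-A_2$, which bounds the constant state $(u_2,v_2)$. Theorem~\ref{thm0} then gives a simple wave whose straight $C_{-}$ lines issue from the shock, i.e.\ $\bar{\partial}_{-}\rho_b=0$. This is exactly the content of Lemma~\ref{120903a}, which you cite.

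The family matters for the ODE you integrate. The left-hand coefficient of (\ref{103004}) comes from writing $\bar{\partial}_{_\Gamma}\sigma_b=-\frac{\sin(2A_b)}{2\rho_b}\bar{\partial}_{_\Gamma}\rho_b$, and that step uses $\bar{\partial}_{-}\rho_b=0$. With straight $C_{+}$ lines this term would change sign, the coefficient would lose its definite sign, and your conclusion $\bar{\partial}_{_\Gamma}\rho_b<0$ would not follow. Your Step~3 already uses the correct family: there $\tau_2>\tau_2^i$ keeps the straight $C_{-}$ lines adjacent to $(u_2,v_2)$ from crossing. So, as written, Step~1 contradicts Step~3.

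\textbf{Smaller corrections.}

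\emph{Outgoing waves at $E$.} In Step~2, the two outgoing waves at $E$ do not both have constant states ahead. The outgoing 1-wave runs into the simple wave behind $\wideparen{PE}$ in subcase (i1), or into the singular-Cauchy flow in subcase (i2). This is why the paper solves a Riemann problem between the straight lines $\overline{EG}$ (inclination $\theta_{+}+A_1$) and $\overline{EK}$ in (i1). In (i2) it solves a discontinuous Goursat problem with boundaries $\overline{EG}$ and $\widetilde{EI}\cup\overline{IK}$.

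\emph{Wave-curve lemmas.} The two lemmas of Section~5.3.1, and Lemma~\ref{lem3.9}, concern other base states, so they do not apply literally. You do not need them, because $\tau_E>\tau_2^i$: along a transonic part $\tau_b$ increases from $\tau_2$, and along a post-sonic part $\tau_b=\psi_{po}(\tau_f)>\tau_2^i$. Hence ${\it W}_1(U_E)$ is a pure fan curve. The paper simply distinguishes $\tau_1\ge\tau_2^i$ (two centered fans at $E$) from $\tau_1<\tau_2^i$ (a straight post-sonic 2-shock from $E$ followed by a fan). Your ``whichever pieces intersect'' version also allows a transonic 2-shock toward $\mathcal{W}_{+}$, so it is a slightly more general form of the same step.

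\emph{The obstacle you single out.} It is milder here than in Section~5.8.3. The upstream flow is the explicit centered fan on the whole sector, so there is no cross characteristic to avoid. The pre-sonic branch is excluded because $\tau_b$ increases from $\tau_2>\psi_{pr}(\tau_0)$ while $\psi_{pr}(\tau_f)$ decreases. The sign of $\bar{\partial}_{_\Gamma}\rho_b$ from (\ref{103004}) is not unconditional, though, because the front bracket contains a positive term produced by $N_f>c_f$. The bracket is negative, for example, when $A_f\le\pi/4$, which holds for $u_0$ large.
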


We denote the oblique fan-shock composite wave issued from $B$ and the oblique shock wave issued from $D$ by ${\it fs}_{-}$ and ${\it s}_{+}$, respectively.
The two oblique waves start to interact with each at the point $P(x_{_P}, y_{_P})$, where
$x_{_P}=2(\tan\xi_s-\tan\eta_0)^{-1}$ and $y_{_P}=1+2(\tan\xi_s-\tan\eta_0)^{-1}\tan\eta_0$.
 We divide the discussion into the following two cases: (i) $\tau_1\geq\tau_2^i$; (ii) $\tau_1<\tau_2^i$.

\noindent
{\bf (i) $\tau_1\geq \tau_2^i$.}
As in Section 5.8.3, the shock ${\it s}_{+}$ will propagate into the fan wave ${\it fs}_{-}$ region and intersect the pre-sonic shock of ${\it fs}_{-}$ at a point $E$.
From the point $P$, we draw a straight $C_{-}$ characteristic with the characteristic angle $\beta=\theta_{-}-A_2$. This characteristic line intersects the wall $\mathcal{W}_{-}$ at a point $F$.
Due to $\tau_1>\tau_2^i$, from the point $E$ we draw a straight $C_{+}$ characteristic with the characteristic angle $\alpha=\theta_{+}+A_1$. This line intersects the wall $\mathcal{W}_{+}$ at a point $G$.

As the shock wave $\wideparen{FG}$ discussed in Section 5.8.3, there are the following two subcases about the shock wave $\wideparen{PE}$.
\begin{description}
  \item[(i1)] $\wideparen{PE}$ is a transonic shock; see Fig. \ref{Fig5.12.1} (left).
  \item[(i2)] There is a point $H$ on $\wideparen{PE}$ such that the segment $\wideparen{PH}$ is transonic and the segment $\wideparen{HE}$ is post-sonic;  see Fig. \ref{Fig5.12.1} (right).
\end{description}

For subcase (i1), there is a simple wave  with straight $C_{-}$ characteristic lines  issued from the shock $\wideparen{PE}$  on a curvilinear quadrilateral domain bounded by $\overline{PF}$, $\wideparen{PE}$, $\overline{EK}$, and $\widetilde{FK}$, where
 $\overline{EK}$ is a straight characteristic line issued from $E$ and $\widetilde{FK}$ is a $C_{+}$ characteristic line issued from $F$. We next solve a Riemann-type problem for (\ref{E1}) with $\overline{EG}$ and $\overline{EK}$ as the characteristic boundaries to get two centered simple waves with different types issued from the point $E$.
Then the interaction of ${\it fs}_{-}$ and ${\it s}_{+}$ generates two simple waves; see Fig. \ref{Fig5.12.1} (left).

For subcase (i2), we first solve a singular Cauchy problem for (\ref{E1}) to obtain the flow downstream of the post-sonic shock $\wideparen{HE}$, i.e., the flow on a triangle region bounded by  $\widetilde{HI}$, $\wideparen{HE}$, and $\widetilde{EI}$, where $\widetilde{EI}$ is a forward $C_{-}$
characteristic line issued from $E$ and $\widetilde{HI}$ is a forward $C_{+}$ characteristic line issued from $H$.
As in the previous discussion, there is a simple wave (denoted by ${\it f}_{-}$) with straight $C_{-}$ characteristic lines on a curvilinear quadrilateral domain bounded by  $\wideparen{PH}\cup\widetilde{HI}$, $\overline{PF}$, $\widetilde{FK}$, and $\overline{IK}$, where $\widetilde{FK}$ is a forward $C_{+}$ characteristic curve issued from $F$ and $\overline{IK}$ is a forward straight $C_{-}$ characteristic line issued from $I$.
Next, we solve a DGP for (\ref{E1}) with $\overline{EG}$ and $\widetilde{EI}\cup \overline{IK}$ as the characteristic boundaries.
This DGP admits a solution on a curvilinear  quadrilateral domain bounded by $\overline{EG}$, $\widetilde{EI}\cup \overline{IK}$, $C_{+}^{K}$ and $C_{-}^{G}$. The solution contains a centered wave on a curvilinear triangle domain bounded by $\widetilde{EI}$, $\widetilde{EJ}$, and $\widetilde{IJ}$ where $\widetilde{EJ}$ is a $C_{-}$ characteristic curve issued from $E$ and $\widetilde{IJ}$ is a $C_{+}$ characteristic curve issued from $I$.
The solution also contains a simple wave with straight $C_{+}$ characteristic lines issued from the point $E$ and
$\widetilde{EJ}$ and a simple wave with straight $C_{-}$ characteristic lines issued from $\widetilde{IJ}$.
In this case, the interaction of ${\it fs}_{-}$ with ${\it s}_{+}$ also generates two simple waves.

\begin{figure}[htbp]
\begin{center}
\includegraphics[scale=0.32]{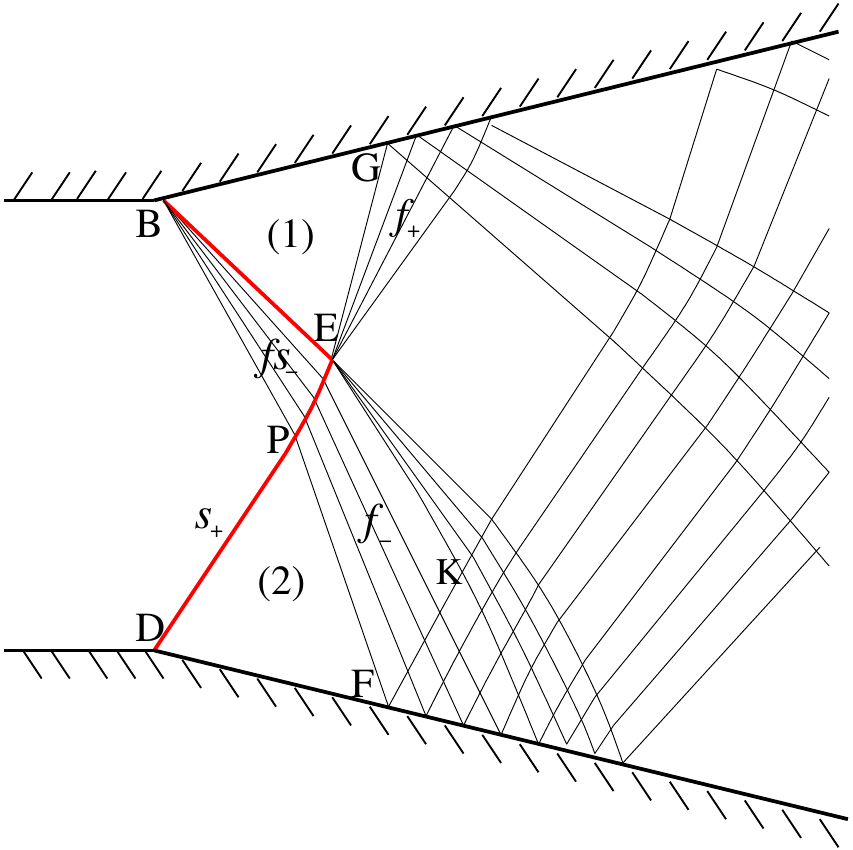}\qquad\qquad \qquad \includegraphics[scale=0.32]{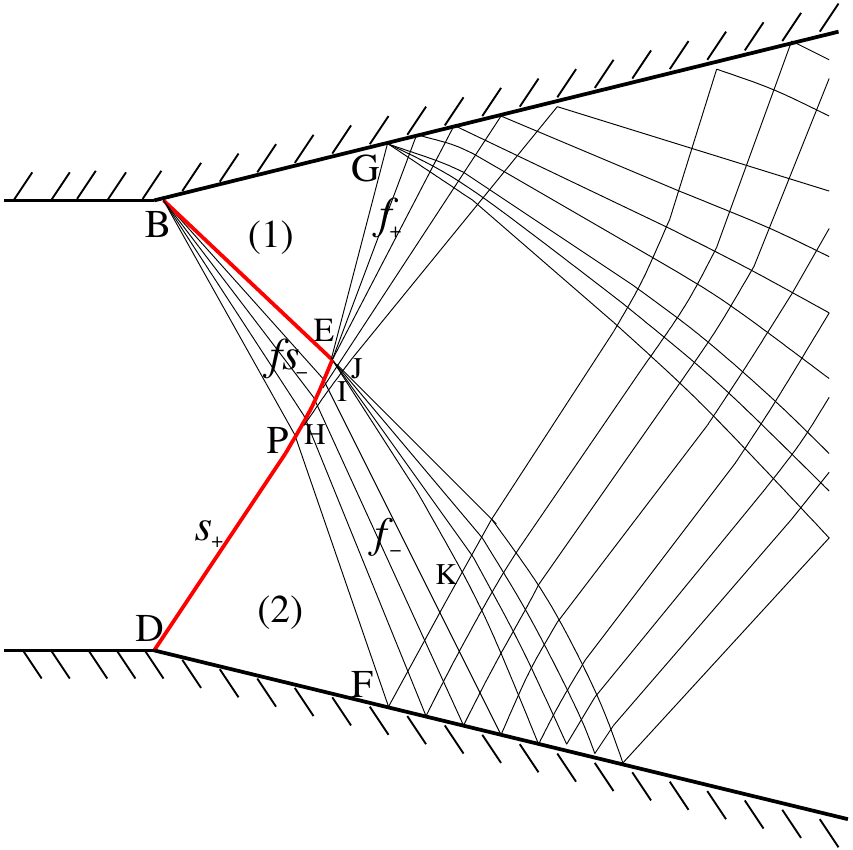}
\caption{\footnotesize Interaction of a fan-shock composite wave with a shock: $\tau_1\geq\tau_2^i$.}
\label{Fig5.12.1}
\end{center}
\end{figure}
\begin{figure}[htbp]
\begin{center}
\includegraphics[scale=0.32]{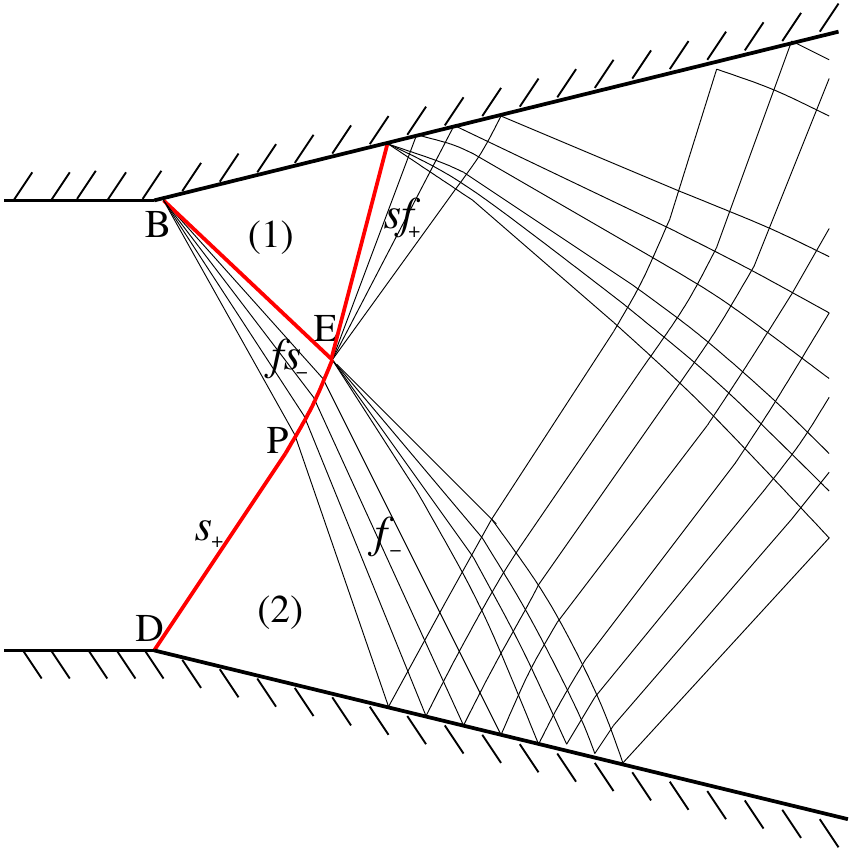}\qquad\qquad \qquad \includegraphics[scale=0.32]{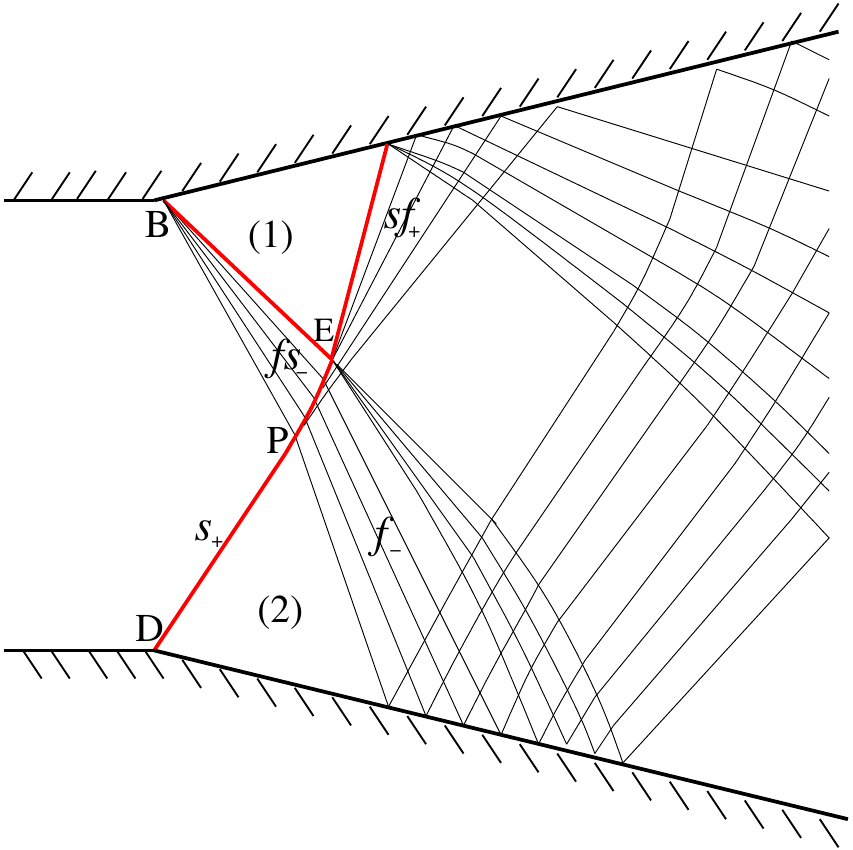}
\caption{\footnotesize Interaction of a fan-shock composite wave with a shock: $\tau_1<\tau_2^i$. }
\label{Fig5.12.2}
\end{center}
\end{figure}

\noindent
{\bf (ii) $\tau_1<\tau_2^i$.}
There is only a slight difference in the discussion between cases (i) and (ii).
 Due to $\tau_1<\tau_2^i$, we construct a straight post-sonic shock propagating from the point $E$. The front side state of the post-sonic shock is $(u_1, v_1)$. The backside state and the inclination angle of the shock can then be determined by the shock relations. 
The discussion in the other part of the interaction is the same as that for case (i). Then for $\tau_1<\tau_2^i$, the interaction of ${\it fs}_{-}$ and ${\it s}_{+}$ generates a shock-fan composite wave and a simple wave; see Fig. \ref{Fig5.12.2}.

The flow after the interaction  of ${\it fs}_{-}$ and ${\it s}_{+}$ can be determined using the method described in the previous subsections.
We omit the details. This completes the proof of Theorem \ref{thm12}.

\subsection{Interaction of fan-shock composite waves}
We consider the IBVP for $\tau_1^e<\tau_0<\tau_1^i$.
In this part we assume that the oblique waves at the corners $B$ and $D$ are fan-shock composite waves, and the flow near the corners  $B$ and $D$ can be represented by
$$
(u, v)=\left\{
               \begin{array}{ll}
                 (u_0, 0), & \hbox{$-\frac{\pi}{2}<\eta<\eta_0$;} \\[2pt]
                 (\tilde{u}, \tilde{v})(\eta), & \hbox{$\eta_0<\eta\leq \eta_{1}$;} \\[2pt]
                 (u_1, v_1), & \hbox{$\eta_{1}<\xi<\theta_{+}$;}
               \end{array}
             \right.
\quad \mbox{and}\quad
(u, v)=\left\{
               \begin{array}{ll}
                 (u_0, 0), & \hbox{$\xi_0\leq \xi\leq \frac{\pi}{2}$;} \\[2pt]
                 (\bar{u}, \bar{v})(\xi), & \hbox{$\phi_{pr}<\xi<\xi_0$;} \\[2pt]
                (u_2, v_2), & \hbox{$\theta_{-}<\xi<{\phi}_{pr}$,}
               \end{array}
             \right.
$$
respectively.
Here, the flow near the corner $B$ is the same as that defined in  Section 5.7;
the flow near the corner $D$ is the same as that defined in  Section 5.8.
The main result of this subsection can be stated as the following theorem.
\begin{thm}\label{thm13}
Assume $\tau_0<\tau_1^i$ and $u_0>c_0$.
Assume furthermore that the oblique waves at the corners $B$ and $D$ are fan-shock composite waves and $\min\{\tau_1, \tau_2\}>\tau_2^i$, where $\tau_1$ and $\tau_2$ are defined the same as that defined in (\ref{def12}).
Then when $\psi_{pr}^{-1}(\tau_1)-\tau_0$ is sufficiently small, the IBVP (\ref{E1}), (\ref{42701}) admits a global  piecewise smooth solution in $\Sigma$.
\end{thm}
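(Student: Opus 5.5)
The plan is to follow the scheme of Sections 5.8 and 5.10. Write ${\it fs}_{-}$ for the composite wave at $B$ and ${\it fs}_{+}$ for the one at $D$; they first meet at $P(\cot\xi_0,0)$. From $P$, draw the forward $C_{+}$ cross characteristic of the fan of ${\it fs}_{-}$; it meets the pre-sonic shock $y=1+x\tan\eta_1$ at a point $E$. Symmetrically, draw the forward $C_{-}$ cross characteristic of the fan of ${\it fs}_{+}$; it meets the pre-sonic shock of ${\it fs}_{+}$ at a point $F$. As in (\ref{62303}) and (\ref{101501}), we have $\bar\partial_+\rho<0$ on $\widetilde{PE}$ and $\bar\partial_-\rho<0$ on $\widetilde{PF}$.

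We then solve the standard Goursat problem with data $(\tilde u,\tilde v)$ on $\widetilde{PE}$ and $(\bar u,\bar v)$ on $\widetilde{PF}$. Lemma \ref{lem1}'s hypothesis $\tau(P)>\tau_2^i$ fails here, so the solution on $\overline{\Omega^3}$ is obtained instead as in Lemma \ref{lem110101}. Smallness of $\psi_{pr}^{-1}(\tau_1)-\tau_0$ keeps $\tau<\tau_1^i$ and makes $\Omega^3$ thin, which yields $C^0$ closeness estimates analogous to (\ref{6811}).

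Next, construct the shocks entering $\Omega^3$ from $E$ and from $F$. Each is a shock free boundary problem of type (\ref{E1}), (\ref{102904})--(\ref{102906}), solved by Lemma \ref{120903a} with initial data $(\phi,\rho)=(\eta_1,\rho_1)$ at $E$, and the analogous data at $F$. Behind each shock we get a local simple wave: straight $C_{+}$ characteristics behind $S_{_E}$ and straight $C_{-}$ characteristics behind $S_{_F}$. The inequality (\ref{103009}) shows that each shock enters the interior of $\Omega^3$. The assertion after (\ref{12201}) shows it stays there as long as it is transonic. The transition lemma of Section 5.8.3 shows that a shock may switch to a post-sonic shock at a point where $\phi=\alpha_b$ (resp. $\phi=\beta_b$). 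Past that point the shock is continued by (\ref{61115}) or (\ref{110408}), and Lemma \ref{91011} keeps it inside $\Omega^3$. By the symmetry of the configuration, the two shocks meet at a point $G$ on the line $y$ between them.

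The remaining steps are to build the flow behind the shocks and then propagate it down the duct. Behind any post-sonic portion, solve the singular Cauchy problem of Lemma \ref{lem7}; this needs the structural conditions (sign of $\bar\partial_{_\Gamma}r_b$, $\bar\partial_{_\Gamma}s_b$) together with $(r_b,s_b)\in\Pi$, which we prove as in Lemmas \ref{110411} and \ref{2501a}. Behind transonic portions, continue the simple waves. Beyond the meeting point $G$, a discontinuous Goursat problem (Lemma \ref{lem8}, or Remark \ref{rem4.3} if a vacuum forms) produces two simple waves ${\it f}_{\pm}$. The assumption $\min\{\tau_1,\tau_2\}>\tau_2^i$ ensures that the straight characteristics adjacent to $(u_1,v_1)$ and $(u_2,v_2)$ do not cross. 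It also lets us replace the shocks leaving $E$ and $F$ toward the walls by straight characteristics of inclination $\theta_{+}+A_1$ and $\theta_{-}-A_2$, so that the simple-wave problems of Lemmas \ref{lem3} and \ref{lem4} apply. Their reflections are treated with Lemmas \ref{lem5} and \ref{lem5a}, and the rest of the duct is handled exactly as in Section 5.1.

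The main obstacle is the a priori control of the two shocks inside $\Omega^3$. We must show they stay transonic or post-sonic, never become pre-sonic, and do not exit through $\widetilde{PE}$, $\widetilde{PF}$ or the far characteristics before meeting. We also need signs of $\bar\partial_{_\Gamma}\rho_b$ along them, as in (\ref{2601}) and (\ref{12201}). For these I would rely on the smallness of $\psi_{pr}^{-1}(\tau_1)-\tau_0$, which gives uniform closeness of $\bar\partial_\pm\rho_3$ to $-\mathcal{L}$. Pre-sonic continuation would be excluded by the argument of Section 5.8.5.
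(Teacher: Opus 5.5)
Your machinery is the same as the paper's: the Goursat problem on $\overline{\Omega^3}$, the free boundary problems for the shocks, the transonic-to-post-sonic switch, the singular Cauchy problems, the final discontinuous Goursat problem, and the use of $\min\{\tau_1,\tau_2\}>\tau_2^i$. But the step ``by the symmetry of the configuration, the two shocks meet at a point $G$'' is a genuine gap.

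There is no symmetry. The angles $\theta_{+}$ and $\theta_{-}$ are independent, so in general $\tau_1\neq\tau_2$ and the two fans have different widths. The smallness hypothesis concerns only the fan of ${\it fs}_{-}$. The paper therefore distinguishes three cases, and your construction covers only the third, where both shocks enter $\Omega^3$ and meet inside it.

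In the first case, the shock from $E$ reaches the opposite cross characteristic $\widetilde{PF}$ at a point $H$ before it can reach $\widetilde{FS}$. It then passes into the still unperturbed fan of ${\it fs}_{+}$ and hits the pre-sonic shock of ${\it fs}_{+}$ at a point $G$ on the ray from $D$, between $D$ and $F$. No shock ever issues from $F$.

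This case is not exceptional. As in Lemma \ref{111503}, holding $\tau_2$ fixed and letting $\psi_{pr}^{-1}(\tau_1)-\tau_0\to 0$ pushes $E$, and hence $H$, towards $P$. So under the hypothesis of the theorem this is the typical configuration. The paper's second case is its mirror image and occurs when $(\psi_{pr}^{-1}(\tau_2)-\tau_0)/(\psi_{pr}^{-1}(\tau_1)-\tau_0)$ is small.

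Treating the first case requires three things:
\begin{itemize}
\item continuing the shock past $H$ with upstream data $(\bar u,\bar v)(\xi)$;
\item proving, where that portion is post-sonic, the structural conditions and $(r,s)\in\Pi$ as in Lemma \ref{120801a} rather than Lemma \ref{110411};
\item solving at $G$ a discontinuous Goursat problem whose characteristic boundary is the straight $C_{-}$ characteristic of inclination $\theta_{-}-A_2$ (Lemma \ref{lem8}, Remark \ref{rem4.1}), which produces a centered wave at $G$, as in Section 5.8.2.
\end{itemize}
Even in the third case, you must prove that the two shocks meet before either leaves $\Omega^3$ through $\widetilde{PE}$, $\widetilde{PF}$ or the far characteristic boundaries. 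Also, $G$ need not lie on $y=0$.

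Relatedly, the uniform closeness of $\bar\partial_{\pm}\rho_3$ to $-\mathcal{L}$, on which you base the sign controls, is not available here. That estimate, like (\ref{6811}), rests on $\tau_0\to\tau_1^e$ and on the whole interaction region shrinking to a point, with $\mathcal{L}$ evaluated at the double-sonic state. In Theorem \ref{thm13} only $\widetilde{PE}$ is short. The region $\Omega^3$ is a thin strip along $\widetilde{PF}$ on which the state runs through the entire fan of ${\it fs}_{+}$, from $\tau_0$ up to $\psi_{pr}^{-1}(\tau_2)$, and $\tau_0$ need not be close to $\tau_1^e$. The signs you need, namely the analogues of (\ref{103009}), (\ref{2601}) and (\ref{12201}), must instead be extracted from the explicit simple-wave data of ${\it fs}_{+}$ together with the smallness of the ${\it fs}_{-}$ fan. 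Existence of the Goursat solution on $\overline{\Omega^3}$ is not the issue, since $\tau<\tau_1^i$ and hence $p''>0$ there.
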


We use ${\it fs}_{-}$ and ${\it fs}_{+}$ to denote the fan-shock composite waves issued from point $B$ and $D$, respectively.
These two composite waves start to interact with each other at point ${P}(\cot\xi_0,0)$.
From the point $P$, we draw a forward $C_{+}$  cross characteristic curve in ${\it fs}_{-}$.
This characteristic curve intersects the pre-sonic shock of ${\it fs}_{-}$ at a point $E$.
From the point $P$, we draw a forward $C_{-}$  cross characteristic curve in ${\it fs}_{+}$.
This characteristic curve intersects the pre-sonic shock  of ${\it fs}_{+}$ at a point $F$.

We first consider (\ref{E1}) with the boundary condition
\begin{equation}\label{10601}
(u, v)=\left\{
               \begin{array}{ll}
                  (\tilde{u}, \tilde{v})(\eta), & \hbox{$(x,y)\in \widetilde{{PE}}$;} \\[2pt]             (\bar{u}, \bar{v})(\xi), & \hbox{$(x,y)\in \widetilde{{PF}}$.} \\[2pt]
              \end{array}
             \right.
\end{equation}
When $\psi_{pr}^{-1}(\tau_1)-\tau_0$ is sufficiently small,
the SGP (\ref{E1}), (\ref{10601}) admits a classical solution on a curvilinear quadrilateral domain $\overline{\Omega^3}$ bounded by $\widetilde{{PE}}$, $\widetilde{{PF}}$, $\widetilde{{ES}}$, and $\widetilde{{FS}}$, where $\widetilde{{ES}}$ is a forward $C_{+}$ characteristic line issued from ${E}$ and $\widetilde{{FS}}$ a forward $C_{-}$ characteristic line issued from ${F}$. Moreover the solution satisfies $\tau<\tau_1^i$ in $\overline{\Omega^3}$.

As discussed in the previous subsections, there are the following three cases:
\begin{itemize}
  \item The shock of ${\it fs}_{-}$ propagates into the domain $\Omega^3$ from the point $E$, and stays in $\Omega^3$ until it intersects $\widetilde{PF}$ at a point $H$. This shock will then propagate in the fan wave of ${\it fs}_{+}$ until it intersects the pre-sonic shock of ${\it fs}_{+}$ at a point $G$; see Fig. \ref{Fig5.13.2}.
  \item When $(\psi_{pr}^{-1}(\tau_2)-\tau_0)/(\psi_{pr}^{-1}(\tau_1)-\tau_0)$ is small, the shock of ${\it fs}_{+}$ propagates into the domain $\Omega^3$ from the point $F$ and stays in $\Omega^3$ until it intersects $\widetilde{PE}$ at a point. This shock will then propagate in the fan wave of ${\it fs}_{-}$ from that point until it intersects the pre-sonic shock of ${\it fs}_{-}$ at some point.
  \item The two shocks of ${\it fs}_{\pm}$ propagate into the domain $\Omega^3$ from the points $E$ and $F$, and intersect with each other at a point $G$ inside the domain $\Omega^3$; see Fig. \ref{Fig5.13.1}.
\end{itemize}
In what follows, we are going to discuss the first and third cases. The discussion for the second case is similar to that of the first case.

\begin{figure}[htbp]
\begin{center}
\includegraphics[scale=0.33]{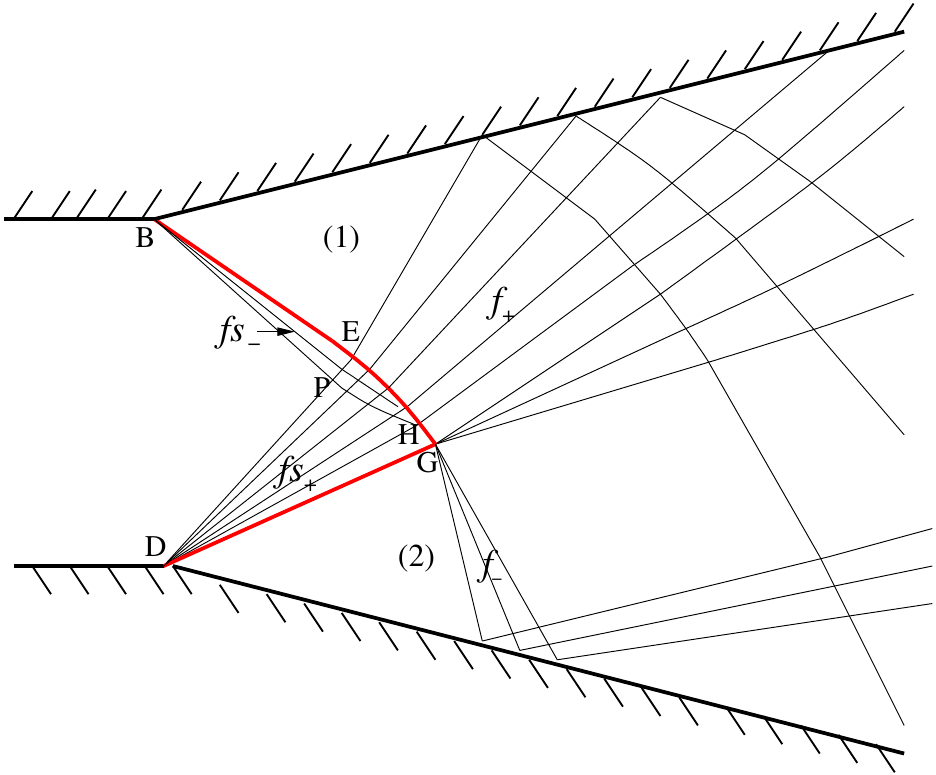}\qquad\qquad\qquad\includegraphics[scale=0.33]{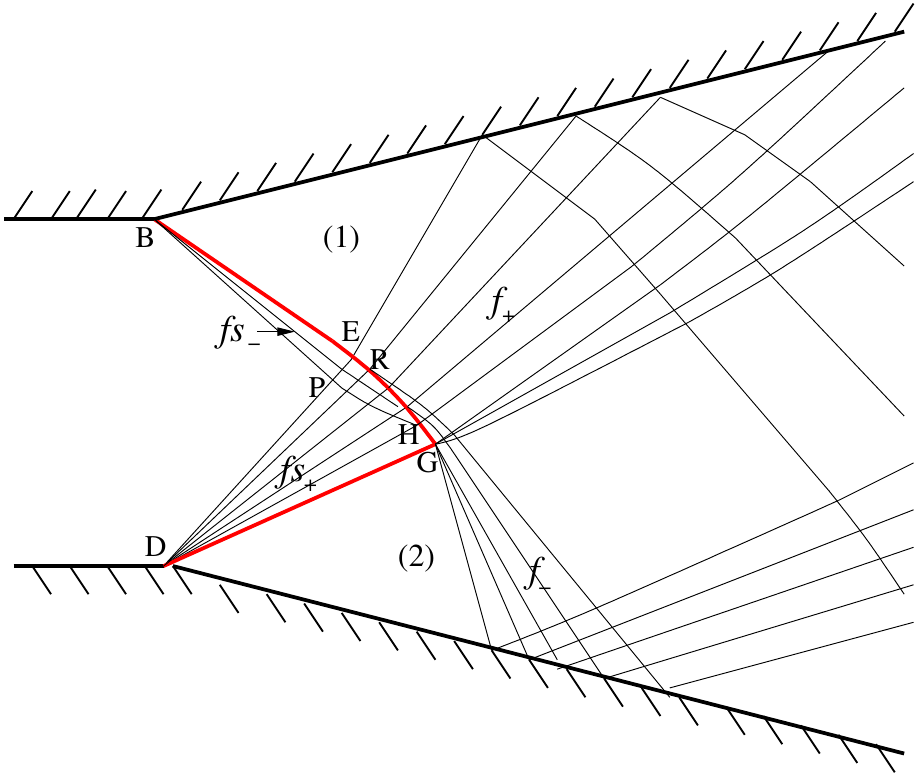}
\caption{\footnotesize Interaction of fan-shock composite waves for the first case. }
\label{Fig5.13.2}
\end{center}
\end{figure}
\begin{figure}[htbp]
\begin{center}
\includegraphics[scale=0.33]{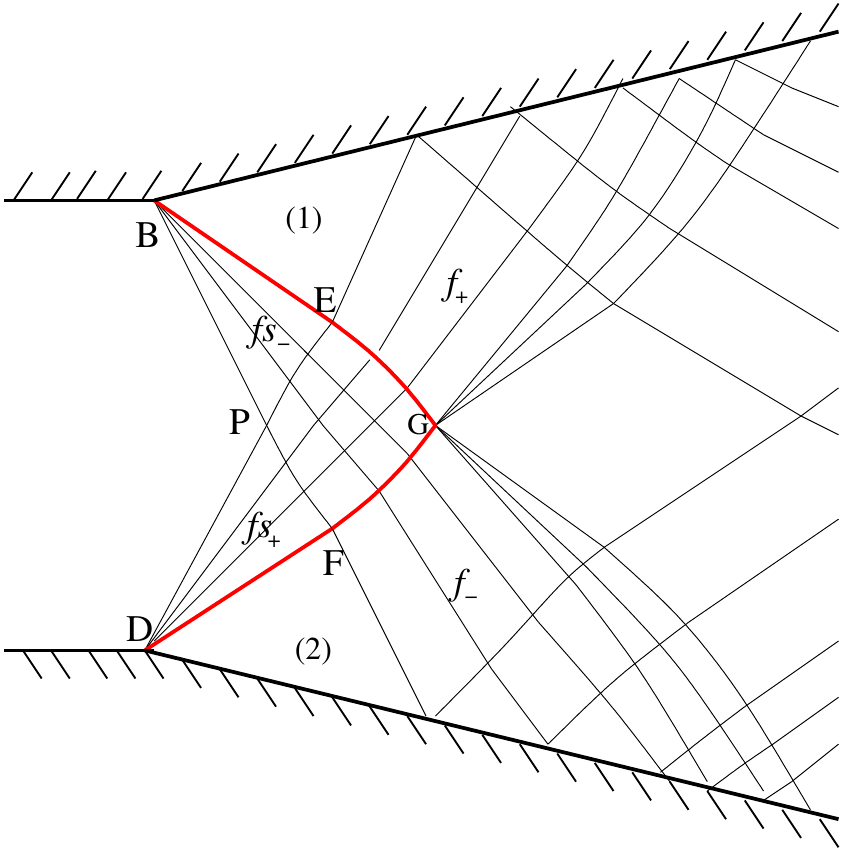}\qquad\qquad\qquad\includegraphics[scale=0.33]{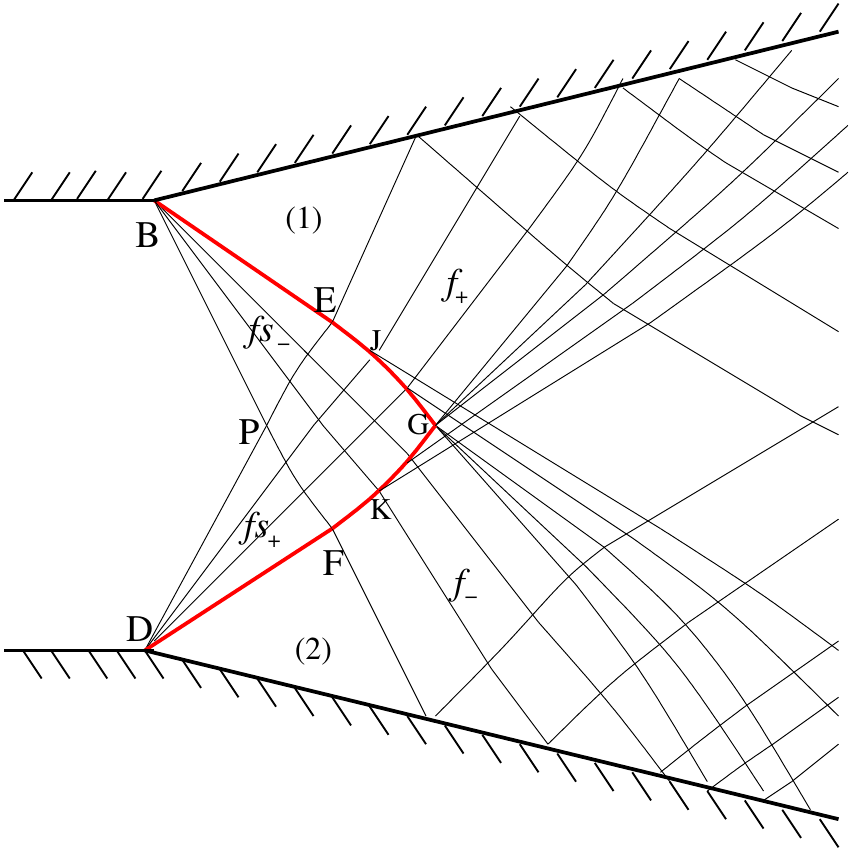}
\caption{\footnotesize Interaction of fan-shock composite waves for the third case.}
\label{Fig5.13.1}
\end{center}
\end{figure}

For the first case, we use $\wideparen{EG}$ to denote the shock wave from the point $E$. The shock  $\wideparen{EG}$ could be a pure transonic shock (see Fig. \ref{Fig5.13.2} (left)) or a combination of a transonic shock and a post-sonic shock (see $\wideparen{ER}$ and $\wideparen{RG}$ in Fig. \ref{Fig5.13.2} (right)).
If the shock  $\wideparen{EG}$ is a pure transonic shock,
the flow downstream of it is a simple wave and can be determined by solving a free boundary value problem as in Section 5.8.
If the shock  $\wideparen{EG}$ is not a pure transonic shock, the flow downstream of it
 can be obtained by solving a free boundary problem and a singular Cauchy problem.
Finally, by solving a simple wave problem and a discontinuous Goursat problem we can obtain the oblique waves generated from the interaction of ${\it fs}_{\pm}$. As in the previous subsections, we see that the interaction generates two simple waves with different types.

For the third case, we use $\wideparen{EG}$ and  $\wideparen{FG}$ to denote the shock waves propagated from the points $E$ and $F$, respectively. The shock  $\wideparen{EG}$ ($\wideparen{FG}$, resp.) may be a pure transonic shock or a combination of a transonic shock and a post-sonic shock. The flow downstream of $\wideparen{EG}$ and $\wideparen{FG}$ can be obtained by solving some simple wave problems and  singular Cauchy problems.
Moreover, by solving a discontinuous Goursat problem we see that the interaction of ${\it fs}_{\pm}$ generates two simple waves with different types.

The flow after the interaction  of ${\it fs}_{\pm}$  can then be determined using the method described in Section 5.1; we omit the details. This completes the proof of Theorem \ref{thm13}.

\section{Relation to 2D Riemann problems}
2D Riemann problems refer to Cauchy problems with special initial data that are
constant along each ray from the origin.
It is well known that the 2D Riemann problem is an important problem in nonlinear hyperbolic conservation laws. Solving  2D Riemann problems would illuminate the physical structure and interaction of waves, and would serve to improve numerical algorithms; see, e.g., Menikoff and Plohr
 \cite{MP}. 
There have been many important works on the existence of global solutions to 2D Riemann problems for convex gases; see \cite{BCF1,BCF2,CCHLQ,CX,CF1,CF2,Chen1,Dai,ELiu2,Hu,Li1,Li3,Li4,Serre,Zheng1,Zheng2}.
However, to the best of our knowledge, much less is known about the global existence of solutions to 2D Riemann problems for nonconvex gases. In \cite{Lai0,Lai1,Lai3,Lai5}, the author studied several special 2D Riemann problems for a van der Waals gas. These Riemann problems involve the interactions of planar fan waves, shock-fan composite waves, and fan-shock-fan composite waves.

As the first step in solving 2D Riemann problems, one needs to study various types of wave interactions.
The results and methods of this paper are also applicable to some 2D Riemann problems for gases with nonconvex equations of state.
For example, we consider the 2D  Euler equations for potential flow
\begin{equation}\label{2DRiemann1}
\left\{
  \begin{array}{ll}
    \rho_t+(\rho\Phi_x)_x+(\rho\Phi_y)_y=0,  \\[4pt]
    \displaystyle\Phi_t+\frac{1}{2}(\Phi_x^2+\Phi_y^2)+h(\tau)=\mathcal{B},
  \end{array}
\right.
\end{equation}
with the Riemann initial data
\begin{equation}\label{2DRiemann2}
(\tau, \Phi)\mid_{t=0}~=~(i):=(\tau_i, u_ix+v_iy), \quad (x,y)\in i^{th}~\mbox{quadrant}, ~~i=1, 2, 3, 4.
\end{equation}
Here, $\Phi$ is the velocity potential, i.e., $(\Phi_x, \Phi_y)=(u, v)$; $\mathcal{B}$ is a constant; $(\tau_i, u_i, v_i)$ ($i=1, 2, 3, 4$) are four distinct constant initial states.

We assume that
the initial data is chosen so that only a shock wave, fan wave, or composite wave  connects two neighboring constant initial states
 at the initial time. 
 We choose the following relation:
\begin{equation}\label{13001}
\begin{array}{rll}
\tau_2 & > \tau_1 \\
\vee & \quad \wedge \\
\tau_3 & < \tau_4
\end{array}
\end{equation}
for the specific volume;
$$
u_1=u_4>u_2=u_3
\quad and \quad
v_1=v_2>v_3=v_4
$$
for the velocities.
 If the gas is a polytropic gas, then the constant states (1) and (2) is connected by a forward fan wave $\overrightarrow{F}_{21}$, (1) and (4)  is connected by a forward fan wave $\overrightarrow{F}_{41}$, (3) and (2)  is connected by a backward fan wave $\overrightarrow{F}_{32}$, and
  (3) and (4)  is connected by a backward fan wave $\overrightarrow{F}_{34}$; see \cite{Schulz-Rienne,ZhangZheng}.
However, if the gas is a nonconvex gas, the initial configuration is of the form
\begin{equation}\label{13002}
\begin{tikzpicture}[baseline=(current bounding box.center)]
\node at (0,0.7)    {$\overrightarrow{W}_{21}$};  
\node at (-0.7,0)   {$\overleftarrow{W}_{32}$};   
\node at (0.7,0)    {$\overrightarrow{W}_{41}$};   
\node at (0,-0.7)   {$\overleftarrow{W}_{34}$};  
\end{tikzpicture}
\end{equation}
where $W_{ij}$ might be a shock wave, fan wave, fan-shock composite wave, shock-fan composite wave, or fan-shock-fan composite wave.
Then,
in order to solve the above 2D Riemann problem we need to study the interactions of $\overrightarrow{W}_{21}$ with $\overrightarrow{W}_{41}$ and the interaction of $\overrightarrow{W}_{32}$ with $\overrightarrow{W}_{34}$.
 Under assumptions (\ref{13001}) and (\ref{13002}) there also exist thirteen distinct types
of wave interactions as discussed in this paper. 







\vskip 32pt


\end{document}